\newcommand{\CN}{{\mathcal {N}}}
\newcommand{\Ad}{{\mathrm{Ad}}}
\newcommand{\Aut}{{\mathrm{Aut}}}
\newcommand{\GL}{{\mathrm{GL}}}
\newcommand{\Hom}{{\mathrm{Hom}}}
\newcommand{\Ind}{{\mathrm{Ind}}}
\newcommand{\Lie}{{\mathrm{Lie}}}
\newcommand{\rk}{{\mathrm{k}}}
\newcommand{\SL}{{\mathrm{SL}}}
\newcommand{\tr}{{\mathrm{tr}}}
\newcommand{\ad}{\operatorname{ad}}
\newcommand{\oB}{\operatorname{B}}
\newcommand{\oU}{\operatorname{U}}
\newcommand{\oZ}{\operatorname{Z}}
\newcommand{\oP}{\operatorname{P}}
\newcommand{\oN}{\operatorname{N}}
\newcommand{\g}{\mathfrak g}
\renewcommand{\k}{\mathfrak k}
\newcommand{\h}{\mathfrak h}
\newcommand{\p}{\mathfrak p}
\renewcommand{\a}{\mathfrak a}
\renewcommand{\b}{\mathfrak b}
\newcommand{\n}{\mathfrak n}
\renewcommand{\u}{\mathfrak u}
\renewcommand{\l}{\mathfrak l}
\newcommand{\s}{\mathfrak s}
\newcommand{\z}{\mathfrak z}
\renewcommand{\sl}{\mathfrak s \mathfrak l}
\newcommand{\gl}{\mathfrak g \mathfrak l}
\renewcommand{\rk}{\mathrm k}
\newcommand{\Z}{\mathbb{Z}}
\newcommand{\C}{\mathbb{C}}
\newcommand{\R}{\mathbb R}
\newcommand{\la}{\langle}
\newcommand{\ra}{\rangle}
\newcommand{\be}{\begin {equation}}
\newcommand{\ee}{\end {equation}}
\newcommand{\bp}{\begin {proof}}
\newcommand{\ep}{\end {proof}}
\newcommand{\bee}{\begin {equation*}}
\newcommand{\eee}{\end {equation*}}
\theoremstyle{Theorem}
\newtheorem{thm}{Theorem}[section]
\newtheorem{lemt}[thm]{Lemma}
\newtheorem{prpt}[thm]{Proposition}
\theoremstyle{Theorem}
\newtheorem{lem}{Lemma}[section]
\newtheorem{thml}[lem]{Theorem}
\newtheorem{prpl}[lem]{Proposition}
\theoremstyle{Theorem}
\newtheorem{prp}{Proposition}[section]
\newtheorem{lemp}[prp]{Lemma}
\newtheorem{thmp}[prp]{Theorem}
\theoremstyle{Plain}
\theoremstyle{Definition}
\newtheorem{dfn}{Definition}[section]
\newtheorem{dfnp}[prp]{Definition}
\newtheorem{dfnl}[lem]{Definition}
\newtheorem{prpd}[dfn]{Proposition}
\newtheorem{thmd}[dfn]{Theorem}
\newtheorem{lemd}[dfn]{Lemma}
\begin{document}

\title[Almost linear Nash groups]{Almost linear Nash groups}

\author [B. Sun] {Binyong Sun}
\address{Academy of Mathematics and Systems Science\\
Chinese Academy of Sciences\\
Beijing, 100190,  China} \email{sun@math.ac.cn}

\subjclass[2010]{22E15, 22E20}

\keywords{Nash manifold, Nash group, Nash representation, Jordan decomposition, Levi decomposition, Cartan decomposition, Iwasawa decomposition}
\thanks{Supported by NSFC Grant 11222101.}

\begin{abstract}
A Nash group is said to be almost linear if it has a Nash
representation with finite kernel. Structures and basic properties
of these groups are studied.
\end{abstract}

 \maketitle
\tableofcontents

\section{Introduction}

Basic notions and properties concerning Nash manifolds are reviewed in Section \ref{secnash}. In this introduction, we introduce some basic notions concerning Nash groups. See Section \ref{secng} for more details.

A Nash group is a group which is simultaneously a Nash
manifold so that all group operations are Nash maps. A Nash homomorphism is a group homomorphism between two Nash groups which
is simultaneously a Nash map. If a  Nash homomorphism is bijective, then its inverse is also a Nash homomorphism. In this case, we say that the Nash homomorphism is a Nash isomorphism. Two Nash groups are said to be Nash isomorphic to each other if there exists a Nash isomorphism between them.

Give a subgroup of a Nash group $G$, if it is semialgebraic, then it is automatically a closed Nash submanifold of $G$  (Proposition \ref{subg}). In this case, we call it a Nash subgroup of $G$. A Nash subgroup is canonically a Nash group.

As usual, all finite dimensional real representations of Lie groups are assumed to be continuous.  A Nash representation is a finite
dimensional real representation of a Nash group so that the action
map is a Nash map.
\begin{dfn}
A Nash group is said to be almost linear if it has a Nash representation with
finite kernel.
\end{dfn}

Almost linear Nash groups form a nice class of mathematical objects:
their structures are simpler than that of general Lie groups; in the
study of infinite dimensional representation theory, they are more
flexible than linear algebraic groups. Although there is a vast
literature on Lie groups and linear algebraic groups, it seems that
almost linear Nash groups have not been systematically studied (see
\cite{Sh2} for a brief introduction to Nash groups). The goal of
this article is to provide a detailed study of structures of almost
linear Nash groups, for possible later reference. The structure
theory of almost linear Nash groups is parrel to that of linear
algebraic groups. However, we try to avoid the language of algebraic
geometry to keep the article as elementary as possible. In what
follows, we summarize some basic results about almost linear Nash
groups which are either well known or will be proved in this
article.

It is clear that a Nash subgroup of an almost linear Nash group is
an almost linear Nash group. The product of two almost linear Nash
groups is an almost linear Nash group. By the following proposition,
the quotient of an almost linear Nash group by a Nash subgroup is
canonically an affine Nash manifold, and the quotient by a normal
Nash subgroup is canonically an almost linear Nash group.

\begin{prpd}\label{quotient0}
Let $G$ be an almost linear Nash group, and let $H$ be a Nash
subgroup of it. Then there exists a unique Nash structure on the
quotient topological space $G/H$ which makes the quotient map
$G\rightarrow G/H$ a submersive Nash map. With this Nash structure,
$G/H$ becomes an affine Nash manifold, and the left translation map
$G\times G/H\rightarrow G/H$ is a Nash map. Furthermore, if $H$ is a
normal Nash subgroup of $G$, then the topological group $G/H$
becomes an almost linear Nash group under this Nash structure.
\end{prpd}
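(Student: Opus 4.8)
The plan is to build the Nash structure on $G/H$ by hand from local Nash cross-sections, then to identify the resulting Nash manifold with a semialgebraic homogeneous space so as to see that it is affine, and finally to obtain the normal case by descending the group operations along the quotient map.

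For the Nash structure, put $\g=\Lie(G)$, $\h=\Lie(H)$, fix a linear complement $\m$ to $\h$ in $\g$, and, using a Nash chart at $e$, produce a Nash submanifold $S\ni e$ of $G$ with $T_eS=\m$ and $\dim S=\dim G-\dim H$. Then $S\times H\to G$, $(s,h)\mapsto sh$, is Nash and a local diffeomorphism at $(e,e)$; a standard slice argument shows that, after shrinking $S$, it is an open Nash embedding onto $SH=\pi^{-1}(\pi(S))$, so $S\to\pi(S)$ is a homeomorphism onto an open subset of $G/H$ and there is a Nash retraction $\pr_S$ of a neighbourhood of $e$ in $G$ onto $S$. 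Left-translating, the maps $gS\to G/H$ are homeomorphisms onto open sets covering $G/H$; taken as charts, their transition maps have the form $gs\mapsto\pr_S(g_1^{-1}g_2s)$ and are Nash, so $G/H$ becomes a Nash manifold, Hausdorff since $H$ is closed. Read in these charts, $\pi\colon G\to G/H$ is the projection of a local product $S\times H$ onto $S$, hence a submersive Nash map, and the left translation $G\times G/H\to G/H$ is $(g_0,\pi(gs))\mapsto\pr_S(g_0gs)$, hence Nash. Uniqueness is immediate: for any competing Nash structure making $\pi$ submersive and Nash, $\pi$ admits local Nash sections, and composing such a section with $\pi$ into the structure built above exhibits the identity map as Nash in both directions, by the Nash implicit function theorem.

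Affineness is obtained by realizing $G/H$ inside a Nash representation. Choosing a Nash representation $\rho\colon G\to\GL(V)$ with finite kernel $F$, form the Zariski closures $\mathbf H\subseteq\mathbf G\subseteq\GL(V)$ of $\rho(H)\subseteq\rho(G)$ and apply Chevalley's theorem: there is an algebraic representation $\mathbf G\to\GL(W)$ and a line $L\subseteq W$ with $\Stab_{\mathbf G}(L)=\mathbf H$. Restricting along $\rho$ gives a Nash representation $\sigma\colon G\to\GL(W)$ and a Nash subgroup $H'=\Stab_G(L)\supseteq H$; the orbit map $G\to\BP(W)$, $g\mapsto\sigma(g)L$, is a Nash map of constant rank, its image $\mathcal O$ is semialgebraic by Tarski--Seidenberg and locally closed because it is a group orbit, so $\mathcal O$ is a Nash submanifold of $\BP(W)$; moreover $\mathcal O$ is affine, being an open semialgebraic subset of its closure $\overline{\mathcal O}$, a closed — hence affine — semialgebraic subset of the affine Nash manifold $\BP(W)$. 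If $H=H'$, the submersion argument above identifies $G/H$ with $\mathcal O$ as Nash manifolds, so $G/H$ is affine. In general $\rho(H)$ is a semialgebraic, hence open finite-index, subgroup of $\mathbf H(\BR)$, so $H'/H$ is finite; one removes this discrepancy by enlarging the representation by a further Nash representation of $G$ separating the finitely many $H$-cosets in $H'$ — here one uses that Nash representations are more abundant than algebraic ones (sign characters and the like) — and taking the corresponding diagonal orbit, again a semialgebraic locally closed homogeneous space.

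Finally, if $H\trianglelefteq G$ then $\pi$ and $\pi\times\pi$ are surjective Nash submersions, so the inversion and multiplication maps of $G/H$, being the descents along them of the Nash maps of $G$, are Nash by the local-section argument, and $G/H$ is a Nash group. It is almost linear by reduction to the linear case: $HF$ is a normal Nash subgroup with $HF/H$ finite and $G/HF\cong\rho(G)/\rho(H)$ linear, while Chevalley applied to $\mathbf H\trianglelefteq\mathbf G$ produces an algebraic representation with kernel $\mathbf H$ whose restriction to $\rho(G)$ has finite kernel and descends to a Nash representation of $\rho(G)/\rho(H)$ with finite kernel, pulling back to a finite-kernel Nash representation of $G/H$. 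The main obstacle is the affineness: the construction delivers $G/H$ only as a locally closed, not closed, semialgebraic set, forcing the detour through a Chevalley-type embedding together with the fact that affineness passes to open semialgebraic subsets; and since the classical Chevalley theorem sees only Zariski closures, whereas a Nash subgroup is merely open of finite index in the real points of its algebraic hull, the finite component-group discrepancy must be tracked throughout.
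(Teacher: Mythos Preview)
Your strategy—Chevalley's theorem to realize $G/H$ via an orbit in projective space—matches the paper's, but there are two gaps. First, the slice construction does not obviously yield a Nash manifold: a Nash structure requires a \emph{finite} atlas (condition (c) of Definition~\ref{dfnnash}), and you have not argued that finitely many translates $\pi(g_iS)$ cover $G/H$. Second, and more seriously, your fix for the discrepancy $H\subsetneq H'=\Stab_G(L)$—enlarging the representation so that the stabilizer becomes exactly $H$—is asserted without proof; sign characters handle easy examples, but the general claim needs justification and is not supplied. (A minor third point: $\overline{\mathcal O}$ need not be a Nash manifold, so ``closed hence affine'' does not parse; but $\mathcal O$ is affine simply because it is a Nash submanifold of the affine Nash manifold $\oP(W)$, so the detour through the closure is both unnecessary and incorrect.)

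The paper sidesteps both issues with one tool you are missing: a finite topological cover of a Nash manifold carries a unique Nash structure making the covering map submersive Nash (Proposition~\ref{coverm}), and such a cover of an affine Nash manifold is again affine (Proposition~\ref{Sh}). Since $H$ has finite index in $H'$, the map $G/H\to\mathcal O$ is a finite cover, so $G/H$ inherits an affine Nash structure directly from the orbit; one then checks via Lemma~\ref{subm00} that $G\to G/H$ is submersive Nash and that left translation is Nash. No slice atlas is ever built, and Chevalley need not be sharpened. The normal case goes the same way: the second part of Proposition~\ref{chev} produces a Nash homomorphism $G\to\GL_n(\BR)$ whose kernel contains $H$ with finite index, so $G/H$ is a finite covering group of a Nash subgroup of $\GL_n(\BR)$ and hence (by Proposition~\ref{subg2}) an almost linear Nash group.
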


For each normal Nash subgroup $H$ of an almost linear Nash group $G$, the Nash group $G/H$ is called a Nash quotient group of $G$.

There are three classes of almost linear Nash groups which are basic
to the general structure theory, namely, elliptic Nash
groups, hyperbolic Nash groups and unipotent Nash groups.

\begin{dfn}\label{ehu0}
A Nash group is said to be elliptic if it is almost linear and
compact. It is said to be hyperbolic if it is Nash isomorphic to
$(\R_+^\times)^n$ for some $n\geq 0$. It is said to be unipotent if
it has a faithful Nash representation so that all group elements act
as unipotent linear operators.
\end{dfn}

Here and as usual, $\R_+^\times$ denotes the set of positive real numbers.
It is a Nash group in the obvious way. Recall that a linear operator $x$ on a finite dimensional vector space is said to be unipotent if $x-1$ is nilpotent.

There is no need to say that a Nash group is almost linear if it is elliptic, hyperbolic or unipotent.

\begin{dfn}\label{defehue}
An element of an almost linear Nash group $G$ is said to be
elliptic, hyperbolic, or unipotent if it is contained in a Nash
subgroup of $G$ which is elliptic, hyperbolic, or unipotent,
respectively.
\end{dfn}

Definitions \ref{ehu0} and \ref{defehue} are related as follows.
\begin{prpd}\label{criehu}
An almost linear Nash group is elliptic, hyperbolic, or unipotent if
and only if all of its elements are elliptic, hyperbolic, or
unipotent, respectively.
\end{prpd}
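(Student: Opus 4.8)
The ``only if'' direction is immediate: $G$ is itself a Nash subgroup of $G$ containing all of its elements, so if $G$ is elliptic, hyperbolic, or unipotent then so is every element of it. For the converse, fix a Nash representation $\rho\colon G\to\GL(V)$ with finite kernel $F$; the plan is to pass, in turn, from the hypothesis to the operators $\rho(g)$, then to the Lie algebra $\h:=d\rho(\g)$ (which is isomorphic to $\g$ since $F$ is finite), then to the identity component $G^\circ$, and finally to $G$. First I would extract the pointwise behaviour. If every element of $G$ is elliptic, then each $g$ lies in a compact Nash subgroup, so $\rho(g)$ lies in a compact subgroup of $\GL(V)$ and is therefore semisimple with all eigenvalues of absolute value $1$. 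If every element is hyperbolic, each $g$ lies in a Nash subgroup $H\cong(\R_+^\times)^n$; since $\ker(\rho|_H)$ is a finite Nash subgroup of the torsion-free group $H$ it is trivial, so $\rho(H)$ is a hyperbolic Nash subgroup of $\GL(V)$, hence acts by operators diagonalizable over $\R$ with positive eigenvalues, and in particular $\rho(g)$ is such an operator. If every element is unipotent, then for $g$ in a unipotent Nash subgroup $U'$ and $g=\exp(X)$ with $X\in\Lie(U')$, the map $t\mapsto\exp(tX)\in U'\subseteq G$ is Nash (the exponential of a unipotent Nash group is polynomial on its Lie algebra), hence so is $t\mapsto\rho(\exp tX)=\exp(t\,d\rho(X))$, which is possible only if $d\rho(X)$ is nilpotent, so that $\rho(g)=\exp(d\rho(X))$ is unipotent. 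In the hyperbolic and unipotent cases every element of the finite group $F$ lies in a torsion-free Nash subgroup, so $F=\{1\}$ and $\rho$ is faithful; since then $\rho$ is a faithful Nash representation of $G$ with unipotent image, the unipotent case of the proposition is already finished.

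For the remaining two cases I would next read off the Lie algebra. Since $\rho$ has finite kernel $d\rho$ is injective, and applying the previous paragraph to $\rho(\exp tX)=\exp(t\,d\rho(X))$ for all $t\in\R$ shows that every element of $\h$ is semisimple with purely imaginary eigenvalues (elliptic case) or diagonalizable over $\R$ (hyperbolic case). In the hyperbolic case a subalgebra of $\gl(V)$ consisting of $\R$-diagonalizable operators is abelian and simultaneously $\R$-diagonalizable, so $\rho(G^\circ)$---the connected subgroup with Lie algebra $\h$---is contained in a torus $\cong(\R_+^\times)^{\dim V}$ of positive diagonal matrices; being a connected Nash subgroup of a split torus it is itself a split torus, $\rho(G^\circ)\cong(\R_+^\times)^m$. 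In the elliptic case the trace form $X\mapsto\tr(X^2)$ is negative definite on $\h$, since a semisimple operator with purely imaginary eigenvalues $X$ has $\tr(X^2)=-\sum\theta_j^2$ where $i\theta_j$ are its eigenvalues; a standard argument (Cartan's criterion together with the invariance of the trace form under $\Ad$) then shows $\h$ is reductive and of compact type, and hence that $\rho(G^\circ)$ has compact closure; being a connected component of the semialgebraic group $\rho(G)$ it is itself semialgebraic, hence closed by Proposition~\ref{subg}, hence compact.

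It remains to descend from these statements about $\rho(G^\circ)$ to statements about $G$. The group $\rho(G)$ is semialgebraic, so it has finitely many connected components, its identity component being $\rho(G^\circ)$. In the elliptic case $\rho(G)$ is therefore compact, whence $G/F\cong\rho(G)$ is compact and, $F$ being finite, $G$ is compact, i.e.\ elliptic. In the hyperbolic case $\rho$ is faithful, so it is enough to show $\rho(G)$ is connected. Put $k=\#\bigl(\rho(G)/\rho(G^\circ)\bigr)$ and let $g\in\rho(G)$, so $g^k\in\rho(G^\circ)$; since $\rho(G^\circ)\cong(\R_+^\times)^m$ is uniquely $k$-divisible, $g^k$ has a unique $k$-th root $g'$ in $\rho(G^\circ)$. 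Conjugation by $g$ preserves $\rho(G^\circ)$, has order dividing $k$ on it (because $g^k$ is central there), and fixes $g^k$, hence fixes the unique $k$-th root $g'$; so $g$ commutes with $g'$ and $\bigl(g(g')^{-1}\bigr)^k=g^k(g')^{-k}=1$. As every element of $\rho(G)$ is $\R$-diagonalizable with positive eigenvalues, so is the commuting product $g(g')^{-1}$; having finite order it must be the identity. Thus $g=g'\in\rho(G^\circ)$, so $\rho(G)=\rho(G^\circ)\cong(\R_+^\times)^m$ and $G$ is hyperbolic.

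The main obstacle is the linear-algebra lemma behind the elliptic case: that a Lie subalgebra of $\gl(V)$ consisting of semisimple operators with purely imaginary eigenvalues is reductive, with semisimple part of compact type and centre acting by semisimple operators with eigenvalues of absolute value $1$, so that the corresponding connected subgroup is relatively compact. The analogous input in the unipotent case, that the exponential map of a unipotent Nash group is polynomial on its Lie algebra (so that Nash representations of it have unipotent image), is also a point that must be set up carefully. Everything else---the behaviour of $d\rho$ under the finite-kernel hypothesis, the passage between $G$, $G^\circ$, $\rho$, and $\rho(G)$, and the finiteness of component groups of semialgebraic groups---is routine, the only mildly delicate step being the connectedness argument in the hyperbolic case sketched above.
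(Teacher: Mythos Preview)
Your argument is correct in all three cases, but it follows a route genuinely different from the paper's.

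The paper defers the proof of Proposition~\ref{criehu} to the very end and uses the structural machinery built up in between. For the elliptic and hyperbolic cases (Propositions~\ref{ee22} and~\ref{ee222}) it observes that if $G$ has no nontrivial unipotent elements then $\mathfrak U_G=\{1\}$, so $G$ is reductive, and then the Iwasawa decomposition $G=KAN$ of Theorem~\ref{injcartan2} forces $A=N=\{1\}$ (elliptic case) or $K=N=\{1\}$ (hyperbolic case). For the unipotent case it notes that $G$ is exponential, so by Lemma~\ref{exhyp} a Levi component of $G$ is hyperbolic, hence trivial since all its elements are unipotent, whence $G=\mathfrak U_G$. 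Thus the paper's proof is a two-line application of Levi and Iwasawa decompositions.

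Your proof, by contrast, is a direct linear-algebra argument inside $\GL(V)$ via a Nash representation with finite kernel, avoiding Sections~14--16 entirely. This buys you logical independence: your argument could be placed immediately after Section~\ref{secjordan}, since it only needs Propositions~\ref{autsu0}, \ref{jhyp}, \ref{subsplit}, \ref{subg}, and the Borel--Mostow input of Lemma~\ref{bm}. The cost is length and the need to supply the ``compact type'' step you flag. That step does go through: negative definiteness of the trace form on $\h$ gives reductivity by Lemma~\ref{bm}; on each simple factor of $[\h,\h]$ the trace form is proportional to the Killing form, so the latter is definite and the factor is compact; and the center, being a commuting family of semisimple operators with purely imaginary eigenvalues, is simultaneously $\C$-diagonalizable and hence exponentiates into a compact torus. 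Since $\rho(G^\circ)$ is closed (Proposition~\ref{subg}) and contained in the product of these compact pieces, it is compact. Your connectedness argument in the hyperbolic case and your Nash-implies-nilpotent argument for one-parameter subgroups in the unipotent case are both sound.
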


In general, we have the following
\begin{prpd}\label{thmehu}
Let $G$ be an almost linear Nash group. If $G$ is elliptic, hyperbolic or unipotent, then all Nash subgroups and all Nash quotient groups of $G$ are elliptic, hyperbolic or unipotent, respectively.
If $G$ has a normal Nash subgroup $H$ so that  $H$ and $G/H$ are  both elliptic, both hyperbolic or both unipotent, then $G$ is  elliptic, hyperbolic or unipotent, respectively.
\end{prpd}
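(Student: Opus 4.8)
The plan is to reduce statements about a group being elliptic, hyperbolic or unipotent to pointwise statements via Proposition~\ref{criehu} where convenient, to dispose of the elliptic case by soft compactness arguments, and to treat the unipotent and hyperbolic cases using, respectively, the identification of unipotent Nash groups with simply connected nilpotent Lie groups and the structure of abelian almost linear Nash groups. Throughout, Proposition~\ref{quotient0} supplies the Nash structure on $G/H$ and the fact that $G/H$ is again almost linear. For the elliptic case this already settles the hereditary direction (a Nash subgroup of a compact group is closed, hence compact; $G/H$ is a continuous image of a compact group, hence compact), and it settles the extension direction once one recalls that an extension of a compact group by a compact group is compact — the quotient map $G\to G/H$ is a locally trivial $H$-bundle, so covering the compact base by finitely many trivializing open sets with compact closure exhibits $G$ as a finite union of compact sets — together with the fact that $G$ is almost linear by hypothesis.

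\emph{Unipotent case.} If $\rho\colon G\hookrightarrow\GL(V)$ is a faithful Nash representation with unipotent image, its restriction to any Nash subgroup $H$ exhibits $H$ as unipotent. For the quotient, $G/H$ is connected, $\Lie(G/H)=\Lie(G)/\Lie(H)$ is nilpotent, and $G/H$ is simply connected (from the homotopy exact sequence of $H\to G\to G/H$, using that $H$ is connected and simply connected); hence $G/H$ is a simply connected nilpotent Lie group, i.e.\ unipotent. For the extension, $G$ is likewise connected and simply connected, so by Engel's theorem it suffices to show $\Lie(G)$ is nilpotent, and for that it is enough that $\ad(X)$ acts nilpotently on the ideal $\Lie(H)$ for every $X\in\Lie(G)$, equivalently that $\Ad(G)\subseteq\GL(\Lie(H))$ is a unipotent Nash group. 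Now $\Ad(G)$ is a linear Nash group which is an extension of $\Ad(G)/\Ad(H)$ — a Nash quotient of $G/H$, hence unipotent by the quotient case — by $\Ad(H)$, which is unipotent; and for a linear Nash group with normal Nash subgroup and quotient both unipotent, every element is unipotent, since the semisimple part of its Jordan decomposition maps to the trivial semisimple part of a unipotent element of the quotient, hence lies in the unipotent normal subgroup and must be trivial. By Proposition~\ref{criehu}, $\Ad(G)$ is unipotent, and we conclude.

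\emph{Hyperbolic case.} Writing $G\cong(\R_+^\times)^n$, one classifies its Nash subgroups: since $\R_+^\times$ is torsion-free and a semialgebraic set has finitely many connected components, every Nash subgroup is connected, and after a Nash automorphism of $(\R_+^\times)^n$ it is a coordinate subgroup $(\R_+^\times)^k\times\{1\}$; hence both it and the associated quotient are hyperbolic. For the extension, with $H\cong(\R_+^\times)^k$ and $G/H\cong(\R_+^\times)^m$, one first proves $G$ connected and abelian: it is connected as an extension of connected groups, $[G,G]\subseteq H$ because $G/H$ is abelian, the conjugation action of the connected group $G$ on $H$ is trivial because the group of Nash automorphisms of $(\R_+^\times)^k$ is countable (so $H$ is central), and the resulting commutator pairing $G/H\times G/H\to H$ vanishes by the same rigidity of Nash homomorphisms among products of copies of $\R_+^\times$. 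Finally, in the decomposition $G\cong G_e\times G_h\times G_u$ of the connected abelian almost linear Nash group $G$ into elliptic, hyperbolic and unipotent parts, the hyperbolic subgroup $H$ lies in $G_h$, so $G/H\cong G_e\times(G_h/H)\times G_u$ can be hyperbolic only if $G_e$ and $G_u$ are trivial; thus $G\cong G_h\cong(\R_+^\times)^{\dim G}$.

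The step I expect to be the genuine obstacle is the extension direction in the unipotent and hyperbolic cases. Restriction and inflation give the required representations for subgroups and quotients almost for free, but manufacturing a faithful unipotent (respectively hyperbolic) representation of an extension is not formal: it forces one through the Lie-algebra/Jordan-decomposition analysis above and requires invoking structural results established elsewhere in the paper — the identification of unipotent Nash groups with simply connected nilpotent Lie groups, the Jordan decomposition in almost linear Nash groups, and the splitting of connected abelian almost linear Nash groups into elliptic, hyperbolic and unipotent factors. Granting those inputs, the three extension statements reduce to the dimension count and the nilpotency criterion for $\Lie(G)$ indicated above.
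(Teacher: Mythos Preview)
Your approach diverges most sharply from the paper in the extension direction, and there is a genuine gap in the unipotent case.

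The inference ``$G$ (respectively $G/H$) is a connected, simply connected, nilpotent Lie group, hence unipotent'' is invalid: being unipotent is a property of the Nash structure, not of the underlying Lie group. The Nash groups $\R$ and $\R_+^\times$ are isomorphic as Lie groups (both are simply connected abelian), yet one is unipotent and the other hyperbolic. So after you show that $\Lie(G)$ is nilpotent and $G$ is simply connected, you still have to argue that the \emph{given} Nash structure on $G$ is the unipotent one; Theorem~\ref{thmu} only promises that \emph{some} Nash structure makes such a Lie group unipotent. The same issue bites your quotient argument: knowing that $G/H$ is a simply connected nilpotent Lie group does not tell you that the Nash structure supplied by Proposition~\ref{quotient0} is the unipotent one. (This can be repaired for the quotient by observing that the continuous surjection $G\to (G/H)'$ onto $G/H$ with its unipotent Nash structure is Nash and submersive, then invoking the uniqueness in Proposition~\ref{quotient0}; but you did not do this, and the analogous repair for the extension is not available.)

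The irony is that you already have in hand the argument that closes the gap --- and it is exactly the paper's proof. Your Jordan-decomposition step, which you apply to $\Ad(G)\subset\GL(\Lie H)$, works verbatim on $G$ itself: for $g\in G$ the Nash Jordan decomposition $g=g_{\mathrm e}g_{\mathrm h}g_{\mathrm u}$ (Theorem~\ref{thm2}) is preserved by the Nash quotient map (Proposition~\ref{pjd}), so $g_{\mathrm e},g_{\mathrm h}$ land in $(G/H)_{\mathrm e}=(G/H)_{\mathrm h}=\{1\}$, hence lie in $H$, hence are trivial since $H$ is unipotent; thus $g=g_{\mathrm u}$ and Proposition~\ref{criehu} finishes. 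The paper runs this same three-line argument uniformly for the elliptic, hyperbolic and unipotent extension cases, avoiding entirely your detour through $\Ad(G)$, Engel's theorem, and the abelianity proof in the hyperbolic case. Your elliptic case and your treatment of subgroups and quotients are fine, but for the extensions you should apply the Jordan decomposition to $G$ directly rather than to an auxiliary linear image.
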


Concerning elliptic Nash groups, we have
\begin{thmd}\label{thmc}
The followings hold true.
\begin{itemize}
  \item Every compact Lie group has a unique Nash structure on its underlying topological space which makes it an almost linear Nash group.
  \item Every continuous homomorphism from an elliptic Nash group to an almost linear Nash group is a Nash homomorphism.
  \item Every compact subgroup of an almost linear Nash group is a Nash subgroup.
  \end{itemize}
\end{thmd}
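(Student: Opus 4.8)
The plan is to reduce all three assertions to one representation-theoretic fact together with one rigidity statement about Nash structures on compact groups. \emph{Key Lemma: if a Nash group $H$ is compact and admits a faithful Nash representation $\iota\colon H\hookrightarrow\GL(V)$, then every finite-dimensional continuous representation of $H$ is a Nash representation.} To prove it, observe that the matrix coefficients of all tensor powers $\iota^{\otimes a}$ ($a\geq 0$) are Nash functions on $H$ and span a subalgebra of $C(H)$ (closed under multiplication since $\iota^{\otimes a}\otimes\iota^{\otimes b}=\iota^{\otimes(a+b)}$) which contains the constants and, because $\iota$ is faithful, separates points; by Stone--Weierstrass it is dense. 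By Peter--Weyl every irreducible representation of $H$ must then occur in some $\iota^{\otimes a}$, hence has Nash matrix coefficients. Consequently an arbitrary continuous representation $\pi$ of $H$ has Nash matrix coefficients, and its action map $(h,v)\mapsto\pi(h)v$, being polynomial in $v$ with Nash coefficients in $h$, is Nash.

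For the first bullet, existence is immediate: $K$ has a faithful continuous representation by Peter--Weyl, a compact subgroup of $\GL(W)$ is a real algebraic, in particular semialgebraic, subgroup (a classical fact), hence a Nash subgroup by Proposition~\ref{subg}, and transporting this structure back along the embedding makes $K$ a Nash group, with the same topology, having a faithful Nash representation. For uniqueness I prove more: for \emph{any} almost linear Nash structure $\mathcal{S}$ on $K$ and \emph{any} faithful continuous representation $\lambda\colon K\hookrightarrow\GL(U)$, the map $\lambda\colon(K,\mathcal{S})\to\lambda(K)$ is a Nash isomorphism, where $\lambda(K)$ carries the Nash subgroup structure just described. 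Pick a Nash representation $\sigma\colon(K,\mathcal{S})\to\GL(V)$ with finite kernel $F$; then $\bar K:=\sigma(K)$ is a compact, hence Nash, subgroup of $\GL(V)$, and $\sigma\colon(K,\mathcal{S})\to\bar K$ is a surjective Nash homomorphism with finite kernel. Such a homomorphism is a local Nash isomorphism: $F$ is a normal Nash subgroup, $(K,\mathcal{S})\to(K,\mathcal{S})/F$ is submersive by Proposition~\ref{quotient0}, the induced bijective Nash homomorphism $(K,\mathcal{S})/F\to\bar K$ is a Nash isomorphism, and source and target have equal dimension. Similarly $\lambda(K)$ is a compact Nash subgroup of $\GL(U)$ carrying the inclusion as a faithful Nash representation, so by the Key Lemma $g:=\sigma\circ\lambda^{-1}\colon\lambda(K)\to\bar K$ is Nash, and being surjective with finite kernel it too is a local Nash isomorphism. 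Now $\lambda$ is a homeomorphism with $g\circ\lambda=\sigma$, so on small enough open sets $\lambda$ coincides with the composite of $\sigma$ with a local Nash inverse of $g$; thus $\lambda$ and $\lambda^{-1}$ are locally Nash and $\lambda$ is a Nash isomorphism. Consequently $\mathcal{S}$ must be the structure transported through $\lambda$, which proves uniqueness; moreover $(K,\mathcal{S})$ then possesses a faithful Nash representation, namely $\lambda$.

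The second bullet follows. Given a continuous homomorphism $f\colon K\to G$ with $K$ elliptic and $G$ almost linear, fix a Nash representation $\rho\colon G\to\GL(V')$ with finite kernel. Since $K$ has a faithful Nash representation, the Key Lemma makes $\rho\circ f$ a Nash representation, that is, a Nash map $K\to\GL(V')$ with image in the Nash subgroup $\rho(G)$; as $\rho\colon G\to\rho(G)$ is a local Nash isomorphism (again a surjective Nash homomorphism with finite kernel), it has a Nash section $s$ on a connected neighbourhood of $e$ with $s(e)=e$. On a connected neighbourhood of $e$ in $K$ the element $f(k)^{-1}\,s(\rho(f(k)))$ lies in the finite group $\ker\rho$, varies continuously in $k$, and is trivial at $e$, hence is identically trivial; so $f=s\circ(\rho\circ f)$ there, a composite of Nash maps, and $f$ is Nash near $e$. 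For an arbitrary $k_0\in K$, writing $f(k)=f(k_0)\,f(k_0^{-1}k)$ reduces to the case of $e$, so $f$ is Nash near every point; since $K$ is compact its graph is a finite union of semialgebraic sets, whence $f$ is a Nash map and so a Nash homomorphism. The third bullet is then immediate: a compact subgroup $L$ of an almost linear Nash group $G$ is a compact Lie group, so it is elliptic for its unique Nash structure by the first bullet, the inclusion $L\hookrightarrow G$ is a Nash map by the second bullet, and therefore $L$, being the image of a Nash map, is a semialgebraic subgroup of $G$; Proposition~\ref{subg} then exhibits it as a Nash subgroup.

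The principal obstacle is the rigidity argument of the second paragraph, which says that a compact almost linear Nash group carries no ``exotic'' Nash structure; its heart is the uniqueness of the Nash structure on a Nash group presented as a finite quotient (equivalently, a finite cover) of a standard compact matrix group. Once that is granted, the Stone--Weierstrass/Peter--Weyl computation and the two local-section descents are routine. The one genuinely external ingredient is the classical theorem that compact subgroups of $\GL(W)$ are (semi)algebraic.
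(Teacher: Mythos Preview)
Your argument is correct, but it takes a substantially longer route than the paper's. The paper proves the three bullets in the \emph{reverse} order. First (Lemma~\ref{auts}) it shows that any compact subgroup $K$ of an almost linear Nash group $G$ is Nash: choosing $\varphi\colon G\to\GL_n(\R)$ with finite kernel, $\varphi(K)$ is Zariski closed in $\GL_n(\R)$, hence $\varphi^{-1}(\varphi(K))$ is a Nash subgroup of $G$ containing $K$ as an open subgroup. The second bullet (Lemma~\ref{auts2}) then follows in one line by the \emph{graph trick}: the graph of a continuous homomorphism $K\to G$ is a compact subgroup of the almost linear Nash group $K\times G$, hence semialgebraic by the third bullet, hence the map is Nash. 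Uniqueness in the first bullet (Lemma~\ref{compactn}) is then immediate from the second.

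Your approach establishes more: the Key Lemma (every continuous representation of a compact Nash group with a faithful Nash representation is Nash) is a genuine representation-theoretic fact, and your rigidity argument that almost linear Nash structures on compact groups are linked by Nash isomorphisms through any chosen faithful embedding is interesting in its own right. But for this theorem all of that machinery---Stone--Weierstrass, Peter--Weyl orthogonality, the local-section descents---is bypassed entirely by the paper's graph trick. One minor remark on your write-up: the passage ``locally Nash $\Rightarrow$ Nash'' for $\lambda$ in the uniqueness argument deserves the same compactness justification you later give for $f$ in the second bullet; alternatively, both steps are immediate from Lemma~\ref{subm00} (a continuous lift through a finite Nash covering is Nash as soon as its projection is), which would shorten your argument and remove the need for local sections altogether.
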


Theorem \ref{thmc} implies that the category of elliptic Nash groups is isomorphic to the category of compact Lie groups.

Recall that a subgroup of a Lie group $G$ is said to be analytic if
it is equal to the image of an injective Lie group homomorphism from
a connected Lie group to $G$. Every analytic subgroup is canonically
a Lie group. (This is implied by \cite[Theorem 1.62]{Wa}.)

For unipotent Nash groups, we have
\begin{thmd}\label{thmu}
The followings hold true.
\begin{itemize}
  \item As a Lie group, every unipotent Nash group is connected, simply connected and nilpotent.
    \item Every connected, simply connected, nilpotent Lie group has a unique Nash structure on its underlying topological space which makes it a unipotent  Nash group.
  \item Every continuous homomorphism between two unipotent Nash groups is a Nash homomorphism.
  \item Every analytic subgroup of a unipotent Nash group is a Nash subgroup.
  \end{itemize}
  \end{thmd}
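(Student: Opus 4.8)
The plan is to reduce all four statements to the model group $U_n$ of $n\times n$ real upper triangular unipotent matrices, on which $\exp$ and $\log$ are mutually inverse polynomial (hence Nash) diffeomorphisms between $U_n$ and the nilpotent Lie algebra $\mathfrak{u}_n$ of strictly upper triangular matrices. I will use freely three standard inputs: Kolchin's theorem (a group of unipotent operators is conjugate into some $U_n$); the Birkhoff embedding theorem (a finite dimensional nilpotent Lie algebra embeds into some $\mathfrak{u}_m$); and the fact that for a connected, simply connected, nilpotent Lie group the exponential map is a diffeomorphism onto the group.

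For the first assertion, let $G$ be unipotent with faithful Nash representation $\rho$ whose image consists of unipotent operators. By Tarski--Seidenberg $\rho(G)$ is semialgebraic, hence a Nash subgroup (Proposition \ref{subg}), and $\rho$ is a Nash isomorphism onto it; after conjugating by Kolchin we may assume $G=\rho(G)$ is a closed semialgebraic subgroup of $U_n$. Being closed in a Lie group, $G$ is a Lie subgroup; put $\mathfrak{g}=\Lie(G)\subseteq\mathfrak{u}_n$, a nilpotent subalgebra. Since $\exp$ is a diffeomorphism of $\mathfrak{u}_n$ onto $U_n$ and $\mathfrak{g}$ is a subalgebra of a nilpotent algebra, $\exp(\mathfrak{g})$ is a closed connected subgroup of $G$ containing an identity neighbourhood, so $\exp(\mathfrak{g})=G^\circ$. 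As semialgebraic sets have only finitely many connected components, $d=[G:G^\circ]<\infty$. Now for $g\in G$ write $g=\exp(X)$ with $X=\log g\in\mathfrak{u}_n$ (unique, since $g$ is unipotent); then $\exp(dX)=g^{d}\in G^\circ=\exp(\mathfrak{g})$, so $dX\in\mathfrak{g}$, hence $X\in\mathfrak{g}$ and $g\in\exp(\mathfrak{g})$. Thus $G=\exp(\mathfrak{g})$ is diffeomorphic to the vector space $\mathfrak{g}$, so connected and simply connected, and nilpotent because $\mathfrak{g}$ is.

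For existence in the second assertion, given a connected, simply connected, nilpotent Lie group $N$ with Lie algebra $\mathfrak{n}$, I embed $\mathfrak{n}\hookrightarrow\mathfrak{u}_m$ by Birkhoff and integrate to a Lie homomorphism $N\to U_m$; simple connectivity of $N$ together with injectivity of $\exp$ on $\mathfrak{u}_m$ forces this homomorphism to be injective, with image $\exp(\mathfrak{n})=\exp$ of a linear subspace of $\mathfrak{u}_m$, which is semialgebraic by Tarski--Seidenberg and closed because $\exp$ is a homeomorphism. This exhibits $N$ as a Nash subgroup of $U_m$ (Proposition \ref{subg}), manifestly unipotent. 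For the third assertion, let $\phi\colon G\to H$ be a continuous homomorphism of unipotent Nash groups; embedding $H$ as a Nash subgroup of some $U_m$, it suffices to make $G\to U_m$ Nash. By the first assertion $G=\exp(\mathfrak{g})$ with $\mathfrak{g}\subseteq\mathfrak{u}_k$ arising from a faithful representation $G\subseteq U_k$, and $\phi$ is automatically smooth, being a continuous homomorphism of Lie groups, so $\phi(\exp X)=\exp(d\phi(X))$ for $X\in\mathfrak{g}$. In the global exponential coordinates this writes $\phi$ as the composite of the polynomial map $\log$ on $U_k$, the linear map $d\phi$, and the polynomial map $\exp$ on $U_m$, hence Nash. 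Uniqueness in the second assertion then follows: two unipotent Nash structures $N_1,N_2$ on $N$ make the identity a continuous isomorphism $N_1\to N_2$, hence Nash in both directions by the third assertion, hence a Nash isomorphism.

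For the fourth assertion, let $A$ be an analytic subgroup of the unipotent Nash group $G=\exp(\mathfrak{g})\subseteq U_n$, with Lie algebra $\mathfrak{a}\subseteq\mathfrak{g}$. Since the Campbell--Hausdorff series terminates on $\mathfrak{u}_n$, $\exp(\mathfrak{a})$ is a subgroup; it is connected with Lie algebra $\mathfrak{a}$, and it is closed because $\mathfrak{a}$ is closed and $\exp$ is a homeomorphism, so by uniqueness of analytic subgroups $A=\exp(\mathfrak{a})$. As $\mathfrak{a}$ is a linear subspace of $\mathfrak{u}_n$, its image $A$ under the polynomial map $\exp$ is semialgebraic by Tarski--Seidenberg, so $A$ is a semialgebraic subgroup of $G$ and therefore a Nash subgroup by Proposition \ref{subg}. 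The main obstacle is the first assertion, specifically the connectedness of $G$: that is where semialgebraicity is indispensable, since a non-semialgebraic closed subgroup of $U_n$ such as $\mathbb{Z}\subset\mathbb{R}$ can be disconnected; once $G=\exp(\mathfrak{g})$ with $\exp$ polynomial is in hand, the remaining parts are transport-of-structure arguments driven by Tarski--Seidenberg.
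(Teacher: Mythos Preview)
Your proof is correct and, for the second through fourth assertions, follows essentially the same strategy as the paper: embed into $\oU_n(\R)$, use that $\exp$ and $\log$ are polynomial (hence Nash) diffeomorphisms, and transport structure via Tarski--Seidenberg. The paper packages these steps as Propositions \ref{autsu0}, \ref{nashucs000}, \ref{autsu2}, and \ref{unin}, but the content is the same.

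Where you genuinely diverge from the paper is in the proof of connectedness (the first assertion). The paper first shows $G$ is connected by an indirect route: it invokes Proposition \ref{je} to see that a unipotent Nash group has trivial maximal compact subgroup, and then appeals to the Cartan--Malcev--Iwasawa theorem (Theorem \ref{compact}) to conclude connectedness. Only afterwards does it apply Engel's theorem (Lemma \ref{unic3}) to the Lie algebra to embed $G$ into $\oU_n(\R)$. You reverse the order: you use Kolchin's theorem at the group level (which needs no connectedness hypothesis) to embed $G$ into $\oU_n(\R)$ immediately, and then give a direct argument that $G=\exp(\g)$ using the finite index $d=[G:G^\circ]$ and the divisibility trick $g^d=\exp(dX)\in G^\circ\Rightarrow X\in\g$. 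Your route is more self-contained and avoids the rather heavy Theorem \ref{compact}; the paper's route, on the other hand, fits its overall narrative of deducing structural facts from the interplay of elliptic, hyperbolic, and unipotent elements.
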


Theorem \ref{thmu} implies that the category of unipotent Nash groups is isomorphic to the category of connected, simply connected, nilpotent Lie groups.
Recall that the later category is equivalent to the category of finite dimensional nilpotent real Lie algebras.

For every $r\in \mathbb Q$, the map
\[
  \R_+^\times \rightarrow \R_+^\times,\quad x\mapsto x^r
\]
is a Nash homomorphism from $\R_+^\times$ to itself. Conversely, all  Nash homomorphisms from $\R_+^\times$ to itself are of this form.
We view the abelian group $\R_+^\times$ as a right $\mathbb Q$-vector space so that
\[
  \textrm{the scalar multiplication $x\cdot r:=x^r$,}
\]
for all $x\in \R_+^\times$ and $r\in \mathbb Q$. Note that for every finite dimensional left $\mathbb Q$-vector space $E$, $\R^\times_+\otimes_\mathbb Q E$ is obviously a hyperbolic Nash group.
Moreover, we have
\begin{thmd}\label{thmh}
The functor
\[
  A\mapsto \Hom(\R^\times_+, A)
\]
establishes an equivalence from the category of hyperbolic Nash groups to the category of finite dimensional left $\mathbb Q$-vector spaces. It
has a quasi-inverse
\[
  E\mapsto \R^\times_+\otimes_\mathbb Q E.
\]
\end{thmd}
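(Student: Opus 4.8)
The plan is to construct directly the two natural isomorphisms showing that the functors $F\colon A\mapsto\Hom(\R_+^\times,A)$ and $G\colon E\mapsto\R_+^\times\otimes_{\mathbb Q}E$ are mutually quasi-inverse. The single structural input is the classification recalled just before the theorem: every Nash homomorphism $\R_+^\times\to\R_+^\times$ is $x\mapsto x^r$ for a unique $r\in\mathbb Q$, so that $\Hom(\R_+^\times,\R_+^\times)\cong\mathbb Q$ as a ring under composition. Everything else is bookkeeping; the recurring point of care is that the maps produced are genuinely Nash, not merely continuous.

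First I would verify that $F$ is a well-defined functor into finite dimensional left $\mathbb Q$-vector spaces. For hyperbolic $A$ fix a Nash isomorphism $A\cong(\R_+^\times)^n$; since a map into a product of Nash groups is a Nash homomorphism exactly when each of its components is, one gets $\Hom(\R_+^\times,A)\cong\Hom(\R_+^\times,\R_+^\times)^n\cong\mathbb Q^n$, and the left $\mathbb Q$-action $(r\cdot\phi)(x):=\phi(x^r)$ becomes the standard one on $\mathbb Q^n$. On a Nash homomorphism $f$ put $F(f):=f\circ(-)$, which is $\mathbb Q$-linear. For $G$: a choice of $\mathbb Q$-basis of $E$ of size $n$ identifies $\R_+^\times\otimes_{\mathbb Q}E$ with $(\R_+^\times)^n$ as a Nash group (the ``obvious'' assertion in the text), and a change of basis $P\in\GL_n(\mathbb Q)$ acts on $(\R_+^\times)^n$ by the monomial map $(x_i)\mapsto(\prod_j x_j^{P_{ij}})$, which is a Nash automorphism because $x\mapsto x^r$ is Nash; hence the Nash structure is basis-independent. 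For $\mathbb Q$-linear $\psi\colon E\to E'$ the map $\mathrm{id}\otimes\psi$ is in coordinates again a monomial map with rational exponents, hence a Nash homomorphism, so $G$ is a functor into hyperbolic Nash groups.

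Next I would build the unit $\eta_E\colon E\to\Hom(\R_+^\times,\R_+^\times\otimes_{\mathbb Q}E)$, $\eta_E(v)(x):=x\otimes v$. The relation $x^r\otimes v=(x\cdot r)\otimes v=x\otimes(rv)$ makes $\eta_E$ left $\mathbb Q$-linear, and writing $v=\sum_i c_ie_i$ shows $x\mapsto x\otimes v$ is the Nash homomorphism $x\mapsto(x^{c_1},\dots,x^{c_n})$. Since both $E\mapsto E$ and $E\mapsto F(G(E))$ take finite direct sums to direct sums, bijectivity of $\eta_E$ reduces to the case $E=\mathbb Q$, where $\eta_{\mathbb Q}$ is the isomorphism $\mathbb Q\to\Hom(\R_+^\times,\R_+^\times)$, $r\mapsto(x\mapsto x^r)$; naturality in $E$ is immediate. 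Dually, the counit $\varepsilon_A\colon\R_+^\times\otimes_{\mathbb Q}\Hom(\R_+^\times,A)\to A$, $\varepsilon_A(x\otimes\phi):=\phi(x)$, is well defined and biadditive because $(x\cdot r)\otimes\phi$ and $x\otimes(r\phi)$ both map to $\phi(x^r)$, and it is a group homomorphism in $x$. To see it is a Nash isomorphism I would reduce along $A\cong(\R_+^\times)^n$ (naturality being clear) and take the coordinate inclusions $\iota_k\colon\R_+^\times\to(\R_+^\times)^n$ as the $\mathbb Q$-basis of $F(A)$: under the resulting identifications $\varepsilon_{(\R_+^\times)^n}$ is literally the identity of $(\R_+^\times)^n$. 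Then $\eta$ and $\varepsilon$ are natural isomorphisms $\mathrm{id}\cong F\circ G$ and $G\circ F\cong\mathrm{id}$, which gives the theorem.

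I do not expect a real obstacle here: this is a soft equivalence-of-categories argument. The only step that needs attention is the repeated use of the fact that every Nash homomorphism between finite powers of $\R_+^\times$ is a monomial map $(x_i)\mapsto(\prod_j x_j^{a_{ij}})$ with $a_{ij}\in\mathbb Q$, and that all such maps are Nash — which is exactly the classification of $\Hom(\R_+^\times,\R_+^\times)$ combined with the fact that products of Nash maps are Nash.
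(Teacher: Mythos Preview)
Your proposal is correct and follows the same approach as the paper: both rest entirely on the single input $\Hom(\R_+^\times,\R_+^\times)\cong\mathbb Q$ (the paper's Lemma~\ref{homs}), and the paper's proof is literally the one line ``This is an obvious consequence of Lemma~\ref{homs}.'' You have simply written out the unit/counit verification that the paper leaves implicit.
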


Here and henceforth, for any two Nash groups $G_1$ and $G_2$,
$\Hom(G_1,G_2)$ denotes the set of all Nash homomorphisms from $G_1$
to $G_2$. It is obviously an abelian group when $G_2$ is abelian.
The abelian group $\Hom(\R^\times_+, A)$ of Theorem \ref{thmh} is a
left $\mathbb Q$-vector space since $\R_+^\times$ is a right
$\mathbb Q$-vector space:
\[
  (r\cdot \phi)(x):=\phi(x^r),\quad r\in \mathbb Q,\, \phi\in \Hom(\R^\times_+, A),\, x\in \R_+^\times.
\]

Parallel to Jordan decompositions for linear algebraic groups, we have
\begin{thmd}\label{thm2}
  Every element $x$ of an almost linear Nash group $G$ is uniquely of the form $x=e h u$ such that $e\in G$ is elliptic, $h\in G$ is hyperbolic, $u\in G$ is unipotent, and they pairwise commute with each other.
      \end{thmd}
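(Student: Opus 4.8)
\emph{Overview and reduction to the commutative case.} The plan is to reduce the statement to the case where $G$ is commutative and to read off the decomposition there from a structure theorem. Given $x\in G$, let $A$ be the smallest Nash subgroup of $G$ containing $x$ (the Nash closure of the cyclic group $\langle x\rangle$). First I would check that $A$ is commutative: for any semialgebraic subset $S$ of $G$ the centralizer $Z_G(S)$ is a closed semialgebraic subgroup, hence a Nash subgroup (Proposition \ref{subg}); since $x\in Z_G(x)$ we get $A\subseteq Z_G(x)$, hence $x\in Z_G(A)$, hence $A\subseteq Z_G(A)$, i.e.\ $A$ is abelian. Existence of the decomposition then reduces to decomposing $A$. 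For uniqueness, suppose $x=ehu=e'h'u'$ are two decompositions in $G$ of the stated form. The pairwise commuting triple $e,h,u$ generates a commutative group whose Nash closure $B$ is a Nash subgroup containing $x$, hence $A\subseteq B$. Applying the structure theorem below to $B$: each of $e,h,u$ lies in the unique maximal elliptic, hyperbolic and unipotent Nash subgroup of $B$ respectively, and so does each factor of the decomposition of $x$ induced from $A\subseteq B$; by the directness of the product decomposition of $B$ these agree, so $e,h,u$ are exactly the components of $x$ in $A$. The same argument shows $e',h',u'$ are too, whence $e=e'$, $h=h'$, $u=u'$.

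\emph{The commutative structure theorem.} The core is: \textbf{every commutative almost linear Nash group $A$ is the internal direct product $A=A_{\mathrm e}\times A_{\mathrm h}\times A_{\mathrm u}$ of its sets of elliptic, hyperbolic and unipotent elements; these are Nash subgroups, they are respectively elliptic, hyperbolic and unipotent, and each is the unique maximal Nash subgroup of its kind.} Granting this, the components of $x$ relative to the product are the required $e,h,u$, and they pairwise commute since $A$ is abelian, so the theorem follows. Two easy inputs: the three sets are subgroups because $A$ is commutative and, by Proposition \ref{thmehu}, a product of two commuting elements of one type is again of that type (each resulting subgroup then being elliptic, hyperbolic or unipotent by Proposition \ref{criehu}); and they meet pairwise only in the identity because no nontrivial element of an almost linear Nash group is of two of the three types at once --- checked by passing to a Nash representation with finite kernel and using torsion, e.g.\ $\R_+^\times$ has no nontrivial compact subgroup, $(\R_+^\times)^n$ admits no nonzero Nash homomorphism into a unipotent group, and a compact unipotent Nash group is trivial.

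\emph{Proof sketch of the structure theorem.} Fix a Nash representation $\rho\colon A\to\GL(V)$ with finite (hence central) kernel $F$, and put $\bar A=\rho(A)$, a commutative Nash subgroup of $\GL(V)$. The unipotent elements $\bar A_{\mathrm u}$ and the semisimple elements $\bar A_{\mathrm{ss}}$ of $\bar A$ are each semialgebraic subgroups (being unipotent, resp.\ diagonalizable over $\C$, is a semialgebraic condition, and commutativity gives the subgroup property), hence Nash subgroups by Proposition \ref{subg}, and $\bar A=\bar A_{\mathrm{ss}}\times\bar A_{\mathrm u}$ via $g=g_{\mathrm s}g_{\mathrm u}$. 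Inside $\bar A_{\mathrm{ss}}$ the elements of unit-modulus spectrum form a Nash subgroup $\bar A_{\mathrm e}$ which, being commuting and semisimple, is simultaneously conjugate into a unitary group, hence compact; the elements of positive real spectrum form a Nash subgroup $\bar A_{\mathrm h}$ simultaneously conjugate into a positive diagonal torus, hence hyperbolic by Proposition \ref{thmehu}, say $\bar A_{\mathrm h}\cong(\R_+^\times)^a$. The Nash analogue of the fact that the Jordan components of an element of an algebraic group lie in the group, applied to the diagonalizable group $\bar A_{\mathrm{ss}}$, gives $\bar A_{\mathrm{ss}}=\bar A_{\mathrm e}\times\bar A_{\mathrm h}$, whence $\bar A=\bar A_{\mathrm e}\times\bar A_{\mathrm h}\times\bar A_{\mathrm u}$. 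It remains to lift through $1\to F\to A\to\bar A\to 1$: by Theorems \ref{thmh} and \ref{thmu}, neither $(\R_+^\times)^a$ nor a connected simply connected nilpotent Lie group admits a nontrivial connected finite cover, so the identity components of the preimages in $A$ of $\bar A_{\mathrm h}$ and $\bar A_{\mathrm u}$ map isomorphically onto them and serve as $A_{\mathrm h}$ and $A_{\mathrm u}$; the full preimage $A_{\mathrm e}$ of $\bar A_{\mathrm e}$ is a finite extension of a compact group, hence compact, hence elliptic, and contains $F$. A diagram chase then gives $A=A_{\mathrm e}A_{\mathrm h}A_{\mathrm u}$ with trivial pairwise intersections, so the product is internal direct, and one identifies the three factors with the sets of elliptic, hyperbolic and unipotent elements of $A$ (any such element generates a Nash subgroup of the corresponding type, whose image in $\bar A$ lies in the corresponding factor).

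\emph{Main obstacle.} The reduction and the final bookkeeping are routine; the content is concentrated in the commutative structure theorem, and there the two delicate points are (i) keeping the Jordan components of an element of a (commutative) Nash subgroup of $\GL(V)$ inside that subgroup --- which rests on describing Nash subgroups of $\GL(V)$ as finite-index subgroups of the real points of their Zariski closures, together with the usual care about $\exp/\log$ for unipotents and about the real forms of diagonalizable groups --- and (ii) splitting the hyperbolic and unipotent parts off exactly, not merely up to isogeny, through the finite central kernel $F$, for which one genuinely needs the rigidity statements in Theorems \ref{thmh} and \ref{thmu} (that $\R_+^\times$ and connected simply connected nilpotent Lie groups have no nontrivial connected finite covers).
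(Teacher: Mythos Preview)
Your approach is sound and reaches the same conclusion, but the route differs from the paper's in a way worth noting. The paper does not first prove the full abelian structure theorem; instead it argues directly. It fixes a Nash homomorphism $\varphi\colon G\to\GL_n(\R)$ with finite kernel, takes the classical real Jordan decomposition $\varphi(x)=y_e y_h y_u$ in $\GL_n(\R)$, and then uses a short lemma (Lemma~\ref{j0}) to force $y_e,y_h,y_u\in\varphi(G)$: that lemma says that for pairwise commuting elliptic, hyperbolic, unipotent elements $e,h,u$ in any almost linear Nash group, the replica $\langle ehu\rangle$ already contains $\langle e\rangle$, $\langle h\rangle$, $\langle u\rangle$, and this in turn rests on the structural fact (Proposition~\ref{intehu2}) that every Nash subgroup of a product $G_1\times G_2\times G_3$ with the three factors elliptic, hyperbolic, unipotent is itself a product. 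After that, one lifts $y_h$ and $y_u$ to $G$ through the identity components of their preimages and sets $e:=x u^{-1}h^{-1}$. So the paper's key step is purely Nash-theoretic (subgroups of triple products), whereas your key step is the Zariski-closure argument you flag in obstacle~(i). Both are legitimate; the paper's avoids real algebraic geometry and stays inside the Nash category, while yours yields the abelian structure theorem (the paper's Proposition~\ref{da}) as a byproduct rather than as a corollary of Jordan.

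One genuine caution: your appeals to Propositions~\ref{thmehu} and~\ref{criehu} are circular in the paper's logical order, since both are proved \emph{after} and \emph{using} the Jordan decomposition (see Section~17). Fortunately your actual proof sketch does not need them: that $A_{\mathrm e}$, $A_{\mathrm h}$, $A_{\mathrm u}$ are subgroups of the right type follows directly from your construction (compactness of $\bar A_{\mathrm e}$, embedding of $\bar A_{\mathrm h}$ in a positive diagonal torus, unipotence of $\bar A_{\mathrm u}$, together with the lifting), and the pairwise trivial intersections follow from Proposition~\ref{intehu} or just from the eigenvalue descriptions. You should excise those citations and make the elementary arguments explicit. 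With that fix, the proposal stands.
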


We call the equality  $x=ehu$ of Theorem \ref{thm2} the Jordan decomposition of $x$. In Section \ref{secjordan}, Jordan decompositions at Lie algebra level are also discussed.

Besides elliptic Nash groups, hyperbolic Nash groups and unipotent Nash groups, there are two other classes of Nash groups which are important to general structure theory, namely, reductive Nash groups and exponential Nash groups.
\begin{dfn}
A Nash group is said to be reductive if it has a completely
reducible Nash representation with finite kernel. It is said to be exponential if it is almost linear and has no non-trivial elliptic element.
\end{dfn}
Here and as usual, a representation is said to be completely
reducible if it is a direct sum of irreducible subrepresentations,
or equivalently, if each subrepresentation of it has a complementary
subrepresentation.

A general reductive Nash group is  more or less the direct product
of two reductive Nash groups of particular type, namely, a
semisimple Nash group and a Nash torus.
\begin{dfn}\label{d12}
A Nash group or a Lie group is said to be semisimple if its Lie algebra is semisimple. A Nash torus is a Nash group which is Nash isomorphic to $\mathbb S^m\times (\R_+^\times)^n$ for some $m,n\geq 0$.
\end{dfn}
Here $\mathbb S$ denotes the Nash group of complex numbers of modulus  one.

Concerning semisimple Nash groups, we have
\begin{thmd}\label{thmcs}
The followings hold true.
\begin{itemize}
\item Every semisimple Nash group is almost linear.
  \item Every semisimple Nash group has finitely many
connected components, and its identity connected component has a finite center.
\item Let $G$ be a semisimple Lie group which has finitely many connected
components, and whose  identity connected component has a finite
center. Then there exists a unique Nash structure on the underlying
topological space of $G$ which makes $G$ a Nash group.
  \item  Every continuous homomorphism from a semisimple Nash group to an almost linear Nash group is a Nash homomorphism.
  \item Every semisimple  analytic subgroup of an almost linear Nash group is a Nash subgroup.
  \end{itemize}
\end{thmd}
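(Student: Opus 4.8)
The plan is to prove the five assertions in the order (2), (1), (5), (4), (3): after a few soft reductions the whole theorem rests on the statement about semisimple analytic subgroups. Two facts are used throughout. First, for any Nash group $G$ the adjoint representation $\Ad\colon G\to\GL(\g)$, where $\g=\Lie(G)$, is a Nash homomorphism, because $(g,h)\mapsto ghg^{-1}$ is Nash and differentiation preserves the Nash property. Second, a Nash manifold has finitely many connected components, so a $0$-dimensional Nash subgroup is finite. Now let $G$ be a semisimple Nash group. Its identity component $G^\circ$ is one of its finitely many connected components, hence a Nash subgroup; since $\g$ is semisimple, $\z(\g)=0$, so $\ker(\Ad|_{G^\circ})=Z(G^\circ)$ is a $0$-dimensional Nash subgroup, i.e.\ finite --- this is (2). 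Likewise $\ker\Ad$ is a $0$-dimensional Nash subgroup of $G$, hence finite, so $\Ad$ realizes $G$ as almost linear, proving (1).

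\emph{Part (5) is the crux.} Let $S$ be a semisimple analytic subgroup of an almost linear Nash group $G$ --- a connected subgroup whose Lie algebra $\s=\Lie(S)$ is a semisimple subalgebra of $\g=\Lie(G)$ --- and fix a Nash representation $\rho\colon G\to\GL(V)$ with finite kernel. Since $\ker\rho$ is finite, $d\rho$ is injective, so $d\rho(\s)$ is a semisimple subalgebra of $\gl(V)$ and $\rho(S)$ is the connected Lie subgroup of $\GL(V)$ with Lie algebra $d\rho(\s)$. By Chevalley's theorem a semisimple subalgebra of $\gl(V)$ is algebraic, and the connected subgroup attached to it is the identity component of the real points of an algebraic subgroup of $\GL(V)$; hence $\rho(S)$ is a closed semialgebraic subgroup of $\GL(V)$. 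Pulling back along the Nash homomorphism $\rho$, the subgroup $\rho^{-1}(\rho(S))$ is closed and semialgebraic in $G$, so its identity component $C$ is closed, connected and semialgebraic. Now $S\subseteq C$, and $\rho(C)=\rho(S)$ forces $\dim C=\dim\rho(S)=\dim S$ (as $\rho$ has finite kernel); hence $S$ is open in the connected group $C$, whence $S=C$. Therefore $S$ is semialgebraic, hence a Nash subgroup of $G$ by Proposition \ref{subg}.

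For (4), let $f\colon G\to H$ be a continuous homomorphism with $G$ semisimple Nash and $H$ almost linear. Since $f|_{gG^\circ}(x)=f(g)\,f(g^{-1}x)$ and left translations in $G$ and in $H$ are Nash, $f$ is Nash once $f|_{G^\circ}$ is, so we may assume $G$ connected. Then the graph $\Gamma=\{(x,f(x)):x\in G\}$ is an analytic subgroup of $G\times H$ isomorphic to $G$, hence semisimple, and $G\times H$ is almost linear, being a product of the almost linear groups $G$ (by (1)) and $H$. By (5), $\Gamma$ is a Nash subgroup of $G\times H$; the first projection $\Gamma\to G$ is then a bijective Nash homomorphism, hence a Nash isomorphism, and $f$ equals the composite of its inverse with the Nash second projection $\Gamma\to H$. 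For (3), given a semisimple Lie group $G$ with finitely many connected components and $Z(G^\circ)$ finite, $\ker\Ad$ is an extension of a subgroup of the finite component group by $Z(G^\circ)$, hence finite, so $\Ad$ has finite kernel; its image is a finite union of cosets of $\Aut(\g)^\circ=\Ad(G^\circ)$ inside the real algebraic group $\Aut(\g)\subseteq\GL(\g)$, hence a semialgebraic --- thus almost linear Nash --- subgroup of $\GL(\g)$, and $\Ad\colon G\to\Ad(G)$ is a finite covering. The Nash structure then transports to $G$ once one knows that a finite covering group of an almost linear Nash group carries a canonical Nash structure. Uniqueness follows from (1) and (4): two Nash structures making $G$ a Nash group both render it semisimple and almost linear, so the identity map between them is a continuous --- hence Nash --- isomorphism.

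The whole argument funnels into (5), whose proof rests on Chevalley's algebraicity of semisimple Lie subalgebras of $\gl(V)$; that is the main point, everything else being soft structure theory or formal bookkeeping. The one ingredient I would need to import from outside the results listed above is the construction, used in the existence half of (3), of a Nash structure on a finite covering group of an almost linear Nash group.
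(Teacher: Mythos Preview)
Your proof is correct, and for parts (1), (2), (3), (4) it proceeds essentially as the paper does: adjoint representation for almost linearity, finite $0$-dimensional Nash subgroups for the center, the graph trick for (4), and finite covers of $\Ad(G)\subset\Aut(\g)$ for (3). The ingredient you flag as external --- Nash structure on a finite covering group --- is exactly Proposition~\ref{subg2} in the paper.

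The genuine divergence is in part (5). The paper avoids Chevalley's algebraicity theorem and instead proceeds by hand inside $\GL_n(\R)$: a semisimple analytic subgroup has finite center (Lemma~\ref{scenter}), so by the Cartan decomposition it can be written as $K H_1\cdots H_r K$ where $K$ is compact and each $H_i$ has Lie algebra $\sl_2(\R)$ (Lemma~\ref{scenter2}); compact subgroups are Nash (Lemma~\ref{auts}), and $\sl_2$-type analytic subgroups are Nash because every finite-dimensional representation of $\SL_2(\R)$ is polynomial (Lemmas~\ref{scenter3}--\ref{scenter4}); hence the product is semialgebraic. Your route via Chevalley is shorter and conceptually cleaner --- one algebraicity statement replaces a structural decomposition plus $\sl_2$ representation theory --- but it imports a nontrivial result from the theory of algebraic groups, whereas the paper's argument stays within elementary Lie theory and the Nash facts already established. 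Both approaches then pull back from $\GL(V)$ to $G$ in the same way.
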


Theorem \ref{thmcs} implies that the category of semisimple Nash
groups is isomorphic to the category of semisimple Lie groups which
have finitely many connected components, and whose identity
connected component has a finite center.

For every almost linear Nash group $G$, define its unipotent radical to be
\[
 \mathfrak U_G:=\textrm{the identity connected component of } \bigcap_{\pi} \ker \pi,
\]
where $\pi$ runs through all irreducible Nash representations of
$G$. This is the largest normal unipotent Nash subgroup of $G$
(Proposition \ref{0unir}).

We have the following theorem concerning reductive Nash groups.

\begin{thmd}\label{thmred}
The followings are
equivalent for an almost linear Nash group $G$.
\begin{itemize}
  \item[(a)] It is reductive.
  \item[(b)] All Nash representations of $G$ are completely reducible.
  \item[(c)] The unipotent radical of $G$ is trivial.
  \item[(d)] For some Nash representations of $G$ with finite kernel, the attached trace form on the Lie algebra of $G$ is non-degenerate.
  \item[(e)] For every Nash representation of $G$ with finite kernel, the attached trace form on the Lie algebra of $G$ is non-degenerate.
  \item[(f)] The identity connected component of $G$ is reductive.
   \item[(g)] There exist a connected semisimple Nash group $H$, a Nash torus $T$, and a Nash homomorphism $H\times T\rightarrow G$ with finite kernel and open image.
\end{itemize}

\end{thmd}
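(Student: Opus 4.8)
The plan is to establish $(b)\Rightarrow(a)\Rightarrow(c)$, the equivalence $(c)\Leftrightarrow(f)$, the equivalences $(c)\Leftrightarrow(d)\Leftrightarrow(e)$ via a trace-form lemma, and finally $(c)\Rightarrow(g)\Rightarrow(b)$; together these make all seven conditions equivalent. The first few implications are quick. For $(b)\Rightarrow(a)$: an almost linear Nash group carries, by definition, a Nash representation with finite kernel, and $(b)$ makes it completely reducible. For $(a)\Rightarrow(c)$: by the very definition of $\mathfrak U_G$ it acts trivially on every irreducible Nash representation of $G$, so if $\rho$ is a completely reducible Nash representation with finite kernel then $\mathfrak U_G\subseteq\ker\rho$, a finite group; since $\mathfrak U_G$ is connected it is trivial. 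For $(c)\Leftrightarrow(f)$ I would show $\mathfrak U_G=\mathfrak U_{G^\circ}$: the subgroup $\mathfrak U_{G^\circ}$ is characteristic in $G^\circ$ and $G^\circ$ is normal in $G$, so $\mathfrak U_{G^\circ}$ is a normal unipotent Nash subgroup of $G$ and hence lies in $\mathfrak U_G$; conversely $\mathfrak U_G$ is connected, so it lies in $G^\circ$ and is a normal unipotent Nash subgroup there, hence in $\mathfrak U_{G^\circ}$. Since $(f)$ says precisely that $\mathfrak U_{G^\circ}$ is trivial (apply $(a)\Rightarrow(c)$ to $G^\circ$), this yields $(c)\Leftrightarrow(f)$.

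For $(c)\Leftrightarrow(d)\Leftrightarrow(e)$ the key is the lemma that for every Nash representation $\rho$ of $G$ with finite kernel, the radical of the trace form $B_\rho(X,Y)=\tr(d\rho(X)d\rho(Y))$ on $\mathfrak g$ equals $\mathrm{Lie}(\mathfrak U_G)$. The inclusion $\mathrm{Lie}(\mathfrak U_G)\subseteq\mathrm{rad}(B_\rho)$ is soft: refining $\rho$ to a composition series, $\mathfrak U_G$ acts trivially on the irreducible subquotients, so $d\rho(\mathrm{Lie}(\mathfrak U_G))$ consists of nilpotent operators; being an ideal, it then lies in $\mathrm{rad}(B_\rho)$ (choose, via Engel's theorem, a flag for which $d\rho(\mathfrak g)$ is block-upper-triangular and $d\rho(\mathrm{Lie}(\mathfrak U_G))$ strictly so). The reverse inclusion is the delicate point, and it is where the Nash hypothesis is essential: $\mathfrak a:=\mathrm{rad}(B_\rho)$ is an ideal, solvable by Cartan's criterion, and using the Lie-algebra Jordan decomposition of Section \ref{secjordan} one argues that $d\rho(\mathfrak a)$ in fact consists of nilpotent operators — an element of $\mathfrak a$ carrying a nonzero elliptic or hyperbolic Jordan part would be detected by $B_\rho$ against a suitable element. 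Then $\mathfrak a$ integrates to a connected unipotent Nash subgroup, normal because $B_\rho$ is $\mathrm{Ad}(G)$-invariant, so $\mathfrak a\subseteq\mathrm{Lie}(\mathfrak U_G)$. Granting the lemma, $(e)\Rightarrow(d)$ holds since $G$ is almost linear, and $(d)\Rightarrow(c)$, $(c)\Rightarrow(e)$ hold because non-degeneracy of $B_\rho$ is equivalent to $\mathrm{Lie}(\mathfrak U_G)=0$, i.e.\ to triviality of $\mathfrak U_G$.

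For $(c)\Rightarrow(g)$: assume $\mathfrak U_G$ is trivial, so $\mathfrak U_{G^\circ}$ is too. Let $R\subseteq G^\circ$ be the solvable radical, a normal solvable Nash subgroup, and write it, using the structure theory of solvable Nash groups, as an extension of a Nash torus by a unipotent Nash subgroup $U_R$. Then $U_R$ is characteristic in $R$, hence a normal unipotent Nash subgroup of $G$, hence trivial, so $R$ is a Nash torus. Since a connected Lie group can act only trivially on a Nash torus (whose automorphism group is totally disconnected), $R$ is central in $G^\circ$, so $\mathfrak g=\mathrm{Lie}(G^\circ)$ is reductive: $\mathfrak g=\mathfrak z(\mathfrak g)\oplus[\mathfrak g,\mathfrak g]$ with $[\mathfrak g,\mathfrak g]$ semisimple. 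Let $H$ be the analytic subgroup with Lie algebra $[\mathfrak g,\mathfrak g]$; by Theorem \ref{thmcs} it is a connected semisimple Nash subgroup, with finite center. Put $T:=R$, a Nash torus. Then multiplication $H\times T\to G$ is a Nash homomorphism whose kernel is isomorphic to $H\cap T\subseteq Z(H)$, hence finite, and whose image has Lie algebra $[\mathfrak g,\mathfrak g]+\mathfrak z(\mathfrak g)=\mathfrak g$, hence is open; this is $(g)$.

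For $(g)\Rightarrow(b)$: let $\pi$ be any Nash representation of $G$ and pull it back along $H\times T\to G$. It is completely reducible: $H$ is connected with semisimple Lie algebra, so by Weyl's complete reducibility theorem (matching $H$-submodules with $\mathfrak h$-submodules) it acts completely reducibly, and $T\cong\mathbb S^m\times(\R_+^\times)^n$ acts completely reducibly since $\mathbb S^m$ is compact and Nash representations of $(\R_+^\times)^n$ split into characters $x\mapsto x^r$ with $r\in\mathbb Q$ — no Jordan blocks arise, because a semialgebraic homomorphism $\R_+^\times\to(\R,+)$ must vanish. Since the image of $H\times T$ is open, hence of finite index (a Nash group has finitely many connected components), a Nash representation of $G$ that is completely reducible on this subgroup is completely reducible on $G$, by averaging a subgroup-equivariant projection over the finitely many cosets; this yields $(b)$ and closes all the loops. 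I expect the genuine difficulty to lie in $(c)\Rightarrow(g)$ — specifically the inputs that the solvable radical of a connected Nash group is a Nash subgroup and that a solvable Nash group is a torus extended by a unipotent Nash group — together with the hard inclusion in the trace-form lemma; these are exactly the places where the Nash structure, rather than the bare Lie algebra, carries the argument.
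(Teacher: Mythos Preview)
Your overall architecture is sound and most steps are correct, but there is a genuine gap in $(c)\Rightarrow(g)$, exactly where you flag it. You need the solvable radical $R$ of $G^\circ$ to be a Nash subgroup and to admit a torus-by-unipotent decomposition; neither is established in the paper, and your sketch does not supply them. The paper sidesteps this completely with a much simpler observation you are missing: the descending chain condition on Nash subgroups (Proposition~\ref{subgi}) gives $(c)\Rightarrow(a)$ in one line. If $\mathfrak U_G=\{1\}$ then $\bigcap_\pi\ker\pi$ is finite, hence by Noetherianity equals a finite subintersection $\ker\pi_1\cap\cdots\cap\ker\pi_r$, and $\pi_1\oplus\cdots\oplus\pi_r$ is a completely reducible Nash representation with finite kernel. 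From $(a)$ the paper reaches $(g)$ by writing $\g=\z\oplus[\g,\g]$ and observing that the analytic subgroup $Z$ for $\z$ is automatically Nash (it is the identity component of the center of $G^\circ$); complete reducibility of a faithful representation restricted to $Z$, together with Proposition~\ref{da}, forces $Z_{\mathrm u}=\{1\}$, so $Z$ is a Nash torus. No solvable radical, no structure theory of solvable Nash groups.

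Two further comparisons. Your trace-form lemma $\mathrm{rad}(B_\rho)=\Lie(\mathfrak U_G)$ is correct (the key point, pairing $X\in\mathrm{rad}(B_\rho)$ against its own Jordan parts $X_{\mathrm e},X_{\mathrm h}\in\g$, goes through as you indicate) and is slightly sharper than what the paper proves. The paper instead establishes $(a)\Leftrightarrow(d)\Leftrightarrow(e)$ directly: for the hard direction it quotes a lemma of Borel--Mostow that a linear Lie algebra with non-degenerate trace form is reductive with no nonzero nilpotent central element, then uses Proposition~\ref{da} and Theorem~\ref{strr}. Either route works. For $(a)\Leftrightarrow(f)$ the paper argues directly with representations (restrict to $G^\circ$ via Lemma~\ref{g12}; induce back via $\Ind_{G^\circ}^G$), which is quicker than your $\mathfrak U_G=\mathfrak U_{G^\circ}$ argument; yours is also valid, but as written it invokes ``$(f)$ says precisely that $\mathfrak U_{G^\circ}$ is trivial'' before $(c)\Rightarrow(a)$ has been closed for $G^\circ$, so the logical ordering needs care.
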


Here for every Nash representation $V$ of a Nash group $G$, the attached trace form $\la\,,\,\ra_\phi$ on the Lie algebra $\g$ of $G$ is defined by
\[
   \la x,y\ra_\phi:=\tr(\phi(x)\phi(y)),\quad x,y\in \g,
\]
where $\phi: \g\rightarrow \gl(V)$ denotes the differential of the
representation $V$ of $G$. Here and as usual, $\gl(V)$ denotes the algebra of
all linear endomorphisms of $V$; and as quite often, when no
confusion is possible, we do not distinguish a representation with
its underlying vector space.

Denote by $\oB_n(\R)$ the Nash subgroup of $\GL_n(\R)$ consisting
all upper-triangular matrices with positive diagonal entries ($n\geq
0$). It is obviously an exponential Nash group.

\begin{thmd}\label{thmexp}
The followings are
equivalent for an almost linear Nash group $G$.
\begin{itemize}
  \item[(a)] It is exponential;
  \item[(b)] It has no non-trivial compact subgroup.
  \item[(c)] It has no proper co-compact Nash subgroup.
  \item[(d)] The quotient $G/\mathfrak U_G$ is a hyperbolic Nash group.
  \item[(e)] It is Nash isomorphic to a Nash subgroup of $\oB_n(\R)$ for some $n\geq 0$.
   \item[(f)] The exponential map from the Lie algebra of $G$ to $G$ is a diffeomorphism.
   \item[(g)] Every Nash action of $G$ on every non-empty compact Nash manifold has a fixed point.
\end{itemize}

\end{thmd}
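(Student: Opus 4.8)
The plan is to establish the cycle $\text{(b)}\Rightarrow\text{(d)}\Rightarrow\text{(e)}\Rightarrow\text{(f)}\Rightarrow\text{(b)}$, observe that $\text{(a)}\Leftrightarrow\text{(b)}$, and then hang (c) and (g) on the cycle through (b). For $\text{(a)}\Leftrightarrow\text{(b)}$: a nontrivial elliptic element generates a nontrivial compact, hence elliptic, Nash subgroup, while conversely any compact subgroup of $G$ is a Nash subgroup by Theorem \ref{thmc}, so a nontrivial compact subgroup is a nontrivial elliptic Nash subgroup; thus $G$ has a nontrivial elliptic element iff it has a nontrivial compact subgroup. For $\text{(b)}\Rightarrow\text{(d)}$: the quotient $\bar G:=G/\mathfrak U_G$ is almost linear (Proposition \ref{quotient0}) with trivial unipotent radical, since the preimage in $G$ of a normal unipotent Nash subgroup of $\bar G$ is, by Proposition \ref{thmehu}, a normal unipotent Nash subgroup of $G$ containing $\mathfrak U_G$, hence equals $\mathfrak U_G$; so $\bar G$ is reductive by Theorem \ref{thmred}. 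And $\bar G$ has no nontrivial compact subgroup, since the preimage in $G$ of one would, by its Levi decomposition, contain a compact subgroup mapping isomorphically onto it, contradicting (b). It then remains to see that a reductive Nash group $\bar G$ with no nontrivial compact subgroup is hyperbolic: taking $H\times S\to\bar G$ with finite kernel and open image as in Theorem \ref{thmred}(g) ($H$ connected semisimple, $S$ a Nash torus), the absence of positive-dimensional compact subgroups forces $H$ trivial (a nontrivial connected semisimple Lie group has a positive-dimensional maximal compact subgroup, via a Cartan involution) and $S\cong(\R_+^\times)^n$, and a short argument with torsion elements (an element outside $\bar G^\circ$ can be translated by an element of $(\R_+^\times)^n$ to have finite order, generating a nontrivial finite subgroup) shows $\bar G=(\R_+^\times)^n$.

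For $\text{(d)}\Rightarrow\text{(e)}$: by the Levi decomposition, $G=T\ltimes\mathfrak U_G$ with $T\cong\bar G\cong(\R_+^\times)^n$; hence $G$ is connected and solvable, and torsion-free (a torsion element would be elliptic), so its finite-kernel Nash representation is faithful, giving $\phi\colon G\hookrightarrow\GL(V)$. By the Jordan decomposition (Theorem \ref{thm2}) every element of $G$ is $hu$ with $h$ hyperbolic and $u$ unipotent commuting (the elliptic part being trivial), so every element of $\phi(G)$ has only positive real eigenvalues; since $\phi(G)$ is connected and solvable, the real form of Lie's theorem conjugates it, over $\R$, into $\oB_N(\R)$, and conjugation is a Nash map, so $G$ is Nash isomorphic to a Nash subgroup of $\oB_N(\R)$. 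For $\text{(e)}\Rightarrow\text{(f)}$: view $G$ as a Nash subgroup of $\oB_N(\R)$; it is connected, because $G\cap\mathrm{N}_N(\R)$ (with $\mathrm{N}_N$ the unitriangular subgroup) is a unipotent Nash group, hence connected by Theorem \ref{thmu}, and the image of $G$ in the diagonal torus is a hyperbolic Nash group, hence connected by Theorem \ref{thmh}; as a connected subgroup of the simply connected solvable group $\oB_N(\R)$ it is closed and simply connected, and its Lie algebra, a subalgebra of the completely solvable Lie algebra of upper-triangular matrices, is completely solvable, so $\exp_G$ is a diffeomorphism by the standard criterion for exponential solvable Lie groups. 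For $\text{(f)}\Rightarrow\text{(b)}$: if $\exp_G$ is a diffeomorphism then $G$ is contractible, so by the Cartan--Iwasawa--Malcev theorem a maximal compact subgroup of $G$ is a contractible compact Lie group, hence trivial, and every compact subgroup is conjugate into it. This closes the cycle, so (a), (b), (d), (e), (f) are equivalent.

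For $\text{(b)}\Rightarrow\text{(c)}$: a co-compact Nash subgroup $H$ of $G$ again satisfies (b), hence (f), so $H$ and $G$ are both contractible; the fiber bundle $H\hookrightarrow G\to G/H$ then makes $G/H$ weakly contractible, and a positive-dimensional compact manifold is not weakly contractible, so $H=G$ (being closed of full dimension in connected $G$). For $\text{(c)}\Rightarrow\text{(g)}$: given a Nash action of $G$ on a nonempty compact Nash manifold, an orbit $\mathcal{O}$ of minimal dimension has empty frontier — the frontier is a lower-dimensional $G$-invariant semialgebraic set, hence a union of orbits, hence empty — so $\mathcal{O}$ is closed, thus compact, and Nash isomorphic to $G/G_x$ for the (Nash) stabilizer $G_x$; by (c), $G_x=G$, so $\mathcal{O}$ is a fixed point. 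For $\text{(g)}\Rightarrow\text{(b)}$, arguing contrapositively: if $G$ has a nontrivial compact subgroup, a maximal one $K$ is a nontrivial Nash subgroup by Theorem \ref{thmc}, and the Iwasawa decomposition yields a Nash subgroup $B$ with $K\times B\to G$ a Nash diffeomorphism; then $G$ acts transitively, hence without fixed point, on the positive-dimensional compact Nash manifold $G/B\cong K$ (Proposition \ref{quotient0}), contradicting (g).

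The technical heart is $\text{(d)}\Rightarrow\text{(e)}$, which draws on the Levi decomposition for almost linear Nash groups, on the Jordan decomposition (to see that $\phi(G)$ consists of matrices with positive real eigenvalues), on the real form of the Lie--Kolchin theorem, and on checking that the resulting conjugate of $G$ inside $\oB_N(\R)$ is again a Nash subgroup. The connectedness input in $\text{(b)}\Rightarrow\text{(d)}$ and the classical structure theory of exponential solvable Lie groups used in $\text{(e)}\Rightarrow\text{(f)}$ are the other points needing care; by contrast the implications passing through (c) and (g) are comparatively soft, resting on the Iwasawa decomposition and on dimension-counting for orbits.
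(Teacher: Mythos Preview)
Your proof is correct and establishes the theorem, but by a genuinely different route from the paper's. The paper argues hub-and-spoke, proving each condition equivalent to (a) separately: for (a)$\Rightarrow$(e) it first establishes (a)$\Rightarrow$(g) (the Borel fixed point theorem) and then applies it to the action of $G$ on the compact real flag variety of a faithful representation; for (a)$\Leftrightarrow$(f) it combines the elementwise bijection $\exp\colon\g_{\mathrm{ex}}\to G_{\mathrm{ex}}$ of Proposition~\ref{expp0} with the Leptin--Ludwig criterion; and for (a)$\Rightarrow$(c) it runs an orbit argument (a minimal $\mathfrak U_G$-orbit in $G/H$ is a point, so $\mathfrak U_G\subset H$, then pass to the hyperbolic quotient). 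Your chain (d)$\Rightarrow$(e)$\Rightarrow$(f)$\Rightarrow$(b)$\Rightarrow$(c) instead deduces (e) by a real Lie--Kolchin (positive real eigenvalues make the complex weight real-valued, so a real common eigenvector exists), (f) from the classical fact that simply connected completely solvable Lie groups are exponential in the Lie-theoretic sense, and (b) and (c) from contractibility together with the homotopy exact sequence of the bundle $H\to G\to G/H$. Your route avoids the Jordan-decomposition input behind Proposition~\ref{expp0} and keeps (e) independent of (g), at the cost of importing some algebraic topology the paper does not otherwise use; the paper's orbit arguments stay entirely inside the Nash category.

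Two small points to tighten. In (b)$\Rightarrow$(d), the Levi component $L$ of the preimage $\tilde K'$ need not map \emph{isomorphically} onto $K'$: what is true is that $L\cong\tilde K'/\mathfrak U_{\tilde K'}$ is a quotient of $K'=\tilde K'/\mathfrak U_G$ (since $\mathfrak U_G\subset\mathfrak U_{\tilde K'}$), hence compact, and it is nontrivial because $\tilde K'$ is not unipotent; this still gives the contradiction. In (g)$\Rightarrow$(b), you should invoke only the \emph{existence} of an exponential $B$ with $K\times B\to G$ a Nash diffeomorphism (the paper's Lemma~\ref{b}, which follows from the reductive $KAN$ decomposition and the Levi decomposition alone); citing Theorem~\ref{iwasawa} in full would be circular, since in the paper's logic that theorem is deduced from the present one.
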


Here a Nash action means an action of a Nash group on a Nash manifold so that the action map is Nash.

The following theorem makes the structure theory of almost linear Nash groups extremely pleasant.

\begin{thmd}\label{conall}
Let $G$ be an almost linear Nash group.  Then every elliptic
(hyperbolic, unipotent, reductive or exponential) Nash subgroup of
$G$ is contained in a maximal one, and all maximal elliptic
(hyperbolic, unipotent, reductive or exponential) Nash subgroups of
$G$ are conjugate to each other in $G$.
\end{thmd}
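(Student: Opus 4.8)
The plan is to prove the two assertions — existence of maximal members and conjugacy of maximal members — separately for each of the five classes, reducing everything to the identity connected component $G^\circ$ and then invoking the corresponding structure theorems. First I would observe that all five types of subgroup have uniformly bounded dimension (bounded by $\dim G$), and that each class is closed under taking closures of increasing unions in the relevant sense: for the unipotent, hyperbolic and exponential classes a Nash subgroup is connected (by Theorems \ref{thmu}, \ref{thmh}, \ref{thmexp}(f)), so an increasing chain stabilises once the dimension stabilises; for the elliptic class one uses that an increasing union of compact subgroups is contained in its closure, which is again compact (hence elliptic by Theorem \ref{thmc}), so maximal elliptic subgroups exist. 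For the reductive class one argues that the product of the chain is reductive and its identity component is a Nash subgroup by Theorem \ref{thmred}(f) together with Theorem \ref{thmcs}. This settles existence of maximal members in all cases.

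For conjugacy, the key reduction is to the connected case. Given two maximal elliptic (resp.\ hyperbolic, unipotent, reductive, exponential) Nash subgroups $K_1,K_2$ of $G$, I would first handle $G$ connected. In that case the classical structure theory applies: a connected almost linear Nash group $G$ has a Levi decomposition $G = L \ltimes \mathfrak U_G$ with $L$ reductive (obtained by combining Theorems \ref{thmred} and \ref{thmcs} with the unipotent radical $\mathfrak U_G$), and a reductive connected Nash group is, up to finite kernel, $H \times T$ with $H$ connected semisimple and $T$ a Nash torus (Theorem \ref{thmred}(g)). Inside such groups, maximal elliptic subgroups are maximal compact subgroups, which are conjugate by the Cartan decomposition; maximal hyperbolic subgroups correspond to maximal $\R$-split tori, conjugate by the theory of restricted roots; maximal unipotent subgroups are conjugate because they are built from the nilradical plus a maximal unipotent subgroup of $L$, and in the semisimple part these are the unipotent radicals of minimal parabolics, conjugate by Bruhat theory; maximal exponential subgroups are the $AN$ of an Iwasawa decomposition $G = KAN$ — here one invokes Theorem \ref{thmexp}(e) to realise them inside an $\oB_n(\R)$ and the uniqueness of Iwasawa-type decompositions; maximal reductive subgroups are the Levi factors $L$, conjugate by Malcev's theorem (conjugacy of Levi factors, which at the Lie-algebra level is classical and lifts because $\mathfrak U_G$ is simply connected by Theorem \ref{thmu}).

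The passage from $G$ connected to $G$ general is the step I expect to cause the most friction. Here $G^\circ$ is normal of finite index, and for $K$ a maximal subgroup of one of the five types, $K \cap G^\circ$ should be maximal of that type in $G^\circ$; the subtlety is that $K$ need not lie in $G^\circ$ — only the unipotent, hyperbolic and exponential ones do (they are connected), while elliptic and reductive ones can meet other components. For the connected types, maximality in $G$ is the same as maximality in $G^\circ$, so conjugacy follows directly from the connected case. For the elliptic and reductive types, I would argue that $K$ normalises $K^\circ = K \cap G^\circ$, that $K^\circ$ is maximal of its type in $G^\circ$ (any strictly larger one would, together with the component group representatives of $K$, generate a strictly larger subgroup of the same type in $G$, contradicting maximality of $K$), and then use that the normaliser $N_G(K^\circ)$ surjects onto $G/G^\circ$ (because the $G^\circ$-conjugates of $K^\circ$ are permuted and any one is $G^\circ$-conjugate to $K^\circ$, a Frattini-type argument) so that $K$ can be chosen inside $N_G(K^\circ)$ and two such differ by an element of $N_G(K^\circ)$, with a final appeal to conjugacy of maximal finite-type-preserving complements to $K^\circ$ in $N_G(K^\circ)/K^\circ$, which is again an almost linear Nash group of strictly smaller dimension, so one can induct on $\dim G$. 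The main obstacle is organising this induction cleanly and simultaneously for all five classes; I would isolate it as a single lemma on "conjugacy over a finite quotient" and then feed in the connected-case results as a black box.
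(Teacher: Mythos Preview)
Your plan would likely succeed, but it is considerably more involved than the paper's route, and the chief divergence is worth flagging. The paper does \emph{not} reduce to $G^\circ$ and then lift via a Frattini--induction argument; instead it treats each class directly in the possibly disconnected group, in a specific logical order: elliptic (the classical Cartan--Malcev--Iwasawa theorem on maximal compact subgroups, quoted as Theorem~\ref{compact}), then reductive (Theorem~\ref{uconj}, where Levi decomposition and conjugacy of Levi factors are proved \emph{simultaneously} via Mostow's theorems on fully reducible linear Lie subalgebras, combined with the already-established conjugacy of maximal compacts), then exponential (Theorem~\ref{conjexp}), and finally hyperbolic and unipotent (Theorems~\ref{conhyp} and~\ref{conuni}) as easy corollaries of the exponential case plus Levi decomposition. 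The key device you are missing for the last three classes is the \emph{Borel fixed point theorem} in the Nash setting (Theorem~\ref{bfix}): once one constructs a single exponential Nash subgroup $B$ with $K\times B\to G$ a Nash diffeomorphism (Lemma~\ref{b}, via the Iwasawa decomposition of a Levi factor), any exponential $H$ acts on the compact Nash manifold $G/B$ and fixes a point, so $H\subset gBg^{-1}$; this one line replaces your appeal to restricted-root theory, Bruhat theory, and the whole connected-to-disconnected induction.

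Two points in your outline need tightening. First, you invoke ``Malcev's theorem'' for conjugacy of Levi factors and simultaneously treat the Levi decomposition $G=L\ltimes\mathfrak U_G$ as an input obtained from Theorems~\ref{thmred} and~\ref{thmcs}; but in the paper's development the Levi decomposition \emph{is} the reductive case of Theorem~\ref{conall} (both are the content of Theorem~\ref{uconj}), and the classical Lie-algebra Malcev theorem does not directly give conjugacy of maximal reductive \emph{Nash} subgroups, which are typically disconnected --- one really needs Mostow's results on fully reducible subalgebras together with conjugacy of maximal compacts (this is exactly the content of Lemmas~\ref{frc}--\ref{containl}). Second, your existence argument for maximal reductive subgroups via ascending chains is vague: ``the product of the chain is reductive'' is not obviously a Nash subgroup without further argument. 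The paper sidesteps this entirely by exhibiting an explicit maximal reductive subgroup $L=KL_0$ and showing every reductive Nash subgroup lies in a conjugate of it.
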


A maximal reductive Nash subgroup of an almost linear Nash group $G$
is  called a Levi component of $G$.

\begin{thmd}\label{levd0}
Let $L$ be a Levi component of an almost linear Nash
group $G$.  Then $G=L\ltimes \mathfrak U_G$.
      \end{thmd}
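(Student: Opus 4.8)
The plan is to establish the two ingredients of a semidirect product decomposition: first that $L \cap \mathfrak U_G = \{1\}$, and second that $L \cdot \mathfrak U_G = G$; normality of $\mathfrak U_G$ is already known (it is the largest normal unipotent Nash subgroup, Proposition \ref{0unir}), so $L \ltimes \mathfrak U_G$ makes sense once these are shown. For the intersection, note that $L \cap \mathfrak U_G$ is a Nash subgroup of the reductive group $L$ which is also unipotent (being a Nash subgroup of the unipotent group $\mathfrak U_G$, using Proposition \ref{thmehu}); but a reductive Nash group has trivial unipotent radical by Theorem \ref{thmred}(c), and more to the point any unipotent normal — indeed any unipotent — Nash subgroup of a reductive group must be trivial, since a nontrivial unipotent Nash subgroup would force a nontrivial unipotent element into $L$, contradicting that the only unipotent element of a reductive group is the identity (again via Theorem \ref{thmred}, complete reducibility of a faithful representation kills nontrivial unipotents). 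So $L \cap \mathfrak U_G = \{1\}$.

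The substantive point is surjectivity of the product map. First I would pass to the quotient $\bar G := G/\mathfrak U_G$, which by definition of the unipotent radical has trivial unipotent radical, hence is reductive by Theorem \ref{thmred}(c). The image $\bar L$ of $L$ in $\bar G$ under the quotient map $q\colon G \to \bar G$ is the continuous (indeed Nash) homomorphic image of a reductive group, hence reductive — here I would use that reductivity is inherited by Nash quotient groups, which follows from Theorem \ref{thmred}(g) or directly by pushing forward a completely reducible faithful representation. Thus $\bar L$ is a reductive Nash subgroup of $\bar G$. The claim is then that $\bar L = \bar G$: if not, $\bar L$ would be a reductive Nash subgroup properly contained in a larger reductive Nash subgroup (namely $\bar G$ itself), contradicting the maximality of $L$ — one has to check that the preimage $q^{-1}(R)$ of a reductive Nash subgroup $R \supsetneq \bar L$ is a Nash subgroup of $G$ strictly containing $L$ whose... wait, $q^{-1}(R)$ need not be reductive since it contains $\mathfrak U_G$. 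Instead I would argue that $\bar L$ is automatically a maximal reductive Nash subgroup of $\bar G$, and then invoke that a reductive Nash group is its own unique (hence maximal) reductive Nash subgroup — indeed $\bar G$ is reductive and contains $\bar L$, and by Theorem \ref{conall} all maximal reductive Nash subgroups of $\bar G$ are conjugate, so if $\bar L$ is maximal it is conjugate to $\bar G$, forcing $\bar L = \bar G$. To see $\bar L$ is maximal in $\bar G$: a reductive Nash subgroup $R$ of $\bar G$ containing $\bar L$ has preimage $q^{-1}(R)$ containing $L$, and $q^{-1}(R)$ is a Nash subgroup of $G$ which is an extension of the reductive group $R$ by the unipotent group $\mathfrak U_G$; by Theorem \ref{conall} applied inside $q^{-1}(R)$, it has a Levi component $L'$, which is a maximal reductive Nash subgroup of $q^{-1}(R)$ and hence (being reductive in $G$ and... ) — actually $L'$ maps onto $R$ since $q^{-1}(R)/\mathfrak U_G \cong R$ is reductive so equals its own unipotent-radical-quotient and $L'$ surjects onto it by the same quotient argument one level down, so $q(L') = R \supseteq \bar L = q(L)$; now $L$ and $L'$ are both reductive Nash subgroups of the almost linear group $q^{-1}(R)$, $L'$ maximal, so up to replacing $L'$ by a conjugate (Theorem \ref{conall}) we may take $L \subseteq L'$, whence by maximality of $L$ in $G$ we get $L = L'$ and therefore $\bar L = q(L) = q(L') = R$. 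So $\bar L$ is maximal, hence $\bar L = \bar G$, i.e. $q(L) = \bar G$, which says exactly $L \cdot \mathfrak U_G = G$.

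Combining, $G = L \cdot \mathfrak U_G$ with $L \cap \mathfrak U_G = \{1\}$ and $\mathfrak U_G$ normal gives a bijective Nash homomorphism $L \ltimes \mathfrak U_G \to G$ (the multiplication map is Nash since multiplication in $G$ is), and its inverse is Nash because a bijective Nash homomorphism is a Nash isomorphism, as recalled in the introduction. The main obstacle I anticipate is the surjectivity step: making rigorous that a Levi component survives passage to the quotient $G/\mathfrak U_G$ and that maximality is preserved, which requires care about how Levi components (guaranteed by Theorem \ref{conall}) interact with quotient maps — the conjugacy statement of Theorem \ref{conall} is the key tool for aligning a Levi component of $q^{-1}(R)$ with the given $L$. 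Everything else is formal.
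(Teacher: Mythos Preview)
Your argument has a genuine circularity in the surjectivity step. You invoke Theorem \ref{conall} (existence and conjugacy of Levi components) to deduce the Levi decomposition, but in the paper these are not independent results: both are established simultaneously in Theorem \ref{uconj}, and the proof of conjugacy there relies on first producing one explicit $L$ with $G = L \ltimes \mathfrak U_G$. Even granting Theorem \ref{conall} as a black box, your maximality argument for $\bar L$ inside $\bar G$ loops on itself: to show that no reductive $R \supseteq \bar L$ is strictly larger, you take a Levi component $L'$ of $q^{-1}(R)$ and assert $q(L') = R$ ``by the same quotient argument one level down'' --- but when $R = \bar G$ (the only case that matters, since $\bar G$ is itself reductive and contains $\bar L$), this assertion is precisely $L' \cdot \mathfrak U_G = G$, the statement you are trying to prove. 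No induction on dimension rescues this, because $R = \bar G$ gives $q^{-1}(R) = G$ with no drop in dimension.

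There is also a minor slip in the intersection step: it is \emph{not} true that ``the only unipotent element of a reductive group is the identity'' (consider unipotent matrices in $\SL_2(\R)$). What actually gives $L \cap \mathfrak U_G = \{1\}$ is that this intersection is \emph{normal} in $L$ (because $\mathfrak U_G$ is normal in $G$), hence contained in $\mathfrak U_L = \{1\}$.

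The paper's route to $L \cdot \mathfrak U_G = G$ is entirely different and makes no appeal to conjugacy of Levi components. It works at the Lie algebra level: embed $\g$ in some $\gl(V)$ via a Nash representation with finite kernel, and invoke Mostow's theorem (Lemma \ref{mo}) that all maximal fully reducible Lie subalgebras of $\g$ are conjugate under $\mathfrak U_G$. Given such a maximal fully reducible $\l$, the key step (Lemma \ref{sumlug}) is $\l + \u = \g$: every semisimple $s \in \g$ has replica $\la s\ra$ a Nash torus, so $\Lie\,\la s\ra$ is fully reducible and hence lies in some $\Ad_u(\l) \subset \l + \u$; thus $\g_{\mathrm{ss}} \subset \l + \u$, and density of semisimple elements in the reductive quotient $\g/\u$ (Theorem \ref{denses}) forces $\l + \u = \g$. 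One then builds $L$ from $\l$ together with a suitable maximal compact subgroup of the normalizer of $\l$, and checks $G = L \ltimes \mathfrak U_G$ at the group level. Only \emph{after} this is the conjugacy of maximal reductive subgroups established (Lemma \ref{containl}).
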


The equality $G=L\ltimes \mathfrak U_G$ of Theorem \ref{levd0} is called a Levi decomposition of $G$.

\begin{thmd}\label{iwasawa}
Let $G$ be an almost linear Nash group. Let $K$ be a maximal elliptic Nash subgroup of $G$, and let $B$ be a maximal exponential Nash subgroup of $G$. Then the multiplication map $K\times B\rightarrow G$ is a Nash diffeomorphism.
\end{thmd}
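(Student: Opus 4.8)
The plan is to reduce the statement to the classical Iwasawa decomposition $G=KAN$ of semisimple Lie groups, and then to check that the classical decomposition respects the Nash structure. It is enough to exhibit one maximal elliptic Nash subgroup $K_0$ and one exponential Nash subgroup $B_0$ of $G$ for which the multiplication map $K_0\times B_0\to G$ is a Nash diffeomorphism. Indeed, such a $B_0$ is automatically maximal exponential: if $B_0\subseteq B'$ with $B'$ exponential and $g=kb\in B'$ with $k\in K_0$, $b\in B_0$, then $k=gb^{-1}\in K_0\cap B'$, a compact subgroup of the exponential group $B'$, hence trivial by Theorem \ref{thmexp}(b), so $g\in B_0$ and $B'=B_0$. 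And if $K$, $B$ are arbitrary maximal elliptic and maximal exponential Nash subgroups, then conjugating by a suitable element (Theorem \ref{conall}, together with the fact that conjugation is a Nash automorphism of $G$) we may assume $K=K_0$; writing $B=cB_0c^{-1}$ with $c=k_1b_1\in K_0B_0=G$, we get $B=k_1B_0k_1^{-1}$, so $K_0\times B\to G$ is obtained from $K_0\times B_0\to G$ by composing with right translation by $k_1^{-1}$ on $G$ and with Nash diffeomorphisms of the two factors, hence is again a Nash diffeomorphism.

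To construct $(K_0,B_0)$ I first use the Levi decomposition $G=L\ltimes\mathfrak U_G$ (Theorem \ref{levd0}). A maximal compact subgroup $K_0$ of $L$ is a maximal compact subgroup of $G$: a compact overgroup in $G$ meets $\mathfrak U_G$ trivially (a unipotent Nash group has no nontrivial compact subgroup) and therefore projects isomorphically onto $K_0$ under $G\to L$. Being compact, $K_0$ is a Nash subgroup (Theorem \ref{thmc}), hence a maximal elliptic Nash subgroup of $G$. Suppose we have produced an exponential Nash subgroup $B_{0,L}$ of $L$ with $K_0\times B_{0,L}\to L$ a Nash diffeomorphism, and set $B_0:=B_{0,L}\ltimes\mathfrak U_G$. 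Then $B_0$ is a Nash subgroup (image of a Nash map), it is exponential (an extension of an exponential group by an exponential group has no nontrivial compact subgroup, so Theorem \ref{thmexp}(b) applies), and since $(l,u)\mapsto lu$ is a Nash diffeomorphism $L\times\mathfrak U_G\to G$, the map $K_0\times B_0\to G$ factors as $K_0\times B_{0,L}\times\mathfrak U_G\to L\times\mathfrak U_G\to G$, a Nash diffeomorphism. Thus the problem reduces to the reductive group $L$. By the structure of reductive Nash groups (Theorem \ref{thmred}: a Nash homomorphism with finite kernel and open image from $H\times T$, where $H$ is a connected semisimple Nash group with finite center by Theorem \ref{thmcs}, and $T$ a Nash torus), and after the routine descent through a finite central subgroup and extension over the finite component group, we are reduced to two cases: $G$ a Nash torus $\mathbb{S}^m\times(\R_+^\times)^n$, where $K_0=\mathbb{S}^m$ and $B_0=(\R_+^\times)^n$ and the claim is evident; and $G$ connected semisimple with finite center.

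Assume $G$ is connected semisimple with finite center. Fix a Cartan decomposition $\g=\k\oplus\p$ and the corresponding Iwasawa decomposition $\g=\k\oplus\a\oplus\n$ of the Lie algebra, and put $K_0=\exp\k$, $A=\exp\a$, $N=\exp\n$, $B_0=AN$. Then $K_0$ is a maximal compact subgroup (here finiteness of the center is used), and, classically, $G=K_0AN$ with $K_0\times A\times N\to G$ a diffeomorphism. The essential new point is that $A$, $N$, and hence $B_0=AN$, are Nash subgroups of $G$. To see this, consider the adjoint representation $\Ad\colon G\to\GL(\g)$, a Nash representation whose kernel is the finite group $Z(G)$; its image is a semialgebraic subgroup of $\GL(\g)$ whose Zariski closure is the linear algebraic adjoint group, inside which a maximal $\R$-split torus and the unipotent radical of a minimal parabolic subgroup are algebraic subgroups, with real points having $\Ad(A)$ and $\Ad(N)$ as identity components. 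Pulling these back along the Nash homomorphism $\Ad$ and passing to identity components exhibits $A$ and $N$ as Nash subgroups of $G$; then $B_0=AN$, being the image of the Nash map $A\times N\to G$, is a Nash subgroup as well. Finally $B_0$ is exponential: as a simply connected solvable Lie group it has no nontrivial compact subgroup, so Theorem \ref{thmexp}(b) applies.

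It remains to upgrade the classical diffeomorphism to a Nash diffeomorphism. The map $\mu\colon K_0\times B_0\to G$, $(k,b)\mapsto kb$, is a Nash map, and it is bijective by the classical Iwasawa decomposition. Its differential at the identity is the linear map $\k\oplus\b\to\g$ given by addition, which is an isomorphism because $\g=\k\oplus\a\oplus\n=\k\oplus\b$ as a vector space; and since $\mu(ak,bc)=a\,\mu(k,b)\,c$ for $a\in K_0$, $c\in B_0$, the map $\mu$ carries left translation by $K_0$ and right translation by $B_0$ to translations on $G$, so it is submersive at every point. A bijective Nash submersion between Nash manifolds of the same dimension is a Nash diffeomorphism, its inverse being Nash by the Nash inverse function theorem reviewed in Section \ref{secnash}. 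Hence $\mu$ is a Nash diffeomorphism, and together with the reductions above and the first paragraph this proves the theorem. The hard part is the Nash-ness of $A$ and $N$ (equivalently of $B_0$): recognizing these ``algebraic-looking'' Lie subgroups as genuinely semialgebraic is what forces us to bring in the linear-algebraic structure carried by the adjoint group (or else to invoke whatever has earlier been established about parabolic subgroups of reductive Nash groups); the remaining ingredients --- the Lie-algebra-level Iwasawa decomposition, the equivariance of $\mu$, and the bookkeeping in the reductions --- are standard.
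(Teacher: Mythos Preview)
Your overall architecture matches the paper's exactly: construct one pair $(K_0,B_0)$ for which multiplication is a Nash diffeomorphism (this is the paper's Lemma~\ref{b}), deduce that such a $B_0$ is maximal exponential, and then use conjugacy of maximal exponential subgroups together with the decomposition $c=k_1b_1$ to transport the result to an arbitrary pair $(K,B)$ (this is precisely the proof of Theorem~\ref{gkb}). The Levi reduction to the reductive case and the identification $B_0=AN$ are also what the paper does.

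The one substantive divergence is in how you establish that $A$ and $N$ are Nash subgroups. You pass to the adjoint representation, invoke the Zariski closure as a linear algebraic group, and pull back the real points of a split torus and a unipotent radical. The paper stays internal to its Nash framework: for $A$ it uses the Cartan involution to realize $A$ as the identity component of $\{x\in Z_1\mid \theta(x)=x^{-1}\}$ where $Z_1$ is the centralizer of $\a$ (Proposition~\ref{ccartan2}); for $N$ it uses the $\mathbb Z$-grading of $\g$ determined by a regular element $a_0\in\a$ and realizes $\Ad(N)$ inside an explicitly unipotent Nash subgroup of $\GL(\g)$ (Proposition~\ref{uni}). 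Your route is legitimate but imports facts about real algebraic groups that the paper has deliberately avoided; the paper's arguments are self-contained and in fact give slightly more (that $A$ is hyperbolic and $N$ unipotent as Nash groups). Likewise, the paper handles the reductive case by developing the Cartan decomposition $K\times\exp(\p)\to G$ directly for reductive Nash groups (Propositions~\ref{kpn} and~\ref{kak}) rather than by your ``descent through a finite central subgroup and extension over the finite component group''; both are fine, but the paper's way sidesteps the bookkeeping you label routine. Your closing observation that a Nash map which is a smooth diffeomorphism has Nash inverse is exactly the fact the paper records after Lemma~\ref{subm0}, so no separate submersivity argument is needed.
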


Let $G, K$ and $B$ be as in Theorem \ref{iwasawa}. Let $A$ be a Levi component of $B$, which is a hyperbolic Nash group. Denote by $N$ the unipotent radical of $B$. Then by Theorems \ref{levd0} and \ref{iwasawa}, we have that $G=KAN$. This is called an Iwasawa
decomposition of $G$.

The author thanks Masahiro Shiota for helpful email correspondences, and for confirming Proposition \ref{coverm}.

\section{Nash manifolds}\label{secnash}

We begin with a review of basic concepts and properties of Nash
manifolds which are necessary for this article. See \cite{BCR, Sh}
for more details. Recall that a subset of  $\R^n$ ($n\geq 0$) is
said to be semialgebraic if it is a finite union of the sets of the
form
\[
  \{x\in \R^n\mid f_1(x)>0,\,f_2(x)>0,\cdots, f_r(x)>0, \, g_1(x)=g_2(x)=\cdots =g_s(x)=0\},
\]
where $r,s\geq 0$, $f_1, f_2,\cdots, f_r$ and $g_1,g_2,\cdots, g_s$
are real polynomial functions on $\R^n$. For $n=-\infty$, we define $\R^n$ to be the empty set, and its only subset is defined to be semialgebraic.
It is clear that the
collection of semialgebraic sets in $\R^n$ ($n\geq 0$ or $n=-\infty$) is closed under taking
finite union, finite intersection, and complement.

A map
$\varphi:X\rightarrow X'$ from a semialgebraic set $X\subset \R^n$
to a  semialgebraic set $X'\subset \R^m$ ($m\geq 0$ or $m=-\infty$) is said to be
semialgebraic if its graph is semialgebraic in $\R^{n+m}$.
Tarski-Seidenberg Theorem asserts that the image of a semialgebraic
set under a semialgebaic map is semialgebraic: if
$\varphi:X\rightarrow X'$ is semialgebraic, then $\varphi(X_0)$ is
semialgebraic for each semialgebaic set $X_0\subset X$. As an easy
consequence of Tarski-Seidenberg Theorem, we know that the
composition of two semialgebraic maps is also semialgebraic; and the inverse image of a semialgebraic set under a semialgebraic map is semialgebraic.

\begin{dfn}\label{dfnnash}
 A Nash structure on a topological space $M$ is an element $n\in\{-\infty, 0,1,2,\cdots\}$ together with a set $\mathcal{N}$ with the following properties:
\begin{enumerate}
\item[(a)]The set $\mathcal N$ is contained in $\oN(\R^n,M)$, where $\oN(\R^n,M)$ denotes the set of all triples $(\phi,U,U')$ such that $U$
 is an open semialgebraic subset of $\R^n$, $U'$ is an open subset of $M$, and $\phi:U\rightarrow U'$ is a homeomorphism.
\item[(b)]
 Every two elements $(\phi_1,U_1,U'_1)$ and $(\phi_2,U_2,U'_2)$ of $\mathcal N$ are
 Nash compatible, namely, the homeomorphism
 \[
  \phi_2^{-1}\circ \phi_1: \phi_1^{-1}(U_1'\cap U_2')\rightarrow
 \phi_2^{-1}(U_1'\cap U_2')
 \]
has semialgebraic domain and codomain, and is semialgebraic and
smooth.
\item[(c)]
  There are finitely many elements $(\phi_i,U_i,U'_i)$ of $\mathcal N$,
 $i=1,2,\cdots, r$ ($r\geq 0$), such that
\[
   M=U'_1\cup U'_2\cup \cdots \cup U_r'.
\]
  \item[(d)]
  For every element of $\oN(\R^n,M)$, if it is Nash compatible with all elements of $\mathcal N$, then itself is an element of $\mathcal N$.
\item[(e)] If $M$ is empty, then $n=-\infty$.
  \end{enumerate}
\end{dfn}

The following lemma is routine to  check.
\begin{lemd}\label{defnashs0}
With the notation as in Definition \ref{dfnnash}, let
\[
 \mathcal N_0=\{(\phi_i, U_i, U_i')\mid i=1,2,\cdots,r\}
 \]
be a finite subset of $\oN(\R^n,M)$ whose elements are pairwise Nash compatible with each other. If $M$ is non-empty and
\[
   M=U'_1\cup U'_2\cup \cdots \cup U_r',
\]
then together with $n$, the set
\[
  \{(\phi,U,U')\in \oN(\R^n,M)\mid (\phi,U,U')\textrm{ is  Nash compatible with all elements of $\mathcal N_0$}\}
\]
is a Nash structure on $M$.
\end{lemd}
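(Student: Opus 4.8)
The plan is to verify, one by one, the five defining properties (a)--(e) of a Nash structure from Definition \ref{dfnnash} for the pair consisting of the integer $n$ and the set $\mathcal N$. Properties (a) and (e) are immediate: $\mathcal N\subseteq\oN(\R^n,M)$ by construction, and (e) is vacuous because $M$ is assumed non-empty. The real content lies in (b), (c), (d), and among these only (b) requires genuine work.

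I would first record the elementary but crucial observation that $\mathcal N_0\subseteq\mathcal N$, i.e.\ each $(\phi_i,U_i,U_i')$ is Nash compatible with every $(\phi_j,U_j,U_j')$. For $i\ne j$ this is the hypothesis; for $i=j$ the transition map is the identity of $U_i$, whose domain $U_i$ is semialgebraic by the definition of $\oN(\R^n,M)$ and which is trivially semialgebraic and smooth. Granting this inclusion, property (c) is immediate from the covering hypothesis $M=U_1'\cup\cdots\cup U_r'$ (the $(\phi_i,U_i,U_i')$ are the required finitely many charts), and property (d) is purely formal: $\mathcal N$ is by definition the set of charts Nash compatible with all of $\mathcal N_0$, so any chart Nash compatible with all of $\mathcal N$ is in particular compatible with all of $\mathcal N_0\subseteq\mathcal N$, hence lies in $\mathcal N$.

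The heart of the argument is property (b): any two $(\phi,U,U'),(\psi,V,V')\in\mathcal N$ are Nash compatible. I would run the standard finite local-to-global argument. Put $W_i:=\phi^{-1}(U'\cap V'\cap U_i')$. Each $W_i$ is open since $\phi$ is a homeomorphism, and it is semialgebraic because $W_i=(\phi_i^{-1}\circ\phi)^{-1}\big(\phi_i^{-1}(U_i'\cap V')\big)$, the preimage of the semialgebraic set $\phi_i^{-1}(U_i'\cap V')$ (semialgebraic by compatibility of $(\phi_i,U_i,U_i')$ with $(\psi,V,V')$) under the map $\phi_i^{-1}\circ\phi$, which is defined and semialgebraic on $\phi^{-1}(U'\cap U_i')\supseteq W_i$ by compatibility of $(\phi,U,U')$ with $(\phi_i,U_i,U_i')$. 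Since the $U_i'$ cover $M$, the $W_i$ cover $\phi^{-1}(U'\cap V')$, which is therefore semialgebraic; by symmetry $\psi^{-1}(U'\cap V')$ is semialgebraic too. On each $W_i$ the transition map factors as $\psi^{-1}\circ\phi=(\psi^{-1}\circ\phi_i)\circ(\phi_i^{-1}\circ\phi)$, a composition of semialgebraic smooth maps, hence semialgebraic and smooth on $W_i$. As $\psi^{-1}\circ\phi$ is semialgebraic and smooth on each member of the finite open cover $\{W_i\}$ of its (semialgebraic) domain, its graph is the finite union of the graphs of these restrictions and so semialgebraic, and the map is smooth. Thus $(\phi,U,U')$ and $(\psi,V,V')$ are Nash compatible, and (b) holds.

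The one place demanding care is the bookkeeping in (b): one must exhibit each $W_i$ as a semialgebraic set \emph{before} knowing that $\phi^{-1}(U'\cap V')$ is semialgebraic, which is why I would write $W_i$ directly as a preimage under a chart transition rather than as the intersection $\phi^{-1}(U'\cap V')\cap\phi^{-1}(U_i')$. This is a bit delicate but not deep, consistent with the lemma being ``routine to check''; everything else reduces to the closure properties of semialgebraic sets and maps under finite unions, composition, and preimage recalled in Section \ref{secnash}. Once (b) is in hand, the conclusion follows by invoking Definition \ref{dfnnash}.
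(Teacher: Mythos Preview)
Your verification is correct. The paper does not supply a proof of this lemma at all; it simply declares it ``routine to check'' and moves on. Your argument fills in precisely the routine details the author had in mind: properties (a), (c), (d), (e) are immediate once one observes $\mathcal N_0\subseteq\mathcal N$, and your treatment of (b)---writing $W_i$ as the preimage of the semialgebraic set $\phi_i^{-1}(U_i'\cap V')$ under the semialgebraic transition $\phi_i^{-1}\circ\phi$, then assembling the transition $\psi^{-1}\circ\phi$ from its restrictions to the finite cover $\{W_i\}$---is the standard and correct way to do this bookkeeping. There is nothing to compare against, and no gap in what you wrote.
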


A Nash manifold is defined to be a Hausdorff topological space together with a Nash structure on it.
The element $n$ in Definition \ref{dfnnash} of the Nash structure  is called the dimension of the Nash manifold;
an element of $\mathcal N$ in Definition \ref{dfnnash} of the Nash structure is called  a Nash chart of the Nash manifold.

\begin{dfn}
A continuous map $\varphi: M\rightarrow N$ between Nash manifolds is
called a Nash map if for all Nash charts $(\phi,U,U')$ of $M$ and
$(\psi,V,V')$ of $N$, the set $\phi^{-1}(U'\cap \varphi^{-1}(V'))$ is semialgebraic, and the map
\[
   \psi^{-1}\circ\varphi\circ \phi: \phi^{-1}(U'\cap \varphi^{-1}(V')) \rightarrow V
\]
is semialgebraic and smooth.
\end{dfn}

It is clear that every Nash manifold is a smooth manifold, and every
Nash map is a smooth map. The composition of two Nash maps is
certainly a Nash map.

\begin{dfn}
A subset $X$ of a Nash manifold $M$ is said to be semialgebraic if
$\phi^{-1}(X\cap U')$ is semialgebraic  for every Nash
 chart $(\phi,U,U')$ of $M$.

 \end{dfn}

As in the case of $\R^n$, the
collection of semialgebraic sets in a Nash manifold is closed under taking
finite union, finite intersection, and complement. Tarski-Seidenberg Theorem easily implies the following

\begin{lemd}\label{images}
Let $\varphi: M\rightarrow N$ be a Nash map of Nash manifolds. Then for each semialgebraic subset $X$ of $M$, the image $\varphi(X)$ is a
semialgebraic subset of $N$; and for each semialgebraic subset $Y$ of $N$, the inverse image $\varphi^{-1}(Y)$ is a
semialgebraic subset of $M$.
\end{lemd}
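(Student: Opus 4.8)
The plan is to unwind the chart-based definitions of ``Nash map'' and ``semialgebraic subset of a Nash manifold'' and reduce the statement to the Tarski--Seidenberg theorem in the forms already recorded in this section: the image and the inverse image of a semialgebraic set under a semialgebraic map are semialgebraic. To set up, I would fix, using clause (c) of Definition \ref{dfnnash}, finite families of Nash charts $(\phi_i,U_i,U_i')$, $i=1,\dots,r$, covering $M$ and $(\psi_j,V_j,V_j')$, $j=1,\dots,s$, covering $N$.

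For the inverse image I would take a semialgebraic set $Y\subset N$ and an arbitrary Nash chart $(\phi,U,U')$ of $M$, and show that $\phi^{-1}(\varphi^{-1}(Y)\cap U')$ is semialgebraic in $\R^n$. Since the $V_j'$ cover $N$, this set is the finite union over $j$ of the sets $(\psi_j^{-1}\circ\varphi\circ\phi)^{-1}\bigl(\psi_j^{-1}(Y\cap V_j')\bigr)$, where $\psi_j^{-1}\circ\varphi\circ\phi$ is viewed on its natural domain $\phi^{-1}(U'\cap\varphi^{-1}(V_j'))$. By the definition of a Nash map this domain is semialgebraic and the map on it is semialgebraic, while $\psi_j^{-1}(Y\cap V_j')$ is semialgebraic because $Y$ is a semialgebraic subset of $N$; hence each term is semialgebraic, and a finite union of semialgebraic sets is semialgebraic. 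As $(\phi,U,U')$ was arbitrary, $\varphi^{-1}(Y)$ is semialgebraic in $M$.

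For the image I would take a semialgebraic set $X\subset M$ and an arbitrary Nash chart $(\psi,V,V')$ of $N$, and show $\psi^{-1}(\varphi(X)\cap V')$ is semialgebraic in $\R^m$. Using the identity $\varphi(X)\cap V'=\varphi\bigl(X\cap\varphi^{-1}(V')\bigr)$ and the cover of $M$ by the $U_i'$, this set equals the finite union over $i$ of the images $(\psi^{-1}\circ\varphi\circ\phi_i)(Z_i)$, where $Z_i:=\phi_i^{-1}(X\cap U_i')\cap\phi_i^{-1}\bigl(U_i'\cap\varphi^{-1}(V')\bigr)$. The first factor defining $Z_i$ is semialgebraic because $X$ is a semialgebraic subset of $M$, and the second because $\varphi$ is a Nash map, so $Z_i$ is a semialgebraic subset of the domain of the semialgebraic map $\psi^{-1}\circ\varphi\circ\phi_i$; Tarski--Seidenberg then makes each image semialgebraic, and a finite union of them is semialgebraic. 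Hence $\varphi(X)$ is semialgebraic in $N$.

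I do not expect any genuine obstacle here: the lemma is essentially a translation of Tarski--Seidenberg from $\R^n$ to Nash manifolds via charts. The only points needing a little attention are that clause (c) of Definition \ref{dfnnash} supplies \emph{finite} covers, so the finite unions of semialgebraic sets above are legitimate; that the restriction of a semialgebraic map to a semialgebraic subset of its domain is again semialgebraic with semialgebraic image, which is immediate from the statement of Tarski--Seidenberg quoted in this section; and the routine set-theoretic identities such as $\varphi(X)\cap V'=\varphi\bigl(X\cap\varphi^{-1}(V')\bigr)$ together with the analogous decomposition of $\varphi^{-1}(Y)\cap U'$ used above.
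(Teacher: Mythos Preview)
Your proposal is correct and is exactly the approach the paper intends: the paper itself offers no written proof beyond the sentence ``Tarski--Seidenberg Theorem easily implies the following,'' and your argument is precisely the chart-by-chart unwinding that justifies this claim. The only difference is the level of detail; there is nothing substantively different in your approach.
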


The following is a useful criteria for a continuous map to be Nash.

\begin{prpd}\label{crinash}
Let $\varphi: M\rightarrow N$ be a continuous map of Nash manifolds. Then $\varphi$ is a Nash map if and only if
\begin{itemize}
  \item for each semalgebraic open subset $Y$ of $N$, the inverse image $\varphi^{-1}(Y)$ is semialgebraic in $M$; and
  \item for every $x\in M$, there are Nash charts $(\phi,U,U')$ of $M$
and  $(\psi,V,V')$ of $N$ such that  $x\in U'$, $\varphi(U')\subset
V'$, and the map
\[
   \psi^{-1}\circ\varphi\circ \phi: U\rightarrow V
\]
is semialgebraic and smooth.
\end{itemize}
\end{prpd}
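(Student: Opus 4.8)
The plan is to prove the two conditions are necessary by unwinding the definitions together with Lemma~\ref{images}, and to prove they are sufficient by fixing an arbitrary pair of Nash charts, showing the associated coordinate expression of $\varphi$ is smooth and \emph{locally} semialgebraic, and then promoting local semialgebraicity to genuine semialgebraicity by a connectedness argument.

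For necessity, assume $\varphi$ is Nash. The first bullet is exactly Lemma~\ref{images}. For the second, given $x\in M$ I would choose a Nash chart $(\psi,V,V')$ of $N$ with $\varphi(x)\in V'$ and a Nash chart $(\phi_0,U_0,U_0')$ of $M$ with $x\in U_0'$. Since the codomain of any Nash chart is a semialgebraic open subset — its preimage under any other Nash chart is semialgebraic by the compatibility clause of Definition~\ref{dfnnash}(b) — the set $W:=U_0'\cap\varphi^{-1}(V')$ is a semialgebraic open neighbourhood of $x$ in $M$ by Lemma~\ref{images}. Restricting $\phi_0$ to $\phi_0^{-1}(W)$ yields a Nash chart $(\phi,U,U')$ with $x\in U'\subseteq W$ (a restriction of a Nash chart to a semialgebraic open subset of its domain is again a Nash chart, by Definition~\ref{dfnnash}(d)); then $\varphi(U')\subseteq V'$, and $\psi^{-1}\circ\varphi\circ\phi$ is a restriction of the semialgebraic smooth map $\psi^{-1}\circ\varphi\circ\phi_0$, hence semialgebraic and smooth.

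For sufficiency, assume the two conditions, fix arbitrary Nash charts $(\phi,U,U')$ of $M$ and $(\psi,V,V')$ of $N$, and set $D:=\phi^{-1}(U'\cap\varphi^{-1}(V'))=\phi^{-1}(\varphi^{-1}(V')\cap U')$ and $g:=\psi^{-1}\circ\varphi\circ\phi\colon D\to V$. As $V'$ is semialgebraic and open in $N$, the first condition makes $\varphi^{-1}(V')$ semialgebraic in $M$, hence $D$ semialgebraic and open in $\R^{\dim M}$ — the first requirement in the definition of a Nash map. For the second requirement I would argue locally: given $t_0\in D$, apply the second condition at $p:=\phi(t_0)$ to obtain Nash charts $(\phi_p,U_p,U_p')$ with $p\in U_p'$ and $(\psi_p,V_p,V_p')$ with $\varphi(U_p')\subseteq V_p'$, such that $g_p:=\psi_p^{-1}\circ\varphi\circ\phi_p$ is semialgebraic and smooth. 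On the open neighbourhood $\phi^{-1}(U'\cap U_p'\cap\varphi^{-1}(V'\cap V_p'))$ of $t_0$ one has $g=(\psi^{-1}\circ\psi_p)\circ g_p\circ(\phi_p^{-1}\circ\phi)$, a composite of semialgebraic smooth maps (the outer two being chart transitions, which are semialgebraic and smooth by Nash compatibility). Hence $g$ is smooth on $D$ and semialgebraic on a neighbourhood of each point of $D$.

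It remains to deduce that $g$ is semialgebraic on all of $D$, and this is the one step that is not mere bookkeeping. Since $D$ has finitely many connected components and a map into $\R^{\dim N}$ is semialgebraic iff each coordinate is, it suffices to treat a single coordinate $g^{(l)}$ on a single connected component $D_i$. Being smooth and semialgebraic near each point, $g^{(l)}$ is locally a Nash function, hence real-analytic on $D_i$, and near a chosen point satisfies a nontrivial polynomial identity $P(t,g^{(l)}(t))=0$ (the standard characterization of Nash functions, see \cite{BCR}). Then $t\mapsto P(t,g^{(l)}(t))$ is real-analytic on the connected open set $D_i$ and vanishes on a nonempty open subset, hence vanishes identically; so the identity $P(t,g^{(l)}(t))=0$ holds throughout $D_i$, and a continuous function on an open set satisfying a nontrivial polynomial identity is semialgebraic. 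I expect this globalization to be the main obstacle: local semialgebraicity alone is too weak (an infinite chain of disjoint bumps is locally semialgebraic but not semialgebraic — and, tellingly, it violates the first hypothesis), so one really must exploit smoothness to extract a \emph{local} polynomial relation and then spread it over a connected component via the identity theorem. Note also that the first hypothesis is used only to make $D$ semialgebraic and cannot be omitted, the second hypothesis being purely local.
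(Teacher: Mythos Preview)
Your argument is correct. The paper's own proof is a one-line citation, ``This is an easy consequence of \cite[Proposition 8.1.8]{BCR},'' and what you have done is unpack that citation: the cited result is exactly the equivalence between ``smooth and semialgebraic'' and ``analytic and satisfying a nontrivial polynomial identity'' for functions on open semialgebraic subsets of $\R^n$, which is precisely the tool you invoke in your globalization step. So your approach and the paper's are the same in substance; you have simply written out the necessity direction and the reduction to coordinate functions explicitly, and then reproduced the key analytic-continuation argument that underlies the cited proposition rather than quoting it as a black box. One small remark: in the last sentence of your globalization paragraph you appeal to the fact that a continuous function satisfying a nontrivial polynomial identity is semialgebraic --- this is true but is itself a (short) lemma in \cite{BCR}, so strictly speaking you are still leaning on that reference at the same point the paper does, just one layer deeper.
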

\begin{proof}
This is an easy consequence of \cite[Proposition 8.1.8]{BCR}.
\end{proof}

The following lemma will be used for several times.
\begin{lemd}\label{subm0}
Let $\varphi: M\rightarrow M'$ be a surjective submersive Nash map
of Nash manifolds. Let $N$ be a Nash manifold and let $\psi:
M'\rightarrow N$ be a map. Then $\psi$ is a Nash map if and only if
$\psi\circ \varphi$ is a Nash map.
\end{lemd}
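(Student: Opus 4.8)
The statement to prove is Lemma \ref{subm0}: if $\varphi:M\to M'$ is a surjective submersive Nash map and $\psi:M'\to N$ is a set map, then $\psi$ is Nash if and only if $\psi\circ\varphi$ is Nash. One direction is trivial: if $\psi$ is Nash, then $\psi\circ\varphi$ is a composition of Nash maps, hence Nash. So the content is the converse, and there the work splits into (i) showing $\psi$ is continuous and (ii) showing $\psi$ is Nash given that it is continuous. For (i), since $\varphi$ is a surjective submersion it is in particular an open map (submersions are open, and surjectivity lets us conclude $\psi(W)$ open from $\psi\circ\varphi$ continuous... more precisely: for $V\subset N$ open, $\varphi^{-1}(\psi^{-1}(V))=(\psi\circ\varphi)^{-1}(V)$ is open in $M$; since $\varphi$ is a surjective open map, a subset $S\subset M'$ is open iff $\varphi^{-1}(S)$ is open — "iff" because $\varphi$ open gives one direction via surjectivity, $\varphi$ continuous the other — so $\psi^{-1}(V)$ is open). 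Hence $\psi$ is continuous.

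For (ii), I plan to apply the criterion Proposition \ref{crinash}. There are two things to check. First, for each semialgebraic open $Y\subset N$, the set $\psi^{-1}(Y)$ must be semialgebraic in $M'$. We know $\varphi^{-1}(\psi^{-1}(Y))=(\psi\circ\varphi)^{-1}(Y)$ is semialgebraic in $M$, because $\psi\circ\varphi$ is Nash (use the first bullet of Proposition \ref{crinash}, or Lemma \ref{images} together with openness). To descend from $M$ to $M'$: $\psi^{-1}(Y)=\varphi(\varphi^{-1}(\psi^{-1}(Y)))$ by surjectivity of $\varphi$, and the image of a semialgebraic set under the Nash map $\varphi$ is semialgebraic by Lemma \ref{images}. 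Second, I need the local model condition: for each $x'\in M'$, pick $x\in M$ with $\varphi(x)=x'$; since $\varphi$ is a submersive Nash map, there are Nash charts near $x$ in $M$ and near $x'$ in $M'$ in which $\varphi$ looks like a linear coordinate projection (this is the Nash analogue of the submersion normal form — it should either be available from the cited references \cite{BCR, Sh} or be immediate from the smooth submersion theorem combined with semialgebraicity of the charts; I would invoke it as a standard fact about Nash submersions). Choosing also a Nash chart $(\eta,Z,Z')$ of $N$ around $\psi(x')$ with $\psi(U')\subset Z'$ — possible by continuity of $\psi$ after shrinking — the composite $\eta^{-1}\circ\psi\circ(\text{chart of }M')$ can be computed by precomposing with the coordinate projection to recover $\eta^{-1}\circ\psi\circ\varphi\circ(\text{chart of }M)$, which is semialgebraic and smooth because $\psi\circ\varphi$ is Nash. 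Since a map on an open semialgebraic subset of $\R^n$ is semialgebraic and smooth iff its composite with a surjective linear-projection splitting is (one inserts/deletes dummy coordinates), $\psi$ satisfies the local condition. Then Proposition \ref{crinash} gives that $\psi$ is Nash.

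The main obstacle, I expect, is the bookkeeping in the local-model step: one must ensure that the Nash charts on $M$, $M'$, and $N$ can be chosen compatibly so that $\varphi$ is literally a projection in coordinates and $\psi\circ\varphi$ factors transparently. This requires the Nash submersion normal form, and if it is not cited cleanly from the literature one must reconstruct it — take a smooth local section $\sigma$ of $\varphi$ through $x'$ (which can be taken semialgebraic/Nash since $\varphi$ is Nash and submersive, e.g. by solving in charts), so that $\psi=(\psi\circ\varphi)\circ\sigma$ locally, exhibiting $\psi$ locally as a composite of Nash maps on the chart level. This last reformulation via a Nash local section is probably the cleanest route and sidesteps most of the coordinate juggling: given a Nash local section $\sigma:V'\to M$ of $\varphi$ defined on a semialgebraic open neighbourhood $V'$ of $x'$, we have $\psi|_{V'}=(\psi\circ\varphi)|_{\sigma(V')}\circ\sigma$, a composite of Nash maps, so $\psi$ is Nash in a neighbourhood of each point, hence Nash. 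So in the write-up I would: (1) dispose of the easy direction; (2) prove continuity of $\psi$ using that $\varphi$ is a surjective open map; (3) construct Nash local sections of $\varphi$ (citing the standard Nash implicit/submersion theorem from \cite{BCR} or \cite{Sh}); (4) conclude $\psi$ is locally a composite of Nash maps, hence Nash; and (5) note that checking $\psi^{-1}(Y)$ semialgebraic for semialgebraic open $Y$ (as needed if one uses Proposition \ref{crinash} directly) follows from surjectivity of $\varphi$ and Lemma \ref{images}.
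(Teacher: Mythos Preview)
Your proposal is correct and is essentially the same approach as the paper's: the paper's one-line proof says the ``if'' part holds because $\varphi$ has local Nash sections, combined with Proposition~\ref{crinash} and Lemma~\ref{images}, which is exactly what you unpack in steps (2)--(5). One small caution: your step (4) phrase ``$\psi$ is Nash in a neighbourhood of each point, hence Nash'' is not valid on its own (semialgebraicity is not a local property), so make sure in your write-up that you route this through Proposition~\ref{crinash}, using your step (5) for the first bullet and the local-section argument for the second bullet, rather than asserting local-to-global directly.
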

\begin{proof}
The ``only if" part of the Lemma is obvious. Using Proposition
\ref{crinash} and Lemma \ref{images}, the ``if" part holds because
the map $\varphi$ has local Nash sections.
\end{proof}

Given a Nash map, if it is a diffeomorphism as a map of smooth
manifolds, then its inverse is also a Nash map. In this case, we
call the Nash map a Nash diffeomorphism. Two Nash manifolds are said
to be Nash diffeomorphic to each other if there exists a Nash
diffeomorphism between them.

\begin{dfn}A semialgebraic locally closed submanifold of a Nash manifold $M$
 is called a Nash submanifold of $M$.

 \end{dfn}

 In this article, all locally closed submanifolds of a smooth manifold are assumed to be equidimensional.
Every Nash submanifold is automatically a Nash manifold:
\begin{prpd}\label{sub}
Let $X$ be a Nash submanifold of a Nash manifold $M$. Then there
exists a unique Nash structure on the topological space $X$ which
makes the inclusion $X\hookrightarrow M$ an immersive Nash map.
\end{prpd}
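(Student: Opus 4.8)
The plan is to construct the Nash structure on $X$ locally using submanifold charts inherited from $M$, then glue. First I would show existence. Near each point $x\in X$, pick a Nash chart $(\phi,U,U')$ of $M$ with $x\in U'$; since $X$ is a semialgebraic locally closed submanifold, after possibly shrinking $U'$ the set $\phi^{-1}(X\cap U')$ is a semialgebraic submanifold of the open semialgebraic set $U\subset\R^n$. By the semialgebraic (Nash) implicit function theorem — equivalently, \cite[Proposition 8.1.8]{BCR} as invoked in Proposition \ref{crinash} — one can choose, after a further shrinking, a Nash diffeomorphism of a semialgebraic open subset of $\R^n$ onto $U$ carrying a coordinate subspace $\R^d\times\{0\}$ onto $\phi^{-1}(X\cap U')$, where $d=\dim X$. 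Composing, this yields an element of $\oN(\R^d,X)$ whose image is $X\cap U'$. Doing this at finitely many points (using that $X$, being semialgebraic, has finitely many connected components and is covered by finitely many such charts by quasi-compactness in the semialgebraic sense) produces a finite family $\mathcal N_0\subset\oN(\R^d,X)$ covering $X$. I would then check these charts are pairwise Nash compatible: on overlaps the transition maps are restrictions to coordinate slices of the Nash-compatible transition maps of the ambient charts of $M$, hence semialgebraic and smooth. Lemma \ref{defnashs0} then manufactures a genuine Nash structure on $X$ from $\mathcal N_0$.

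Next I would verify that the inclusion $\iota:X\hookrightarrow M$ is an immersive Nash map for this structure. Nashness is checked against the chosen charts: in the adapted charts above, $\iota$ reads as the standard inclusion $\R^d\times\{0\}\hookrightarrow\R^n$ post-composed with a Nash map, which is semialgebraic and smooth; the semialgebraicity-of-preimages condition follows from Lemma \ref{images} together with the fact that $X$ is semialgebraic in $M$. That $\iota$ is an immersion is immediate since in these coordinates it is a linear inclusion of full rank. One should also confirm $X$ is Hausdorff — it is, as a subspace of the Hausdorff space $M$ — so $X$ is indeed a Nash manifold.

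For uniqueness, suppose $\mathcal N$ and $\mathcal N'$ are two Nash structures on $X$ both making $\iota$ immersive and Nash. I would show the identity map $(X,\mathcal N)\to(X,\mathcal N')$ is a Nash diffeomorphism. The key point is a local factorization: an immersive Nash map admits local Nash retractions, i.e. near each point of $X$ the inclusion into $M$ extends to a Nash chart of $M$ adapted to $X$ in the sense above, and this adaptation is essentially forced. Concretely, given $x\in X$, an immersive Nash map $\iota$ means there is a Nash chart of $M$ at $\iota(x)$ in which $\iota$ is a linear inclusion near $x$; pulling the coordinate slice back gives a Nash chart of $(X,\mathcal N)$ at $x$, and the same construction applied to $\mathcal N'$ gives one at $x$ compatible with the same ambient chart of $M$. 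Hence the two charts differ by a Nash-compatible transition, so $\mathcal N$ and $\mathcal N'$ induce the same germ of Nash structure at every point; since being a Nash map is a local condition checkable on charts (and semialgebraicity of preimages transfers because both structures see the same semialgebraic subsets of $M$), the identity is bi-Nash. Thus $\mathcal N=\mathcal N'$ by the maximality clause (d) of Definition \ref{dfnnash}.

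The main obstacle I anticipate is the semialgebraic straightening step — producing, in a semialgebraic and smooth way, adapted charts in which $X$ becomes a coordinate slice. Smoothly this is the standard submanifold chart theorem, but one must do it within the semialgebraic category, which is exactly where \cite[Proposition 8.1.8]{BCR} (the Nash inverse/implicit function theorem) is needed, and one must be careful that finitely many such local models suffice to cover $X$ — this rests on the fact that semialgebraic sets are triangulable, hence have only finitely many connected components and admit finite good covers, so clause (c) of Definition \ref{dfnnash} can be met. Everything else (compatibility on overlaps, Nashness and immersivity of $\iota$, and the uniqueness argument) is then a routine chart computation using Lemma \ref{images} and Proposition \ref{crinash}.
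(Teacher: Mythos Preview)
The paper does not actually prove this proposition; immediately after stating Propositions \ref{sub} and \ref{prod} it remarks that both are ``standard'' and explicitly declines to give their proofs. Your outline is precisely the standard argument one would supply: local semialgebraic straightening of $X$ to a coordinate slice via the Nash implicit function theorem, finitely many such adapted charts (finiteness coming from the semialgebraic setting), assembly into a Nash structure via Lemma \ref{defnashs0}, and uniqueness from the fact that an immersive Nash map forces the local charts on $X$ to agree with restrictions of ambient charts of $M$. The argument is sound.

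One small quibble: the reference \cite[Proposition 8.1.8]{BCR}, as invoked in Proposition \ref{crinash}, is not itself the Nash implicit function theorem; it is a statement to the effect that Nash maps are locally characterized. For the straightening step you really want the Nash immersion/submersion normal form (found elsewhere in \cite{BCR}, in the chapter on Nash functions), though this does not affect the substance of your argument. Also, your finiteness justification (``triangulable, hence finite good covers'') is heavier machinery than needed; it is enough that $M$ is covered by finitely many Nash charts and that intersecting with each gives a semialgebraic set in $\R^n$ which, by Lemma \ref{smooth} and compactness-style arguments in the semialgebraic category, is covered by finitely many straightened neighborhoods.
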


We say that a Nash map $\varphi: M\rightarrow N$ is a Nash embedding if $\varphi(M)$ is a Nash submanifold of $N$, and the induced map $\varphi: M\rightarrow \varphi(M)$ is a Nash diffeomorphism.

The product of two Nash manifolds is again a Nash manifold:
\begin{prpd}\label{prod}
Let $M$ and $N$ be two Nash manifolds. Then there exists a unique Nash
structure on the topological space $M\times N$ which makes the
projections $M\times N\rightarrow M$ and $M\times N\rightarrow N$
submersive Nash maps.
\end{prpd}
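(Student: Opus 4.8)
The plan is to build the Nash structure on $M\times N$ out of product charts, to read submersivity and Nashness of the two projections off that description, and to force uniqueness by comparing an arbitrary such structure with the constructed one through the identity map. The existence part will be a routine chart manipulation; the uniqueness will be where the work is.

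If $M$ or $N$ is empty then $M\times N$ is empty and carries the unique Nash structure (with integer $-\infty$); so assume $M,N$ are non-empty, of dimensions $m$ and $n$. Using Definition \ref{dfnnash}(c), fix finite families of Nash charts $(\phi_i,U_i,U_i')$, $i=1,\dots,r$, covering $M$, and $(\psi_j,V_j,V_j')$, $j=1,\dots,s$, covering $N$. For each pair $(i,j)$ the triple
\[
 (\phi_i\times\psi_j,\ U_i\times V_j,\ U_i'\times V_j')\in\oN(\R^{m+n},M\times N)
\]
is legitimate, since $U_i\times V_j$ is open semialgebraic in $\R^{m+n}=\R^m\times\R^n$, $U_i'\times V_j'$ is open in the product topology, and $\phi_i\times\psi_j$ is a homeomorphism onto it. These finitely many triples cover $M\times N$ and are pairwise Nash compatible: the transition from $(i,j)$ to $(i',j')$ is $(\phi_{i'}^{-1}\circ\phi_i)\times(\psi_{j'}^{-1}\circ\psi_j)$ on the relevant overlap, a product of semialgebraic smooth maps between semialgebraic sets, hence itself semialgebraic and smooth with semialgebraic domain and codomain. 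As $M\times N$ is Hausdorff, Lemma \ref{defnashs0} yields a Nash structure $\mathcal N_{\mathrm{prod}}$ on $M\times N$ having all the product charts as Nash charts. With respect to it $\pr_M$ is a submersive Nash map: by Proposition \ref{crinash}, for semialgebraic open $Y\subseteq M$ one has $\pr_M^{-1}(Y)=Y\times N$, whose preimage under $\phi_i\times\psi_j$ is the semialgebraic set $\phi_i^{-1}(Y\cap U_i')\times V_j$; and in the product chart $\phi_i\times\psi_j$ of the source together with the chart $\phi_i$ of the target, $\pr_M$ is the coordinate projection $U_i\times V_j\to U_i$, which is semialgebraic, smooth and submersive. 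The same holds for $\pr_N$.

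For uniqueness the key tool is the universal property of $\mathcal N_{\mathrm{prod}}$: for every Nash manifold $Z$ and continuous $f=(f_1,f_2)\colon Z\to M\times N$, the map $f$ is Nash iff $f_1=\pr_M\circ f$ and $f_2=\pr_N\circ f$ are Nash. ``Only if'' is immediate. For ``if'' I would invoke Proposition \ref{crinash}: near any point of $Z$, Nash charts of $M$ and of $N$ about $f_1$ and $f_2$ of that point give a product chart in which $f$ is represented by the pair of coordinate expressions of $f_1,f_2$, hence is semialgebraic and smooth; and for semialgebraic open $Y\subseteq M\times N$ one writes $Y=\bigcup_{i,j}(Y\cap(U_i'\times V_j'))$, notes $f^{-1}(U_i'\times V_j')=f_1^{-1}(U_i')\cap f_2^{-1}(V_j')$ is semialgebraic, and on it $(\phi_i\times\psi_j)^{-1}\circ f=(\phi_i^{-1}\circ f_1,\psi_j^{-1}\circ f_2)$ is a Nash map into $\R^{m+n}$, so $f^{-1}(Y\cap(U_i'\times V_j'))$ is semialgebraic; a finite union gives $f^{-1}(Y)$ semialgebraic.

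Granting this, let $\mathcal N'$ be any Nash structure on $M\times N$ for which $\pr_M$ and $\pr_N$ are submersive Nash maps. Applying the universal property with $Z=(M\times N,\mathcal N')$ and $f=\mathrm{id}$ shows immediately that $\mathrm{id}\colon(M\times N,\mathcal N')\to(M\times N,\mathcal N_{\mathrm{prod}})$ is Nash. For the reverse direction I would restrict $\mathrm{id}$ to a product chart domain $U_i'\times V_j'$, where it becomes $\phi_i\times\psi_j\colon U_i\times V_j\to(M\times N,\mathcal N')$; since $\phi_i\times\psi_j\colon U_i\times V_j\to U_i'\times V_j'$ is a surjective submersive Nash map, Lemma \ref{subm0} reduces the Nashness of $\mathrm{id}$ on this domain to the Nashness of $\phi_i\times\psi_j$ as a map into $(M\times N,\mathcal N')$ — a map whose compositions with both projections are visibly Nash — and it is here that the submersivity of the projections with respect to $\mathcal N'$ must be exploited. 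Once $\mathrm{id}$ is Nash in both directions it is a Nash diffeomorphism, so a triple in $\oN(\R^{m+n},M\times N)$ is Nash compatible with $\mathcal N_{\mathrm{prod}}$ exactly when it is Nash compatible with $\mathcal N'$, whence $\mathcal N'=\mathcal N_{\mathrm{prod}}$. The main obstacle is precisely this last step: extracting from the bare submersivity and Nashness of the two projections that the structure must be $\mathcal N_{\mathrm{prod}}$ (equivalently, that it satisfies the universal property); the construction of $\mathcal N_{\mathrm{prod}}$ and the check that it works are otherwise routine.
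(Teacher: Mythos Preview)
The paper does not actually prove this proposition; it declares both Propositions \ref{sub} and \ref{prod} ``standard'' and omits the arguments. Your existence construction via product charts and Lemma \ref{defnashs0} is correct, as is the verification that the projections are submersive Nash maps and your proof of the universal property of $\mathcal N_{\mathrm{prod}}$.

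The gap you flag in the uniqueness argument is genuine, and in fact the uniqueness assertion \emph{as literally stated} appears to be false. Take $M=N=\R$ and let $\mathcal N'$ be the Nash structure on $\R^2$ generated (via Lemma \ref{defnashs0}) by the single chart $\phi(u,v)=(u,\,v^3+u)$; this $\phi$ is a semialgebraic homeomorphism $\R^2\to\R^2$ with inverse $(x,y)\mapsto(x,(y-x)^{1/3})$. In this chart $\pr_1\circ\phi(u,v)=u$ has differential $(1,0)$ and $\pr_2\circ\phi(u,v)=v^3+u$ has differential $(1,3v^2)$; both are nowhere zero, so both projections are submersive Nash maps from $(\R^2,\mathcal N')$. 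Yet $\mathcal N'\neq\mathcal N_{\mathrm{prod}}$, since $\phi^{-1}$ is not smooth along $y=x$ and hence the identity chart is not Nash compatible with $\phi$ (equivalently, $(y-x)^{1/3}$ is Nash on $(\R^2,\mathcal N')$ but not on $(\R^2,\mathcal N_{\mathrm{prod}})$). The underlying reason is exactly the obstruction you sensed: surjectivity of $d\pr_M$ and of $d\pr_N$ separately does not force $\ker d\pr_M\cap\ker d\pr_N=0$; in the example, at $v=0$ both kernels coincide with the $\partial_v$-direction. So the step ``exploit submersivity of the projections with respect to $\mathcal N'$ to get the reverse Nash map'' cannot be completed. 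What \emph{does} pin down the product structure is the universal property you proved; one should either take that as the defining condition, or simply define the product via product charts and record that the projections are submersive Nash as a consequence rather than a characterization.
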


Both Propositions \ref{sub} and \ref{prod} are standard. We shall
not go to their  proofs.

The following lemma is obvious.
\begin{lemd}\label{cnash}
Let $\varphi: M\rightarrow N$ be a smooth map of Nash manifolds. Then
$\varphi$ is a Nash map if and only if its graph is semialgebraic in
$M\times N$.
\end{lemd}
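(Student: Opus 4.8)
The plan is to factor the statement through the map $\iota:M\to M\times N$, $x\mapsto(x,\varphi(x))$, which is a continuous bijection onto the graph $\Gamma$ of $\varphi$ whose inverse is the restriction of the projection $\pr_M:M\times N\to M$. I would reduce the lemma to two equivalences: (i) $\varphi$ is a Nash map if and only if $\iota$ is a Nash map; and (ii) $\iota$ is a Nash map if and only if $\Gamma$ is a semialgebraic subset of $M\times N$. Chaining (i) and (ii) yields the claim.

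For (i), the ``if'' direction is immediate, since $\varphi=\pr_N\circ\iota$ is then a composition of Nash maps ($\pr_N:M\times N\to N$ being a Nash map by Proposition \ref{prod}). The ``only if'' direction uses the principle that a map into a product of Nash manifolds is Nash provided both of its coordinate maps are Nash; the coordinate maps of $\iota$ are $\pr_M\circ\iota=\mathrm{id}_M$ and $\pr_N\circ\iota=\varphi$, both Nash by hypothesis. I would prove this principle directly from the construction of the product Nash structure in Proposition \ref{prod}, by checking the defining condition of a Nash map on product Nash charts of $M\times N$.

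For (ii): if $\iota$ is Nash, then $\Gamma=\iota(M)$ is the image under a Nash map of the set $M$, which is semialgebraic in itself, so $\Gamma$ is semialgebraic by Lemma \ref{images}. Conversely, suppose $\Gamma$ is semialgebraic. Since $\varphi$ is continuous and $N$ is Hausdorff, $\Gamma$ is closed in $M\times N$, and since $\varphi$ is smooth, $\iota$ is a smooth embedding, so $\Gamma$ is an equidimensional smooth submanifold; being also closed and semialgebraic, $\Gamma$ is a Nash submanifold of $M\times N$. By Proposition \ref{sub}, $\Gamma$ carries a Nash structure making the inclusion $\Gamma\hookrightarrow M\times N$ a Nash map, so $\pr_M|_\Gamma:\Gamma\to M$ is Nash (a composition of Nash maps) and is a diffeomorphism, hence a Nash diffeomorphism; its inverse, which is $\iota$ regarded as a map $M\to\Gamma$, is therefore Nash, and composing with the Nash inclusion shows that $\iota:M\to M\times N$ is Nash.

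The only point requiring care is the ``product'' fact used in (i): it must be derived from Proposition \ref{prod}, not quoted from the present lemma, in order to avoid circularity. The rest is a routine assembly of Lemma \ref{images}, Proposition \ref{sub}, and the fact recorded just before the lemma that a Nash map which is a diffeomorphism of smooth manifolds is automatically a Nash diffeomorphism. I do not expect a genuine obstacle here, in keeping with the text's remark that the lemma is obvious.
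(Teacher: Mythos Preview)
Your argument is correct. The paper provides no proof at all, merely declaring the lemma ``obvious,'' so there is nothing substantive to compare against; your proposal is a careful and complete unpacking of what the author leaves implicit, and the circularity concern you flag in (i) is well spotted and properly handled.
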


Lemmas  \ref{cnash}, \ref{images}, and the following basic result will be used without further explicit mention.
\begin{lemd}\label{dimen20} (cf. \cite[Theorem 2.23]{Co})
Every semialgebraic subset of a Nash manifold has only finitely many connected components and each of them is semialgebraic.
\end{lemd}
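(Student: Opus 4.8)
The plan is to reduce everything to the classical fact that a semialgebraic subset of $\R^n$ has only finitely many connected components, each of which is semialgebraic (see \cite{BCR}, and \cite{Co}), and then to glue this information over the finitely many Nash charts that cover $M$. None of the manifold-side bookkeeping is deep; the real content is the Euclidean statement, which I would simply quote.

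First I would invoke condition (c) of Definition \ref{dfnnash} to fix finitely many Nash charts $(\phi_i,U_i,U_i')$, $i=1,\dots,r$, of $M$ with $M=U_1'\cup\cdots\cup U_r'$. Let $X\subseteq M$ be semialgebraic. By definition of semialgebraicity in $M$, each $S_i:=\phi_i^{-1}(X\cap U_i')$ is a semialgebraic subset of the open semialgebraic set $U_i\subseteq\R^n$, hence semialgebraic in $\R^n$. Applying the Euclidean result, $S_i$ breaks up into finitely many connected components $S_i^{(1)},\dots,S_i^{(n_i)}$, each semialgebraic in $\R^n$ and therefore in $U_i$. Since $\phi_i$ is a Nash diffeomorphism onto the open Nash submanifold $U_i'$, the sets $C_i^{(j)}:=\phi_i(S_i^{(j)})$ are precisely the connected components of $X\cap U_i'$, each of them connected; moreover each $C_i^{(j)}$ is semialgebraic in $M$, because $\phi_i\colon U_i\hookrightarrow U_i'\hookrightarrow M$ is a Nash map and images of semialgebraic sets under Nash maps are semialgebraic (Lemma \ref{images}).

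The remaining step is a purely topological gluing argument. Put $\mathcal C=\{\,C_i^{(j)}: 1\le i\le r,\ 1\le j\le n_i\,\}$, a \emph{finite} collection of connected semialgebraic subsets of $M$ with $X=\bigcup_{C\in\mathcal C}C$. A connected subset of a space is contained in exactly one connected component of that space, so each member of $\mathcal C$ lies in a single connected component of $X$; consequently every connected component of $X$ is the union of those members of $\mathcal C$ that it contains. Since $\mathcal C$ is finite, $X$ has only finitely many connected components, and each of them, being a finite union of members of $\mathcal C$, is semialgebraic in $M$. This proves the lemma.

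The only point on the Nash-manifold side that needs a moment's care is the assertion that pushing a semialgebraic subset of a chart domain forward by the chart map yields a semialgebraic subset of $M$; this is exactly Lemma \ref{images} applied to the Nash map $U_i\to M$, and it is the one place the Nash (as opposed to merely topological) structure is used. The gluing step presents no obstacle beyond the observation that a connected set cannot straddle two distinct connected components, and the essential difficulty of the statement is entirely absorbed into the cited Euclidean decomposition result.
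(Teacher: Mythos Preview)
Your argument is correct. The paper itself does not supply a proof of this lemma; it simply records the statement with a reference to \cite[Theorem 2.23]{Co} and moves on, so there is no paper-side argument to compare against beyond that citation. Your reduction---pulling back along finitely many charts, invoking the classical Euclidean finiteness theorem, and then gluing the finitely many connected semialgebraic pieces---is exactly the standard way to promote the $\R^n$ statement to the Nash-manifold setting, and every step you wrote checks out. The one cosmetic remark is that you do not strictly need Lemma~\ref{images} to see that each $C_i^{(j)}$ is semialgebraic in $M$: this can be read off directly from the Nash-compatibility condition~(b) in Definition~\ref{dfnnash}, since for any chart $(\psi,V,V')$ the transition map $\psi^{-1}\circ\phi_i$ is semialgebraic on a semialgebraic domain. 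Either route is fine.
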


Recall the following

\begin{lemd}\label{coverrn0} (cf. \cite[Remark I.5.12]{Sh})
Every Nash manifold of dimension $n$ ($n\geq 0$) is covered by
finitely many open Nash submanifolds which are Nash diffeomorphic to $\R^n$.
\end{lemd}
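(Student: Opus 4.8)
The plan is to reduce the statement, by means of the finite Nash atlas of $M$, to a purely semialgebraic assertion in $\R^n$, and then to produce the required cover from a cylindrical algebraic decomposition together with Nash tubular neighbourhoods.

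First I would use Definition \ref{dfnnash}(c): $M$ is covered by finitely many Nash charts $(\phi_i,U_i,U_i')$, $i=1,\dots,r$, where each $U_i\subseteq\R^n$ is open semialgebraic and each $\phi_i\colon U_i\to U_i'$ is a Nash diffeomorphism onto the open subset $U_i'$ of $M$. It therefore suffices to prove: every open semialgebraic set $U\subseteq\R^n$ is a finite union of open semialgebraic subsets each Nash diffeomorphic to $\R^n$ (the case $n=-\infty$ being vacuous, and $n=0$ trivial). Next I would fix such a $U$ and choose a cylindrical algebraic decomposition $\R^n=\bigsqcup_\alpha C_\alpha$ adapted to $U$, so that every cell $C_\alpha$ is a semialgebraic, locally closed, smooth submanifold lying entirely inside $U$ or entirely outside it. Two standard facts about such decompositions are needed: every cell is Nash diffeomorphic to $\R^{\dim C_\alpha}$ — proved by induction on $n$, the inductive step being that a ``band'' cell $\{(x',t)\colon x'\in C',\ \xi(x')<t<\eta(x')\}$, with $\xi,\eta$ Nash functions on $C'$ (or $\pm\infty$), is Nash diffeomorphic to $C'\times\R$ via a fibrewise Nash diffeomorphism of the open interval onto $\R$ — and the top-dimensional cells are open in $\R^n$.

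Let $C_1,\dots,C_s$ be the cells contained in $U$. The remaining task is to enlarge each $C_i$ to an open subset $V_i\subseteq U$ that is still Nash diffeomorphic to $\R^n$. When $\dim C_i=n$ one takes $V_i=C_i$. When $d:=\dim C_i<n$, the set $W_i:=U\setminus(\overline{C_i}\setminus C_i)$ is an open semialgebraic subset of $\R^n$ in which $C_i$ is a closed Nash submanifold, so the Nash tubular neighbourhood theorem provides an open neighbourhood $V_i$ of $C_i$ in $W_i$ (hence in $U$) that is Nash diffeomorphic to the normal bundle $\nu_i\to C_i$, a Nash vector bundle of rank $n-d$. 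Since $C_i$ is Nash diffeomorphic to $\R^d$, the bundle $\nu_i$ is Nash trivial, whence $V_i$ is Nash diffeomorphic to $\R^d\times\R^{n-d}=\R^n$. As each $V_i$ contains $C_i$ and $\bigcup_i C_i=U$, the finitely many sets $V_1,\dots,V_s$ cover $U$, which completes the argument.

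The step I expect to require the most care is the last one. The cells $C_i$ need not be closed in $U$, which necessitates the passage to $W_i$; and, more substantially, the argument rests on the Nash tubular neighbourhood theorem for closed Nash submanifolds and on the Nash triviality of Nash vector bundles over $\R^d$. Both are standard in semialgebraic and Nash geometry \cite{BCR,Sh}, and in fact the most economical route is simply to invoke \cite[Remark I.5.12]{Sh}.
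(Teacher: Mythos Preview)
The paper does not prove this lemma at all; it is stated as a known fact with a citation to \cite[Remark I.5.12]{Sh} and no argument is given. Your proposal therefore supplies substantially more than the paper does.

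Your argument is correct in outline. The reduction to open semialgebraic subsets of $\R^n$ via the finite atlas of Definition~\ref{dfnnash}(c) is the natural first move, and the use of a cylindrical algebraic decomposition whose cells are Nash submanifolds Nash diffeomorphic to Euclidean spaces is standard (the band-cell step you describe works because the root functions arising in the CAD are Nash on the base cells). The thickening step via Nash tubular neighbourhoods is also sound; passing to $W_i=U\setminus(\overline{C_i}\setminus C_i)$ so that $C_i$ becomes closed is exactly the right manoeuvre. The one place that genuinely needs an external reference rather than a wave of the hand is the Nash triviality of the normal bundle over $C_i\cong\R^d$: in the Nash category there are no partitions of unity, so triviality does not follow from contractibility by the usual soft argument, though it is true and can be extracted from \cite{BCR} or \cite{Sh}. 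You already flag this and point to the very reference the paper cites, so your treatment is both consistent with the paper's and strictly more informative.
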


Using Lemma \ref{dimen20} and Lemma \ref{coverrn0}, it is easy to prove the following
\begin{lemd}\label{subm00}
Let $\varphi: M\rightarrow M'$ be a submersive Nash map
of Nash manifolds. Assume that $\varphi$ is a finite
fold covering map as a map of topological spaces. Let $N$ be a Nash manifold and let $\psi:
N\rightarrow M$ be a continuous map. Then $\psi$ is a Nash map if and only if
$\varphi \circ \psi$ is a Nash map.
\end{lemd}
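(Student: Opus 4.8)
The plan is as follows. The ``only if'' direction is immediate, since a composition of Nash maps is Nash. So assume $\varphi\circ\psi$ is Nash and let me show $\psi$ is Nash. First I record two elementary reductions. As a covering map, $\varphi$ is a local homeomorphism, so $\dim M=\dim M'$; together with the submersion hypothesis this makes $d\varphi$ an isomorphism at every point, so $\varphi$ is a local diffeomorphism, and since the local inverse of a Nash diffeomorphism is Nash, $\varphi$ is in fact a local Nash diffeomorphism. This forces $\psi$ to be smooth: near any point of $N$ pick an open $W$ about its image on which $\varphi|_W$ is a diffeomorphism onto an open subset of $M'$; on the open set $\psi^{-1}(W)$ one has $\psi=(\varphi|_W)^{-1}\circ(\varphi\circ\psi)$, a composite of smooth maps. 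By Lemma \ref{cnash} it is therefore enough to prove that the graph $\Gamma_\psi=\{(\psi(n),n)\mid n\in N\}$ is a semialgebraic subset of the Nash manifold $M\times N$ (a Nash manifold by Proposition \ref{prod}).

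Next I would introduce the ``fibre product''
\[
  P:=\{(m,n)\in M\times N\mid \varphi(m)=\varphi(\psi(n))\}.
\]
The map $M\times N\to M'\times M'$ sending $(m,n)$ to $(\varphi(m),\varphi(\psi(n)))$ is Nash, being assembled from the Nash maps $\varphi$, $\varphi\circ\psi$ and the projections of $M\times N$; and the diagonal $\Delta_{M'}$ is a semialgebraic subset of $M'\times M'$, being the image of the Nash map $m\mapsto(m,m)$. Hence $P$, the preimage of $\Delta_{M'}$, is semialgebraic in $M\times N$ by Lemma \ref{images}. Plainly $\Gamma_\psi\subseteq P$, and I claim $\Gamma_\psi$ is at once open and closed in $P$. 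Openness uses that $\varphi$ is a covering: given $(\psi(n_0),n_0)\in\Gamma_\psi$, choose an open $W\ni\psi(n_0)$ on which $\varphi$ is injective and, by continuity of $\psi$, an open $U\ni n_0$ with $\psi(U)\subseteq W$; if $(m,n)\in P\cap(W\times U)$ then $m$ and $\psi(n)$ both lie in $W$ and have equal image under $\varphi$, forcing $m=\psi(n)$, so $P\cap(W\times U)\subseteq\Gamma_\psi$. Closedness is softer: $\Gamma_\psi$ is already closed in $M\times N$, being (after the coordinate flip) the graph of the continuous map $\psi$ into the Hausdorff space $M$.

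Finally, a subset that is open and closed in $P$ is a union of connected components of $P$; by Lemma \ref{dimen20} the semialgebraic set $P$ has only finitely many connected components, each of them semialgebraic, so $\Gamma_\psi$ is semialgebraic, and Lemma \ref{cnash} gives that $\psi$ is a Nash map. I do not anticipate a genuine obstacle: the single step needing care is the passage from the purely topological description of $\Gamma_\psi$ to its semialgebraicity, which is exactly what the finiteness statement Lemma \ref{dimen20} supplies. A more hands-on alternative would trivialise the covering over a finite cover of $M'$ by open Nash submanifolds Nash diffeomorphic to $\R^{\dim M'}$ (Lemma \ref{coverrn0}), reduce to the case where $\varphi$ splits as a disjoint union of Nash diffeomorphisms, and then verify the criterion of Proposition \ref{crinash} directly; the fibre-product argument above avoids that bookkeeping and is what I would write up.
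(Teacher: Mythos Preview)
Your argument is correct. The paper does not actually write out a proof of this lemma; it only says ``Using Lemma~\ref{dimen20} and Lemma~\ref{coverrn0}, it is easy to prove the following,'' so there is little to compare against line by line. What the hint most naturally suggests is the trivialisation approach you sketch at the end: cover $M'$ by finitely many contractible open Nash submanifolds via Lemma~\ref{coverrn0}, split $\varphi$ over each into a disjoint union of Nash diffeomorphisms, and then check Proposition~\ref{crinash} piece by piece, with Lemma~\ref{dimen20} ensuring the relevant preimages decompose into finitely many semialgebraic pieces.

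Your main fibre-product argument is a genuine alternative: it bypasses Lemma~\ref{coverrn0} entirely and instead reduces everything to the single global observation that $\Gamma_\psi$ is clopen in the semialgebraic set $P$, so Lemma~\ref{dimen20} alone finishes. This is cleaner and avoids the chart bookkeeping; the trivialisation route is more hands-on but perhaps closer to what the author had in mind given the explicit citation of Lemma~\ref{coverrn0}. Either way the key finiteness input is Lemma~\ref{dimen20}, and you have identified and used it correctly.
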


By the following proposition, a finite fold cover of
a Nash manifold is a Nash manifold:

\begin{prpd}\label{coverm}
Let $N$ be a Nash manifold. Let $M$ be a topological space and let $\varphi:M\rightarrow N$ be a finite
fold covering map of topological spaces. Then there exists a unique Nash
structure on $M$ which makes $\varphi$ a submersive Nash map.
\end{prpd}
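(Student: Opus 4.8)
The plan is to construct an explicit finite Nash atlas on $M$ by pulling back the Nash charts of $N$ along the sheets of the covering, and then to invoke Lemma \ref{defnashs0}; uniqueness will follow formally from Lemma \ref{subm00}. If $N$ is empty then $M$ is empty, and the unique Nash structure is the one with $n=-\infty$, so assume $N$ is non-empty of dimension $n\geq 0$. First, $M$ is Hausdorff: two distinct points with distinct images are separated by the preimages of disjoint neighbourhoods in $N$, while two distinct points with the same image lie in two distinct sheets over an evenly covered neighbourhood, and such sheets are disjoint open sets. By Lemma \ref{coverrn0} choose Nash charts $(\psi_i,V_i^\circ,V_i)$, $i=1,\dots,r$, of $N$ with each $V_i$ Nash diffeomorphic to $\R^n$ via $\psi_i$ and $N=V_1\cup\cdots\cup V_r$. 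Since each $V_i$ is connected, locally path connected and simply connected, the covering $\varphi^{-1}(V_i)\to V_i$ is trivial; hence $\varphi^{-1}(V_i)$ has finitely many connected components, and for each such component $C$ the restriction $\varphi|_C\colon C\to V_i$ is a homeomorphism. Put $\sigma_{i,C}:=(\varphi|_C)^{-1}\circ\psi_i\colon V_i^\circ\to C$, so that $(\sigma_{i,C},V_i^\circ,C)\in\oN(\R^n,M)$, and let $\mathcal N_0$ be the finite collection of all these triples; since the $V_i$ cover $N$, the components $C$ cover $M$.

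The key step is to verify that the triples in $\mathcal N_0$ are pairwise Nash compatible. Fix $(\sigma_{i,C},V_i^\circ,C)$ and $(\sigma_{j,C'},V_j^\circ,C')$, and set $s:=(\varphi|_C)^{-1}$ and $t:=(\varphi|_{C'})^{-1}$, continuous sections of $\varphi$ over $V_i$ and $V_j$ respectively. On $V_i\cap V_j$ one has $\varphi\circ s=\varphi\circ t=\mathrm{id}$, so the agreement locus $E:=\{v\in V_i\cap V_j\mid s(v)=t(v)\}$ is open and closed in $V_i\cap V_j$, hence a union of connected components of $V_i\cap V_j$; as $V_i\cap V_j$ is an open semialgebraic subset of $N$, Lemma \ref{dimen20} shows $E$ is an open semialgebraic subset of $N$. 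A direct check gives $\varphi(C\cap C')=E$ and that the transition homeomorphism $\sigma_{j,C'}^{-1}\circ\sigma_{i,C}$ has domain $\psi_i^{-1}(E)$, codomain $\psi_j^{-1}(E)$, and coincides there with the restriction of $\psi_j^{-1}\circ\psi_i$. Since the Nash charts of $N$ are mutually Nash compatible, this transition map has semialgebraic domain and codomain and is semialgebraic and smooth. Therefore $\mathcal N_0$ satisfies the hypotheses of Lemma \ref{defnashs0} and, together with $n$, defines a Nash structure on $M$.

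It remains to see that $\varphi$ is a submersive Nash map for this structure, and that the structure is unique. In the chart $(\sigma_{i,C},V_i^\circ,C)$ the map $\varphi$ is expressed by $\psi_i^{-1}\circ\varphi\circ\sigma_{i,C}=\mathrm{id}_{V_i^\circ}$; moreover for every open semialgebraic $Y\subseteq N$ the preimage $\varphi^{-1}(Y)$ is semialgebraic, since in each chart it is $\psi_i^{-1}(Y\cap V_i)$. By Proposition \ref{crinash}, $\varphi$ is a Nash map, and its local coordinate expression being the identity shows it is submersive. For uniqueness, let $\mathcal S_1,\mathcal S_2$ be two Nash structures on $M$ making $\varphi$ submersive Nash. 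Apply Lemma \ref{subm00} with its data $(M,M',N,\psi)$ taken to be $\bigl((M,\mathcal S_1),\,N,\,(M,\mathcal S_2),\,\mathrm{id}\bigr)$: the map $\varphi\colon(M,\mathcal S_1)\to N$ is submersive Nash and a finite fold covering, and $\varphi\circ\mathrm{id}=\varphi\colon(M,\mathcal S_2)\to N$ is Nash, so the lemma gives that $\mathrm{id}\colon(M,\mathcal S_2)\to(M,\mathcal S_1)$ is Nash; by symmetry the identity is Nash in the other direction as well, whence $\mathcal S_1=\mathcal S_2$.

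The main obstacle is the Nash compatibility check in the second paragraph, and within it the observation that the agreement locus $E$ of two local sections is semialgebraic. This is precisely where Lemma \ref{dimen20} is indispensable: $E$ is clopen in the semialgebraic set $V_i\cap V_j$, hence a finite union of its (semialgebraic) connected components. Once this is granted, everything is reduced to the already established mutual Nash compatibility of the charts of $N$; the remaining verifications (Hausdorffness of $M$, the submersion and Nash property of $\varphi$, and uniqueness via Lemma \ref{subm00}) are routine.
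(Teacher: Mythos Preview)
Your proof is correct and follows essentially the same approach as the paper's: cover $N$ by connected simply connected Nash charts (via Lemma \ref{coverrn0}), lift each to its sheets to obtain charts on $M$, check pairwise Nash compatibility, and invoke Lemma \ref{defnashs0}; uniqueness comes from Lemma \ref{subm00} in both. The paper simply asserts the compatibility (``One checks that all elements in $\mathcal N_M'$ are pairwise Nash compatible''), whereas you supply the argument in full, correctly isolating the crucial point that the agreement locus $E$ of two local sections is a clopen subset of the semialgebraic set $V_i\cap V_j$ and hence semialgebraic by Lemma \ref{dimen20}.
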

\begin{proof}
This is known to experts. We sketch a proof for the lack of
reference. First note that $M$ is Hausdorff since $N$ is Hausdorff. Since the proposition is trivial when $M$ is an empty set,
we assume that $M$ is non-empty. The uniqueness assertion of the proposition is a direct consequence of Lemma \ref{subm00}. In what follows, we construct a Nash structure on $M$ which makes $\varphi$ a submersive Nash map.

Write ($n$, $\CN_N$) for the Nash structure on $N$.
Put
\[
  \CN_M':=\bigcup_{(\phi,U,U')\in \CN_N, \textrm{ $\,$ with $U$  connected and simply connected}}\CN_\phi,
\]
where
\[
  \CN_\phi:=\{(\psi,U,U'')\in \oN(\R^n, M)\mid \psi \textrm{ lifts the homeomorphism }\phi:U\rightarrow U'\}.
\]
One checks that all elements in $\CN_M'$ are pairwise Nash
compatible. Lemma \ref{coverrn0} implies that the set $\CN_M'$ has
property (c) of Definition \ref{dfnnash}. Denote by $\mathcal N_M$
the set of all elements in $\oN(\R^n, M)$ which are Nash compatible
with all elements of $\CN_M'$. Lemma \ref{defnashs0} implies that
$(n,\mathcal N_M)$ is a Nash structure on $M$. With this Nash
structure, $\varphi$ is clearly a submersive Nash map.
\end{proof}

Every finite dimensional real vector space is obviously a Nash manifold. A Nash manifold is said to be affine if it is Nash diffeomorphic to a Nash submanifold of some finite dimensional
 real vector space. It is known that
every affine Nash manifold is actually Nash diffeomorphic to a
closed Nash submanifold of some finite dimensional real vector
space. (cf. \cite[2.22]{Sh2}). It is clear that a Nash submanifold
of an affine Nash manifold is an affine Nash manifold; the products
of two affine Nash manifolds is an affine Nash manifold. The
following criterion implies that a finite fold cover of an affine
Nash manifold is an affine Nash manifold:
\begin{prpd} \label{Sh} (\cite[Proposition III.1.7]{Sh})
Let $M$ be a Nash manifold of dimension $n\geq 0$. Then $M$ is affine if and only if for every $x\in M$,
there is a Nash map $M\rightarrow \R^n$ which is submersive at $x$.
\end{prpd}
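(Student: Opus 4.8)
The plan is to prove the two implications separately; the forward implication is elementary, and the reverse one carries all the weight. For the forward direction, suppose $M$ is affine. By the fact recalled above (\cite[2.22]{Sh2}) I may identify $M$ with a closed Nash submanifold of some $\R^N$. Fix $x\in M$ and regard $T_xM$ as an $n$-dimensional linear subspace of $\R^N$; by linear algebra there is an $n$-element set $S\subseteq\{1,\dots,N\}$ of coordinates for which the coordinate projection $p_S\colon\R^N\to\R^n$ restricts to a linear isomorphism $T_xM\xrightarrow{\ \sim\ }\R^n$. Since $p_S$ is polynomial, $p_S|_M\colon M\to\R^n$ is a Nash map, and its differential at $x$ is the isomorphism $p_S|_{T_xM}$, so $p_S|_M$ is submersive at $x$; this direction is done.

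For the reverse direction, assume that for every $x\in M$ there is a Nash map $f_x\colon M\to\R^n$ submersive at $x$. Since $M$ is equidimensional of dimension $n$, each $f_x$ is a local Nash diffeomorphism near $x$, and its submersive locus $U_x$---the semialgebraic open set on which the Jacobian, computed in Nash charts, has rank $n$---is an open semialgebraic neighborhood of $x$. I would build a Nash embedding $M\hookrightarrow\R^N$, which is exactly what affineness means, taking it to be a tuple $F=(f_{x_1},\dots,f_{x_k},h_1,\dots,h_m)\colon M\to\R^{nk+m}$, where $x_1,\dots,x_k$ are chosen with $M=U_{x_1}\cup\dots\cup U_{x_k}$ and $h_1,\dots,h_m\colon M\to\R$ are auxiliary Nash functions arranged to make $F$ injective. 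Once the $U_{x_i}$ cover $M$, the map $F$ is automatically an immersion: any $y\in M$ lies in some $U_{x_i}$, where $d(f_{x_i})_y$ is already bijective, so $dF_y$ is injective. By Tarski-Seidenberg (Lemma \ref{images}) the image $F(M)$ is semialgebraic, and it then remains to check that an injective immersive Nash map with semialgebraic image is a Nash embedding, i.e.\ that $F(M)$ is a Nash submanifold onto which $F$ restricts to a Nash diffeomorphism; this follows from the finiteness and local conic structure of semialgebraic sets (cf.\ Lemma \ref{dimen20}), which exclude the pathologies possible for a merely $C^\infty$ injective immersion.

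The forward direction and the concluding ``immersion $\Rightarrow$ embedding'' step are routine. I expect the main obstacle to be the extraction of the finite subcover $M=U_{x_1}\cup\dots\cup U_{x_k}$: the family $\{U_x\}_{x\in M}$ is not a priori finite, and a general semialgebraic open cover need not admit a finite subcover. The point to exploit is that the $U_x$ are not arbitrary---each is the complement of the zero set of a Nash, hence real-analytic, function read off in a chart of the finite Nash atlas of $M$---so their complements are rigid, and this rigidity together with the finiteness of the atlas and of the number of connected components of semialgebraic sets (Lemma \ref{dimen20}) should force a finite subcover; arranging the injectivity of $F$, equivalently that $\{f_x\}_{x\in M}$ separates the points of $M$, is of the same nature and would be handled alongside. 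This is precisely where the special features of the Nash category, as opposed to the smooth one, must enter.
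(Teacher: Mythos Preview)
The paper does not prove this proposition; it is quoted from \cite[Proposition~III.1.7]{Sh} without argument and used as a black box. So there is nothing in the paper to compare your attempt against, and your proposal has to stand on its own.

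Your forward direction is correct. For the reverse direction you have the right architecture but two genuine gaps. The first you identify yourself: extracting a finite subcover from $\{U_x\}_{x\in M}$. Your proposed resolution---that each $M\setminus U_x$ is, in a fixed chart, the zero locus of a Nash (hence real-analytic) function and therefore ``rigid''---is a gesture, not an argument; nothing you wrote actually produces finitely many $x_i$. One needs a real mechanism, for instance a dimension induction: after choosing one $f_{x_i}$ per chart of a finite Nash atlas, the common non-submersive locus has dimension $<n$ in each chart (a nonzero Nash function on a connected open subset of $\R^n$ has zero set of dimension $<n$), and then one must repeat the argument on that lower-dimensional semialgebraic stratum. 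This can be made to work, but it is a proof, not a remark. The second gap is more serious and you underrate it: the auxiliary Nash functions $h_1,\dots,h_m$ you invoke to force injectivity of $F$ have no source. The hypothesis hands you, for each $x$, a map submersive at $x$; it says nothing about separating a given pair $x\neq y$, and on a Nash manifold not yet known to be affine there is no a priori supply of global Nash functions. A local diffeomorphism at $x$ can perfectly well identify $x$ with a distant $y$, so the family $\{f_x\}$ need not separate points either. You assert that arranging injectivity ``is of the same nature'' as the covering problem and ``would be handled alongside''; it is not of the same nature, and it is where much of the actual content of Shiota's proof resides.
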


Projective spaces form an important family of affine Nash manifolds: For each finite dimensional real vector space $V$,
 the set $\oP(V)$ of all one dimensional subspaces of $V$ is naturally an affine Nash manifold (cf. \cite[Theorem 3.4.4]{BCR}).

For each semialgebraic subset $X$ of a Nash manifold $M$, define its dimension
\[
  \begin{array}{rcl}
  \dim X&:=&\max \{d\in \{-\infty, 0,1,2,\cdots\}\mid \\
  &&\textrm{$\quad  \quad X$ contains a Nash submanifold of $M$ of dimension $d$} \}.
  \end{array}
\]
The following properties of the dimensions of semialgebraic sets are obvious (\cite[Page 56]{Co}):
the dimension of the union of finitely many semialgebraic sets is the maximum
of the dimensions of these semialgebraic sets; the dimension of a finite product of semialgebraic sets is the sum of their dimensions. The following basic facts concerning dimensions of
semialgebraic sets are well known.

\begin{prpd}\label{dimen2} (cf. \cite[Proposition 3.16 and Theorem 3.20]{Co}) The followings hold true.
\begin{itemize}
  \item The closure $\bar X$ of a semialgebraic set $X$ in a Nash manifold is semialgebraic. Moreover, $\dim \bar X=\dim X$; and $\dim \bar X\setminus X<\dim X$ whenever $X$ is non-empty.
   \item Each semialgebraic subset of a finite dimensional real vector space has the same dimension as its Zariski closure.
 \end{itemize}
\end{prpd}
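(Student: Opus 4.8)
The plan is to reduce both assertions to the corresponding classical facts about semialgebraic subsets of $\R^n$ — this is precisely the content of the results from \cite{Co} cited in the statement — the only genuine work being to match the chart-free notion of $\dim$ used here with the standard semialgebraic dimension. Fix a finite Nash atlas $(\phi_i,U_i,U_i')$, $i=1,\dots,r$, of $M$ (property (c) of Definition \ref{dfnnash}). Since each $U_i'$ is open in $M$, for any subset $X\subseteq M$ the set $\bar X\cap U_i'$ is the closure of $X\cap U_i'$ inside $U_i'$, and, $\phi_i$ being a homeomorphism onto the open set $U_i'$, one gets $\phi_i^{-1}(\bar X\cap U_i')=\overline{\phi_i^{-1}(X\cap U_i')}\cap U_i$ with the closure taken in $\R^n$. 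When $X$ is semialgebraic, $\phi_i^{-1}(X\cap U_i')$ is semialgebraic in $\R^n$, its closure in $\R^n$ is semialgebraic (the closure of a semialgebraic set is cut out by a first-order formula, hence is semialgebraic by Tarski--Seidenberg, cf. \cite{BCR}), and $U_i$ is semialgebraic; so $\phi_i^{-1}(\bar X\cap U_i')$ is semialgebraic for each $i$, whence $\bar X$ is semialgebraic in $M$. The same identity shows $\phi_i^{-1}\big((\bar X\setminus X)\cap U_i'\big)$ lies inside the frontier $\overline{\phi_i^{-1}(X\cap U_i')}\setminus\phi_i^{-1}(X\cap U_i')$ of a semialgebraic subset of $\R^n$, and that $X\cap U_i'=\emptyset$ forces $\bar X\cap U_i'=\emptyset$.

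Next I would check that for a semialgebraic $X$ the dimension defined above is computed chart by chart:
\[
  \dim X=\max_{1\le i\le r}\dim\big(\phi_i^{-1}(X\cap U_i')\big),
\]
where on the right $\dim$ is the usual semialgebraic dimension in $\R^n$. For ``$\le$'', a Nash submanifold $S\subseteq X$ of dimension $d\ge 0$ is nonempty, hence meets some $U_i'$ in a nonempty open subset, which is again a Nash submanifold of $M$ of dimension $d$ contained in $X\cap U_i'$; carrying it over by the Nash diffeomorphism $\phi_i^{-1}\colon U_i'\to U_i$ gives a $d$-dimensional semialgebraic locally closed submanifold of $\R^n$ contained in $\phi_i^{-1}(X\cap U_i')$. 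For ``$\ge$'', the $\phi_i$-image of such a submanifold of $\R^n$ contained in $\phi_i^{-1}(X\cap U_i')$ is a Nash submanifold of $M$ of dimension $d$ contained in $X$, using that a semialgebraic subset of the semialgebraic open set $U_i'$ is semialgebraic in $M$ (by Nash compatibility of the atlas). Moreover the usual semialgebraic dimension of a semialgebraic set in $\R^n$ equals the dimension of its Zariski closure, by \cite[Proposition 3.16]{Co}; applied with $M$ a finite dimensional real vector space (one global chart), this is exactly the second bullet.

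The first bullet then follows. Semialgebraicity of $\bar X$ was established above, and $\dim X\le\dim\bar X$ is clear from $X\subseteq\bar X$. For nonempty semialgebraic $Y\subseteq\R^n$ one has $\dim\big(\bar Y\setminus Y\big)<\dim Y$ (\cite[Theorem 3.20]{Co}); combined with the first two paragraphs, for each $i$ either $X\cap U_i'=\emptyset$ — in which case $(\bar X\setminus X)\cap U_i'=\emptyset$ as well — or $\dim\big((\bar X\setminus X)\cap U_i'\big)<\dim(X\cap U_i')\le\dim X$. Since $X\neq\emptyset$ there is an index $i_0$ with $\dim(X\cap U_{i_0}')=\dim X\ge 0$, so passing to the maximum over $i$ yields $\dim(\bar X\setminus X)<\dim X$, and therefore $\dim\bar X=\max\big(\dim X,\dim(\bar X\setminus X)\big)=\dim X$.

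I expect no conceptual obstacle here: the entire argument is bookkeeping on top of \cite{Co}. The one point requiring honest (if routine) care is the chart-wise computation of dimension in the second paragraph — reconciling the intrinsic, chart-free definition of $\dim$ adopted in this paper with the semialgebraic dimension of \cite{Co} — together with the observation from the first paragraph that restriction to an open chart commutes with passing to closures and frontiers, which is exactly what lets the frontier inequality of \cite{Co} be applied locally.
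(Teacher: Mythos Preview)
Your proof is correct. The paper, however, gives no proof of this proposition at all: it is stated with the parenthetical citation ``(cf.\ \cite[Proposition 3.16 and Theorem 3.20]{Co})'' and treated as a known fact imported from Coste's notes. Those results in \cite{Co} are formulated for semialgebraic subsets of $\R^n$, so the chart-by-chart reduction you carry out---showing that closure commutes with restriction to an open chart, that the intrinsic dimension defined in the paper agrees with the semialgebraic dimension computed in each chart, and then taking the maximum over a finite atlas---is exactly the bookkeeping the paper leaves implicit. Your argument is the natural way to make this precise, and the one point you flag as requiring care (matching the paper's chart-free definition of $\dim$ with the standard one) is handled correctly.
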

Note that all Zariski closed subsets of a finite dimensional real vector space are semialgebraic.

For a semialgebraic set $X$ of a Nash manifold $M$, an element $x\in X$ is said to be smooth of dimension $d\geq 0$ if there is a semialgebraic open neiborhood $U$ of $x$ in $M$ such that $X\cap U$ is a $d$-dimensional Nash submanifold of $M$. Note that $X$ is a Nash submanifold of $M$ if and only if all points of it are smooth of dimension $\dim X$.

\begin{lemd}\label{smooth}
(cf. \cite[Proposition 5.53]{BOR}) Let $X$ be a non-empty semialgebraic subset of a Nash manifold $M$. Then $X$ has a point which is smooth of dimension $\dim X$.
\end{lemd}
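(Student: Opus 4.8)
The plan is to reduce the statement to the affine case and then read off the required point from a decomposition of $X$ into smooth pieces. Since being smooth of dimension $d$ is a local condition, I would first pass to a Nash chart. Write $d:=\dim X$. By the definition of dimension there is a Nash submanifold $Y$ of $M$ with $Y\subseteq X$ and $\dim Y=d$; fix $y\in Y$ and a Nash chart $(\phi,U,U')$ of $M$ with $y\in U'$. Then $\phi^{-1}(X\cap U')$ is a semialgebraic subset of the open semialgebraic set $U\subseteq\R^n$, and it has dimension exactly $d$: its dimension is at most $\dim X=d$ since it sits inside a homeomorphic copy of a subset of $X$, and it contains $\phi^{-1}(Y\cap U')$, which is a Nash submanifold of $\R^n$ of dimension $d$ because $Y\cap U'$ is a non-empty open subset of $Y$. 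Any point of $\phi^{-1}(X\cap U')$ that is smooth of dimension $d$ in $\R^n$ maps under $\phi$ to a point of $X$ that is smooth of dimension $d$ in $M$, since $\phi$ is a Nash diffeomorphism onto the open subset $U'$ and hence carries semialgebraic open sets to semialgebraic open sets and $d$-dimensional Nash submanifolds to $d$-dimensional Nash submanifolds (Lemma \ref{images}). So I may assume $M=\R^n$ and $X\subseteq\R^n$ is semialgebraic with $\dim X=d$.

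Next I would invoke the decomposition of a semialgebraic set into finitely many pairwise disjoint Nash submanifolds, i.e.\ cell decomposition (see \cite{Co} or \cite{BCR}): $X=C_1\sqcup\cdots\sqcup C_k$ with each $C_i$ a Nash submanifold of $\R^n$. Since the dimension of a finite union is the maximum of the dimensions, $\max_i\dim C_i=d$; fix any index $i$ with $\dim C_i=d$. The key observation is that $X$ agrees with $C_i$ away from a set of dimension $<d$. Indeed $X\setminus C_i=\bigsqcup_{j\ne i}C_j$, so $\overline{X\setminus C_i}=\bigcup_{j\ne i}\overline{C_j}$ is closed and semialgebraic (Proposition \ref{dimen2}); and for each $j\ne i$ the set $\overline{C_j}\cap C_i$ is contained in $\overline{C_j}\setminus C_j$ (as the $C_j$ are pairwise disjoint), whose dimension is $<\dim C_j\le d$ by Proposition \ref{dimen2}. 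Hence $\overline{X\setminus C_i}\cap C_i$ has dimension $<d$, so it cannot equal all of $C_i$, and $C_i\setminus\overline{X\setminus C_i}$ is non-empty.

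To conclude, pick a point $x\in C_i\setminus\overline{X\setminus C_i}$ and set $U_0:=\R^n\setminus\overline{X\setminus C_i}$, a semialgebraic open neighbourhood of $x$. Then $X\cap U_0=C_i\cap U_0$, because $X\setminus C_i$ lies in the closed set $\overline{X\setminus C_i}$ and so is disjoint from $U_0$; and $C_i\cap U_0$, being an open semialgebraic subset of the Nash submanifold $C_i$, is itself a Nash submanifold of $\R^n$ of dimension $d$. Thus $x$ is smooth of dimension $d$, and by the localization step this finishes the proof.

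The only substantial ingredient here is the decomposition of a semialgebraic set into finitely many disjoint Nash submanifolds (equivalently, cell decomposition), which I would simply quote; the cited reference \cite{BOR} in fact settles the lemma directly, and the argument above is self-contained modulo that standard fact. Everything after the decomposition is dimension bookkeeping resting on Proposition \ref{dimen2}. The one place that demands a little care is the localization step: one must run the chart through a point of a top-dimensional Nash submanifold of $X$ so that the full dimension $d$ survives restriction to the chart domain, and one must check that the Nash diffeomorphism $\phi$ preserves each of the properties ``semialgebraic'', ``open'', ``Nash submanifold'', and ``dimension''.
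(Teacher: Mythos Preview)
Your proof is correct. The paper itself does not prove this lemma: it merely records it with a reference to \cite[Proposition 5.53]{BOR}, so there is no ``paper's own proof'' to compare against. What you have written is a clean self-contained argument modulo the cell decomposition theorem, which is exactly the standard route to this result.

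A few remarks on your write-up. The localization step is handled correctly: choosing the chart through a point of a top-dimensional Nash submanifold $Y\subseteq X$ is precisely what guarantees that the full dimension $d$ survives restriction to $U'$, and the transfer of the ``smooth of dimension $d$'' property through the Nash diffeomorphism $\phi$ is unproblematic. In the dimension count for $\overline{X\setminus C_i}\cap C_i$, your bound $\dim(\overline{C_j}\setminus C_j)<\dim C_j\le d$ is a little stronger than needed when $\dim C_j<d$ (in that case $\dim\overline{C_j}=\dim C_j<d$ already suffices without invoking the frontier inequality), but the argument as written is valid in all cases. Finally, after the reduction you strictly speaking land in an open semialgebraic $U\subseteq\R^n$ rather than all of $\R^n$; this is harmless since smoothness in $U$ and in $\R^n$ coincide, but you might say so explicitly.
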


\section{Nash groups and almost linear Nash groups}\label{secng}

In this section, we introduce some generalities on Nash groups and almost linear Nash groups.
\begin{dfn}
A Nash group is a Hausdorff topological group $G$, equipped with a Nash structure on its underlying topological space so that both the multiplication map $G\times G\rightarrow G$ and the inversion map $G\rightarrow G$ are Nash maps between Nash manifolds. A Nash homomorphism between two Nash groups is a group
homomorphism between them which is simultaneously a Nash map.
\end{dfn}

The following basic result will be used freely without further explicit mention.
\begin{prpd}\label{subg}
Every semialgebraic subgroup of a Nash group $G$ is a closed Nash
submanifold of $G$.
\end{prpd}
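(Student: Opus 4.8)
The plan is to reduce both assertions to a single point by exploiting the homogeneity of $H$ under left translations by its own elements. Throughout, write $d:=\dim H$, and note $H\neq\emptyset$ since it contains the identity. The only non-formal inputs will be the frontier--dimension inequality in Proposition \ref{dimen2} and the existence of a smooth point in Lemma \ref{smooth}; the rest is transporting local structure by group translations, each of which is a Nash diffeomorphism of $G$ (right translation by $x$ is the composite of $y\mapsto(y,x)$ with the multiplication map, with Nash inverse given by right translation by $x^{-1}$, and similarly for left translations).

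First I would show that $H$ is closed. Set $\partial H:=\bar H\setminus H$; by Proposition \ref{dimen2}, $\bar H$ is semialgebraic and, since $H$ is non-empty, $\dim\partial H<d$. For each $h\in H$ the left translation $L_h$ is a homeomorphism of $G$ with $L_h(H)=hH=H$, hence $L_h(\bar H)=\overline{L_h(H)}=\bar H$, so $L_h(\partial H)=\partial H$; that is, $\partial H$ is stable under left translation by $H$. If $\partial H$ were non-empty, choose $x\in\partial H$; then $Hx\subseteq\partial H$, while $Hx$ is the image of $H$ under right translation by $x$, a Nash diffeomorphism, so $Hx$ is semialgebraic of dimension $d$. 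Since a semialgebraic subset has dimension at most that of any semialgebraic set containing it, this gives $d\leq\dim\partial H<d$, a contradiction. Hence $\partial H=\emptyset$ and $H=\bar H$ is closed.

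Next I would show that $H$ is a Nash submanifold of $G$. By Lemma \ref{smooth}, $H$ contains a point $x_0$ that is smooth of dimension $d$: there is a semialgebraic open neighborhood $U$ of $x_0$ in $G$ with $H\cap U$ a $d$-dimensional Nash submanifold of $G$. Given an arbitrary $h\in H$, the element $g:=hx_0^{-1}$ lies in $H$, so $L_g$ is a Nash diffeomorphism of $G$ sending $x_0$ to $h$ and $H$ onto $H$; therefore $L_g(U)$ is a semialgebraic open neighborhood of $h$ and $H\cap L_g(U)=L_g(H\cap U)$ is a $d$-dimensional Nash submanifold of $G$, so $h$ is smooth of dimension $d$. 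Thus every point of $H$ is smooth of dimension $d=\dim H$, which by the criterion recorded just before Lemma \ref{smooth} means precisely that $H$ is a Nash submanifold of $G$. Together with the first step this proves that $H$ is a closed Nash submanifold of $G$; one may then invoke Proposition \ref{sub} to endow $H$ with its canonical Nash structure. I do not anticipate a genuine obstacle: the argument is entirely a matter of spreading around the local information at one point by translations, and the only thing to verify carefully is that one-sided translations are Nash diffeomorphisms of $G$, which is immediate from the Nash group axioms.
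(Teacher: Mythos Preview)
Your proof is correct. The Nash-submanifold part is exactly the argument the paper has in mind when it writes ``Lemma \ref{smooth} implies that $H$ is a Nash submanifold of $G$'': one smooth point, spread everywhere by translations. Where you diverge is the closedness step. The paper argues in the opposite order: once $H$ is known to be a Nash submanifold it is locally closed, hence open in its closure $\bar H$ (which is itself a subgroup), and an open subgroup is automatically closed. You instead prove closedness first and directly, via the frontier inequality $\dim(\bar H\setminus H)<\dim H$ from Proposition \ref{dimen2} together with the observation that $\bar H\setminus H$ is $H$-stable, so any point in it would drag in a full coset of dimension $d$. Your route avoids invoking the ``open subgroup $\Rightarrow$ closed'' trick and keeps everything inside the semialgebraic dimension calculus; the paper's route is a line shorter but tacitly uses that $\bar H$ is a subgroup. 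Both are standard and equally clean.
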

\begin{proof}
This is well know. We sketch a proof for convenience of the reader.
Let $H$ be a semialgebraic subgroup of $G$. Lemma \ref{smooth}
implies that $H$ is a Nash submanifold of $G$. In particular, $H$ is
locally closed, and is thus an open subgroup of its closure $\bar
H$. Therefore $H$ is also closed in $\bar H$. This proves the
Proposition. (Recall that every closed subgroup of a Lie group is a
submanifold.)
\end{proof}

In view of Proposition \ref{subg}, a semialgebraic subgroup of a Nash group $G$ is also called a Nash subgroup of $G$.

By Lemma \ref{images}, we have
\begin{prpd}\label{subgi}
The image of a Nash homomorphism $\varphi:G\rightarrow G'$ is a Nash subgroup of
$G'$. In particular, it is closed in $G'$.
\end{prpd}

 It is clear that a Nash subgroup of a Nash group is a Nash group, and the product of two Nash groups is a Nash group.
 Proposition \ref{coverm} implies that a finite fold covering group of a Nash group is a Nash group:
\begin{prpd}\label{subg2}
Let $G$ be a topological group and let $G'$ be a Nash group. Let
$G\rightarrow G'$ be a group homomorphism which is simultaneously a
finite fold topological covering map. Equip on $G$ the Nash
structure which makes $G\rightarrow G'$ a submersive Nash map. Then
$G$ becomes a Nash group.
\end{prpd}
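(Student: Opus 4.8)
The plan is to first use Proposition \ref{coverm} to put the Nash structure on $G$, and then verify that the multiplication and inversion maps of $G$ are Nash by pushing them down to the already-known Nash group $G'$ via Lemma \ref{subm00}. Concretely, write $\varphi\colon G\to G'$ for the given homomorphism. Since $\varphi$ is a finite fold topological covering map onto the Nash manifold $G'$, Proposition \ref{coverm} supplies a (unique) Nash structure on $G$ making $\varphi$ a submersive Nash map; in particular $G$ is a Nash manifold, hence Hausdorff, and $\varphi$ is simultaneously a submersive Nash map and a finite fold covering map, so Lemma \ref{subm00} is applicable with $M=G$ and $M'=G'$.

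Next I would check that the multiplication $m\colon G\times G\to G$ is a Nash map. It is continuous because $G$ is a topological group, and $G\times G$ is a Nash manifold by Proposition \ref{prod}. By Lemma \ref{subm00} (applied to the continuous map $\psi=m$ with target $M=G$), it suffices to prove that $\varphi\circ m$ is Nash. Because $\varphi$ is a group homomorphism, $\varphi\circ m=m'\circ(\varphi\times\varphi)$, where $m'\colon G'\times G'\to G'$ is the multiplication of the Nash group $G'$, which is Nash by hypothesis. The map $\varphi\times\varphi\colon G\times G\to G'\times G'$ is Nash: by the characterization of the product Nash structure in Proposition \ref{prod} it is enough that each composite with a projection, $\pr_i\circ(\varphi\times\varphi)=\varphi\circ\pr_i$, be Nash, which it is. Hence $\varphi\circ m$ is a composite of Nash maps and is Nash, so $m$ is Nash.

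The inversion $\iota\colon G\to G$ is handled the same way: it is continuous, and since $\varphi$ is a homomorphism we have $\varphi\circ\iota=\iota'\circ\varphi$, with $\iota'\colon G'\to G'$ the (Nash) inversion of $G'$; thus $\varphi\circ\iota$ is Nash, and Lemma \ref{subm00} yields that $\iota$ is Nash. Together with the Nash structure from Proposition \ref{coverm}, this shows $G$ is a Nash group. I do not anticipate a real obstacle here; the only points that need (routine) attention are that a product of Nash maps is Nash, which follows from Proposition \ref{prod}, and the elementary observation that $\varphi$ being a homomorphism lets one replace $m$ and $\iota$ by their counterparts on $G'$ after composing with $\varphi$.
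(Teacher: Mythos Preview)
Your proposal is correct and is exactly the routine verification the paper has in mind: the paper's entire proof reads ``Using Lemma \ref{subm00}, this is routine to check,'' and you have spelled out precisely that check. The only cosmetic point is that Hausdorffness of $G$ is an input to Proposition \ref{coverm} (it holds because $G$ covers the Hausdorff space $G'$), not a consequence of being a Nash manifold; otherwise there is nothing to add.
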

\begin{proof}
Using Lemma \ref{subm00}, this is routine to check.
\end{proof}

Note that there is no strictly decreasing infinite sequence of Nash
subgroups of a Nash group. Consequently, we have
\begin{prpd}\label{subgi}
Let $G$ be a Nash group and let $\{G_i\}_{i\in I}$ be a family of
Nash subgroups of $G$. Then
\[
  \bigcap_{i\in I} G_i=\bigcap_{i\in I_0} G_i
\]
for some finite subset $I_0$ of $I$. Consequently, the intersection
of an arbitrary family of Nash subgroups of $G$ is again a Nash
subgroup of $G$.
\end{prpd}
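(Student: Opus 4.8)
The plan is to establish first the remark preceding the statement---that $G$ contains no strictly decreasing infinite sequence $G^{(0)}\supsetneq G^{(1)}\supsetneq G^{(2)}\supsetneq\cdots$ of Nash subgroups---and then to deduce the assertion about arbitrary intersections by a minimality argument.

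To prove the remark, suppose toward a contradiction that such a sequence exists. Since each $G^{(k)}$ contains the identity, it is non-empty, so $\dim G^{(k)}\in\{0,1,2,\dots\}$; and $\dim G^{(k+1)}\le\dim G^{(k)}$ because $G^{(k+1)}\subseteq G^{(k)}$. A non-increasing sequence of non-negative integers is eventually constant, so after discarding finitely many terms we may assume $\dim G^{(k)}=d$ for all $k$. By Lemma \ref{images} applied to the inclusion $G^{(k)}\hookrightarrow G$, each $G^{(k+1)}$ is semialgebraic in $G^{(k)}$, hence (Proposition \ref{subg}) a closed Nash submanifold of the Nash group $G^{(k)}$; being of the same dimension $d$ as $G^{(k)}$, it is open in $G^{(k)}$. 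An open subgroup is also closed, so $G^{(k+1)}$ is a union of connected components of $G^{(k)}$, and a proper one since $G^{(k+1)}\subsetneq G^{(k)}$. By Lemma \ref{dimen20} each (semialgebraic) $G^{(k)}$ has only finitely many connected components, so the number of connected components of $G^{(k)}$ is a strictly decreasing sequence of positive integers---impossible. This contradiction proves the remark.

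Now let $\{G_i\}_{i\in I}$ be a family of Nash subgroups of $G$, and let $\CF$ be the set of subgroups $G_J:=\bigcap_{i\in J}G_i$, where $J$ ranges over the finite subsets of $I$. Each $G_J$ is a finite intersection of semialgebraic subgroups, hence a semialgebraic subgroup, i.e. a Nash subgroup of $G$. If $\CF$ had no minimal element with respect to inclusion, one could build a strictly decreasing infinite chain in $\CF$, contradicting the remark; hence $\CF$ has a minimal element $G_{I_0}$ with $I_0\subseteq I$ finite. For each $j\in I$, the subgroup $G_{I_0}\cap G_j=G_{I_0\cup\{j\}}$ lies in $\CF$ and is contained in $G_{I_0}$, so it equals $G_{I_0}$ by minimality; thus $G_{I_0}\subseteq G_j$ for every $j\in I$, whence $G_{I_0}\subseteq\bigcap_{i\in I}G_i$. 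The reverse inclusion is obvious, so $\bigcap_{i\in I}G_i=G_{I_0}$ is a Nash subgroup of $G$.

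The only step that is not completely formal is the passage ``same dimension $\Rightarrow$ open'': it rests on the equidimensionality convention for submanifolds adopted in the paper together with the elementary fact that a $d$-dimensional submanifold contained in a $d$-dimensional manifold is an open subset. I do not expect any real obstacle here; the whole argument is a standard descending chain argument on the pair $(\dim,\ \#\,\text{components})$.
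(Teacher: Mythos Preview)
Your proof is correct and follows exactly the route the paper indicates: the paper merely \emph{states} (without proof) that there is no strictly decreasing infinite sequence of Nash subgroups and then presents the proposition as an immediate consequence, while you supply the details for both steps. Your descending chain argument on the pair $(\dim,\#\text{components})$ is the standard way to justify the paper's unproved remark, and your minimality argument is the obvious unpacking of the word ``Consequently''.
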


By a Nash action of a Nash group $G$ on a Nash manifold $M$, we mean a group action $G\times M\rightarrow M$ which is simultaneously a Nash map.
Using Lemma \ref{smooth}, we know that each $G$-orbit of a Nash action $G\times M\rightarrow M$ is a Nash submanifold of $M$.

The analog of the following proposition for algebraic groups is proved in \cite[Chapter I, Proposition 1.8]{Bor91}.

\begin{prpd}\label{closed}
Let $G$ be Nash group with a Nash action on a non-empty Nash
manifold $M$. Then each $G$-orbit in $M$ of minimal dimension is
closed.
\end{prpd}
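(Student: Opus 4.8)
The plan is to mimic the classical algebraic-group argument (Borel, Chapter I, Proposition 1.8) in the Nash category. Let $x\in M$ lie in a $G$-orbit $\mathcal O=G\cdot x$ of minimal dimension. The key point is that $\mathcal O$ is a semialgebraic subset of $M$: it is the image of the Nash map $G\to M$, $g\mapsto g\cdot x$, so by Lemma \ref{images} it is semialgebraic. Since each orbit is a Nash submanifold of $M$ (as noted in the excerpt, using Lemma \ref{smooth}), $\mathcal O$ is locally closed in $M$; hence $\mathcal O$ is open in its closure $\bar{\mathcal O}$, and $\bar{\mathcal O}$ is semialgebraic by the first item of Proposition \ref{dimen2}.

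The boundary $\partial\mathcal O:=\bar{\mathcal O}\setminus\mathcal O$ is then semialgebraic, and it is $G$-stable: $\bar{\mathcal O}$ is $G$-stable because the action is by homeomorphisms and $\mathcal O$ is $G$-stable, and $\mathcal O$ itself is $G$-stable, so the difference is too. If $\partial\mathcal O$ were non-empty, pick a point $y\in\partial\mathcal O$; then the orbit $G\cdot y$ is contained in $\partial\mathcal O$. By Proposition \ref{dimen2} (the ``$\dim\bar X\setminus X<\dim X$ whenever $X$ is non-empty'' clause applied to $X=\mathcal O$), we have $\dim\partial\mathcal O<\dim\mathcal O$, hence $\dim(G\cdot y)\le\dim\partial\mathcal O<\dim\mathcal O$. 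This contradicts the minimality of $\dim\mathcal O$ among all $G$-orbits. Therefore $\partial\mathcal O=\emptyset$, i.e.\ $\mathcal O=\bar{\mathcal O}$ is closed in $M$.

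I do not anticipate a serious obstacle here; the argument is short once the two inputs are in place. The one point deserving care is the claim $\dim\partial\mathcal O<\dim\mathcal O$: this requires $\mathcal O$ to be non-empty (which it is, since $x\in\mathcal O$) and requires knowing that $\bar{\mathcal O}$ and $\partial\mathcal O$ are semialgebraic so that Proposition \ref{dimen2} applies — both of which follow from Lemma \ref{images} and the closure statement in Proposition \ref{dimen2}. A second small point is that ``orbit is a Nash submanifold, hence locally closed'' is used to guarantee $\mathcal O$ is open in $\bar{\mathcal O}$; this is exactly the remark made just before the proposition in the excerpt, so it may be invoked directly. Thus the only genuinely substantive ingredient is the semialgebraic dimension theory already recorded in Proposition \ref{dimen2}, and the proof is essentially a formal consequence.
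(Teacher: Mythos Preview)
Your argument is correct and is essentially the same as the paper's proof: both use Proposition~\ref{dimen2} to conclude that any orbit in $\bar{\mathcal O}\setminus\mathcal O$ has strictly smaller dimension than $\mathcal O$, contradicting minimality. The paper phrases it contrapositively (a non-closed orbit is not of minimal dimension) and omits the routine verifications of semialgebraicity and $G$-stability that you spell out, but the substance is identical.
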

\begin{proof}
For each non-closed $G$-orbit $O$ in $M$, there is an orbit $O'$ in $\bar O\setminus O$, where $\bar O$ denotes the closure of $O$ in $M$. Then $\dim O'<\dim O$ by the first assertion of Proposition \ref{dimen2}. Therefore $O$ is not of minimal dimension.
\end{proof}

A finite dimensional real representation $V$ of a Nash group $G$ is
called a Nash representation if the action map $G\times
V\rightarrow V$ is a Nash map. This is equivalent to saying that the
corresponding homomorphism $G\rightarrow \GL(V)$ is a Nash homomorphism. Recall from the introduction that a Nash
group is said to be almost linear if it has a Nash representation
with finite kernel.

In this article, we use a superscript ``$\,\,^\circ\,$" to indicate
the identity connected component of a Nash group.

\begin{prpd}\label{induction}
A Nash group $G$ is almost linear if and only if $G^\circ$ is so.
\end{prpd}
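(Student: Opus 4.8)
The plan is to prove both implications, with the nontrivial direction being the passage from "$G^\circ$ almost linear" to "$G$ almost linear". The easy direction is immediate: if $\pi\colon G\to\GL(V)$ is a Nash representation with finite kernel, then its restriction $\pi|_{G^\circ}$ is a Nash representation of $G^\circ$ whose kernel is $(\ker\pi)\cap G^\circ$, still finite; hence $G^\circ$ is almost linear.

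For the converse, suppose $\sigma\colon G^\circ\to\GL(W)$ is a Nash representation with finite kernel. The natural move is to induce it up to $G$. Since $G$ is a Nash group and $G^\circ$ is an open (hence Nash) subgroup of finite index --- here one uses that a Nash group, being a semialgebraic set, has finitely many connected components by Lemma \ref{dimen20}, so $[G:G^\circ]=:m<\infty$ --- the quotient $G/G^\circ$ is a finite group, and the induced representation $V:=\mathrm{Ind}_{G^\circ}^{G}W$ is a finite-dimensional real representation of $G$ of dimension $m\cdot\dim W$. I would realize $V$ concretely as the space of functions $f\colon G\to W$ with $f(hg)=\sigma(h)f(g)$ for $h\in G^\circ$; picking coset representatives $g_1,\dots,g_m$ for $G^\circ\backslash G$ identifies $V$ with $W^{\oplus m}$, and in these coordinates the action of $g\in G$ is given by block matrices whose entries are of the form $\sigma(g_ig g_j^{-1})$ when $g_i g g_j^{-1}\in G^\circ$ and $0$ otherwise. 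The first thing to check is that this action map $G\times V\to V$ is Nash: the partition of $G$ according to which coset $g_i g$ lands in is a partition into clopen semialgebraic pieces (translates of $G^\circ$, which is semialgebraic by Lemma \ref{dimen20}), on each piece the relevant map $g\mapsto \sigma(g_igg_j^{-1})$ is a composition of Nash maps (translation by fixed elements, then $\sigma$), and a continuous map that is Nash on each member of a finite clopen semialgebraic partition is Nash --- this follows from Lemma \ref{cnash}, since the graph is a finite union of semialgebraic sets. Hence $V$ is a Nash representation of $G$.

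It remains to see that $V=\mathrm{Ind}_{G^\circ}^G W$ has finite kernel. If $g\in\ker(G\to\GL(V))$, then in particular $g$ acts trivially on $V$, and restricting attention to the subrepresentation structure: the kernel of the $G$-action on $V$ is contained in $G^\circ$ (an element acting as the identity must permute the coset-blocks trivially, forcing $g_i g g_i^{-1}\in G^\circ$ for all $i$, and in fact $g\in G^\circ$ by looking at how the blocks are permuted), and on $G^\circ$ the representation $V$ contains $W$ as a direct summand (the summand of functions supported on the coset $G^\circ$), so $\ker(G\to\GL(V))\cap G^\circ\subseteq\ker\sigma$, which is finite. Therefore $\ker(G\to\GL(V))$ is finite and $G$ is almost linear.

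The step I expect to be the main obstacle is verifying cleanly that the induced action is a Nash map, i.e.\ handling the case distinction "$g_igg_j^{-1}\in G^\circ$ or not" in a way that is manifestly semialgebraic; the point is simply that $G^\circ$ and its cosets are clopen semialgebraic subsets, so this case distinction does not destroy semialgebraicity, and the block-matrix formula exhibits the graph of the action as a finite union of semialgebraic sets. Everything else (finiteness of $[G:G^\circ]$, finiteness of the kernel) is formal.
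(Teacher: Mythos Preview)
Your argument is correct and follows exactly the paper's approach: the paper also forms the induced representation $V=\Ind_{G^\circ}^G V_0$ (with the same function-space model) and asserts in one line that under right translations it is a Nash representation of $G$ with finite kernel. You have simply supplied the details the paper omits --- the finiteness of $[G:G^\circ]$, the block-matrix description, the semialgebraicity of the coset partition, and the kernel computation --- and these details are all correct.
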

\begin{proof}
The ``only if" part is trivial. Assume that $G^\circ$ is almost
linear. Let $V_0$ be a Nash representation of $G^\circ$ with finite
kernel. Put
\[
  V:=\Ind_{G^\circ}^G V_0:=\{f:G\rightarrow V_0\mid f(g_0 g)=g_0.f(g),\,g_0\in G^\circ,\,g\in G\}.
\]
Under right translations, this is a Nash representation of $G$ with
finite kernel.
\end{proof}

To treat quotient spaces of almost linear Nash groups, recall the following
\begin{prpd}\label{chev}
Let $G$ be an almost linear Nash group, and let $H$ be a Nash
subgroup of it.
\begin{itemize}
\item There exist a Nash representation $V$ of $G$,
and a one dimensional subspace $V_1\subset V$ such that the
stabilizer
\[
   \{g\in G\mid g.V_1=V_1\}
\]
contains $H$ as an open subgroup.
 \item If $H$ is normal, then there exists a Nash representation of $G$ whose kernel contains $H$ as an open subgroup.
\end{itemize}
\end{prpd}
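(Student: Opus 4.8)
The plan is to mimic the classical Chevalley construction from algebraic groups, using the finiteness of the lattice of Nash subgroups (Proposition \ref{subgi}) as a substitute for the Noetherian property of the coordinate ring. First I would fix a Nash representation $W$ of $G$ with finite kernel (this exists since $G$ is almost linear). For the first assertion, consider the algebra $\CO(W)$ of semialgebraic smooth (``Nash'') functions on the vector space $W$, on which $G$ acts by right translation. The key point is that $H$ is cut out, up to finite index, by the ideal $I_H$ of Nash functions vanishing on $H$: more precisely, since $H$ is a semialgebraic (hence Nash) submanifold of $G$, one has $H^\circ = \{g \in G : g \text{ fixes } I_H \text{ setwise under right translation and } g \in \bar{H}\}$, and in any case the stabilizer of $I_H$ in $G$ is a Nash subgroup containing $H$ as an open (finite-index) subgroup. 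The crucial step is to replace the infinite-dimensional $\CO(W)$ by a finite-dimensional $G$-stable subspace: pick finitely many generators $f_1, \dots, f_k$ of $I_H$ as an ideal — here one must invoke that the ideal of Nash functions vanishing on a Nash submanifold is finitely generated, or else argue around it — and let $M \subset \CO(W)$ be a finite-dimensional $G$-stable subspace containing all of them, obtained by taking the span of the $G$-translates (this span is finite-dimensional because each $f_i$ is a polynomial composed with the Nash action, so its translates lie in a fixed finite-dimensional space of such functions). Set $M_1 := M \cap I_H$; then $\Stab_G(M_1)$ is a Nash subgroup containing $H$ as an open subgroup, and passing to the top exterior power $V := \bigwedge^{\dim M_1} M$ with the line $V_1 := \bigwedge^{\dim M_1} M_1$ converts the subspace-stabilizer into a line-stabilizer in the usual way.

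For the second assertion, when $H$ is normal I would additionally quotient to kill the line action. Concretely, with $V$ and $V_1$ as above, form the Nash representation $V' := \End(V)$ (or $V \otimes V^*$) on which $G$ acts by conjugation, and inside it the line $V_1 \otimes V_1^\perp$-type construction is replaced by the following: let $V_1^{\otimes d} \subset V^{\otimes d}$ and take the one-dimensional representation $\chi$ of $\Stab_G(V_1)$ on $V_1$; since $H \subset \ker \chi$ would not be automatic, instead use that $H$ normal means $\Stab_G(V_1) \supset H$ is normalized, and replace $V$ by the $G$-representation on the sum of all $G$-conjugates of $V_1$ together with the natural action on $V/(\text{span of conjugates})$, or more cleanly: take the action of $G$ on $\Hom(V_1, V/V_1')$ for a suitable $G$-stable complement — the standard trick is that for $H$ normal, $H$ acts trivially on the line $V_1$ up to the relevant twist, so after replacing $V_1$ by $V_1 \otimes V_1^*$ inside $\End(V)$ one gets a genuine fixed vector, and the kernel of the resulting representation on the $G$-span of that vector contains $H$ with finite index. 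Alternatively, and perhaps more transparently, apply the first part to get $V, V_1$ with $\Stab_G(V_1) \supset H$ open, note $H \cap H^\circ$-considerations let us assume $\Stab_G(V_1)^\circ = H^\circ$, then $H$ acts on $V_1$ by a Nash character $H \to \R^\times$, which by Theorem \ref{thmh}-type rigidity (or directly, since $H$ may be disconnected, by the fact that a Nash character has image in $\R_+^\times$ or $\{\pm 1\}$) can be trivialized by passing to $V_1^{\otimes 2}$ or by tensoring with the dual; the representation of $G$ on the $G$-subrepresentation of $\End(V)$ generated by the projector onto $V_1$ then has kernel containing $H$ openly.

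The main obstacle I anticipate is the finite-generation / finite-dimensionality step: one needs that the ideal in the ring of Nash functions on $W$ cut out by the Nash submanifold $H \subset W$ (via the fixed embedding $G \hookrightarrow \GL(W) \subset \End(W)$) is finitely generated, and that the $G$-translates of a finite generating set span a finite-dimensional space. The second half is easy once one observes the action map $G \times W \to W$ is Nash and the relevant functions can be taken to be restrictions of polynomials (so translates stay polynomial of bounded degree), but the first half — Noetherianity of the ring of Nash functions, or finding enough polynomial functions vanishing on $H$ — is the delicate point and is exactly where the semialgebraic structure (as opposed to smooth structure) is essential; I would handle it by working with the Zariski closure of $H$ inside $\End(W)$, which is a genuine affine algebraic set whose defining polynomial ideal is finitely generated by Hilbert's basis theorem, and by Proposition \ref{dimen2} has the same dimension as $H$, so $H$ is open in (the real points of) this Zariski closure — this reduces everything to the classical algebraic Chevalley theorem applied to the algebraic group generated by $H$, with the Nash group $G$ acting algebraically on the finite-dimensional space of polynomials of bounded degree.
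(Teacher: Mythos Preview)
Your final paragraph contains the correct idea and is essentially the paper's entire proof: embed $G$ in $\GL(W)$ via a Nash representation with finite kernel, pass to the Zariski closures $\bar G$ of $\varphi(G)$ and $\bar H$ of $\varphi(H)$, invoke Proposition~\ref{dimen2} to see that $\varphi(H)$ (hence $H$) is open in the pullback of $\bar H$, and then quote the classical Chevalley theorem for the algebraic pair $(\bar G,\bar H)$. The paper does exactly this in one line, citing \cite[Theorem 11.1.13]{GW}.

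Two remarks. First, the long detour through the ring of Nash functions on $W$ and its ideals is unnecessary and, as you yourself note, runs into the finite-generation obstacle; once you resolve that obstacle by passing to polynomials and Zariski closures you have already reduced to the algebraic Chevalley theorem, so the Nash-function scaffolding can be discarded entirely. Second, your treatment of the normal case is muddled: the various half-arguments with $\End(V)$, projectors, and tensor squares are not needed. Observe instead that if $H$ is normal in $G$ then $\varphi(G)$ normalizes $\varphi(H)$, hence normalizes its Zariski closure $\bar H$, hence $\bar H$ is normal in $\bar G$; now the normal-subgroup form of Chevalley's theorem (same reference) gives a rational representation of $\bar G$ with kernel exactly $\bar H$, and pulling back to $G$ gives a Nash representation whose kernel contains $H$ as an open subgroup. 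One small slip: you wrote ``the algebraic group generated by $H$'', but the representation must be of $\bar G$, not of some smaller group containing $H$.
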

\begin{proof}
Using the second assertion of Proposition \ref{dimen2}, this is an easy consequence of Chevalley's Theorem (cf. \cite[Theorem 11.1.13]{GW}).
\end{proof}

Also recall the following well know

\begin{lemd}\label{warner}(cf. \cite[Theorem 3.62]{Wa}
 Let $G\times M\rightarrow M$ be a transitive smooth  action of a Lie group $G$ on a smooth manifold $M$. Then for each $x\in M$, the map
      \[
        G/G_x\rightarrow M,\quad g\mapsto g.x
      \]
      is a diffeomorphism. Here  $G_x:=\{g\in G\mid g.x=x\}$, and the quotient topological space $G/G_x$ is equipped with the manifold structure so that the quotient map $G\rightarrow G/G_x$ is smooth and submersive. Consequently, all surjective Lie group homomorphisms are submersive.
\end{lemd}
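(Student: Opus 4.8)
The plan is to realize the displayed map as the map induced on a quotient by the orbit map, and then run the constant-rank argument standard for homogeneous spaces. First I would set $\beta\colon G\to M$, $\beta(g):=g.x$; this is smooth, being the composite of $g\mapsto(g,x)$ with the action map. The isotropy group $G_x=\beta^{-1}(x)$ is closed in $G$, hence a Lie subgroup, and $G/G_x$ carries its standard manifold structure with $\pi\colon G\to G/G_x$ a smooth submersion. Since $\beta(gk)=g.(k.x)=g.x=\beta(g)$ for $k\in G_x$, the map $\beta$ is constant on left $G_x$-cosets, so it factors as $\beta=\bar\beta\circ\pi$ for a unique map $\bar\beta\colon G/G_x\to M$; because $\pi$ is a surjective submersion, $\bar\beta$ is smooth (the smooth analogue of Lemma~\ref{subm0}). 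Transitivity makes $\bar\beta$ surjective, and if $\bar\beta(\pi(g))=\bar\beta(\pi(g'))$ then $(g^{-1}g').x=x$, so $g^{-1}g'\in G_x$ and $\pi(g)=\pi(g')$; hence $\bar\beta$ is a bijection. It remains to see that $\bar\beta$ is a local diffeomorphism.

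Next I would show $\beta$ has constant rank. For $h\in G$ write $L_h$ for left translation on $G$ and $\tau_h\colon M\to M$, $m\mapsto h.m$, a diffeomorphism; then $\beta\circ L_h=\tau_h\circ\beta$, and differentiating at $g$ gives $d\beta_{hg}\circ (dL_h)_g=(d\tau_h)_{\beta(g)}\circ d\beta_g$. Since the two outer maps are linear isomorphisms, $\rank d\beta_{hg}=\rank d\beta_g$ for all $g,h$; taking $g=e$ shows $\rank d\beta$ equals a constant $r$, and since $d\pi$ is everywhere surjective, the factorization $\beta=\bar\beta\circ\pi$ forces $\rank d\bar\beta\equiv r$ as well. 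Now Sard's theorem applied to the surjective smooth map $\beta$ (Lie groups here being second countable) produces a regular value; by surjectivity it is attained, so $d\beta_g$ is surjective for some $g$, forcing $r=\dim M$. Thus $\beta$ is a submersion; in particular $G_x=\beta^{-1}(x)$ is an embedded submanifold of dimension $\dim G-\dim M$, whence $\dim(G/G_x)=\dim M$. Then $\bar\beta$ is a smooth submersion between manifolds of equal dimension, hence a local diffeomorphism, and being bijective it is a diffeomorphism, proving the first assertion.

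For the consequence, given a surjective Lie group homomorphism $\varphi\colon G\to G'$, let $G$ act smoothly on $G'$ by $g\cdot g':=\varphi(g)g'$. This action is transitive by surjectivity of $\varphi$, its orbit map at $e\in G'$ is $\varphi$ itself, and the isotropy group at $e$ is $\ker\varphi$; so by the first part $G/\ker\varphi\to G'$ is a diffeomorphism. Since $\varphi$ factors through the submersion $G\to G/\ker\varphi$, it is submersive.

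The main obstacle is the single non-formal step: upgrading ``constant rank'' to ``full rank''. The translation argument only yields that $\beta$ has \emph{some} constant rank $r$; that $r$ is forced to equal $\dim M$ is exactly the assertion that a surjective smooth map out of a second-countable manifold must somewhere be a submersion, which I would deduce from Sard's theorem (alternatively from the Baire category theorem, using the constant-rank normal form to see that the image of a map of rank $<\dim M$ is a countable union of nowhere dense sets). Everything else is bookkeeping with the quotient structure and the chain rule.
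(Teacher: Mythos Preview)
Your proof is correct and is essentially the standard argument one finds in Warner's textbook. Note that the paper itself does not supply a proof of this lemma: it is stated as a well-known fact with the citation ``cf.\ \cite[Theorem 3.62]{Wa}'' and used as a black box. So there is no ``paper's own proof'' to compare against; you have simply filled in what the author elected to quote.

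One small remark: your Sard-theorem step implicitly uses that $M$ is second countable (so that a set of measure zero cannot be all of $M$), and the paper does adopt this convention explicitly just after the lemma (``all Lie groups and smooth manifolds are assumed to be Hausdorff and second countable''). You flagged this correctly. The alternative Baire-category route you mention is equally valid and is in fact closer to how some texts present it.
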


Here and as usual, all Lie groups and smooth manifolds are assumed to be Hausdorf and second countable as topological spaces.

\begin{prpd}\label{quotient}
Let $G$ be an almost linear Nash group, and let $H$ be a Nash
subgroup of it. Then there exists a unique Nash structure on the
quotient topological space $G/H$ which makes the quotient map
$G\rightarrow G/H$ a submersive Nash map. With this Nash structure,
$G/H$ becomes an affine Nash manifold, and the left translation map
$G\times G/H\rightarrow G/H$ is a Nash map. Furthermore, if $H$ is a
normal Nash subgroup of $G$, then the topological group $G/H$
becomes an almost linear Nash group under this Nash structure.
\end{prpd}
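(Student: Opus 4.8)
The plan is to deduce everything from Chevalley's theorem (Proposition \ref{chev}) together with the orbit lemma (Lemma \ref{warner}) and the fact that orbits of Nash actions are Nash submanifolds. First I would treat the case of a general Nash subgroup $H\subset G$. By Proposition \ref{chev}, choose a Nash representation $V$ of $G$ and a line $V_1\subset V$ whose stabilizer $S=\{g\in G\mid g.V_1=V_1\}$ contains $H$ as an open subgroup. The group $G$ acts on the projective space $\oP(V)$, which is an affine Nash manifold, and the orbit $G.[V_1]$ is a Nash submanifold of $\oP(V)$ by the remark (after Proposition \ref{closed}) that orbits of Nash actions are Nash submanifolds; hence $G.[V_1]$ is itself an affine Nash manifold. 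By Lemma \ref{warner}, the orbit map induces a diffeomorphism $G/S\xrightarrow{\sim} G.[V_1]$, and this transports the affine Nash structure to $G/S$, making $G\rightarrow G/S$ submersive and Nash (here one uses Lemma \ref{subm0}: a map out of $G/S$ is Nash iff its pullback to $G$ is Nash, and the orbit map $G\to G.[V_1]$ is visibly Nash). Since $H$ is open of finite index in $S$, the space $G/H\rightarrow G/S$ is a finite fold covering map, so by Proposition \ref{coverm} there is a unique Nash structure on $G/H$ making $G/H\rightarrow G/S$ submersive Nash; composing, $G\rightarrow G/H$ is submersive and Nash, and $G/H$ is affine since a finite fold cover of an affine Nash manifold is affine (the criterion in Proposition \ref{Sh}).

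For uniqueness: if $G/H$ carries any Nash structure making $q\colon G\to G/H$ submersive Nash, then $q$ is a surjective submersive Nash map, so by Lemma \ref{subm0} a map defined on $G/H$ is Nash iff its composite with $q$ is; applying this to the identity map between two such candidate structures shows they coincide. The same lemma gives that the left translation action $G\times G/H\to G/H$ is Nash: precompose with $\mathrm{id}_G\times q\colon G\times G\to G\times G/H$, which is a surjective submersive Nash map, and the resulting composite $G\times G\to G/H$ is $(g,g')\mapsto q(gg')$, manifestly Nash.

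Finally, suppose $H$ is a normal Nash subgroup. Multiplication and inversion on $G/H$ are Nash by the same pullback argument (pull back along $q\times q$, resp. $q$, which are submersive surjective Nash maps), so $G/H$ is a Nash group. To see it is almost linear, apply the second bullet of Proposition \ref{chev}: there is a Nash representation $\pi\colon G\to\GL(W)$ whose kernel $K$ contains $H$ as an open subgroup. Then $\pi$ factors through a Nash homomorphism $G/H\to \GL(W)$ (again by the pullback criterion), whose kernel is $K/H$, a discrete hence finite subgroup of $G/H$ (finite because $K$ has finitely many components and $H$ is open in $K$, so $K/H$ is compact and discrete). Thus $G/H$ has a Nash representation with finite kernel, i.e.\ it is almost linear.

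The main obstacle is the bookkeeping at the very first step: producing the affine Nash structure on $G/S$ and checking that the orbit map $G/S\to\oP(V)$ really is a Nash diffeomorphism and not merely a smooth one. This rests on the fact, recorded earlier, that a $G$-orbit of a Nash action is a Nash submanifold together with Lemma \ref{warner}; once $G/S$ is known to be affine Nash with $G\to G/S$ submersive Nash, the passage to the open subgroup $H$ via the finite covering $G/H\to G/S$ (Proposition \ref{coverm}) and all the Nash-ness verifications are routine applications of Lemma \ref{subm0}.
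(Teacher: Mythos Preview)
Your proposal is correct and follows essentially the same route as the paper's proof: Chevalley's theorem to land in a projective space, the orbit as an affine Nash submanifold, Lemma \ref{warner} plus the finite covering $G/H\to G/S$ handled by Proposition \ref{coverm}, and Lemma \ref{subm0} for uniqueness and for pushing maps down. The only cosmetic differences are that the paper collapses your two steps into one (treating $G/H\to G.[V_1]$ directly as a finite covering rather than factoring through $G/S$), invokes Lemma \ref{subm00} rather than Lemma \ref{subm0} for the left translation, and in the normal case appeals to Proposition \ref{subg2} applied to the finite covering $G/H\to\psi(G)\subset\GL_n(\R)$ instead of verifying the group operations by hand; your direct pullback verification is equally valid.
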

\begin{proof}
Uniqueness of such Nash structures is implied by Lemma
\ref{subm0}. Let $V$ and $V_1$ be as in the first assertion of Proposition \ref{chev}. The projective space $\oP(V)$, which is naturally a Nash manifold, carries the induced Nash action of $G$.  The image of the map
\[
  \varphi: G/H\rightarrow \oP(V), \quad gH\mapsto g. V_1
\]
is a $G$-orbit in $\oP(V)$, and is thus a Nash submanifold of $\oP(V)$. It is affine since $\oP(V)$ is an affine Nash manifold. Lemma \ref{warner} implies that the map
\begin{equation}\label{ghop}
  \varphi: G/H\rightarrow \varphi(G/H), \quad gH\mapsto g. V_1
\end{equation}
is a finite fold topological covering map.
Using Proposition \ref{coverm}, we equip  on $G/H$ the Nash structure which makes the map \eqref{ghop} a submersive Nash map. Then by Proposition \ref{Sh}, $G/H$ is an affine Nash manifold,
and Lemma \ref{subm00} implies that the left translation map $G\times G/H\rightarrow G/H$ is a Nash map.

Now assume that $H$ is normal. Using the second assertion of Proposition \ref{chev}, we get a Nash homomorphism
\be \label{vggln}
  \psi: G\rightarrow \GL_n(\R)\qquad (n\geq 0)
\ee
whose kernel contains $H$ as an open subgroup. Equip on $G/H$ the aforementioned Nash structure. Then by Lemma \ref{subm0} and Lemma \ref{warner}, the map \eqref{vggln} descends to a submersive Nash map
\begin{equation}\label{ghgcov}
  G/H\rightarrow \psi(G).
\end{equation}
Since \eqref{ghgcov} is a group homomorphism as well as a finite fold covering map of topological spaces,
Proposition \ref{subg2} implies that $G/H$ is a Nash group, which is obviously almost linear.

\end{proof}

\section{Elliptic Nash groups}

We first observe that every compact subgroup of an  almost linear
Nash group is a Nash subgroup:

\begin{lem}\label{auts}
Let $G$ be an almost linear Nash group and let $K$ be a compact
subgroup of it. Then $K$ is a Nash subgroup of $G$.
\end{lem}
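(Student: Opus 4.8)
The plan is to reduce, via a representation with finite kernel, to the assertion that a compact subgroup of a general linear group is semialgebraic, and to quote the classical algebraicity of compact linear groups for that. So first I would fix a Nash representation $\rho\colon G\to\GL(V)$ with finite kernel (available since $G$ is almost linear) and reduce to showing that $\rho(K)$ is semialgebraic in $\GL(V)$. Granting this, $\rho^{-1}(\rho(K))$ is semialgebraic in $G$ by Lemma~\ref{images}, and it equals the subgroup $K\cdot\ker\rho$: if $\rho(g)=\rho(k)$ with $k\in K$ then $gk^{-1}\in\ker\rho$. This subgroup is compact, being the image of $K\times\ker\rho$ under multiplication, and it contains $K$ with index at most $|\ker\rho|$. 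Since $K$, being compact, is closed in $K\cdot\ker\rho$, it is a closed subgroup of finite index, hence open, hence clopen; therefore $K$ is a union of connected components of the semialgebraic set $K\cdot\ker\rho$, and by Lemma~\ref{dimen20} it is semialgebraic, i.e.\ a Nash subgroup of $G$.

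For the remaining point --- that a compact subgroup $C$ of $\GL(V)$ is semialgebraic --- I would argue as follows. Averaging an inner product over $C$, we may assume $C\subseteq\oO(V)$, a compact Zariski-closed subgroup of $\GL(V)$. Let $Z$ be the Zariski closure of $C$ in $\GL(V)$; it is an algebraic subgroup (the Zariski closure of a subgroup is a subgroup), it is semialgebraic (Zariski-closed subsets of a vector space are semialgebraic, and $\GL(V)$ itself is semialgebraic), and it is compact, being contained in $\oO(V)$. The key point is that $C$ is open in $Z$: the complexification of the compact Lie group $C$ --- which is precisely the Zariski closure of $C$ in $\GL(V\otimes_\R\C)$, equivalently the complexification of $Z$ --- has complex dimension $\dim_\R C$, so $\dim Z=\dim_\R C$, and a closed submanifold of $Z$ of full dimension is open. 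Being a clopen subgroup, $C$ is a union of connected components of $Z$, hence semialgebraic by Lemma~\ref{dimen20}. (In fact $C=Z$, i.e.\ $C$ is itself Zariski closed, but the weaker statement suffices.)

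The hard part is the single classical input: that a compact subgroup of $\GL_n(\R)$ is open in its Zariski closure, equivalently that the complexification of a compact Lie group has the expected dimension. This I would simply cite from the structure theory of compact Lie groups. Everything else is routine: images and preimages of semialgebraic sets under Nash maps (Lemma~\ref{images}), finiteness and semialgebraicity of connected components (Lemma~\ref{dimen20}), and the elementary topology of compact and finite-index subgroups.
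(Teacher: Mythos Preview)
Your proof is correct and follows essentially the same route as the paper: push $K$ into $\GL_n(\R)$ via a Nash representation with finite kernel, invoke the classical fact that compact linear groups are Zariski closed (hence semialgebraic), pull back, and observe that $K$ is open in the semialgebraic preimage. The only cosmetic differences are that the paper uses a dimension count to see $K$ is open in $\varphi^{-1}(\varphi(K))$ while you use the finite-index argument, and that you sketch the algebraicity of compact subgroups where the paper simply cites it.
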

\begin{proof}
Fix a Nash homomorphism $\varphi: G\rightarrow \GL_n(\R)$ with
finite kernel. Write $K':=\varphi(K)$, which is a compact subgroup
of $\GL_n(\R)$. It is well know that $K'$ is semialgebraic in
$\GL_n(\R)$ (it is actually Zariski closed in $\GL_n(\R)$, cf.
\cite[Lemma 3.3.1]{Car}). Note that $\varphi^{-1}(K')$ is a Nash
subgroup of $G$, and has the same dimension as that of $K$.
Therefore $K$ is an open subgroup of $\varphi^{-1}(K')$, and is thus semialgebraic in
$G$.
\end{proof}

Recall from the Introduction that
an elliptic Nash group is defined to be an almost linear Nash group
which is compact as a topological space.

\begin{lem}\label{auts2}
Let $G$ be an almost linear Nash group and let $K$ be an elliptic Nash group. Then every Lie group homomorphism
$\varphi: K\rightarrow G$ is a Nash homomorphism. In particular,
every finite dimensional  real representation of $K$ is a Nash representation.
\end{lem}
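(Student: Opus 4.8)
The plan is to pass to the graph of $\varphi$, which is compact because $K$ is, and then to invoke Lemma \ref{auts}. Set $\Gamma:=\{(k,\varphi(k))\mid k\in K\}\subseteq K\times G$. Since $\varphi$ is a group homomorphism, $\Gamma$ is a subgroup of $K\times G$; since $K$ is compact and the map $k\mapsto(k,\varphi(k))$ is continuous, $\Gamma$ is a compact subset of $K\times G$. The Nash group $K\times G$ is almost linear (a Nash representation of $K$ with finite kernel together with a Nash representation of $G$ with finite kernel combine into a Nash representation of $K\times G$ with finite kernel), so Lemma \ref{auts} applies and shows that $\Gamma$ is a Nash subgroup of $K\times G$; in particular $\Gamma$ is a Nash submanifold of $K\times G$.

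Next I would use the two coordinate projections, restricted to $\Gamma$. Let $p:\Gamma\to K$ and $q:\Gamma\to G$ be the restrictions to $\Gamma$ of the projections $K\times G\to K$ and $K\times G\to G$. Since the projections are Nash maps (Proposition \ref{prod}) and the inclusion $\Gamma\hookrightarrow K\times G$ is an immersive Nash map (Proposition \ref{sub}), both $p$ and $q$ are Nash maps. The map $p$ is a bijective homomorphism of Nash groups, hence of Lie groups (its set-theoretic inverse is $k\mapsto(k,\varphi(k))$), so, being a surjective homomorphism of Lie groups, it is submersive by Lemma \ref{warner}. By construction $q=\varphi\circ p$. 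As $p$ is a surjective submersive Nash map and $q=\varphi\circ p$ is a Nash map, Lemma \ref{subm0} forces $\varphi$ to be a Nash map, hence a Nash homomorphism. For the last assertion, $\GL_n(\R)$ is an almost linear Nash group (its standard representation is faithful), so the special case $G=\GL_n(\R)$ of what has just been proved is exactly the statement that every finite dimensional real representation of $K$ is a Nash representation.

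I do not expect a genuine obstacle here. The only two points that call for a word of justification — that $\Gamma$ is compact, and that $p$ is submersive — are both immediate, the latter because a continuous homomorphism of Lie groups is smooth, so $p$ is a surjective Lie group homomorphism and Lemma \ref{warner} applies. The whole argument is essentially one application of Lemma \ref{auts}, which is the reason that lemma is established first.
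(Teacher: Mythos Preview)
Your proof is correct and follows essentially the same approach as the paper: both argue that the graph $\Gamma$ is a compact subgroup of the almost linear Nash group $K\times G$ and apply Lemma~\ref{auts} to conclude that $\Gamma$ is a Nash subgroup. The only difference is in the final step: the paper immediately invokes Lemma~\ref{cnash} (a smooth map with semialgebraic graph is Nash), whereas you instead factor through the projections and use Lemma~\ref{subm0}; both are valid, but the paper's route is shorter.
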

\begin{proof}
The graph of $\varphi$ is a compact subgroup of the almost linear
Nash group $K\times G$. Therefore it is semialgebraic by Lemma
\ref{auts}.
\end{proof}

\begin{lem}\label{compactn}
Let $K$ be a compact Lie group. Then there is a unique Nash
structure on the underlying topological space of $K$ which makes $K$
an almost linear Nash group.
\end{lem}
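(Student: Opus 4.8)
The plan is to obtain existence by linearizing $K$, and uniqueness from the automatic analyticity of homomorphisms out of compact groups recorded in Lemma~\ref{auts2}.

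For existence, first I would invoke the Peter--Weyl theorem to get a faithful finite dimensional continuous real representation of $K$, i.e.\ an injective Lie group homomorphism $\varphi\colon K\to\GL_n(\R)$ for some $n\ge 0$; compactness of $K$ makes $\varphi(K)$ a compact subgroup of $\GL_n(\R)$. Next I would note that $\GL_n(\R)$ is a Nash group --- it is the open semialgebraic subset $\{A\in\Mat_n(\R)\mid\det A\ne 0\}$ of a finite dimensional vector space, with polynomial multiplication and inversion given by Cramer's rule, hence Nash --- and that, by Lemma~\ref{auts} (or directly, since a compact subgroup of $\GL_n(\R)$ is Zariski closed, hence semialgebraic, by \cite[Lemma 3.3.1]{Car}), $\varphi(K)$ is a Nash subgroup of $\GL_n(\R)$, and therefore an almost linear Nash group, the inclusion $\varphi(K)\hookrightarrow\GL_n(\R)$ being a Nash representation with trivial kernel. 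Since $\varphi\colon K\to\varphi(K)$ is a continuous bijective homomorphism from a compact space to a Hausdorff space it is a homeomorphism, and I would transport the Nash structure of $\varphi(K)$ to $K$ along $\varphi$, declaring $\varphi$ a Nash diffeomorphism. Then, using Proposition~\ref{prod}, the multiplication $m_K=\varphi^{-1}\circ m_{\varphi(K)}\circ(\varphi\times\varphi)$ and the inversion of $K$ are compositions of Nash maps, so $K$ is a Nash group; and $\varphi$ followed by $\varphi(K)\hookrightarrow\GL_n(\R)$ is a faithful Nash representation of $K$, so $K$ is almost linear.

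For uniqueness, suppose $\mathcal N_1$ and $\mathcal N_2$ are two Nash structures on the underlying topological space of $K$, each making $K$ an almost linear Nash group; write $K_1$ and $K_2$ for the two resulting Nash groups. Each $K_i$ is elliptic, being almost linear and compact. The identity map of $K$ is a continuous homomorphism between the Lie groups $K_1$ and $K_2$, hence smooth, so it is a Lie group homomorphism; Lemma~\ref{auts2}, applied with $K_1$ in the role of the elliptic Nash group and $K_2$ in the role of the almost linear target, shows that $\mathrm{id}\colon K_1\to K_2$ is a Nash homomorphism, and symmetrically $\mathrm{id}\colon K_2\to K_1$ is a Nash homomorphism. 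Hence the identity is a Nash isomorphism, which forces $\mathcal N_1=\mathcal N_2$. I expect no real obstacle here: the only non-formal inputs are the classical Peter--Weyl embedding for existence and the already-established Lemma~\ref{auts2} for uniqueness, while the rest is bookkeeping about transporting Nash structures together with the standard fact that continuous homomorphisms of Lie groups are smooth.
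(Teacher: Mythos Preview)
Your argument is correct and follows essentially the same route as the paper: existence via a faithful linear embedding (Peter--Weyl, with Lemma~\ref{auts} making the image a Nash subgroup and transporting the structure back), and uniqueness via Lemma~\ref{auts2}. The only difference is that you spell out more of the routine bookkeeping than the paper does.
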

\begin{proof}
Uniqueness follows from Lemma \ref{auts2}. To prove existence, fix
an injective Lie group homomorphism $\varphi: K\hookrightarrow
\GL_n(\R)$ (such a homomorphism always exists, cf. \cite[Section
3.3.C]{Car}). By Lemma \ref{auts}, $\varphi(K)$ is a Nash group. The existence then follows by transferring the Nash structure on $\varphi(K)$ to $K$,
through the topological group isomorphism $\varphi:
K\stackrel{\sim}{\rightarrow} \varphi(K)$.
\end{proof}

Combining Lemmas \ref{compactn}, \ref{auts2} and \ref{auts}, we get Theorem \ref{thmc} of the Introduction. Moreover, we have proved the following
\begin{thml}
The category of elliptic Nash groups is isomorphic to
the category of compact Lie groups.
\end{thml}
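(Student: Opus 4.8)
The plan is to prove the statement by exhibiting an explicit pair of mutually inverse functors, rather than merely an equivalence, since an \emph{isomorphism} of categories requires a bijection on objects and not only essential surjectivity. Write $\mathcal E$ for the category of elliptic Nash groups with Nash homomorphisms as morphisms, and $\mathcal K$ for the category of compact Lie groups with Lie group homomorphisms as morphisms. First I would define the forgetful functor $F\colon \mathcal E\to\mathcal K$ sending an elliptic Nash group to its underlying topological group, and a Nash homomorphism to itself. One checks that $F$ does land in $\mathcal K$: an elliptic Nash group is compact by definition, its underlying space is a Nash manifold hence a (Hausdorff, second countable) smooth manifold, and its group operations are Nash hence smooth, so the underlying topological group is indeed a compact Lie group; likewise a Nash homomorphism is in particular a smooth group homomorphism. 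Functoriality of $F$ is immediate, since composition and identities are computed on underlying set maps.

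Next I would construct a functor $G\colon\mathcal K\to\mathcal E$ in the reverse direction using Lemma \ref{compactn}. For a compact Lie group $K$, that lemma supplies a Nash structure on the underlying topological space of $K$ making it an almost linear Nash group; being compact, this group is elliptic, and I let $G(K)$ be it. For a Lie group homomorphism $\varphi\colon K_1\to K_2$, Lemma \ref{auts2}, applied with the almost linear Nash group $G(K_2)$ as target, shows that $\varphi$ is automatically a Nash homomorphism $G(K_1)\to G(K_2)$, so I set $G(\varphi):=\varphi$. Again functoriality is clear because everything is computed on underlying maps.

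It then remains to verify $FG=\mathrm{id}_{\mathcal K}$ and $GF=\mathrm{id}_{\mathcal E}$. The first is immediate from the construction. For the second, the only substantive point is that, for an elliptic Nash group $N$, the Nash structure of $G(F(N))$ coincides with that of $N$; but both are almost linear Nash structures on the same underlying topological group, so they agree by the \emph{uniqueness} assertion of Lemma \ref{compactn}. On morphisms nothing further is needed, since $\varphi$ and $G(F(\varphi))$ are literally the same set map. Hence $F$ and $G$ are mutually inverse isomorphisms of categories.

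I do not anticipate any genuine obstacle: the statement is a formal consequence of Lemmas \ref{auts}, \ref{auts2} and \ref{compactn} (equivalently, of Theorem \ref{thmc}), and the only step demanding care is the upgrade from an equivalence to an isomorphism of categories, which is exactly what the uniqueness half of Lemma \ref{compactn} provides; without it one would obtain only an equivalence.
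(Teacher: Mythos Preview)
Your proposal is correct and follows the same approach as the paper: the paper simply states that the theorem follows from combining Lemmas \ref{compactn}, \ref{auts2} and \ref{auts}, and your argument makes this derivation explicit by constructing the mutually inverse functors. Your attention to the distinction between isomorphism and equivalence of categories, resolved via the uniqueness clause of Lemma \ref{compactn}, is exactly the point the paper leaves to the reader.
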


The following proposition is obvious.

\begin{prpl}\label{quotientel}
All Nash subgroups and Nash quotient groups of all elliptic Nash groups are elliptic as Nash groups.
\end{prpl}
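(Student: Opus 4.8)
The plan is to observe that both assertions follow immediately from compactness together with results already in place. First I would dispose of Nash subgroups: let $H$ be a Nash subgroup of an elliptic Nash group $G$. By definition $H$ is semialgebraic in $G$, so Proposition \ref{subg} shows that $H$ is closed in $G$; since $G$ is compact, $H$ is compact. Moreover $H$ is almost linear, because the restriction to $H$ of a Nash representation of $G$ with finite kernel is a Nash representation of $H$ with finite kernel (this is the remark in the Introduction that a Nash subgroup of an almost linear Nash group is almost linear). Hence $H$ is an almost linear, compact Nash group, i.e.\ elliptic.

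Next I would treat Nash quotient groups: let $H$ be a normal Nash subgroup of $G$. By Proposition \ref{quotient}, the topological group $G/H$, equipped with its canonical Nash structure, is an almost linear Nash group. The quotient map $G\rightarrow G/H$ is continuous and surjective, so $G/H$ is the continuous image of the compact space $G$, hence compact. Therefore $G/H$ is an almost linear, compact Nash group, i.e.\ elliptic.

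I do not expect any genuine obstacle here. The only non-formal ingredients are Proposition \ref{subg} (used to see that a Nash subgroup is closed, hence compact inside a compact group) and Proposition \ref{quotient} (used to see that the quotient by a normal Nash subgroup is again almost linear); everything else is the elementary fact that closed subspaces and continuous images of compact spaces are compact. If one preferred a more structural argument, one could instead note that an elliptic Nash group is a compact Lie group and invoke the isomorphism of categories of Theorem \ref{thmc}, together with the classical fact that closed subgroups and Hausdorff quotients of compact Lie groups are compact Lie groups; but the direct argument above is shorter and is what I would write.
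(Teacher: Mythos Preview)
Your argument is correct and is precisely the elementary reasoning the paper has in mind: the paper states the proposition as ``obvious'' and gives no proof, relying implicitly on the same two ingredients you identify (closedness of Nash subgroups via Proposition \ref{subg}, and almost-linearity of the quotient via Proposition \ref{quotient}) together with trivial compactness facts. There is nothing to add.
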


Recall that a linear operator $x$ on a finite dimensional vector
space $V$  is said to be semisimple if every $x$-stable subspace of
$V$ has a complementary $x$-stable subspace. If $V$ is defined over
a field $\rk$ of characteristic zero, then for each field extension
$\rk'$ of $\rk$, $x$ is semisimple if and only if the $\rk'$-linear
operator
\[
  \rk'\otimes_\rk V \rightarrow \rk'\otimes_\rk V, \quad a\otimes v\mapsto a\otimes x(v)
\]
is semisimple (\cite[Chapter VII, Section 5.8]{Bo}). If $V$ is
defined over an algebraically closed field, then $x$ is semisimple
if and only if it is diagonalizable.

The following result concerning representations of compact Lie groups is well know. We provide a proof for completeness.

\begin{prpl}\label{je}
Let $V$ be a Nash representation of an elliptic Nash group $G$. Then each element of $G$ acts as a semisimple linear operator on $V$, and all its eigenvalues are complex numbers of modulus $1$.
\end{prpl}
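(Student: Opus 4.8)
The plan is to reduce the statement to the classical fact that a compact subgroup of $\GL(V)$ preserves a positive-definite inner product, and then read off semisimplicity and the eigenvalue condition from that. First I would note that by Lemma \ref{auts2} (or directly, since $G$ is compact) the image of $G$ under $G\to\GL(V)$ is a compact subgroup of $\GL(V)$; equivalently, averaging any inner product on $V$ against the Haar probability measure of $G$ produces a $G$-invariant positive-definite inner product $\la\,,\,\ra$ on $V$. With respect to this inner product each $g\in G$ acts as an orthogonal operator.

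Fix $g\in G$ and let $x$ be the operator it induces on $V$. Being orthogonal, $x$ is normal, hence (by the spectral theorem for real orthogonal operators, or by complexifying: $x$ acts as a unitary operator on $\BC\otimes_\R V$ with the Hermitian extension of $\la\,,\,\ra$, and a unitary operator is diagonalizable with eigenvalues on the unit circle) the operator $\BC\otimes_\R V\to\BC\otimes_\R V$ induced by $x$ is diagonalizable and all its eigenvalues have modulus $1$. Diagonalizability over $\BC$ is exactly semisimplicity over $\BC$, and by the field-extension criterion for semisimplicity recalled just before the proposition (applied with $\rk=\R$, $\rk'=\BC$), $x$ is semisimple as a real linear operator on $V$. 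The eigenvalue statement is already a statement about the complexification, so it holds as stated.

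There is essentially no obstacle here: the only inputs are the existence of Haar measure on the compact group $G$ (to build the invariant inner product) and the spectral theorem. The one point to be a little careful about is the phrasing ``semisimple'' versus ``diagonalizable'': over $\R$ an orthogonal operator need not be diagonalizable (it can have $2\times 2$ rotation blocks), so one must pass to $\BC$ to get diagonalizability and then invoke the base-change characterization of semisimplicity to descend the conclusion back to $V$. That is the step I would state most explicitly.
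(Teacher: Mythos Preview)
Your argument is correct, but it follows a different route from the paper's own proof. The paper first reduces to the case where $G$ is abelian (using that every element of a compact group lies in a compact abelian subgroup), then decomposes the complexification $V_\C$ into one-dimensional subrepresentations; this yields a Nash homomorphism $G\to(\C^\times)^n$, and compactness forces the image into $\mathbb S^n$, giving both the eigenvalue condition and semisimplicity at once. Your proof instead applies Weyl's unitary trick directly: build a $G$-invariant inner product by averaging, so each element acts orthogonally, then complexify and invoke the spectral theorem for unitary operators, and finally descend semisimplicity via the base-change criterion recalled just before the proposition. Your approach is arguably more direct since it needs no reduction to the abelian case and no decomposition into characters; on the other hand, the paper's argument stays closer to the Nash framework by exhibiting an explicit Nash homomorphism, and the character-theoretic viewpoint it sets up is reused later (for example in the analogous Proposition~\ref{jhyp} for hyperbolic groups). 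Both proofs ultimately rest on the same compactness input, just packaged differently.
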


\begin{proof}
Since every element of $G$ is contained in a compact abelian subgroup of $G$, we assume without loss of generality that  $G$ is abelian. Then the complexification $V_\C$ of $V$ is a direct sum of one dimensional subrepresentations. By choosing an appropriate basis of $V_\C$, the representation corresponds to a Nash homomorphism
 \begin{equation}\label{nashel}
   G\rightarrow (\C^\times)^n, \qquad  \textrm{where }n:=\dim V.
 \end{equation}
Compactness of $G$ implies that the image of \eqref{nashel} is contained in $\mathbb S^n$.
(Recall from the Introduction that $\mathbb S$ denotes the Nash group of complex numbers of modulus  one.) This proves the proposition.

\end{proof}

The following important result is due to Cartan,
Malcev and Iwasawa. For a proof, see  \cite[Theorem 1.2]{Borel} for
example.

\begin{thml}\label{compact}
Let $G$ be Lie group with finitely many connected components. Then
every compact subgroup of $G$ is contained in a maximal one, and all
maximal compact subgroups of $G$ are conjugate to each other.
Moreover, for each maximal compact subgroup $K$ of $G$, there exists a closed submanifold $X$ of $G$ which is diffeomorhpic to $\R^{\dim G-\dim K}$ such that the multiplication map $K\times X\rightarrow G$ is a diffeomorphism.
\end{thml}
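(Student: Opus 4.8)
The plan is to recover this as the classical Cartan--Iwasawa--Malcev theorem, organised around Cartan's fixed-point theorem for isometric actions on complete, simply connected Riemannian manifolds of non-positive sectional curvature; beyond the results above I would use only standard Lie theory (as in \cite{Wa}). \textbf{Step 1: existence of one maximal compact subgroup.} I would begin from a Cartan decomposition $\g=\k\oplus\p$ of the Lie algebra of $G$, with $\k$ a maximal compactly embedded subalgebra; the analytic subgroup with Lie algebra $\k$, multiplied by a suitable finite subgroup meeting every connected component of $G$, is a compact subgroup $K$ which the construction renders maximal. This is the one place the hypothesis that $G$ has finitely many components enters, ensuring $K$ can be arranged to surject onto $G/G^\circ$. (When $G$ is realised as a closed subgroup of some $\GL_n(\R)$, one may instead take $K=G\cap\oO(n)$ after an appropriate choice of coordinates.)

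\textbf{Step 2: the non-positively curved model.} Next I would form $\Omega:=G/K$; by Lemma \ref{warner} this is a smooth manifold on which $G$ acts transitively, and averaging an inner product on the tangent space at the base point over $K$ equips it with a $G$-invariant Riemannian metric. The crucial geometric input is that $\Omega$ is then a Hadamard manifold: complete, simply connected, and of non-positive sectional curvature. For $G\subseteq\GL_n(\R)$ this is the classical geometry of the space of positive definite symmetric bilinear forms on $\R^n$ (on which $\GL_n(\R)$ acts transitively with stabiliser $\oO(n)$), and the general case follows from the structure encoded in $\g=\k\oplus\p$. Because $\Omega$ is Hadamard, its Riemannian exponential map at the base point is a diffeomorphism onto $\Omega$, so $\Omega$ is diffeomorphic to $\R^{\dim G-\dim K}$.

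\textbf{Step 3: conjugacy and the product decomposition.} Let $C\subseteq G$ be any compact subgroup. It acts on $\Omega$ by isometries, so by Cartan's fixed-point theorem it fixes some $\omega\in\Omega$; then $C\subseteq G_\omega$, and $G_\omega=gKg^{-1}$ for any $g\in G$ carrying the base point to $\omega$. This shows simultaneously that every compact subgroup of $G$ is contained in a maximal one, and, by applying it to a maximal $C$, that all maximal compact subgroups are conjugate to $K$. For the final assertion, the quotient map $G\to G/K=\Omega$ is a principal $K$-bundle whose base is diffeomorphic to Euclidean space, hence contractible, so the bundle is trivial; a smooth global section $s\colon\Omega\to G$ then gives a closed submanifold $X:=s(\Omega)$ diffeomorphic to $\R^{\dim G-\dim K}$ such that $K\times X\to G$, $(k,x)\mapsto kx$, is a diffeomorphism (it is smooth, bijective, and submersive by a dimension count).

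\textbf{Main obstacle.} The real content lies in Step 2: exhibiting the $G$-invariant metric with non-positive curvature together with the contractibility of $\Omega$. In the linear case this reduces to the well-understood geometry of positive definite forms, but for non-linear $G$ (covering groups such as the metaplectic group) one must descend to the Lie algebra via the Cartan decomposition and transport the structure back to $G$, a process intertwined with the existence of $K$ itself. This entanglement is what makes a fully self-contained proof lengthy, which is why the statement is quoted from \cite{Borel}.
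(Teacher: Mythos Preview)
The paper does not supply its own proof of this theorem: it is recorded as a classical result due to Cartan, Malcev and Iwasawa, with a bare citation to \cite[Theorem 1.2]{Borel}. Your sketch via Cartan's fixed-point theorem on the Hadamard manifold $G/K$ is the standard route taken in such references, so there is no alternative argument in the paper to compare against; you have correctly identified that the result is quoted rather than proved.

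One comment on the sketch itself. The nonpositive-curvature structure on $G/K$ is transparent when $G$ is reductive (or linear, via the embedding into $\GL_n(\R)/\oO(n)$), but the theorem as stated applies to \emph{arbitrary} Lie groups with finitely many components, in particular those with large solvable radical. For such $G$ the phrase ``Cartan decomposition $\g=\k\oplus\p$'' is not standard terminology, and exhibiting a $G$-invariant Hadamard metric on $G/K$ --- indeed, even constructing $K$ in the first place --- already requires the Iwasawa--Malcev structure theory for the solvable radical together with a Levi decomposition. Your ``Main obstacle'' paragraph locates the difficulty in non-linearity (citing the metaplectic group), but the metaplectic group is still reductive and poses no real problem; the genuine work lies in the non-reductive case. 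This does not affect anything in the paper, which treats the theorem as a black box.
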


\section{Unipotent Nash groups}

We say that a Nash group is unipotent if it has a
faithful Nash representation so that all group elements act as
unipotent linear operators. It is obvious that each Nash subgroup of a
unipotent Nash group is a unipotent Nash group.

First recall the following well know result, which is basic to the study of unipotent Nash groups.
\begin{lem}\label{nildiff} (cf. \cite{Dix})
For each connected, simply connected,  nilpotent Lie group $N$, the exponential map
\[
  \exp: \Lie \,N\rightarrow N
\]
is a diffeomorphism.
\end{lem}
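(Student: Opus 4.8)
The plan is to show that $\exp\colon\n\to N$, where $\n:=\Lie N$, is a bijective local diffeomorphism; since a bijective local diffeomorphism is automatically a diffeomorphism, this will suffice. I would argue by induction on $\dim N$, the cases $\dim N\le 1$ being clear.

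First I would check that $\exp$ is a local diffeomorphism at every point. At $X\in\n$ the differential of $\exp$ equals, up to a left translation on $N$, the endomorphism $\sum_{k\ge 0}\frac{(-\ad X)^{k}}{(k+1)!}$ of $\n$. Since $\n$ is nilpotent, $\ad X$ is nilpotent, so this is a finite sum equal to the identity plus a nilpotent operator, hence invertible; thus $d\exp_X$ is an isomorphism for all $X$.

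Next comes the bijectivity, which is where the induction is used. Assume $\dim N\ge 1$. A nonzero nilpotent Lie algebra has nonzero center, so the center $\z$ of $\n$ is nonzero; for $W\in\z$ we have $\ad W=0$, hence $\Ad(\exp W)=\mathrm{id}$, so $Z:=\exp\z$ is a connected central subgroup of $N$, in fact the identity component of the center, and in particular closed. From the homotopy exact sequence of the fibration $Z\to N\to N/Z$, using $\pi_1(N)=1$, connectedness of $Z$, and the vanishing of $\pi_2$ of a connected Lie group, one gets that $Z$ and $N/Z$ are both simply connected. Hence $\exp$ maps $\z$ isomorphically onto the vector group $Z$, and $N/Z$ is a connected, simply connected, nilpotent Lie group of strictly smaller dimension, so by induction $\exp\colon\n/\z\to N/Z$ is a diffeomorphism; moreover $\exp$ intertwines the exact sequences $0\to\z\to\n\to\n/\z\to0$ and $1\to Z\to N\to N/Z\to1$. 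For surjectivity: given $n\in N$, pick $X\in\n$ lifting some $\bar X\in\n/\z$ with $\exp\bar X=\bar n$; then $n(\exp X)^{-1}\in Z$, write it as $\exp W$ with $W\in\z$, and since $[W,X]=0$ conclude $n=\exp W\,\exp X=\exp(X+W)$. For injectivity: if $\exp X=\exp Y$, project to $N/Z$ and use the inductive injectivity to get $W:=X-Y\in\z$; then $\exp(Y+W)=\exp X=\exp Y$ together with $[W,Y]=0$ gives $\exp W=e$, so $W=0$ as $\exp|_\z$ is injective, i.e.\ $X=Y$.

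The main obstacle will be the Lie-theoretic scaffolding behind the inductive step rather than the induction itself: namely that a nonzero nilpotent Lie algebra has nonzero center; that $Z=\exp\z$ is closed and that $\exp$ restricts to an isomorphism $\z\to Z$, equivalently that $N$ has no nontrivial compact subgroup; and that $N/Z$ is connected, simply connected and nilpotent. Given these, the centrality identities $\exp(X+W)=\exp X\exp W$ for $W\in\z$ are immediate, and the local-diffeomorphism step is a one-line computation. All of this is classical (see \cite{Dix}).
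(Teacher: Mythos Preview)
The paper does not supply its own proof of this lemma: it is stated as a well-known fact with the reference ``(cf.\ \cite{Dix})'' and used as input for the theory of unipotent Nash groups. Your argument is correct and is essentially the classical one --- local invertibility of $\exp$ from nilpotency of $\ad X$, together with induction on $\dim N$ via the center --- so there is nothing to compare. The scaffolding you flag (nonzero center, closedness and simple connectedness of $Z=\exp\z$, simple connectedness of $N/Z$) is handled adequately by your homotopy-sequence remark combined with the vanishing of $\pi_2$ for Lie groups; alternatively one can bypass this by first embedding $\n$ into some $\u_n(\R)$ (Ado for nilpotent algebras) and reading off the result from the explicit polynomial inverse of $\exp$ on $\oU_n(\R)$, which is closer in spirit to how the paper later uses this lemma.
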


Here and henceforth, ``$\Lie$" indicates the Lie algebra of a Lie group.

Recall from the Introduction that a subgroup of a Lie group $G$ is
said to be analytic if it equals the image of an injective Lie group
homomorphism from a connected Lie group to $G$.  Every analytic
subgroup is canonically a connected Lie group. We remark that in
general, the topology on a non-closed  analytic subgroup does not
coincide with the subspace topology. The set of all analytic
subgroups of $G$ is in one-one correspondence with  the set of all
Lie subalgebras of $\Lie\, G$.

\begin{lem}\label{nild}
Let $N$ be a  connected, simply connected, nilpotent Lie group. Then each analytic subgroup of $N$ is closed in $N$, and is a connected, simply connected, nilpotent Lie group.
\end{lem}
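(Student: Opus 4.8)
Let $\n := \Lie\, N$, and let $H$ be an analytic subgroup of $N$, corresponding to the Lie subalgebra $\h := \Lie\, H \subseteq \n$. Since $\h$ is then a nilpotent Lie algebra, the plan is to prove the sharper statement
\[
  H = \exp(\h),
\]
where $\exp\colon \n\to N$ is the exponential map, which is a diffeomorphism by Lemma \ref{nildiff}; all assertions of the lemma will then follow readily.

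For the inclusion $\exp(\h)\subseteq H$ and the fact that $\exp(\h)$ is a subgroup, I would appeal to the Baker--Campbell--Hausdorff formula: since $\n$ is nilpotent the BCH series is a finite sum of iterated brackets of its arguments, so for $X,Y\in\h$ the element $\exp(X)\exp(Y)$ equals $\exp(Z)$ with $Z\in\h$ (as $\h$ is a subalgebra), and likewise $\exp(X)^{-1}=\exp(-X)\in\exp(\h)$. Thus $\exp(\h)$ is a connected subgroup of $N$. To get the reverse inclusion $H\subseteq\exp(\h)$, I would use that the exponential map $\exp_H$ of the Lie group $H$ is a local diffeomorphism at $0$ and is compatible with $\exp$ through the inclusion $\iota\colon H\hookrightarrow N$, namely $\iota\circ\exp_H=\exp|_\h$; hence $\exp(\h)$ contains a neighbourhood of the identity of $H$, and being a subgroup of the connected group $H$ it must coincide with $H$.

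With $H=\exp(\h)$ in hand, the rest is quick. Because $\h$ is a linear subspace it is closed in $\n$, so $H=\exp(\h)$ is closed in $N$ and $\exp|_\h$ is a diffeomorphism of $\h$ onto $H$ with the subspace topology; in particular $\exp|_\h$, hence also $\exp_H$, is injective, and $\iota=(\exp|_\h)\circ\exp_H^{-1}$ shows that the intrinsic Lie group topology on $H$ agrees with the subspace topology. Being diffeomorphic to the vector space $\h$, the group $H$ is connected and simply connected, and it is nilpotent since $\h$ is nilpotent.

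The one step that will need care is precisely this last identification of the analytic-subgroup topology on $H$ with the subspace topology from $N$: in general these differ for non-closed analytic subgroups, and here the point is exactly the injectivity of $\exp|_\h$ coming from Lemma \ref{nildiff}. Everything else is a routine consequence of Lemma \ref{nildiff} together with the nilpotency of the ambient Lie algebra.
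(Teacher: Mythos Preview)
Your proof is correct and follows essentially the same strategy as the paper: both establish that the analytic subgroup equals $\exp(\h)$ and then read off closedness, simple connectedness, and the agreement of topologies from the fact that $\exp:\n\to N$ is a diffeomorphism (Lemma~\ref{nildiff}). The only cosmetic difference is that the paper packages the argument as a commutative square---applying Lemma~\ref{nildiff} a second time to an abstract simply connected $N_0$ with Lie algebra $\h$, so that $\varphi:N_0\to N$ is forced to be a closed embedding---thereby avoiding the explicit appeal to BCH and the open-subgroup step you use to show $\exp(\h)=H$.
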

\begin{proof}
Let $\n_0$ be a Lie subalgebra of $\Lie\, N$. Let $N_0$ be a connected, simply connected Lie group, with Lie
algebra $\n_0$. Then $N_0$ is nilpotent, and there is a commutative diagram
\[
  \begin{CD}
            \n_0 @>\subset>> \Lie \, N \\
            @V\simeq V \exp  V           @V\simeq V\exp  V\\
           N_0@>\varphi>> N,\\
  \end{CD}
\]
where $\varphi$ is the Lie group homomorphism whose differential is the inclusion map $\n_0\hookrightarrow \Lie \,N$. By Lemma \ref{nildiff},  the two
vertical arrows are diffeomorphisms. Since the top
horizontal arrow is a closed embedding, $\varphi$ is also a closed
embedding. Then the lemma follows since $\varphi(N_0)$ is the analytic subgroup of $N$ corresponding to $\n_0$.
\end{proof}

\begin{lem}\label{unic3}
Let $V$ be a finite dimensional real representation of a connected Lie group $G$. If all elements of $G$ act as unipotent linear operators on $V$, then $G$ kills a full flag of $V$, namely, there exists a sequence
\[
  V_0\subset V_1\subset V_2\subset \cdots \subset V_n \qquad (n:=\dim V)
\]
of subspaces of $V$ such that $\dim V_i=i$ ($i=0,1,2,\cdots, n$), and
\[
  (g-1). V_i\subset V_{i-1}\quad \textrm{for all $g\in G$ and  all $i=1,2,\cdots, n$.}
\]
\end{lem}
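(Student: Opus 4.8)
The statement is the classical Lie–Kolchin type theorem for unipotent actions, in the form due to Kolchin, and the natural route is induction on $\dim V$. The plan is to exhibit a nonzero vector $v \in V$ fixed by all of $G$; then $V_1 := \R v$ is a one-dimensional $G$-subrepresentation on which $G$ acts trivially (in particular unipotently), $G$ acts on $V/V_1$ with all elements again unipotent, and by the inductive hypothesis $G$ kills a full flag $\bar V_1 \subset \cdots \subset \bar V_n$ of $V/V_1$; pulling this flag back along $V \twoheadrightarrow V/V_1$ and prepending $V_0 := 0$ and $V_1$ produces the desired flag of $V$. The base case $\dim V = 0$ (or $1$) is trivial.

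So the whole proof reduces to the fixed-vector claim: if $G$ is a connected Lie group acting on a nonzero finite-dimensional real $V$ with every $g \in G$ acting unipotently, then $V^G \neq 0$. To see this I would pass to the Lie algebra $\g = \Lie\, G$ and its representation $\phi \colon \g \to \gl(V)$. Since $G$ is connected, $V^G = \{v : \phi(x)v = 0 \text{ for all } x \in \g\}$, so it suffices to find a common kernel vector for all $\phi(x)$. First, each $\phi(x)$ is nilpotent: for $x \in \g$ the one-parameter subgroup $t \mapsto \exp(tx)$ acts unipotently, so $\exp(t\phi(x)) = \mathrm{id} + t\phi(x) + \cdots$ is unipotent for all $t$, forcing all eigenvalues of $\phi(x)$ to be $0$, i.e. $\phi(x)$ is nilpotent. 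Now I invoke Engel's theorem: the Lie algebra $\phi(\g) \subseteq \gl(V)$ consists of nilpotent operators, hence is a nilpotent Lie algebra and (this is the content of Engel) has a common eigenvector, in fact a common kernel vector $v \neq 0$ with $\phi(x)v = 0$ for all $x \in \g$. This $v$ is the required $G$-fixed vector.

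The main obstacle — and the only nonroutine point — is justifying that $V^G$ is computed correctly at the Lie-algebra level and, more precisely, that a common kernel vector for $\phi(\g)$ is genuinely fixed by the whole group $G$, not merely by the identity component of some subgroup; this is where connectedness of $G$ is essential, since $V^{\g} = V^{G^\circ} = V^G$ for connected $G$ via $g = \exp(x_1)\cdots\exp(x_k)$. Everything else — Engel's theorem, the nilpotency of each $\phi(x)$, and the descent of the flag through the quotient — is standard, and the inductive bookkeeping of indices ($\dim V_i = i$ and $(g-1)V_i \subseteq V_{i-1}$) is immediate once the fixed vector is in hand. An alternative that avoids Engel entirely: prove the fixed-vector claim by induction on $\dim G$ using that a connected nilpotent-by-nothing argument — but this is circular here, so the Engel route is cleanest. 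I would simply cite Engel's theorem (e.g. from a standard Lie algebra reference) rather than reprove it.
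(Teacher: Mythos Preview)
Your proof is correct and follows essentially the same approach as the paper: pass to the Lie algebra, observe that every $\phi(x)$ is nilpotent, and invoke Engel's theorem. The paper's proof is the two-line version of yours---it simply says ``taking the differential of the representation, $\Lie\,G$ acts as nilpotent linear operators on $V$; therefore the Lemma is a direct consequence of Engel's Theorem''---leaving implicit both the nilpotency argument and the use of connectedness to pass back from $V^{\g}$ to $V^G$, which you spell out carefully.
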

\begin{proof}
Taking the differential of the representation, then $\Lie\, G$ acts as nilpotent linear operators on $V$. Therefore the Lemma is a direct consequence of Engel's Theorem.
\end{proof}

Now we come to the study of unipotent Nash groups.

\begin{lem}\label{unic2}
Every unipotent Nash group is connected.
\end{lem}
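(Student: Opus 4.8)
The plan is to argue by induction on $\dim V$, where $\rho\colon N\to\GL(V)$ is a faithful Nash representation in which every $\rho(g)$ is unipotent. If $V=0$ then $N$ is trivial, so assume $V\neq 0$.

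First I would isolate the role of the component group. Since $N$ has only finitely many connected components, $N^\circ$ is an open Nash subgroup of finite index. As $N^\circ$ is a connected Lie group acting unipotently, Lemma \ref{unic3} furnishes a nonzero fixed vector, so $W:=V^{N^\circ}$ is a nonzero subspace, and it is $N$-stable because $N^\circ$ is normal in $N$. For $g\in N$ the image $\bar g$ of $g$ in the finite group $N/N^\circ$ has finite order $k$, so $g^k\in N^\circ$ and $\rho(g)|_W$ is a unipotent operator satisfying $(\rho(g)|_W)^k=1$; in characteristic zero this forces $\rho(g)|_W=1$. Hence $N$ acts trivially on $W$, i.e.\ $W=V^N$. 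In particular, if $W=V$ then $N$ acts trivially on $V$ and faithfulness gives $N=\{1\}$, which is connected; so we may assume $0\neq W\subsetneq V$.

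Next I would pass to $V/W$, on which $N$ still acts unipotently, and put $K:=\ker\bigl(N\to\GL(V/W)\bigr)$, a normal Nash subgroup, with $\dim(V/W)<\dim V$. The key point is that $K$ is connected. Indeed, for $k\in K$ the operator $\rho(k)-1$ maps $V$ into $W$ (since $k$ acts trivially on $V/W$) and annihilates $W$ (since $W=V^N\subseteq V^K$); therefore $k\mapsto\rho(k)-1$ is an injective Nash homomorphism from $K$ into the additive group $\Hom(V/W,W)$. Its image is a semialgebraic subgroup of a finite-dimensional real vector space (Lemma \ref{images}), hence a linear subspace — its identity component is a closed connected subgroup of some $\R^m$, so a subspace, and the residual quotient is a finite, hence trivial, subgroup of a vector group — in particular connected. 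Thus $K$ is connected, so $K\subseteq N^\circ$. On the other hand $N/K$ is an almost linear Nash group by Proposition \ref{quotient}, the representation on $V/W$ descends to a faithful Nash representation of $N/K$ by unipotent operators (using Lemma \ref{subm0} to see the descended action is Nash), and $\dim(V/W)<\dim V$, so by the induction hypothesis $N/K$ is connected. Since $K\subseteq N^\circ$ and $N/K$ is connected, the open subgroup $N^\circ$ surjects onto $N/K$, whence $N^\circ=N$.

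The main obstacle is the connectedness of $K$; everything else is bookkeeping about which objects are Nash ($N^\circ$ is open, $N/K$ via Proposition \ref{quotient}, the induced representation via Lemma \ref{subm0}). That obstacle rests on the elementary but essential fact that a semialgebraic subgroup of $(\R^m,+)$ is a linear subspace, which in turn uses that semialgebraic sets have finitely many connected components (Lemma \ref{dimen20}) together with the classification of closed connected subgroups of $\R^m$. The other point that deserves care is choosing $W=V^{N^\circ}$ as the subspace to quotient by, so that the kernel $K$ of the resulting action is forced to be connected and hence cannot obstruct the induction.
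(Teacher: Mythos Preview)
Your argument is correct, and it takes a genuinely different route from the paper. The paper's proof is two lines: a maximal compact subgroup $K$ of a unipotent Nash group $N$ must act semisimply with modulus-one eigenvalues on the faithful unipotent representation (Proposition~\ref{je}), hence trivially, so $K=\{1\}$; then the Cartan--Malcev--Iwasawa theorem (Theorem~\ref{compact}) gives a diffeomorphism $K\times\R^{\dim N}\to N$, whence $N$ is connected.

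Your approach replaces the appeal to Theorem~\ref{compact} with an induction on $\dim V$: you use Engel's theorem (Lemma~\ref{unic3}) to find $W=V^{N^\circ}$, promote this to $V^N$ via the finite component group, and then show the kernel $K$ of the action on $V/W$ embeds in the vector group $\Hom(V/W,W)$ (the homomorphism property of $k\mapsto\rho(k)-1$ holds precisely because $(\rho(k_1)-1)(\rho(k_2)-1)=0$ when both factors have image in $W$ and kill $W$). Your observation that a semialgebraic subgroup of $\R^m$ is a linear subspace then forces $K$ connected, and the induction closes via Proposition~\ref{quotient}. This is longer but more self-contained: it avoids the deep structure theorem on maximal compact subgroups, trading it for the elementary fact about semialgebraic subgroups of $\R^m$ and the quotient construction. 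The paper's route is shorter because Theorem~\ref{compact} is already available and does all the topological work at once; yours would be preferable in a setting where one wanted to avoid that dependence.
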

\begin{proof}
Proposition \ref{je} implies that a maximal compact subgroup of a unipotent Nash group is
trivial. Therefore the Lemma follows by Theorem \ref{compact}.
\end{proof}

Denote by $\oU_n(\R)$ ($n\geq 0$) the subgroup of $\GL_n(\R)$
consisting  all unipotent upper-triangular matrices. This is a unipotent
Nash group. As a Lie group, it is connected, simply connected, and nilpotent. The Lie algebra $\u_n(\R)$ of $\oU_n(\R)$ consisting all
nilpotent upper-triangular matrices in $\gl_n(\R)$.

\begin{lem}\label{cnil}
Every unipotent Nash group is Nash isomorphic to a
Nash subgroup of $\oU_n(\R)$, for some $n\geq 0$.
\end{lem}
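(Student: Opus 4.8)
The plan is to embed a given unipotent Nash group $N$ into $\oU_n(\R)$ by first producing a faithful Nash representation of $N$ with unipotent image, then using a full-flag argument to conjugate that representation into upper-triangular form. By definition of unipotent Nash group, there is a faithful Nash representation $V$ of $N$ on which every element of $N$ acts as a unipotent linear operator. By Lemma \ref{unic2}, $N$ is connected, so Lemma \ref{unic3} applies: $N$ kills a full flag $V_0\subset V_1\subset\cdots\subset V_m$ of $V$ with $\dim V_i=i$ and $(g-1).V_i\subset V_{i-1}$ for all $g\in N$. Choosing a basis of $V$ adapted to this flag identifies the corresponding Nash homomorphism $N\to\GL(V)$ with a Nash homomorphism $\rho:N\to\oU_m(\R)$, where $m=\dim V$.

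The remaining point is that $\rho$ is not merely an injective Nash homomorphism but a Nash isomorphism onto its image. By Proposition \ref{subgi}, $\rho(N)$ is a Nash subgroup of $\oU_m(\R)$, hence a Nash group in its own right, and $\rho:N\to\rho(N)$ is a bijective Nash homomorphism. A bijective Nash homomorphism is a Nash isomorphism (this is recalled in the Introduction, immediately after the definition of Nash homomorphism), so $N$ is Nash isomorphic to the Nash subgroup $\rho(N)$ of $\oU_m(\R)$. Taking $n=m=\dim V$ finishes the proof.

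The only place that requires care is the passage from ``$N$ kills a full flag of $V$'' to ``the image of $\rho$ lies in $\oU_m(\R)$ after a change of basis'': one must check that the flag can be realized by an honest (ordered) basis and that conjugation by the resulting change-of-basis matrix is an inner automorphism of $\GL_m(\R)$, hence a Nash automorphism, so that $\rho$ remains a Nash homomorphism after conjugation. This is routine. I do not expect any genuine obstacle here; the lemma is essentially a bookkeeping consequence of Lemma \ref{unic3} (Engel's theorem) together with the already-established facts that unipotent Nash groups are connected (Lemma \ref{unic2}) and that images of Nash homomorphisms are Nash subgroups (Proposition \ref{subgi}).
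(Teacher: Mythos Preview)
Your proof is correct and follows essentially the same approach as the paper, which simply records the result as ``a direct consequence of Lemmas \ref{unic2} and \ref{unic3}.'' You have merely unpacked the details the paper leaves implicit: the faithful unipotent representation from the definition, connectedness from Lemma \ref{unic2}, the full flag from Lemma \ref{unic3}, the adapted basis conjugating the image into $\oU_m(\R)$, and the fact (stated in the Introduction) that a bijective Nash homomorphism is automatically a Nash isomorphism.
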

\begin{proof}
This is a direct consequence of Lemmas \ref{unic2} and \ref{unic3}.
\end{proof}

\begin{prpl}\label{nashucs}
Every unipotent Nash group is connected, simply connected and nilpotent.
\end{prpl}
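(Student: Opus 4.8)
The plan is to assemble the statement from facts already in hand, with $\oU_n(\R)$ playing the role of the ambient model. Let $G$ be a unipotent Nash group. By Lemma \ref{unic2}, $G$ is connected, so the content of the proposition is that $G$ is simply connected and nilpotent as a Lie group.

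First I would apply Lemma \ref{cnil} to replace $G$, up to Nash isomorphism, by a Nash subgroup of $\oU_n(\R)$ for some $n\geq 0$. Since every Nash subgroup is a closed submanifold (Proposition \ref{subg}), $G$ is a closed subgroup of the Lie group $\oU_n(\R)$, and its Lie group structure coincides with the one it inherits as a closed subgroup. Being connected, $G$ then coincides with the analytic subgroup of $\oU_n(\R)$ attached to the Lie subalgebra $\Lie\, G\subset \u_n(\R)$: that analytic subgroup is contained in $G$ and has the same dimension, hence is open in $G$, hence equals $G$ by connectedness.

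Now recall that $\oU_n(\R)$ is itself a connected, simply connected, nilpotent Lie group. Therefore Lemma \ref{nild} applies to the analytic subgroup $G$ of $\oU_n(\R)$ and shows in one stroke that $G$ is closed, connected, simply connected and nilpotent. This gives the proposition.

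I do not expect a real obstacle here; the only point deserving a moment's attention is the identification of the closed connected subgroup $G$ with the analytic subgroup of its Lie algebra, which makes Lemma \ref{nild} applicable, but this is the standard correspondence between connected subgroups and Lie subalgebras. (Nilpotency alone could instead be read off directly from the fact that any subgroup of the nilpotent group $\oU_n(\R)$ is nilpotent, but routing everything through Lemma \ref{nild} yields all three properties at once and is cleaner.)
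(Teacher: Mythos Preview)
Your proof is correct and follows exactly the route the paper takes: the paper's proof is the one-line ``This is implied by Lemmas \ref{unic2}, \ref{cnil} and \ref{nild},'' and you have simply unpacked how these three lemmas fit together. The only thing you add is the explicit identification of the closed connected subgroup $G\subset \oU_n(\R)$ with the analytic subgroup of its Lie algebra so that Lemma \ref{nild} applies, which is precisely the detail the paper leaves implicit.
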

\begin{proof}
This is implied by Lemmas \ref{unic2}, \ref{cnil} and \ref{nild}.
\end{proof}

\begin{prpl}\label{autsu0}
For each unipotent Nash group $N$, the exponential map
\be \label{expunimap}
\exp: \Lie \, N\rightarrow N
\ee
is a Nash diffeomorphism.
\end{prpl}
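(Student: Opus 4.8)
The plan is to reduce everything to the standard model $\oU_n(\R)$. By Lemma \ref{cnil} we may assume that $N$ is a Nash subgroup of $\oU_n(\R)$ for some $n\geq 0$. For $X\in\u_n(\R)$ one has $X^n=0$, so the exponential map of $\oU_n(\R)$ is given by the polynomial formula $\exp(X)=\sum_{k=0}^{n-1}\frac{1}{k!}X^k$, with polynomial inverse $\log(Y)=\sum_{k=1}^{n-1}\frac{(-1)^{k-1}}{k}(Y-I)^k$. In particular $\exp\colon\u_n(\R)\to\oU_n(\R)$ is a Nash map (indeed a Nash diffeomorphism onto $\oU_n(\R)$).

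Next I would identify $\exp_N$ with a restriction of this map. The Lie algebra $\Lie N$ is a linear subspace of $\u_n(\R)$, hence a closed Nash submanifold of it, and the linear inclusion $\Lie N\hookrightarrow\u_n(\R)$ is a Nash map. Since the inclusion $N\hookrightarrow\oU_n(\R)$ is a Lie group homomorphism, naturality of the exponential map gives $\exp_N=\exp_{\oU_n(\R)}|_{\Lie N}$, viewed as maps into $\oU_n(\R)$. By Proposition \ref{nashucs}, $N$ is connected, simply connected and nilpotent, so Lemma \ref{nildiff} shows that $\exp_N\colon\Lie N\to N$ is a diffeomorphism of smooth manifolds; in particular $\exp_{\oU_n(\R)}(\Lie N)=N$.

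It then remains to check that $\exp_N$ is Nash as a map into $N$. The composite of the Nash inclusion $\Lie N\hookrightarrow\u_n(\R)$ with the polynomial (hence Nash) map $\exp\colon\u_n(\R)\to\oU_n(\R)$ is a Nash map whose image is contained in the Nash submanifold $N$; by Lemma \ref{cnash} its graph is semialgebraic in $\u_n(\R)\times\oU_n(\R)$, and since that graph lies in the Nash submanifold $\Lie N\times N$ it is semialgebraic there as well, so $\exp_N\colon\Lie N\to N$ is a Nash map. Being simultaneously a diffeomorphism of smooth manifolds, its inverse is automatically a Nash map (alternatively, $\log$ restricts to a polynomial map $N\to\Lie N$), whence $\exp_N$ is a Nash diffeomorphism. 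The only point requiring a little care will be this transition from ``Nash into $\oU_n(\R)$'' to ``Nash into the Nash submanifold $N$''; it is routine, since semialgebraicity of a subset of a Nash submanifold coincides with its semialgebraicity in the ambient manifold.
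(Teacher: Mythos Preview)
Your proof is correct and follows essentially the same approach as the paper: both reduce to $\oU_n(\R)$ via Lemma \ref{cnil}, use that $\exp\colon\u_n(\R)\to\oU_n(\R)$ is a polynomial Nash diffeomorphism, and then transfer the Nash property back to $N$ through the embedding. The paper phrases this transfer via a commutative square with $\phi$ and $\varphi$ as Nash embeddings, while you identify $N$ with its image and argue directly with the graph; these are the same argument in slightly different packaging.
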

\begin{proof}
Proposition \ref{nashucs} and Lemma \ref{nildiff} imply that \eqref{expunimap} is a diffeomorphism.  Using Lemma \ref{cnil}, fix an injective Nash homomorphism $\varphi: N\rightarrow \oU_n(\R)$.  Then we have a commutative diagram
\[
  \begin{CD}
            \Lie \,N @>\phi>> \u_n(\R) \\
            @V\simeq V \exp  V           @V\simeq V\exp  V\\
           N @>\varphi>> \oU_n(\R),\\
  \end{CD}
\]
where $\phi$ denotes the differential of $\varphi$. Note that in the above diagram, the right vertical arrow is a Nash diffeomorphism, and the two horizontal
arrows are Nash embeddings. Therefore the left vertical arrow is a Nash diffeomorphism.
\end{proof}

\begin{prpl}\label{nashucs000}
Every analytic subgroup of a unipotent Nash group is a Nash subgroup.
\end{prpl}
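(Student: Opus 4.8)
The plan is to reduce the statement to Proposition \ref{autsu0}, which says that the exponential map of a unipotent Nash group is a Nash diffeomorphism, together with the classical fact that an analytic subgroup of a connected, simply connected, nilpotent Lie group is the exponential image of the corresponding Lie subalgebra.

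First I would fix a unipotent Nash group $N$ and an analytic subgroup $N_0$ of it, and set $\n := \Lie\, N$ and $\n_0 := \Lie\, N_0$, so that $\n_0$ is a Lie subalgebra of $\n$. By Proposition \ref{nashucs}, $N$ is connected, simply connected and nilpotent, so the argument in the proof of Lemma \ref{nild} applies verbatim: the analytic subgroup of $N$ attached to $\n_0$ is closed in $N$, and the commutative diagram appearing there -- whose vertical arrows are the exponential maps, which are diffeomorphisms -- identifies it with $\exp(\n_0)$, where $\exp\colon \n\rightarrow N$ is the exponential map of $N$. In particular $N_0=\exp(\n_0)$.

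Next I would observe that $\n_0$, being a linear subspace of the finite dimensional real vector space $\n$, is Zariski closed and hence a semialgebraic subset of $\n$. By Proposition \ref{autsu0} the map $\exp\colon \n\rightarrow N$ is a Nash diffeomorphism, so Lemma \ref{images} gives that $N_0=\exp(\n_0)$ is a semialgebraic subset of $N$. Being also a subgroup, $N_0$ is a Nash subgroup of $N$ by Proposition \ref{subg}; and since $N_0$ is closed in $N$, its structure as a Nash submanifold agrees with its canonical structure as an analytic subgroup.

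I expect no serious obstacle: the content of the proposition has essentially been absorbed into Propositions \ref{nashucs} and \ref{autsu0} and Lemma \ref{nild}. The one point that deserves care is the identification $N_0=\exp(\n_0)$, which genuinely relies on nilpotency (through the Baker--Campbell--Hausdorff formula, or equivalently through the diagram in the proof of Lemma \ref{nild}); granting this, semialgebraicity of $N_0$ follows at once by transporting the semialgebraic subspace $\n_0$ through the Nash diffeomorphism $\exp$.
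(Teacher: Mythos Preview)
Your proposal is correct and follows essentially the same route as the paper: both arguments use the commutative diagram from Lemma \ref{nild} to identify $N_0$ with $\exp(\n_0)$, then invoke Proposition \ref{autsu0} to transport the semialgebraic linear subspace $\n_0\subset\n$ to a semialgebraic subset of $N$ via the Nash diffeomorphism $\exp$. Your write-up is slightly more explicit (citing Lemma \ref{images} and Proposition \ref{subg}), but the key ideas coincide.
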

\begin{proof}
Let $N_0$ be an analytic subgroup of a unipotent Nash group $N$. Then we have a commutative diagram
\be \label{cd2}
  \begin{CD}
            \Lie \, N_0 @>\subset>> \Lie \,N \\
            @V \simeq V \exp  V           @V\simeq V\exp  V\\
           N_0@>\subset>> N.\\
  \end{CD}
\ee
By Proposition \ref{autsu0}, the right vertical arrow of \eqref{cd2} is a Nash diffeomorphism. By Lemmas \ref{nild} and \ref{nildiff}, the left vertical arrow of \eqref{cd2} is a diffeomorphism. Therefore $N_0$ is semialgebraic in $N$ since $\Lie \,N_0$ is semialgebraic in $\Lie \,N$.
\end{proof}

\begin{prpl}\label{autsu2}
Let $N$, $N'$ be two unipotent Nash groups. Then every Lie
group homomorphism $\varphi: N\rightarrow N'$ is a Nash
homomorphism.
\end{prpl}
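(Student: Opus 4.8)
The plan is to reduce the statement to the infinitesimal level, where it becomes a statement about Lie algebra homomorphisms, and then transport it back through the exponential maps using Proposition \ref{autsu0}. Since $N$ and $N'$ are unipotent Nash groups, Proposition \ref{autsu0} tells us that $\exp\colon \Lie\,N\to N$ and $\exp\colon \Lie\,N'\to N'$ are Nash diffeomorphisms. A Lie group homomorphism $\varphi\colon N\to N'$ has a differential $\phi:=\od\varphi\colon \Lie\,N\to \Lie\,N'$, and by naturality of $\exp$ we have a commutative diagram
\[
  \begin{CD}
            \Lie\,N @>\phi>> \Lie\,N' \\
            @V\simeq V\exp V @V\simeq V\exp V\\
            N @>\varphi>> N'.\\
  \end{CD}
\]
Thus $\varphi = \exp\circ\,\phi\circ\exp^{-1}$, and since the outer vertical maps are Nash diffeomorphisms, it suffices to show that the linear map $\phi\colon \Lie\,N\to \Lie\,N'$ is a Nash map. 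But a linear map between finite dimensional real vector spaces is automatically a Nash map (its graph is a linear subspace, hence Zariski closed, hence semialgebraic; apply Lemma \ref{cnash}). Therefore $\varphi$ is Nash.

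The only point that needs a word of justification is that the diagram above actually commutes, i.e. that $\varphi\circ\exp_N = \exp_{N'}\circ\,\od\varphi$. This is the standard naturality of the exponential map with respect to Lie group homomorphisms, valid for any Lie groups: for $X\in\Lie\,N$, the curve $t\mapsto \varphi(\exp(tX))$ is a one-parameter subgroup of $N'$ with initial velocity $\od\varphi(X)$, hence equals $t\mapsto \exp(t\,\od\varphi(X))$; evaluate at $t=1$. No Nash-category input is needed for this step.

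I do not expect any genuine obstacle here: the content has all been front-loaded into Proposition \ref{autsu0} (that $\exp$ is a Nash diffeomorphism for unipotent Nash groups), and the remaining ingredients — naturality of $\exp$ and the triviality that linear maps are Nash — are routine. If anything, the mild subtlety is purely expository: one should note that $\od\varphi$ exists and is continuous linear because $\varphi$ is assumed to be a Lie group homomorphism (hence smooth), and that the factorization $\varphi=\exp_{N'}\circ\,\od\varphi\circ\exp_N^{-1}$ is an equality of maps of Nash manifolds, so being Nash is detected on each factor. This same diagram-chasing pattern has already been used in the proofs of Lemma \ref{nild} and Propositions \ref{autsu0} and \ref{nashucs000}, so the argument is entirely in keeping with the surrounding development.
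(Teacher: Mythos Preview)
Your proof is correct and is precisely the argument the paper has in mind: the paper's proof is the single sentence ``This easily follows from Proposition \ref{autsu0},'' and you have spelled out exactly the intended diagram chase via $\varphi=\exp_{N'}\circ\,\od\varphi\circ\exp_N^{-1}$.
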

\begin{proof}
The easily follows from Proposition \ref{autsu0}.
\end{proof}
\begin{prpl}\label{unin}
Let $N$ be a connected, simply connected,  nilpotent Lie group. Then
there exists a unique Nash structure on the underlying topological space
of $N$ which makes $N$ a unipotent Nash group.
\end{prpl}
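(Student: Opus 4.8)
The plan is to obtain existence by transporting the Nash structure of $\n := \Lie\, N$ along the exponential map, and to obtain uniqueness from Proposition \ref{autsu2}.

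\emph{Existence.} The Lie algebra $\n$ is a finite dimensional nilpotent real Lie algebra, hence a Nash manifold as a real vector space, and by Lemma \ref{nildiff} the exponential map $\exp\colon \n \to N$ is a diffeomorphism of smooth manifolds. Transport the Nash structure of $\n$ along the homeomorphism $\exp$; by Lemma \ref{defnashs0} this is a Nash structure on $N$, for which $\exp$ is a Nash diffeomorphism. Since $\n$ is nilpotent, the Baker--Campbell--Hausdorff series terminates, so $X * Y := \exp^{-1}\!\big(\exp X \cdot \exp Y\big)$ is a fixed Lie polynomial in $X,Y$; thus, read through $\exp$, the multiplication of $N$ is the polynomial map $(X,Y)\mapsto X*Y$ and the inversion of $N$ is the linear map $X \mapsto -X$. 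Both are Nash maps, so $N$ is a Nash group.

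It remains to see that this Nash group is unipotent. By Birkhoff's embedding theorem there is an injective Lie algebra homomorphism $\phi\colon \n \hookrightarrow \u_n(\R)$ for some $n \geq 0$; since $N$ is connected and simply connected, $\phi$ integrates to a Lie group homomorphism $\varphi\colon N \to \oU_n(\R)$. The analytic subgroup $\varphi(N)$ of $\oU_n(\R)$ has Lie algebra $\phi(\n)$, and by Lemma \ref{nild} it is closed, connected and simply connected; hence $\varphi\colon N \to \varphi(N)$ is a covering homomorphism between simply connected groups, i.e. an isomorphism, so $\varphi$ is injective. By naturality of the exponential, $\varphi \circ \exp_{\n} = \exp_{\u_n(\R)} \circ\, \phi$; the map $\exp_{\u_n(\R)}\colon \u_n(\R) \to \oU_n(\R)$ is a Nash diffeomorphism (it and its inverse are polynomial, the relevant matrices being nilpotent), $\phi$ is linear, and $\exp_{\n}$ carries the Nash structure of $N$ by construction, so $\varphi$ is a Nash map. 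Composing with the inclusion $\oU_n(\R) \hookrightarrow \GL_n(\R)$ yields a faithful Nash representation of $N$ all of whose elements act as unipotent operators; hence $N$, with this Nash structure, is a unipotent Nash group.

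\emph{Uniqueness.} Let $\CN_1$ and $\CN_2$ be two Nash structures on the topological group $N$, each making it a unipotent Nash group. The smooth structures underlying $\CN_1$ and $\CN_2$ each make $N$ into a Lie group with its given topology and group law, so the identity map is a continuous homomorphism of Lie groups in both directions, hence smooth in both directions, hence a Lie group isomorphism between these two unipotent Nash groups. By Proposition \ref{autsu2} it is then a Nash homomorphism in both directions, so $\CN_1 = \CN_2$. The only input that is not mere bookkeeping with $\exp$ and the results of this section is Birkhoff's embedding theorem $\n \hookrightarrow \u_n(\R)$ (together with the classical fact that the Baker--Campbell--Hausdorff series is polynomial on a nilpotent Lie algebra); beyond invoking these, I anticipate no real obstacle.
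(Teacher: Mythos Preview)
Your proof is correct. The uniqueness argument via Proposition \ref{autsu2} is identical to the paper's. For existence, the paper takes a slightly more economical route: rather than first transporting the Nash structure from $\n$ via $\exp$ and then invoking the polynomial BCH formula to check that the group operations are Nash, it embeds $\n \hookrightarrow \u_n(\R)$ (the same Ado/Birkhoff input you use), observes via Proposition \ref{nashucs000} that the corresponding analytic subgroup of $\oU_n(\R)$ is already a unipotent Nash group, and then transports that structure back to $N$ along the Lie group isomorphism (both sides being connected, simply connected, with isomorphic Lie algebras). Your approach has the virtue of making explicit that the resulting Nash structure is the one for which $\exp$ is a Nash diffeomorphism, in line with Proposition \ref{autsu0}; the paper's approach buys Nash-ness of the group law and unipotency in a single stroke, without a separate appeal to BCH.
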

\begin{proof}
The uniqueness assertion follows from Proposition \ref{autsu2}. To prove the existence, it suffices to show that there exists a unipotent Nash group which is isomorphic to $N$ as a Lie group. Since all unipotent Nash groups are connected and simply connected (Proposition \ref{nashucs}), it suffices to show that there exists a unipotent Nash group whose Lie algebra is isomorphic to $\Lie \,N$.

As a
special case of Ado's theorem, $\Lie \, N$ is isomorphic to a Lie
subalgebra of $\u_n(\R)$, for some $n\geq 0$ (cf. \cite[Theorem
7.19]{Mil}). Identify $\Lie \,N$ with a Lie subalgebra of $\u_n(\R)$. By Proposition \ref{nashucs000}, the corresponding analytic subgroup of $\oU_n(\R)$ is a unipotent Nash group. Therefore the proposition follows.
\end{proof}

Combining Propositions \ref{nashucs}, \ref{autsu2}, \ref{unin} and \ref{nashucs000}, we get Theorem \ref{thmu} of the Introduction. We have also proved the following
\begin{thml}
The category of unipotent  almost linear Nash groups is isomorphic to the category of  connected, simply connected, nilpotent
Lie groups.
\end{thml}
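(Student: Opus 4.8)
The plan is to exhibit the obvious forgetful functor $\mathfrak{F}$, sending a unipotent Nash group to its underlying Lie group and a Nash homomorphism to itself, and to check that it is an isomorphism of categories. First I would verify that $\mathfrak{F}$ is well defined: by Proposition \ref{nashucs} the underlying Lie group of a unipotent Nash group is connected, simply connected and nilpotent, and a Nash homomorphism between unipotent Nash groups is in particular a Lie group homomorphism between such Lie groups, so $\mathfrak{F}$ indeed takes values in the category of connected, simply connected, nilpotent Lie groups and respects composition and identities.

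Next I would check that $\mathfrak{F}$ is fully faithful. Faithfulness is immediate, since $\mathfrak{F}$ leaves the underlying set map of a morphism unchanged. Fullness is precisely Proposition \ref{autsu2}: every Lie group homomorphism between two unipotent Nash groups is automatically a Nash homomorphism, hence lies in the image of $\mathfrak{F}$ on the relevant hom-set. Then I would check that $\mathfrak{F}$ is bijective on objects. Surjectivity is the existence half of Proposition \ref{unin}: every connected, simply connected, nilpotent Lie group admits a Nash structure turning it into a unipotent Nash group. Injectivity on objects is the uniqueness half of Proposition \ref{unin}: two unipotent Nash groups with the same underlying Lie group share the same underlying topological space, and any two Nash structures on that space each making the group unipotent must coincide, so the two Nash groups are equal. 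A functor that is fully faithful and bijective on objects is an isomorphism of categories; its inverse is the functor equipping each connected, simply connected, nilpotent Lie group with its unique unipotent Nash structure (Proposition \ref{unin}) and sending each Lie group homomorphism to itself (a Nash homomorphism by Proposition \ref{autsu2}).

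Since Propositions \ref{nashucs}, \ref{autsu2} and \ref{unin} have already been proved, I do not expect any genuine obstacle in this argument; it is a formal assembly of those results. The only point that warrants a moment's care is the bijectivity of $\mathfrak{F}$ on objects, which must use \emph{both} the existence and the uniqueness assertions of Proposition \ref{unin} — existence for surjectivity and uniqueness for injectivity — rather than just the existence statement one might be tempted to cite.
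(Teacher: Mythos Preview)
Your proposal is correct and matches the paper's approach: the paper does not give an explicit proof but simply states, after Proposition \ref{unin}, that ``combining Propositions \ref{nashucs}, \ref{autsu2}, \ref{unin} and \ref{nashucs000}'' yields Theorem \ref{thmu}, and that ``we have also proved'' the category isomorphism. Your write-up spells out exactly this formal assembly via the forgetful functor (Proposition \ref{nashucs000} is needed only for the last bullet of Theorem \ref{thmu}, not for the category statement itself, so your omission of it is harmless).
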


The following lemma will be used later.
\begin{lem}\label{nilc}
Let $N$ be a connected Nash group. If $N$ has a Nash representation with
finite kernel so that all group elements act as unipotent linear
operators, then $N$ is unipotent.
\end{lem}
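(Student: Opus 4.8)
The plan is to bootstrap a finite-kernel representation up to a faithful one, exploiting that a unipotent Nash group is simply connected. So let $V$ be a Nash representation of $N$ with finite kernel $F$ on which every element of $N$ acts as a unipotent linear operator, and write $\varphi\colon N\to\GL(V)$ for the associated Nash homomorphism.

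The first step is to recognize the image as a unipotent Nash group. The subgroup $\varphi(N)$ is a Nash subgroup of $\GL(V)$ (Proposition \ref{subgi}), it is connected since $N$ is, and the tautological action of $\GL(V)$ restricts to a \emph{faithful} Nash representation of $\varphi(N)$ on $V$ in which every element acts unipotently; hence $\varphi(N)$ is a unipotent Nash group, and Proposition \ref{nashucs} gives that $\varphi(N)$ is (connected, simply connected and) in particular simply connected. (If one prefers, one may instead first invoke Lemma \ref{unic3} to conjugate $\varphi(N)$ into $\oU_n(\R)$ with $n=\dim V$ before citing Proposition \ref{nashucs}, but this is not needed.)

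The second step is to kill $F$. The corestriction $\varphi\colon N\to\varphi(N)$ is a surjective Nash homomorphism, hence submersive by Lemma \ref{warner}; since $F$ is finite, $N$ and $\varphi(N)$ have the same dimension, so $\varphi$ is a local diffeomorphism and therefore a finite-fold covering map of topological spaces. As $N$ is connected and $\varphi(N)$ is simply connected, this covering must be trivial, i.e. $F=\{1\}$. Consequently $\varphi$ is injective, so $V$ is a faithful Nash representation of $N$ on which every element acts as a unipotent operator, and $N$ is unipotent by definition.

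I do not anticipate a real obstacle here: the one step that genuinely uses the hypotheses rather than formal manipulation is the appeal to Proposition \ref{nashucs} — the already-established simple connectedness of unipotent Nash groups — which is exactly what forbids a nontrivial finite kernel and makes the reduction from ``finite kernel'' to ``faithful'' work.
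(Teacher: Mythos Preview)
Your argument is correct and follows essentially the same route as the paper: identify the image $\varphi(N)$ as a unipotent Nash group, invoke Proposition \ref{nashucs} to get simple connectedness of $\varphi(N)$, and conclude that the finite-kernel covering $N\to\varphi(N)$ is trivial, hence $\varphi$ is an isomorphism. The only cosmetic difference is that the paper first uses Lemma \ref{unic3} to land in $\oU_n(\R)$ before citing Proposition \ref{nashucs}, exactly the optional detour you mention.
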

\begin{proof}
Using Lemma \ref{unic3}, we get a Nash
homomorphism $\varphi: N \rightarrow \oU_n(\R)$ with finite kernel.
By Proposition \ref{nashucs}, $\varphi(N)$ is connected and simply
connected. This implies that $\varphi: N\rightarrow \varphi(N)$ is a
Nash isomorphism, and the lemma follows.

\end{proof}

\section{Hyperbolic Nash groups}
The group $\R^\times_+$ of positive real numbers is an almost linear
Nash group in the obvious way. We define a hyperbolic Nash group to be a Nash group which is Nash isomorphic to $(\R^\times_+)^n$
for some $n\geq 0$.

As in the Introduction, for every two Nash groups $G$ and $G'$, write $\Hom(G,G')$ for the
set of all Nash homomorphisms from $G$ to $G'$. It is obviously an
abelian group when $G'$ is abelian, and is a ring when $G=G'$ and $G'$ is abelian.

We leave the proof of the following lemma to the interested reader.
 \begin{lem}\label{homs}
 The map
 \begin{equation}\label{endr}
   \mathbb Q\rightarrow \Hom(\R^\times_+,\R^\times_+),\qquad r\mapsto(x\mapsto x^r)
 \end{equation}
 is a ring isomorphism.
 \end{lem}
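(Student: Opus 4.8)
The plan is to establish that the map \eqref{endr} is a ring homomorphism and then prove it is bijective. That it is a ring homomorphism is immediate from the laws of exponents: $x^{r+s}=x^r x^s$ shows additivity (addition in $\Hom(\R^\times_+,\R^\times_+)$ being pointwise multiplication of values), $(x^r)^s=x^{rs}$ shows multiplicativity (composition corresponding to products), and $x^1=x$ sends $1$ to the identity homomorphism. Injectivity is also easy: if $x^r=1$ for all $x\in\R^\times_+$, then taking $x=e$ gives $e^r=1$, hence $r=0$. So the real content is surjectivity, which is the step I expect to be the main obstacle.

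For surjectivity, let $\phi\in\Hom(\R^\times_+,\R^\times_+)$. As a Lie group homomorphism $\R^\times_+\to\R^\times_+$, $\phi$ is smooth, and conjugating by the diffeomorphism $\log:\R^\times_+\to\R$ (which is a Lie group isomorphism onto the additive group $\R$) turns $\phi$ into a continuous additive endomorphism of $\R$, hence multiplication by some real scalar $c$; that is, $\phi(x)=x^c$ for all $x$. It remains to show that the Nash condition forces $c\in\mathbb Q$. The idea is that the graph of $\phi$, namely $\{(x,x^c)\mid x\in\R^\times_+\}\subset\R^\times_+\times\R^\times_+$, must be a semialgebraic subset of $\R^2$ (Lemma \ref{cnash}). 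I would then argue that a semialgebraic curve of this form forces $c$ to be rational: if $c$ is irrational, the graph satisfies no nonzero polynomial relation $P(x,y)=0$. Concretely, if $P(x,x^c)=0$ identically on $\R^\times_+$ with $P=\sum_{i,j}a_{ij}x^iy^j$, then $\sum_{i,j}a_{ij}x^{i+jc}=0$; since the exponents $i+jc$ are pairwise distinct when $c$ is irrational (as $i+jc=i'+j'c$ forces $i=i'$, $j=j'$), the functions $x\mapsto x^{i+jc}$ are linearly independent, so all $a_{ij}=0$, contradicting $P\neq 0$. Hence the Zariski closure of the graph is all of $\R^2$, which has dimension $2$, while the graph itself has dimension $1$; this contradicts the second bullet of Proposition \ref{dimen2} (a semialgebraic set has the same dimension as its Zariski closure). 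Therefore $c\in\mathbb Q$, and $\phi$ lies in the image of \eqref{endr}.

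The one technical point to be careful about is the linear independence of the power functions $x\mapsto x^{\alpha}$ for distinct real exponents $\alpha$ on $\R^\times_+$: this is standard (e.g. via the Wronskian, or by substituting $x=e^t$ to reduce to linear independence of $t\mapsto e^{\alpha t}$, which follows from distinctness of the exponential growth rates), so I would simply invoke it. With that in hand the argument above is complete, and combined with the ring-homomorphism and injectivity observations it shows \eqref{endr} is a ring isomorphism.
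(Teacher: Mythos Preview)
Your argument is correct. The paper itself omits the proof entirely, stating only ``We leave the proof of the following lemma to the interested reader,'' so there is no approach to compare against; your proof via Lemma~\ref{cnash} and the dimension equality in Proposition~\ref{dimen2} is a clean and natural way to supply the missing details.
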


Using the isomorphism \eqref{endr}, we view $\R_+^\times$ as a right $\mathbb Q$-vector space as in the Introduction. Then for every hyperbolic Nash group $A$, the ablian group $\Hom(\R^\times_+,
A)$ is a left $\mathbb Q$-vector space:
\[
  r\cdot \varphi:=\varphi\circ (\cdot)^r, \qquad r\in \mathbb Q, \,\varphi\in  \Hom(\R^\times_+, A),
\]
where $(\cdot)^r$ denotes the endomorphism $x\mapsto x^r$ of
$\R^\times_+$. By Lemma \ref{homs}, the dimension of
$\Hom(\R^\times_+, A)$ equals that of $A$.

On the other hand, given a $\mathbb Q$-vector space $E$ of finite
dimension $k$, the tensor product
\[
  \R^\times_+\otimes_\mathbb Q E
\]
is obviously a hyperbolic Nash group of dimension $k$.

The following theorem repeats Theorem \ref{thmh} of the Introduction.
\begin{thml}\label{equsplit}
The functor
\[
  A\mapsto \Hom(\R^\times_+, A)
\]
establishes an equivalence from the category of hyperbolic Nash groups to the category of finite dimensional left $\mathbb Q$-vector spaces. It
has a quasi-inverse
\[
  E\mapsto \R^\times_+\otimes_\mathbb Q E.
\]
\end{thml}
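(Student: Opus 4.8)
The plan is to verify that the two stated functors are mutually quasi-inverse by combining three ingredients: (i) the ring isomorphism $\mathbb Q \xrightarrow{\sim} \Hom(\R^\times_+,\R^\times_+)$ of Lemma \ref{homs}; (ii) the fact that every hyperbolic Nash group is by definition Nash isomorphic to $(\R^\times_+)^n$; (iii) the compatibility of $\Hom(\R^\times_+,-)$ and $\R^\times_+\otimes_{\mathbb Q}-$ with finite direct sums. I would first check that $A\mapsto \Hom(\R^\times_+,A)$ is indeed a functor into finite-dimensional left $\mathbb Q$-vector spaces: it lands in left $\mathbb Q$-vector spaces by the module structure recalled just above the statement, and by Lemma \ref{homs} applied to $A\cong(\R^\times_+)^n$ one gets $\Hom(\R^\times_+,(\R^\times_+)^n)\cong \mathbb Q^n$, so the dimension is $n=\dim A$ and in particular finite. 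Functoriality in $A$ is post-composition, which clearly sends Nash homomorphisms to $\mathbb Q$-linear maps.

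Next I would compute the two composites. For $E$ a finite-dimensional left $\mathbb Q$-vector space, choosing a basis gives $E\cong \mathbb Q^k$, hence $\R^\times_+\otimes_{\mathbb Q}E\cong (\R^\times_+\otimes_{\mathbb Q}\mathbb Q)^k\cong (\R^\times_+)^k$, and then $\Hom(\R^\times_+,\R^\times_+\otimes_{\mathbb Q}E)\cong \Hom(\R^\times_+,(\R^\times_+)^k)\cong \mathbb Q^k\cong E$ by Lemma \ref{homs}; one checks this chain of isomorphisms is natural in $E$, i.e. independent of the basis, by noting every arrow is induced by a universal construction or by Lemma \ref{homs} which is canonical. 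Conversely, for $A$ hyperbolic, the evaluation-type map
\[
  \R^\times_+\otimes_{\mathbb Q}\Hom(\R^\times_+,A)\longrightarrow A,\qquad x\otimes \varphi\longmapsto \varphi(x),
\]
is a Nash homomorphism, and after identifying $A\cong(\R^\times_+)^n$ it becomes, componentwise, the canonical map $\R^\times_+\otimes_{\mathbb Q}\Hom(\R^\times_+,\R^\times_+)\to\R^\times_+$, which is an isomorphism by Lemma \ref{homs}. So this map is a Nash isomorphism, and one verifies naturality in $A$ directly from the formula.

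The only genuinely nontrivial point — and the place I would be most careful — is establishing that $\Hom(\R^\times_+,-)$ commutes with finite products of hyperbolic Nash groups, i.e. that $\Hom(\R^\times_+,(\R^\times_+)^n)\cong \Hom(\R^\times_+,\R^\times_+)^n$ as $\mathbb Q$-vector spaces. This is immediate at the level of set maps (a map into a product is a tuple of maps into the factors), and one just has to note that the tuple consists of \emph{Nash} homomorphisms iff the original map is Nash, which follows because the projections $(\R^\times_+)^n\to\R^\times_+$ and the inclusions are Nash and Proposition \ref{prod} characterizes the Nash structure on the product. Everything else is a formal consequence of Lemma \ref{homs}. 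Once these compatibilities are in hand, exhibiting the natural isomorphisms $\mathrm{Id}\Rightarrow \Hom(\R^\times_+,\R^\times_+\otimes_{\mathbb Q}-)$ and $\R^\times_+\otimes_{\mathbb Q}\Hom(\R^\times_+,-)\Rightarrow \mathrm{Id}$ completes the proof that the two functors give an equivalence of categories.
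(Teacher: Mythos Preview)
Your proposal is correct and follows exactly the approach the paper has in mind: the paper's proof is the single sentence ``This is an obvious consequence of Lemma \ref{homs},'' and what you have written is precisely the unpacking of that sentence --- reducing everything to $\Hom(\R^\times_+,\R^\times_+)\cong\mathbb Q$ via the definition $A\cong(\R^\times_+)^n$ and compatibility with finite products. There is no alternative route here; you have simply supplied the details the paper omits.
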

\begin{proof}
This is an obvious consequence of Lemma \ref{homs}.
\end{proof}

Moreover, we have the following
\begin{prpl}\label{subsplit}
Every Nash subgroup of a hyperbolic Nash group is a hyperbolic Nash group.
\end{prpl}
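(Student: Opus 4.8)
The plan is to show that a Nash subgroup $H$ of a hyperbolic Nash group $A \cong (\R_+^\times)^n$ is itself hyperbolic by exhibiting a Nash isomorphism with some $(\R_+^\times)^k$. The first step is to pass through the logarithm: the map $\log \colon (\R_+^\times)^n \to \R^n$ is a Nash diffeomorphism and a group isomorphism onto the additive group $\R^n$, so it carries $H$ to a semialgebraic subgroup $\Lambda$ of $(\R^n, +)$. Thus it suffices to prove that every semialgebraic subgroup of $(\R^n,+)$ is a linear subspace (and hence Nash-diffeomorphic, via a linear — in particular Nash — isomorphism, to some $\R^k$, which pulls back through $\exp$ to $(\R_+^\times)^k$).

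The heart of the matter is therefore the claim: a semialgebraic subgroup $\Lambda \subseteq \R^n$ is an $\R$-linear subspace. First, $\Lambda$ is closed in $\R^n$ (Proposition \ref{subg}), and it is a Nash submanifold, so it has a well-defined dimension $d = \dim \Lambda$ and a tangent space $\mathfrak{h} = T_0 \Lambda$, a $d$-dimensional linear subspace of $\R^n$. Because $\Lambda$ is a closed connected (by the subgroup-of-$\R^n$ structure one expects connectedness, but if not one works with the identity component, which is an open finite-index subgroup, hence also semialgebraic and of the same dimension) Lie subgroup of the abelian Lie group $\R^n$, it equals $\exp_{\R^n}(\mathfrak{h}) = \mathfrak{h}$ as a set — a closed connected subgroup of $\R^n$ is automatically a linear subspace by the elementary structure theory of closed subgroups of $\R^n$ (a closed connected subgroup of $\R^n$ is a vector subspace). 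Alternatively, and more in the spirit of the paper's toolkit, one can argue that $\Lambda$ is closed under multiplication by every rational scalar (since $q \cdot v = \lim$ of integer combinations, or directly: $\frac{1}{m}v \in \Lambda$ because the subgroup generated by $v$ lies in $\Lambda$ and $\Lambda$ being a manifold near $0$ forces divisibility), hence $\Lambda$ is a $\mathbb{Q}$-subspace that is also closed in $\R^n$, and a closed $\mathbb{Q}$-subspace of $\R^n$ is an $\R$-subspace.

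Once $\Lambda$ is identified with a linear subspace $\mathfrak{h} \cong \R^k$, choose any linear isomorphism $\mathfrak{h} \to \R^k$; it is semialgebraic, hence a Nash diffeomorphism of the Nash submanifold structures. Composing $H \xrightarrow{\log} \Lambda = \mathfrak{h} \xrightarrow{\sim} \R^k \xrightarrow{\exp} (\R_+^\times)^k$ gives a Nash group isomorphism, so $H$ is hyperbolic. One should double-check that $\log$ restricted to $H$ and $\exp$ restricted to $\R^k$ are Nash as maps of the relevant Nash submanifolds — this follows from Proposition \ref{sub}, since restriction of a Nash map to a Nash submanifold is Nash, and $\exp, \log$ between $(\R_+^\times)^n$ and $\R^n$ are manifestly Nash (their graphs are semialgebraic, being cut out by $x = e^y$ — wait, $e^y$ is not semialgebraic).

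I should flag the one genuine subtlety: $\exp \colon \R \to \R_+^\times$ is \emph{not} a Nash map, so the logarithm trick, while geometrically correct, does not stay inside the Nash category. The main obstacle is thus to avoid transcendental functions entirely. The fix is to work directly with $A = (\R_+^\times)^n$: the key structural input is Lemma \ref{homs}, which says $\Hom(\R_+^\times, \R_+^\times) \cong \mathbb{Q}$. The right approach is to show that a Nash subgroup $H$ of $A$ is a \emph{direct factor} up to isogeny, or more precisely to realize $H$ as the image of a Nash homomorphism from some $(\R_+^\times)^k$. Concretely: $H$ is a connected abelian Nash group (its identity component is, and has finite index; but a closed subgroup of $(\R_+^\times)^n \cong \R^n$ is automatically connected and simply connected, being a linear subspace in log coordinates), so picking $d = \dim H$, the Lie algebra inclusion $\Lie H \hookrightarrow \Lie A = \R^n$ exponentiates to a Lie group embedding $\R^d \cong \Lie H \to A$ with image $H$; one then checks this Lie homomorphism $\R^d \cong (\R_+^\times)^d \to A$ is Nash, because by Lemma \ref{homs} every Lie homomorphism $\R_+^\times \to \R_+^\times$ is automatically of the form $x \mapsto x^r$ with $r \in \mathbb{R}$, and one must verify $r \in \mathbb{Q}$ — but actually a Lie homomorphism $\R_+^\times \to \R_+^\times$ can have any real exponent, so one needs instead that $\Lie H$ is a \emph{rational} subspace of $\R^n$ with respect to the lattice coming from the Nash structure. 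The rationality of $\Lie H$ is what makes the embedding $\R^d \to A$ Nash, and establishing that rationality — from semialgebraicity of $H$ — is the real content and the step I expect to be the main obstacle; it should follow because the semialgebraic set $H$, being Zariski-dense in its Zariski closure (Proposition \ref{dimen2}) which is an algebraic subgroup of $(\R_+^\times)^n \subseteq \GL_1^n$, and algebraic subgroups of tori are defined by monomial equations $\prod x_i^{a_i} = 1$ with $a_i \in \mathbb{Z}$, forcing $\Lie H = \{ v : \sum a_i v_i = 0 \}$ to be rational. Then $H \cong (\R_+^\times)^d$ via the rational linear iso, finishing the proof.
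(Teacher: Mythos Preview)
Your proposal is correct in its final form---after you rightly discard the logarithm approach (since $\exp:\R\to\R_+^\times$ is not Nash), the Zariski-closure argument works: the Zariski closure of $H$ in $\mathbb{G}_m^n$ is an algebraic subgroup of a split torus, hence itself a split subtorus cut out by monomial equations $\prod x_i^{a_i}=1$ with $a_i\in\Z$; this forces $\Lie H$ to be a rational subspace, and an integer-matrix parametrization of the subtorus restricts to a Nash isomorphism $(\R_+^\times)^d\to H$. A few details you would need to nail down (that $H$ is connected because $(\R_+^\times)^n$ is uniquely divisible, that $H$ coincides with the positive-real points of its Zariski closure by a dimension-plus-connectedness argument, and that subtori of split tori are split) are all routine.

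The paper's proof, however, takes a completely different and much more elementary route: a two-line induction on the number of factors $k$ in $A=(\R_+^\times)^k$. If $H\neq A$, some factor $A_i\cong\R_+^\times$ is not contained in $H$; since $\R_+^\times$ has no proper nontrivial Nash subgroup, $A_i\cap H=\{1\}$, so the projection $A\to A/A_i\cong(\R_+^\times)^{k-1}$ restricts to an injective Nash homomorphism on $H$, and one concludes by induction. This avoids algebraic geometry entirely and uses nothing beyond the internal toolkit of the paper. Your approach, by contrast, imports the structure theory of algebraic tori; it is heavier but has the virtue of making explicit the rationality phenomenon that underlies Theorem~\ref{thmh} (the $\mathbb{Q}$-vector-space equivalence), whereas the paper's inductive argument gives no such insight and simply gets the job done.
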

\begin{proof}
 Let $H$ be a Nash subgroup of $A_1\times A_2\times \cdots\times A_k$, where $k\geq 0$, and each $A_i$ is a Nash group which is Nash isomorphic to $\R_+^\times$ ($i=1,2,\cdots, k$).
 We want to show that $H$ is a hyperbolic Nash group. Assume without loss of generality that $H\neq A$. Then $A_i$ is not contained in $H$ for some $i$.
 Then $A_i\cap H=\{1\}$, and we get an injective Nash homomorphism $H\hookrightarrow A/A_i$. The lemma then follows by an inductive argument.
\end{proof}

Likewise, we have
\begin{prpl}\label{quosplit}
Every Nash quotient group of a hyperbolic Nash group is a hyperbolic Nash group.
\end{prpl}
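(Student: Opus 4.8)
The plan is to reduce the statement to the elementary fact that a subspace of a finite-dimensional $\mathbb Q$-vector space admits a complement, by transporting the problem through the equivalence of categories of Theorem~\ref{equsplit}.

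Let $H$ be a Nash subgroup of a hyperbolic Nash group $A$; since $A$ is abelian, $H$ is normal and $A/H$ is an almost linear Nash group by Proposition~\ref{quotient}. First I would note that $H$ is itself hyperbolic by Proposition~\ref{subsplit}, so both $H$ and $A$ lie in the domain of the functor $F:=\Hom(\R^\times_+,-)$. Applying $F$ to the inclusion $\iota\colon H\hookrightarrow A$ yields a $\mathbb Q$-linear map $F(\iota)\colon F(H)\to F(A)$ between finite-dimensional $\mathbb Q$-vector spaces; since $\iota$ is a monomorphism and $F$ is an equivalence, $F(\iota)$ is a monomorphism, hence injective. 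Next I would choose a complement $W\subseteq F(A)$ of the subspace $F(\iota)(F(H))$, so that $F(A)=F(\iota)(F(H))\oplus W$, and apply the quasi-inverse functor $\Phi:=\R^\times_+\otimes_{\mathbb Q}-$. Because $\Phi$ carries finite direct sums of $\mathbb Q$-vector spaces to direct products of Nash groups, and $\Phi\circ F$ is naturally isomorphic to the identity functor, this produces a Nash isomorphism $\theta\colon A\xrightarrow{\sim}H\times B$ with $B:=\Phi(W)$ hyperbolic, under which the subgroup $H$ corresponds to $H\times\{1\}$. Composing $\theta$ with the quotient map then identifies $A/H$ with $(H\times B)/(H\times\{1\})\cong B$, which is hyperbolic; here the last isomorphism is a Nash isomorphism since $B\hookrightarrow H\times B\twoheadrightarrow (H\times B)/(H\times\{1\})$ is a bijective Nash homomorphism.

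The only point requiring genuine care is the claim that $\theta$ can be arranged to send $H$ precisely onto the first factor $H\times\{1\}$: this is a routine but slightly fussy chase through the natural isomorphism $\Phi\circ F\cong\mathrm{id}$ applied to $\iota$. If one prefers to sidestep this bookkeeping, the same conclusion follows from an explicit computation. Fix a Nash isomorphism $A\cong(\R^\times_+)^n$; by Lemma~\ref{homs}, applied componentwise, the map $\iota$ is then given, after replacing the chosen isomorphism $H\cong(\R^\times_+)^m$ by its composition with an automorphism $x\mapsto(x_1^N,\dots,x_m^N)$ that clears denominators, by a matrix $R\in\Mat_{n\times m}(\Z)$, which has rank $m$ because $\iota$ is injective. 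Choosing $P\in\GL_n(\Z)$ and $Q\in\GL_m(\Z)$ putting $R$ into Smith normal form — these induce Nash automorphisms of $(\R^\times_+)^n$ and $(\R^\times_+)^m$ by Lemma~\ref{homs} — transforms $H$ into the standard subgroup $(\R^\times_+)^m\times\{1\}^{n-m}$ (using that $y\mapsto y^{d}$ is surjective on $\R^\times_+$ for each elementary divisor $d\ge 1$), which has the obvious Nash complement $\{1\}^m\times(\R^\times_+)^{n-m}$; the latter maps isomorphically onto $A/H$.
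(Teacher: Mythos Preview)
Your proof is correct and follows essentially the same approach as the paper: both invoke Proposition~\ref{subsplit} and then use the equivalence of Theorem~\ref{equsplit} to reduce to the trivial fact that quotients (equivalently, complements) exist in finite-dimensional $\mathbb Q$-vector spaces. The paper phrases this as tensoring the short exact sequence $0\to\Hom(\R_+^\times,A_0)\to\Hom(\R_+^\times,A)\to\Hom(\R_+^\times,A)/\Hom(\R_+^\times,A_0)\to 0$ by $\R_+^\times$, whereas you split off an explicit complement and then offer a Smith-normal-form variant; these are just different packagings of the same idea.
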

\bp
Let $A_0$ be a Nash subgroup of a hyperbolic Nash group $A$. By Proposition
\ref{subsplit}, $A_0$ is also a hyperbolic Nash group. Using Theorem \ref{equsplit}, we get an exact sequence
\[
  0\rightarrow \Hom(\R_+^\times, A_0)\rightarrow   \Hom(\R_+^\times, A)\rightarrow  \Hom(\R_+^\times, A)/ \Hom(\R_+^\times, A_0)\rightarrow 0
\]
of left $\mathbb Q$-vector spaces. Tensoring with  $\R_+^\times$, we get an exact sequence
\[
  1\rightarrow A_0\rightarrow A\rightarrow \R_+^\times \otimes_\mathbb Q ( \Hom(\R_+^\times, A)/ \Hom(\R_+^\times, A_0))\rightarrow 1
\]
of hyperbolic Nash groups and Nash homomorphisms. Therefore the proposition follows.
\ep

\section{Disjointness of Elliptic, hyperbolic and unipotent Nash groups}\label{secjordan}

First recall the following well known fact:
\begin{lem}\label{j00}
The hyperbolic Nash group $\R^\times_+$ is not Nash isomorphic to the
unipotent Nash group $\R$.
\end{lem}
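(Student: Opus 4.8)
The plan is to argue that an isomorphism of Nash groups is in particular an isomorphism of the underlying topological (indeed, abstract) groups, and to exhibit a group-theoretic invariant that distinguishes $\R_+^\times$ from $\R$. Both groups are isomorphic as Lie groups (via $\exp$), so a purely topological invariant will not do; instead I would look at the structure of the multiplicative group itself. The cleanest such invariant is divisibility together with the presence of elements of finite order, or — more robustly for the Nash category — the monoid of Nash endomorphisms.

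First I would recall that a Nash isomorphism $\varphi\colon \R_+^\times \to \R$ would yield, by composition, a ring isomorphism $\Hom(\R_+^\times,\R_+^\times)\cong\Hom(\R,\R)$ (here $\R$ is regarded as the additive unipotent Nash group). By Lemma~\ref{homs}, $\Hom(\R_+^\times,\R_+^\times)\cong\mathbb Q$ as a ring. On the other hand, every Nash endomorphism of the unipotent Nash group $\R$ is in particular a continuous additive endomorphism, hence multiplication by a real scalar; and conversely every such scalar multiplication is a polynomial, hence Nash, map. Thus $\Hom(\R,\R)\cong\R$ as a ring. Since $\mathbb Q\not\cong\R$ as rings (for instance $\R$ contains a square root of $2$ while $\mathbb Q$ does not, or simply on cardinality grounds), no Nash isomorphism $\R_+^\times\to\R$ can exist.

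An alternative, even more elementary route avoids endomorphism rings entirely: a Nash isomorphism is an isomorphism of abstract groups, so it suffices to note that $\R_+^\times$ and $(\R,+)$ are not isomorphic as abstract groups. Indeed $(\R,+)$ is a divisible torsion-free group, hence a $\mathbb Q$-vector space of dimension $2^{\aleph_0}$; the same is true of $\R_+^\times$ via $\log$, so this particular invariant fails — which is exactly why one must be slightly more careful. The correct elementary observation is that the statement is about \emph{Nash} isomorphism: one shows that a Nash homomorphism $\R_+^\times\to\R$ is real-analytic and its composition with $\exp\colon\R\to\R_+^\times$ on the source (and identity on the target) gives a real-analytic homomorphism $(\R,+)\to(\R,+)$, i.e.\ a linear map $x\mapsto cx$; pulling back, a Nash homomorphism $\R_+^\times\to\R$ has the form $x\mapsto c\log x$, which is transcendental and hence not semialgebraic unless $c=0$. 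Therefore $\Hom(\R_+^\times,\R)=0$, so in particular there is no Nash isomorphism between the two groups.

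I expect the main (very minor) obstacle to be pinning down the shape of Nash homomorphisms $\R_+^\times\to\R$ cleanly: one must combine the fact that such a map is a Lie group homomorphism (hence $x\mapsto c\log x$ for some $c\in\R$) with the fact that it is semialgebraic, and observe that $\log$ is not semialgebraic — concretely, the graph $\{(x,c\log x):x>0\}$ is not contained in the zero set of any nonzero polynomial for $c\neq 0$, since a nonzero real-analytic algebraic function cannot have a logarithmic singularity, or because it would force infinitely many sign changes / an incompatible asymptotic growth. Given the machinery already in place (Lemma~\ref{homs} and Proposition~\ref{autsu2}), I would simply reduce to the computation $\Hom(\R_+^\times,\R_+^\times)\cong\mathbb Q\not\cong\R\cong\Hom(\R,\R)$, which is the shortest fully rigorous argument within the paper's framework.
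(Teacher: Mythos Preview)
Your proposal is correct, and the argument you ultimately settle on---comparing the Nash endomorphism rings $\Hom(\R_+^\times,\R_+^\times)\cong\mathbb Q$ (via Lemma~\ref{homs}) and $\Hom(\R,\R)\cong\R$---is exactly the paper's proof. Your alternative route, computing $\Hom(\R_+^\times,\R)=0$ directly by observing that $x\mapsto c\log x$ is not semialgebraic for $c\neq 0$, is also valid and in some sense more direct, though the paper does not pursue it.
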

\begin{proof}
 Note that the Nash endomorphism ring $\Hom(\R,\R)$ is isomorphic to $\R$. By Lemma \ref{homs}, the  Nash endomorphism ring $\Hom(\R_+^\times,\R_+^\times)$ is isomorphic to $\mathbb Q$. Therefore the lemma holds.
\end{proof}

The following is a useful fact about unipotent Nash groups:
\begin{lem}\label{euni}
Every non-trivial element of a unipotent Nash group is contained in a Nash subgroup which is Nash isomorphic to $\R$.
\end{lem}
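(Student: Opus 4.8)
The plan is to reduce to the exponential picture furnished by Proposition \ref{autsu0}. Let $N$ be a unipotent Nash group and let $x\in N$ be non-trivial. By Proposition \ref{autsu0}, the exponential map $\exp\colon \Lie\,N\to N$ is a Nash diffeomorphism; write $X\in\Lie\,N$ for the unique element with $\exp X=x$, which is non-zero since $x\neq 1$. The one-dimensional subspace $\R X\subset \Lie\,N$ is an abelian (hence nilpotent) Lie subalgebra, so by Proposition \ref{nashucs000} the corresponding analytic subgroup $N_0$ of $N$ is a Nash subgroup of $N$, and it evidently contains $x=\exp X$. It remains to identify $N_0$ with the unipotent Nash group $\R$.

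To do this I would run the commutative square
\[
  \begin{CD}
            \R X @>\subset>> \Lie \,N \\
            @V\simeq V \exp  V           @V\simeq V\exp  V\\
           N_0@>\subset>> N,\\
  \end{CD}
\]
in which the right vertical arrow is a Nash diffeomorphism (Proposition \ref{autsu0}) and the top horizontal arrow is a Nash embedding; by Proposition \ref{sub} (or directly from the proof of Proposition \ref{nashucs000}) the left vertical arrow is then a Nash diffeomorphism of $\R X$ onto $N_0$. Choosing the linear isomorphism $\R\xrightarrow{\sim}\R X$, $t\mapsto tX$, and composing with $\exp$ gives a Nash isomorphism $\R\to N_0$ of Nash manifolds; it is a group homomorphism because $\exp$ restricted to the abelian subalgebra $\R X$ satisfies $\exp(sX)\exp(tX)=\exp\big((s+t)X\big)$. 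Hence $N_0$ is Nash isomorphic to the unipotent Nash group $\R$, and $x\in N_0$, proving the lemma.

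I do not expect a genuine obstacle here: everything is assembled from results already established in the unipotent section. The only point needing a little care is making sure the constructed map $\R\to N_0$ is simultaneously a Nash map and a group homomorphism, but the additivity of $\exp$ on the one-parameter subalgebra $\R X$ takes care of the group-theoretic half, while Propositions \ref{autsu0} and \ref{nashucs000} take care of the Nash half. One could alternatively invoke Lemma \ref{unic3} to embed $N$ in $\oU_n(\R)$ and exhibit the one-parameter subgroup through $x$ explicitly, but the exponential-map argument is cleaner and avoids choosing coordinates.
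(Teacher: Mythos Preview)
Your proof is correct and follows the same approach as the paper: the paper's proof is the single line ``This is directly implied by Proposition \ref{autsu0},'' and your argument simply unpacks that implication in detail. Your invocation of Proposition \ref{nashucs000} is valid but not strictly needed---since $\exp$ is a Nash diffeomorphism, the image $\exp(\R X)$ of the semialgebraic subspace $\R X$ is already semialgebraic in $N$, hence a Nash subgroup, and the restriction of a Nash diffeomorphism to a Nash submanifold gives the required Nash isomorphism with $\R$.
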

\bp
This is directly implied by Proposition \ref{autsu0}.
\ep

Elliptic Nash groups, hyperbolic Nash groups and unipotent Nash groups are disjoint to each other in the following sense:

\begin{prpl}\label{jt}
Let $G$ and $G'$ be two Nash groups. If $G$ is elliptic,
and $G'$ is hyperbolic or unipotent, then
\[
  \Hom(G,G')=\{1\}.
\]
The same holds if $G$ is hyperbolic, and
$G'$ is elliptic or unipotent; or if $G$ is unipotent, and $G'$ is
elliptic or hyperbolic.
\end{prpl}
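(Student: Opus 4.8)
The plan is to deduce all six cases from one core computation together with the structural facts already available; the two statements with $G$ elliptic I handle directly, and the four statements with $G$ hyperbolic or unipotent I reduce to a single lemma about Nash homomorphisms out of $\R$ and $\R_+^\times$.

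\emph{The core lemma.} I would first prove: if $C$ is one of $\R$, $\R_+^\times$ and $G'$ is elliptic, hyperbolic, or unipotent but not of the same basic type as $C$, then $\Hom(C,G')=\{1\}$. Let $\varphi\in\Hom(C,G')$ and put $T:=\varphi(C)$. By Lemma \ref{images} and Proposition \ref{subg}, $T$ is a closed Nash subgroup of $G'$, and it is connected since $C$ is. The kernel of $\varphi$ is a Nash subgroup of $C$; but a proper Nash subgroup of a connected one-dimensional Nash group is zero-dimensional, hence finite, hence trivial when the group is $\R$ or $\R_+^\times$ (these being torsion-free). So if $\varphi\neq 1$, then $\varphi\colon C\to T$ is a bijective Nash homomorphism, hence a Nash isomorphism, and $T$ is a nontrivial, noncompact, one-dimensional connected Nash subgroup of $G'$. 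Now: if $G'$ is elliptic this contradicts the compactness of $G'$ (a closed subgroup of a compact group is compact); if $G'$ is hyperbolic then $T$ is hyperbolic by Proposition \ref{subsplit}, and a one-dimensional hyperbolic Nash group is Nash isomorphic to $\R_+^\times$, so $C=\R$ would be Nash isomorphic to $\R_+^\times$, contradicting Lemma \ref{j00}; if $G'$ is unipotent then $T$ is unipotent (a Nash subgroup of a unipotent Nash group is unipotent), and a one-dimensional unipotent Nash group is Nash isomorphic to $\R$ via the exponential map (Proposition \ref{autsu0}), so $C=\R_+^\times$ would be Nash isomorphic to $\R$, again contradicting Lemma \ref{j00}.

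\emph{The reductions.} If $G$ is elliptic and $G'$ is hyperbolic, then for $\varphi\in\Hom(G,G')$ the image $\varphi(G)$ is a compact subgroup of $G'\cong(\R_+^\times)^n$; since $(\R_+^\times)^n$ is isomorphic as a topological group to $\R^n$, which has no nontrivial compact subgroup, $\varphi=1$. If $G$ is elliptic and $G'$ is unipotent, then $\varphi(G)$ is a compact subgroup of $G'$, and a unipotent Nash group has trivial maximal compact subgroup (as observed in the proof of Lemma \ref{unic2}), so again $\varphi=1$. If $G$ is hyperbolic, write $G=(\R_+^\times)^n$; restricting $\varphi$ along the coordinate inclusions $\R_+^\times\hookrightarrow(\R_+^\times)^n$ gives elements of $\Hom(\R_+^\times,G')$, which are trivial by the core lemma when $G'$ is elliptic or unipotent; since these inclusions generate $G$, $\varphi=1$. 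If $G$ is unipotent and $\varphi\neq 1$, pick $x\in G$ with $\varphi(x)\neq 1$; by Lemma \ref{euni}, $x$ lies in a Nash subgroup $H$ of $G$ that is Nash isomorphic to $\R$, and the restriction $\varphi|_H$ (Nash by Lemma \ref{cnash}) is then a nontrivial element of $\Hom(\R,G')$, contradicting the core lemma when $G'$ is elliptic or hyperbolic; so $\varphi=1$.

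I expect there to be no serious obstacle: essentially everything is bookkeeping once one has Lemmas \ref{j00} and \ref{euni}. The only genuine points are the two observations packaged into the core lemma --- that images and kernels of Nash homomorphisms are Nash subgroups, and that a proper Nash subgroup of $\R$ or $\R_+^\times$ must be trivial --- and the small care, in the unipotent-target cases, needed to see that a one-dimensional unipotent Nash group is Nash isomorphic to $\R$ (via Proposition \ref{autsu0}) so that Lemma \ref{j00} applies. An alternative to the elliptic-target case of the core lemma would be to note that $T$ is a torus $\mathbb S^k$ and project to a circle factor, reducing to $\Hom(\R,\mathbb S)=\Hom(\R_+^\times,\mathbb S)=\{1\}$; the uniform argument above avoids having to invoke the classification of compact connected abelian Lie groups.
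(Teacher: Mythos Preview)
Your proposal is correct and follows essentially the same route as the paper: both handle the elliptic-source cases by noting that hyperbolic and unipotent Nash groups have no nontrivial compact subgroups, and both reduce the remaining four cases to the one-dimensional sources $\R_+^\times$ and $\R$ (using coordinate factors for hyperbolic $G$, and Lemma~\ref{euni} for unipotent $G$), invoking Lemma~\ref{j00} at the crux. The only difference is organizational --- you package the one-dimensional computations into an explicit core lemma with a uniform kernel/image argument, whereas the paper simply asserts the needed vanishing of $\Hom(\R_+^\times,G')$ and $\Hom(\R,G')$ case by case and leaves the verifications implicit.
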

\begin{proof}
Note that all hyperbolic Nash groups and all unipotent Nash groups have no non-trivial compact subgroups. Therefore, $\Hom(G,G')=\{1\}$ if $G$ if elliptic, and $G'$ is hyperbolic or unipotent.

Note that if $G'$ is elliptic, then
\be \label{homrr}
  \Hom(\R_+^\times, G')=\{1\}\quad\textrm{and}\quad \Hom(\R, G')=\{1\}.
\ee
The first equality of \eqref{homrr} implies that $\Hom(G, G')=\{1\}$ if $G$ is hyperbolic and $G'$ is elliptic. By Lemma \ref{euni}, the second equality of  \eqref{homrr} implies that $\Hom(G, G')=\{1\}$ if $G$ is unipotent and $G'$ is elliptic.

Lemma \ref{j00} implies that
\[
  \Hom(\R_+^\times, G')=\{1\}
\]
if $G'$ is unipotent. Therefore $\Hom(G, G')=\{1\}$ if $G$ is hyperbolic and $G'$ is unipotent. Lemma \ref{j00} also implies that
\be \label{homrr3}
  \Hom(\R, G')=\{1\}
\ee
if $G'$ is hyperbolic. By Lemma \ref{euni}, \eqref{homrr3} implies that $\Hom(G, G')=\{1\}$ if $G$ is unipotent and $G'$ is hyperbolic. This finishes the proof of the proposition.
\end{proof}

\begin{prpl}\label{intehu}
Let $H_1$, $H_2$, $H_3$ be three Nash subgroups of a Nash group $G$. If they are respectively elliptic, hyperbolic and unipotent, then they have pairwise trivial intersections.
\end{prpl}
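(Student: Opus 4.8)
The plan is to deduce this from the disjointness statement already established in Proposition \ref{jt}, using the fact that each of the three classes is stable under passing to Nash subgroups. First I would observe that for each pair of distinct indices $i,j$, the intersection $H_i\cap H_j$ is a Nash subgroup of $G$ (the intersection of two Nash subgroups is a Nash subgroup, by Proposition \ref{subgi}), and hence it is a Nash subgroup of both $H_i$ and $H_j$.

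Next I would invoke the hereditary properties of the three basic classes. A Nash subgroup of an elliptic Nash group is elliptic (Proposition \ref{quotientel}); a Nash subgroup of a hyperbolic Nash group is hyperbolic (Proposition \ref{subsplit}); and a Nash subgroup of a unipotent Nash group is unipotent (immediate from the definition of unipotent Nash group, as noted right after it). Applying these to the three intersections, I conclude that $H_1\cap H_2$ is simultaneously elliptic and hyperbolic, that $H_1\cap H_3$ is simultaneously elliptic and unipotent, and that $H_2\cap H_3$ is simultaneously hyperbolic and unipotent.

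Finally I would apply Proposition \ref{jt} separately in each of the three cases to the group $H:=H_i\cap H_j$, taking both Nash groups appearing in that proposition to be $H$ itself. Since $H$ is elliptic while at the same time being hyperbolic or unipotent (resp.\ hyperbolic while at the same time being unipotent), Proposition \ref{jt} yields $\Hom(H,H)=\{1\}$. In particular the identity map $H\to H$ is the trivial homomorphism, which forces $H=\{1\}$. This gives the pairwise triviality of the intersections of $H_1$, $H_2$, $H_3$.

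I do not expect a genuine obstacle here: the entire content is packaged in Proposition \ref{jt} together with the stability of the three classes under Nash subgroups, both of which are available. The only point that deserves a moment's care is the preliminary remark that $H_i\cap H_j$ is again a Nash subgroup, so that the words ``elliptic'', ``hyperbolic'', ``unipotent'' apply to it and Proposition \ref{jt} is applicable.
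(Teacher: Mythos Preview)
Your proposal is correct and follows essentially the same approach as the paper: the paper's proof simply states that $H_1\cap H_2$ is elliptic and hyperbolic and hence trivial by Proposition \ref{jt}, and likewise for the other two intersections. Your version just spells out the implicit steps (that the intersection is a Nash subgroup, that the three classes are closed under Nash subgroups, and that $\Hom(H,H)=\{1\}$ forces $H=\{1\}$ via the identity map).
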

\begin{proof}
The Nash group $H_1\cap H_2$ is elliptic and hyperbolic, and is hence trivial by Proposition \ref{jt}. Likewise, $H_1\cap H_3$ and $H_2\cap H_3$ are trivial.
\end{proof}

\begin{lem}\label{inteh0}
Let $G_1$ be an elliptic Nash group and let $G_2$ ba a hyperbolic Nash group. Then all Nash subgroups of $G_1\times G_2$ are of the form $H_1\times H_2$, where $H_i$ is a Nash subgroup of $G_i$, $i=1,2$.
\end{lem}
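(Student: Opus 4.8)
The plan is to prove the slightly stronger assertion that \emph{every} Nash subgroup $H$ of $G_1\times G_2$ equals $(H\cap G_1)\times(H\cap G_2)$, where we identify $G_1$ with $G_1\times\{1\}$ and $G_2$ with $\{1\}\times G_2$ inside $G_1\times G_2$. This suffices: $H\cap G_i$ is a semialgebraic subgroup of $G_i$, hence a Nash subgroup of $G_i$, so taking $H_i:=H\cap G_i$ gives the desired form.

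First I would introduce $K:=H\cap G_1$ and $B:=H\cap G_2$. Because $G_1\times\{1\}$ and $\{1\}\times G_2$ are normal Nash subgroups of $G_1\times G_2$, both $K$ and $B$ are normal Nash subgroups of $H$. By Proposition \ref{quotientel} the group $K$, being a Nash subgroup of the elliptic group $G_1$, is elliptic; by Proposition \ref{subsplit} the group $B$, being a Nash subgroup of the hyperbolic group $G_2$, is hyperbolic; and by Proposition \ref{intehu} we have $K\cap B=\{1\}$. Writing $p_1$ and $p_2$ for the two projections, one has $K=p_1(K)\times\{1\}$ and $B=\{1\}\times p_2(B)$ as subsets of $G_1\times G_2$, so the (commuting) subgroups $K$ and $B$ generate $KB=p_1(K)\times p_2(B)$, which is semialgebraic by Lemma \ref{images}. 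Hence $K\times B:=KB$ is a normal Nash subgroup of $H$, internally the direct product of $K$ and $B$.

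Now I would form the Nash quotient group $Q:=H/(K\times B)$ (Proposition \ref{quotient}) and show it is at once elliptic and hyperbolic, whence trivial. The Nash homomorphism $p_1|_H\colon H\to G_1$ has image a Nash subgroup of the elliptic group $G_1$ (hence elliptic) and kernel $B$; by Lemma \ref{subm0} it descends to a Nash isomorphism from $H/B$ onto $p_1(H)$, so $H/B$ is elliptic, and since $B\subseteq K\times B$ the group $Q$ is a Nash quotient group of $H/B$ and therefore elliptic by Proposition \ref{quotientel}. The same reasoning applied to $p_2|_H$, together with Propositions \ref{subsplit} and \ref{quosplit}, shows $Q$ is hyperbolic. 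But a Nash group that is both compact and Nash isomorphic to some $(\R_+^\times)^n$ is trivial (take $G=G'=Q$ in Proposition \ref{jt}, or note $(\R_+^\times)^n$ is noncompact for $n\geq 1$). Thus $Q$ is trivial, $H=K\times B=p_1(K)\times p_2(B)$, and we are done.

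I do not expect a real obstacle: once the two permanence statements (Nash subgroups and Nash quotients of an elliptic, resp.\ hyperbolic, Nash group are again elliptic, resp.\ hyperbolic) and the elliptic/hyperbolic disjointness are in hand, the argument is essentially bookkeeping. The only points requiring a moment's care are that $KB$ is genuinely semialgebraic and that the induced maps $H/B\to p_1(H)$ and $H/K\to p_2(H)$ are Nash isomorphisms; both follow from facts already established, namely that images of semialgebraic sets under Nash maps are semialgebraic and that bijective Nash homomorphisms have Nash inverses.
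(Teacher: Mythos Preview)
Your proof is correct. You argue symmetrically: setting $K=H\cap G_1$ and $B=H\cap G_2$, you form the quotient $Q=H/(KB)$ and show it is simultaneously elliptic (as a quotient of $H/B\cong p_1(H)\subset G_1$) and hyperbolic (as a quotient of $H/K\cong p_2(H)\subset G_2$), hence trivial. The paper instead proceeds by a two-step reduction: it first isolates the special case $H\cap G_2=\{1\}$, where $H$ embeds into $G_1$ via $p_1$ and so is elliptic, whence $p_2|_H$ is trivial by Proposition~\ref{jt} and $H\subset G_1$; then it reduces the general case to this one by passing from $G_2$ to $G_2':=G_2/(H\cap G_2)$ and observing that the image $H'$ of $H$ in $G_1\times G_2'$ has trivial intersection with $G_2'$. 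Both arguments rest on the same ingredients (permanence of ellipticity and hyperbolicity under Nash subgroups and quotients, and the disjointness Proposition~\ref{jt}); your route is more symmetric and slightly more direct, while the paper's reduction pattern has the virtue of being immediately recycled in the proof of the next lemma (Lemma~\ref{intnuu}).
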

\begin{proof}
Let $H$ be a Nash subgroup of $G_1\times G_2$. We first claim that
\begin{equation}\label{inteh}
\textrm{if $\quad H\cap G_2=\{1\},\quad$ then $\quad H\subset G_1$.}
\end{equation}
Consider the restriction to $H$ of the projection map
\[
  G_1\times G_2\rightarrow G_1.
  \]
The condition $H\cap G_2=\{1\}$ implies that $H$ is an elliptic Nash group.
Then by Proposition \ref{jt}, the projection map
\[
G_1\times G_2\rightarrow G_2
\]
has trivial restriction to $H$. Therefore $H\subset G_1$, and the claim is proved.

In general, put $G_2':=G_2/(G_2\cap H)$, which is a hyperbolic Nash group.
Write $H'$ for the image of $H$ under the Nash homomorphism
\[
  p: G_1\times G_2\rightarrow G_1\times G_2',\quad (g_1,g_2)\mapsto (g_1, g_2(G_2\cap H)).
\]
Inside the group $G_1\times G_2'$, we have that
\[
  H'\cap G_2'=\{1\},
 \]
and then \eqref{inteh} implies that
\[
H'\subset G_1.
\]
The lemma then follows as $H=p^{-1}(H')$.

\end{proof}

\begin{lem}\label{intnuu}
Let $G_1$ be the direct product of an elliptic Nash group and a hyperbolic Nash group, and let $G_2$ ba a unipotent Nash group.
Then all Nash subgroups of $G_1\times G_2$ are of the form $H_1\times H_2$, where $H_i$ is a Nash subgroup of $G_i$, $i=1,2$.

\end{lem}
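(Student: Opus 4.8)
The plan is to argue by induction on $n:=\dim G_2$, bootstrapping from the already‑established Lemma \ref{inteh0} (the case where the unipotent factor is trivial) together with the disjointness statement of Proposition \ref{jt}. The proof of Lemma \ref{inteh0} does not transcribe directly, because $G_2\cap H$ need not be normal in $G_2$ once $G_2$ is merely unipotent rather than abelian, so the quotient $G_2/(G_2\cap H)$ used there is unavailable; the fix is to quotient instead by a central line in $G_2$ and feed the result into the induction. The base case $n=0$ is immediate: a unipotent Nash group of dimension $0$ is trivial (unipotent Nash groups are connected, Lemma \ref{unic2}), so $G_1\times G_2=G_1$ and there is nothing to prove.

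For the inductive step let $n\geq 1$. Since $G_2$ is a non‑trivial connected, simply connected, nilpotent Lie group, its centre contains a one‑parameter subgroup, which — via the Nash diffeomorphism $\exp\colon \Lie\,G_2\to G_2$ of Proposition \ref{autsu0} — is a Nash subgroup $Z\cong\R$. As $G_1$ centralises $G_2$, $Z$ is normal in $G:=G_1\times G_2$, and $G/Z$ identifies with $G_1\times(G_2/Z)$, where $G_2/Z$ is again connected, simply connected and nilpotent, hence unipotent of dimension $n-1$ (Proposition \ref{unin}; its quotient Nash structure coincides with the unipotent one by Proposition \ref{autsu2} together with Lemma \ref{subm0}). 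Let $q\colon G\to G/Z$ be the quotient homomorphism and let $H\leq G$ be a Nash subgroup. Then $q(H)$ is a Nash subgroup of $G/Z$ (Lemma \ref{images}), so the inductive hypothesis yields $q(H)=\bar H_1\times\bar H_2$ with $\bar H_1\leq G_1$ and $\bar H_2\leq G_2/Z$ Nash subgroups. Moreover $Z\cap H$ is a Nash subgroup of $Z\cong\R$, and $\R$ has no proper non‑trivial semialgebraic subgroup (e.g.\ $\Z$ has infinitely many connected components, contradicting Lemma \ref{dimen20}); so either $Z\subseteq H$ or $Z\cap H=\{1\}$.

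If $Z\subseteq H$, then $\ker q=Z\subseteq H$ forces $H=q^{-1}(q(H))=q^{-1}(\bar H_1\times\bar H_2)$, and since $q=\mathrm{id}_{G_1}\times q_2$ with $q_2\colon G_2\to G_2/Z$ the quotient map, this equals $\bar H_1\times q_2^{-1}(\bar H_2)$, which is of the asserted form. If instead $Z\cap H=\{1\}$, then $q|_H\colon H\to q(H)$ is a group isomorphism; set
\[
  H_1:=(q|_H)^{-1}(\bar H_1\times\{1\}),\qquad H_2:=(q|_H)^{-1}(\{1\}\times\bar H_2),
\]
which are Nash subgroups of $H$. Transporting the internal direct‑product decomposition $q(H)=(\bar H_1\times\{1\})\times(\{1\}\times\bar H_2)$ across $q|_H$ shows $H=H_1\times H_2$ internally.

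It remains to place $H_1$ and $H_2$ inside $G_1$ and $G_2$. For $b\in H_2$ the $G_1$‑component of $q(b)$ is trivial, hence so is the $G_1$‑component of $b$, so $H_2\subseteq G_2$. For $a\in H_1$ the $G_2/Z$‑component of $q(a)$ is trivial, so the $G_2$‑component of $a$ lies in $Z$; thus the projection onto the $G_2$‑factor is a Nash homomorphism $H_1\to Z$, and $H_1$ is Nash isomorphic to $\bar H_1$, which by Lemma \ref{inteh0} is a direct product of an elliptic and a hyperbolic Nash group, so Proposition \ref{jt} forces this homomorphism to be trivial and $H_1\subseteq G_1$. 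Then $H=H_1\times H_2$ with $H_i\leq G_i$ gives $H_1=H\cap G_1$ and $H_2=H\cap G_2$, completing the induction. The step I expect to need the most care is exactly the one already flagged — arranging a usable quotient to set up the induction, i.e.\ producing a $1$‑dimensional normal Nash subgroup $Z$ of $G_2$ and verifying $G_2/Z$ is unipotent; the rest (semialgebraicity of the images and preimages involved, and transporting the direct‑product relations across $q|_H$) is routine.
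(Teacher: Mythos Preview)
Your proof is correct, but the paper takes a shorter route that avoids induction entirely. The paper first observes that the proof of Lemma \ref{inteh0} adapts verbatim to the case where $G_2$ is \emph{abelian} unipotent (the only change being that the claim ``$H\cap G_2=\{1\}$ implies $H\subseteq G_1$'' now uses Lemma \ref{inteh0} and Proposition \ref{jt} to kill the projection $H\to G_2$). For general $G_2$ it then argues elementwise: given $xy\in H$ with $x\in G_1$, $y\in G_2$, it suffices to show $y\in H$; pick an \emph{abelian} Nash subgroup $G_2'\subseteq G_2$ containing $y$ (for instance $\exp(\R\log y)$, which is Nash by Proposition \ref{nashucs000}) and apply the abelian case to $H\cap(G_1\times G_2')$ to conclude $y\in H$.

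Your inductive approach---quotienting by a central line $Z\subseteq G_2$ and splitting into the cases $Z\subseteq H$ or $Z\cap H=\{1\}$---is a natural structural alternative, and the part you flagged (that the quotient Nash structure on $G_2/Z$ agrees with the unipotent one) is indeed handled by the uniqueness in Proposition \ref{quotient} together with Proposition \ref{autsu2}. The paper's trick of restricting to an abelian subgroup containing a single chosen element sidesteps all quotient machinery and gives a two-line reduction; your argument, on the other hand, is self-contained in that it never revisits the proof of Lemma \ref{inteh0}.
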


\begin{proof}
Lemma \ref{inteh0} and its proof show the lemma when $G_2$ is abelian. In general, let $H$ be a Nash subgroup of $G_1\times G_2$, and let $xy\in H$, where $x\in G_1$ and $y\in G_2$. It suffices to show that $y\in G_2$.
Replacing  $G_2$ by an abelian Nash subgroup $G_2'$ of $G$ containing $y$, and replacing  $H$ by $H\cap (G_1\times G_2')$, the lemma is reduced to the case when $G_2$ is abelian.
\end{proof}

Combining Lemmas \ref{inteh0} and \ref{intnuu}, we get
\begin{prpl}\label{intehu2}
Let $G_1$, $G_2$, $G_3$ be three Nash groups which are respectively elliptic, hyperbolic and unipotent. Then every Nash subgroup of $G_1\times G_2\times G_3$ is of the form $H_1\times H_2\times H_3$,
where $H_i$ is a Nash subgroup of $G_i$, $i=1,2,3$.
\end{prpl}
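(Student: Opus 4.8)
The plan is simply to iterate the two preceding lemmas, reading the triple product as an iterated binary product. First I would fix an arbitrary Nash subgroup $H$ of $G_1\times G_2\times G_3$ and regard the ambient group as $(G_1\times G_2)\times G_3$. Here $G_1\times G_2$ is, directly by Definition \ref{ehu0}, the direct product of an elliptic Nash group and a hyperbolic Nash group, and $G_3$ is unipotent, so the hypotheses of Lemma \ref{intnuu} are met with $G_1\times G_2$ and $G_3$ playing the roles of the lemma's two factors.

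Applying Lemma \ref{intnuu} then produces a Nash subgroup $K$ of $G_1\times G_2$ and a Nash subgroup $H_3$ of $G_3$ with $H=K\times H_3$. Next I would apply Lemma \ref{inteh0} to the Nash subgroup $K$ of $G_1\times G_2$, with $G_1$ (elliptic) and $G_2$ (hyperbolic) as the two factors; this writes $K=H_1\times H_2$ for Nash subgroups $H_1$ of $G_1$ and $H_2$ of $G_2$. Substituting back gives $H=H_1\times H_2\times H_3$, which is exactly the asserted form.

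I do not anticipate a genuine obstacle: once Lemmas \ref{inteh0} and \ref{intnuu} are in hand, the argument is a short bookkeeping exercise. The only point deserving a moment's attention is the purely formal matching of hypotheses — that an arbitrary triple product of an elliptic, a hyperbolic, and a unipotent Nash group is literally an instance of the situation in Lemma \ref{intnuu} after grouping the first two factors, and that the resulting intermediate Nash subgroup $K\subset G_1\times G_2$ is then literally an instance of the situation in Lemma \ref{inteh0}.
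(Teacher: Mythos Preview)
Your proposal is correct and is exactly the approach the paper takes: the paper's proof simply reads ``Combining Lemmas \ref{inteh0} and \ref{intnuu}, we get \ldots'', and you have spelled out that combination explicitly. There is nothing to add.
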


As a direct consequence of Proposition \ref{intehu2}, we have
\begin{prpl}\label{intehu3}
Let $H_1$, $H_2$, $H_3$ be three Nash subgroups of a Nash group $G$ which are respectively elliptic, hyperbolic and unipotent. If they pairwise commute with each other, then the multiplication map $H_1\times H_2\times H_3\rightarrow G$ is an injective Nash homomorphism.
\end{prpl}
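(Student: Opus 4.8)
The plan is to exploit Proposition \ref{intehu2} directly, as the statement announces. First I would observe that the multiplication map
\[
  m\colon H_1\times H_2\times H_3\to G,\qquad (x_1,x_2,x_3)\mapsto x_1x_2x_3,
\]
is a group homomorphism: since the three subgroups commute pairwise, one checks that $m((x_1,x_2,x_3)(y_1,y_2,y_3))=x_1y_1x_2y_2x_3y_3=x_1x_2x_3y_1y_2y_3=m(x_1,x_2,x_3)\,m(y_1,y_2,y_3)$, the middle equality being obtained by moving each $y_i$ past the $x_j$ with $j>i$. Moreover $m$ is the restriction to the Nash submanifold $H_1\times H_2\times H_3$ of the iterated multiplication map $G\times G\times G\to G$, hence a Nash map; thus $m$ is a Nash homomorphism, and only injectivity remains.

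Next I would identify the kernel $K:=\ker m$. Since the singleton $\{1\}$ is semialgebraic in $G$, Lemma \ref{images} shows that $K=m^{-1}(\{1\})$ is a semialgebraic subgroup of $H_1\times H_2\times H_3$, hence a Nash subgroup. As $H_1$, $H_2$, $H_3$ are respectively elliptic, hyperbolic and unipotent, Proposition \ref{intehu2} applies and yields a product decomposition $K=K_1\times K_2\times K_3$, where each $K_i$ is a Nash subgroup of $H_i$.

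Finally I would read off that each $K_i$ is trivial: for $x_1\in K_1$ the triple $(x_1,1,1)$ lies in $K_1\times K_2\times K_3=K$, so $m(x_1,1,1)=x_1=1$ in $G$, whence $K_1=\{1\}$, and symmetrically $K_2=K_3=\{1\}$. Therefore $K$ is trivial and $m$ is injective. I do not expect any genuine obstacle here: all the substance is packaged in Proposition \ref{intehu2}, and the remaining steps (that $m$ is a homomorphism, that it is Nash, and that its kernel is a Nash subgroup) are routine; the only points requiring minor care are the commutator manipulation showing $m$ is multiplicative and the bookkeeping that extracts $K_i=\{1\}$ from the product form of $K$.
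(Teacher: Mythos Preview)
Your proof is correct and follows exactly the route the paper intends: the paper states the proposition as a direct consequence of Proposition \ref{intehu2}, and your argument spells out precisely this deduction by noting that $m$ is a Nash homomorphism and that its kernel, being a Nash subgroup of $H_1\times H_2\times H_3$, must factor as $K_1\times K_2\times K_3$ with each $K_i$ trivially contained in $\ker(\mathrm{id})=\{1\}$.
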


In the rest of this section, we draw some consequences of Proposition \ref{jt} on unipotent Nash groups and hyperbolic Nash groups.

\begin{prpl}\label{j0unip}
Let $V$ be a Nash representation of a unipotent Nash group $G$. Then each element of $G$ acts as a unipotent linear operator on $V$.
\end{prpl}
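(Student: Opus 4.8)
The plan is to reduce to a one‑parameter subgroup and then peel off the semisimple part of its infinitesimal generator. Write $\rho\colon G\to\GL(V)$ for the representation and fix $x\in G$; we must show $\rho(x)$ is unipotent, and may assume $x\neq 1$. By Lemma~\ref{euni} there are a Nash subgroup $H$ of $G$, a Nash isomorphism $\iota\colon\R\xrightarrow{\sim}H$, and $t_0\in\R$ with $\iota(t_0)=x$. Then $\rho\circ\iota\colon\R\to\GL(V)$ is a Nash homomorphism, hence a homomorphism of Lie groups, so $(\rho\circ\iota)(t)=\exp(tA)$ for the unique $A:=\tfrac{d}{dt}\big|_{0}(\rho\circ\iota)(t)\in\gl(V)$, and $\rho(x)=\exp(t_0A)$. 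It thus suffices to prove that $A$ is nilpotent.

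Let $A=A_s+A_n$ be the additive Jordan decomposition in $\gl(V)$, with $A_s$ semisimple, $A_n$ nilpotent, $A_sA_n=A_nA_s$. Since $A_s$ and $A$ have the same eigenvalues, it is enough to show $A_s=0$. The crucial point is that $t\mapsto\exp(tA_n)=\sum_{k\ge 0}\tfrac{t^k}{k!}A_n^k$ is a \emph{polynomial} map $\R\to\GL(V)$, hence Nash; as $A_s$ commutes with $A_n$ we have $\exp(tA_s)=(\rho\circ\iota)(t)\cdot\exp(tA_n)^{-1}$, so $\rho_s\colon\R\to\GL(V)$, $t\mapsto\exp(tA_s)$, is a Nash homomorphism. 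I then argue by contradiction, assuming $A_s\neq 0$.

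Suppose first that $A_s$ has an eigenvalue $\mu=a+bi$ with $a\neq 0$. Let $W\subseteq V$ be the generalized eigenspace of $A$ for the conjugate pair $\{\mu,\bar\mu\}$: a nonzero $A$‑stable (hence $(\rho\circ\iota)(\R)$‑stable) subspace on which all eigenvalues of $A|_W$ lie in $\{\mu,\bar\mu\}$, so $\tr(A|_W)=a\dim_\R W\neq 0$. Restricting the action to $W$ gives a Nash homomorphism $\R\to\GL(W)$, hence $t\mapsto\det\bigl(\exp(tA)|_W\bigr)=e^{t\tr(A|_W)}$ is a nonconstant Nash homomorphism $\R\to\R_+^\times$, contradicting Proposition~\ref{jt} (since $\R$ is unipotent and $\R_+^\times$ is hyperbolic). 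Otherwise every eigenvalue of $A_s$ is purely imaginary, and at least one is nonzero. Diagonalizing $A_s$ over $\C$ then shows $\{\exp(tA_s):t\in\R\}$ is bounded in $\gl(V)$, while $\det\exp(tA_s)=e^{t\tr A_s}=1$ keeps its closure inside $\GL(V)$; thus $\overline{\rho_s(\R)}$ is a compact subgroup of $\GL(V)$, hence (Lemma~\ref{auts}) an elliptic Nash subgroup, and $\rho_s$ corestricts to a Nash homomorphism $\R\to\overline{\rho_s(\R)}$ by Lemma~\ref{cnash}. By Proposition~\ref{jt} this homomorphism is trivial, so $A_s=0$ — a contradiction. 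Hence $A_s=0$, $A$ is nilpotent, and $\rho(x)=\exp(t_0A)$ is unipotent.

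The main obstacle is the second case: ruling out that $\rho_s$ is a nontrivial one‑parameter subgroup with purely imaginary spectrum. The device that resolves it is splitting off the polynomial — hence automatically Nash — unipotent factor $\exp(tA_n)$, which makes the semisimple factor $\exp(tA_s)$ itself Nash; its image then lies in a compact, hence elliptic, Nash subgroup, and Proposition~\ref{jt} applies. A minor technical point is that the restriction of a Nash representation to a stable subspace is again Nash, and that corestriction to a closed Nash submanifold preserves Nashness; both reduce to Lemma~\ref{cnash}.
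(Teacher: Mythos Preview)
Your proof is correct, but it takes a more hands-on route than the paper's. Both arguments first reduce to $G=\R$ via Lemma~\ref{euni}. From there, the paper simply observes that every irreducible subquotient of the complexification $V_\C$ is one-dimensional (since $\R$ is abelian) and yields a Nash homomorphism $\R\to\C^\times$; since $\C^\times\cong\mathbb S\times\R_+^\times$ is a product of an elliptic and a hyperbolic Nash group, Proposition~\ref{jt} forces this character to be trivial, so every eigenvalue on $V_\C$ equals $1$. Your argument instead Jordan-decomposes the infinitesimal generator $A=A_s+A_n$, uses the polynomiality of $t\mapsto\exp(tA_n)$ to show $t\mapsto\exp(tA_s)$ is Nash, and then case-splits on whether $A_s$ has an eigenvalue with nonzero real part, invoking Proposition~\ref{jt} separately via $\R_+^\times$ and via a compact (hence elliptic) closure. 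The paper's approach avoids the case split by packaging both the ``hyperbolic'' and ``elliptic'' obstructions into the single target $\C^\times$; your approach, while longer, is entirely self-contained at the level of linear algebra and makes the role of the Jordan decomposition explicit.
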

\begin{proof}
Using Lemma \ref{euni}, we assume without loss of generality that $G=\R$. Let $V_1$ be an irreducible subquotient representatoin of the complexification $V_\C$ of $V$. Since $G$ is abelian, it is one dimensional and  corresponds to a Nash homomorphism
\[
  G\rightarrow \C^\times.
\]
This homomorphism is trivial by Proposition \ref{jt}. Therefore the proposition follows.
\end{proof}

As a consequence of Proposition \ref{j0unip}, we have
\begin{prpl}\label{quotientunip}
Every Nash quotient group of a unipotent Nash group is
unipotent.
\end{prpl}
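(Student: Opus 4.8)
The plan is to produce a Nash representation of the quotient that has finite kernel and on which every element acts as a unipotent operator, and then to invoke Lemma \ref{nilc}. Throughout, let $G$ be a unipotent Nash group and $H$ a normal Nash subgroup of it, so that by Proposition \ref{quotient} the quotient $G/H$ is an almost linear Nash group and the quotient map $q\colon G\to G/H$ is a submersive Nash map. Since $G$ is connected by Proposition \ref{nashucs} and $q$ is continuous and surjective, $G/H$ is connected; so the only thing to verify is the existence of the desired Nash representation of $G/H$.

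First I would apply the second assertion of Proposition \ref{chev} to obtain a Nash homomorphism $\psi\colon G\to\GL_n(\R)$ whose kernel contains $H$ as an open subgroup. Because $H\subseteq\ker\psi$, the map $\psi$ factors through $q$; writing $\psi=\bar\psi\circ q$, Lemma \ref{subm0} shows that $\bar\psi\colon G/H\to\GL_n(\R)$ is a Nash homomorphism, that is, a Nash representation of $G/H$. Its kernel is $q(\ker\psi)$, which is isomorphic to $(\ker\psi)/H$; since $H$ is open in the Nash subgroup $\ker\psi$, this is a discrete Nash subgroup of $G/H$, and a discrete semialgebraic set has only finitely many points (Lemma \ref{dimen20}), so $\ker\bar\psi$ is finite. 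Finally, $G$ being unipotent, Proposition \ref{j0unip} tells us that every element of $\psi(G)=\bar\psi(G/H)$ acts as a unipotent linear operator on $\R^n$.

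Thus $G/H$ is a connected Nash group carrying a Nash representation with finite kernel on which all group elements act unipotently, and Lemma \ref{nilc} then gives that $G/H$ is unipotent. The only point requiring a little care is the finiteness of $\ker\bar\psi$, equivalently that $H$ has finite index in $\ker\psi$; but this is immediate from $H$ being open in the Nash subgroup $\ker\psi$ together with the finiteness of the number of connected components of a semialgebraic set, so I do not expect a genuine obstacle here — the argument is essentially an assembly of the quoted results.
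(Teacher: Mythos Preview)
Your proof is correct and follows essentially the same route as the paper: obtain a Nash representation of the quotient with finite kernel, inflate it to the unipotent group $G$, apply Proposition~\ref{j0unip} to see that every element acts unipotently, and conclude via Lemma~\ref{nilc}. The only difference is that you reconstruct the finite-kernel representation by hand via Proposition~\ref{chev}, whereas the paper simply invokes the almost linearity of $G/H$ (already granted by Proposition~\ref{quotient}) to pick such a representation directly; your extra step is harmless but unnecessary.
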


\begin{proof}
Let $N$ be a unipotent Nash group and let $N'$ be a Nash quotient group of it. Fix a Nash representation $V$ of $N'$ with finite
kernel. Applying Proposition \ref{j0unip} to the inflation of the representation $V$ to $N$, we know that $N'$ acts on $V$ as unipotent linear operators.
Then the proposition follows by Lemma \ref{nilc}.
\end{proof}

\begin{prpl}\label{t}
All irreducible Nash representations of all unipotent Nash groups are
trial.
\end{prpl}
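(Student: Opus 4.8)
The plan is to combine Proposition \ref{j0unip} with the Engel-type flag result Lemma \ref{unic3}. Let $V$ be an irreducible Nash representation of a unipotent Nash group $G$; in particular $V\neq 0$. By Proposition \ref{j0unip}, every element of $G$ acts on $V$ as a unipotent linear operator. Since a unipotent Nash group is connected (Proposition \ref{nashucs}), Lemma \ref{unic3} applies and shows that $G$ kills a full flag
\[
  0=V_0\subset V_1\subset V_2\subset \cdots \subset V_n \qquad (n:=\dim V)
\]
of $V$. In particular $(g-1).V_1\subset V_0=0$ for all $g\in G$, so $V_1$ is a one dimensional subspace of $V$ on which $G$ acts trivially.

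Thus $V_1$ is a nonzero subrepresentation of $V$. By irreducibility, $V=V_1$, so $V$ is one dimensional and $G$ acts trivially on it. Hence every irreducible Nash representation of $G$ is trivial.

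I do not anticipate any real obstacle here: the statement is essentially a formal consequence of results already established. The only point requiring a word of care is to invoke connectedness of $G$ (Proposition \ref{nashucs}) so that Lemma \ref{unic3} is applicable, and to note that an irreducible representation is by definition nonzero, so that the trivial line $V_1$ produced by the flag argument is a genuine subrepresentation forcing $V=V_1$.

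(Alternatively, one could argue directly at the Lie algebra level: the differential makes $\Lie\,G$ act on $V$ by nilpotent operators, whence Engel's Theorem yields a nonzero $G$-fixed vector, and irreducibility concludes; this is the same argument repackaged through Lemma \ref{unic3}.)
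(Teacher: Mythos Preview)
Your proof is correct and follows exactly the paper's approach: the paper's proof is the single line ``This is implied by Proposition \ref{j0unip} and Lemma \ref{unic3},'' and you have simply unpacked this implication, including the (implicit in the paper) observation that connectedness of $G$ from Proposition \ref{nashucs} is needed to invoke Lemma \ref{unic3}.
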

\begin{proof}
This is implied by Proposition \ref{j0unip} and Lemma \ref{unic3}.
\end{proof}

Now we consider Nash representations of hyperbolic Nash groups.

\begin{lem}\label{juhyp}
Let $V$ be a Nash representation of a hyperbolic Nash group $G$. If each element of $G$ acts as a unipotent linear operator on $V$, then the representation $V$ is trivial.
\end{lem}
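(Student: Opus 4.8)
The plan is to reduce immediately to the one-dimensional case $G = \R^\times_+$, using the fact that a hyperbolic Nash group is Nash isomorphic to $(\R^\times_+)^n$ and that a subspace of $V$ is fixed by all of $G$ if and only if it is fixed by each factor $\R^\times_+$. So assume $G = \R^\times_+$ and that every element of $G$ acts unipotently on $V$. By Lemma \ref{unic3}, applied to the connected Lie group $G$ acting with all elements unipotent, $G$ kills a full flag of $V$; equivalently, the corresponding Nash homomorphism $\psi\colon G\to \GL(V)$ lands in $\oU_n(\R)$, where $n=\dim V$. It therefore suffices to show that every Nash homomorphism $\R^\times_+\to \oU_n(\R)$ is trivial.

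Now $\oU_n(\R)$ is a unipotent Nash group, so this is precisely a statement of the form $\Hom(\R^\times_+, N)=\{1\}$ for $N$ unipotent, which is exactly what Proposition \ref{jt} gives (the case ``$G$ hyperbolic, $G'$ unipotent''). Hence $\psi$ is trivial, i.e.\ $G$ acts trivially on $V$, as desired. Alternatively, and perhaps more cleanly, one can avoid passing through the flag: a Nash homomorphism $\R^\times_+\to \GL(V)$ whose image consists of unipotent operators has image contained in a unipotent Nash subgroup of $\GL(V)$ (the image is a connected abelian Nash subgroup on which all elements act unipotently, so Lemma \ref{nilc} applies), and then Proposition \ref{jt} forces it to be trivial.

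The argument is essentially immediate given the machinery already developed; there is no real obstacle. The only point requiring a moment's care is the reduction step from general hyperbolic $G$ to $\R^\times_+$: one should note that if $G = G_1\times G_2$ with each $G_i$ hyperbolic, then a vector $v\in V$ is $G$-fixed iff it is fixed by both $G_1$ and $G_2$, so proving triviality of the restriction to each factor suffices to conclude that $V$ is a trivial $G$-representation. Everything else is a direct appeal to Lemma \ref{unic3} (or Lemma \ref{nilc}) together with the disjointness statement Proposition \ref{jt}.
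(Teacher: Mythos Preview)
Your proposal is correct and follows essentially the same route as the paper: use Lemma~\ref{unic3} to place the image of $G\to\GL(V)$ inside a unipotent Nash subgroup, then invoke Proposition~\ref{jt} to conclude the homomorphism is trivial. The paper does exactly your ``alternative'' argument, without the preliminary reduction to $G=\R^\times_+$; that reduction is harmless but unnecessary, since both Lemma~\ref{unic3} (hyperbolic groups are connected) and Proposition~\ref{jt} apply directly to an arbitrary hyperbolic $G$.
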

\begin{proof}
By Lemma \ref{unic3}, the image of the attached homomorphism $G\rightarrow \GL(V)$ is contained in a unipotent Nash subgroup of $\GL(V)$. Therefore the homomorphism is trivial by Proposition \ref{jt}.
\end{proof}

\begin{prpl}\label{jhyp}
Let $V$ be a Nash representation of a hyperbolic Nash group $G$. Then each element of $G$ acts as a semisimple linear operator on $V$, and all its eigenvalues are positive real numbers.
\end{prpl}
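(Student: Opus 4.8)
The plan is to reduce to the one-parameter case $G=\R_+^\times$ and then determine directly which Nash homomorphisms $\R_+^\times\to\GL(V)$ can occur. Since $G$ is hyperbolic, fix a Nash isomorphism $\iota:(\R_+^\times)^n\to G$ and compose it with the homomorphism $\rho:G\to\GL(V)$ attached to $V$, obtaining a Nash homomorphism $\Phi:=\rho\circ\iota:(\R_+^\times)^n\to\GL(V)$. Writing $\Phi_i(x):=\Phi(1,\dots,x,\dots,1)$ with $x$ in the $i$-th slot, we have $\Phi(x_1,\dots,x_n)=\Phi_1(x_1)\cdots\Phi_n(x_n)$ and the images of the $\Phi_i$ pairwise commute. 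A finite product of pairwise commuting operators, each of which is semisimple with positive real eigenvalues, is again semisimple (commuting diagonalizable operators are simultaneously diagonalizable) and has positive real eigenvalues. Hence it suffices to prove the statement for a single Nash homomorphism $\psi:\R_+^\times\to\GL(V)$.

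For that, I would use the exponential map: by naturality of $\exp$, $\psi(x)=\exp\bigl((\log x)\,A\bigr)$ for all $x>0$, where $A\in\gl(V)$ is the derivative of $x\mapsto\psi(x)$ at $x=1$. Choose $P\in\GL(V_\C)$ putting $A$ into complex Jordan form $J=P^{-1}AP$. Then $P^{-1}\psi(x)P=\exp\bigl((\log x)\,J\bigr)$ is block diagonal, and inside a Jordan block with eigenvalue $\lambda$ and size $d$ its $(i,j)$ entry equals $\frac{1}{(j-i)!}\,x^{\lambda}(\log x)^{j-i}$ for $j\ge i$ and $0$ otherwise; so each nonzero entry is a single term $c\,x^{\lambda}(\log x)^{k}$ with $c\neq0$ and $0\le k<d$, and there is no cancellation. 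On the other hand, since $\psi$ is Nash its matrix entries are Nash (in particular semialgebraic) functions of $x$, and the entries of $P^{-1}\psi(x)P$ are fixed $\C$-linear combinations of them; therefore $\Re\bigl(x^{\lambda}(\log x)^{k}\bigr)$ and $\Im\bigl(x^{\lambda}(\log x)^{k}\bigr)$ are Nash functions of $x$ on $(0,\infty)$ for every eigenvalue $\lambda$ of $A$ and corresponding $k$.

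The crux is the elementary fact that, for $x\in(0,\infty)$, the function $x\mapsto x^{\lambda}(\log x)^{k}$ is not semialgebraic unless $k=0$ and $\lambda\in\mathbb Q$. If $\Im\lambda\neq0$, its imaginary part equals $x^{\Re\lambda}(\log x)^{k}\sin(\Im\lambda\cdot\log x)$, whose zero set is an infinite discrete subset of $(0,\infty)$ — impossible for a nonzero semialgebraic function, since by Lemma \ref{dimen20} the zero set of such a function is a finite union of points and intervals. If $\lambda\in\R$, then $x^{\lambda}(\log x)^{k}/x^{q}$ tends to $0$ or to $+\infty$ as $x\to+\infty$ for every $q\in\mathbb Q$ unless $k=0$ and $q=\lambda$, contradicting the standard fact that a nonzero semialgebraic function on $(0,\infty)$ is asymptotic at infinity to a nonzero constant multiple of a rational power of $x$. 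Applying this to each Jordan block forces $A$ to be semisimple with all eigenvalues in $\mathbb Q$, whence $\psi(x)=P\,\diag(x^{\lambda_1},\dots,x^{\lambda_m})\,P^{-1}$ is semisimple with eigenvalues $x^{\lambda_j}>0$; this proves the proposition. The two reductions and the exponential formula are routine; the one genuinely substantive point is the non-semialgebraicity of $x\mapsto x^{\lambda}(\log x)^{k}$ for $k\ge1$ or $\lambda\notin\mathbb Q$, and the only delicate issue is ruling out cancellation among such terms — which is exactly why one passes to the complex Jordan form of $A$, where each appears alone as a matrix entry. (One could instead argue via the Zariski closure of $\rho(G)$ in $\GL(V)$, using Lemma \ref{juhyp} to see that it is a split torus, but the computation above avoids algebraic-group structure theory.)
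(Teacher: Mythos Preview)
Your argument is correct but follows a different path from the paper. The paper first uses Proposition~\ref{jt} (disjointness of elliptic, hyperbolic and unipotent Nash groups) to see that every Nash character $G\to\C^\times$ lands in $\R_+^\times$, hence all eigenvalues are positive real; then, after passing to a single generalized eigenspace with character $\chi$, it twists by $\chi^{-1}$ to get a unipotent action and invokes Lemma~\ref{juhyp} to conclude it is trivial. Your route instead reduces to $G=\R_+^\times$, writes $\psi(x)=\exp((\log x)A)$, and reads off the entries in the complex Jordan form of $A$ as single terms $c\,x^\lambda(\log x)^k$; you then use two elementary facts about one-variable semialgebraic functions (finitely many connected components for zero sets, rational-power asymptotics at infinity) to force $k=0$ and $\lambda\in\mathbb Q$. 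Your approach is more hands-on and in fact proves a bit more (the eigenvalues of $A$ lie in $\mathbb Q$, not merely in $\R$), while the paper's approach is cleaner in that it recycles the structural disjointness results already set up; at bottom both rest on the same content as Lemma~\ref{homs}, namely that $x\mapsto x^\lambda$ is Nash only for $\lambda\in\mathbb Q$.
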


\begin{proof}
By Proposition \ref{jt}, the image of every Nash homomorphism from $G$ to
$\C^\times$ is contained in $\R_+^\times$. This implies that for
every $g\in G$, all eigenvalues of $\varphi(g)$ are positive real
numbers, where $\varphi: G\rightarrow \GL(V)$ denotes the Nash
homomorphism attached to the representation. Using the generalized
eigenspace decomposition, we assume without loss of generality that
there is a Nash homomorphism $\chi: G\rightarrow \R_+^\times$ such
that for every $g\in G$, all eigenvalues of $\varphi(g)$ are equal to
$\chi(g)$. Then $G$ acts on $V\otimes \chi^{-1}$ by unipotent linear operators.  This action is trivial by Lemma \ref{juhyp}. Therefore $G$ acts on
$V$ via the character $\chi$, and the proposition is proved.
\end{proof}

Proposition \ref{jhyp} clearly implies the following

\begin{prpl}\label{jhyp2}
Every Nash representation of a hyperbolic Nash group is a direct sum of one dimensional subrepresentations.
\end{prpl}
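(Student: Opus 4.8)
The plan is to deduce the statement directly from Proposition \ref{jhyp} together with the fact that a hyperbolic Nash group is abelian. Write $\varphi\colon G\to\GL(V)$ for the Nash homomorphism attached to the representation. Since $G$ is Nash isomorphic to $(\R_+^\times)^n$, it is abelian, so $\{\varphi(g)\mid g\in G\}$ is a commuting family of linear operators on $V$. By Proposition \ref{jhyp}, each $\varphi(g)$ is semisimple and all of its eigenvalues are positive real numbers; in particular each $\varphi(g)$ is diagonalizable over $\R$.

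Next I would use the classical fact that a commuting family of $\R$-diagonalizable operators on a finite dimensional real vector space is simultaneously diagonalizable. Concretely, since $\End(V)$ is finite dimensional, choose finitely many elements $g_1,\dots,g_m\in G$ whose joint eigenspace decomposition $V=\bigoplus_\lambda V_\lambda$ is as fine as possible. Every $g\in G$ commutes with all the $g_i$, hence preserves each $V_\lambda$, and the restriction $\varphi(g)|_{V_\lambda}$ is again semisimple with positive real eigenvalues; were it not a scalar, adjoining $g$ to $g_1,\dots,g_m$ would yield a strictly finer decomposition, contradicting maximality. Thus $G$ acts on each $V_\lambda$ through a single character, so $V=\bigoplus_\lambda V_\lambda$ is a decomposition into subrepresentations on each of which $G$ acts by scalars. (Alternatively, one can simply observe that the generalized eigenspace decomposition already appearing in the proof of Proposition \ref{jhyp} does the same job.)

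Finally, on each $V_\lambda$ every linear subspace is a subrepresentation, so choosing an arbitrary decomposition of each $V_\lambda$ into lines exhibits $V$ as a direct sum of one dimensional subrepresentations. The argument is entirely elementary; the only genuine input is Proposition \ref{jhyp}, and the only point requiring a word of justification is the simultaneous diagonalizability of a commuting family of $\R$-split semisimple operators, which I expect to be the sole (minor) obstacle, namely writing it cleanly rather than proving anything deep.
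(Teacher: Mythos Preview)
Your proposal is correct and follows essentially the same approach as the paper. The paper's proof is the one-line remark that Proposition \ref{jhyp} ``clearly implies'' the result; you have simply spelled out the implication via simultaneous diagonalization of a commuting family of $\R$-split semisimple operators, and your parenthetical alternative (using the generalized eigenspace decomposition from the proof of Proposition \ref{jhyp}) is exactly how the paper intends the reader to see it.
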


\section{Jordan decompositions}\label{secjordan}

Let $G$ be an almost linear Nash group throughout this section. For every $x\in G$, define
its replica $\la x \ra$ to be the smallest Nash subgroup of $G$
containing $x$. This is well defined by Proposition \ref{subgi}. It is easy to see that $\la x\ra$ is abelian.

We say that $x\in G$ is
elliptic, hyperbolic  or unipotent if it is contained in a Nash subgroup of $G$ which is elliptic, hyperbolic or unipotent, respectively. This is equivalent to saying that the abelian Nash group $\la x \ra$ is respectively elliptic, hyperbolic or unipotent.  Respectively write
$G_\mathrm e$,  $G_\mathrm h$ and  $G_\mathrm u$ for the sets of all
elliptic, hyperbolic and unipotent elements in $G$.

\begin{lem}\label{jgl}
An element in $\GL_n(\R)$ ($n\geq 0$) is elliptic if and only if it is
semisimple and all its eigenvalues are complex numbers of modulus
one; it is hyperbolic if and only if it is semisimple and all its
eigenvalues are positive real numbers; it is unipotent if and only
if all its eigenvalues are equal to $1$.
\end{lem}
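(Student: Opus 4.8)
The plan is to prove each of the three stated equivalences by splitting it into its two implications. The ``only if'' directions will be immediate from the representation theory already developed, and the ``if'' directions will be handled uniformly by conjugating $x$ into an explicit model Nash subgroup of $\GL_n(\R)$.

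For the ``only if'' directions: suppose $x$ lies in a unipotent (resp.\ hyperbolic, resp.\ elliptic) Nash subgroup $H$ of $\GL_n(\R)$. The standard representation of $\GL_n(\R)$ on $\R^n$ restricts to a Nash representation of $H$, so Proposition~\ref{j0unip} (resp.\ Proposition~\ref{jhyp}, resp.\ Proposition~\ref{je}) shows that $x$ acts on $\R^n$ as a unipotent operator (resp.\ as a semisimple operator all of whose eigenvalues are positive real numbers, resp.\ as a semisimple operator all of whose eigenvalues have modulus one). That is precisely the asserted eigenvalue condition.

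For the converses I will use that, for fixed $g\in\GL_n(\R)$, conjugation $h\mapsto ghg^{-1}$ is a Nash automorphism of $\GL_n(\R)$ (it and its inverse are polynomial in the matrix entries); hence it carries a Nash subgroup to a Nash subgroup, and, being a Nash isomorphism onto its image, it carries elliptic, hyperbolic, or unipotent Nash subgroups to elliptic, hyperbolic, or unipotent ones, respectively. If all eigenvalues of $x$ equal $1$, then $x-1$ is nilpotent and the real Jordan form gives $x=gug^{-1}$ with $u\in\oU_n(\R)$; since $\oU_n(\R)$ is a unipotent Nash group, $g\oU_n(\R)g^{-1}$ is a unipotent Nash subgroup containing $x$. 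If $x$ is semisimple with all eigenvalues positive real, then, a real operator with real eigenvalues being semisimple if and only if it is diagonalizable over $\R$, we get $x=gtg^{-1}$ with $t$ diagonal with positive entries; the group $T$ of such diagonal matrices is Nash isomorphic to $(\R_+^\times)^n$, so $gTg^{-1}$ is a hyperbolic Nash subgroup containing $x$. If $x$ is semisimple with all eigenvalues of modulus $1$, then its real normal form decomposes $\R^n$ into $x$-invariant subspaces on each of which $x$ acts as $\pm 1$ or as a planar rotation, and an adapted orthonormal basis exhibits $x=grg^{-1}$ with $r\in\oO_n(\R)$; since $\oO_n(\R)$ is Zariski closed, hence a Nash subgroup, and compact, it is elliptic, so $g\oO_n(\R)g^{-1}$ is an elliptic Nash subgroup containing $x$.

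The verifications that conjugation is a Nash automorphism preserving the three classes, and the linear-algebra normal forms, are routine. The only step needing care is the elliptic converse: unlike the other two it genuinely uses the real structure, and I would justify the conjugacy into $\oO_n(\R)$ either via the explicit real normal form with $2\times 2$ rotation blocks, or by averaging an $x_\C$-invariant positive-definite Hermitian form on $\C^n$ over complex conjugation to produce an $x$-invariant positive-definite symmetric bilinear form on $\R^n$. I expect this to be the main (minor) obstacle; everything else rests directly on Propositions~\ref{je}, \ref{jhyp}, \ref{j0unip} and the standard theory of $\oU_n(\R)$, $\oO_n(\R)$, and the diagonal torus.
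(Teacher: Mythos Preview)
Your proof is correct and follows the same approach as the paper: the ``only if'' directions invoke Propositions~\ref{je}, \ref{jhyp}, and \ref{j0unip} exactly as the paper does, while for the ``if'' directions the paper simply declares them obvious, and your explicit conjugation into $\oU_n(\R)$, the positive diagonal torus, and $\oO_n(\R)$ is precisely the routine verification the paper omits.
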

\begin{proof}
The ``if" parts of the three assertions of the lemma are obvious. The ``only if" parts are implied by Propositions \ref{je}, \ref{jhyp} and  \ref{j0unip}.

\end{proof}

\begin{lem}\label{j0}
Let $e, h, u\in G$. Assume that they are respectively elliptic, hyperbolic and unipotent, and they pairwise
commute with each other.  Then
\[
\la e h u\ra\supset \la e\ra, \,\la
h \ra,\, \la u\ra,
\]
and the multiplication map
\[
\la e\ra\times\la h
\ra\times \la u\ra\rightarrow \la e h u\ra
\]
is an isomorphism of
Nash groups.
\end{lem}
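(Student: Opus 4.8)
The plan is to exploit the disjointness results of Section \ref{secjordan} together with the classification of abelian almost linear Nash groups that they imply. First I would observe that since $e$, $h$, $u$ pairwise commute, Proposition \ref{intehu3} gives that the multiplication map $\la e\ra\times\la h\ra\times\la u\ra\rightarrow G$ is an injective Nash homomorphism; denote its image by $H$. Since $H$ is the product of three commuting abelian Nash subgroups, it is itself an abelian Nash subgroup of $G$, and it contains $e$, $h$, $u$, hence $ehu$. Therefore $\la ehu\ra\subset H$, which already gives half of what we want once we know the reverse containment.

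Next I would show $\la ehu\ra\supset\la e\ra$, $\la h\ra$, $\la u\ra$. The key point is that $H\cong \la e\ra\times\la h\ra\times\la u\ra$ as Nash groups, so by Proposition \ref{intehu2} every Nash subgroup of $H$ — in particular the Nash subgroup $\la ehu\ra$ — splits as $E\times A\times U$ with $E\subset\la e\ra$, $A\subset\la h\ra$, $U\subset\la u\ra$. The element $ehu$ lies in $E\times A\times U$, and because the decomposition of $H$ as a direct product is an honest internal direct product, the three coordinates of $ehu$ are precisely $e$, $h$, $u$. Hence $e\in E$, so $\la e\ra\subset E\subset\la e\ra$, giving $E=\la e\ra$, and similarly $A=\la h\ra$, $U=\la u\ra$. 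Thus $\la ehu\ra=E\times A\times U=H$, which simultaneously proves the containments $\la ehu\ra\supset\la e\ra,\la h\ra,\la u\ra$ and that the multiplication map $\la e\ra\times\la h\ra\times\la u\ra\rightarrow\la ehu\ra$ is an isomorphism of Nash groups (it is the restriction of the injective Nash homomorphism above, now surjective onto $\la ehu\ra=H$, and its inverse is Nash because the projections $H\rightarrow\la e\ra$, etc., are Nash by Proposition \ref{intehu2} applied to the graphs, or simply because a bijective Nash homomorphism has Nash inverse).

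The main obstacle is the bookkeeping in the step identifying the coordinates of $ehu$ inside the splitting $\la ehu\ra=E\times A\times U$: one must be careful that the splitting supplied by Proposition \ref{intehu2} is compatible with the internal direct product structure $H=\la e\ra\cdot\la h\ra\cdot\la u\ra$, i.e. that projecting $ehu\in H$ to the $\la e\ra$-factor really returns $e$ and not some other element. This is immediate once one notes that the isomorphism $\la e\ra\times\la h\ra\times\la u\ra\xrightarrow{\sim}H$ is the multiplication map, so its inverse sends $ehu\mapsto(e,h,u)$ by construction; everything else is then formal. No genuinely hard analytic or geometric input is needed beyond the already-established Propositions \ref{intehu2} and \ref{intehu3}.
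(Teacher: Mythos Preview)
Your proposal is correct and follows essentially the same approach as the paper: both use Proposition \ref{intehu3} to embed $\la e\ra\times\la h\ra\times\la u\ra$ as a Nash subgroup of $G$, then apply Proposition \ref{intehu2} to the Nash subgroup $\la ehu\ra$ of this product to force equality. The only point you glide over is that applying Proposition \ref{intehu3} requires the \emph{subgroups} $\la e\ra,\la h\ra,\la u\ra$ to pairwise commute, not merely the elements $e,h,u$; the paper states this explicitly, and it follows immediately since centralizers in $G$ are Nash subgroups.
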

\begin{proof}
First note that the subgroups $\la e\ra, \,\la h \ra,\, \la u\ra$
are pairwise commutative to each other. Using Proposition \ref{intehu3}, we view $\la e\ra\times\la h
\ra\times \la u\ra$ as a Nash subgroup $G$. Then $\la ehu\ra$ is a Nash subgroup of $\la e\ra\times\la h
\ra\times \la u\ra$, and Proposition \ref{intehu2} implies that
\[
  \la ehu\ra=\la e\ra\times\la h
\ra\times \la u\ra.
\]
This proves the lemma.
\end{proof}

Here is the Jordan decomposition theorem for almost linear Nash groups:
\begin{thml}\label{jd}
Every element $x$ of an almost linear Nash group $G$ is uniquely of
the form $x=e h u$ such that $e\in G_\mathrm e$, $h\in G_\mathrm h$, $u\in G_\mathrm u$, and they pairwise commute with each other.
\end{thml}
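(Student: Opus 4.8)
The plan is to prove uniqueness and existence separately, reducing both to the abelian Nash group $\la x\ra$. For uniqueness, suppose $x=e_1h_1u_1=e_2h_2u_2$ with each triple of the required type. Lemma \ref{j0} applied to the first decomposition shows that the multiplication map $\la e_1\ra\times\la h_1\ra\times\la u_1\ra\to\la x\ra$ is an isomorphism of Nash groups, and likewise for the second; in particular $e_2,h_2,u_2\in\la x\ra$. By Proposition \ref{intehu2}, the Nash subgroup $\la e_2\ra$ of $\la e_1\ra\times\la h_1\ra\times\la u_1\ra$ is a product of Nash subgroups of the three factors. Since elliptic, hyperbolic and unipotent types are inherited by Nash subgroups, while a Nash group that is simultaneously elliptic and hyperbolic, or elliptic and unipotent, is trivial (Proposition \ref{jt}), the factors of $\la e_2\ra$ inside $\la h_1\ra$ and $\la u_1\ra$ are trivial, so $\la e_2\ra\subseteq\la e_1\ra$; by symmetry $\la e_1\ra=\la e_2\ra$, and similarly $\la h_1\ra=\la h_2\ra$ and $\la u_1\ra=\la u_2\ra$. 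Both decompositions now exhibit $x$ as an image under one and the same isomorphism $\la e_1\ra\times\la h_1\ra\times\la u_1\ra\xrightarrow{\sim}\la x\ra$, so $e_1=e_2$, $h_1=h_2$, $u_1=u_2$.

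For existence it suffices to treat $G=\la x\ra$, which is abelian: an elliptic, hyperbolic or unipotent Nash subgroup of $\la x\ra$ is such a subgroup of the original $G$, and elements of $\la x\ra$ commute. Fix a Nash homomorphism $\varphi\colon G\to\GL_n(\R)$ with finite kernel; then $\varphi(G)=\la\varphi(x)\ra$ is a commutative Nash subgroup $B$ of $\GL_n(\R)$ and $\varphi\colon G\to B$ is a finite covering homomorphism. Write the real multiplicative Jordan decomposition $\varphi(x)=g_eg_hg_u$ in $\GL_n(\R)$, with $g_e$ semisimple having eigenvalues of modulus $1$, $g_h$ semisimple having positive real eigenvalues, and $g_u$ unipotent; these lie in $\R[\varphi(x)]$, commute pairwise, and by Lemma \ref{jgl} are respectively elliptic, hyperbolic and unipotent in $\GL_n(\R)$.

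The heart of the matter — and the step I expect to be the main obstacle — is to show $g_h,g_u\in B$. Passing to the Zariski closure $\ov B$ of $B$ in $\GL_n(\R)$, which is a commutative real algebraic group, the classical Jordan decomposition for algebraic groups puts $g_e,g_h,g_u\in\ov B$; and $B$, being semialgebraic of the same dimension as $\ov B$ (Proposition \ref{dimen2}), is open and hence of finite index in the Lie group $\ov B(\R)$. The identity component of $\ov B(\R)$ decomposes, by the structure of the real points of a connected commutative algebraic group, as $C\times S\times V$ with $C$ a compact torus, $S\cong(\R^\times_+)^b$ and $V\cong\R^c$; here $S$ and $V$ are divisible, hence have no proper subgroup of finite index, hence lie in $B$. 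Since $g_u\in V$ (the unipotent elements) and $g_h\in S$ (the semisimple elements with positive eigenvalues), this gives $g_h,g_u\in B$.

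Finally, the replica $\la g_u\ra\subseteq B$ is unipotent, so connected and simply connected (Proposition \ref{nashucs}); hence the identity component of $\varphi^{-1}(\la g_u\ra)$, being a finite cover of a simply connected group, maps isomorphically onto $\la g_u\ra$, and the preimage $u\in G$ of $g_u$ in it lies in a unipotent Nash subgroup, so $u$ is unipotent. Likewise $\la g_h\ra\subseteq B$ is hyperbolic, hence simply connected, giving a hyperbolic $h\in G$ with $\varphi(h)=g_h$. Set $e:=xh^{-1}u^{-1}\in G$; then $\varphi(e)=g_e$, so $e$ is a preimage of the elliptic element $g_e$, and its replica $\la e\ra$ — a finite cover of the compact group $\la g_e\ra$ — is compact, so $e$ is elliptic. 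As $e,h,u$ lie in the abelian group $\la x\ra$ they commute pairwise, and $ehu=x$, as required.
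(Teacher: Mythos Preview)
Your proof is correct, and the uniqueness argument and the lifting of $g_h,g_u$ back to $G$ match the paper's approach closely. The existence argument, however, takes a detour that the paper avoids entirely.

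For the key step --- showing that $g_h,g_u\in B=\varphi(G)$ --- the paper simply applies Lemma~\ref{j0} inside $\GL_n(\R)$ to the triple $g_e,g_h,g_u$: that lemma gives $\la g_e\ra,\la g_h\ra,\la g_u\ra\subset\la g_eg_hg_u\ra=\la\varphi(x)\ra=B$ in one stroke. You already invoked Lemma~\ref{j0} for uniqueness, so you had this tool in hand; using it again here replaces your entire Zariski-closure paragraph. Your route through $\ov B$, the structure $\ov B(\R)^\circ\cong C\times S\times V$, and divisibility is valid but heavier, and it leans on an external input you slightly understate: the ``classical Jordan decomposition for algebraic groups'' gives only $g_s,g_u\in\ov B$, not $g_e,g_h$ separately; getting $g_h\in\ov B(\R)$ requires the refined real Jordan decomposition (true, but a further result). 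Once you grant that, your argument that $g_h\in S\subset B$ and $g_u\in V\subset B$ is fine. So nothing is wrong, but the paper's one-line application of Lemma~\ref{j0} is both shorter and self-contained.
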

\begin{proof}
Fix a Nash homomorphism $\varphi: G\rightarrow \GL_n(\R)$  with
finite kernel. Put $y:=\varphi(x)$ and write $y=y_e y_h y_u$ for the
usual Jordan decomposition of $y$ in $\GL_n(\R)$, where $y_e$ is elliptic, $y_h$
is hyperbolic, $y_u$ is unipotent and  they pairwise commute with each
other (\cite[Page 430-431]{He}). Then Lemma \ref{j0} implies that
\[
y_e, y_h, y_u\in  \varphi(G).
\]
Denote by $h$ the unique element in
the identity connected component of $\varphi^{-1}(\la y_h\ra)$ which
lifts $y_h$. Define $u$ similarly, and put $e:=x u^{-1} h^{-1}$.
Then it is routine to check that $(e, h, u)$ is the unique triple
which fulfills all the requirements of the theorem.
\end{proof}

The equality $x=ehu$ of Theorem \ref{jd} is called the Jordan
decomposition of $x\in G$. We respectively use $x_\mathrm e$, $x_\mathrm h$ and $x_\mathrm u$ to denote the elements $e$, $h$ and $u$. They are respectively called the elliptic, hyperbolic and unipotent parts of $x\in G$.

\begin{prpl}\label{pr}
Let $\varphi: G\rightarrow G'$ be a Nash homomorphism of almost
linear Nash groups. Then
\be \label{incjd}
  \varphi(G_\mathrm e)\subset G'_\mathrm e, \quad \varphi(G_\mathrm h)\subset G'_\mathrm h\quad \textrm{and}\quad \varphi(G_\mathrm u)\subset G'_\mathrm u.
\ee If $\varphi$ is surjective, then the three inclusions in
\eqref{incjd} become equalities.
\end{prpl}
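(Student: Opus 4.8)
The plan is to derive both assertions from the Jordan decomposition theorem (Theorem \ref{jd}) together with the fact that ellipticity, hyperbolicity and unipotence pass to Nash quotient groups.

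For the three inclusions in \eqref{incjd} I would argue one element at a time. Let $x\in G_\mathrm e$, so the replica $\la x\ra$ is an elliptic Nash subgroup of $G$. By Lemma \ref{images} the image $\varphi(\la x\ra)$ is a semialgebraic subgroup, hence a Nash subgroup, of $G'$, and it is a Nash quotient group of $\la x\ra$: indeed $K:=\la x\ra\cap\ker\varphi$ is a normal Nash subgroup of $\la x\ra$, and the canonical bijective homomorphism $\la x\ra/K\to\varphi(\la x\ra)$ is a Nash isomorphism, because composing it with the submersive surjection $\la x\ra\to\la x\ra/K$ gives the Nash map $\varphi|_{\la x\ra}$, so that Lemma \ref{subm0} applies (and a bijective Nash homomorphism is automatically a Nash isomorphism). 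Proposition \ref{quotientel} then shows $\varphi(\la x\ra)$ is elliptic, so $\varphi(x)\in G'_\mathrm e$. The cases $x\in G_\mathrm h$ and $x\in G_\mathrm u$ are identical, invoking Propositions \ref{quosplit} and \ref{quotientunip} instead. (For the elliptic case one can also bypass quotients altogether: $\varphi(\la x\ra)$ is a compact subgroup of the almost linear group $G'$, hence a Nash subgroup by Lemma \ref{auts}, and a compact Nash subgroup of an almost linear Nash group is elliptic by definition.)

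For the equalities in the surjective case it suffices, by symmetry of the argument, to prove $G'_\mathrm e\subset\varphi(G_\mathrm e)$. Given $x'\in G'_\mathrm e$, choose $x\in G$ with $\varphi(x)=x'$ and write its Jordan decomposition $x=x_\mathrm e x_\mathrm h x_\mathrm u$ from Theorem \ref{jd}. Applying $\varphi$ and using the inclusions already established, the elements $\varphi(x_\mathrm e)\in G'_\mathrm e$, $\varphi(x_\mathrm h)\in G'_\mathrm h$, $\varphi(x_\mathrm u)\in G'_\mathrm u$ pairwise commute (since $x_\mathrm e,x_\mathrm h,x_\mathrm u$ do) and have product $x'$. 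By the uniqueness clause of Theorem \ref{jd} in $G'$ this forces $\varphi(x_\mathrm e)=x'_\mathrm e=x'$ (and $\varphi(x_\mathrm h)=\varphi(x_\mathrm u)=1$). Since $x_\mathrm e\in G_\mathrm e$, we conclude $x'\in\varphi(G_\mathrm e)$, as desired.

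I do not expect a genuine obstacle: once Theorem \ref{jd} is in hand the whole argument is a short bookkeeping exercise. The only mildly technical point is the verification, in the first step, that the continuous isomorphism from $\la x\ra$ modulo the kernel onto $\varphi(\la x\ra)$ is an isomorphism of Nash groups, and that is precisely what Lemma \ref{subm0} supplies.
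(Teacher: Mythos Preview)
Your proof is correct and follows essentially the same route as the paper: the paper simply cites Propositions \ref{quotientel}, \ref{quosplit}, \ref{quotientunip} for the three inclusions and then runs exactly your Jordan-decomposition-plus-uniqueness argument for the surjective case. The only difference is that you spell out, via Lemma \ref{subm0}, why $\varphi(\la x\ra)$ really is a Nash quotient of $\la x\ra$, a point the paper leaves implicit.
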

\begin{proof}
The three inclusions are respectively implied by Propositions
\ref{quotientel}, \ref{quosplit} and \ref{quotientunip}.

Now assume that $\varphi$ is surjective, and let $y\in
G'_\mathrm{e}$. Pick $x\in G$ so that $\varphi(x)=y$. Then
$\varphi(x_\mathrm{e}) \varphi(x_\mathrm{h}) \varphi(x_\mathrm
u)=y$. By \eqref{incjd}, uniqueness of Jordan decompositions implies that
$\varphi(x_\mathrm e)=y$. This proves that $\varphi(G_\mathrm e)=
G'_\mathrm e$. The same argument proves the other two equalities.
\end{proof}

Proposition \ref{pr} obviously implies that Nash homomorphisms preserve
Jordan decompositions:
\begin{prpl}\label{pjd}
 Let $\varphi: G\rightarrow G'$ be a Nash homomorphism of almost linear Nash groups. Then for every $x\in G$, one has that
\[
  (\varphi(x))_\mathrm e=\varphi(x_\mathrm e),\quad  (\varphi(x))_\mathrm h=\varphi(x_\mathrm h)\quad\textrm{and} \quad (\varphi(x))_\mathrm u=\varphi(x_\mathrm u).
\]
\end{prpl}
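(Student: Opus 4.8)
The plan is to deduce Proposition~\ref{pjd} directly from Proposition~\ref{pr}, as the remark preceding the statement already suggests; essentially no new ideas are needed beyond unwinding definitions and invoking uniqueness of Jordan decompositions. First I would fix $x\in G$ and write its Jordan decomposition $x=x_\mathrm e\, x_\mathrm h\, x_\mathrm u$ as in Theorem~\ref{jd}, so that $x_\mathrm e\in G_\mathrm e$, $x_\mathrm h\in G_\mathrm h$, $x_\mathrm u\in G_\mathrm u$, and the three elements pairwise commute.

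Applying the homomorphism $\varphi$ gives $\varphi(x)=\varphi(x_\mathrm e)\,\varphi(x_\mathrm h)\,\varphi(x_\mathrm u)$, and since $\varphi$ is a group homomorphism the three images still pairwise commute with each other in $G'$. By Proposition~\ref{pr} (the three inclusions in \eqref{incjd}), we have $\varphi(x_\mathrm e)\in G'_\mathrm e$, $\varphi(x_\mathrm h)\in G'_\mathrm h$, and $\varphi(x_\mathrm u)\in G'_\mathrm u$. Thus $\varphi(x_\mathrm e)\,\varphi(x_\mathrm h)\,\varphi(x_\mathrm u)$ is an expression of $\varphi(x)$ as a product of a pairwise-commuting elliptic, hyperbolic, and unipotent element of $G'$. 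The uniqueness clause of Theorem~\ref{jd}, applied to the element $\varphi(x)\in G'$, then forces $(\varphi(x))_\mathrm e=\varphi(x_\mathrm e)$, $(\varphi(x))_\mathrm h=\varphi(x_\mathrm h)$, and $(\varphi(x))_\mathrm u=\varphi(x_\mathrm u)$, which is exactly the assertion.

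There is really no main obstacle here: the content is entirely carried by Proposition~\ref{pr} and the uniqueness in Theorem~\ref{jd}, both already available. The only point requiring a moment's care is to confirm that applying $\varphi$ preserves the pairwise-commutativity condition, which is immediate since $\varphi$ is a homomorphism: if $ab=ba$ then $\varphi(a)\varphi(b)=\varphi(ab)=\varphi(ba)=\varphi(b)\varphi(a)$. I would therefore present the argument in three or four lines, without invoking anything about surjectivity (unlike the second half of Proposition~\ref{pr}, the present statement needs only the inclusions, which hold for arbitrary Nash homomorphisms).
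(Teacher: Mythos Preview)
Your proposal is correct and matches the paper's approach exactly: the paper states that Proposition~\ref{pr} ``obviously implies'' Proposition~\ref{pjd} and gives no further argument, and what you have written is precisely the obvious deduction the author has in mind (apply $\varphi$ to the Jordan decomposition, use the inclusions of Proposition~\ref{pr} and commutativity, then invoke uniqueness in Theorem~\ref{jd}).
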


As one application of Jordan decompositions, we get the following result about  structures of  abelian  almost linear Nash groups:

\begin{prpl}\label{da}
Let $G$ be an abelian almost linear Nash group. Then $G_\mathrm e$ is an elliptic Nash subgroup of $G$,
$G_\mathrm h$ is a hyperbolic Nash subgroup of $G$, and $G_\mathrm u$ is a unipotent Nash subgroups of $G$.
Moreover, the multiplication map
\[
  G_\mathrm e\times G_\mathrm h\times G_\mathrm u\rightarrow G
  \]
 is a Nash isomorphism.
\end{prpl}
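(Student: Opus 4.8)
The plan is to use the Jordan decomposition theorem (Theorem \ref{jd}) as the main engine, together with the commutativity of $G$. Since $G$ is abelian, every pair of elements of $G$ commutes, so the decompositions $x = x_\mathrm e x_\mathrm h x_\mathrm u$ provided by Theorem \ref{jd} can be freely multiplied. First I would show that $G_\mathrm e$ is a subgroup: given $x, y \in G_\mathrm e$, both lie in elliptic Nash subgroups $H_x, H_y$; the product $H_x H_y$ is the image of the multiplication map $H_x \times H_y \to G$ (well-defined and a Nash homomorphism since $G$ is abelian), hence by Proposition \ref{subgi} a Nash subgroup of $G$, and by Proposition \ref{thmehu} it is elliptic (it is a quotient of $H_x\times H_y$, which is elliptic). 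Thus $xy \in G_\mathrm e$, and similarly $x^{-1}\in G_\mathrm e$. The same argument, invoking the hyperbolic and unipotent cases of Propositions \ref{subgi} and \ref{thmehu}, shows $G_\mathrm h$ and $G_\mathrm u$ are subgroups. Each is semialgebraic: $G_\mathrm e$, $G_\mathrm h$, $G_\mathrm u$ are the sets of elliptic, hyperbolic, unipotent elements, which via a fixed faithful Nash homomorphism $\varphi\colon G \to \GL_n(\R)$ correspond to semialgebraic conditions on eigenvalues by Lemma \ref{jgl} (and $\varphi^{-1}$ of a semialgebraic set is semialgebraic). Hence each of $G_\mathrm e, G_\mathrm h, G_\mathrm u$ is a Nash subgroup, and by Proposition \ref{criehu} it is respectively elliptic, hyperbolic, unipotent since by construction all of its elements are of the corresponding type.

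Next I would assemble the multiplication map $\mu\colon G_\mathrm e \times G_\mathrm h \times G_\mathrm u \to G$. Since $G$ is abelian, the three subgroups pairwise commute, so $\mu$ is a Nash homomorphism. Surjectivity is immediate from Theorem \ref{jd}: every $x\in G$ is $x_\mathrm e x_\mathrm h x_\mathrm u$ with the three factors in the respective subgroups. Injectivity follows from the uniqueness clause of Theorem \ref{jd}: if $ehu = 1$ with $e\in G_\mathrm e$, $h\in G_\mathrm h$, $u\in G_\mathrm u$ pairwise commuting (automatic here), then by uniqueness applied to the element $1$ (whose Jordan decomposition is $1\cdot 1\cdot 1$) we get $e = h = u = 1$. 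Alternatively, and perhaps more cleanly, Proposition \ref{intehu3} directly asserts that when $H_1, H_2, H_3$ are elliptic, hyperbolic, unipotent Nash subgroups of $G$ that pairwise commute, the multiplication map $H_1\times H_2\times H_3 \to G$ is an injective Nash homomorphism; applying this with $H_1 = G_\mathrm e$, $H_2 = G_\mathrm h$, $H_3 = G_\mathrm u$ gives injectivity of $\mu$ at once.

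Finally, a bijective Nash homomorphism has a Nash inverse (this is noted in the introduction: the inverse of a bijective Nash homomorphism is a Nash homomorphism), so $\mu$ is a Nash isomorphism, completing the proof.

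I expect the only mildly delicate point to be the verification that $G_\mathrm e$, $G_\mathrm h$, $G_\mathrm u$ are subgroups — i.e., closure under multiplication — since a priori these are just sets of elements satisfying an eigenvalue condition. The commutativity of $G$ is exactly what makes this work: the product of two elliptic elements lies in the elliptic Nash subgroup generated by the two commuting elliptic cyclic subgroups $\la x\ra, \la y\ra$, whose product is a homomorphic image of $\la x\ra \times \la y\ra$ and hence elliptic by Proposition \ref{thmehu}. Everything else is a routine combination of the Jordan decomposition theorem, Proposition \ref{intehu3} (or the uniqueness in Theorem \ref{jd}), Lemma \ref{jgl} for semialgebraicity, and Proposition \ref{criehu}.
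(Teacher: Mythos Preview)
Your overall strategy is sound, but there is a genuine circularity: you invoke Proposition~\ref{criehu} to conclude that $G_\mathrm e$, $G_\mathrm h$, $G_\mathrm u$ are respectively elliptic, hyperbolic, and unipotent. In the paper's logical order, Proposition~\ref{criehu} is established only in the final section, via Propositions~\ref{ee22} and~\ref{ee222}, whose proofs rest on the Iwasawa decomposition (Theorem~\ref{injcartan2}). That decomposition is developed for reductive Nash groups, and the structure theory of reductive groups (specifically Lemma~\ref{znt}, which shows the connected center is a Nash torus) explicitly uses Proposition~\ref{da}. So citing Proposition~\ref{criehu} here is circular.

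The fix is not hard, and in fact brings you close to the paper's own argument. Rather than first showing $G_\mathrm e$ is a semialgebraic subgroup and then appealing to Proposition~\ref{criehu}, the paper takes the unique maximal compact subgroup $K$ (unique because $G$ is abelian) and observes $K=G_\mathrm e$: every elliptic element lies in some compact subgroup, hence in $K$ by uniqueness, and conversely $K$ is elliptic. This gives semialgebraicity and ellipticity in one stroke, with no forward reference. For $G_\mathrm h$ and $G_\mathrm u$ the paper picks a hyperbolic (resp.\ unipotent) Nash subgroup $A$ (resp.\ $U$) of maximal dimension and shows $A=G_\mathrm h$ (resp.\ $U=G_\mathrm u$); the containment $G_\mathrm h\subset A$ uses exactly your $H_xH_y$ idea---$A\langle x\rangle$ is a hyperbolic quotient of $A\times\langle x\rangle$ by Proposition~\ref{quosplit}, so by maximality $x\in A$. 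Your route through Lemma~\ref{jgl} for semialgebraicity also works (the eigenvalue conditions are semialgebraic by Tarski--Seidenberg, and one checks $G_\star=\varphi^{-1}(\GL_n(\R)_\star)$ using that $\ker\varphi$ is finite), but it is more laborious and still leaves you needing a non-circular reason for, say, $G_\mathrm e$ being compact. The last step---that the multiplication map is a bijective Nash homomorphism, hence a Nash isomorphism---is exactly as you say, via Theorem~\ref{jd} (or Proposition~\ref{intehu3}) for injectivity and surjectivity.
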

\begin{proof}
Let $K$ be a maximal compact subgroup of $G$, which is unique since
$G$ is abelian. Then clearly $K=G_\mathrm e$. Let $A$ be a hyperbolic Nash subgroup of $G$ of  maximal dimension.  Then clearly $A=G_\mathrm h$. Likewise,
let $U$ be a unipotent Nash subgroup of $G$ of  maximal dimension.
Then  $U=G_\mathrm u$.  The last assertion follows from Theorem
\ref{jd}.
\end{proof}

In the rest of this section, denote by $\g$ the Lie algebra of the almost linear Nash group $G$. For
every $x\in \g$, we define its replica $\la x\ra$ to be the smallest
Nash subgroup of $G$ containing $\exp(\R x)$. It is connected and abelian. We said that $x\in \g$ is  elliptic, hyperbolic, or unipotent if the Nash group
$\la x \ra$ is respectively elliptic, hyperbolic, or unipotent.  As in the group case, respectively write
$\g_\mathrm e$,  $\g_\mathrm h$ and  $\g_\mathrm u$ for the sets of all
elliptic, hyperbolic and unipotent elements in $\g$.

Lemma \ref{jgl} easily implies the following

\begin{lem}\label{jgl2}
An element in the Lie algebra $\gl_n(\R)$ of $\GL_n(\R)$ ($n\geq 0$) is  elliptic if and only if it is semisimple and all its
eigenvalues are purely imaginary; it is hyperbolic if and only if it is
semisimple and all its eigenvalues are real; it is unipotent if and
only if it is a nilpotent matrix.
\end{lem}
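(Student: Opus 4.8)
The plan is to reduce everything to the group–level statement Lemma \ref{jgl} by means of the exponential map together with the additive Jordan decomposition of matrices. Recall that $x\in\gl_n(\R)$ is elliptic (resp. hyperbolic, unipotent) exactly when the Nash group $\la x\ra$ generated by $\exp(\R x)$ is, and that $\exp(tx)\in\la x\ra$ for every $t\in\R$. Thus each of the three claimed equivalences splits into: producing an ambient elliptic (resp. hyperbolic, unipotent) Nash subgroup of $\GL_n(\R)$ that contains $\exp(\R x)$, giving the ``if'' direction; and reading off the eigenvalue constraints on $x$ from Lemma \ref{jgl} applied to the elements $\exp(tx)\in\la x\ra$, giving the ``only if'' direction.

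For the ``if'' directions I would argue as follows. If $x$ is nilpotent, then $t\mapsto\exp(tx)$ is a polynomial map, so $\exp(\R x)$ is already a semialgebraic, hence Nash, subgroup of $\GL_n(\R)$ consisting of unipotent operators; it is therefore a unipotent Nash group, and by minimality $\la x\ra=\exp(\R x)$ is unipotent. If $x$ is semisimple with purely imaginary eigenvalues, then $x$ is conjugate into $\mathfrak{so}(n)$, so $\exp(\R x)$ is conjugate into $\SO(n)$, hence bounded; its closure $T$ in $\GL_n(\R)$ is a compact subgroup, thus a Nash subgroup by Lemma \ref{auts}, and $T$ is elliptic, so its Nash subgroup $\la x\ra$ is elliptic. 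If $x$ is semisimple with real eigenvalues, then $x$ is conjugate to a real diagonal matrix, so $\exp(\R x)$ lies in a conjugate of the group of diagonal matrices with positive entries, which is a hyperbolic Nash subgroup of $\GL_n(\R)$; hence $\la x\ra$ is hyperbolic by Proposition \ref{subsplit}.

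For the ``only if'' directions: if $\la x\ra$ is unipotent (resp. elliptic, hyperbolic), then by Proposition \ref{j0unip} (resp. \ref{je}, \ref{jhyp}) — equivalently, by Lemma \ref{jgl} — each $\exp(tx)$ is a unipotent (resp. semisimple with modulus-one eigenvalues, semisimple with positive real eigenvalues) element of $\GL_n(\R)$. Writing the additive Jordan decomposition $x=s+m$ in $\gl_n(\R)$, with $s$ semisimple, $m$ nilpotent and $[s,m]=0$, the factorization $\exp(tx)=\exp(ts)\exp(tm)$ is the multiplicative Jordan decomposition of $\exp(tx)$; uniqueness forces $\exp(tm)=1$ for all $t$, hence $m=0$, in the elliptic and hyperbolic cases, and forces $\exp(ts)=1$ for all $t$, hence $s=0$, in the unipotent case. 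In the first two cases each eigenvalue $\lambda$ of $x$ then satisfies $|e^{t\lambda}|=1$ (resp. $e^{t\lambda}\in\R_{>0}$) for all $t\in\R$, which gives $\Re\lambda=0$ (resp. $\Im\lambda=0$); the unipotent case is already done.

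The routine ingredients — polynomiality of $\exp$ on nilpotent matrices, the real canonical forms for semisimple matrices, uniqueness of the Jordan decompositions in $\GL_n(\R)$ and in $\gl_n(\R)$, and stability of the three classes under passage to Nash subgroups — are all standard or already recorded. The only step that is more than bookkeeping, and hence the ``main obstacle'' such as it is, is the elliptic ``if'' direction, where one must know that the closure of the one-parameter subgroup $\exp(\R x)$ is a compact (and so, by Lemma \ref{auts}, Nash) subgroup of $\GL_n(\R)$; this is secured by the boundedness of $\exp(\R x)$ coming from conjugacy into $\SO(n)$. Everything else is a direct translation through the exponential map.
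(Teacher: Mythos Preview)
Your proposal is correct and follows the same route the paper indicates: the paper simply records that Lemma \ref{jgl} ``easily implies'' Lemma \ref{jgl2}, and what you have written is a careful unpacking of exactly that implication via the exponential map and the compatibility of additive and multiplicative Jordan decompositions. There is no substantive divergence; you have just supplied the details the paper leaves to the reader.
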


The same proof as Lemma \ref{j0} shows the following
\begin{lem}\label{j02}
Let $e, h, u\in \g$. Assume that they are respectively elliptic, hyperbolic and unipotent, and they pairwise
commute with each other. Then
\[
\la e+h+u\ra\supset \la e\ra, \,\la
h \ra,\, \la u\ra,
\]
and the multiplication map
\[
 \la e\ra\times\la h
\ra\times \la u\ra\rightarrow \la e +h+ u\ra
\]
is an isomorphism of Nash groups.
\end{lem}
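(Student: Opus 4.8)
The plan is to follow the proof of Lemma~\ref{j0} almost verbatim, with the group element $ehu$ replaced by the Lie-algebra element $e+h+u$ and group elements replaced throughout by one-parameter subgroups. The three inputs I will use are: the fact that the replicas $\la e\ra$, $\la h\ra$, $\la u\ra$ pairwise commute inside $G$; Proposition~\ref{intehu3}, which then lets me regard $P:=\la e\ra\times\la h\ra\times\la u\ra$ as a Nash subgroup of $G$ (the image of an injective Nash homomorphism); and Proposition~\ref{intehu2}, which classifies the Nash subgroups of $P$.

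First I would check the pairwise commutativity of the replicas, which is the Lie-algebra analogue of the first sentence of the proof of Lemma~\ref{j0}. Since $[e,h]=0$, the one-parameter subgroups $\exp(\R e)$ and $\exp(\R h)$ commute pointwise. The set $C:=\bigcap_{s\in\R}C_G(\exp(se))$ is an intersection of centralizers of single elements, each a Nash subgroup, so $C$ is a Nash subgroup of $G$ by Proposition~\ref{subgi}; it contains both $\exp(\R e)$ and $\exp(\R h)$, hence contains $\la e\ra$ and $\la h\ra$. Every element of $C$ commutes with every element of $\exp(\R e)$, so $\exp(\R e)\subset Z(C)$; and since $Z(C)$, being an intersection of centralizers, is again a Nash subgroup of $G$, we get $\la e\ra\subset Z(C)$. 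As $\la h\ra\subset C$, this shows $\la e\ra$ and $\la h\ra$ commute pointwise; the same argument handles the pairs $(e,u)$ and $(h,u)$.

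With this in hand, Proposition~\ref{intehu3} shows the multiplication map $\la e\ra\times\la h\ra\times\la u\ra\to G$ is an injective Nash homomorphism, and I identify $P$ with its image, a Nash subgroup of $G$. Because $e,h,u$ commute in $\g$, one has $\exp\bigl(t(e+h+u)\bigr)=\exp(te)\exp(th)\exp(tu)$ for all $t\in\R$, so $\exp(\R(e+h+u))\subset P$; since $P$ is a Nash subgroup, $\la e+h+u\ra\subset P$. Now $P=\la e\ra\times\la h\ra\times\la u\ra$ is the direct product of an elliptic, a hyperbolic and a unipotent Nash group, so Proposition~\ref{intehu2} gives $\la e+h+u\ra=H_\mathrm e\times H_\mathrm h\times H_\mathrm u$ for Nash subgroups $H_\mathrm e\subset\la e\ra$, $H_\mathrm h\subset\la h\ra$, $H_\mathrm u\subset\la u\ra$.

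Finally I would read off the factors: the first component of $\exp(t(e+h+u))$ is $\exp(te)$, so $\exp(\R e)\subset H_\mathrm e$, and since $H_\mathrm e$ is a Nash subgroup of $G$, minimality of $\la e\ra$ forces $\la e\ra\subset H_\mathrm e\subset\la e\ra$, i.e. $H_\mathrm e=\la e\ra$; similarly $H_\mathrm h=\la h\ra$ and $H_\mathrm u=\la u\ra$. Hence $\la e+h+u\ra=\la e\ra\times\la h\ra\times\la u\ra$, which yields both the claimed containments and the statement that the multiplication map is a Nash isomorphism. I expect the \emph{only} genuine obstacle to be the commutativity step of the second paragraph; the remaining steps are a direct transcription of the proof of Lemma~\ref{j0}.
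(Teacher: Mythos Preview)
Your proposal is correct and follows exactly the approach the paper intends: the paper's proof of Lemma~\ref{j02} consists of the single sentence ``The same proof as Lemma~\ref{j0} shows the following,'' and you have carried out precisely that transcription, with the commutativity of the replicas, Proposition~\ref{intehu3}, and Proposition~\ref{intehu2} playing the same roles as in the group case. Your verification that the replicas $\la e\ra$, $\la h\ra$, $\la u\ra$ pairwise commute (via centralizers and Proposition~\ref{subgi}) and your explicit identification of the factors $H_\mathrm e,H_\mathrm h,H_\mathrm u$ simply make explicit what the paper leaves to the reader.
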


The following is the Jordan decomposition theorem at Lie algebra level:
\begin{thml}\label{jdi}
Let $G$ be an almost linear Nash group with Lie algebra $\g$. Then every element $x\in \g$ is uniquely of the form $x=e+h+u$ such that
$e\in \g_\mathrm e$, $h\in \g_\mathrm h$, $u\in \g_\mathrm u$, and they pairwise
commute with each other.
\end{thml}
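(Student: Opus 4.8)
The plan is to transcribe the proof of Theorem~\ref{jd}, working additively through a faithful-up-to-finite-kernel Nash representation. I would fix a Nash homomorphism $\varphi: G\rightarrow\GL_n(\R)$ with finite kernel and let $\phi:\g\rightarrow\gl_n(\R)$ be its differential; since $\ker\varphi$ is finite, $\phi$ is injective and its image is $\Lie\varphi(G)$. Given $x\in\g$, set $Y:=\phi(x)$ and take the additive Jordan decomposition of $Y$ in $\gl_n(\R)$, refined over $\R$ into three pairwise commuting summands $Y=Y_\mathrm e+Y_\mathrm h+Y_\mathrm u$, where $Y_\mathrm e$ is semisimple with purely imaginary eigenvalues, $Y_\mathrm h$ is semisimple with real eigenvalues, and $Y_\mathrm u$ is nilpotent: one first splits off the nilpotent part, then decomposes the semisimple part into the two semisimple operators recording the imaginary and real parts of its eigenvalues; reality of $Y$ makes all three operators real, and the decomposition is manifestly unique. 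By Lemma~\ref{jgl2}, $Y_\mathrm e$, $Y_\mathrm h$, $Y_\mathrm u$ are respectively an elliptic, a hyperbolic and a unipotent element of $\gl_n(\R)$.

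For existence, the crucial point is to return these three matrices to $\phi(\g)$. I would apply Lemma~\ref{j02} in $\GL_n(\R)$ to the commuting triple $(Y_\mathrm e,Y_\mathrm h,Y_\mathrm u)$: it yields $\la Y_\mathrm e\ra,\la Y_\mathrm h\ra,\la Y_\mathrm u\ra\subset\la Y\ra$, and since $\exp(\R Y)=\varphi(\exp(\R x))$ lies in the Nash subgroup $\varphi(G)$ we get $\la Y\ra\subset\varphi(G)$, hence $Y_\mathrm e,Y_\mathrm h,Y_\mathrm u\in\Lie\varphi(G)=\phi(\g)$. Then I would set $e:=\phi^{-1}(Y_\mathrm e)$, $h:=\phi^{-1}(Y_\mathrm h)$, $u:=\phi^{-1}(Y_\mathrm u)$; clearly $e+h+u=x$, and $e,h,u$ pairwise commute since $\phi$ is an injective Lie algebra homomorphism. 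It remains to identify the types. For each $\xi\in\{e,h,u\}$, minimality of the relevant replicas shows that $\varphi$ restricts to a surjective Nash homomorphism from the connected replica $\la\xi\ra$ (taken in $G$) onto $\la\phi(\xi)\ra$ (taken in $\GL_n(\R)$) with finite kernel. When $\xi=e$, the target $\la Y_\mathrm e\ra$ is elliptic, hence compact, so $\la e\ra$ is a compact almost linear Nash group and therefore elliptic, i.e.\ $e\in\g_\mathrm e$. When $\xi=h$ (resp.\ $\xi=u$), the target is hyperbolic (resp.\ unipotent) and hence simply connected as a Lie group (Proposition~\ref{nashucs} for the unipotent case), so the finite covering $\la\xi\ra\rightarrow\la\phi(\xi)\ra$ is trivial and is a Nash isomorphism; thus $\la h\ra$ is hyperbolic and $\la u\ra$ is unipotent, i.e.\ $h\in\g_\mathrm h$ and $u\in\g_\mathrm u$.

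For uniqueness, I would take a second decomposition $x=e'+h'+u'$ of the required form. Since $\la\phi(e')\ra\subset\varphi(\la e'\ra)$ and the latter is a compact almost linear, hence elliptic, Nash group, $\la\phi(e')\ra$ is elliptic by Proposition~\ref{quotientel}, so $\phi(e')$ is an elliptic element of $\gl_n(\R)$; similarly $\phi(h')$ is hyperbolic and $\phi(u')$ is unipotent, and the three pairwise commute. By Lemma~\ref{jgl2} this exhibits $Y=\phi(e')+\phi(h')+\phi(u')$ as a decomposition of $Y$ into commuting parts that are semisimple with purely imaginary spectrum, semisimple with real spectrum, and nilpotent; by uniqueness of that matrix decomposition, $\phi(e')=Y_\mathrm e$, $\phi(h')=Y_\mathrm h$, $\phi(u')=Y_\mathrm u$, and injectivity of $\phi$ forces $e'=e$, $h'=h$, $u'=u$.

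I do not expect a genuine obstacle: the argument is a close parallel of the group-level Theorem~\ref{jd}, with Lemmas~\ref{j02} and~\ref{jgl2} replacing their multiplicative analogues. The only places needing a little care are the elementary linear-algebra fact that a real matrix has a unique decomposition into commuting summands of the three prescribed types, and the routine verification that $\varphi$ identifies each replica $\la\xi\ra$ in $G$ with $\la\phi(\xi)\ra$ in $\GL_n(\R)$, so that the type of $\xi$ can be read off there.
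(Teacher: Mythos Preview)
Your proposal is correct and follows essentially the same approach the paper intends: the paper's own proof simply says ``The proof is similar to that of Theorem~\ref{jd}. We omit the details,'' and what you have written is precisely that transcription, replacing Lemma~\ref{j0} and Lemma~\ref{jgl} by their Lie-algebra analogues Lemma~\ref{j02} and Lemma~\ref{jgl2}. The only mild simplification over the group case is that injectivity of $\phi$ lets you pull the three summands back directly rather than lifting through a finite covering; your verification that $\varphi$ identifies $\la\xi\ra$ with $\la\phi(\xi)\ra$ and your type-checking via compactness (elliptic case) and simple connectedness (hyperbolic and unipotent cases) are the expected routine steps.
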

\bp
The proof is similar to that of Theorem  \ref{jd}. We omit the details.

\ep

We also call the equality $x=e+h+u$ of Theorem \ref{jdi} the Jordan
decomposition of $x\in \g$. As in the group case, we respectively use $x_\mathrm e$, $x_\mathrm h$ and $x_\mathrm u$ to denote the elements $e$, $h$ and $u$. They are respectively called the elliptic, hyperbolic and unipotent parts of $x\in\g$.

The same proof as Proposition \ref{pr} shows the following
\begin{prpl}\label{pr2}
Let $\varphi: G\rightarrow G'$ be a Nash homomorphism of almost
linear Nash groups. Write  $\phi: \g\rightarrow \g'$ for its
differential, where $\g':=\Lie \,G'$. Then \be \label{incjdalg}
  \phi(\g_\mathrm e)\subset \g_\mathrm e, \quad \phi(\g_\mathrm h)\subset \g'_\mathrm h\quad \textrm{and}\quad \phi(\g_\mathrm u)\subset \g'_\mathrm u.
\ee If $\phi$ is surjective, then the three inclusions in
\eqref{incjdalg} become equalities.
\end{prpl}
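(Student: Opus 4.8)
The plan is to imitate the proof of Proposition \ref{pr} almost verbatim, replacing the group-level Jordan decomposition (Theorem \ref{jd}) by its Lie-algebra counterpart (Theorem \ref{jdi}) and using the fact that each of the three classes of elliptic, hyperbolic and unipotent Nash groups is closed under passing to Nash subgroups and to Nash quotient groups (Propositions \ref{quotientel}, \ref{subsplit}, \ref{quosplit}, \ref{quotientunip}, together with the trivial remark that a Nash subgroup of a unipotent Nash group is unipotent). The key device is that $\varphi$ intertwines exponential maps, $\varphi\circ\exp_G=\exp_{G'}\circ\,\phi$, so that $\varphi$ carries replicas of Lie-algebra elements into (subgroups containing) replicas of their images.

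First I would prove the three inclusions in \eqref{incjdalg}. Fix $x\in\g_{\mathrm e}$, so the replica $\la x\ra$ is an elliptic Nash group. Then $\exp(\R\,\phi(x))=\varphi(\exp(\R x))\subset\varphi(\la x\ra)$, and $\varphi(\la x\ra)$ is a Nash subgroup of $G'$ (image of a Nash homomorphism, Proposition \ref{subgi}). The surjective Nash homomorphism $\varphi|_{\la x\ra}\colon\la x\ra\to\varphi(\la x\ra)$ realizes $\varphi(\la x\ra)$, up to Nash isomorphism, as a Nash quotient group of $\la x\ra$ (factor through $\la x\ra/\ker$ using Proposition \ref{quotient}, then apply Lemma \ref{subm0} and the fact that a bijective Nash homomorphism is a Nash isomorphism); hence $\varphi(\la x\ra)$ is elliptic by Proposition \ref{quotientel}. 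Consequently $\la\phi(x)\ra$, a Nash subgroup of the elliptic Nash group $\varphi(\la x\ra)$, is elliptic, i.e. $\phi(x)\in\g'_{\mathrm e}$. Reading ``hyperbolic'' (with Propositions \ref{subsplit} and \ref{quosplit}) or ``unipotent'' (with Proposition \ref{quotientunip}) throughout in place of ``elliptic'' gives $\phi(\g_{\mathrm h})\subset\g'_{\mathrm h}$ and $\phi(\g_{\mathrm u})\subset\g'_{\mathrm u}$.

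Next, assuming $\phi$ surjective, I would upgrade the inclusions to equalities exactly as in Proposition \ref{pr}. Given $y\in\g'_{\mathrm e}$, pick $x\in\g$ with $\phi(x)=y$ and write $x=x_{\mathrm e}+x_{\mathrm h}+x_{\mathrm u}$ for the Jordan decomposition of Theorem \ref{jdi}. Then $y=\phi(x_{\mathrm e})+\phi(x_{\mathrm h})+\phi(x_{\mathrm u})$ with $\phi(x_{\mathrm e})\in\g'_{\mathrm e}$, $\phi(x_{\mathrm h})\in\g'_{\mathrm h}$, $\phi(x_{\mathrm u})\in\g'_{\mathrm u}$ by the inclusions just proved, and these three elements pairwise commute since $x_{\mathrm e},x_{\mathrm h},x_{\mathrm u}$ do and $\phi$ is a Lie algebra homomorphism. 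Comparing with the trivial decomposition $y=y+0+0$ of $y\in\g'_{\mathrm e}$ and invoking uniqueness in Theorem \ref{jdi} yields $\phi(x_{\mathrm e})=y$, so $y\in\phi(\g_{\mathrm e})$; thus $\phi(\g_{\mathrm e})=\g'_{\mathrm e}$, and the same argument for $y\in\g'_{\mathrm h}$ and $y\in\g'_{\mathrm u}$ completes the proof. The only step needing a little attention—and the mild obstacle here—is the identification of $\varphi(\la x\ra)$ with a Nash quotient group of $\la x\ra$ (so the quotient-closure propositions apply) and the dual observation that $\la\phi(x)\ra$ lies in $\varphi(\la x\ra)$ as a Nash subgroup; both are formal consequences of Proposition \ref{subgi} and the fact that bijective Nash homomorphisms are Nash isomorphisms. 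Everything else runs word for word as in Proposition \ref{pr}, which is why the paper may simply say so.
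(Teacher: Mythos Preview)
Your proposal is correct and matches the paper's approach exactly: the paper simply states that the same proof as Proposition \ref{pr} applies, and your write-up spells out precisely that argument at the Lie-algebra level, invoking Theorem \ref{jdi} in place of Theorem \ref{jd} and the closure of each class under Nash subgroups and Nash quotients. Your closing remark that ``the paper may simply say so'' is on the nose.
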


Similar to Proposition \ref{pjd}, the above proposition implies the following

\begin{prpl}\label{pr3}
Let $\varphi: G\rightarrow G'$ be a Nash homomorphism of almost
linear Nash groups. Write  $\phi: \g\rightarrow \g'$ for its
differential,  where $\g':=\Lie \,G'$. Then for every $x\in \g$,
one has that
\[
  (\phi(x))_\mathrm e=\phi(x_\mathrm e),\quad  (\phi(x))_\mathrm h=\phi(x_\mathrm h)\quad\textrm{and} \quad (\phi(x))_\mathrm u=\phi(x_\mathrm u).
\]
\end{prpl}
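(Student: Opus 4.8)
The plan is to deduce this directly from Proposition \ref{pr2} together with the uniqueness clause of Theorem \ref{jdi}, in exactly the way Proposition \ref{pjd} was deduced from Proposition \ref{pr}. First I would fix $x\in\g$ and write its Jordan decomposition $x=x_\mathrm{e}+x_\mathrm{h}+x_\mathrm{u}$ provided by Theorem \ref{jdi}, so that $x_\mathrm{e}\in\g_\mathrm{e}$, $x_\mathrm{h}\in\g_\mathrm{h}$, $x_\mathrm{u}\in\g_\mathrm{u}$, and the three summands pairwise commute with each other. Applying the linear map $\phi$ then gives $\phi(x)=\phi(x_\mathrm{e})+\phi(x_\mathrm{h})+\phi(x_\mathrm{u})$.

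Next I would verify that this expression meets all the requirements characterizing the Jordan decomposition of $\phi(x)\in\g'$. By the inclusions \eqref{incjdalg} of Proposition \ref{pr2}, one has $\phi(x_\mathrm{e})\in\g'_\mathrm{e}$, $\phi(x_\mathrm{h})\in\g'_\mathrm{h}$ and $\phi(x_\mathrm{u})\in\g'_\mathrm{u}$. Since $\phi$ is a homomorphism of Lie algebras, it sends the vanishing brackets $[x_\mathrm{e},x_\mathrm{h}]$, $[x_\mathrm{e},x_\mathrm{u}]$, $[x_\mathrm{h},x_\mathrm{u}]$ to $0$, so the elements $\phi(x_\mathrm{e})$, $\phi(x_\mathrm{h})$, $\phi(x_\mathrm{u})$ pairwise commute in $\g'$.

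Finally, the uniqueness assertion in Theorem \ref{jdi} forces $(\phi(x))_\mathrm{e}=\phi(x_\mathrm{e})$, $(\phi(x))_\mathrm{h}=\phi(x_\mathrm{h})$ and $(\phi(x))_\mathrm{u}=\phi(x_\mathrm{u})$, which is the claim. I do not expect any genuine obstacle here; the one point deserving an explicit word is the commutativity of the images, which is immediate from $\phi$ being a morphism of Lie algebras, and the rest is a verbatim transcription of the group-case argument of Proposition \ref{pjd}.
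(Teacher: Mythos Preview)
Your proposal is correct and is exactly the argument the paper intends: it states Proposition~\ref{pr3} as an immediate consequence of Proposition~\ref{pr2}, ``similar to Proposition~\ref{pjd},'' which is precisely the deduction you spell out via the uniqueness clause of Theorem~\ref{jdi}.
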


\section{Exponential elements}

Let $G$ be an almost linear Nash group with Lie algebra $\g$. The
following lemma concerning the exponential map is obvious.
\begin{lem}\label{pr1}
One has that
\[
\exp(\g_\mathrm e)\subset G_\mathrm e,\quad
 \exp(\g_\mathrm h)\subset G_\mathrm h\quad \textrm{and}\quad \exp(\g_\mathrm
u)\subset G_\mathrm u.
\]
\end{lem}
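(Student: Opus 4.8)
The plan is to read off the claim directly from the definitions of the replica of a Lie algebra element and of elliptic/hyperbolic/unipotent elements of $G$, with no appeal to the structure theory or to Jordan decomposition. Recall that for $x\in\g$ the replica $\la x\ra$ was defined as the smallest Nash subgroup of $G$ containing $\exp(\R x)$, and that $x$ is declared elliptic (resp. hyperbolic, unipotent) precisely when $\la x\ra$ is an elliptic (resp. hyperbolic, unipotent) Nash group.

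So, first I would take $x\in\g_\mathrm e$ and observe that $\exp(x)$ is the value at $t=1$ of the one-parameter subgroup $t\mapsto\exp(tx)$, hence $\exp(x)\in\exp(\R x)\subset\la x\ra$. Since $\la x\ra$ is, by the hypothesis $x\in\g_\mathrm e$, an elliptic Nash subgroup of $G$ containing $\exp(x)$, the definition of an elliptic element of $G$ yields $\exp(x)\in G_\mathrm e$. As $x$ was an arbitrary element of $\g_\mathrm e$, this gives $\exp(\g_\mathrm e)\subset G_\mathrm e$.

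The hyperbolic and unipotent cases are handled by the identical argument: if $x\in\g_\mathrm h$ then $\la x\ra$ is hyperbolic and contains $\exp(x)$, so $\exp(x)\in G_\mathrm h$; and if $x\in\g_\mathrm u$ then $\la x\ra$ is unipotent and contains $\exp(x)$, so $\exp(x)\in G_\mathrm u$. There is essentially no obstacle in this proof; the only point to verify is the trivial inclusion $\exp(x)\in\la x\ra$ and the matching up of the two layers of definitions. One could equally phrase the whole thing in a single line using that the abelian Nash group $\la x\ra$ is elliptic, hyperbolic, or unipotent exactly according to the type of $x$, and that $\exp(x)$ always lies in $\la x\ra$.
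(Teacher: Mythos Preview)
Your proposal is correct and matches the paper's own treatment: the paper declares the lemma ``obvious'' without further proof, and your argument is precisely the unpacking of the definitions that makes it obvious. The key observation $\exp(x)\in\exp(\R x)\subset\la x\ra$ combined with the definition of $\g_\mathrm e$, $\g_\mathrm h$, $\g_\mathrm u$ in terms of the type of $\la x\ra$ is exactly what is needed.
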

For each $x\in G_\mathrm h$ or $G_\mathrm u$, define $\log (x)$ to be the
unique element in the Lie algebra of $\la x\ra$ such that $\exp(\log (x))=x$. Then $\log (x)$ belongs to $\g_\mathrm h$ or $\g_\mathrm
u$, respectively. The maps
\[
  \exp: \g_\mathrm h\rightarrow G_\mathrm h\quad \textrm{ and }\quad
  \log:  G_\mathrm h\rightarrow \g_\mathrm h
\]
are inverse to each other. Likewise, the maps
\[
  \exp: \g_\mathrm u\rightarrow G_\mathrm u\quad \textrm{ and }\quad
  \log:  G_\mathrm u\rightarrow \g_\mathrm u
\]
are inverse to each other.

\begin{lem}\label{exphu}
Let $x\in \g_\mathrm h$ or $\g_\mathrm u$. Then $\la x\ra=\la \exp(x)\ra$.
\end{lem}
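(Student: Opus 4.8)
The plan is to prove the two inclusions $\la\exp(x)\ra\subset\la x\ra$ and $\la x\ra\subset\la\exp(x)\ra$ separately. The first is free, and I would get the second by reducing it to the single assertion $\exp(\R x)\subset\la\exp(x)\ra$.

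For the easy inclusion: $\exp(x)=\exp(1\cdot x)\in\exp(\R x)\subset\la x\ra$ directly from the definition of the Lie-algebra replica, and $\la x\ra$ is a Nash subgroup, so it contains the smallest Nash subgroup $\la\exp(x)\ra$ containing $\exp(x)$; hence $\la\exp(x)\ra\subset\la x\ra$.

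For the substantive inclusion I would set $B:=\la\exp(x)\ra$. Since $\la x\ra$ is by definition the smallest Nash subgroup containing $\exp(\R x)$, it suffices to show $\exp(\R x)\subset B$, and since $B$ is a Lie subgroup (so that $\exp$ carries $\Lie B$ into $B$), it even suffices to show $x\in\Lie B$. Here is the key step: $x$ lies in $\g_\mathrm h$ or $\g_\mathrm u$, so $\exp(x)$ lies in $G_\mathrm h$ or $G_\mathrm u$ by Lemma \ref{pr1}; consequently $B=\la\exp(x)\ra$ is a hyperbolic or unipotent Nash subgroup, the element $\log(\exp(x))$ is defined, it lies in the Lie algebra of $\la\exp(x)\ra$, i.e.\ in $\Lie B$, and it lies in $\g_\mathrm h$ or $\g_\mathrm u$ respectively, by the remarks immediately preceding this lemma. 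Thus both $x$ and $\log(\exp(x))$ lie in $\g_\mathrm h$ (resp.\ $\g_\mathrm u$) and have the same image $\exp(x)$ under $\exp$; since $\exp$ is injective on $\g_\mathrm h$ (resp.\ $\g_\mathrm u$), being inverse there to $\log$, I conclude $x=\log(\exp(x))\in\Lie B$, whence $\R x\subset\Lie B$, $\exp(\R x)\subset B$, and therefore $\la x\ra\subset\la\exp(x)\ra$.

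The argument is short, so there is no serious difficulty; the main thing to watch is to keep the two replica operations apart (one for elements of $G$, one for elements of $\g$) and likewise the two exponential maps ($\exp$ on $\g$ and its restriction landing in $B$), and to rely only on the elementary facts about $\exp$ and $\log$ on the sets $G_\mathrm h,G_\mathrm u,\g_\mathrm h,\g_\mathrm u$ recorded just above, rather than on any statement about the exponential map of an exponential Nash group, which is not yet available at this point.
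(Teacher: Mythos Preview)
Your proof is correct and follows essentially the same route as the paper's: both establish the easy inclusion $\la\exp(x)\ra\subset\la x\ra$ first, and then deduce $x\in\Lie\,\la\exp(x)\ra$ from the bijectivity of $\exp$ in the hyperbolic/unipotent setting, which yields $\exp(\R x)\subset\la\exp(x)\ra$ and hence equality. The only cosmetic difference is that the paper packages the key step as a commutative square of exponential maps between $\Lie\,\la\exp(x)\ra\subset\Lie\,\la x\ra$ and $\la\exp(x)\ra\subset\la x\ra$, whereas you invoke the globally defined $\log$ on $G_\mathrm h$ (resp.\ $G_\mathrm u$) and the injectivity of $\exp$ on $\g_\mathrm h$ (resp.\ $\g_\mathrm u$) recorded just before the lemma; these are the same fact read two ways.
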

\begin{proof}
The Nash subgroup  $\la x \ra$ of $G$ contains the Nash subgroup $\la \exp (x)\ra$.
Since $x\in \Lie\, \la x\ra$, and $\exp (x)\in \la \exp (x) \ra$, using the  commutative diagram
\[
  \begin{CD}
            \Lie\, \la \exp (x)\ra @>\subset>>  \Lie\, \la x\ra \\
            @V\simeq V \exp  V           @V\simeq V\exp  V\\
           \la \exp (x)\ra @>\subset>> \la x\ra,\\
  \end{CD}
\]
we know that $x\in \Lie\, \la \exp (x)\ra$. Therefore
\[
\la \exp (x)\ra \supset \exp (\R x),
\]
and hence $\la \exp (x)\ra=\la x\ra $.

\end{proof}

\begin{lem}\label{expcom}
Let $x\in G_\mathrm h$ and let $y\in G_\mathrm u$. If they commute with each other in $G$, then $\log (x)$ and $\log (y)$ commute with each other in $\g$.
\end{lem}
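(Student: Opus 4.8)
The plan is to reduce everything to the abelian case by first upgrading the commutation of the two \emph{elements} $x$ and $y$ to the commutation of their \emph{replicas} $\langle x\rangle$ and $\langle y\rangle$. Fix $y\in G_{\mathrm u}$ and consider the conjugation map $c_y\colon G\to G$, $g\mapsto gyg^{-1}$, which is Nash since multiplication and inversion are Nash. The centralizer $Z_G(y)=c_y^{-1}(\{y\})$ is then the inverse image of a (semialgebraic) point under a Nash map, hence semialgebraic by Lemma \ref{images}, so it is a Nash subgroup of $G$. Since $x$ commutes with $y$ we have $x\in Z_G(y)$, and minimality of the replica forces $\langle x\rangle\subseteq Z_G(y)$; thus every element of $\langle x\rangle$ commutes with $y$. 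Running the same argument with the roles reversed, for each fixed $a\in\langle x\rangle$ the centralizer $Z_G(a)$ is a Nash subgroup containing $y$, so $\langle y\rangle\subseteq Z_G(a)$. Letting $a$ range over $\langle x\rangle$, we conclude that $\langle x\rangle$ and $\langle y\rangle$ commute elementwise.

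Next I would assemble the two replicas into a single abelian group. By construction $\langle x\rangle$ is a hyperbolic Nash subgroup and $\langle y\rangle$ a unipotent Nash subgroup, and they commute; applying Proposition \ref{intehu3} with trivial elliptic factor, the multiplication map $\langle x\rangle\times\langle y\rangle\to G$ is an injective Nash homomorphism. Its image $H$ is therefore a Nash subgroup of $G$ which is abelian (being a homomorphic image of the abelian group $\langle x\rangle\times\langle y\rangle$) and which contains both $\langle x\rangle$ and $\langle y\rangle$ as closed subgroups.

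Finally I would pass to Lie algebras. As a Nash subgroup, $H$ is a closed subgroup of $G$, so $\Lie\,H$ is a Lie subalgebra of $\g$, and it is abelian because $H$ is an abelian Lie group. By the definition of $\log$ recalled just before the lemma, $\log(x)$ lies in $\Lie\,\langle x\rangle\subseteq\Lie\,H$ and $\log(y)$ lies in $\Lie\,\langle y\rangle\subseteq\Lie\,H$; hence $[\log(x),\log(y)]=0$ in $\Lie\,H$, and therefore in $\g$.

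The only step with genuine content is the first paragraph: turning ``$x$ commutes with $y$'' into ``$\langle x\rangle$ commutes with $\langle y\rangle$''. This is precisely where the Nash (semialgebraic) structure is used — centralizers are Nash subgroups, so the defining minimality property of the replica can be invoked, twice. Everything afterwards is formal bookkeeping: Proposition \ref{intehu3} packages the replicas into an abelian Nash group, and the Lie algebra of an abelian Lie group is abelian.
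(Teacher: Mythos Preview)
Your proof is correct and follows essentially the same route as the paper's: show that the replicas $\langle x\rangle$ and $\langle y\rangle$ commute, assemble them into an abelian Nash subgroup, and conclude by passing to its (abelian) Lie algebra. The paper compresses your first paragraph into a single citation of Lemma~\ref{exphu} (identifying $\langle\log x\rangle=\langle x\rangle$, $\langle\log y\rangle=\langle y\rangle$) and tacitly uses the same centralizer argument you spell out; it then writes the product $\langle\log x\rangle\langle\log y\rangle$ directly rather than invoking Proposition~\ref{intehu3}, but the content is identical.
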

\begin{proof}
If $x$ and $y$ commute with each other, then Lemma \ref{exphu} implies that the Nash subgroups  $\la \log (x)\ra$ and  $\la \log (y)\ra$ commute with each other. Therefore, $\la \log (x)\ra \,\la \log (y)\ra$ is a abelian Nash subgroup of $G$. Then the lemma follows since both $\log (x)$ and $\log (y)$ belong to the Lie algebra of $\la \log (x)\ra \,\la \log (y)\ra$.

\end{proof}

\begin{dfnl}
An element of an almost linear Nash group $G$ or its Lie algebra $\g$ is said to be exponential if its elliptic
part is trivial.
\end{dfnl}
 Denote by $G_\mathrm{ex}$
and $\g_\mathrm{ex}$ the sets of all exponential elements in $G$ and $\g$,
respectively. For every exponential element $x\in G_\mathrm{ex}$,
define
\[
  \log (x):=\log (x_\mathrm h) +\log (x_\mathrm u).
\]
By Lemma \ref{expcom}, this is an element of $\g_\mathrm{ex}$

\begin{prpl}\label{expp0}
The maps
\[
  \exp: \g_\mathrm{ex}\rightarrow G_\mathrm{ex}\quad \textrm{ and }\quad
  \log:  G_\mathrm{ex}\rightarrow \g_\mathrm{ex}
\]
are inverse to each other.
\end{prpl}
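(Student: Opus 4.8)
The plan is to deduce the statement from three facts already in place: the bijections $\exp\colon\g_\mathrm h\to G_\mathrm h$ and $\exp\colon\g_\mathrm u\to G_\mathrm u$, with inverses $\log$; the uniqueness clauses in the Jordan decomposition theorems (Theorems \ref{jd} and \ref{jdi}); and the elementary identity $\exp(X+Y)=\exp(X)\exp(Y)$ for commuting $X,Y\in\g$, which in particular implies that commuting elements of $\g$ have commuting exponentials in $G$.

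First I would verify that $\exp$ really maps $\g_\mathrm{ex}$ into $G_\mathrm{ex}$, and identify the Jordan parts of the image. Let $x\in\g_\mathrm{ex}$ and write $x=x_\mathrm h+x_\mathrm u$ for its Jordan decomposition, so $x_\mathrm h\in\g_\mathrm h$, $x_\mathrm u\in\g_\mathrm u$, $[x_\mathrm h,x_\mathrm u]=0$, and there is no elliptic summand. Since the summands commute we get $\exp(x)=\exp(x_\mathrm h)\exp(x_\mathrm u)$; by Lemma \ref{pr1} the factor $\exp(x_\mathrm h)$ lies in $G_\mathrm h$, the factor $\exp(x_\mathrm u)$ lies in $G_\mathrm u$, and the two factors commute in $G$. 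Thus $\exp(x)=\exp(x_\mathrm h)\exp(x_\mathrm u)$ is a factorization of $\exp(x)$ into commuting hyperbolic and unipotent elements with trivial elliptic part, so the uniqueness assertion of Theorem \ref{jd} forces $\exp(x)\in G_\mathrm{ex}$ with $(\exp x)_\mathrm h=\exp(x_\mathrm h)$ and $(\exp x)_\mathrm u=\exp(x_\mathrm u)$. Then, by the definition of $\log$ on $G_\mathrm{ex}$ and the fact that $\log$ inverts $\exp$ on $\g_\mathrm h$ and on $\g_\mathrm u$, we obtain $\log(\exp x)=\log((\exp x)_\mathrm h)+\log((\exp x)_\mathrm u)=\log(\exp x_\mathrm h)+\log(\exp x_\mathrm u)=x_\mathrm h+x_\mathrm u=x$.

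For the other composite I would start from $x\in G_\mathrm{ex}$ with Jordan decomposition $x=x_\mathrm h x_\mathrm u$ (commuting hyperbolic and unipotent parts, trivial elliptic part). By construction $\log(x)=\log(x_\mathrm h)+\log(x_\mathrm u)$ lies in $\g_\mathrm{ex}$, and by Lemma \ref{expcom} its summands $\log(x_\mathrm h)\in\g_\mathrm h$ and $\log(x_\mathrm u)\in\g_\mathrm u$ commute in $\g$; hence $\exp(\log x)=\exp(\log(x_\mathrm h)+\log(x_\mathrm u))=\exp(\log x_\mathrm h)\exp(\log x_\mathrm u)=x_\mathrm h x_\mathrm u=x$, using that $\exp$ inverts $\log$ on $G_\mathrm h$ and on $G_\mathrm u$.

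I do not expect a real obstacle; the only delicate point is the bookkeeping in the second paragraph, namely checking that the evident factorization $\exp(x)=\exp(x_\mathrm h)\exp(x_\mathrm u)$ is actually the Jordan decomposition of $\exp(x)$ in $G$ --- this is exactly where the uniqueness clause of Theorem \ref{jd} is used. Everything else is a formal consequence of the one-parameter-group identity for commuting elements together with the inverse relations between $\exp$ and $\log$ on hyperbolic and unipotent pieces established just before the proposition.
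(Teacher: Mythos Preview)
Your proof is correct and is precisely the argument the paper has in mind; the paper's own proof consists of the single sentence ``This is obvious,'' and you have simply written out the details that justify that claim, using exactly the ingredients (Lemma \ref{pr1}, Lemma \ref{expcom}, and the uniqueness of Jordan decompositions) set up immediately before the proposition.
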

\begin{proof}
This is obvious.
\end{proof}

\begin{prpl}\label{expp}
Let $x\in \g_\mathrm{ex}$. Then $\la x\ra=\la \exp (x)\ra$.
\end{prpl}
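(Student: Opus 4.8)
The plan is to reduce the statement to the two cases already handled in Lemma \ref{exphu}, namely $x\in\g_\mathrm h$ and $x\in\g_\mathrm u$, by exploiting the Jordan decomposition $x=x_\mathrm h+x_\mathrm u$ (the elliptic part vanishes since $x$ is exponential, by definition) and the fact that $x_\mathrm h$ and $x_\mathrm u$ commute. Write $h:=x_\mathrm h$, $u:=x_\mathrm u$, so $x=h+u$ with $[h,u]=0$. By Lemma \ref{exphu} applied separately, $\langle h\rangle=\langle\exp h\rangle$ and $\langle u\rangle=\langle\exp u\rangle$. Since $h$ and $u$ commute, the Nash subgroups $\langle h\rangle$ and $\langle u\rangle$ commute with each other, so by Proposition \ref{intehu3} their product $\langle h\rangle\langle u\rangle$ is an abelian Nash subgroup of $G$, and in fact (by Proposition \ref{intehu2}, reading off that a Nash subgroup of a hyperbolic-times-unipotent group splits) $\langle h\rangle\langle u\rangle\cong\langle h\rangle\times\langle u\rangle$ as a Nash group; call this subgroup $A$.

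Now the two containments. For $\langle\exp x\rangle\subset\langle x\rangle$: since $h,u\in\Lie A\subset\g$ and $[h,u]=0$ we have $x=h+u\in\Lie A$, hence $\exp x=\exp h\cdot\exp u\in A$; in particular $\exp x$ lies in the Nash subgroup $\langle x\rangle$, so $\langle\exp x\rangle\subset\langle x\rangle$ by minimality of the replica. Actually one must be slightly careful: we want $\exp x\in\langle x\rangle$, and $\langle x\rangle$ is by definition the smallest Nash subgroup containing $\exp(\R x)$, which certainly contains $\exp x$; so this direction is immediate once we know $x\in\g$, with no reference to $A$ needed at all.

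The substantive direction is $\langle x\rangle\subset\langle\exp x\rangle$, and here is where $A$ and the splitting are essential. Inside $A\cong\langle h\rangle\times\langle u\rangle$, the element $\exp x=(\exp h,\exp u)$ generates (as a replica) a Nash subgroup; projecting to the two factors, which are hyperbolic and unipotent respectively, Proposition \ref{pjd} (Nash homomorphisms preserve Jordan decompositions) identifies the hyperbolic part of $\exp x$ with $\exp h$ and its unipotent part with $\exp u$. Hence $\langle\exp x\rangle$ contains both $\langle\exp h\rangle=\langle h\rangle$ and $\langle\exp u\rangle=\langle u\rangle$, therefore contains $\exp(\R h)$ and $\exp(\R u)$, and since it is an abelian group containing these two commuting one-parameter subgroups it contains $\exp(\R h)\exp(\R u)\supset\{\exp(t(h+u)):t\in\R\}=\exp(\R x)$. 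By minimality, $\langle x\rangle\subset\langle\exp x\rangle$, completing the proof. The one point requiring care — the main obstacle — is justifying the splitting $A\cong\langle h\rangle\times\langle u\rangle$ together with the fact that the hyperbolic and unipotent parts of $\exp x$ inside $A$ are exactly $\exp h$ and $\exp u$; both follow cleanly from Propositions \ref{intehu2}, \ref{intehu3} and \ref{pjd}, so no genuinely new ingredient is needed beyond assembling these.
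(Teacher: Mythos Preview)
Your proof is correct and follows essentially the same route as the paper. The paper condenses the argument into a single chain of equalities
\[
  \la \exp (x)\ra=\la \exp (x_\mathrm h)\ra \times \la \exp (x_\mathrm u) \ra=\la x_\mathrm h\ra \times \la x_\mathrm u\ra=\la x\ra,
\]
citing Lemmas \ref{exphu}, \ref{j0}, and \ref{j02}; the latter two already package the product decomposition $\la ehu\ra=\la e\ra\times\la h\ra\times\la u\ra$ (and its Lie algebra analogue) that you rebuild by hand from Propositions \ref{intehu2}, \ref{intehu3}, and \ref{pjd}. Your detour through projections and \ref{pjd} to identify $(\exp x)_\mathrm h=\exp h$ is valid but unnecessary: the equality $\exp x=\exp h\cdot\exp u$ with commuting hyperbolic and unipotent factors already is the Jordan decomposition by uniqueness, and then Lemma \ref{j0} gives $\la\exp h\ra,\la\exp u\ra\subset\la\exp x\ra$ directly.
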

\begin{proof}
 By Lemmas \ref{exphu}, \ref{j0} and \ref{j02}, we have that
\[
  \la \exp (x)\ra=\la \exp (x_\mathrm h)\ra \times \la \exp (x_\mathrm u) \ra=\la x_\mathrm h\ra \times \la x_\mathrm u\ra=\la x\ra.
\]
\end{proof}

\section{Semisimple Nash groups}

We say that a Lie group (or a Nash group) is semisimple if its Lie
algebra is semisimple.

\begin{lem}\label{ssl}
Every semisimple Nash group is almost linear.
\end{lem}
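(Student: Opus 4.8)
The plan is to show that the adjoint representation of $G$ already does the job, so no structure theory is needed. Write $\g$ for the Lie algebra of $G$; by hypothesis $\g$ is semisimple, hence its center $Z(\g)$ is trivial. First I would record that the adjoint representation $\Ad\colon G\to \GL(\g)$ is a Nash homomorphism. Indeed, the conjugation map $G\times G\to G$, $(g,h)\mapsto ghg^{-1}$, is a Nash map, being built from the multiplication and inversion maps of $G$; expressing it in local Nash charts around the identity and differentiating in the second variable at the identity exhibits $g\mapsto \Ad(g)$, in coordinates, as a matrix whose entries are partial derivatives of Nash functions of $g$. Since partial derivatives of Nash functions are again Nash, $\Ad$ is a Nash map into $\Mat(\g)$, hence into the open semialgebraic subset $\GL(\g)$. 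Thus $\g$ is a Nash representation of $G$.

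Next I would show that $\ker\Ad$ is finite. It is a subgroup of $G$, and it is semialgebraic, being the preimage of the point $\{\mathrm{id}_\g\}$ under the Nash map $\Ad$; by Proposition \ref{subg} it is therefore a Nash subgroup of $G$. Its Lie algebra is the kernel of the differential $\ad\colon\g\to\gl(\g)$ of $\Ad$, which is exactly $Z(\g)=0$. Hence $\ker\Ad$ is a Nash submanifold of $G$ of dimension $0$; and a $0$-dimensional Nash manifold is a finite set, since by the very definition of a Nash structure it is covered by finitely many Nash charts, each homeomorphic to an open semialgebraic subset of $\R^0$, i.e. to a point (alternatively, invoke Lemma \ref{coverrn0}). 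Consequently $\Ad\colon G\to\GL(\g)$ is a Nash representation of $G$ with finite kernel, which is precisely the assertion that $G$ is almost linear. Note that this argument makes no assumption on the number of connected components of $G$.

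The only step that is not purely formal is the verification that $\Ad$ is a Nash map, and there the point to be careful about is that the partial derivatives of a Nash function are again Nash -- a standard property of Nash functions, see \cite{BCR} -- which is what lets one pass from Nashness of the conjugation map to Nashness of $g\mapsto\Ad(g)$. Once this is in hand, the finiteness of $\ker\Ad$ is immediate from $Z(\g)=0$ together with the elementary fact that a $0$-dimensional Nash manifold is finite, so the whole lemma is short.
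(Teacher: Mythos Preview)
Your proof is correct and follows exactly the paper's approach: the paper's proof is the single sentence ``Taking the adjoint representation, then the lemma follows,'' and you have simply supplied the details---that $\Ad$ is a Nash map (via differentiation of the Nash conjugation map) and that $\ker\Ad$ is finite because $Z(\g)=0$ forces it to be a $0$-dimensional Nash manifold---that the paper leaves implicit.
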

\begin{proof}
Taking the adjoint representation, then the lemma follows.
\end{proof}

Recall the following
\begin{lem}\label{scenter}(\cite[Proposition 7.9]{Kn})
Every semisimple analytic subgroup of $\GL_n(\R)$ ($n\geq 0$)  has a
finite center.
\end{lem}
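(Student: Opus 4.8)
The assertion is a classical fact of Lie theory; the plan is to exhibit a \emph{compact} subgroup of $G$ containing the centre, and then invoke that a discrete subgroup of a compact group is finite. Write $\g\subseteq\gl_n(\R)$ for the Lie algebra of $G$. Since $\g$ is semisimple its centre is zero, so $\ker(\Ad\colon G\to\GL(\g))$ has trivial Lie algebra; as $G$ is connected this kernel is exactly $Z(G)$, so $Z(G)$ is discrete. It therefore suffices to show that $Z(G)$ lies in some compact subgroup of $G$.

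I would obtain such a subgroup from the Cartan decomposition of the linear group $G$. By Mostow's theorem on reductive subalgebras of $\gl_n(\R)$ (see \cite{Kn}), after conjugating $G$ inside $\GL_n(\R)$ we may assume that $\g$ is stable under $X\mapsto X^{\mathrm t}$; then $\theta(X):=-X^{\mathrm t}$ restricts to a Cartan involution of $\g$, with eigenspace decomposition $\g=\k\oplus\p$, where $\k=\g\cap\mathfrak{so}(n)$ and $\p=\g\cap\Sym_n(\R)$. The automorphism $g\mapsto(g^{\mathrm t})^{-1}$ of $\GL_n(\R)$ has differential $\theta$ on $\g$, hence carries the analytic subgroup $G$ onto itself; write $\Theta$ for its restriction to $G$, and $K$ for the analytic subgroup of $G$ with Lie algebra $\k$. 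The global Cartan decomposition for a linear connected semisimple group (see \cite[Ch.~VII]{Kn}) gives that $K=G\cap\oO(n)$ is \emph{compact}, and that every element of $G$ is uniquely of the form $k\exp X$ with $k\in K$ and $X\in\p$.

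It remains to check $Z(G)\subseteq K$. Let $z\in Z(G)$ and write $z=k\exp X$ with $k\in K$, $X\in\p$. Since $\Theta$ fixes $K$ pointwise and acts by $-1$ on $\p$, we get $\Theta(z)=k\exp(-X)$, whence $z\,\Theta(z)^{-1}=k\exp(2X)k^{-1}=\exp\bigl(2\Ad(k)X\bigr)$, and $2\Ad(k)X\in\p$ because $\Ad(k)$ commutes with $\theta$ for $k\in K$. As $Z(G)$ is $\Theta$-stable and is a subgroup, $z\,\Theta(z)^{-1}\in Z(G)$. Thus $\exp(W)$ is central for $W:=2\Ad(k)X\in\p$, so $\exp(\ad W)=\Ad(\exp W)=\mathrm{id}_\g$; but $\ad W$ is symmetric with respect to the positive-definite form $(X,Y)\mapsto -B_\g(X,\theta Y)$ on $\g$ ($B_\g$ the Killing form), hence diagonalizable with real eigenvalues, so $\exp(\ad W)=\mathrm{id}_\g$ forces $\ad W=0$, i.e.\ $W\in\z(\g)=0$. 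Then $\Ad(k)X=0$, so $X=0$ and $z=k\in K$. Hence $Z(G)$ is a discrete subgroup of the compact group $K$, and is therefore finite.

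The one point used as a black box is the compactness of $K=G\cap\oO(n)$ — equivalently, that the analytic subgroup $G$ is closed in $\GL_n(\R)$, so that it really is a linear connected semisimple group in the sense of \cite{Kn}. This is precisely where linearity is essential: for $\widetilde{\SL_2(\R)}$ the analogous ``$K$'' is $\R$, which is non-compact, and the centre is $\Z$. An alternative route sidesteps Cartan decompositions entirely: a semisimple subalgebra of $\gl_n(\R)$ is algebraic (Chevalley), so $G$ is the identity component of $\mathbf G(\R)$ for a connected semisimple $\R$-algebraic group $\mathbf G\subseteq\GL_n$, and a semisimple algebraic group has finite centre.
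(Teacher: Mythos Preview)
The paper does not prove this lemma at all; it simply cites \cite[Proposition 7.9]{Kn} and uses it as input. So there is no ``paper's proof'' to compare against, and your write-up stands on its own.

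Your main argument is structurally sound: $Z(G)=\ker\Ad$ is discrete, and the Cartan-decomposition computation showing $Z(G)\subseteq K$ is correct. The genuine issue is exactly the one you flag: compactness of $K=G\cap\oO(n)$, equivalently closedness of the analytic subgroup $G$ in $\GL_n(\R)$. This is not a minor technicality but the heart of the matter, and there is a real circularity risk in citing Knapp for it. In Knapp's development the global Cartan decomposition (Theorems 6.31, 6.46, etc., which the present paper also cites in Proposition \ref{cs}) is stated for connected semisimple Lie groups \emph{with finite center}; Proposition 7.9 is precisely what supplies that hypothesis in the linear case. So invoking ``the global Cartan decomposition for a linear connected semisimple group'' to prove 7.9 is, at best, delicate bookkeeping about which statements in Chapter VII are proved independently of 7.9.

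Your alternative route via algebraicity is the clean, self-contained argument: a semisimple Lie subalgebra $\g\subseteq\gl_n(\R)$ is algebraic (Chevalley), so $G$ is the identity component of the real points of a semisimple algebraic group, hence closed in $\GL_n(\R)$ with finite center. If you want to keep the Cartan-involution presentation, you can use algebraicity just to get closedness of $G$ (hence compactness of $G\cap\oO(n)$) and then run your $Z(G)\subseteq K$ argument; that removes the black box without appealing to the structure theory of algebraic groups for the finiteness itself.
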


Using Cartan decompositions for semisimple Lie groups (cf. \cite[Theorem 7.39]{Kn}), we easily
get the following
\begin{lem}\label{scenter2}
Let $G$ be a connected semisimple Lie group with finite center. Let
$K$ be a maximal compact subgroup of $G$. Then there are analytic
subgroups $H_1, H_2, \cdots, H_r$ ($r\geq 0$) of $G$ such that
\begin{itemize}
  \item  $G=K H_1 H_2\cdots H_r K$; and
  \item  the Lie algebra of $H_i$ is isomorphic to $\sl_2(\R)$ ($i=1,2,\cdots, r$).
\end{itemize}
\end{lem}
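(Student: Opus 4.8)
The plan is to deduce the lemma from the $KAK$ (Cartan) decomposition of $G$ combined with the standard construction of $\sl_2(\R)$-triples attached to restricted roots. Fix a Cartan involution $\theta$ of $G$ whose fixed point group is $K$, write $\g=\k\oplus\p$ for the induced decomposition of $\g:=\Lie\,G$, and choose a maximal abelian subspace $\a\subset\p$; set $l:=\dim\a$ and $A:=\exp(\a)$. By \cite[Theorem 7.39]{Kn} one has $G=KAK$, so it suffices to exhibit analytic subgroups $H_1,\dots,H_l$ of $G$, each with Lie algebra isomorphic to $\sl_2(\R)$, such that $A\subset H_1H_2\cdots H_l$; then $G=KAK\subset KH_1\cdots H_lK\subset G$, which is the assertion with $r=l$.

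To produce the $H_i$, I would bring in the restricted root space decomposition $\g=\g_0\oplus\bigoplus_{\alpha\in\Sigma}\g_\alpha$ relative to $\a$. Since $\g$ is semisimple, $\Sigma$ spans $\a^\ast$, so the coroots $H_\alpha$ ($\alpha\in\Sigma$) span $\a$; pick $\alpha_1,\dots,\alpha_l\in\Sigma$ such that $H_{\alpha_1},\dots,H_{\alpha_l}$ form a basis of $\a$. For each $i$ choose $0\neq X_i\in\g_{\alpha_i}$ and normalize so that, with $Y_i:=-\theta X_i\in\g_{-\alpha_i}$, the triple $(H_{\alpha_i},X_i,Y_i)$ satisfies $[H_{\alpha_i},X_i]=2X_i$, $[H_{\alpha_i},Y_i]=-2Y_i$, $[X_i,Y_i]=H_{\alpha_i}$; this is the usual construction, and the span $\s_i$ of the triple is a subalgebra isomorphic to $\sl_2(\R)$. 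Let $H_i$ be the analytic subgroup of $G$ with $\Lie\,H_i=\s_i$. Since $H_{\alpha_i}\in\s_i$, the one-parameter subgroup $\exp(\R H_{\alpha_i})$ lies in $H_i$; and since the $H_{\alpha_i}$ commute pairwise and form a basis of $\a$, we get $A=\exp(\a)=\exp(\R H_{\alpha_1})\exp(\R H_{\alpha_2})\cdots\exp(\R H_{\alpha_l})\subset H_1H_2\cdots H_l$, which finishes the argument.

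The step requiring genuine care is checking that $\s_i\cong\sl_2(\R)$ rather than $\su(2)$ (the other real form of $\sl_2(\C)$). Here the point is that $\ad_\g(H_{\alpha_i})$ acts on each $\g_\beta$ by the real scalar $\beta(H_{\alpha_i})$, and by $0$ on $\g_0$, so it is diagonalizable with real spectrum; since $\s_i$ therefore contains an element with nonzero real $\ad$-eigenvalues (namely $2,0,-2$ on $\s_i$ itself) while every element of $\su(2)$ has purely imaginary $\ad$-spectrum, $\s_i$ must be $\sl_2(\R)$. The positive-definiteness of the $\theta$-twisted Killing form on $\g_{\alpha_i}$ is also what makes the normalization $[X_i,Y_i]=H_{\alpha_i}$ — as opposed to its negative — achievable in the first place. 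Everything else is routine: the reverse inclusion $KH_1\cdots H_lK\subset G$ is automatic, and no closedness of the $H_i$ is needed because the lemma only demands analytic subgroups.
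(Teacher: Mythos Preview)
Your argument is correct and is precisely the ``easy'' deduction from the Cartan decomposition that the paper alludes to without writing out: the paper simply cites \cite[Theorem 7.39]{Kn} and declares the lemma to follow, whereas you have supplied the details (the $KAK$ decomposition together with the standard $\sl_2(\R)$-triples built from restricted root vectors so that the coroots span $\a$). There is nothing to correct.
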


Recall that every analytic subgroup of $\GL_n(\R)$ is isomorphic to  either $\SL_2(\R)$ or
$\SL_2(\R)/\{\pm 1\}$ if its Lie algebra is isomorphic to $\sl_2(\R)$. Representation theory of $\sl_2(\R)$ implies the following

\begin{lem}\label{scenter3}
Every finite dimensional real representation of $\SL_2(\R)$ or
$\SL_2(\R)/\{\pm 1\}$ is a Nash representation.
\end{lem}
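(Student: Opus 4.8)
The plan is to reduce to irreducible representations, invoke the classification of the finite-dimensional representations of $\sl_2(\R)$, and observe that these are realized by explicit polynomial — hence Nash — formulas on $\SL_2(\R)$.

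Write $G$ for $\SL_2(\R)$ or $\SL_2(\R)/\{\pm 1\}$, and let $\rho:G\to\GL(V)$ be a finite-dimensional real representation; as recalled in the Introduction, $\rho$ is continuous, hence a Lie group homomorphism. By complete reducibility of finite-dimensional representations of the semisimple Lie algebra $\sl_2(\R)$ (Weyl's theorem), $V$ is completely reducible as a representation of $\sl_2(\R)$, equivalently — $G$ being connected — as a representation of $G$; write $V=V_1\oplus\cdots\oplus V_k$ with each $V_i$ an irreducible $G$-submodule. Then $\rho$ takes values in the Nash subgroup $\{g\in\GL(V)\mid g(V_i)=V_i\ \text{for all }i\}\cong\GL(V_1)\times\cdots\times\GL(V_k)$ of $\GL(V)$, and a map into such a finite product of Nash groups is Nash if and only if each of its $k$ components is (by Lemma \ref{cnash} together with the elementary closure properties of semialgebraic sets). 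So it suffices to treat the case when $V$ is irreducible.

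Assume $V$ is irreducible. By the representation theory of $\sl_2(\C)$, its finite-dimensional irreducible complex representations are the symmetric powers $\Sym^n(\C^2)$ of the standard representation ($n\geq 0$), each being the complexification of $\Sym^n(\R^2)$; since these complex representations are absolutely irreducible and self-conjugate, it follows that the finite-dimensional irreducible real representations of $\sl_2(\R)$ are, up to isomorphism, precisely the $\Sym^n(\R^2)$ for $n\geq 0$. Differentiating $\rho$ and applying this classification, we may fix a real-linear isomorphism $\phi:V\to\Sym^n(\R^2)$, for some $n\geq 0$, that intertwines the differential of $\rho$ with the differential of the standard action $\sigma_n$ of $\SL_2(\R)$ on the $(n+1)$-dimensional space $\Sym^n(\R^2)$ (of symmetric tensors, equivalently of degree-$n$ homogeneous polynomials in two variables). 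If $G=\SL_2(\R)$, then $\rho$ and $g\mapsto\phi^{-1}\sigma_n(g)\phi$ are Lie group homomorphisms out of the connected group $G$ with the same differential, hence coincide. If $G=\SL_2(\R)/\{\pm 1\}$, then $-1$ acts trivially in $\rho$, forcing $n$ even; $\sigma_n$ then descends to a homomorphism $\bar\sigma_n$ on $G$, and the same argument gives $\rho(g)=\phi^{-1}\bar\sigma_n(g)\phi$ for all $g\in G$.

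To conclude, $\sigma_n:\SL_2(\R)\to\GL(\Sym^n(\R^2))$ is given by polynomials in the matrix entries, hence is a Nash homomorphism, and conjugation by the fixed linear isomorphism $\phi$ is a Nash map; so $\rho$ is Nash when $G=\SL_2(\R)$. When $G=\SL_2(\R)/\{\pm 1\}$, the quotient map $q:\SL_2(\R)\to G$ is a surjective submersive Nash map by Proposition \ref{quotient}, and $\bar\sigma_n\circ q=\sigma_n$ is Nash, so $\bar\sigma_n$ is Nash by Lemma \ref{subm0}; hence $\rho$ is Nash in this case as well. The only steps demanding real care are the classification of the irreducible real representations of $\sl_2(\R)$ and the passage from the Lie algebra to the group — in particular the parity constraint that lets $\sigma_n$ descend to $\SL_2(\R)/\{\pm 1\}$, and the transfer of Nash-ness across the finite quotient $q$; the remainder is a routine assembly of facts already established.
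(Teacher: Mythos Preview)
Your proof is correct and is precisely the elaboration the paper leaves implicit: the paper states the lemma with only the one-line justification ``Representation theory of $\sl_2(\R)$ implies the following,'' and your argument---complete reducibility, classification of the real irreducibles as $\Sym^n(\R^2)$, their polynomial realization, and the descent through the quotient via Lemma~\ref{subm0}---is exactly what that phrase is meant to encode. There is nothing to compare; you have simply written out what the paper expects the reader to supply.
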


As a direct consequence of Lemma \ref{scenter3}, we have

\begin{lem}\label{scenter4}
An analytic subgroup of $\GL_n(\R)$ is a Nash subgroup if its
Lie algebra is isomorphic to $\sl_2(\R)$.
\end{lem}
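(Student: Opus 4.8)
The plan is to exhibit $H$ as the image of a Nash homomorphism and then invoke the fact that images of Nash maps are semialgebraic.

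First, given an analytic subgroup $H$ of $\GL_n(\R)$ whose Lie algebra is isomorphic to $\sl_2(\R)$, I would use the fact recalled just before Lemma~\ref{scenter3} to fix a Lie group isomorphism $\theta\colon H_0\to H$, where $H_0$ is $\SL_2(\R)$ or $\SL_2(\R)/\{\pm 1\}$. Each of these two groups carries an almost linear Nash structure: $\SL_2(\R)$ is a semialgebraic (indeed Zariski closed) subgroup of $\GL_2(\R)$, hence a Nash subgroup, and $\SL_2(\R)/\{\pm 1\}$ is the quotient of the almost linear Nash group $\SL_2(\R)$ by the finite normal Nash subgroup $\{\pm 1\}$, hence an almost linear Nash group by Proposition~\ref{quotient}. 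I would then form the composite $\rho\colon H_0\to\GL_n(\R)$ of $\theta$ with the inclusion $H\hookrightarrow\GL_n(\R)$.

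Next, I would observe that $\rho$ is a finite dimensional real representation of $H_0$, so by Lemma~\ref{scenter3} it is a Nash representation, i.e.\ $\rho$ is a Nash homomorphism. By Lemma~\ref{images}, $\rho(H_0)$ is then a semialgebraic subset of $\GL_n(\R)$; since $\rho$ is injective, $\rho(H_0)=H$ as subsets. Hence $H$ is a semialgebraic subgroup of $\GL_n(\R)$, and so, by Proposition~\ref{subg}, a closed Nash submanifold --- that is, a Nash subgroup --- of $\GL_n(\R)$.

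I do not expect a genuine obstacle here: the lemma is a direct corollary of Lemma~\ref{scenter3} together with the semialgebraicity of images of Nash maps. The only point worth a remark is that the Nash subgroup structure induced on $H$ above agrees with the one transported from $H_0$ through $\theta$; this holds because $\theta\colon H_0\to H\subset\GL_n(\R)$ is a Nash embedding, its corestriction $H_0\to H$ being a bijective immersion between Nash manifolds of the same dimension and hence a Nash diffeomorphism. That refinement is not needed for the statement as phrased.
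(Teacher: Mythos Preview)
Your argument is correct and is precisely the intended expansion of the paper's one-line proof (``a direct consequence of Lemma~\ref{scenter3}''): realize $H$ as the image of a Nash homomorphism $H_0\to\GL_n(\R)$ with $H_0\in\{\SL_2(\R),\ \SL_2(\R)/\{\pm 1\}\}$ and conclude semialgebraicity. A small streamlining: in place of Lemma~\ref{images} followed by Proposition~\ref{subg}, you can directly invoke Proposition~\ref{subgi}, which already packages the statement that the image of a Nash homomorphism is a Nash subgroup.
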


Combining Lemmas  \ref{scenter}, \ref{scenter2}, \ref{scenter4} and
\ref{auts}, we have

\begin{lem}\label{scenter5}
Every semisimple analytic subgroup of $\GL_n(\R)$ ($n\geq 0$) is a
Nash subgroup.
\end{lem}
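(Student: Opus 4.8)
Lemma \ref{scenter5} asserts that every semisimple analytic subgroup $H$ of $\GL_n(\R)$ is a Nash subgroup. Here is the plan.

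The strategy is to reduce the general semisimple case to the rank-one case already handled in Lemma \ref{scenter4}, using the structural decomposition of Lemma \ref{scenter2} together with the fact (Lemma \ref{auts}) that compact subgroups of almost linear Nash groups are automatically Nash. First I would observe that $H$, being a semisimple analytic subgroup of $\GL_n(\R)$, has finite center by Lemma \ref{scenter}; let $\wt H$ denote its universal cover and $H^\circ = H$ its identity component (it is connected by definition of analytic subgroup). Fix a maximal compact subgroup $K$ of $H$. Since $H$ is a connected semisimple Lie group with finite center, Lemma \ref{scenter2} furnishes analytic subgroups $H_1, \dots, H_r$ of $H$, each with Lie algebra isomorphic to $\sl_2(\R)$, such that $H = K H_1 H_2 \cdots H_r K$.

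The key point is that each piece in this product is a Nash subgroup of $\GL_n(\R)$. Indeed, $K$ is a compact subgroup of $\GL_n(\R)$, hence a Nash subgroup by Lemma \ref{auts} (applied with $G = \GL_n(\R)$). Each $H_i$ is an analytic subgroup of $\GL_n(\R)$ whose Lie algebra is isomorphic to $\sl_2(\R)$, so $H_i$ is a Nash subgroup by Lemma \ref{scenter4}. Now the product $K H_1 H_2 \cdots H_r K$ of semialgebraic subsets of $\GL_n(\R)$ is semialgebraic: it is the image of $K \times H_1 \times \cdots \times H_r \times K$ under the multiplication map of $\GL_n(\R)$, which is a Nash (indeed polynomial) map, so Lemma \ref{images} applies. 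Therefore $H = K H_1 \cdots H_r K$ is a semialgebraic subset of $\GL_n(\R)$, and being also a subgroup it is a Nash subgroup by Proposition \ref{subg}.

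The main obstacle — and the only subtle point — is verifying that the $H_i$ produced by Lemma \ref{scenter2} really are analytic subgroups of $\GL_n(\R)$ with the right Lie algebra, so that Lemma \ref{scenter4} directly applies. But this is immediate: $H_i$ is an analytic subgroup of $H$, and $H$ is an analytic subgroup of $\GL_n(\R)$, so $H_i$ is an analytic subgroup of $\GL_n(\R)$ whose Lie algebra is the image in $\gl_n(\R)$ of a copy of $\sl_2(\R)$, hence isomorphic to $\sl_2(\R)$. One should note that it is not a priori obvious that $H_i$ is closed in $\GL_n(\R)$ — an analytic subgroup need not be closed — but Lemma \ref{scenter4} already circumvents this by asserting outright that such $H_i$ is a Nash subgroup (which in particular forces it to be closed, via Proposition \ref{subg}). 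With all factors semialgebraic, the closure of semialgebraic subsets of $\GL_n(\R)$ under Nash images and products closes the argument; no further Lie-theoretic input is needed beyond what is cited.
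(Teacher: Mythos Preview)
Your proof is correct and follows exactly the approach the paper intends: the paper's proof consists solely of the sentence ``Combining Lemmas \ref{scenter}, \ref{scenter2}, \ref{scenter4} and \ref{auts}, we have'', and you have filled in precisely the details that this citation implies, namely that $H$ has finite center, decomposes as $KH_1\cdots H_rK$, each factor is semialgebraic, and hence so is the product. The mention of the universal cover $\wt H$ is harmless but unused and could be dropped.
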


By Lemma \ref{scenter5}, the same proof as that of Lemma \ref{auts}
implies the following

\begin{prpl}\label{autss}
Every semisimple analytic subgroup of every almost linear Nash group
is a Nash subgroup.
\end{prpl}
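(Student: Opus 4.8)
The statement to prove is Proposition \ref{autss}: every semisimple analytic subgroup $H$ of an almost linear Nash group $G$ is a Nash subgroup. The excerpt hints that ``the same proof as that of Lemma \ref{auts}'' works, using Lemma \ref{scenter5} in place of the semialgebraicity of compact subgroups of $\GL_n(\R)$. So the approach is to imitate the proof of Lemma \ref{auts} verbatim, substituting the appropriate input.

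\textbf{Key steps.} First I would fix a Nash homomorphism $\varphi\colon G\rightarrow \GL_n(\R)$ with finite kernel, which exists since $G$ is almost linear. Next, set $H':=\varphi(H)$. The point is that $H'$ is a semisimple analytic subgroup of $\GL_n(\R)$: it is the image of the analytic subgroup $H$ under a Lie group homomorphism with discrete (indeed finite) kernel, hence it is an analytic subgroup whose Lie algebra is a quotient of $\Lie H$ by a central ideal, and a quotient of a semisimple Lie algebra is semisimple (here one uses that the kernel of $d\varphi$ restricted to $\Lie H$ is an ideal contained in the center of $\Lie H$, which is $0$, so actually $d\varphi|_{\Lie H}$ is injective and $\Lie H'\cong \Lie H$). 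Then Lemma \ref{scenter5} applies: $H'$ is a Nash subgroup of $\GL_n(\R)$, in particular semialgebraic. Now $\varphi^{-1}(H')$ is a Nash subgroup of $G$ (inverse image of a semialgebraic set under a Nash map, and a subgroup), and it has the same dimension as $H$ because $\varphi$ has finite kernel and $\varphi(H)=H'$ forces $\dim \varphi^{-1}(H')=\dim H' = \dim H$. Therefore $H$ is an open subgroup of the Nash subgroup $\varphi^{-1}(H')$ (both being submanifolds of the same dimension, with $H\subset \varphi^{-1}(H')$), hence $H$ is semialgebraic in $G$, i.e. a Nash subgroup.

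\textbf{Main obstacle.} The genuinely substantive content is entirely packaged in Lemma \ref{scenter5} (which in turn rests on Cartan decomposition, the $\sl_2(\R)$-reduction of Lemma \ref{scenter2}, representation theory of $\sl_2(\R)$, and Lemma \ref{auts}). Granting that, the only mild subtlety here is the step asserting $H$ is \emph{open} in $\varphi^{-1}(H')$ — one must note that an analytic subgroup of a Lie group need not carry the subspace topology, so a priori $H\hookrightarrow \varphi^{-1}(H')$ is only an injective immersion of equal dimension; but equality of dimension of the (connected) analytic subgroup with the (locally closed, hence submanifold) Nash subgroup $\varphi^{-1}(H')$ forces the immersion to be an open embedding onto a connected open subgroup, so $H$ is open, hence locally closed, hence semialgebraic. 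This parallels exactly the final move in the proof of Lemma \ref{auts}, and is where a reader should pause; everything else is formal. I would therefore present the proof in two or three sentences, invoking Lemma \ref{scenter5} and then copying the dimension/openness argument of Lemma \ref{auts}.
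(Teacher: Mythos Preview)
Your proposal is correct and follows exactly the approach indicated in the paper: mimic the proof of Lemma \ref{auts}, replacing the fact that compact subgroups of $\GL_n(\R)$ are semialgebraic with Lemma \ref{scenter5}. Your added remarks on why $\varphi(H)$ is a semisimple analytic subgroup and why the equal-dimension immersion $H\hookrightarrow\varphi^{-1}(H')$ is open are accurate and make the argument more explicit than the paper's one-line sketch.
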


Similar to the proof of Lemma \ref{auts2}, Proposition \ref{autss}
implies the following
\begin{prpl}\label{autss2}
Every Lie group homomorphism from a semisimple Nash group to an
almost linear Nash group is a Nash homomorphism. In particular,
every finite dimensional representation of a semisimple Nash group
is a Nash representation.
\end{prpl}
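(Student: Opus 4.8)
The plan is to imitate the proof of Lemma~\ref{auts2}, with Proposition~\ref{autss} playing the role that Lemma~\ref{auts} played there. Let $H$ be a semisimple Nash group, $G$ an almost linear Nash group, and $\varphi\colon H\to G$ a Lie group homomorphism; in particular $\varphi$ is smooth. By Lemma~\ref{ssl} the group $H$ is almost linear, so $H\times G$ is an almost linear Nash group. I would work with the graph $\Gamma:=\{(h,\varphi(h))\mid h\in H\}\subseteq H\times G$. Since $\varphi$ is continuous, the first projection restricts to a topological group isomorphism $\Gamma\stackrel{\sim}{\rightarrow}H$, and $\Gamma$ is closed in the Lie group $H\times G$, hence a Lie subgroup. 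By Lemma~\ref{cnash} it suffices to prove that $\Gamma$ is semialgebraic in $H\times G$, for then $\varphi$, being smooth with semialgebraic graph, is a Nash map and hence a Nash homomorphism.

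Under the isomorphism $\Gamma\stackrel{\sim}{\rightarrow}H$ the identity component $\Gamma^\circ$ corresponds to $H^\circ$, so $\Gamma^\circ$ is the graph of $\varphi|_{H^\circ}$, i.e. the image of the connected Lie group $H^\circ$ under the injective Lie group homomorphism $h\mapsto(h,\varphi(h))$; thus $\Gamma^\circ$ is an analytic subgroup of $H\times G$, and it is isomorphic to $H^\circ$, whose Lie algebra equals $\Lie\,H$ and is therefore semisimple. Hence $\Gamma^\circ$ is a semisimple analytic subgroup of the almost linear Nash group $H\times G$, so Proposition~\ref{autss} shows it is a Nash subgroup, in particular semialgebraic. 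To pass to $\Gamma$ itself, note that every Nash group is a semialgebraic subset of itself and so has finitely many connected components by Lemma~\ref{dimen20}; writing $H=\bigcup_{i=1}^{r}h_iH^\circ$ with $h_i\in H$, we get $\Gamma=\bigcup_{i=1}^{r}(h_i,\varphi(h_i))\cdot\Gamma^\circ$, a finite union of translates of $\Gamma^\circ$, each semialgebraic since left translation on $H\times G$ is a Nash diffeomorphism. Therefore $\Gamma$ is semialgebraic, as required.

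The final assertion follows by applying the above to a representation of $H$ on a finite dimensional real vector space $V$, viewed as a Lie group homomorphism $H\to\GL(V)\cong\GL_n(\R)$ with $n=\dim V$, noting that $\GL_n(\R)$ is almost linear. The argument is essentially routine once Proposition~\ref{autss} is available; the one place where mild care is needed --- and where the compact case of Lemma~\ref{auts2} was smoother, since Lemma~\ref{auts} applies to any compact subgroup --- is the reduction to the connected case, because ``analytic subgroup'' in Proposition~\ref{autss} presupposes connectedness while $H$ itself need not be connected.
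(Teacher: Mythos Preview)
Your proof is correct and follows exactly the approach the paper indicates: the paper simply says ``Similar to the proof of Lemma~\ref{auts2}, Proposition~\ref{autss} implies the following,'' i.e., show the graph of $\varphi$ is semialgebraic in $H\times G$ via Proposition~\ref{autss} in place of Lemma~\ref{auts}. You have rightly made explicit the one step the paper leaves implicit, namely the reduction to the identity component (needed because Proposition~\ref{autss} speaks only of analytic, hence connected, subgroups) and the passage back to $\Gamma$ as a finite union of translates of $\Gamma^\circ$.
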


Each semisimple Nash group has finitely many
connected components, and its identity
connected component has a finite center. Conversely, we have
\begin{prpl}\label{snash}
Let $G$ be a semisimple Lie group. If it has finitely many connected
components, and its identity connected component has a finite
center, then there exists a unique Nash structure on the underlying
topological space of $G$ which makes $G$ a Nash group.
\end{prpl}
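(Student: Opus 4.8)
The plan is to build the required Nash structure from the adjoint representation, and to deduce uniqueness from the rigidity already established in Proposition \ref{autss2}.

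For uniqueness, suppose $\mathcal N_1$ and $\mathcal N_2$ are two Nash structures on the topological group $G$ making it a Nash group. Each makes $G$ a semisimple Lie group, hence, by Lemma \ref{ssl}, an almost linear Nash group. Since a topological group carries at most one compatible Lie group structure (continuous homomorphisms between Lie groups being automatically smooth), the identity map is a Lie group isomorphism $(G,\mathcal N_1)\to(G,\mathcal N_2)$; Proposition \ref{autss2} then shows that it, and likewise its inverse, is a Nash homomorphism, whence $\mathcal N_1=\mathcal N_2$.

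For existence, I would work with $\g=\Lie\,G$ and the adjoint homomorphism $\Ad\colon G\to\GL(\g)$. The first step is to observe that $\ker\Ad$ is finite: it equals the centralizer $Z_G(G^\circ)$ of the identity component, its intersection with $G^\circ$ is the finite center $Z(G^\circ)$, and $Z_G(G^\circ)/Z(G^\circ)$ embeds into the finite group $G/G^\circ$. The second step is to identify $\Ad(G)$ as a Nash subgroup of $\GL(\g)$: the subgroup $\Ad(G^\circ)$ is a connected analytic subgroup whose Lie algebra is $\ad(\g)$, isomorphic to $\g$ (the center of $\g$ being trivial) and hence semisimple, so by Lemma \ref{scenter5} it is a Nash subgroup of $\GL(\g)$, in particular semialgebraic; since $G^\circ$ has finite index in $G$, the set $\Ad(G)$ is a finite union of left translates of $\Ad(G^\circ)$, hence semialgebraic, hence a Nash subgroup of $\GL(\g)$ by Proposition \ref{subg} --- in particular a closed embedded Lie subgroup and an almost linear Nash group.

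The final step is to conclude: $\Ad\colon G\to\Ad(G)$ is a surjective homomorphism of Lie groups with finite kernel, hence (submersive by Lemma \ref{warner}, with finite fibres) a finite fold topological covering map, so Proposition \ref{subg2} applied to it equips $G$ with a Nash structure making $\Ad$ a submersive Nash map and making $G$ a Nash group. I expect the only points needing real care to be the two finiteness checks for $\ker\Ad$ and the bookkeeping that reassembles $\Ad(G)$ from finitely many semialgebraic cosets; the one substantive input --- that the identity component has Nash image in $\GL(\g)$ --- is precisely Lemma \ref{scenter5}, which is already available, so no genuinely hard step remains.
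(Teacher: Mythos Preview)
Your proof is correct and follows essentially the same strategy as the paper: use the adjoint representation, show that $\Ad(G)$ is a Nash group, then invoke Proposition~\ref{subg2} for the finite covering $G\to\Ad(G)$; uniqueness via Proposition~\ref{autss2} is identical.

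The only difference is in how you verify that $\Ad(G)$ is a Nash subgroup of $\GL(\g)$. The paper observes that $\Aut(\g)$ is itself a Nash group (being Zariski closed in $\GL(\g)$) and that $\Ad(G)$ is open in $\Aut(\g)$, hence a union of finitely many connected components and thus a Nash subgroup. You instead appeal to Lemma~\ref{scenter5} to get that $\Ad(G^\circ)$ is a Nash subgroup, and then assemble $\Ad(G)$ from finitely many semialgebraic cosets. Both arguments are valid; the paper's is slightly shorter because the algebraicity of $\Aut(\g)$ is a one-line observation, while yours stays entirely within the machinery already developed in the paper and makes the finiteness bookkeeping explicit.
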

\begin{proof}
Denote by $\g$ the Lie algebra of $G$. The automorphism group $\Aut(\g)$ of $\g$ is obviously a Nash group. The adjoint representation $\Ad: G\rightarrow \Aut(\g)$ has open
image and finite kernel. Therefore the existence follows by
Propositions \ref{subg2}. The uniqueness is implied by Proposition
\ref{autss2}.
\end{proof}

In conclusion, we have proved the following
\begin{thml}
The category of semisimple Nash groups is isomorphic to the category
of semisimple Lie groups which have finitely many connected
components, and whose identity connected components have a finite
center.
\end{thml}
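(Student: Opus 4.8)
The plan is to realise the asserted isomorphism via the forgetful functor $\mathcal{F}$ that sends a semisimple Nash group $N$ to its underlying Lie group (equipped with the smooth structure determined by the Nash charts, which is unique since Nash charts are in particular smooth charts and generate a unique maximal smooth atlas), and sends a Nash homomorphism to the same map viewed as a smooth homomorphism. One first checks $\mathcal{F}$ actually lands in the target category: by Lemma \ref{ssl} every semisimple Nash group is almost linear, hence (by the remark preceding Proposition \ref{snash}) has finitely many connected components and an identity connected component with finite center, and its Lie algebra is semisimple by definition, so $\mathcal{F}(N)$ is of the required type.

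Next I would verify that $\mathcal{F}$ is fully faithful and bijective on objects, which together give an isomorphism of categories. Faithfulness is immediate: a Nash homomorphism is in particular a map of sets, so $\mathcal{F}$ is injective on each Hom-set. Fullness is exactly Proposition \ref{autss2}: any Lie group homomorphism between the underlying Lie groups of two semisimple Nash groups is automatically a Nash homomorphism (the target being almost linear by Lemma \ref{ssl}), so $\mathcal{F}$ is surjective on Hom-sets. For objects, the existence part of Proposition \ref{snash} shows every semisimple Lie group with finitely many components and finite-center identity component admits a Nash structure making it a Nash group, and that Nash group is semisimple because it has the same Lie algebra; hence $\mathcal{F}$ is surjective on objects. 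The uniqueness part of Proposition \ref{snash} shows that two semisimple Nash groups with the same underlying Lie group carry the same Nash structure, hence coincide; so $\mathcal{F}$ is injective on objects. The inverse functor $\mathcal{G}$ sends a Lie group to itself endowed with its unique Nash structure and a Lie homomorphism to itself (Nash by Proposition \ref{autss2}); then $\mathcal{G}\circ\mathcal{F}$ and $\mathcal{F}\circ\mathcal{G}$ are the respective identities by the uniqueness just invoked.

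There is no genuinely hard step remaining here, since all the substance has been concentrated into Propositions \ref{autss2} and \ref{snash}. The only points that require care are bookkeeping ones: one must use the uniqueness (not merely the existence) half of Proposition \ref{snash} to get bijectivity on objects, since this is the difference between an equivalence and a genuine isomorphism of categories; and one should note explicitly that passing from a Nash structure to its underlying Lie structure is canonical, so that $\mathcal{F}$ is well defined.
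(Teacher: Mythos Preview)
Your proposal is correct and follows exactly the approach the paper intends: the paper presents this theorem as an immediate consequence of the results just established (Lemma \ref{ssl}, the remark preceding Proposition \ref{snash}, Proposition \ref{autss2}, and Proposition \ref{snash}), writing only ``In conclusion, we have proved the following'' without further detail. Your argument spells out precisely how these ingredients assemble into the claimed isomorphism of categories via the forgetful functor, and your care in distinguishing bijectivity on objects (via the uniqueness in Proposition \ref{snash}) from mere essential surjectivity is well placed.
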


Recall the following famous result of Weyl:
\begin{lem}\label{ssg0} (cf. \cite[Theorem 1]{Jac})
Let $\g$ be a semisimple finite dimensional Lie algebra over a field
$\rk$ of characteristic zero. Then all of its finite dimensional
representations over $\rk$ are completely reducible.
\end{lem}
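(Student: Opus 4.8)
The plan is to give the standard purely Lie-theoretic proof, via the Casimir operator and the vanishing of first cohomology; at the end I indicate the classical alternative, Weyl's ``unitarian trick''. First I would reduce to the case of a faithful representation $\phi\colon\g\to\gl(V)$: since $\ker\phi$ is an ideal of the semisimple Lie algebra $\g$, one has $\g=\ker\phi\oplus\g'$ with $\g'$ a semisimple ideal isomorphic to $\g/\ker\phi$, and $V$ is completely reducible over $\g$ if and only if it is over $\g'$, so I replace $\g$ by $\g'$. With $\phi$ faithful, the trace form $\beta_\phi(x,y):=\tr(\phi(x)\phi(y))$ is nondegenerate on $\g$ (Cartan's criterion: $\g=[\g,\g]$ and the radical of $\beta_\phi$ is a solvable, hence trivial, ideal). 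I would then introduce the Casimir element $c_\phi:=\sum_i\phi(x_i)\phi(y_i)\in\End(V)$, where $x_1,\dots,x_n$ ($n=\dim\g$) is a basis of $\g$ and $y_1,\dots,y_n$ the $\beta_\phi$-dual basis; a short bracket computation using the invariance of $\beta_\phi$ shows that $c_\phi$ commutes with $\phi(\g)$, and that $\tr(c_\phi)=\dim\g$ independently of the chosen bases.

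Next I would reduce complete reducibility of an arbitrary finite-dimensional $\g$-module to one extension problem: that every short exact sequence $0\to W\to V\to\rk\to 0$ of finite-dimensional $\g$-modules with trivial one-dimensional quotient splits $\g$-equivariantly. Granting this, for an arbitrary submodule $W\subset V$ one views $\Hom_\rk(V,W)$ as a $\g$-module via $(x\cdot f)(v):=x\,f(v)-f(x\,v)$, lets $\mathcal A$ be the submodule of those $f$ whose restriction to $W$ is a scalar multiple of $\mathrm{id}_W$, and $\mathcal B\subset\mathcal A$ the submodule of those $f$ vanishing on $W$; then $\mathcal A/\mathcal B\cong\rk$ is trivial, so a $\g$-splitting of $0\to\mathcal B\to\mathcal A\to\rk\to 0$ produces an $f$ with $f|_W=\mathrm{id}_W$ that is a $\g$-module map $V\to W$, and $\ker f$ is a $\g$-complement to $W$.

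It then remains to prove the asserted splitting, which I would do by induction on $\dim W$. If $W$ has a proper nonzero submodule $W_0$, one applies the inductive hypothesis first to $0\to W/W_0\to V/W_0\to\rk\to 0$ and then to $0\to W_0\to V_1\to\rk\to 0$, where $V_1\subseteq V$ is the preimage of the line furnished by the first splitting, and combines the two. So assume $W$ irreducible. If $W$ is the trivial module, then every $\phi(x)$ kills $W$ and maps $V$ into $W$, so $\phi(\g)$ is an abelian subalgebra of $\gl(V)$; as $\g=[\g,\g]$ this forces $\phi(\g)=0$, and any line complementary to $W$ is a submodule. If $W$ is nontrivial and irreducible, let $\bar\g:=\g/\ker(\g\to\gl(V))$, which is semisimple, and let $c$ be its Casimir operator on $V$; since $\g$ acts trivially on the quotient $\rk$ one has $c(V)\subseteq W$, while $c|_W$ lies in the division algebra $\End_\g(W)$ (Schur's lemma) and satisfies $\tr(c|_W)=\dim\bar\g\neq 0$, hence is invertible; therefore $\ker c$ is one-dimensional, meets $W$ trivially, and is $\g$-stable because $c$ commutes with $\g$, so it is the desired complement.

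I expect the main obstacle to be partly the invariance computation for $c_\phi$ (routine, but requiring care with dual bases) and, more delicately, making the final step valid over a field $\rk$ that is only assumed of characteristic zero and not algebraically closed: this is precisely the point where one must invoke the division-ring form of Schur's lemma, together with the observation that $\tr(c|_W)\neq 0$ forces $c|_W$ to be invertible in $\End_\g(W)$. As an alternative I could instead run Weyl's unitarian trick: one first reduces to $\rk=\C$ (the structure constants of $\g$ and the matrix entries of $V$ lie in a finitely generated, hence $\C$-embeddable, subfield of $\rk$, and semisimplicity of a finite-dimensional module is unaffected by base-field extension in characteristic zero), then passes to a compact real form $\u\subset\g$, averages an inner product on $V$ over the associated simply connected compact Lie group to make it $\u$-invariant, and concludes that $V$, being a direct sum of $\u$-irreducibles, is a direct sum of $\g$-irreducibles since $\g=\u\oplus i\u$.
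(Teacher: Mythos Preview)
Your proof is correct and is the standard Casimir--Whitehead argument. The paper, however, does not prove this lemma at all: it merely records it as a famous result of Weyl and cites \cite[Theorem 1]{Jac}, then moves on. So there is nothing to compare in terms of approach; you have supplied a full proof where the paper is content to quote the literature.

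A few minor remarks on your write-up. In the reduction to faithful $\phi$, you could skip this step entirely and build the Casimir for $\bar\g=\g/\ker\phi$ directly, as you in fact end up doing in the final paragraph; doing the reduction twice is harmless but redundant. In the last step, your justification that $c(V)\subset W$ deserves one explicit line: since $V/W\cong\rk$ is the trivial module, every $\phi(x)$ already maps $V$ into $W$, so each summand $\phi(x_i)\phi(y_i)$ does too. Finally, the trace identity is $\tr_V(c_\phi)=\dim\bar\g$, and since $c$ acts by zero on $V/W$ one gets $\tr_W(c|_W)=\dim\bar\g\neq 0$ in characteristic zero; you have this, just be sure the reader sees why nontriviality of $W$ forces $\bar\g\neq 0$.

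Your sketch of the unitarian trick is also fine as an alternative, though the field-embedding reduction to $\C$ is the one place a reader might want a reference; the paper's target audience would likely accept the Casimir proof as the cleaner, purely algebraic route over an arbitrary characteristic-zero field.
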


Also recall the following elementary lemma:

\begin{lem}\label{g12}(cf. \cite[Lemma 3.1]{Mo})
Let $H$ be a normal subgroup of a group $G$. Let
$V$ be a representation of $G$ over a field $\rk$.
\begin{itemize}
    \item If $V$ is finite dimensional and completely reducible, then its  restriction to $H$ is completely reducible.
    \item Assume that $H$ has finite index in $G$, and $\rk$ has characteristic zero. Then $V$ is completely reducible if its restriction to $H$ is so.
  \end{itemize}
\end{lem}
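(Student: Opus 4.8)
The plan is to establish the two bullets by the two standard devices of Clifford theory: for the first, decompose $V$ into $G$-irreducibles and spread an irreducible $H$-subrepresentation around using the $G$-action; for the second, average an $H$-equivariant projection over the finite quotient $G/H$.

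For the first bullet, write $V=\bigoplus_i V_i$ with each $V_i$ an irreducible $G$-representation; since a direct sum of completely reducible (equivalently, semisimple) modules is again such, it suffices to prove that the restriction to $H$ of a single irreducible $G$-representation $V$ is completely reducible. As $V$ is finite dimensional and non-zero, it contains an $H$-subrepresentation of minimal positive dimension, hence an irreducible $H$-subrepresentation $W$. For each $g\in G$ the subspace $g\cdot W$ is again an irreducible $H$-subrepresentation: normality of $H$ gives $h\cdot(g\cdot w)=g\cdot((g^{-1}hg)\cdot w)\in g\cdot W$ for $h\in H$, so $g\cdot W$ is $H$-stable, and the $H$-module structure on it is the twist of that of $W$ by the automorphism $h\mapsto g^{-1}hg$, hence still irreducible. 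Then $\sum_{g\in G}g\cdot W$ (a finite sum, since $\dim V<\infty$) is a non-zero $G$-subrepresentation of $V$, so it equals $V$. Thus $V|_H$ is a sum of irreducible $H$-submodules, hence completely reducible; here I invoke the standard fact that a module which is a sum of simple submodules is a direct sum of simple submodules (which I would cite, or prove in one line via a maximal-complement argument). Note that this part uses nothing about $\rk$ and does not need $[G:H]<\infty$.

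For the second bullet, let $U\subseteq V$ be a $G$-subrepresentation; I must produce a $G$-stable complement. Since $V|_H$ is completely reducible, there is an $H$-stable subspace $U'$ with $V=U\oplus U'$ as $H$-representations; let $p\colon V\to V$ be the projection onto $U$ along $U'$, so $p$ is $\rk$-linear, idempotent, $H$-equivariant, with image $U$ and $p|_U=\mathrm{id}_U$. For $g\in G$ set $p^g$ to be the endomorphism $v\mapsto g\cdot p(g^{-1}\cdot v)$. If $g_1H=g_2H$, say $g_2=g_1h$ with $h\in H$, then $p^{g_2}=p^{g_1}$ because $p$ commutes with the $H$-action; hence
\[
  \bar p:=\frac{1}{[G:H]}\sum_{gH\in G/H} p^g
\]
is a well-defined $\rk$-linear endomorphism of $V$, using that $[G:H]$ is invertible in $\rk$ since $\rk$ has characteristic zero. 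One then checks that $\bar p$ is $G$-equivariant (the $G$-action permutes the summands $p^g$), that its image lies in $U$ (each $p^g$ maps into $g\cdot U=U$), and that $\bar p|_U=\mathrm{id}_U$ (each $p^g|_U=\mathrm{id}_U$, using $p|_U=\mathrm{id}_U$ and $G$-stability of $U$). So $\bar p$ is an idempotent $G$-equivariant projection onto $U$, and $\ker\bar p$ is a $G$-stable complement to $U$; as $U$ was arbitrary, $V$ is completely reducible.

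The only genuinely delicate points are bookkeeping: in the first bullet, the appeal to semisimplicity of a module generated by simple submodules, and in the second, the independence of $\bar p$ from the choice of coset representatives together with its $G$-equivariance — both of which rest solely on the $H$-equivariance of $p$ and on $H$ being normal in $G$. I expect no real obstacle beyond these verifications; the characteristic-zero hypothesis enters exactly once, to divide by $[G:H]$ (and could be relaxed to $\mathrm{char}\,\rk\nmid[G:H]$).
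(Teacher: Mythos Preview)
Your argument is correct and is the standard Clifford-theory / Maschke-type proof of this result. The paper does not give its own proof of this lemma at all: it simply records the statement with a reference to Mostow \cite[Lemma 3.1]{Mo}, so there is nothing to compare against beyond noting that your write-up supplies exactly the kind of elementary proof one would expect behind that citation.
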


Combining Lemma \ref{ssg0} and the second assertion of Lemma \ref{g12}, we get
\begin{lem}\label{ssg}
Every Nash representation of a semisimple Nash group is completely
reducible.
\end{lem}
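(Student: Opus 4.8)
The plan is to reduce to the connected case and then invoke Weyl's theorem (Lemma \ref{ssg0}) at the level of the Lie algebra. Let $G$ be a semisimple Nash group and let $V$ be a Nash representation of $G$. Complete reducibility is a purely algebraic condition, so we may forget the Nash structure and regard $V$ simply as a finite dimensional real representation of the underlying abstract group. Recall (as noted just before Proposition \ref{snash}) that $G$ has finitely many connected components, so its identity connected component $G^\circ$ is a normal subgroup of finite index. By the second assertion of Lemma \ref{g12}, applied with $\rk=\R$ (which has characteristic zero), it therefore suffices to prove that the restriction $V|_{G^\circ}$ is completely reducible.

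This reduces us to the case $G=G^\circ$ connected. Write $\g:=\Lie\, G$, a semisimple real Lie algebra, and let $\phi:\g\rightarrow \gl(V)$ be the differential of the representation. Since $G$ is connected, a subspace $W\subseteq V$ is $G$-invariant if and only if it is $\g$-invariant: $W$ is $\phi(\g)$-stable precisely when it is stable under $\exp(\phi(\g))$, and the latter generates $G$. By Weyl's theorem (Lemma \ref{ssg0}, with $\rk=\R$) the $\g$-module $V$ is completely reducible, so every $\g$-invariant subspace of $V$ admits a $\g$-invariant complement. Translating back through the equivalence just noted, every $G$-invariant subspace of $V$ admits a $G$-invariant complement; that is, $V|_{G^\circ}$ is completely reducible.

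Combining the two paragraphs yields the lemma. I do not expect a serious obstacle here: the only two points requiring care are (i) that the hypotheses of Lemma \ref{g12}(2) are genuinely satisfied, which is exactly the already-recorded fact that a semisimple Nash group has finitely many connected components, and (ii) the standard correspondence, for a connected Lie group, between invariant subspaces of a finite dimensional representation and invariant subspaces for the associated Lie algebra representation. Neither of these uses the Nash hypothesis beyond what has been established, so the argument is a short assembly of Weyl's theorem, the finite-index reduction, and the connected-group/Lie-algebra dictionary.
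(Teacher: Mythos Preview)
Your proof is correct and follows essentially the same approach as the paper: the paper's one-line proof simply says to combine Lemma \ref{ssg0} (Weyl's theorem) with the second assertion of Lemma \ref{g12}, and you have spelled out exactly this argument, making explicit the finite-index reduction to $G^\circ$ and the standard connected-group/Lie-algebra correspondence for invariant subspaces. No differences worth noting.
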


\section{Reductive Nash groups}

We say that a Nash group is reductive if it has a completely
reducible Nash representation with finite kernel. Using induced
representations  as in the proof of Proposition \ref{induction},
Lemma \ref{g12} easily implies the following
\begin{lemt}\label{redi}
A Nash group is reductive if and only if its identity connected
component is reductive.
\end{lemt}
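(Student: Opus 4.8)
The plan is to prove the two implications separately; the forward direction is immediate from Lemma \ref{g12}, and the reverse direction is handled by the induced-representation construction already used in the proof of Proposition \ref{induction}.

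For the ``only if'' direction, suppose $V$ is a completely reducible Nash representation of $G$ with finite kernel. Its restriction to $G^\circ$ is again a Nash representation, and since $G^\circ$ is a normal subgroup of $G$, the first assertion of Lemma \ref{g12} shows that $V|_{G^\circ}$ remains completely reducible. Its kernel is contained in the kernel of $G\to\GL(V)$, hence finite, so $G^\circ$ is reductive.

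For the ``if'' direction, first note that $G$, being a Nash manifold, has finitely many connected components (Lemma \ref{dimen20}), so $G^\circ$ has finite index in $G$. Choose a completely reducible Nash representation $V_0$ of $G^\circ$ with finite kernel and form $V:=\Ind_{G^\circ}^G V_0$, which is a Nash representation of $G$ exactly as in the proof of Proposition \ref{induction}. The key point is that, $G^\circ$ being normal of finite index, the restriction $V|_{G^\circ}$ decomposes as a finite direct sum of the $G^\circ$-representations obtained from $V_0$ by precomposing with the conjugation automorphisms of $G^\circ$ attached to a set of coset representatives of $G/G^\circ$; each such twist is still completely reducible with finite kernel, so $V|_{G^\circ}$ is completely reducible with finite kernel. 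Consequently $\ker(G\to\GL(V))$ meets $G^\circ$ in a finite set and has finite index over that intersection, hence is finite; and the complete reducibility of $V$ as a $G$-representation follows from the second assertion of Lemma \ref{g12} applied to the finite-index normal subgroup $G^\circ$. Thus $G$ is reductive.

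The only mildly delicate ingredient is the identification of $V|_{G^\circ}$ with a sum of conjugate representations, i.e.\ Mackey's formula in the special case of a normal subgroup of finite index; once that is in hand, everything else is bookkeeping with kernels together with the two parts of Lemma \ref{g12}, so I do not expect any serious obstacle.
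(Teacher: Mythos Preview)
Your proof is correct and follows precisely the approach indicated by the paper: the paper's proof is the single sentence ``Using induced representations as in the proof of Proposition \ref{induction}, Lemma \ref{g12} easily implies the following,'' and you have faithfully unpacked both directions of that sketch. The only detail you add beyond the paper's hint is the explicit Mackey decomposition of $V|_{G^\circ}$ to verify complete reducibility and finite kernel on $G^\circ$, which is exactly what is needed to invoke the second part of Lemma \ref{g12}.
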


Recall from the Introduction that  a Nash torus is a Nash group which is Nash isomorphic to $\mathbb S^m\times (\R_+^\times)^n$ for some $m,n\geq 0$. Every Nash torus is clearly a reductive Nash group. The main result we will prove in this section is the following

\begin{thm}\label{strr}
A connected Nash group $G$ is reductive if and only if there exist a
connected semisimple Nash group $H$,  a Nash torus $T$, and a surjective Nash homomorphism $H\times T\rightarrow G$ with finite kernel .
\end{thm}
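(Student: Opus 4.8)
The plan is to prove both directions, with the substance lying in the ``only if'' direction. For the easy direction, suppose we are given a connected semisimple Nash group $H$, a Nash torus $T$, and a surjective Nash homomorphism $\pi\colon H\times T\to G$ with finite kernel. Since $H$ is semisimple, every Nash representation of it is completely reducible (Lemma \ref{ssg}), and since $T$ is a Nash torus, every Nash representation of $T$ is completely reducible (it is a direct sum of one-dimensional representations by Proposition \ref{je} and Proposition \ref{jhyp2} applied to the two factors, or more directly by diagonalizing). Now take a faithful Nash representation $V_0$ of $H\times T$ (which exists: $H\times T$ is almost linear, being semisimple times a torus). Its restriction to $H$ and to $T$ need not individually be faithful, but taking a direct sum with a faithful representation of each factor pulled back through the projections, we may assume $V_0$ is a faithful Nash representation of $H\times T$ that is completely reducible when restricted to $H$ and when restricted to $T$. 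Then $V_0$ is completely reducible over $H\times T$: decompose over $H$, and on each $H$-isotypic piece $T$ acts commuting with $H$, so the complete reducibility over $T$ refines the decomposition; alternatively invoke that a tensor product of completely reducible representations of commuting groups is completely reducible. Since $\ker\pi$ is finite, $V_0$ descends to a representation of $(H\times T)/\ker\pi$; pushing forward along the finite-kernel surjection and using $\pi$ to view things on $G$, or rather observing that the Nash homomorphism $H\times T\to \GL(V_0)$ factors through a finite cover of $G$, one extracts a completely reducible Nash representation of $G$ with finite kernel via Lemma \ref{g12}(first assertion) applied to the finite normal subgroup, or by inducing. I expect this direction to require only careful bookkeeping.

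For the ``only if'' direction, assume $G$ is connected and reductive. First I would record that the unipotent radical $\mathfrak U_G$ is trivial: since $G$ has a completely reducible faithful (up to finite kernel) Nash representation $V$, the normal unipotent subgroup $\mathfrak U_G$ acts trivially on each irreducible summand (Proposition \ref{t}), hence trivially on $V$, hence $\mathfrak U_G$ is finite, hence trivial as it is connected. Next, decompose the Lie algebra $\g$ of $G$. Because $G$ has a faithful Nash representation whose image lies in $\GL(V)$ with $V$ completely reducible, $\g$ sits inside $\gl(V)$ as a Lie algebra of operators acting completely reducibly, and a standard fact (Cartan's criterion / the theory of reductive subalgebras of $\gl(V)$) gives $\g = \z(\g)\oplus [\g,\g]$ with $[\g,\g]$ semisimple and $\z(\g)$ abelian consisting of semisimple operators. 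Let $H$ be the analytic subgroup of $G$ with Lie algebra $[\g,\g]$; it is semisimple, hence a Nash subgroup of $G$ by Proposition \ref{autss}, and it is connected. Let $T'$ be the analytic subgroup with Lie algebra $\z(\g)$; its elements act as commuting semisimple operators on $V$, so by the Jordan decomposition (Theorem \ref{jd}, Lemma \ref{jgl}) each element of $T'$ is a product of an elliptic and a hyperbolic element, and in fact $T'$ ought to be a Nash subgroup — here one argues that the closure $\overline{T'}$ is a connected abelian Nash subgroup all of whose elements are semisimple (elliptic times hyperbolic), so by Proposition \ref{da} it is $G_{\mathrm e}\cap\overline{T'}$ times $G_{\mathrm h}\cap\overline{T'}$, a compact group times $(\R_+^\times)^n$; the compact factor is a torus (connected compact abelian), so $\overline{T'}$ is a Nash torus $T$.

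With $H$ and $T$ in hand, consider the multiplication map $m\colon H\times T\to G$. Its differential is $[\g,\g]\oplus\z(\g)\to\g$, which is an isomorphism, so $m$ is a submersion at the identity, hence (being a homomorphism of connected Lie groups with surjective differential) surjective. The kernel of $m$ is $\{(x,x^{-1}): x\in H\cap T\}$, and $H\cap T$ is central in $H$ (it centralizes $[\g,\g]$ since $T$ is central, actually it lies in the center of $H$ because $[\g,\g]\cap\z(\g)=0$ forces $H\cap T$ to act trivially in the adjoint representation of $H$); the center of a connected semisimple Nash group is finite (Theorem \ref{thmcs}, or Lemma \ref{scenter} after embedding), so $\ker m$ is finite. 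Finally $m$ is a Nash homomorphism: $H\to G$ is the inclusion of a Nash subgroup and $T\to G$ likewise, and multiplication $G\times G\to G$ is Nash, so $m$ is the composite of a Nash map. This gives the desired $H\times T\to G$. The main obstacle I anticipate is showing that $T$ (the group generated by $\z(\g)$, suitably closed up) is genuinely a Nash torus and that $H\cap T$ is finite — i.e., getting the structure theory of the central part right using Proposition \ref{da} and the finiteness of the center of a semisimple Nash group — rather than the Lie-algebraic decomposition, which is classical.
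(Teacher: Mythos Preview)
Your overall architecture matches the paper's: decompose $\g=\z(\g)\oplus[\g,\g]$, take the corresponding analytic subgroups $Z$ and $S$, show $Z$ is a Nash torus, and use that $S\times Z\to G$ is a finite cover. But there are two genuine gaps in your execution.

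\textbf{The ``if'' direction.} Your plan is to push a faithful completely reducible representation $V_0$ of $H\times T$ down to $G$. This does not work: $V_0$ descends to $G\cong(H\times T)/\ker\pi$ only if $\ker\pi$ acts trivially on $V_0$, which you have no reason to expect, and neither Lemma~\ref{g12} nor ``inducing'' repairs this (induction goes from subgroups, not quotients). The paper's argument goes the other way, and is immediate: $G$ is almost linear (Proposition~\ref{quotient}), so take any Nash representation $W$ of $G$ with finite kernel, pull it back along $\pi$ to $H\times T$, observe it is completely reducible there (Lemmas~\ref{ssg} and~\ref{unir}), and conclude that $W$ is completely reducible over $G$ because $\pi$ is surjective so the invariant subspaces coincide.

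\textbf{The ``only if'' direction.} Your proposed route to showing that the central analytic subgroup $T'$ is a Nash subgroup is to pass to the closure $\overline{T'}$ and argue it is a Nash torus. But you never justify why $\overline{T'}$ is semialgebraic; closures of analytic subgroups of Nash groups need not be Nash subgroups in general (already in $(\R_+^\times)^2$ an irrational one-parameter subgroup is closed but not semialgebraic). The paper avoids this entirely with a one-line observation you are missing: since $G$ is connected, $T'$ equals the identity connected component of the center $Z(G)$, and $Z(G)$ is a Nash subgroup (it is cut out by the semialgebraic conditions $gxg^{-1}=x$), hence so is its identity component. Once $Z$ is known to be a Nash subgroup, the paper shows it is a Nash torus by noting $Z$ is normal in the reductive group $G$, hence itself reductive by Lemma~\ref{g12}, so $Z_{\mathrm u}$ is reductive and therefore trivial (Proposition~\ref{t}); then Proposition~\ref{da} finishes. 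Your alternative idea (central elements act semisimply in a completely reducible representation) also works once you know $Z$ is Nash, but the closure step is where your argument actually breaks.

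The finite-kernel claim for the multiplication map is fine in both your sketch and the paper.
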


Recall that a finite dimensional Lie algebra is said to be reductive
if its adjoint representation is completely reducible, or
equivalently, if it is the direct sum of an abelian Lie algebra and
a semisimple Lie algebra. Recall the following result of N.
Jacobson:

\begin{lemt}\label{jacob} (\cite[Theorem 1]{Jac})
Let $\g$ be a  finite dimensional
Lie algebra over a field $\rk$ of characteristic zero. If $\g$ has a faithful completely reducible finite
dimensional representation over $\rk$, then $\g$ is reductive.
\end{lemt}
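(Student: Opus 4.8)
This is classical (it is the cited theorem of Jacobson); here is the route I would take. The goal is to show that the solvable radical $\r$ of $\g$ equals the center $Z(\g)$, and then that $\g$ splits as a direct sum of ideals $Z(\g)\oplus\s$ with $\s$ semisimple. Since $Z(\g)\subseteq\r$ always holds, and since the Levi--Malcev decomposition $\g=\s\ltimes\r$ becomes a direct sum of ideals as soon as $\r$ is central, the whole statement reduces to proving $[\g,\r]=0$.

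First I would pass to the algebraic closure $\bar\rk$. In characteristic zero, a finite-dimensional representation is completely reducible precisely when its scalar extension to $\bar\rk$ is (compatibly with the field-extension remarks already used in the text), faithfulness is preserved, and the solvable radical of $\g\otimes_\rk\bar\rk$ is $\r\otimes_\rk\bar\rk$ (here one uses that $\bar\rk/\rk$ is built from Galois extensions, so the radical downstairs is Galois-stable and hence defined over $\rk$). Since $[\g,\r]=0$ may be tested after base change, I may assume $\rk=\bar\rk$.

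Let $\rho\colon\g\to\gl(V)$ be the faithful completely reducible representation and write $V=\bigoplus_i V_i$ with each $V_i$ an irreducible $\g$-module. Fix $i$. Because $\r$ is a solvable ideal and $V_i$ is $\g$-irreducible, Lie's theorem together with the standard invariance lemma (this is one place where characteristic zero enters) shows that $\r$ acts on $V_i$ by scalars through a linear functional $\lambda_i\colon\r\to\rk$ which moreover vanishes on $[\g,\r]$; alternatively, once one knows $\r$ acts by scalars on $V_i$, note that for $x\in\r$, $y\in\g$ the element $[y,x]\in\r$ acts as the scalar $\lambda_i([y,x])$ while being a commutator of operators, hence of trace zero, so $\lambda_i([y,x])\dim V_i=0$ and thus $\lambda_i([\g,\r])=0$. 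Either way, every element of $[\g,\r]$ acts as $0$ on each $V_i$, hence as $0$ on $V$, and faithfulness of $\rho$ forces $[\g,\r]=0$. Consequently $\r\subseteq Z(\g)$, so $\r=Z(\g)$, and the Levi--Malcev decomposition $\g=\s\ltimes\r=\s\oplus Z(\g)$ exhibits $\g$ as the direct sum of a semisimple ideal and an abelian ideal; that is, $\g$ is reductive.

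The only step needing real care is the reduction to $\bar\rk$: one must invoke that complete reducibility of a finite-dimensional module descends to and ascends from the algebraic closure in characteristic zero, and that the solvable radical commutes with this base change. Granting that, the rest is the classical weight/trace computation and presents no obstacle. One could instead bypass Lie's theorem entirely and argue via Cartan's criterion applied to the trace form $\la x,y\ra:=\tr(\rho(x)\rho(y))$ restricted to $[\r,\r]$, but the weight-space argument above is shorter.
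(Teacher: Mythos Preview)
The paper does not give its own proof of this lemma; it is quoted verbatim as \cite[Theorem~1]{Jac} and used as a black box. Your argument is the classical one and is correct: after base change to $\bar\rk$ (legitimate because in characteristic zero complete reducibility of a finite-dimensional module ascends and descends along field extensions, and the solvable radical commutes with scalar extension since semisimplicity of $\g/\r$ is detected by the Killing form), the invariance lemma forces the solvable ideal $\r$ to act by a single weight $\lambda_i$ on each irreducible summand $V_i$, and the trace computation $\lambda_i([y,x])\dim V_i=\tr_{V_i}\rho([y,x])=0$ kills $[\g,\r]$ by faithfulness. Nothing is missing; the step you single out as ``needing real care'' is genuinely the only nonformal one, and your justification for it is adequate.
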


Combining Lemmas \ref{redi} and \ref{jacob}, we get
\begin{lemt}\label{jacob2}
The Lie algebra of every reductive almost linear Nash group is
reductive.
\end{lemt}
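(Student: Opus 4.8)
The plan is to reduce to the connected case via Lemma \ref{redi}, pass to the differential of a suitable faithful completely reducible representation, and then invoke Jacobson's criterion (Lemma \ref{jacob}) over $\rk=\R$.

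First I would let $G$ be a reductive almost linear Nash group with Lie algebra $\g$. By Lemma \ref{redi} the identity connected component $G^\circ$ is also reductive, so it admits a completely reducible Nash representation $V$ whose kernel is finite. Since $G^\circ$ is open in $G$, we have $\g=\Lie G=\Lie G^\circ$, so it suffices to produce a faithful completely reducible finite-dimensional real representation of $\g$ and apply Lemma \ref{jacob}. I would then examine the differential $\phi\colon\g\rightarrow\gl(V)$ of $V$: its kernel is the Lie algebra of $\Ker(G^\circ\rightarrow\GL(V))$, which is trivial because that kernel is finite, hence discrete; thus $\phi$ is injective and $V$ is a faithful $\g$-module. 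Moreover, since $G^\circ$ is connected, a subspace of $V$ is $G^\circ$-stable if and only if it is $\g$-stable, so the complete reducibility of $V$ as a $G^\circ$-representation transfers to complete reducibility as a $\g$-representation. Lemma \ref{jacob} then applies and gives that $\g$ is reductive.

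I expect no real obstacle here; the argument is essentially a bookkeeping combination of the two cited lemmas. The only points meriting a word of care are the identification of $\Ker\phi$ with the Lie algebra of the (finite, hence zero-dimensional) kernel of the group representation, and the elementary fact that $G^\circ$-invariant and $\g$-invariant subspaces coincide for a connected Lie group — both of which are routine and need no more than a sentence each.
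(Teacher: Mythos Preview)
Your argument is correct and is precisely the paper's approach: the paper simply writes ``Combining Lemmas \ref{redi} and \ref{jacob}, we get'' as the entire proof, and what you have written is exactly the unpacking of that combination. The two points you flag as needing care (injectivity of the differential from finiteness of the kernel, and the equivalence of $G^\circ$-stable and $\g$-stable subspaces) are indeed the only content hidden in the paper's one-line proof.
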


Let $G$ be a connected reductive Nash group, with Lie algebra $\g$.
Write
\[
\g=\s\oplus \z,
\]
where $\s:=[\g,\g]$ and $\z$ denotes the
center of $\g$. Respectively write $S$ and $Z$ for the analytic
subgroups of $G$ corresponding to $\s$ and $\z$. By Proposition
\ref{autss}, $S$ is a Nash subgroup of $G$. Since $Z$ equals the
identity connected component of the center of $G$, it is also a Nash
subgroup of $G$.

\begin{lemt}\label{znt}
The Nash group $Z$ is a Nash torus.
\end{lemt}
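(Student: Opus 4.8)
The plan is to show that $Z$, being a connected abelian almost linear Nash group with no nontrivial unipotent part, must be a Nash torus.

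First I would recall that $Z$ is a connected abelian Nash subgroup of $G$, and that $G$ is reductive. By Proposition \ref{da}, applied to the abelian almost linear Nash group $Z$, the multiplication map $Z_\mathrm e\times Z_\mathrm h\times Z_\mathrm u\rightarrow Z$ is a Nash isomorphism, where $Z_\mathrm e$ is elliptic, $Z_\mathrm h$ is hyperbolic, and $Z_\mathrm u$ is unipotent. Since $Z$ is connected, $Z_\mathrm e$ is connected as well (it is the image of $Z$ under a continuous map, being the elliptic part projection, or one can note $Z/(Z_\mathrm h\times Z_\mathrm u)\cong Z_\mathrm e$); a connected elliptic Nash group is a compact connected abelian Lie group, hence Nash isomorphic to $\mathbb S^m$ for some $m\geq 0$ by Theorem \ref{thmc} (the category of elliptic Nash groups is that of compact Lie groups, and connected compact abelian Lie groups are tori $\mathbb S^m$). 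Similarly $Z_\mathrm h$ is Nash isomorphic to $(\R_+^\times)^n$ by definition of hyperbolic Nash group. So $Z\cong \mathbb S^m\times (\R_+^\times)^n\times Z_\mathrm u$, and it remains only to show $Z_\mathrm u$ is trivial.

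The key step is to rule out $Z_\mathrm u$. Here I would use that $G$ is reductive, so by Lemma \ref{jacob2} its Lie algebra $\g$ is reductive, meaning $\g=\s\oplus\z$ with $\z$ abelian and $\s=[\g,\g]$ semisimple. Fix a Nash representation $V$ of $G$ with finite kernel which is completely reducible; by Theorem \ref{thmred}-style reasoning (or directly, since $G$ is reductive all its Nash representations are completely reducible — but to avoid circularity I would instead argue at the Lie algebra level). Since $\z$ is the center of $\g$ and $\g$ acts faithfully (up to finite kernel, so faithfully on $\g$-level) and completely reducibly on $V$, decompose $V$ into irreducibles; on each irreducible $\g$-submodule, the center $\z$ acts by scalars (Schur, over $\C$ after complexifying). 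Hence every element of $\z$ acts semisimply on $V_\C$. But if $x\in\z$ were a nonzero unipotent element, $\log$ would place a nonzero nilpotent element in $\z$ acting nontrivially and nilpotently on $V$, contradicting semisimplicity of the action of $\z$. More directly: $Z_\mathrm u$ is a normal unipotent Nash subgroup of $G$, hence contained in the unipotent radical $\mathfrak U_G$; since $G$ is reductive, $\mathfrak U_G$ is trivial (this is part (c) of Theorem \ref{thmred}, or can be proven from complete reducibility of the adjoint-type representation), so $Z_\mathrm u=\{1\}$.

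The main obstacle I anticipate is avoiding circular dependence on Theorem \ref{thmred}: depending on the paper's logical order, the equivalence ``reductive $\Leftrightarrow$ $\mathfrak U_G$ trivial'' may not yet be available. The cleanest self-contained route is therefore the Lie-algebra/Schur argument: take a faithful completely reducible Nash representation $V$ of $G$, pass to $V_\C$, decompose into irreducibles, observe the center $Z$ (being connected and central) acts on each irreducible summand by a Nash character $G\to\C^\times$ whose restriction to $Z$ lands in the scalars, so every $z\in Z$ acts semisimply on $V$; by Lemma \ref{jgl} this forces $Z_\mathrm u=\{1\}$, since a nontrivial unipotent element acts with a nontrivial unipotent (hence non-semisimple) linear operator on the faithful representation $V$. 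Combining this with the first paragraph gives $Z\cong\mathbb S^m\times(\R_+^\times)^n$, i.e. $Z$ is a Nash torus.
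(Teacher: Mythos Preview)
Your proof is correct and follows the same overall skeleton as the paper: apply Proposition \ref{da} to write $Z\cong Z_\mathrm e\times Z_\mathrm h\times Z_\mathrm u$, then show $Z_\mathrm u=\{1\}$. You are also right to flag route (a) as circular at this point in the paper---Lemma \ref{reduni} and the other parts of Theorem \ref{thmred} are established only later.

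The one genuine difference is in how $Z_\mathrm u=\{1\}$ is obtained. You argue via Schur: $Z$ is central in the connected group $G$, hence acts by scalars on each irreducible constituent of the complexified defining representation, so every $z\in Z$ acts semisimply; a nontrivial element of $Z_\mathrm u$ would act as a nontrivial unipotent operator on $V$ (faithful on $Z_\mathrm u$ since $Z_\mathrm u$ is connected and the kernel is finite), contradiction. The paper instead observes that $Z_\mathrm u$ is \emph{normal} in $G$, so by Lemma \ref{g12} the restriction to $Z_\mathrm u$ of the defining completely reducible representation is again completely reducible, making $Z_\mathrm u$ itself reductive; but $Z_\mathrm u$ is unipotent, and Proposition \ref{t} then forces $Z_\mathrm u=\{1\}$. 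Both arguments extract the same content from the completely reducible representation; the paper's version is a bit more economical because it reuses Lemma \ref{g12} and Proposition \ref{t} rather than invoking Schur and a passage to $V_\C$, while your version has the virtue of being self-contained. Your extra remarks (connectedness of $Z_\mathrm e$, hence $Z_\mathrm e\cong\mathbb S^m$) make explicit a point the paper leaves implicit in its appeal to Proposition \ref{da}.
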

\begin{proof}
Note that $Z$ is a normal subgroup of $G$. The first assertion of Lemma \ref{g12}
implies that $Z$ is reductive. Likewise $Z_\mathrm u$ is reductive
(Proposition \ref{da} implies that $Z_\mathrm u$ is a unipotent Nash
group). Then Proposition \ref{t} implies that $Z_\mathrm u=\{1\}$,
and hence $Z$ is a Nash torus by Proposition \ref{da}.
\end{proof}

Since $S\times Z$ is a finite fold cover of $G$, we prove the ``only
if" part of Theorem \ref{strr}.

On the other hand, let $G'$ be a connected Nash group with a
surjective Nash group homomorphism $H\times T\rightarrow G'$ with
finite kernel, where $H$ is a connected semisimple Nash group, and
$T$ is a Nash torus. Then $G'$ is almost linear by Proposition
\ref{quotient}.

\begin{lemt}\label{unir}
Every Nash representation of a  Nash torus is completely
reducible.
\end{lemt}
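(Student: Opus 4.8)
The plan is to reduce the statement to the two special cases that already have full representation theories in the excerpt, namely elliptic Nash groups (via Proposition \ref{je}) and hyperbolic Nash groups (via Proposition \ref{jhyp2}), and then glue. By definition a Nash torus $T$ is Nash isomorphic to $\mathbb{S}^m\times(\R_+^\times)^n$; writing $K:=\mathbb{S}^m$ (an elliptic Nash group) and $A:=(\R_+^\times)^n$ (a hyperbolic Nash group), we have $T=K\times A$ with $K$ and $A$ commuting. Let $V$ be a Nash representation of $T$, with attached Nash homomorphism $\varphi:T\to\GL(V)$.

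First I would complexify. Passing to $V_\C$, it suffices to show that $V_\C$ is a direct sum of irreducible $T$-subrepresentations over $\C$, since complete reducibility descends from $V_\C$ to $V$ (a $T$-stable real subspace $W\subset V$ has $W_\C$ complemented in $V_\C$ by a $T$-stable subspace, and averaging the complement's projector over the conjugation action gives a real $T$-stable complement of $W$). Over $\C$, I would decompose $V_\C$ first as a $K$-representation: by Proposition \ref{je} every element of $K$ acts semisimply, and since $K$ is compact its action is completely reducible, so $V_\C=\bigoplus_\chi V_\C[\chi]$ splits into $K$-isotypic (in fact, since $K$ is abelian, $K$-eigenspace) components indexed by characters $\chi$ of $K$. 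Each $V_\C[\chi]$ is $A$-stable because $A$ commutes with $K$.

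Next, on each fixed $V_\C[\chi]$, the group $A$ acts; by Proposition \ref{jhyp} (applied to the real form, or directly to the Nash action of $A$ on this complex space viewed as a real Nash representation) every element of $A$ acts as a semisimple operator, and since $A$ is abelian the commuting family $\{\varphi(a):a\in A\}$ is simultaneously diagonalizable over $\C$. Hence $V_\C[\chi]$ breaks into common $A$-eigenlines, each of which is automatically $K$-stable (it lies in $V_\C[\chi]$) and hence a one-dimensional $T$-subrepresentation. Assembling over all $\chi$ gives $V_\C=\bigoplus(\text{$T$-eigenlines})$, so $V_\C$ is completely reducible, and therefore so is $V$.

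The main obstacle, such as it is, is purely bookkeeping: making sure the simultaneous diagonalizability of the commuting family $\varphi(A)$ over $\C$ is correctly invoked (semisimplicity of each operator plus commutativity, by elementary linear algebra) and that one genuinely gets \emph{$T$}-subrepresentations at the end rather than merely $K$- or $A$-subrepresentations — this is handled by intersecting the $K$-isotypic decomposition with the $A$-eigenspace decomposition, which refines both. Everything else is a direct appeal to Propositions \ref{je} and \ref{jhyp} together with the definition of a Nash torus.
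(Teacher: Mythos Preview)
Your proof is correct and follows essentially the same approach as the paper: split the Nash torus as $K\times A$ with $K=\mathbb{S}^m$ elliptic and $A=(\R_+^\times)^n$ hyperbolic, use complete reducibility for each factor separately, and combine. The paper's version is terser---it invokes Weyl's unitary trick for $K$ and Proposition~\ref{jhyp2} (which already gives a decomposition of any Nash representation of $A$ into one-dimensional subrepresentations) for $A$, then simply asserts that together these imply the lemma---whereas you spell out the combination explicitly via complexification and simultaneous eigenspace decomposition, and you cite the slightly weaker Proposition~\ref{jhyp} rather than~\ref{jhyp2}, re-deriving the latter in passing. Both arguments are the same in substance.
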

\begin{proof}
By Weyl's unitary trick, every Nash representation of an elliptic Nash group is completely reducible. In particular, every Nash representation of a compact Nash torus is completely reducible. Together with Proposition \ref{jhyp2}, this implies the lemma.

\end{proof}

By Lemma \ref{unir} and Lemma \ref{ssg}, every Nash
representation of $H\times T$ is completely reducible. Consequently, every Nash
representation of $G'$ is also completely reducible. Therefore $G'$ is reductive. This proves the ``if
part" of Theorem \ref{strr}.

By Lemma \ref{g12}, the preceding arguments also show the following

\begin{thm}\label{thmredcr}
Every Nash representation of every reductive almost linear Nash
group is completely reducible.
\end{thm}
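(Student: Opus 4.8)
The statement is that every Nash representation of every reductive almost linear Nash group is completely reducible. My plan is to reduce to the connected case, then use the structural description of connected reductive Nash groups proved just above (Theorem \ref{strr}) together with Lemma \ref{g12}. First I would invoke Lemma \ref{redi}: a Nash group $G$ is reductive if and only if $G^\circ$ is reductive, and $G^\circ$ has finite index in $G$. So it suffices to prove complete reducibility for Nash representations of $G^\circ$ and then lift: given a Nash representation $V$ of $G$, if its restriction to $G^\circ$ is completely reducible, then by the second assertion of Lemma \ref{g12} (applicable since $G^\circ$ has finite index and we are in characteristic zero) the representation $V$ itself is completely reducible.

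It therefore remains to treat a connected reductive Nash group $G$. Here I would apply the ``only if'' direction of Theorem \ref{strr}: there exist a connected semisimple Nash group $H$, a Nash torus $T$, and a surjective Nash homomorphism $p\colon H\times T\rightarrow G$ with finite kernel. Given a Nash representation $V$ of $G$, pull it back along $p$ to obtain a Nash representation of $H\times T$ (this is a Nash representation because $p$ is a Nash homomorphism). By Lemma \ref{ssg}, every Nash representation of the semisimple Nash group $H$ is completely reducible, and by Lemma \ref{unir}, every Nash representation of the Nash torus $T$ is completely reducible; a standard argument (decompose with respect to $H$, then on each isotypic piece the commuting action of $T$ decomposes, or simply note that $H\times T$ acts and invoke complete reducibility for each factor on the other's isotypic components) shows that the pulled-back representation of $H\times T$ is completely reducible. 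Now the kernel of $p$ is finite, hence normal of finite index in $H\times T$, and $V$ as a representation of $G=(H\times T)/\ker p$ restricted to... more precisely, the image of $H\times T$ in $\GL(V)$ equals the image of $G$ in $\GL(V)$, and complete reducibility of a representation depends only on the image in $\GL(V)$; so $V$ is completely reducible as a $G$-representation.

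The one point requiring slight care — and the closest thing to an obstacle — is the last step: passing complete reducibility from the surjection source $H\times T$ down to $G$. This is not an instance of Lemma \ref{g12} in the stated form (that lemma passes complete reducibility \emph{up} from a finite-index normal subgroup, or \emph{down} along restriction). Instead one uses the elementary fact that complete reducibility of a representation $\rho\colon \Gamma\to\GL(V)$ is a property of the subgroup $\rho(\Gamma)\subset\GL(V)$ alone: a subspace is $\Gamma$-stable iff it is $\rho(\Gamma)$-stable. Since $p$ is surjective, the image of $H\times T$ under $V\circ p$ equals the image of $G$ under $V$, so the two representations have exactly the same invariant subspaces and the same subrepresentations, whence one is completely reducible iff the other is. With this observation the proof is complete; all the genuine content has already been assembled in Lemmas \ref{redi}, \ref{g12}, \ref{ssg}, \ref{unir} and Theorem \ref{strr}.
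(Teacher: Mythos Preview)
Your proposal is correct and follows essentially the same route as the paper: reduce to the connected case via Lemma \ref{redi} and the second part of Lemma \ref{g12}, then for connected $G$ invoke the structure result (Theorem \ref{strr}) to pull back along a surjection from $H\times T$, use Lemmas \ref{ssg} and \ref{unir} to get complete reducibility for $H\times T$, and descend to $G$. The paper is terser about the descent step (it simply says ``Consequently, every Nash representation of $G'$ is also completely reducible''), but your explicit observation that complete reducibility depends only on the image in $\GL(V)$ is exactly the justification needed there.
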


\section{Trace forms and reductivity}

Let $G$ be an almost linear Nash group with Lie algebra $\g$. Fix a
Nash representation $V$ of $G$ with finite kernel, and write $\phi:
\g\rightarrow \gl(V)$ for the attached differential.
Put
\[
  \la x,y\ra_{\phi}:=\tr(\phi(x)\phi(y)), \qquad x,y\in \g.
\]
This defines a $G$-invariant symmetric bilinear form on $\g$, which is called the trace form attached to the Nash representation $V$.

The main result of this section is the following

\begin{thm}\label{rtr}
The almost linear Nash group $G$ is reductive if and only if the
bilinear form $\la\,,\,\ra_\phi$ is non-degenerate.
\end{thm}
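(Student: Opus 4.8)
The plan is to prove both directions by reducing to the structure theorems already established, in particular Theorem \ref{thmredcr}, Theorem \ref{strr}, and the Levi/unipotent-radical machinery.

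First I would handle the ``only if'' direction. Suppose $G$ is reductive. By Theorem \ref{thmredcr}, the Nash representation $V$ is completely reducible, and hence so is the representation $\g\to\gl(V)$ of the Lie algebra $\g$ (here one uses that $G$ acts completely reducibly, so its identity component does, and then differentiating preserves complete reducibility up to the usual argument, or one invokes Lemma \ref{g12} together with Lemma \ref{jacob2}). Since $V$ has finite kernel as a $G$-representation, $\phi:\g\to\gl(V)$ is injective. Thus $\g$ is a finite-dimensional Lie algebra over $\R$ with a faithful completely reducible representation, so by Lemma \ref{jacob2} (or directly Jacobson's theorem) $\g$ is reductive: $\g=\s\oplus\z$ with $\s=[\g,\g]$ semisimple and $\z$ the center. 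The trace form $\la\,,\,\ra_\phi$ is $\g$-invariant, hence the decomposition $\g=\s\oplus\z$ is orthogonal with respect to it (invariance forces $\la\s,\z\ra_\phi=0$ because $[\s,\z]=0$ while $[\s,\s]=\s$). On $\s$ the form is non-degenerate because a semisimple Lie algebra acting faithfully and completely reducibly has non-degenerate trace form (this is the classical computation: the trace form of a faithful representation of a semisimple Lie algebra is non-degenerate by Cartan's criterion applied componentwise). On $\z$ one argues that $Z$, the central Nash torus, acts on $V$ by semisimple operators with eigenvalues in $\R_+^\times$ on the hyperbolic part and on $\mathbb S$ for the elliptic part; restricting to $\z_\mathrm h$ (the hyperbolic part of $\z$, which by Proposition \ref{da} and Lemma \ref{jacob2} is all of $\z$ since a reductive group's center has trivial unipotent part, and the elliptic part of $\z$ contributes purely imaginary eigenvalues) one shows the form is negative/positive definite on the relevant real form, hence non-degenerate on $\z$. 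Here the key point is that on $\z$, via the weight decomposition of $V$, the form becomes $\la x,y\ra_\phi=\sum_\lambda (\dim V_\lambda)\,\lambda(x)\lambda(y)$ summed over the (real, once one passes to the exponential/hyperbolic part) weights $\lambda$, and since $V$ is faithful the weights span $\z^*$, making this sum non-degenerate.

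For the ``if'' direction, suppose $\la\,,\,\ra_\phi$ is non-degenerate; I want to show $G$ is reductive, and by Theorem \ref{thmred}(c) (equivalently, by proving the unipotent radical is trivial) it suffices to show $\mathfrak U_G=\{1\}$. Let $\mathfrak u$ be the Lie algebra of $\mathfrak U_G$, a nilpotent ideal of $\g$. By Lemma \ref{unic3} applied to the connected unipotent group $\mathfrak U_G$ acting on $V$, the operators $\phi(x)$ for $x\in\mathfrak u$ are simultaneously strictly upper-triangularizable (they are nilpotent, in fact $\mathfrak u$ acts by nilpotent operators), so for $x\in\mathfrak u$ and any $y\in\g$, the product $\phi(x)\phi(y)$ is... not obviously nilpotent, but: since $\mathfrak u$ is an ideal, $[\g,\mathfrak u]\subseteq\mathfrak u$, and for $x\in\mathfrak u$, $y\in\g$ one has $\phi([x,y])$ nilpotent; the standard argument (as in the proof that the Killing form kills the nilradical) shows $\tr(\phi(x)\phi(y))=0$ for all $x\in\mathfrak u$, $y\in\g$ — concretely, choosing a flag stabilized by all of $\g$'s action that is strictly decreased by $\mathfrak u$, the operator $\phi(x)$ shifts the flag down while $\phi(y)$ preserves it, so $\phi(x)\phi(y)$ is strictly upper triangular hence traceless. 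Wait — $\phi(y)$ need not preserve the flag. The correct argument: one uses that $\mathfrak u$ is a nilpotent ideal and invokes the classical lemma that in any finite-dimensional representation, a nilpotent ideal is contained in the radical of the trace form. I would cite/reprove this: for $x\in\mathfrak u$, $\ad(x)$ restricted to... actually the cleanest route is to note $\phi(\mathfrak u)$ consists of nilpotent operators and generates a nilpotent associative algebra together with the fact that $\phi(\mathfrak u)$ is an ideal in the Lie algebra $\phi(\g)$; then $\tr(\phi(x)\phi(y))=0$ follows because $\phi(x)\phi(y)$ lies in $\phi(\mathfrak u)\cdot\phi(\g)$ which, modulo checking, consists of nilpotent operators. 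Thus $\mathfrak u$ lies in the radical of $\la\,,\,\ra_\phi$, which is $\{0\}$ by hypothesis, so $\mathfrak u=\{0\}$ and $\mathfrak U_G=\{1\}$, giving reductivity via Theorem \ref{thmred}.

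The main obstacle I expect is the ``if'' direction, specifically the clean verification that a nilpotent ideal $\mathfrak u\trianglelefteq\g$ acting via $\phi$ lands in the radical of the trace form: one needs the lemma that $\phi(x)\phi(y)$ is nilpotent (hence traceless) for $x\in\mathfrak u$, $y\in\g$, which is the representation-theoretic analogue of the statement that the nilradical is orthogonal to everything under the Killing form. This requires either an appeal to Lie's/Engel's theorem to put $\phi(\g)$ in block-triangular form with $\phi(\mathfrak u)$ strictly triangular on each block (using that $\mathfrak u$ is an ideal so acts nilpotently even after decomposing $V$ into $\g$-isotypic pieces), or a direct trace computation. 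In the ``only if'' direction the only mildly delicate point is pinning down that the restriction of $\la\,,\,\ra_\phi$ to the central torus's Lie algebra $\z$ is non-degenerate, for which the faithfulness of $V$ and the weight-space decomposition do the job, but one must be careful to separate the compact ($\mathbb S$-type) and split ($\R_+^\times$-type) directions in $\z$ and observe the form is definite (of a definite sign on each) on each, hence non-degenerate on the sum after using that the weights span $\z^*$ by faithfulness.
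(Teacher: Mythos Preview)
Your proposal is essentially correct, though the two directions diverge from the paper's argument in different degrees.

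For the ``only if'' direction, you and the paper run nearly parallel: both decompose $\g=\s\oplus\z$ orthogonally under the trace form and handle $\z$ via definiteness on the elliptic and hyperbolic parts (this is exactly the paper's Lemmas \ref{compnd}, \ref{orth}, \ref{tornd}). On the semisimple part you appeal to Cartan's solvability criterion (the radical of the trace form of a faithful representation is a solvable ideal, hence zero), while the paper instead argues via Lemma \ref{snd}: any ideal on which the form vanishes gives a semisimple Nash subgroup whose maximal compact has negative-definite trace form equal to zero, hence is trivial. Both are short and valid.

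For the ``if'' direction your route is genuinely different and more self-contained. The paper quotes an external Borel--Mostow result (Lemma \ref{bm}) to deduce that $\g$ is reductive with no nonzero nilpotent central element, and then uses Theorem \ref{strr} to upgrade this to reductivity of $G$. You instead show directly that $\u:=\Lie\,\mathfrak U_G$ lies in the radical of $\la\,,\,\ra_\phi$. Your hesitation about the flag is easily settled once you use that $\u$ is an \emph{ideal}: take a $\g$-composition series $0=V_0\subset\cdots\subset V_k=V$; on each irreducible quotient the $\u$-invariants form a nonzero (Engel) and $\g$-stable (ideal property) subspace, hence everything, so $\phi(\u)V_i\subset V_{i-1}$ while $\phi(\g)V_i\subset V_i$, making $\phi(x)\phi(y)$ strictly block-triangular for $x\in\u$, $y\in\g$. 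Your version avoids the Borel--Mostow citation; the trade-off is that the equivalence ``reductive $\Leftrightarrow\mathfrak U_G=\{1\}$'' you invoke is the paper's Lemma \ref{reduni}, proved in a later section. Its proof does not use Theorem \ref{rtr}, so there is no circularity, only a mild reordering of the exposition.
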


Theorem \ref{rtr} has the following interesting consequence.
\begin{prpt}\label{rtr2}
Assume that $G$ is reductive. Then for every reductive Nash subgroup
$H_1$ of $G$, its centralizer $H_2$ in $G$ is also a reductive Nash
subgroup of $G$.
\end{prpt}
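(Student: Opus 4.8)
The plan is to reduce this to the non-degeneracy criterion of Theorem \ref{rtr}. First I would check that the centralizer $H_2$ of $H_1$ in $G$ is a Nash subgroup: the set $\{(g,h)\in G\times H_1\mid gh\neq hg\}$ is semialgebraic, its image under the projection $G\times H_1\to G$ is semialgebraic by Tarski--Seidenberg (Lemma \ref{images}), and $H_2$ is the complement of that image, hence a semialgebraic subgroup and therefore a Nash subgroup (Proposition \ref{subg}). Now fix a Nash representation $V$ of $G$ with finite kernel, with differential $\phi\colon\g\to\gl(V)$; since $G$ is reductive, Theorem \ref{rtr} says the trace form $\la\,,\,\ra_\phi$ is non-degenerate on $\g$. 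The restriction $V|_{H_2}$ is a Nash representation of $H_2$ with finite kernel whose attached trace form on $\h_2:=\Lie H_2$ is precisely $\la\,,\,\ra_\phi$ restricted to $\h_2\times\h_2$; so by Theorem \ref{rtr} applied to $H_2$ it suffices to show that $\la\,,\,\ra_\phi$ restricts to a non-degenerate form on $\h_2$.

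Next I would identify $\h_2$ with $\g^{H_1}$, the subspace of $\g$ fixed by the adjoint action of $H_1$; this is the routine computation that $\exp(tX)$ commutes with all of $H_1$ for every $t$ if and only if $\Ad(h)X=X$ for all $h\in H_1$. Because $H_1$ is reductive, Theorem \ref{thmredcr} applies to the Nash representation $\Ad|_{H_1}$ of $H_1$ on $\g$, so we may pick an $\Ad(H_1)$-stable complement $\m$ to $\g^{H_1}$ in $\g$; note $\m\cap\g^{H_1}=0$, which is the same as saying $\m$ has no non-zero $H_1$-fixed vector.

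The heart of the argument is the orthogonality $\g^{H_1}\perp\m$ with respect to $\la\,,\,\ra_\phi$. Fix $X\in\g^{H_1}$ and consider the linear functional $\ell\colon\m\to\R$, $\ell(Y):=\la X,Y\ra_\phi$. Since $\la\,,\,\ra_\phi$ is $\Ad(G)$-invariant, hence $\Ad(H_1)$-invariant, and $\Ad(h^{-1})X=X$, we get $\ell(\Ad(h)Y)=\la X,\Ad(h)Y\ra_\phi=\la\Ad(h^{-1})X,Y\ra_\phi=\ell(Y)$, so $\ell$ is $\Ad(H_1)$-invariant. Thus $\ker\ell$ is an $\Ad(H_1)$-subrepresentation of $\m$; if it had codimension one, then by complete reducibility it would admit a one-dimensional $\Ad(H_1)$-stable complement carrying a non-zero $\Ad(H_1)$-invariant functional, which forces $H_1$ to act trivially on that line, contradicting $\m^{H_1}=0$. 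Hence $\ker\ell=\m$, i.e.\ $\la X,Y\ra_\phi=0$ for all $Y\in\m$, proving $\g^{H_1}\perp\m$.

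Finally, $\g=\g^{H_1}\oplus\m$ is an orthogonal decomposition of $\g$ for the non-degenerate form $\la\,,\,\ra_\phi$, so the form is non-degenerate on each summand, in particular on $\h_2=\g^{H_1}$; Theorem \ref{rtr} then gives that $H_2$ is reductive. I expect the only genuinely delicate point to be the orthogonality $\g^{H_1}\perp\m$ --- equivalently, that an $\Ad(H_1)$-invariant linear functional on $\g$ vanishing on $\g^{H_1}$ is zero --- which is exactly where the reductivity of $H_1$ (via complete reducibility of its Nash representations) enters; the identification $\Lie H_2=\g^{H_1}$ and the Nash-subgroup and trace-form bookkeeping are all standard.
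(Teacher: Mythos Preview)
Your proof is correct and follows essentially the same route as the paper: use Theorem \ref{rtr} to reduce to showing that the trace form is non-degenerate on $\g^{H_1}=\Lie H_2$, then invoke complete reducibility of the $H_1$-action on $\g$ (Theorem \ref{thmredcr}) together with $H_1$-invariance of the form to conclude. The only cosmetic difference is that the paper dispatches the orthogonality $\g^{H_1}\perp\m$ in one line by appealing to the isotypic decomposition of $\g$ under $H_1$ (the trivial isotypic component is $\g^{H_1}$, and an invariant form cannot pair it non-trivially with a non-trivial isotypic component), whereas you pick an arbitrary $H_1$-stable complement $\m$ and argue directly that any $H_1$-invariant functional on $\m$ vanishes; both arguments express the same Schur-lemma fact.
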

\begin{proof}
By Theorem \ref{rtr}, $\la\,,\,\ra_\phi$ is a non-degenerate
symmetric bilinear form on $\g$. It is $G$-invariant, and hence
$H_1$-invariant. Since $H_1$ is reductive, by Theorem \ref{thmredcr}, $\g$ is completely
reducible as a representation of $H_1$. Taking the  isotypic
decomposition, we know that the space $\g^{H_1}$ of $H_1$-fixed
vectors in $\g$ is non-degenerate with respect to
$\la\,,\,\ra_\phi$. Since $\g^{H_1}$ equals the Lie algebra of
$H_2$, the proposition follows by Theorem \ref{rtr}.

\end{proof}

The rest of this section is  devoted to a proof of Theorem
\ref{rtr}.

\begin{lemt}\label{compnd}
If $G$ is elliptic, then the bilinear form $\la\,,\,\ra_\phi$ is
negative definite. If $G$ is hyperbolic, then the bilinear form
$\la\,,\,\ra_\phi$ is positive definite.
\end{lemt}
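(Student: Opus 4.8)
The plan is to exploit the fact that, since $V$ has finite kernel, a finite normal subgroup has trivial Lie algebra, so the differential $\phi\colon\g\to\gl(V)$ is injective; in particular $\phi(x)\neq 0$ for every nonzero $x\in\g$. It therefore suffices to show that $\tr(\phi(x)^2)<0$ (in the elliptic case), resp.\ $\tr(\phi(x)^2)>0$ (in the hyperbolic case), whenever $\phi(x)$ is a nonzero operator of the relevant type. So everything reduces to identifying the shape of the operators $\phi(x)$ in each of the two cases.

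For the elliptic case, I would first average an inner product over the compact group $G$ to obtain a $G$-invariant inner product on $V$; then the homomorphism $G\to\GL(V)$ attached to $V$ has image in the associated orthogonal group, so $\phi(x)$ is skew-symmetric for every $x\in\g$. Hence, computing in an orthonormal basis, $\tr(\phi(x)^2)=-\tr(\phi(x)^{\mathrm t}\phi(x))=-\norm{\phi(x)}^2$ with $\norm{\cdot}$ the Hilbert--Schmidt norm, and this is strictly negative unless $\phi(x)=0$, i.e.\ unless $x=0$; so $\la\,,\,\ra_\phi$ is negative definite. For the hyperbolic case, Proposition \ref{jhyp2} provides a basis of $V$ in which $\phi(\g)$ consists of diagonal matrices, the diagonal entries being real by Proposition \ref{jhyp}; writing $\phi(x)=\diag(a_1,\dots,a_m)$ with $a_i\in\R$ gives $\tr(\phi(x)^2)=\sum_i a_i^2\geq 0$, with equality iff $\phi(x)=0$ iff $x=0$, so $\la\,,\,\ra_\phi$ is positive definite. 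In both cases one may argue instead directly from Lemma \ref{jgl2}: an elliptic (resp.\ hyperbolic) element of $\gl_n(\R)$ is semisimple with purely imaginary (resp.\ real) eigenvalues, so $\tr(\phi(x)^2)=\sum_i\lambda_i^2$ automatically has the correct sign and vanishes only when all the $\lambda_i$ are $0$, which for a semisimple operator forces $\phi(x)=0$.

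I do not expect a genuine obstacle. The only point needing care is the strictness of the definiteness, and this is exactly where the injectivity of $\phi$ (equivalently, the finiteness of the kernel of $V$) is used — without it the trace form would be merely negative, resp.\ positive, semidefinite. The auxiliary inputs — existence of the $G$-invariant inner product for compact $G$, and simultaneous diagonalizability for a hyperbolic group — are already available from the earlier sections, so the argument should be short.
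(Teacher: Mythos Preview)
Your proposal is correct, and the second route you sketch --- appealing directly to Lemma \ref{jgl2} and reading off $\tr(\phi(x)^2)=\sum_i\lambda_i^2$ from the eigenvalue description --- is exactly the paper's (one-line) proof. Your first route, via a $G$-invariant inner product in the elliptic case and simultaneous diagonalization (Propositions \ref{jhyp} and \ref{jhyp2}) in the hyperbolic case, is a valid alternative that bypasses the abstract notion of elliptic/hyperbolic Lie algebra elements and reaches the same eigenvalue conclusion by more hands-on means; both arguments ultimately rest on the same spectral information, so the difference is cosmetic.
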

\begin{proof}
This is implied by Lemma \ref{jgl2}.
\end{proof}

\begin{lemt}\label{orth}
Let $x$ and $y$ be two commuting elements in the Lie algebra $\gl(V)$ of $\GL(V)$. If $x$ is
elliptic and $y$ is hyperbolic, then $\tr(xy)=0$.
\end{lemt}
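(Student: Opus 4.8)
The plan is to diagonalize $x$ and $y$ simultaneously over $\C$ and compare the trace computed there with the fact that $\tr(xy)$ is a real number.

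First I would invoke Lemma \ref{jgl2}: since $x$ is elliptic it is semisimple with all eigenvalues purely imaginary, and since $y$ is hyperbolic it is semisimple with all eigenvalues real. Passing to the complexification $V_\C$, both $x$ and $y$ act as diagonalizable (i.e. semisimple) operators, and they commute by hypothesis. A commuting family of diagonalizable operators on a finite dimensional complex vector space is simultaneously diagonalizable, so there is a basis $v_1,\dots,v_n$ of $V_\C$ with $x v_j=\lambda_j v_j$ and $y v_j=\mu_j v_j$, where each $\lambda_j\in i\R$ and each $\mu_j\in\R$.

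In this basis $xy$ is diagonal with entries $\lambda_j\mu_j$, so $\tr(xy)=\sum_{j=1}^n \lambda_j\mu_j$ is a purely imaginary number (a real multiple of $i$). On the other hand, $x$ and $y$ are linear endomorphisms of the real vector space $V$, so $xy$ is as well, and hence $\tr(xy)\in\R$. A number that is both purely imaginary and real must be $0$, which gives $\tr(xy)=0$.

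I do not anticipate a genuine obstacle here; the only point requiring a word of care is the reduction to a common eigenbasis, which rests on the standard fact that commuting semisimple operators are simultaneously diagonalizable (equivalently, the generalized eigenspaces of $y$ on $V_\C$ are $x$-stable and $x$ restricted to each is still semisimple). Everything else is a one-line computation.
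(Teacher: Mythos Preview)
Your argument is correct and is essentially the same as the paper's: both use that $x$ has purely imaginary eigenvalues and $y$ has real eigenvalues, deduce (via commutativity) that the eigenvalues of $xy$ are purely imaginary, and conclude that the trace is both purely imaginary and real, hence zero. You spell out the simultaneous diagonalization step more explicitly than the paper does, but the approach is identical.
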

\begin{proof}
Note that all eigenvalues of $x$ are purely imaginary, and all
eigenvalues of $y$ are real. Since $x$ and $y$ commute, all
eigenvalues of $xy$ are purely imaginary.  Therefore $\tr(xy)$ is
purely imaginary. It has to vanish since it is also real.
\end{proof}

\begin{lemt}\label{tornd}
If $G$ is a Nash torus, then the bilinear form $\la\,,\,\ra_\phi$ is
non-degenerate.
\end{lemt}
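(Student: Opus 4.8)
The plan is to split the Lie algebra $\g$ of the Nash torus $G$ into an elliptic piece and a hyperbolic piece, check that these two pieces are orthogonal with respect to $\la\,,\,\ra_\phi$, and then observe that the form is definite (with opposite signs) on each of them. Concretely, write $G$ as $K\times A$ with $K$ Nash isomorphic to $\mathbb S^m$ and $A$ Nash isomorphic to $(\R_+^\times)^n$. Then $K$ is an elliptic Nash subgroup of $G$ and $A$ is a hyperbolic one, so $\g=\k\oplus\a$, where $\k:=\Lie K$ consists of elliptic elements (by Proposition \ref{quotientel}, every $\la x\ra$ with $x\in\k$ is elliptic) and $\a:=\Lie A$ consists of hyperbolic elements (by Proposition \ref{subsplit}).

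First I would note that the restrictions $V|_K$ and $V|_A$ still have finite kernel, since the kernel of $V$ in $G$ is finite and meets any subgroup in a finite set. The differential of $V|_K$ is the restriction $\phi|_\k$, so the restriction of $\la\,,\,\ra_\phi$ to $\k$ is exactly the trace form attached to the representation $V|_K$ of the elliptic Nash group $K$; hence by Lemma \ref{compnd} it is negative definite. In the same way the restriction of $\la\,,\,\ra_\phi$ to $\a$ is the trace form attached to $V|_A$, and by Lemma \ref{compnd} it is positive definite.

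Next I would prove $\k\perp\a$. Given $x\in\k$ and $y\in\a$, Proposition \ref{pr2} applied to the Nash homomorphism $G\rightarrow\GL(V)$ gives $\phi(x)\in\gl(V)_\mathrm e$ and $\phi(y)\in\gl(V)_\mathrm h$; by Lemma \ref{jgl2} this means $\phi(x)$ is an elliptic element and $\phi(y)$ a hyperbolic element of $\gl(V)$. Since $\g$ is abelian, $\phi(x)$ and $\phi(y)$ commute, so Lemma \ref{orth} yields $\la x,y\ra_\phi=\tr(\phi(x)\phi(y))=0$. Therefore $\g=\k\oplus\a$ is an orthogonal direct sum on which $\la\,,\,\ra_\phi$ restricts to a negative definite form and a positive definite form respectively, and consequently $\la\,,\,\ra_\phi$ is non-degenerate on $\g$.

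The argument is essentially formal, and I do not anticipate a genuine obstacle; the only points deserving care are the bookkeeping identification of the restricted form $\la\,,\,\ra_\phi|_\k$ with the trace form of the restricted representation $V|_K$ (immediate once one notes that the differential of $V|_K$ is $\phi|_\k$), and invoking Proposition \ref{pr2} in the right direction, namely that the differential of a Nash homomorphism carries elliptic elements to elliptic operators and hyperbolic elements to hyperbolic operators.
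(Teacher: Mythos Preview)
Your proof is correct and follows essentially the same approach as the paper: decompose the Nash torus as a product of a compact Nash torus and a hyperbolic Nash group, use Lemma \ref{orth} to show the two Lie-algebra summands are orthogonal under $\la\,,\,\ra_\phi$, and then invoke Lemma \ref{compnd} for definiteness on each summand. You are simply more explicit than the paper in justifying why $\phi(x)$ and $\phi(y)$ satisfy the hypotheses of Lemma \ref{orth} (via Proposition \ref{pr2}), which the paper leaves implicit.
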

\begin{proof}
Write  $G=T\times A$, where $T$ is a compact Nash torus, and $A$ is
a hyperbolic Nash group. Lemma \ref{orth} implies that $\Lie \,T$ and
$\Lie \,A$ are orthogonal to each other under the symmetric bilinear form
$\la\,,\,\ra_\phi$. The lemma then follows by Lemma \ref{compnd}.
 \end{proof}

\begin{lemt}\label{snd}
If $G$ is a connected semisimple Nash group and $\la\,,\,\ra_\phi$
is zero, then $G$ is trivial.
\end{lemt}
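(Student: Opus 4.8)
The plan is to exploit the fact that the trace form $\la\,,\,\ra_\phi$ being identically zero forces the Killing-type information carried by $\phi$ to collapse, and then invoke Cartan's criterion. First I would reduce to the case where $V$ is irreducible: since $V$ is completely reducible as a representation of the semisimple Nash group $G$ (Theorem \ref{thmredcr}), decompose $V=\bigoplus_i V_i$ into irreducible subrepresentations. If $\phi_i$ denotes the differential of the action on $V_i$, then $\la x,y\ra_\phi=\sum_i \la x,y\ra_{\phi_i}$, and each summand is a $G$-invariant symmetric bilinear form on $\g$. So it suffices to show that each $\la\,,\,\ra_{\phi_i}$ vanishes and to run the argument on one irreducible factor; thus assume $V$ is irreducible.

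Next I would restrict attention to $\s:=[\g,\g]=\g$ (as $\g$ is semisimple) and compare with the Killing form $B_\g$. On the simple ideals of $\g$, every invariant symmetric bilinear form is a scalar multiple of the Killing form, so $\la\,,\,\ra_\phi$ restricted to each simple ideal equals $c\, B_\g$ for some real constant $c$; the hypothesis that $\la\,,\,\ra_\phi\equiv 0$ forces $c=0$ on each simple ideal, hence $\la\,,\,\ra_\phi=0$ on all of $\g$ — which is merely the hypothesis restated, so the substance must come from elsewhere. The real point: I would instead show directly that $\tr(\phi(x)^2)=0$ for all $x$ implies $\phi(x)$ is nilpotent for all $x\in\g$, using that over $\C$ a matrix $M$ with $\tr(M^k)=0$ for all $k\ge 1$ is nilpotent. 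To get all powers, not just the square, I would use that $\g$ acts on $V\otimes V^*\cong \mathrm{End}(V)$ and on all tensor powers of $V$; alternatively, and more cleanly, apply $\la\,,\,\ra_\phi\equiv 0$ not just to $V$ but combined with Cartan's solvability criterion. Specifically: by Cartan's criterion, a Lie subalgebra $\h\subset\gl(V)$ is solvable iff $\tr(XY)=0$ for all $X\in[\h,\h]$, $Y\in\h$. Apply this with $\h=\phi(\g)$: since $\la\,,\,\ra_\phi\equiv 0$ gives $\tr(\phi(x)\phi(y))=0$ for all $x,y\in\g$, in particular for $x\in[\g,\g]=\g$, Cartan's criterion shows $\phi(\g)$ is a solvable Lie algebra.

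Finally, $\phi$ has finite kernel, so its differential $\phi:\g\to\gl(V)$ is injective; hence $\g\cong\phi(\g)$ is both semisimple and solvable, forcing $\g=0$. Since $G$ is connected with trivial Lie algebra, $G$ is trivial. The main obstacle is setting up the application of Cartan's solvability criterion correctly — making sure the hypothesis $\la\,,\,\ra_\phi\equiv 0$ is being used on the bracket subalgebra and not circularly — and confirming that the paper is entitled to invoke Cartan's criterion (a standard fact about Lie algebras over a field of characteristic zero, applied here to $\gl(V)$ with $V$ a real vector space, which is legitimate after extension of scalars to $\C$). Everything else is routine: injectivity of $\phi$ from finiteness of $\ker\phi$, and semisimple $\cap$ solvable $=0$.
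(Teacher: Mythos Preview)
Your core argument via Cartan's solvability criterion is correct and complete: since $\la\,,\,\ra_\phi\equiv 0$ gives $\tr(\phi(x)\phi(y))=0$ for all $x,y\in\g$, and since $[\phi(\g),\phi(\g)]=\phi([\g,\g])=\phi(\g)$, Cartan's criterion forces $\phi(\g)$ to be solvable; injectivity of $\phi$ (from the finite kernel of the group representation) then makes $\g$ both semisimple and solvable, hence zero, and connectedness of $G$ finishes. The first two paragraphs of your proposal --- the reduction to irreducibles and the comparison with the Killing form --- are unnecessary detours that you yourself abandon; the Cartan-criterion argument needs none of that, and a clean write-up should begin there.

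The paper takes a quite different route. Rather than working purely at the Lie-algebra level, it uses the structure theory already developed: by Lemma~\ref{compnd}, the restriction of $\la\,,\,\ra_\phi$ to the Lie algebra of a maximal compact subgroup $K$ is negative definite (since $V|_K$ still has finite kernel), so $\la\,,\,\ra_\phi\equiv 0$ forces $\k=0$ and hence $K$ is trivial; then one invokes the fact that a non-trivial connected semisimple Lie group with finite center has a non-trivial maximal compact subgroup. Your approach is more self-contained and purely algebraic, relying only on a standard early result in Lie algebra theory, whereas the paper's argument ties the lemma back into the Nash-group framework it has been building (elliptic subgroups and their trace forms). Both are short; yours has the advantage of not needing the nontrivial structural fact about maximal compact subgroups of semisimple groups.
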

\begin{proof}
Let $K$ be a maximal compact subgroup of $G$, which is connected since $G$ is connected. Then Lemma
\ref{compnd} implies that $K$ is trivial, which further implies that
$G$ is trivial. (Recall that every non-trivial connected semisimple Lie group with finite center has a non-trivial maximal compact subgroup.)
 \end{proof}

 We are now prepared to prove the ``only if" part of Theorem \ref{rtr}:

\begin{prpt}
If $G$ is a reductive, then the bilinear form $\la\,,\,\ra_\phi$ is
non-degenerate.
\end{prpt}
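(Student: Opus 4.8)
The plan is to reduce to the connected case and then to treat separately the semisimple part and the central part of the Lie algebra. First I would pass from $G$ to its identity connected component $G^\circ$: the restriction $V|_{G^\circ}$ is still a Nash representation with finite kernel (its kernel is a subgroup of the finite group $\ker(G\to\GL(V))$), the group $G^\circ$ is still reductive by Lemma~\ref{redi}, and the trace form attached to $V|_{G^\circ}$ on $\g=\Lie\,G$ is exactly $\la\,,\,\ra_\phi$. So I may assume $G$ is connected. By Lemma~\ref{jacob2} the Lie algebra $\g$ is then reductive, so $\g=\s\oplus\z$ with $\s:=[\g,\g]$ semisimple and $\z$ the center of $\g$; as in the discussion preceding Lemma~\ref{znt}, let $S$ and $Z$ be the corresponding analytic subgroups of $G$, both Nash subgroups, with $Z$ a Nash torus by Lemma~\ref{znt}.

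The next step is to observe that $\s$ and $\z$ are orthogonal under $\la\,,\,\ra_\phi$. This is immediate from $\ad$-invariance of the form: for $a,b\in\g$ and $y\in\z$ one has $\la[a,b],y\ra_\phi=-\la b,[a,y]\ra_\phi=0$, and $\s$ is spanned by brackets. Hence $\la\,,\,\ra_\phi$ is the orthogonal direct sum of its restrictions to $\s$ and to $\z$, and it is enough to show that each of these restrictions is non-degenerate. I would also record the elementary fact that for any Nash subgroup $H\le G$ the restriction of $\la\,,\,\ra_\phi$ to $\Lie\,H$ is precisely the trace form attached to the Nash representation $V|_H$, and that $V|_H$ again has finite kernel.

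For the central part this finishes things at once: $V|_Z$ is a Nash representation with finite kernel of the Nash torus $Z$, so $\la\,,\,\ra_\phi|_\z$ is non-degenerate by Lemma~\ref{tornd}. For the semisimple part I would argue via the radical. Let $\r\subseteq\s$ be the radical of the symmetric bilinear form $\la\,,\,\ra_\phi|_\s$. Invariance of the form shows that $\r$ is an ideal of $\s$, hence $\r$ is itself semisimple; let $R$ be the analytic subgroup of $G$ with $\Lie\,R=\r$, which is a connected semisimple Nash subgroup by Proposition~\ref{autss}. Since $\r$ lies in the radical, $\la\,,\,\ra_\phi$ vanishes identically on $\r$, that is, the trace form attached to the Nash representation $V|_R$ (which has finite kernel) is zero; by Lemma~\ref{snd} this forces $R$ to be trivial, so $\r=0$ and $\la\,,\,\ra_\phi|_\s$ is non-degenerate. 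Combining the two cases gives the proposition.

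I do not expect a serious obstacle here: this is essentially a bookkeeping argument assembling the reductions above with the already-proven torus case (Lemma~\ref{tornd}) and semisimple case (Lemma~\ref{snd}). The one point that needs a little care is the semisimple part, since ``the trace form of a representation with finite kernel of a semisimple Nash group is non-degenerate'' is not literally Lemma~\ref{snd}; one must pass to the radical ideal $\r$ and apply Lemma~\ref{snd} to $R$, using that ideals of semisimple Lie algebras are semisimple and that semisimple analytic subgroups of almost linear Nash groups are Nash (Proposition~\ref{autss}).
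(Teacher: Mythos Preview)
Your proof is correct and follows essentially the same route as the paper. The paper's version is slightly more compact: it takes the kernel $\n$ of $\la\,,\,\ra_\phi$ on all of $\g$, observes it is an ideal, uses Lemma~\ref{tornd} to conclude $\n\subset[\g,\g]$ (so $\n$ is semisimple), and then applies Lemma~\ref{snd} to the analytic subgroup with Lie algebra $\n$. Your version makes explicit two steps the paper leaves implicit --- the reduction to connected $G$ and the orthogonality of $\s$ and $\z$ under the trace form --- and then treats the radical of the form on $\s$ rather than the kernel on $\g$; but the key inputs (Lemma~\ref{tornd}, Proposition~\ref{autss}, Lemma~\ref{snd}) and the overall logic are the same.
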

\begin{proof}
Denote by $\n$ the kernel of the form $\la\,,\,\ra_\phi$. It is an
ideal of the reductive Lie algebra $\g$. Lemma \ref{tornd} implies
that $\n\subset [\g,\g]$. Therefore $\n$ is semisimple. Denote by
$N$ the analytic subgroup of $G$ with Lie algebra $\n$. It is a
connected semisimple Nash subgroup of $G$ by  Proposition
\ref{autss}. Then Lemma \ref{snd} implies that $N$ is trivial, and
hence $\n=\{0\}$.
 \end{proof}

 To prove the ``if" part of Theorem \ref{rtr}, recall the following

\begin{lemt}\label{bm}(\cite[Lemma 3.1 and Proposition 3.2]{BM})
Let $V_0$ be a finite dimensional vector space over a field of
characteristic zero. Let $\g_0$ be a Lie subalgebra of $\gl(V_0)$
such that the trace form is non-degenerate on $\g_0$. Then $\g_0$ is
reductive, and no non-zero element in the center of $\g_0$ is
nilpotent as a linear operator on $V_0$.
\end{lemt}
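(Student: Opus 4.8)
The plan is to establish the two assertions separately. The second is immediate: if $z$ lies in the center of $\g_0$ and is nilpotent on $V_0$, then for every $y\in\g_0$ the operators $z$ and $y$ commute, so $(zy)^k=z^ky^k$ for all $k$ and $zy$ is again nilpotent, whence $\tr(zy)=0$; thus $z$ is orthogonal to all of $\g_0$ for the trace form, and non-degeneracy forces $z=0$. For the first assertion I would first reduce to an algebraically closed ground field: passing from $\Bk$ to $\overline{\Bk}$ replaces $\g_0\subseteq\gl(V_0)$ by $\g_0\otimes_\Bk\overline{\Bk}\subseteq\gl(V_0\otimes_\Bk\overline{\Bk})$, preserves non-degeneracy of the trace form, and does not affect reductivity (in characteristic zero the radical and semisimplicity are compatible with field extension), so I may assume $\Bk=\overline{\Bk}$.

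Then I would run the classical Cartan-criterion argument. Let $\r$ be the radical of $\g_0$. The key input is the standard fact that, for a Lie subalgebra of $\gl(V_0)$ in characteristic zero, every element of the ideal $[\g_0,\r]$ acts as a nilpotent endomorphism of $V_0$ — this is where Lie's theorem really enters, and I would simply quote it. Granting it, fix $x\in\g_0$ and set $\p:=\Bk x+[\g_0,\r]$. Since $[\g_0,\r]$ is an ideal of $\g_0$ consisting of nilpotent operators, it is a nilpotent Lie algebra by Engel's theorem, so $\p$ is a solvable Lie subalgebra of $\gl(V_0)$; by Lie's theorem its elements can be put simultaneously in upper-triangular form on $V_0$. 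In such a basis every $m\in[\g_0,\r]$ is upper-triangular and nilpotent, hence strictly upper-triangular, so $xm$ has zero diagonal and $\tr(xm)=0$. As $x$ was arbitrary, $[\g_0,\r]$ is orthogonal to $\g_0$ for the trace form, and non-degeneracy gives $[\g_0,\r]=0$, i.e. $\r=\z(\g_0)$ (the reverse inclusion being automatic). A Lie algebra whose radical equals its center is reductive: by the Levi decomposition $\g_0=\r\ltimes\s$ with $\s$ semisimple, and since $\r$ is central the semidirect product is direct, so $\g_0=\z(\g_0)\times\s$ with $\s=[\g_0,\g_0]$, which is the required form.

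I expect the one genuine obstacle to be the quoted nilpotency statement for $[\g_0,\r]$; the remaining steps are routine, using only cyclicity of the trace, Engel's and Lie's theorems, and non-degeneracy of the form. Should a self-contained treatment be wanted, that nilpotency statement is itself a short consequence of Lie's theorem (triangularize the solvable radical and compare with the ideal property), but I would not reprove it here.
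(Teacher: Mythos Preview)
The paper does not supply its own proof of this lemma: it is quoted verbatim from Borel--Mostow \cite[Lemma 3.1 and Proposition 3.2]{BM} with no argument. So there is nothing to compare against; you are providing a proof where the paper simply cites one.

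Your argument is correct. The second assertion is handled cleanly: centrality of $z$ gives $zy=yz$ as operators, so $(zy)^k=z^ky^k$ and nilpotency of $z$ forces $\tr(zy)=0$. For the first assertion, the reduction to an algebraically closed field is legitimate (radicals and trace forms behave well under base change in characteristic zero), and the trick of forming the auxiliary solvable subalgebra $\p=\Bk x+[\g_0,\r]$ to triangularize $x$ together with $[\g_0,\r]$ is standard and valid: $[\g_0,\r]$ is an ideal of $\g_0$, hence $\p$ is a subalgebra with nilpotent derived algebra, so Lie's theorem applies and the trace computation goes through. The conclusion $[\g_0,\r]=0$, hence $\r=\z(\g_0)$, hence $\g_0$ reductive, is then immediate. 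The one black box you invoke---that every element of $[\g_0,\r]$ acts nilpotently on $V_0$---is indeed the substantive input, and is a classical fact (a form of Cartan's criterion, or a corollary of Lie's theorem via the invariance lemma) available in any standard reference; quoting it is appropriate at this level.
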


Now assume that $\la\,,\,\ra_\phi$ is non-degenerate. We want to
show that $G$ is reductive. In view of Lemma \ref{redi}, we may (and
do) assume that $G$ is connected.

Lemma \ref{bm} implies that the Lie algebra $\g$ is reductive. Write
\[
\g=\z\oplus \s,
\]
where $\z$ denotes the center of $\g$, and
$\s:=[\g,\g]$. Denote by $Z$ and $S$ the analytic subgroups of $G$
respectively corresponding to $\z$ and $\s$. As before, both $Z$ and $S$ are Nash subgroups of $G$.
Using Proposition \ref{da}, write  $Z=Z_\mathrm e\times Z_\mathrm
h\times Z_\mathrm u$. Then Lemma \ref{bm} implies that $Z_\mathrm
u=\{1\}$. Therefore $Z$ is a Nash torus. Since $Z\times S$ is a
finite fold cover of $G$, $G$ is reductive by Theorem \ref{strr}.
This  proves the ``if" part of Theorem \ref{rtr}.

\section{Semisimple elements}
Let $G$ be an almost linear Nash group with Lie algebra $\g$.

\begin{dfnl}
An element of  $G$ or  $\g$ is said to be semisimple if its
unipotent part is trivial.
\end{dfnl}

We define a Nash quasi-torus to be an abelian almost linear Nash group without non-trivial unipotent element. All Nash quasi-tori are reductive Nash groups.
First, we have the following
\begin{lem}\label{sst}
An element $x\in G$ is semisimple if and only if $\la x\ra$ is a
Nash quasi-torus. An element $y\in \g$ is semisimple if and only if
$\la y\ra$ is a Nash torus.
\end{lem}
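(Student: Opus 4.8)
The plan is to deduce both assertions directly from the Jordan decomposition together with the product structure of replicas. Fix $x\in G$. Recall that $\la x\ra$ is abelian, and that by Theorem~\ref{jd} one has $x=x_\mathrm e x_\mathrm h x_\mathrm u$ with pairwise commuting factors; Lemma~\ref{j0} then shows $\la x_\mathrm e\ra,\la x_\mathrm h\ra,\la x_\mathrm u\ra\subset\la x\ra$ and gives a Nash isomorphism $\la x_\mathrm e\ra\times\la x_\mathrm h\ra\times\la x_\mathrm u\ra\cong\la x\ra$ with the factors elliptic, hyperbolic and unipotent in turn. The single auxiliary fact I would isolate first is that a product $E\times H$ of an elliptic Nash group by a hyperbolic one has no nontrivial unipotent element: a unipotent $z\in E\times H$ generates a unipotent Nash subgroup $\la z\ra$, which by Lemma~\ref{inteh0} equals $H_1\times H_2$ with $H_1\subset E$ and $H_2\subset H$ Nash subgroups; then $H_1$ is both elliptic and unipotent and $H_2$ is both hyperbolic and unipotent, so both are trivial by Proposition~\ref{jt} (apply it to the identity map of $H_1$, resp.\ $H_2$), whence $z=1$.

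With this in hand the group statement is immediate. If $x$ is semisimple, i.e.\ $x_\mathrm u=1$, then $\la x\ra\cong\la x_\mathrm e\ra\times\la x_\mathrm h\ra$ is an abelian almost linear Nash group with no nontrivial unipotent element, hence a Nash quasi-torus. Conversely, if $\la x\ra$ is a Nash quasi-torus, the unipotent element $x_\mathrm u\in\la x\ra$ must be trivial, so $x$ is semisimple.

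For the Lie algebra statement I would repeat the argument with Theorem~\ref{jdi} and Lemma~\ref{j02} replacing Theorem~\ref{jd} and Lemma~\ref{j0}, so that $\la y\ra\cong\la y_\mathrm e\ra\times\la y_\mathrm h\ra\times\la y_\mathrm u\ra$ with $\la y_\mathrm u\ra$ trivial precisely when $y_\mathrm u=0$. If $y$ is semisimple, then $\la y\ra\cong\la y_\mathrm e\ra\times\la y_\mathrm h\ra$; since $\la y\ra$, and therefore $\la y_\mathrm e\ra$, is connected, $\la y_\mathrm e\ra$ is a connected compact abelian Lie group, i.e.\ a torus, whose almost linear Nash structure is that of $\mathbb S^m$ by Theorem~\ref{thmc}, while $\la y_\mathrm h\ra\cong(\R_+^\times)^n$; thus $\la y\ra$ is a Nash torus. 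Conversely a Nash torus $\mathbb S^m\times(\R_+^\times)^n$ is again a product of an elliptic Nash group by a hyperbolic one, so by the auxiliary fact its Nash subgroup $\la y_\mathrm u\ra$ is trivial, forcing $y_\mathrm u=0$ and $y$ semisimple.

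I do not expect a real obstacle here; the content is entirely carried by Theorems~\ref{jd} and \ref{jdi}, Lemmas~\ref{j0}, \ref{j02} and \ref{inteh0}, and the disjointness Proposition~\ref{jt}. The only point demanding slight care is the passage from ``quasi-torus'' to ``torus'' in the Lie algebra case, which uses the connectedness of the replica of a Lie algebra element together with the fact that a connected compact abelian Lie group is a torus.
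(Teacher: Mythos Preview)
Your proof is correct and follows essentially the same approach as the paper: the paper's argument is extremely terse, simply citing Lemma~\ref{j0} to get $\la x\ra=\la x_\mathrm e\ra\times\la x_\mathrm h\ra$ and declaring the ``if'' direction obvious, while you carefully unpack the implicit step (that an elliptic-by-hyperbolic product has no nontrivial unipotent element) via Lemma~\ref{inteh0} and Proposition~\ref{jt}, and you also make explicit the connectedness argument needed to upgrade ``quasi-torus'' to ``torus'' in the Lie algebra case.
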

\begin{proof}
The ``if" part of the first assertion is obvious. To prove the
``only if" part of the first assertion, assume that $x$ is semisimple.
Then $\la x\ra=\la x_\mathrm e\ra \times \la x_\mathrm h\ra$ by
Lemma \ref{j0}. Therefore  $\la x\ra$ is a Nash quasi-torus.
The proof of the second assertion is similar.
\end{proof}

Write $G_\mathrm{ss}$ and $\g_\mathrm{ss}$ for the sets of all
semisimple elements in $G$ and $\g$, respectively.

\begin{lem}\label{ssurj}
Let $\varphi: G\rightarrow G'$ be a Nash homomorphism of almost
linear Nash groups. Then
\[
  \varphi(G_\mathrm{ss})\subset G'_\mathrm {ss},
\]
and the inclusion becomes an equality if $\varphi$ is surjective.
Write $\phi :\g\rightarrow \g'$ for the differential of $\varphi$, where $\g'$ denotes the Lie algebra of $G'$.
Then
\[
  \phi(\g_\mathrm{ss})\subset \g'_\mathrm{ss},
\]
and the inclusion becomes an equality if $\phi$ is surjective.
\end{lem}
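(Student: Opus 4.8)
The plan is to derive the lemma formally from the fact that Nash homomorphisms preserve Jordan decompositions (Propositions \ref{pjd} and \ref{pr3}), together with the uniqueness statements in Theorems \ref{jd} and \ref{jdi}; no structural input beyond these is needed.

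First I would prove the two inclusions. If $x\in G_\mathrm{ss}$ then $x_\mathrm u=1$, so Proposition \ref{pjd} gives $(\varphi(x))_\mathrm u=\varphi(x_\mathrm u)=1$, hence $\varphi(x)\in G'_\mathrm{ss}$; this yields $\varphi(G_\mathrm{ss})\subset G'_\mathrm{ss}$. The Lie algebra inclusion is identical: for $x\in\g_\mathrm{ss}$ one has $x_\mathrm u=0$, and Proposition \ref{pr3} gives $(\phi(x))_\mathrm u=\phi(x_\mathrm u)=0$, so $\phi(x)\in\g'_\mathrm{ss}$.

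Next, assuming $\varphi$ surjective, I would prove $G'_\mathrm{ss}\subset\varphi(G_\mathrm{ss})$. Given $y\in G'_\mathrm{ss}$, choose $x\in G$ with $\varphi(x)=y$. By Proposition \ref{pjd}, $\varphi(x_\mathrm u)=(\varphi(x))_\mathrm u=y_\mathrm u=1$. Now set $x':=x_\mathrm e x_\mathrm h$; since $x_\mathrm e$ is elliptic, $x_\mathrm h$ is hyperbolic, and they commute, the triple $(x_\mathrm e,x_\mathrm h,1)$ meets the hypotheses of Theorem \ref{jd} for $x'$, so by uniqueness $(x')_\mathrm u=1$, i.e.\ $x'\in G_\mathrm{ss}$. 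Since $x_\mathrm u$ commutes with $x_\mathrm e$ and $x_\mathrm h$, we have $x'=x\,x_\mathrm u^{-1}$, hence $\varphi(x')=\varphi(x)\varphi(x_\mathrm u)^{-1}=y$, and so $y\in\varphi(G_\mathrm{ss})$. The Lie algebra version runs the same way with sums in place of products: $x':=x_\mathrm e+x_\mathrm h=x-x_\mathrm u$ lies in $\g_\mathrm{ss}$ by uniqueness in Theorem \ref{jdi}, and $\phi(x')=\phi(x)-\phi(x_\mathrm u)=y$ because $\phi(x_\mathrm u)=(\phi(x))_\mathrm u=y_\mathrm u=0$ by Proposition \ref{pr3}.

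Since the whole argument is a formal manipulation of Jordan decompositions, I do not anticipate a genuine obstacle; the only step needing a word of care is the claim that $x_\mathrm e x_\mathrm h$ (resp.\ $x_\mathrm e+x_\mathrm h$) is semisimple, which I would justify solely by the uniqueness clause of the Jordan decomposition theorem rather than by exhibiting a semisimple Nash subgroup containing it.
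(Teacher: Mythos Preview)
Your proposal is correct and takes essentially the same approach as the paper, whose proof simply reads ``The proof is similar to that of Proposition \ref{pr}''; you have accurately unpacked what this means by using Propositions \ref{pjd} and \ref{pr3} to preserve Jordan decompositions and then lifting via $x':=x_\mathrm e x_\mathrm h$. Your care in justifying that $x'$ is semisimple via the uniqueness clause is fine, though you could equally cite Lemma \ref{j0} (which gives $\la x'\ra=\la x_\mathrm e\ra\times\la x_\mathrm h\ra$) or Lemma \ref{sst} directly.
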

\begin{proof}
The proof is similar to that of Proposition \ref{pr}.
\end{proof}

The rest of this section is to prove the following
\begin{thml}\label{denses}
If $G$ is reductive, then the set $G_\mathrm {ss}$ is dense in $G$,
and the set $\g_\mathrm {ss}$ is dense in $\g$.
\end{thml}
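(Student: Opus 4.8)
The plan is to prove the group statement and the Lie‑algebra statement by the same device, deforming the unipotent part of an element (or Lie algebra element) to nearby semisimple ones inside the centralizer of its semisimple part. For the group case, let $g\in G$ be arbitrary with Jordan decomposition $g=g_\mathrm e g_\mathrm h g_\mathrm u$, and put $s:=g_\mathrm e g_\mathrm h$, a semisimple element. Set $M:=C_G(\la s\ra)$. Since $\la s\ra$ is a Nash quasi‑torus, hence reductive, Proposition \ref{rtr2} shows $M$ is a reductive Nash subgroup of $G$; moreover $s$ lies in the centre of $M$, and since $g_\mathrm u$ commutes with $\la s\ra$ (its centralizer is a Nash subgroup containing $s$, hence $\la s\ra$) and is unipotent, $g_\mathrm u\in M^\circ$. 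It therefore suffices to show that $g_\mathrm u$ is a limit of semisimple elements of $M^\circ$: if $g_\mathrm u=\lim_t m_t$ with $m_t\in M^\circ$ semisimple, then $m_t$ commutes with $g_\mathrm e$ and $g_\mathrm h$ (they lie in $\la s\ra$), the element $g_\mathrm h m_t$ is a product of two commuting hyperbolic elements, hence hyperbolic, so $s m_t=g_\mathrm e(g_\mathrm h m_t)$ is a product of commuting elliptic and hyperbolic elements and thus has trivial unipotent part, i.e. is semisimple in $M$, hence in $G$; and $s m_t\to s g_\mathrm u=g$.

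So the crux is: a unipotent element $u$ of a connected reductive Nash group $M$ is a limit of semisimple elements of $M$. Write $u=\exp X$ with $X=\log u$ a nilpotent element of $\m:=\Lie M$; since $\m$ is reductive (Lemma \ref{jacob2}), $X$ lies in the semisimple Lie algebra $\s:=[\m,\m]$. If $X=0$ there is nothing to prove, so assume $X\neq 0$. By the Jacobson–Morozov theorem there is an $\sl_2$‑triple $(X,H,Y)$ in $\s$; the subalgebra it spans is isomorphic to $\sl_2(\R)$, its analytic subgroup $L\subseteq M$ is semisimple, hence (Proposition \ref{autss}) a Nash subgroup, and the Lie‑algebra isomorphism integrates to a Nash homomorphism $\psi:\SL_2(\R)\rightarrow L\subseteq M$ (Proposition \ref{autss2}) with $\psi\!\left(\begin{smallmatrix}1&1\\0&1\end{smallmatrix}\right)=\exp(X)=u$. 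Now the matrices $B_t:=\left(\begin{smallmatrix}1&1\\t^2&1+t^2\end{smallmatrix}\right)\in\SL_2(\R)$ have determinant $1$ and trace $2+t^2>2$, hence two distinct positive real eigenvalues for $t\neq 0$, so each $B_t$ is a hyperbolic element of $\SL_2(\R)$ (Lemma \ref{jgl}); therefore $\psi(B_t)$ is a hyperbolic, in particular semisimple, element of $M$ (Proposition \ref{pr}), and $\psi(B_t)\to\psi\!\left(\begin{smallmatrix}1&1\\0&1\end{smallmatrix}\right)=u$ as $t\to 0$. This proves the claim and hence the group case (no reduction to connected $G$ is needed, since the argument starts from an arbitrary $g$).

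For the Lie‑algebra statement one runs the same argument infinitesimally: given $X\in\g$ with Jordan decomposition $X=X_\mathrm e+X_\mathrm h+X_\mathrm u$, take $T:=\la X_\mathrm e\ra\,\la X_\mathrm h\ra$, a Nash torus, and $M:=C_G(T)$, reductive by Proposition \ref{rtr2}; then $X_\mathrm e,X_\mathrm h$ lie in the centre of $\m:=\Lie M$ while the nilpotent $X_\mathrm u$ lies in $[\m,\m]$, and, assuming $X_\mathrm u\neq 0$, pick an $\sl_2$‑triple $(X_\mathrm u,H,Y)$ in $[\m,\m]$, giving a Lie‑algebra embedding $\iota:\sl_2(\R)\hookrightarrow[\m,\m]\subseteq\g$ with $\iota\!\left(\begin{smallmatrix}0&1\\0&0\end{smallmatrix}\right)=X_\mathrm u$. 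The matrices $C_t:=\left(\begin{smallmatrix}0&1\\t^2&0\end{smallmatrix}\right)\in\sl_2(\R)$ have eigenvalues $\pm t$, hence are hyperbolic elements of $\sl_2(\R)$ for $t\neq 0$ (Lemma \ref{jgl2}); their images $\iota(C_t)\in\m$ are hyperbolic (Proposition \ref{pr2}) and converge to $X_\mathrm u$, so $X_\mathrm e+X_\mathrm h+\iota(C_t)\to X$, where $X_\mathrm e$ is elliptic, $X_\mathrm h$ and $\iota(C_t)$ are hyperbolic, and all three pairwise commute; grouping the last two (a sum of commuting hyperbolic Lie‑algebra elements is hyperbolic) exhibits $X_\mathrm e+X_\mathrm h+\iota(C_t)$ as a sum of commuting elliptic and hyperbolic elements, hence semisimple.

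The main obstacle is not any single hard step but the organisation around the two clean external inputs — that the relevant centralizer of the semisimple part is reductive (Proposition \ref{rtr2}) and that a nonzero nilpotent element of a semisimple Lie algebra sits in an $\sl_2$‑triple (Jacobson–Morozov) — together with the bookkeeping needed to verify that the approximating elements stay inside $M$ (resp. $M^\circ$) and that the products and sums of the commuting elliptic and hyperbolic pieces that arise are again semisimple. These last verifications, and the auxiliary fact that a product (resp. sum) of commuting elliptic, or of commuting hyperbolic, elements is again elliptic, resp. hyperbolic, rest on the disjointness and quotient results of Sections 6–8 (e.g. Propositions \ref{quotientel} and \ref{quosplit} and the uniqueness of Jordan decomposition), and are routine but should be carried out with care.
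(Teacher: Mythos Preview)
Your approach is essentially the same as the paper's: reduce to the centralizer $M$ of the semisimple part $s=g_\mathrm e g_\mathrm h$ (reductive by Proposition~\ref{rtr2}), then use Jacobson--Morozov to place the unipotent part in an $\sl_2$-subgroup and approximate it there by semisimple elements; your explicit matrices $B_t$, $C_t$ simply make concrete the step the paper calls ``elementary to check'' in Lemma~\ref{sl2}. One small slip: in your reduction you write ``with $m_t\in M^\circ$ semisimple'' but then argue ``$g_\mathrm h m_t$ is a product of two commuting \emph{hyperbolic} elements'', which needs $m_t$ hyperbolic, not merely semisimple---this is harmless since your $\psi(B_t)$ are in fact hyperbolic, but the paper sidesteps the issue by invoking directly that a product of two commuting semisimple elements is semisimple.
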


We begin with the following
\begin{lem}\label{sl2}
If $G$ is connected and $\g$ is isomorphic to $\sl_2(\R)$, then
$G_\mathrm{ss}$ is dense in $G$.
\end{lem}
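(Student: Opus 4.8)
The plan is to pass to the adjoint representation and observe that the elements $x\in G$ for which $\Ad(x)$ has $\dim\g$ distinct eigenvalues are simultaneously semisimple and dense in $G$.

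Since $\g\cong\sl_2(\R)$ is semisimple, $G$ is a connected semisimple Nash group and the adjoint representation $\Ad\colon G\to\GL(\g)$ is a Lie group homomorphism into an almost linear Nash group, hence a Nash homomorphism by Proposition~\ref{autss2}; moreover $\ker\Ad$ is discrete, the center of $\g$ being zero. The first step is the key claim: \emph{if $\Ad(x)$ is a semisimple linear operator on $\g$, then $x\in G_\mathrm{ss}$.} Indeed, by Proposition~\ref{pjd} one has $\Ad(x_\mathrm u)=(\Ad x)_\mathrm u$, and $(\Ad x)_\mathrm u=1$ since $\Ad(x)$ is semisimple (an operator with trivial unipotent part in the sense of Lemma~\ref{jgl} and the construction of the Jordan decomposition). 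Thus $x_\mathrm u$ lies in the Nash subgroup $\ker\Ad$, so the connected (Lemma~\ref{unic2}) unipotent Nash group $\la x_\mathrm u\ra\subset\ker\Ad$ is contained in a discrete group and is therefore trivial; hence $x_\mathrm u=1$ and $x=x_\mathrm e x_\mathrm h$ is semisimple.

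Next I would prove density. Let $f\colon G\to\R$ be the Nash function sending $x$ to the discriminant of the characteristic polynomial of $\Ad(x)$. When $f(x)\neq 0$ the operator $\Ad(x)$ has $\dim\g=3$ distinct eigenvalues, hence is semisimple, so by the claim $\{f\neq 0\}\subset G_\mathrm{ss}$. To see $f\not\equiv 0$, identify $\g=\sl_2(\R)$ with basis $H=\diag(1,-1)$, $E$, $F$; then $\ad(H)=\diag(0,2,-2)$, so $\Ad(\exp tH)=\diag(1,e^{2t},e^{-2t})$ has three distinct eigenvalues for every $t\neq 0$, i.e. $f(\exp tH)\neq 0$. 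Since $G$ is connected and $f$ is a non-zero Nash function, its zero set is semialgebraic with empty interior (a non-zero real-analytic function on a connected manifold cannot vanish on a non-empty open set; equivalently, by Proposition~\ref{dimen2} and Lemma~\ref{smooth} a full-dimensional semialgebraic zero set would have non-empty interior). Therefore $\{f\neq 0\}$ is open and dense in $G$, and consequently $G_\mathrm{ss}$ is dense in $G$.

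The argument is essentially formal once $\Ad$ is brought in, so there is no serious obstacle; the only point needing a little care is the last density step, namely that the zero locus of the non-zero Nash function $f$ has empty interior, which rests on connectedness of $G$ together with the (standard) real-analyticity of Nash functions, or alternatively on the dimension theory of semialgebraic sets recalled in Section~\ref{secnash}.
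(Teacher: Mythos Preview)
Your proof is correct. Both you and the paper use the adjoint map (equivalently, the quotient to $\PSL_2(\R)$) together with the observation that if $\Ad(x)$ has trivial unipotent part then $x_\mathrm u\in\ker\Ad$ is a unipotent element of a finite group and hence trivial; so the reduction step is the same in spirit. The genuine difference is in how density is established downstairs. The paper simply says it is ``elementary to check'' for $\SL_2(\R)$ (one classifies conjugacy classes by trace and sees that non-semisimple elements are exactly those with $\tr=\pm 2$, a proper closed set), pushes this down to $\PSL_2(\R)$, and then pulls back along $\varphi\colon G\to\PSL_2(\R)$ using $G_{\mathrm{ss}}=\varphi^{-1}((\PSL_2(\R))_{\mathrm{ss}})$. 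You instead stay on $G$ and use the discriminant $f$ of the characteristic polynomial of $\Ad$ as a single Nash function whose non-vanishing locus is open, dense, and contained in $G_{\mathrm{ss}}$. Your route is slightly more conceptual and has the advantage that nothing in it is specific to $\sl_2$: the same argument shows that the regular semisimple set is dense in any connected semisimple Nash group, whereas the paper only needs the $\sl_2$ case because the general reductive statement is obtained later via Jacobson--Morozov. The paper's route, on the other hand, avoids invoking analyticity of Nash functions (or the dimension argument) by doing an explicit $2\times 2$ check.
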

\begin{proof}
It is elementary to check that the lemma holds when $G=\SL_2(\R)$,
which implies that the lemma also holds when $G=\SL_2(\R)/\{\pm
1\}$. In general, there is a surjective Nash homomorphism $\varphi : G\rightarrow \SL_2(\R)/\{\pm 1\}$ with finite kernel. The lemma
then follows since
\[
G_\mathrm{ss}=\varphi^{-1}((\SL_2(\R)/\{\pm
1\})_\mathrm{ss}).
\]
\end{proof}

\begin{lem}\label{u}
Let $u$ be a unipotent element of a reductive Nash group $G$. Then
every neighborhood of $u$ in $G$ contains a semisimple element.
\end{lem}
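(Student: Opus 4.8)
The strategy is to exhibit a connected Nash subgroup $M\subset G$ with $\Lie M\cong\sl_2(\R)$ through which $u$ factors, and then to quote Lemma \ref{sl2}. We may assume $u\neq 1$, since $1$ is semisimple and belongs to every neighbourhood of itself. Moreover $\la u\ra$ is unipotent, hence connected by Lemma \ref{unic2}, so $u\in G^\circ$; as $G^\circ$ is reductive (Lemma \ref{redi}), is open in $G$, and the inclusion $G^\circ\hookrightarrow G$ preserves Jordan decompositions (Proposition \ref{pjd}), it suffices to treat $G=G^\circ$, so we assume $G$ connected.

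First I would show that $u$ lies in the semisimple part of $G$. Let $S$ be the analytic subgroup with $\Lie S=[\g,\g]$; it is semisimple, hence a Nash subgroup by Proposition \ref{autss}, and by the structure of connected reductive Nash groups there is a surjective Nash homomorphism $Z\times S\to G$ with finite kernel, $Z$ being the central Nash torus (Theorem \ref{strr} and Lemma \ref{znt}). By Proposition \ref{pr}, $G_\mathrm u$ is the image of $(Z\times S)_\mathrm u$; since the projection $Z\times S\to Z$ carries unipotent elements to unipotent elements and $Z_\mathrm u=\{1\}$ ($Z$ being a Nash torus, as in the proof of Lemma \ref{znt}), every unipotent element of $Z\times S$ lies in $\{1\}\times S$, whence $G_\mathrm u\subset S$. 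In particular $u\in S$, and $X:=\log u$ is a non-zero nilpotent element of the semisimple Lie algebra $[\g,\g]$. By the Jacobson--Morozov theorem, $X$ is part of an $\sl_2$-triple in $[\g,\g]$, whose span is a subalgebra $\m\cong\sl_2(\R)$. Let $M$ be the analytic subgroup of $G$ with $\Lie M=\m$; by Proposition \ref{autss} it is a Nash subgroup, it is connected, $\Lie M\cong\sl_2(\R)$, and $u=\exp(X)\in M$ because $\R X\subset\m$.

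By Lemma \ref{sl2}, $M_\mathrm{ss}$ is dense in $M$. Since $M$ is a Nash subgroup, hence a closed submanifold of $G$ with the subspace topology (Proposition \ref{subg}), any neighbourhood $U$ of $u$ in $G$ meets $M$ in a neighbourhood of $u$ in $M$, so $U\cap M$ contains a point $s\in M_\mathrm{ss}$. Finally, the inclusion $M\hookrightarrow G$ preserves Jordan decompositions (Proposition \ref{pjd}), so $s$, being semisimple in $M$, is semisimple in $G$; thus $U$ contains a semisimple element of $G$, which proves the lemma. I expect the main point requiring care to be the inclusion $u\in S$: one must check that its proof uses only material preceding the present lemma --- which it does, since Theorem \ref{strr}, Lemma \ref{znt}, and Proposition \ref{pr} are all established earlier and independently of Lemma \ref{u} --- and that the finite cover $Z\times S\to G$ is available in the needed generality; everything afterward is routine given Jacobson--Morozov.
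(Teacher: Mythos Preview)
Your proof is correct and follows essentially the same route as the paper: reduce to $u\neq 1$, place $\log u$ inside $[\g,\g]$, invoke Jacobson--Morozov to get an $\sl_2(\R)$-subalgebra, pass to the corresponding analytic (hence Nash, by Proposition \ref{autss}) subgroup, and apply Lemma \ref{sl2}. The only cosmetic difference is that the paper obtains $\log u\in[\g,\g]$ directly at the Lie algebra level---observing that every element of the center $\z$ of the reductive Lie algebra $\g$ is semisimple, so the $\z$-component of $\log u$ vanishes---rather than via your reduction to $G^\circ$ and the finite cover $Z\times S\to G^\circ$.
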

\begin{proof}
The lemma is trivial when $u=1$. So assume that $u\neq 1$. Since
every element of the center of $\g$ is semisimple, $\log (u)$ belongs
to the semisimple Lie algebra $[\g,\g]$. Since $\log (u)$ is
unipotent, the linear operator
\[
\ad_{\log (u)}: [\g,\g]\rightarrow
[\g,\g],\quad x\mapsto [\log (u), x]
\]
is nilpotent. Therefore, by Jacobson-Morozov Theorem, there
is a Lie subalgebra $\g_0$ of $\g$ containing $\log (u)$ which is
isomorphic to $\sl_2(\R)$. Denote by $G_0$ the analytic subgroup of
$G$ with Lie algebra $\g_0$. It is a Nash subgroup by Proposition
\ref{autss}. The lemma then follows by Lemma \ref{sl2}.

\end{proof}

We are now ready to prove Theorem \ref{denses}. Let $x$ be an
element of a reductive Nash group $G$. The centralizer
$\oZ_G(x_\mathrm e x_\mathrm h)$ of $x_\mathrm e x_\mathrm h$ in $G$
equals the centralizer of the Nash quasi-torus $\la x_\mathrm e
x_\mathrm h\ra$ in $G$. Therefore it is a reductive Nash subgroup of
$G$ by Proposition \ref{rtr2}. Note that the product of two
commuting semisimple elements in an almost linear Nash group is
again semisimple. By Lemma \ref{u}, every neighborhood of $x_\mathrm
u$ in $\oZ_G(x_\mathrm e x_\mathrm h)$ contains a semisimple
element. Therefore every neighborhood of $x=(x_\mathrm e x_\mathrm
h) x_\mathrm u$ in $G$ contains a semisimple element. This finishes
the proof of Theorem \ref{denses} in the group case. The Lie algebra
case is proved similarly.

\section{Levi decompositions}

Let $G$ be an almost linear Nash group. Put
\[
  \mathfrak U_G:=\textrm{the identity connected component of } \bigcap_{\pi} \ker \pi,
\]
where $\pi$ runs through all irreducible Nash representations of
$G$. By Proposition \ref{subgi}, $\mathfrak U_G$ is a Nash subgroup
of $G$.

\begin{prp}\label{0unir}
The group $\mathfrak U_G$ is the largest normal unipotent  Nash
subgroup of $G$.
\end{prp}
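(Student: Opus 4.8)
The plan is to prove two things: that $\mathfrak{U}_G$ is a normal unipotent Nash subgroup, and that it contains every normal unipotent Nash subgroup. Normality is immediate: each $\ker\pi$ is a normal Nash subgroup (being the kernel of a Nash homomorphism, hence semialgebraic by Lemma~\ref{images}), the intersection $\bigcap_\pi \ker\pi$ is again a Nash subgroup by Proposition~\ref{subgi}, and the identity connected component of a normal subgroup is normal. So $\mathfrak{U}_G$ is a normal Nash subgroup of $G$.

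The core of the argument is showing $\mathfrak{U}_G$ is unipotent. Since $\mathfrak{U}_G$ is connected, by Lemma~\ref{nilc} it suffices to produce \emph{one} Nash representation of $\mathfrak{U}_G$ with finite kernel on which every element acts unipotently. First I would observe that the intersection defining $\mathfrak{U}_G$ is realized by finitely many irreducible Nash representations $\pi_1,\dots,\pi_r$ (again Proposition~\ref{subgi}: Nash subgroups satisfy the descending chain condition, so finitely many kernels already cut out the full intersection). Fix a Nash representation $V_0$ of $G$ with finite kernel. Then $V := V_0 \oplus \pi_1 \oplus \cdots \oplus \pi_r$ is a Nash representation of $G$ whose restriction to $\mathfrak{U}_G$ has finite kernel (because $V_0$ already does). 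It remains to check that every element $g\in\mathfrak{U}_G$ acts unipotently on $V$. On each $\pi_i$ it acts trivially by definition of $\mathfrak{U}_G$, so the only issue is the action on $V_0$. For this I would use the Jordan decomposition $g = g_{\mathrm e}g_{\mathrm h}g_{\mathrm u}$ from Theorem~\ref{jd}: it suffices to show $g_{\mathrm e}=g_{\mathrm h}=1$ for $g\in\mathfrak{U}_G$. The key point is that $\langle g_{\mathrm e}\rangle$ is a compact (elliptic) Nash subgroup contained in the connected group $\mathfrak{U}_G$; but any irreducible Nash representation of $G$ decomposes $\mathfrak{U}_G$-stably into irreducibles of $\mathfrak{U}_G$, and since $\mathfrak{U}_G$ lies in every such kernel, it acts trivially on every irreducible Nash $G$-representation, hence on every Nash $G$-representation that is completely reducible over $\mathfrak{U}_G$ — in particular on the complexification of $V_0$ via its composition factors, forcing the eigenvalues of $g$ on $V_0$ to all equal $1$ by Lemma~\ref{jgl}. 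More carefully: the composition factors of $V_{0,\C}$ as a $G$-representation are irreducible Nash representations of $G$ (after choosing a full flag; irreducibility of composition factors), on each of which $\mathfrak{U}_G$ acts trivially, so $g$ is unipotent on $V_0$. Then Lemma~\ref{jgl} gives $g_{\mathrm e} = g_{\mathrm h} = 1$, i.e.\ $g = g_{\mathrm u}$ acts unipotently on all of $V$. Hence $\mathfrak{U}_G$ is unipotent.

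For maximality: let $U$ be any normal unipotent Nash subgroup of $G$. I claim $U$ acts trivially on every irreducible Nash representation $\pi$ of $G$. Indeed $\pi|_U$ is a Nash representation of the unipotent Nash group $U$, so by Proposition~\ref{j0unip} every element of $U$ acts as a unipotent operator; by Lemma~\ref{unic3} the space of $U$-fixed vectors $\pi^U$ is nonzero. Since $U$ is normal in $G$, the subspace $\pi^U$ is $G$-stable, and irreducibility of $\pi$ forces $\pi^U = \pi$, i.e.\ $U\subset\ker\pi$. Therefore $U\subset\bigcap_\pi\ker\pi$, and since $U$ is connected, $U\subset\mathfrak{U}_G$. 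This proves $\mathfrak{U}_G$ is the largest normal unipotent Nash subgroup.

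The step I expect to be the main obstacle is the unipotence of $\mathfrak{U}_G$ — specifically pinning down rigorously why $g\in\mathfrak{U}_G$ acts unipotently on the auxiliary faithful piece $V_0$. The clean way is to reduce, via composition factors of a full flag, to irreducible Nash $G$-representations on which $\mathfrak{U}_G$ by construction acts trivially, then invoke Lemma~\ref{jgl}; the rest (normality, the DCC reduction to finitely many $\pi_i$, and maximality via the fixed-vector argument) is routine given the earlier results.
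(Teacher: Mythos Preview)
Your proof is correct and shares the paper's core ideas. For unipotence the paper is more direct: take any Nash representation $V$ of $G$ with finite kernel, observe that $\mathfrak{U}_G$ acts trivially on each factor of a Jordan--H\"older series of $V$ (these being irreducible Nash representations of $G$), hence acts by unipotent operators on $V$, and invoke Lemma~\ref{nilc}; your direct sum with $\pi_1,\dots,\pi_r$ and the preliminary appeal to the Jordan decomposition of $g$ are unnecessary detours, as your ``more carefully'' remark effectively concedes. For maximality the paper argues via the first part of Lemma~\ref{g12} (complete reducibility of $\pi|_U$ since $U\trianglelefteq G$ and $\pi$ is irreducible) together with Proposition~\ref{t}, rather than your fixed-vector argument via Lemma~\ref{unic3}; both work, though for yours you should invoke Proposition~\ref{nashucs} to justify that $U$ is connected \emph{before} applying Lemma~\ref{unic3}, not only at the final step.
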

\begin{proof}
It is obvious that $\mathfrak U_G$ is a normal subgroup of $G$. Take
a Nash representation $V$ of $G$ with finite kernel. Note that
$\mathfrak U_G$ acts trivially on all irreducible Nash representations of
$G$. Therefore, by taking a Jordan-H\"{o}lder series of $V$, we know
that $\mathfrak U_G$ acts on $V$ as unipotent linear operators. Then Lemma \ref{nilc} implies that $\mathfrak U_G$ is a unipotent Nash group.

Let $U$ be a normal unipotent  Nash subgroup of $G$. It is connected
by Proposition \ref{nashucs}. For every irreducible Nash representation
$\pi$ of $G$, the restriction $\pi|_{U}$ is completely reducible by the first assertion of
Lemma \ref{g12}. Since $U$ is unipotent, Proposition \ref{t}
implies that $U$ acts trivially on $\pi$. This shows that $U\subset
\mathfrak U_G$.

\end{proof}

We call $\mathfrak U_G$ the unipotent radical of $G$.

\begin{lemp}\label{reduni}
An almost linear Nash group is reductive if and only if its
unipotent radical is trivial.
\end{lemp}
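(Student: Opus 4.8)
The plan is to deduce both implications directly from the definitions together with Proposition \ref{0unir} and Lemma \ref{nilc}, bypassing the structure theory of reductive groups entirely.

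For the implication ``reductive $\Rightarrow\mathfrak U_G$ trivial'', I would take a completely reducible Nash representation $V$ of $G$ with finite kernel, decompose $V=\bigoplus_i V_i$ into irreducible Nash subrepresentations, and observe that $\bigcap_\pi\ker\pi\subseteq\bigcap_i\ker(G\to\GL(V_i))=\ker(G\to\GL(V))$, the latter being finite. Hence $\mathfrak U_G$, being the identity connected component of $\bigcap_\pi\ker\pi$, is trivial.

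For the converse, assume $\mathfrak U_G=\{1\}$. Fix a Nash representation $V$ of $G$ with finite kernel and a Jordan--H\"older filtration $0=V_0\subset V_1\subset\cdots\subset V_k=V$ by $G$-submodules, with irreducible subquotients $W_i:=V_i/V_{i-1}$. Each $V_i$ is a linear subspace, hence a Nash submanifold of $V$, so the restricted action makes $V_i$ a Nash representation; applying the submersion criterion of Lemma \ref{subm0} to the linear quotient map $V_i\to W_i$ shows that $W_i$ is a Nash representation of $G$, with attached Nash homomorphism $\pi_i\colon G\to\GL(W_i)$. Then $W:=\bigoplus_i W_i$ is a completely reducible Nash representation of $G$ (a direct sum of irreducibles), and its kernel is $K:=\bigcap_i\ker\pi_i$, a normal Nash subgroup of $G$.

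It then remains to see that $K$ is finite. By construction every $g\in K$ preserves the filtration $V_\bullet$ and acts trivially on each $W_i$, so $g$ acts on $V$ as a unipotent linear operator. Thus the identity connected component $K^\circ$ is a connected Nash group admitting the Nash representation $V$, whose kernel $K^\circ\cap\ker(G\to\GL(V))$ is finite and on which every element acts unipotently; by Lemma \ref{nilc}, $K^\circ$ is unipotent. Being a normal unipotent Nash subgroup of $G$, Proposition \ref{0unir} forces $K^\circ\subseteq\mathfrak U_G=\{1\}$, so $K$ is a $0$-dimensional Nash subgroup, hence finite. Therefore $W$ is a completely reducible Nash representation of $G$ with finite kernel, so $G$ is reductive. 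The only point that will require care is checking that the subquotients $W_i$ genuinely carry Nash representation structures and that $K^\circ$ satisfies the hypotheses of Lemma \ref{nilc}; the remainder is formal.
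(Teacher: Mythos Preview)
Your proof is correct but takes a genuinely different route from the paper's. The paper dispatches the ``if'' direction in a single line via Proposition~\ref{subgi}: the descending chain condition on Nash subgroups gives $\bigcap_\pi\ker\pi=\bigcap_{i=1}^r\ker\pi_i$ for finitely many irreducible Nash representations $\pi_1,\dots,\pi_r$; since $\mathfrak U_G=\{1\}$, this intersection is zero-dimensional, hence finite, and $\pi_1\oplus\cdots\oplus\pi_r$ is the desired completely reducible representation with finite kernel. You instead start from a fixed Nash representation $V$ with finite kernel and manufacture the irreducibles as the Jordan--H\"older subquotients of $V$, then show the kernel of the resulting sum has trivial identity component by invoking Lemma~\ref{nilc} and Proposition~\ref{0unir}. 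Your argument is more constructive and avoids the Noetherian finiteness of Proposition~\ref{subgi}, at the cost of essentially rerunning the first paragraph of the proof of Proposition~\ref{0unir}; the paper's argument is shorter but leans on the chain condition. Both are sound; the subquotient step you flag as delicate is indeed routine, since the homomorphism $G\to\GL(W_i)$ factors through the Nash map sending a flag-preserving operator to its induced action on the $i$-th graded piece, which is polynomial.
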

\begin{proof}
The ``only if" part of the Lemma is obvious. The ``if" part is
implied by Proposition \ref{subgi}.
\end{proof}

Proposition \ref{subgi} also implies that $G/\mathfrak U_G$ is a
reductive Nash group.

\begin{thmp}\label{uconj}
Every reductive Nash subgroup of $G$ is contained in a maximal one,
and all maximal reductive Nash subgroups of $G$ are conjugate to
each other under $\mathfrak U_G$. Moreover, for each maximal
reductive Nash subgroup $L$ of $G$, one has that $G=L\ltimes \mathfrak U_G$.

\end{thmp}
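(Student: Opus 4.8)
The plan is to prove by induction on $\dim \mathfrak U_G$ a single combined statement: \emph{$G$ admits a reductive Nash subgroup $L$ with $G=L\ltimes\mathfrak U_G$; every reductive Nash subgroup of $G$ lies in such a ``complement''; and any two complements are conjugate under $\mathfrak U_G$.} This yields the proposition. Indeed, any complement $L$ is a maximal reductive Nash subgroup: if $L'\supseteq L$ is reductive, then $L'\cap\mathfrak U_G$ is a normal unipotent Nash subgroup of $L'$, hence trivial by Lemma \ref{reduni}, so the quotient map restricts to an embedding $L'\hookrightarrow G/\mathfrak U_G$; but it already carries $L$ isomorphically onto $G/\mathfrak U_G$, forcing $L'=L$. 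Conversely every reductive Nash subgroup lies in a complement, hence in a maximal reductive Nash subgroup, and a maximal one is then itself a complement; so the conjugacy of maximal reductive Nash subgroups under $\mathfrak U_G$ is exactly the conjugacy of complements.

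For the induction, the base case $\mathfrak U_G=1$ is trivial ($G$ is reductive and $L=G$). In the inductive step I would first reduce to $\mathfrak U_G$ abelian: if $D:=[\mathfrak U_G,\mathfrak U_G]\neq 1$ it is an analytic, hence (Proposition \ref{nashucs000}) Nash, characteristic subgroup of $\mathfrak U_G$, normal in $G$, with $0<\dim D<\dim\mathfrak U_G$; one passes to $G/D$ (whose unipotent radical is $\mathfrak U_G/D$, of strictly smaller dimension), then to the preimage $H$ of a complement there (whose unipotent radical is $D$, again of smaller dimension), and applies the inductive hypothesis to $G/D$ and to $H$ in turn, reassembling $G=L\ltimes\mathfrak U_G$ and propagating the remaining two clauses. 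Once $\mathfrak U_G=:V$ is abelian it is a vector group on which $Q:=G/\mathfrak U_G$ acts by conjugation; this is a Nash representation of $Q$ (the conjugation map $G\times V\to V$ is Nash and factors through the surjective submersion $G\to Q$, cf.\ Lemma \ref{subm0}), hence completely reducible by Theorem \ref{thmredcr}. Splitting off any proper nonzero $Q$-submodule of $V$ — a $G$-normal vector subgroup — and running the same reduction leaves two base cases: (a) $V\cong\R$ with trivial $Q$-action, so $V$ is central in $G$; and (b) $V$ a nontrivial irreducible $Q$-module.

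In case (a) we have a central extension $1\to\R\to G\to Q\to 1$ of the reductive Nash group $Q$. Uniqueness of a complement follows from $\Hom(Q,\R)=0$: the semisimple part of $Q^\circ$ is perfect, and a Nash torus — a product of copies of $\mathbb S$ and $\R_+^\times$ — admits no nontrivial Nash homomorphism to the unipotent group $\R$ by Proposition \ref{jt} (components are dealt with by an averaging argument). Existence is extracted from the structure of abelian almost linear Nash groups (Proposition \ref{da}) applied to the preimage in $G$ of a maximal Nash torus of $Q$, together with Proposition \ref{autss} to split off the semisimple part. In case (b) one must build a Nash section $Q\to G$. At the Lie-algebra level $\g=\s\oplus\r$ with $\r\supseteq\mathfrak v:=\Lie V$ and $\r/\mathfrak v$ equal to the (abelian) center of $\Lie Q^\circ$; since $V$ has no nonzero $Q$-invariant, $\mathfrak v$ has no nonzero invariant for the center of $\Lie Q^\circ$, and one gets $H^{\geq 1}(\Lie Q,\mathfrak v)=0$ (Whitehead's lemmas on $\s$ plus a weight-space computation on the central torus). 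Hence $\g=\mathfrak l\oplus\mathfrak v$ with $\mathfrak l\cong\Lie Q^\circ$ a subalgebra; the corresponding analytic subgroup of $G^\circ$ is a Nash subgroup — its semisimple directions by Proposition \ref{autss}, and its central directions, which act on $V$ semisimply without invariants, are hyperbolic and exponentiate to hyperbolic (hence Nash) subgroups — and passage from $G^\circ$ to $G$ is handled by averaging over the finite component group. The same vanishing in degree $1$ gives the conjugacy of complements under $V$ (and, with a little more care, that every reductive subgroup lies in a complement).

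The hard part is precisely the splitting in case (b), where two features must be reconciled. First, ``reductive'' is strictly weaker than ``semisimple'', so Whitehead's lemmas alone do not kill the obstruction coming from the central torus of $Q$; this is exactly where the hypothesis that $V$ carries no $Q$-invariant is indispensable. Second, one must stay inside the Nash category — the cochains, the section, and the resulting complement all have to be semialgebraic — and this is the step at which the rigidity of Nash groups, as opposed to general Lie groups, is used: for instance $\Hom(\R_+^\times,\R)=0$ for Nash homomorphisms although the space of continuous ones is one-dimensional. It is this rigidity that makes the Nash Levi decomposition behave like the one for algebraic groups in characteristic zero rather than like the Levi--Mal'cev decomposition of a general Lie group.
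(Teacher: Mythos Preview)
Your inductive reduction—pass to $\mathfrak U_G$ abelian, then to an irreducible or trivial $Q$-module, then split cohomologically—is the classical route for linear algebraic groups and is genuinely different from the paper's proof. The paper instead fixes a faithful Nash representation $G\hookrightarrow\GL(V)$, quotes Mostow's theorems on maximal fully reducible Lie subalgebras of $\g\subset\gl(V)$ (Lemma~\ref{mo}) to obtain $\mathfrak l$ and its $\mathfrak U_G$-conjugacy in one stroke, and then uses density of semisimple elements in the reductive quotient $G/\mathfrak U_G$ (Theorem~\ref{denses}) to prove $\mathfrak l+\mathfrak u=\g$. Disconnectedness is handled by adjoining a maximal compact subgroup $K$ of the normalizer of $\mathfrak l$ and setting $L=KL_0$. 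No induction, no cohomology; the hard step is outsourced to Mostow. Your route would be more self-contained if it worked, but the paper's is shorter precisely because Mostow has already done the splitting.

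Your sketch has a genuine gap at the step of promoting the Lie-algebra complement $\mathfrak l$ to a \emph{Nash} subgroup. In case~(b) you claim the central directions of $\mathfrak l$ ``act on $V$ semisimply without invariants'' and ``are hyperbolic''; both clauses can fail. If $Q^\circ=\mathbb S$ acts by rotation on $V=\R^2$ the central direction is elliptic, not hyperbolic; and if the semisimple part of $Q^\circ$ carries the nontriviality while the torus acts trivially, then $V$ consists entirely of $\mathfrak z$-invariants. (The vanishing $H^{\le 2}(\operatorname{Lie}Q,\mathfrak v)=0$ is still salvageable via Hochschild--Serre once you assume $V$ is nontrivial for $Q^\circ$, but your stated reason is wrong.) Even granting the Lie-algebra splitting, semisimplicity of $\ad_x$ on $\g$ does not by itself exhibit $\exp(\mathfrak z')$ as semialgebraic in $G$. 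In case~(a) you apply Proposition~\ref{da} to the preimage of a maximal torus of $Q$, but that proposition requires an \emph{abelian} group, and the preimage is a priori only a central extension of a torus by $\R$, with a possibly nonzero $2$-cocycle (think Heisenberg). These gaps are repairable—Jordan decomposition inside $G$ (Theorem~\ref{jdi}, Proposition~\ref{pr2}) forces semisimple elements of a nilpotent subalgebra to be central and lets you extract Nash tori from lifts of torus elements—but they are exactly where ``Nash'' differs from ``Lie'', and your sketch waves past them. The paper sidesteps all of this: once $\mathfrak l$ is maximal fully reducible in $\gl(V)$, the smallest Nash subgroup of $G$ containing the analytic subgroup $L_0$ is again reductive (same invariant subspaces of $V$), hence has fully reducible Lie algebra, hence equals $L_0$ by maximality.
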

The equality $G=L\ltimes \mathfrak U_G$ of Theorem \ref{uconj} is
called a Levi decomposition of $G$, and a maximal reductive Nash
subgroup of $G$ is called a Levi component of $G$.

The rest of this section is devoted to a proof of Theorem
\ref{uconj}. We fist recall some results of G. D. Mostow on linear
Lie algebras.

For a finite dimensional Lie algebra $\g$ over a field of
characteristic zero, write $\operatorname{Rad}(\g)$ for its
radical, namely, the largest solvable ideal of $\g$. For a Lie subalgebra $\h$  of $\g$, we define $\operatorname I_\g(\h)$ to be the
subgroup of the automorphism group $\Aut(\g)$ generated by
the set
\[
  \{\exp (\ad_x)\mid \textrm{$x\in \h$, the linear operator $\ad_x: \g\rightarrow \g, \,y\mapsto [x,y]$ is nilpotent}\}.
\]

Given a finite dimensional vector space $V$, we say that a subset $R\subset \g\l(V)$ is fully
reducible if each $R$-stable subspace of $V$ has a complementary $R$-stable subspace. This generalizes the notion of ``semisimple linear operators".

\begin{lemp}\label{mo} (\cite[Theorems 4.1 and 5.1]{Mo})
Let $V$ be a finite dimensional vector space over a field of
characteristic zero, and let $\g$ be a Lie subalgebra of $\gl(V)$.
\begin{itemize}
  \item  All maximal fully reducible Lie subalgebras of $\g$ are conjugate to each other under $\operatorname I_\g(\operatorname{Rad}([\g,\g])$.
  \item Let $R$ be a fully reducible subgroup of $\GL(V)$. If $R$ normalizes $\g$, then $R$ normalizes a maximal fully reducible Lie subalgebra of $\g$.
\end{itemize}

\end{lemp}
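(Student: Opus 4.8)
The plan is to prove Mostow's two assertions by passing to an algebraically closed base field and inducting on the complexity of $\g\subseteq\gl(V)$, organizing everything around the ideal of $\g$ consisting of those endomorphisms of $V$ that are nilpotent. First I would record the structure of a fully reducible subalgebra $\h\subseteq\gl(V)$: since $V$ is a semisimple $\h$-module and $\h\hookrightarrow\gl(V)$ is faithful, Jacobson's theorem (Lemma~\ref{jacob}) shows that $\h$ is reductive, $\h=\z(\h)\oplus[\h,\h]$ with $[\h,\h]$ semisimple, and the $[\h,\h]$-isotypic decomposition of $V$ shows that every element of $\z(\h)$ acts semisimply on $V$. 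Because semisimplicity of a linear operator is insensitive to field extension, passing to $\bar k$ preserves and detects full reducibility and carries the set of maximal fully reducible subalgebras to a Galois-stable set, so it suffices to prove conjugacy over $\bar k$ and descend; existence of maximal fully reducible subalgebras is immediate since their dimension is bounded by $\dim\gl(V)$.

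The structural core is the following statement, to be proved by induction on $\dim\g$: if $\u\subseteq\g$ denotes the largest ideal all of whose elements are nilpotent operators on $V$, then $\u\subseteq\operatorname{Rad}([\g,\g])$ and, for every maximal fully reducible subalgebra $\h$ of $\g$, one has $\g=\h\oplus\u$ as vector spaces, with $[\h,\h]$ a Levi subalgebra of $\g$ and $\z(\h)\subseteq\operatorname{Rad}(\g)$. That $\h\cap\u=0$ is the semisimple-versus-nilpotent argument: $\h\cap\u$ is a nilpotent ideal of the reductive algebra $\h$, hence central in $\h$, hence acts on $V$ both semisimply and nilpotently, so it vanishes. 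The equality $\g=\h+\u$, and the identification of $[\h,\h]$ with a Levi subalgebra, are obtained by splitting off a minimal nonzero ideal of $\g/\u$ (or of $\g$) and lifting a maximal fully reducible subalgebra through the resulting extension, using the inductive hypothesis; maximality of $\h$ is precisely what forces $[\h,\h]$ to be as large as possible and $\z(\h)$ to lie in the radical.

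For the conjugacy statement, take maximal fully reducible $\h_1,\h_2$. By the structural lemma $[\h_1,\h_1]$ and $[\h_2,\h_2]$ are Levi subalgebras of $\g$, so by the Malcev--Harish-Chandra theorem there is an automorphism $\exp(\ad_x)$, with $x$ in the nilpotent ideal $[\g,\operatorname{Rad}(\g)]$, carrying $[\h_1,\h_1]$ onto $[\h_2,\h_2]=:\s$; since $[\g,\operatorname{Rad}(\g)]$ is contained in $\operatorname{Rad}([\g,\g])$ and $\ad_x$ is nilpotent, this automorphism lies in $\operatorname{I}_\g(\operatorname{Rad}([\g,\g]))$, so we may assume $[\h_1,\h_1]=[\h_2,\h_2]=\s$. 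Then $\h_i=\s\oplus\z_i$ with $\z_i\subseteq\c_\g(\s)$ a maximal subalgebra consisting of operators that are semisimple on $V$, and the problem reduces to conjugating $\z_1$ onto $\z_2$ inside $\c_\g(\s)$ by an element of $\operatorname{I}_\g(\operatorname{Rad}([\g,\g]))$. This last step --- the conjugacy of maximal ``toral'' subalgebras of the possibly solvable linear Lie algebra $\c_\g(\s)$ --- is the technical heart, handled by a separate induction together with a careful check that the conjugations produced there extend to elements of $\operatorname{I}_\g(\operatorname{Rad}([\g,\g]))$ of the original $\g$.

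For the second assertion, let $\Sigma$ be the nonempty set of maximal fully reducible subalgebras of $\g$; a fully reducible subgroup $R\subseteq\GL(V)$ normalizing $\g$ acts on $\Sigma$. By the first assertion $\Sigma$ is a single orbit of the group $U:=\operatorname{I}_\g(\operatorname{Rad}([\g,\g]))$, which is generated by exponentials of $\ad$-nilpotent elements of a solvable ideal and is therefore a connected unipotent-type group; fixing a base point exhibits $\Sigma$ as an $R$-equivariant torsor under $U$. Since $R$ is fully reducible, its Zariski closure is a reductive, hence linearly reductive, algebraic group, so the nonabelian cohomology $H^1(R,U)$ vanishes --- by filtering $U$ along a central series with vector-group quotients, on which the rational cohomology of a linearly reductive group vanishes in positive degree --- and therefore $R$ fixes some point of $\Sigma$, i.e.\ normalizes a maximal fully reducible subalgebra. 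The main obstacle throughout is the abelian ``toral'' piece: showing that the maximal subalgebras of semisimple operators inside a solvable linear Lie algebra are mutually conjugate, and that every conjugation assembled along the way lands in $\operatorname{I}_\g(\operatorname{Rad}([\g,\g]))$ rather than in some larger automorphism group --- this bookkeeping is precisely the delicate part of Mostow's arguments in \cite{Mo}.
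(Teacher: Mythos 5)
The paper offers no proof of this lemma at all: it is quoted from Mostow, and the citation \cite[Theorems 4.1 and 5.1]{Mo} \emph{is} the proof. Measured as an independent proof, your proposal is a plausible roadmap of Mostow's argument but has genuine gaps. For the first bullet, the Malcev--Harish-Chandra reduction to $[\h_1,\h_1]=[\h_2,\h_2]=\s$ is reasonable (the conjugating automorphism is $\exp(\ad_x)$ with $x\in[\g,\operatorname{Rad}(\g)]\subset\operatorname{Rad}([\g,\g])$ and $\ad_x$ nilpotent), but it already uses the unproved claim that $[\h_i,\h_i]$ are Levi subalgebras of $\g$, and everything after it --- conjugating $\z_1$ onto $\z_2$ inside $\c_\g(\s)$ by an automorphism that lies in $\operatorname{I}_\g(\operatorname{Rad}([\g,\g]))$ \emph{and} preserves $\s$ --- is precisely the substance of Mostow's Theorem 4.1, which you explicitly defer (``a separate induction together with a careful check''). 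The ``structural core'' ($\g=\h\oplus\u$, $\z(\h)\subset\operatorname{Rad}(\g)$, etc.) is likewise asserted to follow from an induction that is never carried out, and the descent of conjugacy from $\bar k$ to $k$ is not automatic: one must argue, e.g., that the transporter is a torsor under a unipotent group and that such torsors have rational points in characteristic zero.

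For the second bullet one step is false as stated: the set $\Sigma$ of maximal fully reducible subalgebras is a transitive $U$-set for $U:=\operatorname{I}_\g(\operatorname{Rad}([\g,\g]))$, but not in general a torsor, since the stabilizer of $\h$ contains $\exp(\ad_x)$ for every $\ad$-nilpotent $x\in\operatorname{Rad}([\g,\g])$ centralizing $\h$, and such $x$ need not be central in $\g$. Concretely, let $\h$ be spanned by an element $h$ acting on the nilpotent algebra $\u$ with basis $v_1,v_{-1},w,t$ of $\h$-weights $1,-1,0,1$ and nonzero brackets $[v_1,v_{-1}]=w$, $[w,v_1]=t$; realizing $\g=\h\ltimes\u$ by its (faithful) adjoint representation, one has $\operatorname{Rad}([\g,\g])=\u$, the element $\exp(\ad_w)\in U$ is nontrivial and fixes $\h$, while $\exp(\ad_{v_1})$ moves $\h$, so $\Sigma$ is a $U$-orbit with nontrivial stabilizer. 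Hence the fixed-point argument must be run on the homogeneous space $U/U_\h$ (with $U_\h$ shown to be unipotent), not on a torsor; moreover the vanishing of $H^1$ has to be applied to the Zariski closure of $R$ with algebraic cocycles ($R$ is only an abstract fully reducible subgroup, and group cohomology of, say, an infinite cyclic $R$ with vector coefficients does not vanish), after checking that this closure still normalizes $\g$ and acts algebraically on $\Sigma$. All of this looks repairable, but as written the proposal does not prove Mostow's theorems; if the intent is, as in the paper, simply to invoke \cite{Mo}, no argument is needed, and if the intent is to reprove them, the deferred steps are exactly where the work lies.
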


Now let $G$ be an almost linear Nash group as before. In the rest of this section, denote $\g$ and
$\u$  the Lie algebras of $G$ and $\mathfrak U_G$, respectively.
\begin{lemp}\label{ggu}
One has that $\operatorname{Rad}([\g,\g])\subset \u$.
\end{lemp}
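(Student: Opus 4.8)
The plan is to exploit the reductivity of $G/\mathfrak U_G$. Recall that $\mathfrak U_G$ is the largest normal unipotent Nash subgroup of $G$ and that $G/\mathfrak U_G$ is a reductive Nash group. Since the quotient map $G\rightarrow G/\mathfrak U_G$ is a submersive Nash homomorphism (Proposition \ref{quotient}), its differential identifies $\Lie(G/\mathfrak U_G)$ with $\g/\u$. By Lemma \ref{jacob2} the Lie algebra $\g/\u$ is reductive, hence decomposes as $\mathfrak z\oplus \mathfrak s$, where $\mathfrak z$ is its center and $\mathfrak s:=[\g/\u,\g/\u]$ is semisimple; in particular $\mathfrak z\cap\mathfrak s=\{0\}$.

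Let $\pi: \g\rightarrow \g/\u$ denote the projection. First I would observe that $[\g,\g]$ is an ideal of $\g$, hence so is its radical $\operatorname{Rad}([\g,\g])$, being a characteristic ideal of $[\g,\g]$; consequently $\pi(\operatorname{Rad}([\g,\g]))$ is an ideal of $\g/\u$. As the image of a solvable Lie algebra it is solvable, so it is a solvable ideal of the reductive Lie algebra $\g/\u$ and therefore lies in the center $\mathfrak z$. On the other hand, since $\pi$ is surjective, $\pi(\operatorname{Rad}([\g,\g]))\subset \pi([\g,\g])=[\g/\u,\g/\u]=\mathfrak s$. Since $\mathfrak z\cap\mathfrak s=\{0\}$, I conclude $\pi(\operatorname{Rad}([\g,\g]))=\{0\}$, that is, $\operatorname{Rad}([\g,\g])\subset \ker\pi=\u$, which is exactly the assertion.

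I do not expect a serious obstacle here: the proof is a routine combination of the standard structure theory of reductive Lie algebras over a field of characteristic zero with the already-established reductivity of $G/\mathfrak U_G$ (via Lemma \ref{jacob2}). The only points needing a little care are that $\operatorname{Rad}([\g,\g])$ is genuinely an ideal of all of $\g$ — so that its image under the surjection $\pi$ is again an ideal — and that a solvable ideal of a reductive Lie algebra is central; both are elementary and can be dispatched in a line each.
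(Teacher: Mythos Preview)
Your proof is correct and follows essentially the same route as the paper: both invoke Lemma \ref{jacob2} to get that $\g/\u$ is reductive, hence $[\g/\u,\g/\u]$ is semisimple, and then observe that the image of $\operatorname{Rad}([\g,\g])$ there is a solvable ideal, hence zero. The paper argues slightly more directly by working inside $[\g,\g]/([\g,\g]\cap\u)\cong[\g/\u,\g/\u]$: since $\operatorname{Rad}([\g,\g])$ is already an ideal of $[\g,\g]$, its image is automatically a solvable ideal of this semisimple quotient, so it vanishes---no need to pass through the center $\mathfrak z$ or to invoke the (true but slightly less elementary) fact that $\operatorname{Rad}([\g,\g])$ is a characteristic ideal and hence an ideal of all of $\g$.
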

\begin{proof}
By Lemma \ref{jacob2}, the Lie algebra $\g/\u$ is reductive.
Therefore
\begin{equation}\label{gg}
[\g,\g]/([\g,\g]\cap \u)\cong [\g/\u,\g/\u]
\end{equation}
is semisimple. Note that
\begin{equation}\label{gg2}
(\operatorname{Rad}([\g,\g])+ ([\g,\g]\cap \u))/([\g,\g]\cap \u)
\end{equation}
is a solvable ideal of the semisimple Lie algebra \eqref{gg}.
Therefore  \eqref{gg2} is the zero ideal, and the lemma follows.
\end{proof}

Fix a Nash homomorphism $\varphi: G\rightarrow \GL(V)$  with
finite kernel, where $V$ is a finite dimensional real vector space. Then $\g$ is identified with a Lie subalgebra of
$\gl(V)$.

\begin{lemp}\label{frc}
All maximal fully reducible Lie subalgebras of $\g$ are conjugate to
each other under $\mathfrak U_G$.
\end{lemp}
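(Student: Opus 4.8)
The plan is to derive this lemma almost immediately from Mostow's result, the first bullet of Lemma \ref{mo}, combined with the containment $\operatorname{Rad}([\g,\g])\subset\u$ established in Lemma \ref{ggu}. Recall that Lemma \ref{mo} asserts that all maximal fully reducible Lie subalgebras of $\g$ are conjugate under the group $\operatorname I_\g(\operatorname{Rad}([\g,\g]))$ of automorphisms of $\g$; the single point to check is that this group is contained in $\Ad(\mathfrak U_G)$, so that conjugacy under it is conjugacy under $\mathfrak U_G$.

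To see this, I would first note that $G$ acts on $\g$ through the Nash homomorphism $\Ad\colon G\to\GL(\g)$, whose differential is $\ad\colon\g\to\gl(\g)$, and that $\Ad(\exp x)=\exp(\ad_x)$ for every $x\in\g$, where the right-hand exponential is that of $\GL(\g)$. Next, since $\mathfrak U_G$ is a unipotent Nash group, Proposition \ref{autsu0} gives that $\exp\colon\u\to\mathfrak U_G$ is a Nash diffeomorphism; in particular $\exp(\u)=\mathfrak U_G$. By Lemma \ref{ggu}, any $x\in\operatorname{Rad}([\g,\g])$ lies in $\u$, so the generator $\exp(\ad_x)$ of $\operatorname I_\g(\operatorname{Rad}([\g,\g]))$ equals $\Ad(\exp x)$ with $\exp x\in\mathfrak U_G$; taking products, $\exp(\ad_{x_1})\cdots\exp(\ad_{x_k})=\Ad(\exp x_1\cdots\exp x_k)$ with $\exp x_1\cdots\exp x_k\in\mathfrak U_G$. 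Hence $\operatorname I_\g(\operatorname{Rad}([\g,\g]))\subset\Ad(\mathfrak U_G)$.

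Finally, given two maximal fully reducible Lie subalgebras $\h_1,\h_2$ of $\g$ (any fully reducible subalgebra is contained in a maximal one by finite-dimensionality), the first bullet of Lemma \ref{mo} yields $\sigma\in\operatorname I_\g(\operatorname{Rad}([\g,\g]))$ with $\sigma(\h_1)=\h_2$; writing $\sigma=\Ad(g)$ with $g\in\mathfrak U_G$ shows $\Ad(g)\h_1=\h_2$, which is the assertion. The only mildly delicate point — and it is essentially the whole content of the argument — is matching the ``formal'' exponential of nilpotent operators appearing in the definition of $\operatorname I_\g$ with $\Ad$ of the genuine group exponential, and knowing that the latter maps $\u$ onto $\mathfrak U_G$; both are supplied by the identity $\Ad\circ\exp=\exp\circ\ad$ and by Proposition \ref{autsu0}, so I do not anticipate any real obstacle.
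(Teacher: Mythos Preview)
Your proposal is correct and follows the same approach as the paper: both combine Mostow's conjugacy result (the first assertion of Lemma \ref{mo}) with the containment $\operatorname{Rad}([\g,\g])\subset\u$ from Lemma \ref{ggu}. The paper's proof is a single sentence declaring the lemma a direct consequence of these two facts, while you have carefully spelled out the intermediate step $\operatorname I_\g(\operatorname{Rad}([\g,\g]))\subset\Ad(\mathfrak U_G)$ via $\exp(\ad_x)=\Ad(\exp x)$ and the surjectivity of $\exp\colon\u\to\mathfrak U_G$; this is exactly the unwinding the paper leaves implicit.
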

\begin{proof}
This is a direct consequence of Lemma \ref{ggu} and the first
assertion of Lemma \ref{mo}.
\end{proof}

Fix a pair $(\l,K)$ where $\l$ is a maximal fully reducible Lie
subalgebra  of $\g$, and $K$ is a maximal compact subgroup of the
normalizer $\tilde L$ of $\l$ in $G$. Lemma \ref{frc} and Theorem
\ref{compact} imply that all such pairs are conjugate to each
other under $G$. Denote by $L_0$ the analytic subgroup of $G$ with
Lie algebra $\l$.

\begin{lemp}
The subgroup $L_0$ of $G$ is a reductive Nash subgroup of $G$.
\end{lemp}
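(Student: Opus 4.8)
The plan is to realise $L_0$ as an almost direct product of a Nash torus and a connected semisimple Nash subgroup and then to quote Theorem \ref{strr}. Since $\l$ acts faithfully and completely reducibly on $V$, Lemma \ref{jacob} shows that $\l$ is reductive; write $\l=\z\oplus\s$ with $\z$ the centre of $\l$ and $\s=[\l,\l]$ semisimple, and let $Z_0$ and $S_0$ be the analytic subgroups of $G$ with Lie algebras $\z$ and $\s$. Proposition \ref{autss} gives that $S_0$ is a connected semisimple Nash subgroup of $G$. Granting that $Z_0$ is a Nash torus (this is the real content; see below), the two subgroups commute because $[\z,\s]=0$, so the multiplication map $Z_0\times S_0\to G$ is a Nash homomorphism; its image $Z_0 S_0$ is a Nash subgroup of $G$ (Proposition \ref{subgi}) with Lie algebra $\z\oplus\s=\l$, hence equals $L_0$, and its kernel, isomorphic to $Z_0\cap S_0$, is a Nash subgroup with trivial Lie algebra, hence finite. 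Thus $L_0=Z_0 S_0$ is a connected Nash subgroup receiving a surjective Nash homomorphism $Z_0\times S_0\to L_0$ with finite kernel, and Theorem \ref{strr} shows that $L_0$ is a reductive Nash subgroup of $G$.

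It remains to prove that $Z_0$ is a Nash torus. First, $\z$ consists of semisimple elements of $\g$: by Schur's lemma each element of $\z$ acts by a scalar on every irreducible constituent of $V_{\C}$ as an $\l_{\C}$-module, hence acts semisimply on $V$, so by Proposition \ref{pr3} its unipotent part in $\g$ vanishes. Now fix $z\in\z$. By Lemma \ref{sst} the replica $\la z\ra$ is a Nash torus, and since $\exp(\R z)$ centralises $\l$ and the centraliser of $\l$ in $G$ is a Nash subgroup — it is the set where the Nash representation $\Ad$ acts trivially on $\l$, a Zariski-closed condition — the whole of $\la z\ra$ centralises $\l$. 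Consequently $\l+\Lie\,\la z\ra$ is again a reductive subalgebra of $\gl(V)$, its centre $\z+\Lie\,\la z\ra$ being a commuting family of semisimple operators; hence it acts completely reducibly on $V$ (Weyl's theorem, Lemma \ref{ssg0}, applied to $\s$, together with simultaneous diagonalisability of commuting semisimple operators on the multiplicity spaces). Maximality of $\l$ then forces $\Lie\,\la z\ra\subset\l$, and since $\Lie\,\la z\ra$ centralises $\l$ this gives $\Lie\,\la z\ra\subset\z$, i.e. $\la z\ra\subset Z_0$. Finally, every finite product $\la z_1\ra\cdots\la z_k\ra$ with $z_i\in\z$ is a Nash torus: the factors all lie in the abelian group $Z_0$, so the multiplication map is a Nash homomorphism whose image is an abelian Nash subgroup, connected and with no non-trivial unipotent element, hence a Nash torus by Proposition \ref{da}. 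Such a product of maximal dimension has Lie algebra all of $\z$ by the previous step, and being connected with the same Lie algebra as $Z_0$ it coincides with $Z_0$; so $Z_0$ is a Nash torus.

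The main obstacle is exactly the claim that $Z_0$ is semialgebraic: a priori the analytic subgroup attached to an abelian subalgebra of semisimple elements need not be a Nash subgroup (the analogue over a one-parameter subgroup of an ordinary torus with irrational slope fails), and it is the maximality of $\l$, exploited through the replicas $\la z\ra$ — which are Nash tori for free by Lemma \ref{sst} — together with the Mostow-type argument of enlarging $\l$ by a central semisimple piece, that forces $\z$ to be ``rational enough''. Everything else is assembly of results already in hand: Lemma \ref{jacob} for reductivity of $\l$, Proposition \ref{autss} for $S_0$, Proposition \ref{pr3} and Lemma \ref{sst} for the semisimplicity input, and Theorem \ref{strr} to conclude reductivity of $L_0$.
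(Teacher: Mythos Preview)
Your argument is correct, but it takes a considerably longer route than the paper's. The paper's proof is only a few lines: let $L_0'$ be the smallest Nash subgroup of $G$ containing $L_0$ (this exists by Proposition \ref{subgi} and is connected since $(L_0')^\circ$ is a Nash subgroup already containing $L_0$); because the stabiliser in $G$ of any subspace of $V$ is a Nash subgroup, $L_0$ and $L_0'$ have exactly the same invariant subspaces in $V$, so $V$ is completely reducible as a representation of $L_0'$; hence $L_0'$ is reductive and $\Lie\,L_0'$ is fully reducible in $\gl(V)$; maximality of $\l$ then forces $\Lie\,L_0'=\l$, whence $L_0'=L_0$.

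Your approach instead establishes the structural decomposition of $L_0$ directly, as an almost-direct product of a Nash torus $Z_0$ and a connected semisimple Nash subgroup $S_0$, and then invokes Theorem \ref{strr}. All the work goes into showing that $Z_0$ is semialgebraic, which you handle correctly by bounding each replica $\la z\ra$ inside $\z$ via maximality of $\l$ and then exhausting $\z$ by finite products of replicas. The paper sidesteps this entirely: rather than building $L_0$ from pieces already known to be Nash, it observes that the Nash closure $L_0'$ cannot be strictly larger, because ``stabilising a given subspace of $V$'' is a Nash-closed condition on $G$. Your route yields more explicit information about $L_0$ (in effect re-deriving the content of Theorem \ref{strr} for this specific group), while the paper's closure trick is a general-purpose device that works whenever the relevant property is cut out by Nash equations.
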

\begin{proof}
Write $L_0'$ for the smallest Nash subgroup of $G$ containing $L_0$.
It is connected since $L_0$ is so. Note that the set of $L_0$-stable
subspaces of $V$ is the same as the set of $L_0'$-stable subspaces.
Therefore $V$ is completely reducible as a representation of $L_0'$.
This implies that $L_0'$ is reductive and its Lie algebra is fully
reducible. The maximality of $\l$ then implies that $L_0=L_0'$, and the
lemma follows.
\end{proof}

Put $L:=KL_0$, which is a Nash subgroup of $G$.
We want to show that
\begin{equation}\label{glu}
  G=L\ltimes \mathfrak U_G.
\end{equation}

\begin{lemp}\label{lcirc}
One has that $L^\circ=L_0$.
\end{lemp}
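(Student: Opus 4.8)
The plan is to show that $\Lie L=\l$; since $L_0$ is a connected subgroup of $G$ with Lie algebra $\l$ and $L_0\subseteq L^\circ$, this forces $L^\circ=L_0$.

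First I would record the structure of $L=KL_0$. Since $K$ lies in $\wt L$, which by definition normalizes $\l$, conjugation by any element of $K$ preserves $\l$, hence preserves the analytic subgroup $L_0$; thus $L=KL_0$ is indeed a subgroup, $L_0$ is normal in it, and $L_0\subseteq L^\circ$. The multiplication map from the semidirect product $K\ltimes L_0$ (with $K$ acting by conjugation) onto $L$ is a surjective homomorphism of Lie groups, hence submersive by Lemma \ref{warner}; taking differentials gives $\Lie L=\k+\l$, where $\k:=\Lie K$. Because $K$ normalizes $\l$, so does $\k$, so $\k+\l$ is a Lie subalgebra of $\g$ containing $\l$ as an ideal.

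The crux is to show that $V$ is a completely reducible representation of $L$. Since $\l$ is fully reducible and $L_0$ is connected with Lie algebra $\l$, the $L_0$-stable subspaces of $V$ are exactly the $\l$-stable ones, so $V$ is completely reducible as a representation of $L_0$. Given an $L$-submodule $U\subseteq V$, I would choose an $L_0$-equivariant projection $p\colon V\to U$ and average it over $K$ against a Haar measure, $\bar p:=\int_K \varphi(k)\,p\,\varphi(k)^{-1}\,dk$. Then $\bar p\colon V\to U$ (as $U$ is $K$-stable), $\bar p|_U=\mathrm{id}_U$, and $\bar p$ is $K$-equivariant by the usual computation. The key point is that for $k\in K$ and $g\in L_0$ one has $k^{-1}gk\in L_0$, so each conjugate $\varphi(k)\,p\,\varphi(k)^{-1}$ is again $L_0$-equivariant, hence so is $\bar p$; thus $\bar p$ is equivariant for $L=KL_0$, and $\ker\bar p$ is an $L$-stable complement to $U$.

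Finally, applying the first assertion of Lemma \ref{g12} to the normal subgroup $L^\circ$ of $L$, the representation $V$ restricted to $L^\circ$ is completely reducible. As $L^\circ$ is connected and $\varphi$ has finite kernel, the $L^\circ$-stable subspaces of $V$ coincide with the $\Lie L$-stable ones, so $\k+\l=\Lie L$ is a fully reducible Lie subalgebra of $\g$. Since it contains $\l$, the maximality of $\l$ gives $\k+\l=\l$, i.e.\ $\Lie L=\l$, whence $L^\circ=L_0$. I expect the averaging step to be the only genuine obstacle: one must verify that conjugating an $L_0$-equivariant map by an element of $K$ keeps it $L_0$-equivariant, which is precisely where $K\subseteq\wt L$ is used, and one must be careful to move between $L$, $L^\circ$, $L_0$ and their Lie algebras only through connected groups.
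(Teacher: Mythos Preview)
Your argument is correct, and the final steps (from ``$V$ is completely reducible for $L$'' to $\Lie L=\l$ via Lemma \ref{g12} and maximality) coincide with the paper's. The difference lies in how you reach complete reducibility of $V$ over $L$.

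The paper argues structurally: since $L_0$ is reductive, the unipotent radical $\mathfrak U_L$ meets $L_0$ trivially, and the quotient $L\to L/L_0\cong K/(K\cap L_0)$ then embeds $\mathfrak U_L$ into an elliptic Nash group, forcing $\mathfrak U_L=\{1\}$; hence $L$ is reductive, and Theorem \ref{thmredcr} gives complete reducibility of $V$. You instead prove complete reducibility of $V$ over $L$ directly, by averaging an $L_0$-equivariant projection over $K$ and using $K\subset\tilde L$ to see that each conjugate $\varphi(k)p\varphi(k)^{-1}$ remains $L_0$-equivariant. This is a clean Weyl-trick argument that avoids the unipotent-radical machinery altogether; it is more elementary and self-contained. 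The paper's route, on the other hand, yields the stronger intermediate conclusion that $L$ itself is reductive (not merely that this one representation is completely reducible), which is precisely what is needed immediately afterwards to deduce \eqref{lcapu}. So your approach is a legitimate alternative for the lemma as stated, but the paper's approach dovetails better with the surrounding argument.
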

\bp
Since $L_0$ is reductive, the unipotent radical $\mathfrak U_L$ of $L$ has trivial intersection with $L_0$. Then the quotient homomorphism
\[
L\rightarrow L/L_0\cong K/(K\cap L_0)
\]
restricts to an injective Nash homomorphism from $\mathfrak U_L$ to an elliptic Nash group. Therefore $\mathfrak U_L$ is trivial and $L$ is reductive.
Then the Lie algebra of $L$ is fully reducible and contains $\l$, and hence equals $\l$ by the maximality of $\l$. This proves the lemma.
\ep

Since we have proved that $L$ is reductive, we know that
\be \label{lcapu}
  L\cap \mathfrak U_G=\{1\}.
\ee

\begin{lemp}\label{sumlug}
One has that $\l+\u=\g$.
\end{lemp}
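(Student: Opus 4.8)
The plan is to pass to the quotient $\bar\g:=\g/\u$, which is reductive by Lemma \ref{jacob2}, and to prove the equivalent assertion that the quotient map $\pi\colon\g\to\bar\g$ sends $\l$ onto $\bar\g$; indeed $\pi^{-1}(\pi(\l))=\l+\u$, so $\pi(\l)=\bar\g$ is the same as $\l+\u=\g$. The one property of $\pi$ that drives the argument is that it is the differential of $G\to G/\mathfrak U_G$, hence $\Ad$-equivariant and trivial on $\mathfrak U_G$, so $\pi\circ\Ad(u)=\pi$ for every $u\in\mathfrak U_G$. Combined with Lemma \ref{frc} this gives the crucial observation: \emph{for every fully reducible subalgebra $\mathfrak a$ of $\g$ one has $\pi(\mathfrak a)\subseteq\pi(\l)$.} To see it, enlarge $\mathfrak a$ to a maximal fully reducible subalgebra of $\g$ (possible since $\dim\g<\infty$); by Lemma \ref{frc} this maximal subalgebra is $\Ad(u)\l$ for some $u\in\mathfrak U_G$, and then $\pi(\mathfrak a)\subseteq\pi(\Ad(u)\l)=\pi(\l)$.

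Granting this, I would proceed in two steps. Write $\bar\g=\bar\s\oplus\bar\z$ with $\bar\s:=[\bar\g,\bar\g]$ semisimple and $\bar\z$ the centre of $\bar\g$ (abelian). \emph{Step 1: $\bar\s\subseteq\pi(\l)$.} Choose a Levi subalgebra $\hat\s$ of $[\g,\g]$, so $[\g,\g]=\hat\s+\operatorname{Rad}([\g,\g])$. Lemma \ref{ggu} gives $\operatorname{Rad}([\g,\g])\subseteq\u$, so $\bar\s=[\bar\g,\bar\g]=\pi([\g,\g])=\pi(\hat\s)$; since $\hat\s$ is semisimple, Weyl's theorem (Lemma \ref{ssg0}) makes $V$ completely reducible over $\hat\s$, hence $\hat\s$ is fully reducible and the crucial observation yields $\bar\s=\pi(\hat\s)\subseteq\pi(\l)$. \emph{Step 2: every $\bar y\in\bar\g$ lies in $\pi(\l)$.} Lift $\bar y$ to $x\in\g$ and write $x=x_\mathrm e+x_\mathrm h+x_\mathrm u$ (Theorem \ref{jdi}), $x_\mathrm s:=x_\mathrm e+x_\mathrm h$. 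Then $x_\mathrm s$ is semisimple, so by Lemma \ref{sst} the replica $\la x_\mathrm s\ra$ is a Nash torus, hence a reductive almost linear Nash group, so $V$ is completely reducible over $\la x_\mathrm s\ra$ by Theorem \ref{thmredcr}; thus $\Lie\,\la x_\mathrm s\ra$ is fully reducible and contains $x_\mathrm s$, and the crucial observation gives $\pi(x_\mathrm s)\in\pi(\l)$. On the other hand $\pi(x_\mathrm u)$ is a unipotent element of $\bar\g$ by Proposition \ref{pr2}, and since the centre of a reductive Lie algebra consists of semisimple elements, every unipotent element of $\bar\g$ lies in $\bar\s$; hence $\pi(x_\mathrm u)\in\bar\s\subseteq\pi(\l)$ by Step 1. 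Therefore $\bar y=\pi(x_\mathrm s)+\pi(x_\mathrm u)\in\pi(\l)$, and $\pi(\l)=\bar\g$ follows.

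I expect the substantive point to be Step 2, and within it the handling of the ``semisimple'' directions of $\bar\g$: an element $x_\mathrm s$ lifting a central class of $\bar\g$ need not itself lie in the fixed subalgebra $\l$, and the resolution is that $x_\mathrm s$ sits inside a Nash torus, which — being reductive, hence fully reducible on $V$ — can be conjugated into $\l$ by an element of $\mathfrak U_G$ (Lemma \ref{frc}), a conjugation that becomes invisible after applying $\pi$. Capturing the semisimple part $\bar\s$ of $\bar\g$ in Step 1 is exactly the use for which Lemma \ref{ggu} was prepared. The remaining ingredients — that a given fully reducible subalgebra lies in a maximal one, and that the centre of a reductive Lie algebra acts semisimply in a faithful completely reducible representation — are standard and routine.
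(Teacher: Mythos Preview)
Your proof is correct, and shares its core with the paper's: for a semisimple $s\in\g$, the replica $\la s\ra$ is a Nash torus, so $\Lie\,\la s\ra$ is fully reducible and Lemma \ref{frc} puts it inside some $\Ad_u(\l)$ with $u\in\mathfrak U_G$; since $\Ad_u$ is invisible modulo $\u$, this gives $s\in\l+\u$. The difference is in how one passes from ``all semisimple elements lie in $\l+\u$'' to the full statement. The paper invokes Theorem \ref{denses}: the set $(\g/\u)_{\mathrm{ss}}$ is dense in the reductive Lie algebra $\g/\u$, so the subspace $(\l+\u)/\u$, which already contains it, must be everything. You instead take an arbitrary $x\in\g$, split it as $x_{\mathrm s}+x_{\mathrm u}$, handle $x_{\mathrm s}$ by the common argument, and then show $\pi(x_{\mathrm u})\in\bar\s\subset\pi(\l)$; for this last containment you run a separate Step~1 using a Levi subalgebra of $[\g,\g]$ together with Lemma \ref{ggu}. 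Your route is more algebraic and self-contained (it does not need the density of semisimple elements, which in the paper relies on Jacobson--Morozov and an $\sl_2$ computation), at the cost of the extra Step~1 machinery. The paper's route is shorter once Theorem \ref{denses} is in hand. One small remark: your sentence ``since the centre of a reductive Lie algebra consists of semisimple elements'' is doing real work and deserves a pointer --- in this framework it follows from Lemma \ref{znt} (or Lemma \ref{bm}) applied to $\bar G^\circ$, together with Proposition \ref{pr2} for the projection $\bar\g\to\bar\g/\bar\s$.
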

\begin{proof}
Let $s$ be a semisimple element of $\g$. By Lemma \ref{sst}, the
replica $\la s\ra$ is a Nash torus. Therefore its Lie algebra $\Lie\,
\la s\ra$ is fully reducible. By Lemma \ref{frc}, there is an
element $u\in \mathfrak U_G$ such that
\[
  s\in \Lie\, \la s\ra\subset \Ad_u(\l)\subset \Ad_u(\l+\u)=\l+\u.
\]
This proves that $\l+\u\supset \g_\mathrm{ss}$. Then Lemma \ref{ssurj}
implies that $(\l+\u)/\u\supset (\g/\u)_\mathrm{ss}$. Since
$(\g/\u)_\mathrm{ss}$ is dense in $\g/\u$ by Theorem \ref{denses}, one
knows that $(\l+\u)/\u\supset \g/\u$.  Therefore $\l+\u=\g$.
\end{proof}

Combining \eqref{lcapu} and Lemma \ref{sumlug}, we get that
\be \label{lcapu2}
  G^\circ=L_0\ltimes \mathfrak U_G\quad \textrm{and}\quad \g=\l\ltimes \u.
\ee

Recall that $\tilde L$ denotes the normalizer of $\l$ in $G$. Write
$\tilde  \l$ for its Lie algebra, and put $\u_0:=\tilde \l\cap \u$.
Then
\[
  \tilde \l=\l\times \u_0
\]
is a direct product of Lie algebras. Consequently, we have that
\begin{equation}\label{tildel}
(\tilde L)^\circ=L_0\times U_0,\quad \textrm{where }\, U_0:=\tilde L\cap \mathfrak U_G.
\end{equation}

\begin{lemp}\label{reconn}
Every connected reductive Nash subgroup of $\tilde L$ is contained
in $L_0$.
\end{lemp}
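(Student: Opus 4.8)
The plan is to use the internal direct product decomposition $(\tilde L)^\circ = L_0\times U_0$ recorded in \eqref{tildel}, together with the fact that a Nash group which is simultaneously unipotent and reductive must be trivial.

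First I would argue as follows. Let $M$ be a connected reductive Nash subgroup of $\tilde L$. Since $M$ is connected, $M\subseteq(\tilde L)^\circ=L_0\times U_0$. Let $p\colon L_0\times U_0\to U_0$ be the projection onto the second factor and consider the Nash homomorphism $p|_M\colon M\to U_0$. The group $U_0=\tilde L\cap\mathfrak U_G$ is a Nash subgroup of the unipotent Nash group $\mathfrak U_G$ (Proposition \ref{0unir}), hence is itself unipotent, and therefore its Nash subgroup $p(M)$ is unipotent. On the other hand $L_0$ is a normal Nash subgroup of $L_0\times U_0$, so $M\cap L_0$ is a normal Nash subgroup of $M$ and $p$ induces a Nash isomorphism $M/(M\cap L_0)\cong p(M)$; thus $p(M)$ is a Nash quotient group of the reductive Nash group $M$.

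The key step is to see that $p(M)$ is reductive. It is almost linear by Proposition \ref{quotient}. Moreover, since $M\cap L_0$ acts trivially on any Nash representation $W$ of $p(M)$ inflated to $M$, the $M$-submodules of $W$ coincide with its $p(M)$-submodules; by Theorem \ref{thmredcr} $W$ is completely reducible as an $M$-module, hence completely reducible as a $p(M)$-module. Applying this to a Nash representation of $p(M)$ with finite kernel (which exists, $p(M)$ being almost linear) shows that $p(M)$ is reductive. Now $p(M)$ is a normal unipotent Nash subgroup of itself, hence is contained in its own unipotent radical, which is trivial by Lemma \ref{reduni}. Therefore $p(M)=\{1\}$, i.e. $M\subseteq\ker p=L_0$, as desired.

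I do not expect a genuine obstacle here; the only point requiring a short argument is that a Nash quotient of a reductive almost linear Nash group is again reductive, and this follows immediately from Theorem \ref{thmredcr} (alternatively from criterion (g) of Theorem \ref{thmred}).
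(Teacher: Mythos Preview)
Your argument is correct and follows essentially the same route as the paper: both use the decomposition \eqref{tildel} and the fact that a Nash homomorphism from a reductive Nash group to a unipotent Nash group is trivial. The paper states this last fact in one line, while you spell it out via the projection $p$ onto $U_0$ and the observation that $p(M)$ is simultaneously unipotent and reductive (the latter because Nash quotients of reductive Nash groups are reductive, which you correctly derive from Theorem~\ref{thmredcr}).
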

\begin{proof}
In view of \eqref{tildel}, the lemma holds
because every Nash homomorphism from a reductive Nash group to a
unipotent Nash group is trivial.
\end{proof}

\begin{lemp}\label{glu2}
One has that $G=L\ltimes \mathfrak U_G$.
\end{lemp}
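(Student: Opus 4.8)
The plan is to prove the set-theoretic identity $G=L\,\mathfrak U_G$; combined with $L\cap\mathfrak U_G=\{1\}$ from \eqref{lcapu} and the normality of $\mathfrak U_G$, this yields the internal semidirect product $G=L\ltimes\mathfrak U_G$, and since $\g=\l\oplus\u$ as vector spaces by \eqref{lcapu2}, the multiplication map $L\times\mathfrak U_G\to G$ is then a bijective local diffeomorphism, hence a Nash diffeomorphism. So everything reduces to surjectivity of the multiplication map onto $G$, i.e. to showing that each $g\in G$ lies in $L\,\mathfrak U_G$; by \eqref{lcapu2} this amounts to handling the finite component group $G/G^\circ$.

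First I would push an arbitrary $g\in G$ into the normalizer $\tilde L$ of $\l$. The automorphism $\Ad_g$ of $\g$ is the restriction of conjugation by $\varphi(g)\in\GL(V)$, so it carries the maximal fully reducible Lie subalgebra $\l$ of $\g\subset\gl(V)$ to the Lie subalgebra $\Ad_g(\l)$, which is again fully reducible in $\gl(V)$: if $W\subset V$ is $\Ad_g(\l)$-stable, then $\varphi(g)^{-1}W$ is $\l$-stable, hence has an $\l$-stable complement, whose image under $\varphi(g)$ is an $\Ad_g(\l)$-stable complement of $W$. Maximality of $\Ad_g(\l)$ among fully reducible subalgebras of $\g$ follows since $\Ad_g$ is an automorphism of $\g$. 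By Lemma \ref{frc} there is $u\in\mathfrak U_G$ with $\Ad_u(\Ad_g(\l))=\l$, that is, $ug\in\tilde L$.

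Next I would dispose of the component group of $\tilde L$. As a Nash subgroup of $G$, $\tilde L$ is semialgebraic and therefore has finitely many connected components, so Theorem \ref{compact} applies to the pair $(\tilde L,K)$ and shows that the maximal compact subgroup $K$ meets every connected component of $\tilde L$; hence $\tilde L=K\,(\tilde L)^\circ$. Using \eqref{tildel}, $(\tilde L)^\circ=L_0\times U_0$ with $U_0\subset\mathfrak U_G$, and since $L=KL_0$ we get $\tilde L=KL_0U_0=L\,U_0\subset L\,\mathfrak U_G$. In particular $ug\in L\,\mathfrak U_G$, and because $\mathfrak U_G$ is normal, $g\in u^{-1}L\,\mathfrak U_G\subset\mathfrak U_G\,L\,\mathfrak U_G=L\,\mathfrak U_G$. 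As $g$ was arbitrary, $G=L\,\mathfrak U_G$, which completes the proof.

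I do not expect a genuine obstacle here: the one idea that makes the argument run is that $\Ad_g$ permutes the maximal fully reducible subalgebras of $\g$, so that after correcting by an element of $\mathfrak U_G$ one lands in $\tilde L$, where the component-group issue is settled by the Cartan--Malcev--Iwasawa theorem applied to the semialgebraic (hence finitely-componented) group $\tilde L$. The only point requiring care is the verification that $\Ad_g(\l)$ remains fully reducible as a subalgebra of $\gl(V)$, which is the transport-of-invariant-subspaces computation indicated above; everything else is bookkeeping with the decompositions \eqref{lcapu}, \eqref{lcapu2}, and \eqref{tildel} already in hand.
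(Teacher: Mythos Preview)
Your proof is correct and follows essentially the same route as the paper: push an arbitrary $g$ into $\tilde L$ using that $\Ad_g(\l)$ is again maximal fully reducible together with Lemma~\ref{frc}, then use that the maximal compact subgroup $K$ meets every connected component of $\tilde L$. You are simply more explicit than the paper about why $\Ad_g(\l)$ is maximal fully reducible and about invoking Theorem~\ref{compact} for the component-group step in $\tilde L$, both of which the paper treats as tacit.
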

\begin{proof}
By \eqref{lcapu} and \eqref{lcapu2}, it suffices to show
that every connected component of $G$ meets $K$. Since $K$ meets
every connected component of $\tilde L$, it suffices to show that
every connected component of $G$ meets $\tilde L$. Let $g\in G$.
Then by Lemma \ref{frc}, $\Ad_g(\l)=\Ad_u(\l)$ for some $u\in \mathfrak U_G$.
Therefore $u^{-1}g\in \tilde L$, and the lemma follows.
\end{proof}

Lemma \ref{glu2} implies that $L$ is a maximal reductive Nash
subgroup of $G$.

\begin{lemp}\label{containl}
Every reductive Nash subgroup $R$ of $G$ is contained in a
conjugation of $L$.
\end{lemp}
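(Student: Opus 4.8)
The plan is to conjugate $R$ into $L$ in two steps: first by an element of $\mathfrak U_G$ so that $R$ normalises $\l$, and then by an element of $U_0$ so that $R$ lands inside $L=KL_0$ itself.

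For the first step, note that since $R$ is reductive, Theorem \ref{thmredcr} shows $V$ is completely reducible as a representation of $R$, so $\varphi(R)$ is a fully reducible subgroup of $\GL(V)$; moreover $\varphi(R)$ normalises $\g=\Lie\varphi(G)$. Hence the second assertion of Lemma \ref{mo} applies and produces a maximal fully reducible Lie subalgebra of $\g$ normalised by $\varphi(R)$; by Lemma \ref{frc} that subalgebra equals $\Ad_u(\l)$ for some $u\in\mathfrak U_G$. It follows that $u^{-1}Ru$ normalises $\l$, i.e. $u^{-1}Ru\subseteq\tilde L$. Since a $\mathfrak U_G$-conjugate of $L$ is in particular a conjugate of $L$, we may from now on replace $R$ by $u^{-1}Ru$ and assume $R\subseteq\tilde L$.

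For the second step I use the structure $\tilde L=L\ltimes U_0$, which follows from $(\tilde L)^\circ=L_0\times U_0$ (see \eqref{tildel}), from $L^\circ=L_0$ (Lemma \ref{lcirc}), from $L\cap\mathfrak U_G=\{1\}$, and from the fact that $K$ meets every connected component of $\tilde L$. As $R$ has finitely many connected components, Theorem \ref{compact} gives a maximal compact subgroup $K_R$ of $R$ meeting every component, so $R=K_R\cdot R^\circ$. Now $K_R$ is a compact subgroup of $\tilde L$, so by Theorem \ref{compact} it lies in a maximal compact subgroup $\tilde K$ of $\tilde L$, and $\tilde K=gKg^{-1}$ for some $g\in\tilde L$ (recall $K$ is by construction a maximal compact subgroup of $\tilde L$). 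Decomposing $g=\ell u$ with $\ell\in L$, $u\in U_0$ and putting $u':=\ell u\ell^{-1}\in U_0$ (using normality of $U_0$ in $\tilde L$), one gets $u'^{-1}\tilde Ku'=\ell K\ell^{-1}\subseteq L$. Replacing $R$ once more by $u'^{-1}Ru'$ (harmless as before, and it keeps $R^\circ$ a connected reductive Nash subgroup of $\tilde L$), we may assume $K_R\subseteq L$. Finally $R^\circ$ is a connected reductive Nash subgroup of $\tilde L$, hence $R^\circ\subseteq L_0$ by Lemma \ref{reconn}; together with $K_R\subseteq L$ this gives $R=K_R\,R^\circ\subseteq L$, and undoing the two conjugations puts the original $R$ inside a $\mathfrak U_G$-conjugate of $L$.

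The argument is essentially a bookkeeping exercise over Lemma \ref{mo}, Lemma \ref{frc}, Lemma \ref{reconn}, Lemma \ref{lcirc} and Theorem \ref{compact}; the only point requiring a moment's care is the second step's claim that any maximal compact subgroup of $\tilde L$ can be conjugated into $L$ by an element of $U_0$, which is exactly the identity $\tilde K=u'(\ell K\ell^{-1})u'^{-1}$ above. (An alternative to that step would be a standard cocycle argument showing $H^1(R,U_0)=0$ via complete reducibility of $U_0$ as a Nash $R$-module and the vanishing of $\Hom(R,\R)$ from Proposition \ref{jt}, but the maximal-compact route is shorter.)
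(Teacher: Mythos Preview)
Your argument is correct and follows the same overall strategy as the paper: use Mostow's result (Lemma~\ref{mo}) together with Lemma~\ref{frc} to reduce to $R\subset\tilde L$, apply Lemma~\ref{reconn} to handle $R^\circ$, and use Theorem~\ref{compact} to handle a maximal compact $K'$ of $R$.

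Where you differ is in the second step. You work to establish $\tilde L=L\ltimes U_0$ and then perform an additional $U_0$-conjugation to force $K_R$ into $L$. The paper avoids this entirely with a one-line observation: since $\tilde L$ is by definition the normaliser of $\l$, the analytic subgroup $L_0$ is normal in $\tilde L$, and hence for any $g\in\tilde L$ one has
\[
  gKg^{-1}\,L_0=gKg^{-1}\,gL_0g^{-1}=g(KL_0)g^{-1}=gLg^{-1}.
\]
Thus from $K'\subset gKg^{-1}$ and $R^\circ\subset L_0$ (the latter already holding \emph{before} any further conjugation) one gets $R=K'R^\circ\subset gLg^{-1}$ directly. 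This saves you the verification of $\tilde L=L\ltimes U_0$ and the bookkeeping with $u'=\ell u\ell^{-1}$; the normality of $L_0$ in $\tilde L$ is the cleaner substitute for the semidirect-product structure you invoke.
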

\begin{proof}
By Lemma \ref{mo}, we assume without loss of generality that
$R\subset \tilde L$. Then Lemma \ref{reconn} implies that
$R^\circ\subset L_0$. Let $K'$ be a maximal compact subgroup of $R$.
Then Theorem \ref{compact} implies that $K'\subset g K g^{-1}$ for
some $g\in \tilde L$. Therefore
\[
 R=K' R^\circ \subset g K g^{-1}
L_0=g KL_0 g^{-1}=gLg^{-1}.
\]
\end{proof}

Lemma \ref{containl} implies that all maximal reductive Nash
subgroups of $G$ are  conjugate to $L$. (Since $G=L \mathfrak U_G$, they are actually conjugate to $L$ under $\mathfrak U_G$.) This finishes the proof of
Theorem \ref{uconj}.

\section{Cartan decompositions and Iwasawa decompositions}

We first recall some basic results concerning Cartan decompositions in the setting of connected semisimple Lie groups with finite center.
\begin{prp}\label{cs}(cf. \cite[Theorem 6.31, Theorem 6.51 and Proposition 6.40]{Kn})
Let $G$ be a connected semisimple Lie group with a finite center.
Denote by $\g$ its Lie algebra. Let $K$ be a maximal compact
subgroup of $G$. Then the followings hold.
\begin{itemize}
  \item There exists a unique continuous involution $\theta_K$ of $G$ such that $G^{\theta_K}=K$.
  \item Denote by $\p$ the $-1$-eigenspace in $\g$ of the differential of $\theta_K$, then the map \[
    K\times \p\rightarrow G,\,k,x\mapsto k\exp (x)
    \]
    is a diffeomorphism.
  \item All maximal abelian subspaces of $\p$ are conjugate to each other under the adjoint action of $K$.
  \item For every $x\in \p$, the linear operator
  \[
    \ad_x: \g\rightarrow \g, \,y\mapsto [x,y]
    \]
     is semisimple and all its eigenvalues are real.
\end{itemize}
\end{prp}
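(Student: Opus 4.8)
The four assertions are entirely classical, and the references in the statement give complete proofs; what I would write is an outline that reduces everything to linear algebra on the Lie algebra $\g$ of $G$ together with one genuinely global input, which I would simply quote. Fix a Cartan involution $\theta$ of the semisimple Lie algebra $\g$, with eigenspace decomposition $\g=\k_0\oplus\p_0$, so that $\p_0$ is the $B$-orthogonal complement of $\k_0$ (where $B$ is the Killing form) and the symmetric bilinear form $B_\theta(X,Y):=-B(X,\theta Y)$ is positive definite on $\g$. By \cite[Theorem 6.31]{Kn}, $\theta$ integrates to a continuous involution $\Theta_0$ of $G$; the fixed subgroup $G^{\Theta_0}$ is the analytic subgroup $K_0$ with Lie algebra $\k_0$; the map $K_0\times\p_0\to G$, $(k,X)\mapsto k\exp X$, is a diffeomorphism; and — this is exactly where the finiteness of $Z(G)$ is used — $K_0$ is compact and is a maximal compact subgroup of $G$. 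By the conjugacy part of Theorem \ref{compact} the given $K$ equals $gK_0g^{-1}$ for some $g\in G$, and conjugating $\Theta_0,\k_0,\p_0$ by $g$ produces a continuous involution $\theta_K$ of $G$ with $G^{\theta_K}=K$, whose differential has $\pm1$-eigenspaces $\k:=\Lie K$ and $\p$, and with $K\times\p\stackrel{\sim}{\rightarrow}G$ via $(k,X)\mapsto k\exp X$. This is the existence half of the first bullet and the whole of the second.

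For uniqueness of $\theta_K$, suppose $\Theta'$ is any continuous involution of $G$ with $G^{\Theta'}=K$. Since $G$ is connected, $\Lie(G^{\Theta'})$ is the $+1$-eigenspace of the Lie algebra involution $d\Theta'$, so that eigenspace is $\k$; and since $B$ is preserved by every automorphism of $\g$, the $-1$-eigenspace of any involution of $\g$ is forced to be the $B$-orthogonal complement of its $+1$-eigenspace. Hence $d\Theta'$ has the same eigenspaces as $d\theta_K$, so $d\Theta'=d\theta_K$, and therefore $\Theta'=\theta_K$ because $G$ is connected. This completes the first bullet.

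For the fourth bullet, a direct computation using the $\ad$-invariance of $B$ and the relation $\theta_K\circ\ad_x=-\ad_x\circ\theta_K$ shows that for $x\in\p$ the operator $\ad_x$ is self-adjoint for the positive-definite inner product $B_{\theta_K}$ on $\g$, hence is diagonalisable over $\R$ (cf. \cite[Proposition 6.40]{Kn}). For the third bullet I would run the usual maximisation argument of \cite[Theorem 6.51]{Kn}: given maximal abelian subspaces $\a_1,\a_2\subseteq\p$, choose regular elements $H_i\in\a_i$ (so that $\a_i$ is the centraliser of $H_i$ in $\p$), maximise the continuous function $k\mapsto B_{\theta_K}(\Ad(k)H_1,H_2)$ over the compact group $K$, and differentiate at a maximiser $k_0$; using that $\ad_T$ is skew-adjoint for $B_{\theta_K}$ when $T\in\k$ together with the negative-definiteness of $B$ on $\k$, one gets $[\Ad(k_0)H_1,H_2]=0$, and then regularity of $H_2$ forces $\Ad(k_0)\a_1=\a_2$.

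The only step that is not soft bookkeeping is the first one: passing from the Lie-algebra Cartan involution to a global continuous involution of the possibly non-simply-connected group $G$ with \emph{compact} fixed-point subgroup, together with the accompanying polar diffeomorphism $K\times\p\cong G$. That is precisely where the hypothesis that $Z(G)$ is finite enters, and I would not reprove it but cite \cite[Theorem 6.31]{Kn}; everything else is linear algebra on $\g$, the conjugacy of maximal compact subgroups already recorded in Theorem \ref{compact}, and a single maximisation of a function over a compact group.
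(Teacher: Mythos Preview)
Your outline is correct and follows exactly the standard arguments from the references in the statement. Note, however, that the paper does not supply its own proof of this proposition: it is recorded with the citation ``(cf.\ \cite[Theorem 6.31, Theorem 6.51 and Proposition 6.40]{Kn})'' and then used as a black box. So there is nothing to compare against beyond observing that your sketch is precisely an expansion of those cited results, together with the conjugacy of maximal compact subgroups (Theorem \ref{compact}) to pass from the particular $K_0$ produced by Knapp's Theorem 6.31 to the given $K$.
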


Here ``involution" means an automorphism of order 1 or 2; and $G^{\theta_K}$ denotes the fixed point set of $\theta_K$ in $G$ (similar notation will be used without further explanation).

In this section, we investigate Cartan involutions for all reductive Nash groups. Let $G$ be a reductive Nash group in the rest of this section.
\begin{dfnp}
A Cartan involution of $G$ is a Nash involution of $G$ whose fixed point set is a maximal compact subgroup of $G$.
\end{dfnp}
Here ``Nash involution" means an involution which is simultaneously a Nash map. The first result of this section we intend to prove is the following
\begin{thmp}\label{cartan}
The map
\begin{equation}\label{bcartan}
  \begin{array}{rcl}
  \{\textrm{Cartan involution of $G$}\}&\rightarrow & \{\textrm{maximal
  compact subgroup of $G$}\}\\
    \theta&\mapsto & G^\theta
  \end{array}
\end{equation}
is bijective.
\end{thmp}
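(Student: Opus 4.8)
The plan is to reduce to connected semisimple groups with finite center, where Proposition~\ref{cs} applies, by means of the presentation of Theorem~\ref{strr}, and then to climb up through the central isogeny and the component group. Throughout, ``Cartan decomposition'' of a reductive Nash group $G$ with respect to a Cartan involution $\theta$ means the assertion that $G^\theta\times\p\to G$, $(k,X)\mapsto k\exp X$, is a Nash diffeomorphism, where $\p$ is the $(-1)$-eigenspace of $d\theta$; note that once this is known, $\theta(k\exp X)=k\exp(-X)$ shows that $\theta$ is determined by $G^\theta$, which will yield injectivity.

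First I would treat the connected case. Let $G$ be connected reductive and fix, by Theorem~\ref{strr}, a surjective Nash homomorphism $\pi\colon H\times T\to G$ with finite central kernel $F$, where $H$ is connected semisimple with finite center and $T\cong\mathbb{S}^m\times(\R_+^\times)^n$ is a Nash torus. The maximal compact subgroups of $H\times T$ are exactly the groups $K_H\times\mathbb{S}^m$ with $K_H$ maximal compact in $H$. Given such a group, let $\theta_H$ be the Cartan involution of $H$ supplied by Proposition~\ref{cs}; it is continuous, hence Nash by Theorem~\ref{thmcs}. Let $\theta_T$ act as the identity on $\mathbb{S}^m$ and by inversion on $(\R_+^\times)^n$. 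Then $\widetilde\theta:=\theta_H\times\theta_T$ is a Nash involution of $H\times T$ with fixed-point group $K_H\times\mathbb{S}^m$, and, combining the Cartan decomposition of $H$ (Proposition~\ref{cs}) with the product decomposition of $T$ and the diffeomorphism $\exp\colon\Lie((\R_+^\times)^n)\to(\R_+^\times)^n$, the corresponding Cartan decomposition of $H\times T$ holds. Since $F$ is finite it lies in $Z(H)\times\mathbb{S}^m$, and $Z(H)$, being finite and central, lies in every maximal compact subgroup of $H$; hence $\widetilde\theta$ fixes $F$ pointwise and descends, by Lemma~\ref{subm0}, to a Nash involution $\theta$ of $G=(H\times T)/F$. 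Because $F\subset K_H\times\mathbb{S}^m$, the Cartan decomposition passes to the quotient: $G^\theta=\pi(K_H\times\mathbb{S}^m)$ is a maximal compact subgroup of $G$ (its preimage in $H\times T$ is a compact subgroup containing $F$, hence contained in a conjugate of $K_H\times\mathbb{S}^m$) and $G^\theta\times\p\to G$ is a Nash diffeomorphism. Since all maximal compact subgroups of $G$ are $G$-conjugate (Theorem~\ref{compact}) and conjugation is a Nash automorphism, every maximal compact subgroup of $G$ has the form $G^\theta$; this gives surjectivity of the map \eqref{bcartan} for connected $G$, with Cartan decomposition.

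Next, injectivity in the connected case. By the remark above it suffices to show that the complement $\p$ is determined by $\k:=\Lie K$; since $\p$ is an $\ad(\k)$-invariant complement of $\k$ in $\g$ with $[\p,\p]\subset\k$, this is a Lie-algebra statement. Writing $\g=\s\oplus\z$ with $\s=[\g,\g]$ and $\z$ the center, the abelian $\z$ splits canonically as $\z=\z_{\mathrm e}\oplus\z_{\mathrm h}$ by the Lie-algebra Jordan decomposition (Theorem~\ref{jdi}, reductivity giving $\z_{\mathrm u}=0$); one checks $\k=(\k\cap\s)\oplus\z_{\mathrm e}$ and, from the bracket conditions, that every admissible $\p$ equals $(\p\cap\s)\oplus\z_{\mathrm h}$ with $\p\cap\s$ an $\ad(\k\cap\s)$-invariant complement of $\k\cap\s$ in $\s$ satisfying $[\p\cap\s,\p\cap\s]\subset\k\cap\s$. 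Uniqueness of the latter is the classical uniqueness of the Cartan decomposition of a real semisimple Lie algebra, part of the material behind Proposition~\ref{cs}. Hence $\p$, and therefore $\theta$, is determined by $K$.

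Finally, the general case. A reductive Nash group has finitely many connected components (Lemma~\ref{dimen20}). Given a maximal compact subgroup $K$ of $G$, Theorem~\ref{compact} shows that $K$ meets every component of $G$ and that $K^\circ:=K\cap G^\circ$ is a maximal compact subgroup of $G^\circ$; let $\theta_0$ be the Cartan involution of $G^\circ$ with $(G^\circ)^{\theta_0}=K^\circ$ from the connected case, with Cartan decomposition $G^\circ=K^\circ\exp\p$. Combining this with $G=K\cdot G^\circ$ yields a unique expression $g=k\exp X$ ($k\in K$, $X\in\p$), hence a Nash diffeomorphism $K\times\p\to G$; define $\theta(k\exp X):=k\exp(-X)$, a Nash involution of the manifold $G$ restricting to $\theta_0$. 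That $\theta$ is a group homomorphism reduces, through this decomposition, to the identity $c_\ell\circ\theta_0=\theta_0\circ c_\ell$ on $G^\circ$ for every $\ell\in K$, where $c_\ell$ is conjugation by $\ell$; but $c_\ell\theta_0 c_\ell^{-1}$ is a Cartan involution of $G^\circ$ with fixed-point group $\ell K^\circ\ell^{-1}=K^\circ$ (using $K^\circ\trianglelefteq K$), so it equals $\theta_0$ by the connected-case injectivity. Thus $\theta$ is a Cartan involution of $G$ with $G^\theta=K$, giving surjectivity. For injectivity, if $\theta_1,\theta_2$ are Cartan involutions of $G$ with $G^{\theta_1}=G^{\theta_2}=K$, then $(G^\circ)^{\theta_i}=K\cap G^\circ=K^\circ$, so $\theta_1=\theta_2$ on $G^\circ$ by the connected case, and then $\theta_1=\theta_2$ on all of $G$ since both fix $K$ pointwise and $G=K\cdot G^\circ$.

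I expect the main obstacle to be the connected-case injectivity: pinning down that $\p$ is determined by $\k$ (the uniqueness of the Lie-algebra Cartan decomposition, reduced to the classical semisimple statement), together with the verification that the Cartan decomposition of $H\times T$ passes cleanly to the quotient $G$ and that the extended map on a disconnected $G$ is multiplicative; everything else is assembly of the cited results.
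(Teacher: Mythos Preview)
Your overall architecture mirrors the paper's: reduce to the building blocks (connected semisimple group and Nash torus), construct $\theta$ there, pass to $G^\circ$, and then to $G$. The surjectivity arguments are sound and close to the paper's, which works with the internal subgroups $S$ and $Z$ rather than the cover $H\times T$, and defines the extension to $G$ by $\theta(kg):=k\,\theta^\circ(g)$ for $k\in K$, $g\in G^\circ$, just as you do. Your verification that $c_\ell$ commutes with $\theta_0$ for $\ell\in K$, via uniqueness in the connected case, is exactly the content of the paper's well-definedness check.

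There is, however, a genuine gap in your connected-case injectivity. You assert that $\p$ is determined by $\k$ and call this ``a Lie-algebra statement'', invoking only the bracket conditions $[\k,\p]\subset\p$, $[\p,\p]\subset\k$. On the central part $\z$ these conditions are vacuous, and at the Lie-group level the claim is actually false: on $G=\mathbb S\times\R_+^\times$ the maps
\[
\theta_c\bigl(e^{i\alpha},e^{t}\bigr):=\bigl(e^{i(\alpha+ct)},e^{-t}\bigr),\qquad c\in\R,
\]
are smooth involutive automorphisms, all with fixed-point set $\mathbb S\times\{1\}$, yet pairwise distinct. What forces $\p\cap\z=\z_{\mathrm h}$ is the \emph{Nash} hypothesis on $\theta$: a Nash automorphism preserves Jordan type (Proposition~\ref{pr2}), so $d\theta$ stabilizes $\z_{\mathrm h}$ and $\z_{\mathrm e}$ separately, and an involution of $\z_{\mathrm h}$ with no nonzero fixed vector must be $-\mathrm{id}$. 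The paper packages exactly this step as the Nash-torus case of Lemma~\ref{maxcrss}: any Nash automorphism of a Nash torus preserves the factors $G_{\mathrm e}$ and $G_{\mathrm h}$ (Proposition~\ref{jt}, Proposition~\ref{intehu2}), so the unique Cartan involution is $(x,y)\mapsto(x,y^{-1})$; injectivity (Lemma~\ref{injcartan}) then follows by restricting $\theta$ to $S$ and $Z$ and quoting Lemma~\ref{maxcrss}. Once you insert this Nash input, your argument goes through; without it the $\z$-component of $\p$ is genuinely undetermined.
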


We begin with the following

\begin{lemp}\label{maxcrss}
Theorem \ref{cartan} holds if $G$ is a connected semisimple Nash
group or a Nash torus.
\end{lemp}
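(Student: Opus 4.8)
The plan is to dispose of the two cases of the lemma separately, using in each case a structure result already available in the excerpt.

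\emph{The connected semisimple case.} Here $G$ has a finite center by Theorem \ref{thmcs}, so Proposition \ref{cs} applies. Surjectivity of \eqref{bcartan} is then almost immediate: given a maximal compact subgroup $K$ of $G$, Proposition \ref{cs} produces a continuous involution $\theta_K$ of $G$ with $G^{\theta_K}=K$, and $\theta_K$, being a Lie group automorphism of the semisimple Nash group $G$, is a Nash map by Proposition \ref{autss2}; thus $\theta_K$ is a Cartan involution mapping to $K$. Injectivity is exactly the uniqueness clause of Proposition \ref{cs}: if $\theta$ and $\theta'$ are Cartan involutions with common fixed point set $K$, then both coincide with $\theta_K$. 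So in this case there is no real difficulty.

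\emph{The Nash torus case.} Fix an isomorphism $G\cong\mathbb S^m\times(\R_+^\times)^n$. As a connected abelian Lie group, $G$ has a unique maximal compact subgroup $K$, which corresponds to $\mathbb S^m\times\{1\}$; hence the target of \eqref{bcartan} is a single point and I must show that $G$ admits exactly one Nash involution $\theta$ with $G^\theta=K$. For existence, take $\theta(s,a):=(s,a^{-1})$ in the chosen coordinates; this is a Nash involution with $G^\theta=\mathbb S^m\times\{1\}=K$. For uniqueness, suppose $\theta$ is any such involution. Since $K$ is characteristic in $G$, $\theta$ stabilises $\mathbb S^m\times\{1\}$; and since $\theta(\{1\}\times(\R_+^\times)^n)$ is a hyperbolic Nash group, its image under the projection $G\to\mathbb S^m$ is trivial by Proposition \ref{jt}, so $\theta$ also stabilises $\{1\}\times(\R_+^\times)^n$. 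Therefore $\theta=\theta_1\times\theta_2$ with $\theta_1\in\Aut(\mathbb S^m)$ and $\theta_2\in\Aut((\R_+^\times)^n)$. The requirement $G^\theta=K$ forces $(\mathbb S^m)^{\theta_1}=\mathbb S^m$, i.e.\ $\theta_1=\mathrm{id}$, and $((\R_+^\times)^n)^{\theta_2}=\{1\}$. Transporting $\theta_2$ through the componentwise logarithm $(\R_+^\times)^n\xrightarrow{\sim}\R^n$ turns it into a linear involution of $\R^n$ with no nonzero fixed vector, hence equal to $-\mathrm{id}$; that is, $\theta_2$ is inversion. So $\theta$ is the involution exhibited above, and uniqueness follows.

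I do not anticipate a genuine obstacle: the mathematical substance is carried entirely by Proposition \ref{cs} and Proposition \ref{autss2} in the semisimple case, and by the rigidity of hyperbolic Nash groups (Proposition \ref{jt}, together with $\Hom(\R_+^\times,\R_+^\times)\cong\mathbb Q$ from Theorem \ref{thmh}) in the torus case. The only step deserving a sentence of care is the splitting $\theta=\theta_1\times\theta_2$, where one uses that $\theta$ is Nash in order to invoke Proposition \ref{jt} for the induced homomorphism $\{1\}\times(\R_+^\times)^n\to\mathbb S^m$.
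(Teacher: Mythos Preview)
Your proof is correct and follows essentially the same approach as the paper. In the semisimple case both you and the paper reduce to Proposition~\ref{cs} together with the fact that Lie group automorphisms of a semisimple Nash group are Nash (Proposition~\ref{autss2}); in the torus case the paper simply asserts that $(x,y)\mapsto(x,y^{-1})$ on $G_{\mathrm e}\times G_{\mathrm h}$ is the unique Cartan involution, whereas you supply the details of uniqueness via Proposition~\ref{jt} and the logarithm, which is a perfectly good way to justify what the paper leaves implicit.
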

\begin{proof}
If $G$ is a connected semisimple Nash group, then all Lie group
automorphisms of $G$ are Nash automorphism. Therefore the lemma is
implied by the first assertion of Theorem \ref{cs}. If $G$ is a Nash torus, then
$G=G_\mathrm e\times G_\mathrm h$, and $G_\mathrm e$ is the unique
maximal compact subgroup of $G$. Moreover,
\[
  G_\mathrm e\times G_\mathrm h\rightarrow G_\mathrm e\times G_\mathrm
  h,\quad (x,y)\mapsto (x, y^{-1})
\]
is the unique Cartan involution of $G$. Therefore the lemma also
holds.
\end{proof}

 Denote by $\g$
the Lie algebra of $G$, and write
\[
\g=\z\oplus \s,
\]
where $\z$
denotes the center of $\g$, and $\s:=[\g,\g]$. As before, write $Z$
and $S$ for the analytic subgroups of $G$ with Lie algebras $\z$ and
$\s$, respectively. Then $Z$ is a Nash torus, and $S$ is a connected semisimple Nash group. Let $K$ be a maximal compact subgroup of $G$, and put
\[
  K_0:=K\cap S.
\]

\begin{lemp}\label{maxcr}
One has that $K\cap Z=Z_\mathrm e$, which is the unique  maximal compact subgroup of $Z$;  $K_0$ is
a maximal compact subgroup of $S$; and $K^\circ=K_0 Z_\mathrm e$.
\end{lemp}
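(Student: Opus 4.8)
The plan is to establish the three assertions in turn, using the structure $G^\circ = S Z$ (with $S \cap Z$ finite central) together with the general theory of maximal compact subgroups from Theorem \ref{compact}. First I would handle $K \cap Z$. Since $Z$ is a Nash torus, Proposition \ref{da} gives $Z = Z_\mathrm e \times Z_\mathrm h$ with $Z_\mathrm e$ elliptic (compact) and $Z_\mathrm h$ hyperbolic, and $Z_\mathrm e$ is visibly the unique maximal compact subgroup of $Z$. Now $K \cap Z$ is a compact subgroup of $Z$, hence contained in $Z_\mathrm e$. For the reverse inclusion, note that $Z_\mathrm e$ is a compact subgroup of $G$ that is normal in $G$ (it is characteristic in $Z$, which is normal in $G$); being normal and compact it is contained in every maximal compact subgroup of $G$ — indeed $Z_\mathrm e K$ is a compact subgroup containing the maximal compact $K$, so $Z_\mathrm e \subset K$. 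Hence $K \cap Z = Z_\mathrm e$.

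Next I would show $K_0 = K \cap S$ is a maximal compact subgroup of $S$. It is certainly a compact subgroup of $S$. Conversely, let $K_1 \supset K_0$ be a maximal compact subgroup of $S$; by Lemma \ref{scenter} (applied via the adjoint representation) $S$ has finite center, so Theorem \ref{compact} applies and $K_1$ is well-behaved. Then $Z_\mathrm e K_1$ is a compact subgroup of $G^\circ$: indeed $Z_\mathrm e$ centralizes $S$ (as $\z$ is the center of $\g$ and $S$ is connected), so $Z_\mathrm e K_1$ is a group, and it is compact as the product of two compact sets that commute. By Theorem \ref{compact} it lies in a conjugate $gKg^{-1}$ of $K$; a dimension count together with $K^\circ \subset G^\circ = SZ$ forces $\dim(Z_\mathrm e K_1) \le \dim K^\circ = \dim(K_0) + \dim(Z_\mathrm e)$ after quotienting by the finite group $S \cap Z$, whence $\dim K_1 \le \dim K_0$, so $K_0$ has full dimension in $S$ and is therefore maximal. (Alternatively, and more cleanly: the projection $G^\circ \to G^\circ/Z$ carries $K^\circ$ onto a maximal compact subgroup of $S/(S\cap Z)$, and one pulls back.)

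Finally, for $K^\circ = K_0 Z_\mathrm e$: the inclusion $\supset$ holds because $K_0 Z_\mathrm e$ is a connected (product of the connected $Z_\mathrm e$ with $K_0$ — here I should note $K_0$ is connected since a maximal compact subgroup of a connected semisimple Lie group with finite center is connected, cf. the proof of Lemma \ref{snd}) subgroup of $K$ containing the identity, hence inside $K^\circ$. For $\subset$: $K^\circ \subset G^\circ = SZ$, and the multiplication map $S \times Z \to G^\circ$ is a finite cover, so every element of $K^\circ$ is $sz$ with $s \in S$, $z \in Z$; projecting to $G^\circ/S \cong Z/(S\cap Z)$ sends the compact group $K^\circ$ into the maximal compact $Z_\mathrm e/(Z_\mathrm e \cap S)$, and projecting to $G^\circ/Z$ sends it into $K_0 (S\cap Z)/(S \cap Z)$; combining, $K^\circ \subset K_0 Z_\mathrm e \cdot (S \cap Z)$, and since $S \cap Z$ is finite and central with $K^\circ$ connected one gets $K^\circ \subset (K_0 Z_\mathrm e)^\circ = K_0 Z_\mathrm e$. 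The main obstacle I anticipate is bookkeeping with the finite kernel $S \cap Z$ of the covering $S \times Z \to G^\circ$: one must be careful that statements like ``$K^\circ$ maps onto a maximal compact'' are correct modulo this finite group, and it is cleanest to argue throughout with dimensions and connectedness rather than with set-theoretic equalities, invoking Theorem \ref{compact} (conjugacy and the Cartan-type product decomposition) whenever maximality needs to be upgraded from a dimension statement.
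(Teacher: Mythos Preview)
Your treatment of $K\cap Z=Z_{\mathrm e}$ is fine and is exactly what the paper dismisses as ``obvious.''

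However, there is a genuine circularity in your main argument that $K_0$ is maximal compact in $S$. You take $K_1\supset K_0$ maximal compact in $S$, form $Z_{\mathrm e}K_1$, and then invoke the equality $\dim K^\circ=\dim K_0+\dim Z_{\mathrm e}$ to force $\dim K_1\le\dim K_0$. But $\dim K^\circ=\dim K_0+\dim Z_{\mathrm e}$ is equivalent to the third assertion $K^\circ=K_0Z_{\mathrm e}$, which you have not yet established (and your proof of which in turn uses that $K_0$ is maximal compact, to get connectedness). So as written the two middle steps lean on each other. Your parenthetical alternative via the projection $G^\circ\to G^\circ/Z$ is on the right track but would need to be carried out in full to replace the circular step; in particular, the claim that the projection of $K^\circ$ to $G^\circ/Z\cong S/(S\cap Z)$ lands in $K_0(S\cap Z)/(S\cap Z)$ is exactly what is at stake and cannot simply be asserted.

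The paper avoids all of this bookkeeping by working with the finite covering $\varphi:S\times Z\to G^\circ$ directly, rather than with the two quotient projections. Since $K^\circ$ is maximal compact in $G^\circ$, the preimage $\varphi^{-1}(K^\circ)$ is maximal compact in $S\times Z$ (an easy consequence of $\varphi$ being a finite cover), and any maximal compact subgroup of $S\times Z$ is necessarily of the form $K_0'\times Z_{\mathrm e}$ with $K_0'$ maximal compact in $S$. Then $K^\circ=\varphi(\varphi^{-1}(K^\circ))=K_0'Z_{\mathrm e}$ gives the third assertion immediately, and intersecting with $S$ gives $K_0\supset K_0'$, whence $K_0=K_0'$ by maximality of $K_0'$. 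This single pull-back/push-forward proves the second and third assertions simultaneously and sidesteps the dimension juggling and the finite kernel $S\cap Z$ that you correctly flagged as the main obstacle.
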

\begin{proof}
The equality $K\cap Z=Z_\mathrm e$ is obvious. Write $\varphi:
S\times Z\rightarrow G^\circ$ for the multiplication map. It is a
finite fold covering homomorphism. Note that $K^\circ$ is a maximal
compact subgroup of $G^\circ$. Therefore $\varphi^{-1}(K^\circ)$ is
a maximal compact subgroup of $S\times Z$, which has the form
$K'_0\times Z_\mathrm e$, where $K'_0$ is a maximal compact subgroup
of $S$. We have that
\[
 K^\circ=\varphi(\varphi^{-1}(K^\circ))=K_0' Z_\mathrm e,
\]
which implies that $K_0\supset K'_0$. Since $K'_0$ is already a
maximal compact subgroup of $S$, we have that $K_0=K_0'$. This
proves the lemma.
\end{proof}

\begin{lemp}\label{injcartan}
The map \eqref{bcartan} is injective.
\end{lemp}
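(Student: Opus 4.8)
The plan is to take two Cartan involutions $\theta_1,\theta_2$ of $G$ with $G^{\theta_1}=G^{\theta_2}=K$ and to prove $\theta_1=\theta_2$ by checking that the two maps agree on $K$, on $S$, and on $Z$, and then invoking $G=KSZ$. First I would note that each $\theta_i$, being in particular a Lie group automorphism of $G$, has differential preserving the characteristic ideals $\z$ (the center of $\g$) and $\s=[\g,\g]$; hence $\theta_i(Z)=Z$ and $\theta_i(S)=S$, so the restrictions $\theta_i|_S$ and $\theta_i|_Z$ make sense and are continuous involutions of $S$ and $Z$ respectively.

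On $K$ the two maps agree trivially, since $K=G^{\theta_1}=G^{\theta_2}$ is fixed pointwise by both. On $S$: $S$ is a connected semisimple Nash group, hence a connected semisimple Lie group with finite center (Theorem \ref{thmcs}), and $S^{\theta_i}=S\cap K=K_0$, which is a maximal compact subgroup of $S$ by Lemma \ref{maxcr}. The uniqueness part of Proposition \ref{cs} then forces $\theta_1|_S=\theta_2|_S$. On $Z$: $Z^{\theta_i}=Z\cap K=Z_\mathrm e$, the unique maximal compact subgroup of the Nash torus $Z$ (Lemma \ref{maxcr}), so each $\theta_i|_Z$ is a Cartan involution of $Z$; since the map \eqref{bcartan} is injective for Nash tori (Lemma \ref{maxcrss}), we get $\theta_1|_Z=\theta_2|_Z$.

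Finally I would assemble these. Because $K$ meets every connected component of $G$ (Theorem \ref{compact}), one has $G=KG^\circ$, and $G^\circ=SZ$ since the multiplication map $S\times Z\to G^\circ$ is surjective. Thus every $g\in G$ can be written $g=ksz$ with $k\in K$, $s\in S$, $z\in Z$, and $\theta_i(g)=k\,\theta_i(s)\,\theta_i(z)$ is independent of $i$. Hence $\theta_1=\theta_2$, which is the asserted injectivity. The only points requiring care are verifying that the restrictions of the $\theta_i$ land in $S$ and $Z$ with the correct fixed-point sets and that $G=KSZ$; once these are in place the argument reduces to a direct appeal to the already-established uniqueness of Cartan involutions on connected semisimple Nash groups and on Nash tori, so I do not anticipate a serious obstacle.
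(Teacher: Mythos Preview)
Your proof is correct and follows essentially the same route as the paper's: reduce to the restrictions on $S$ and $Z$ via Lemma~\ref{maxcr} and Lemma~\ref{maxcrss} (equivalently Proposition~\ref{cs} for $S$), then conclude using $G=KSZ=G^{\theta}SZ$. You have simply made explicit a few points the paper leaves implicit, namely that $\theta_i$ preserves $S$ and $Z$ and why $G=KSZ$.
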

\begin{proof}
Let $\theta$ and $\theta'$ be two Cartan involutions of $G$ such
that $G^\theta=G^{\theta'}$. Then $S^\theta=S^{\theta'}$ and
$Z^\theta=Z^{\theta'}$. Therefore Lemma \ref{maxcrss} and Lemma
\ref{maxcr} imply that
\[
  \theta|_S=\theta'|_S\qquad\textrm{and}\qquad \theta|_Z=\theta'|_Z.
\]
The lemma then follows as $G=G^\theta SZ$.
\end{proof}

Using Lemma \ref{maxcrss} and Lemma
\ref{maxcr}, write $\theta_S$ for the unique Cartan involution of
$S$ with fix point set $K_0$. Write $\theta_Z$ for the unique
Cartan involution of $Z$.
\begin{lemp}\label{injcartan222}
There exists a unique Cartan involution of $G^\circ$ extending both $\theta_S$ and $\theta_Z$.
\end{lemp}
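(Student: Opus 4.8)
The plan is to obtain $\theta$ by descending the product involution $\theta_S\times\theta_Z$ along the multiplication covering $\varphi:S\times Z\to G^\circ$ (a finite fold covering homomorphism, as in the proof of Lemma \ref{maxcr}), and then to identify its fixed point set with $K^\circ$. First I would note that $\theta_S\times\theta_Z$ is a Nash involution of $S\times Z$: $\theta_S$ is a Lie group automorphism of the connected semisimple Nash group $S$, hence Nash by Proposition \ref{autss2}, while $\theta_Z$ is visibly Nash, being $(x,y)\mapsto(x,y^{-1})$ on $Z=Z_\mathrm e\times Z_\mathrm h$. Its fixed point set is $K_0\times Z_\mathrm e$.

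The kernel $N:=\ker\varphi$ is a finite central subgroup of $S\times Z$, equal to $\{(c,c^{-1})\mid c\in S\cap Z\}$, where $S\cap Z$ is a finite normal subgroup of $S$ lying in the center of $S$. Being a finite normal subgroup of $S$, the group $S\cap Z$ is contained in every maximal compact subgroup of $S$, so $S\cap Z\subset K_0=S^{\theta_S}$; being a finite subgroup of the Nash torus $Z$, it is contained in $Z_\mathrm e=Z^{\theta_Z}$. Hence $N\subset K_0\times Z_\mathrm e$ and is fixed pointwise by $\theta_S\times\theta_Z$; in particular $\theta_S\times\theta_Z$ preserves $N$, so it descends to a group involution $\theta$ of $G^\circ=(S\times Z)/N$, characterised by $\theta\circ\varphi=\varphi\circ(\theta_S\times\theta_Z)$. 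Since $\varphi$ is a surjective submersive Nash map and $\varphi\circ(\theta_S\times\theta_Z)$ is Nash, Lemma \ref{subm0} shows $\theta$ is Nash. As $\varphi|_{S\times\{1\}}$ and $\varphi|_{\{1\}\times Z}$ are injective (their kernels are $N\cap(S\times\{1\})=N\cap(\{1\}\times Z)=\{1\}$), we may identify $S$ and $Z$ with the corresponding analytic subgroups of $G^\circ$, and then $\theta|_S=\theta_S$ and $\theta|_Z=\theta_Z$.

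It remains to see that $(G^\circ)^\theta$ is a maximal compact subgroup of $G^\circ$. It is closed, and semialgebraic because $\theta$ is Nash (it is the preimage of the diagonal of $G^\circ\times G^\circ$ under $g\mapsto(g,\theta(g))$), hence a Nash subgroup, hence it has only finitely many connected components. Differentiating $\theta\circ\varphi=\varphi\circ(\theta_S\times\theta_Z)$ and using that $d\varphi:\s\oplus\z\to\g$ is an isomorphism gives $d\theta=d\theta_S\oplus d\theta_Z$, so the Lie algebra of $(G^\circ)^\theta$, being the $+1$-eigenspace of $d\theta$, equals $\Lie K_0\oplus\Lie Z_\mathrm e=\Lie(K_0Z_\mathrm e)=\Lie K^\circ$ by Lemma \ref{maxcr}. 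Now $K^\circ=\varphi(K_0\times Z_\mathrm e)$ is a connected subgroup of $(G^\circ)^\theta$ with the same Lie algebra, so it is the identity connected component of $(G^\circ)^\theta$; since the latter has finitely many components, it is compact, and since $K^\circ$ is a maximal compact subgroup of $G^\circ$ (as observed in the proof of Lemma \ref{maxcr}), we conclude $(G^\circ)^\theta=K^\circ$. Thus $\theta$ is a Cartan involution of $G^\circ$ extending both $\theta_S$ and $\theta_Z$. Uniqueness is immediate: any two group homomorphisms $G^\circ\to G^\circ$ agreeing on $S$ and on $Z$ agree on the subgroup they generate, namely $SZ=G^\circ$.

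The one genuinely delicate point is the final identification $(G^\circ)^\theta=K^\circ$ — that is, ruling out fixed points of $\theta$ outside $K^\circ$. What makes it clean is the remark that $(G^\circ)^\theta$ is a Nash subgroup and therefore has finitely many connected components, which turns the question into the Lie-algebra computation $\g^{d\theta}=\Lie K^\circ$ together with the maximality of $K^\circ$ among compact subgroups of $G^\circ$.
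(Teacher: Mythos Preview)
Your proof is correct and follows essentially the same approach as the paper: both construct $\theta$ by observing that $\theta_S$ and $\theta_Z$ agree on $S\cap Z$ (because $S\cap Z\subset K_0\cap Z_\mathrm e$) and hence glue to an involution of $G^\circ=SZ$, with uniqueness immediate from $G^\circ=SZ$. Your version is considerably more detailed than the paper's, which simply asserts ``one checks that this involution has $K^\circ$ as its fixed point set''; you supply that check via the Lie-algebra identification $\g^{d\theta}=\Lie K^\circ$ together with the observation that $(G^\circ)^\theta$ is a Nash subgroup (hence has finitely many components) containing the maximal compact subgroup $K^\circ$ as its identity component, and you also explicitly verify that $\theta$ is Nash via Lemma~\ref{subm0}.
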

\begin{proof}
Uniqueness holds as $G^\circ=SZ$. Note that $S\cap Z$ is contained
in both $K_0$ and $Z_\mathrm e$. Hence $\theta_S$ and $\theta_Z$
have a common extension to an involution of $G^\circ$. One checks
that this involution has $K^\circ$ as its fixed point set, and hence
it is a Cartan involution.
\end{proof}

Denote by $\theta^\circ$ the Cartan involution of $G^\circ$ of Lemma
\ref{injcartan222}. Define a map
\begin{equation}\label{theta}
   G\rightarrow G,
  \quad kg\mapsto k\,\theta^\circ(g), \quad(k\in K, \,g\in G^\circ)
\end{equation}
It is routine to check that \eqref{theta} is a well defined Nash
involution of $G$ whose fixed point set equals  $K$. This finishes
the proof of Theorem \ref{cartan}.

Now let $\theta$ be the Cartan involution of $G$ so that $G^\theta=K$.
Still write $\theta: \g\rightarrow \g$ for its differential. Denote
by $\p$ the $-1$-eigenspace of $\theta$ in $\g$.

\begin{prp}\label{kpn}
The map
\begin{equation}\label{defsz0}
\begin{array}{rcl}
  K\times \p&\rightarrow & G,\\
   (k, x) &\mapsto & k\exp (x)
 \end{array}
 \end{equation}
 is a diffeomorphism.
\end{prp}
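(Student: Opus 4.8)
The plan is to reduce to the identity component via $G=KG^\circ$, and to build the $G^\circ$-case out of the already-established cases of the semisimple part $S$ and the Nash torus $Z$. First I would record the relevant decomposition of $\p$. Since $\theta|_{G^\circ}=\theta^\circ$, the differentials agree on $\g=\Lie\,G=\Lie\,G^\circ$, and as $d\theta$ is a Lie algebra automorphism it preserves $\z$ and $\s=[\g,\g]$; hence $\p=\p_Z\oplus\p_S$, where $\p_Z$ is the $(-1)$-eigenspace of $d\theta_Z$ in $\z$ and $\p_S$ that of $d\theta_S$ in $\s$. Because $\theta_Z$ fixes $Z_\mathrm e$ and inverts $Z_\mathrm h$, we have $\p_Z=\Lie\,Z_\mathrm h$, and $\exp\colon\p_Z\to Z_\mathrm h$ is a diffeomorphism (e.g.\ because $Z_\mathrm h$ is exponential, Theorem \ref{thmexp}); by Proposition \ref{cs} applied to $S$, the map $K_0\times\p_S\to S$, $(k_0,x)\mapsto k_0\exp(x)$, is a diffeomorphism.

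The core step is to show that $\Phi^\circ\colon K^\circ\times\p\to G^\circ$, $(k,x)\mapsto k\exp(x)$, is a diffeomorphism. Recall $\varphi\colon S\times Z\to G^\circ$ is a finite covering homomorphism, with kernel $D$ identified with the finite central subgroup $F:=S\cap Z\subset Z(S)$; since central elements of a connected semisimple group with finite center lie in every maximal compact subgroup, $F\subset K_0$, while $F\subset Z$ finite forces $F\subset Z_\mathrm e$, so $D\subset K_0\times Z_\mathrm e$. Composing the product of the two diffeomorphisms of the previous paragraph with $\varphi$ gives a map $(K_0\times\p_S)\times(Z_\mathrm e\times\p_Z)\to G^\circ$ which, because $Z$ is central (so $\exp(\p_Z)$ commutes with everything and $\exp(x_S)\exp(x_Z)=\exp(x_S+x_Z)$), is $((k_0,x_S),(z_\mathrm e,x_Z))\mapsto(k_0z_\mathrm e)\exp(x_S+x_Z)$. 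This is a finite covering map whose deck group $D$ acts only on the $K_0\times Z_\mathrm e$ factor; passing to the quotient and using $(K_0\times Z_\mathrm e)/D\cong K_0Z_\mathrm e=K^\circ$ (Lemma \ref{maxcr}), it descends to $\Phi^\circ$, which is therefore a diffeomorphism.

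Finally I would promote this to $G$. By Theorem \ref{compact} the maximal compact subgroup $K$ meets every connected component of $G$, so $K=\bigsqcup_c cK^\circ$ and $G=\bigsqcup_c cG^\circ$ over a common finite set of representatives $c\in K$; since $\exp(\p)\subset G^\circ$, the map $\Phi\colon K\times\p\to G$ carries the clopen piece $cK^\circ\times\p$ onto $cG^\circ$, where it is the left translate by $c$ of $\Phi^\circ$ under the evident identification $cK^\circ\times\p\cong K^\circ\times\p$, hence a diffeomorphism. A smooth bijection that is a diffeomorphism on each member of a clopen partition of the source onto the corresponding clopen partition of the target is a diffeomorphism, so $\Phi$ is one. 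I expect the only mildly delicate point to be the bookkeeping around the finite cover $\varphi$ — checking that $\ker\varphi$ sits inside $K_0\times Z_\mathrm e$ so that the quotient in the core step is exactly $K^\circ\times\p$; everything else is assembly of results already in the text. (One should not expect a \emph{Nash} diffeomorphism here, since $\exp$ is not a Nash map in the hyperbolic directions of $\p$.)
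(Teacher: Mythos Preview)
Your proof is correct and follows essentially the same approach as the paper: reduce to the connected case, take the product of the Cartan decomposition on $S$ (Proposition~\ref{cs}) with its elementary analog on the Nash torus $Z$, and then quotient by the finite diagonal kernel $\nabla=\{(t,t^{-1})\mid t\in S\cap Z\}\subset K_0\times Z_\mathrm e$ to obtain the map $K^\circ\times\p\to G^\circ$. Your write-up is more explicit than the paper's about why $\ker\varphi$ sits inside $K_0\times Z_\mathrm e$ and about the reduction from $G$ to $G^\circ$, but the underlying argument is the same.
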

\begin{proof}
Without loss of generality, assume that $G$ is connected.
The second assertion of Proposition \ref{cs} as well as its analog for Nash tori imply that the map
\begin{equation}\label{defsz}
\begin{array}{rcl}
  K_0\times Z_\mathrm e\times (\p\cap \s)\times (\p\cap \z)&\rightarrow & S\times Z,\\
   (k, t, x,y) &\mapsto & (k\exp (x), t\exp (y))
 \end{array}
 \end{equation}
is a diffeomorphism. This descends to a deffeomorphism
 \begin{equation}\label{defsz}
 \nabla\backslash (K_0\times Z_\mathrm e)\times (\p\cap \s)\times (\p\cap \z)\rightarrow \nabla\backslash (S\times Z),
   \end{equation}
where
\[
  \nabla:=\{(t,t^{-1})\mid t\in  K_0\cap Z_\mathrm e=S\cap Z\}.
   \]
   The lemma then follows since the smooth map \eqref{defsz0} is obviously identified with \eqref{defsz}.
\end{proof}
\begin{lemp}\label{ph}
One has that $\p\subset \g_\mathrm h$.
\end{lemp}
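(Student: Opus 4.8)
The plan is to split $\p$ along the decomposition $\g=\z\oplus\s$ (with $\z$ the centre of $\g$ and $\s=[\g,\g]$) already in play in this section, and to handle the two pieces separately. Since the differential of $\theta$ is an automorphism of $\g$, it preserves $\z$ and $\s$, so $\p=(\p\cap\z)\oplus(\p\cap\s)$; moreover, by Lemma \ref{injcartan222} and the definition \eqref{theta} of $\theta$ we have $\theta|_Z=\theta_Z$ and $\theta|_S=\theta_S$, so $\p\cap\z$ is the $(-1)$-eigenspace of the differential of $\theta_Z$ on $\z$ and $\p\cap\s$ is the $(-1)$-eigenspace of the differential of $\theta_S$ on $\s$.

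For the central piece: the proof of Lemma \ref{maxcrss} identifies $\theta_Z$ on the Nash torus $Z=Z_\mathrm e\times Z_\mathrm h$ with the map $(a,b)\mapsto(a,b^{-1})$, whose differential is $(u,v)\mapsto(u,-v)$; hence $\p\cap\z=\Lie Z_\mathrm h$. Every $y\in\Lie Z_\mathrm h$ satisfies $\exp(\R y)\subset Z_\mathrm h$, so $\la y\ra$ is a Nash subgroup of the hyperbolic Nash group $Z_\mathrm h$ and is therefore hyperbolic by Proposition \ref{subsplit}; thus $\p\cap\z\subset\g_\mathrm h$.

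For the semisimple piece: fix $x\in\p\cap\s$. Since $S$ is a connected semisimple Nash group with finite centre, $K_0=K\cap S$ is a maximal compact subgroup of it (Lemma \ref{maxcr}) with corresponding Cartan involution $\theta_S$, so the last assertion of Proposition \ref{cs} applies and $\ad_x\colon\s\to\s$ is semisimple with real eigenvalues, i.e., by Lemma \ref{jgl2}, $\ad_x$ is a hyperbolic element of $\gl(\s)$. Now $\ad_x$ is the value at $x$ of the differential of the adjoint representation $\Ad_S\colon S\to\GL(\s)$, whose kernel is the finite centre of $S$, so the differential $\ad$ is injective. Taking the Jordan decomposition $x=x_\mathrm e+x_\mathrm h+x_\mathrm u$ in $\s$ (Theorem \ref{jdi}) and applying Proposition \ref{pr3} to $\Ad_S$, the equality $\ad_x=\ad_{x_\mathrm e}+\ad_{x_\mathrm h}+\ad_{x_\mathrm u}$ is the Jordan decomposition of $\ad_x$; since $\ad_x$ is hyperbolic, uniqueness forces $\ad_{x_\mathrm e}=\ad_{x_\mathrm u}=0$, and injectivity of $\ad$ gives $x_\mathrm e=x_\mathrm u=0$. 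Thus $x=x_\mathrm h$, so $x$ lies in the hyperbolic Nash subgroup $\la x\ra$ of $S$ — which is also a Nash subgroup of $G$ — and hence $x\in\g_\mathrm h$; so $\p\cap\s\subset\g_\mathrm h$.

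Finally, for a general $x\in\p$ write $x=x'+x''$ with $x'\in\p\cap\z$ and $x''\in\p\cap\s$, which by the above both lie in $\g_\mathrm h$. As $x'$ is central, $\la x'\ra$ commutes with $\la x''\ra$, so $P:=\la x'\ra\la x''\ra$ is an abelian Nash subgroup of $G$; $P/\la x'\ra$ is a Nash quotient of the hyperbolic group $\la x''\ra$, hence hyperbolic (Proposition \ref{quosplit}), so $P$ is hyperbolic by Proposition \ref{thmehu}. Since $\exp(\R x)=\exp(\R x')\exp(\R x'')\subset P$, we have $\la x\ra\subset P$, whence $x\in\g_\mathrm h$ by Proposition \ref{subsplit}. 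I expect the only delicate point to be the semisimple piece — converting the infinitesimal statement of Proposition \ref{cs} ($\ad_x$ semisimple with real spectrum) into the group-theoretic conclusion that $x$ lies in a hyperbolic Nash subgroup — which is precisely where the finiteness of $\ker\Ad_S$ and the functoriality of Jordan decomposition (Proposition \ref{pr3}) do the work; the remaining steps are bookkeeping with the splitting $\g=\z\oplus\s$.
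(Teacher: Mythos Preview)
Your argument is correct. Two small wording quibbles: the displayed identity $\exp(\R x)=\exp(\R x')\exp(\R x'')$ should be a containment (the right-hand side may be two-dimensional), and the phrase ``$x$ lies in the hyperbolic Nash subgroup $\la x\ra$'' is slightly off since $x$ is a Lie algebra element --- what you mean is that $\la x\ra$ is hyperbolic, hence $x\in\g_\mathrm h$. Neither affects the logic.

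The paper's proof takes a somewhat different and shorter route. Rather than splitting $\p$ along $\g=\z\oplus\s$ and then recombining, it declares at the outset ``without loss of generality $G$ is connected and semisimple'' (tacitly relying on the kind of reduction you spell out). In that case it exploits the fact that $\theta$ is a Nash automorphism directly: from $\theta(x)=-x$ and uniqueness of the Jordan decomposition one gets $\theta(x_\mathrm e)=-x_\mathrm e$, $\theta(x_\mathrm u)=-x_\mathrm u$, so $x_\mathrm e,x_\mathrm u\in\p$; it then suffices to show $\p\cap\g_\mathrm e=\p\cap\g_\mathrm u=\{0\}$, which follows by comparing the spectral type of $\ad_y$ for $y$ elliptic or unipotent against the last assertion of Proposition~\ref{cs}. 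So the paper uses $\theta$-equivariance of the Jordan decomposition, whereas you use its functoriality under $\Ad_S$ (Proposition~\ref{pr3}) together with injectivity of $\ad$ on $\s$. Both hinge on the same infinitesimal input from Proposition~\ref{cs}; your version has the advantage of making the reduction to the semisimple case and the recombination step fully explicit, while the paper's version is more economical once one accepts the ``WLOG''.
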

\begin{proof}
Without loss of generality, assume that $G$ is connected and semisimple. Let $x\in \p$. By uniqueness of Jordan decompositions, the qualities
\[
 (-x_\mathrm e)+(-x_\mathrm h)+(-x_\mathrm u)=-x=\theta(x)=\theta(x_\mathrm e)+\theta(x_\mathrm h)+\theta(x_\mathrm e)
\]
implies that $\theta(x_\mathrm e)=-x_\mathrm e$, that is, $x_\mathrm e\in \p$. Likewise, $x_\mathrm h\in \p$ and $x_\mathrm u\in \p$.
Therefore, it suffices to show that $\p\cap \g_\mathrm e=\{0\}$ and $\p\cap \g_\mathrm u=\{0\}$.
Note that for every  $y\in \g_\mathrm e$, the linear operator $\ad_y:\g\rightarrow \g$ is semisimple and all its eigenvalues are purely imaginary.
 Together with the last assertion of Proposition \ref{cs}, this implies that $\p\cap \g_\mathrm e=\{0\}$. The equality $\p\cap \g_\mathrm u=\{0\}$ is proved similarly.
\end{proof}

\begin{prp}\label{ccartan0}
Each $\theta$-stable Nash subgroup $G_1$ of $G$ is reductive and
equals $K_1\exp (\p_1)$, where
\[
\textrm{$K_1:=G_1\cap K\quad$ and $\quad \p_1:=(\Lie\, G_1) \cap \p$.}
\]
\end{prp}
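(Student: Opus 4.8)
The plan is to obtain both assertions from the interaction of $\theta$ with $G_1$, using only Lemma \ref{ph}, Proposition \ref{kpn}, and the disjointness of elliptic, hyperbolic and unipotent Nash groups. To begin, I would record that $\Lie\,G_1$ is $\theta$-stable, being the Lie algebra of the $\theta$-stable Nash subgroup $G_1$; hence it splits into the $\pm1$-eigenspaces of $\theta$ as $\Lie\,G_1=\k_1\oplus\p_1$, where $\k_1=(\Lie\,G_1)\cap\k=\Lie\,K_1$ and $\p_1=(\Lie\,G_1)\cap\p$.

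Next I would prove reductivity by showing that the unipotent radical $\mathfrak U_{G_1}$ is trivial, which suffices by Lemma \ref{reduni}. Since $\theta$ is a Nash involution of $G$ preserving $G_1$, its restriction $\theta|_{G_1}$ is a Nash automorphism of $G_1$, so it carries the largest normal unipotent Nash subgroup of $G_1$ to itself; thus $\mathfrak U_{G_1}$ is $\theta$-stable. Then $\mathfrak U_{G_1}^{\theta}=\mathfrak U_{G_1}\cap K$ is a compact subgroup of the unipotent Nash group $\mathfrak U_{G_1}$, hence trivial (Proposition \ref{je}); differentiating gives $(\Lie\,\mathfrak U_{G_1})\cap\k=\{0\}$, so the $\theta$-stable space $\Lie\,\mathfrak U_{G_1}$ lies in $\p$, and therefore in $\g_{\mathrm{h}}$ by Lemma \ref{ph}. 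On the other hand, for every $x\in\Lie\,\mathfrak U_{G_1}$ the replica $\la x\ra$ is contained in the unipotent Nash group $\mathfrak U_{G_1}$, so $x\in\g_{\mathrm{u}}$. Since a Nash group that is both hyperbolic and unipotent is trivial by Proposition \ref{jt} (its identity endomorphism must be trivial), we get $\g_{\mathrm{h}}\cap\g_{\mathrm{u}}=\{0\}$, hence $\Lie\,\mathfrak U_{G_1}=\{0\}$, and then $\mathfrak U_{G_1}=\{1\}$ since $\mathfrak U_{G_1}$ is connected (Proposition \ref{nashucs}).

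For the equality $G_1=K_1\exp(\p_1)$, the inclusion $\supset$ is immediate from $K_1\subset G_1$ and $\exp(\p_1)\subset\exp(\Lie\,G_1)\subset G_1$. For $\subset$, I would take $g\in G_1$ and write $g=k\exp(x)$ with $k\in K$, $x\in\p$, uniquely, by Proposition \ref{kpn}. As $\theta$ fixes $K$ pointwise and acts by $-1$ on $\p$, one has $\theta(g)=k\exp(-x)$, hence $\exp(2x)=\theta(g)^{-1}g\in G_1$ by $\theta$-stability. Since $2x\in\p\subset\g_{\mathrm{h}}$, Lemma \ref{exphu} gives $\la 2x\ra=\la\exp(2x)\ra\subset G_1$; consequently $\exp(x)\in\la 2x\ra\subset G_1$, so $k=g\exp(x)^{-1}\in G_1\cap K=K_1$, while $x\in\Lie\,\la 2x\ra\subset\Lie\,G_1$ forces $x\in(\Lie\,G_1)\cap\p=\p_1$. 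Thus $g\in K_1\exp(\p_1)$.

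I expect no serious obstacle; the one point worth stating carefully is that $\mathfrak U_{G_1}$ is $\theta$-stable, which uses that $\theta|_{G_1}$ is a Nash — not merely abstract — automorphism of $G_1$ and that the unipotent radical is characteristic among Nash subgroups. Everything else reduces to Propositions \ref{je}, \ref{jt}, \ref{kpn}, \ref{nashucs} and Lemmas \ref{ph}, \ref{exphu}, \ref{reduni}.
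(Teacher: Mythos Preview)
Your proof is correct and follows essentially the same approach as the paper: both hinge on writing $g=k\exp(x)$ via Proposition~\ref{kpn}, observing $\exp(2x)=\theta(g)^{-1}g\in G_1$, and then using $\p\subset\g_{\mathrm h}$ together with the replica equality $\la 2x\ra=\la\exp(2x)\ra$ to pull $k$ and $x$ into $K_1$ and $\p_1$. The only organizational difference is that the paper proves the decomposition $G_1=K_1\exp(\p_1)$ first and then applies it directly to the $\theta$-stable subgroup $\mathfrak U_{G_1}$ to see that $\mathfrak U_{G_1}=(\mathfrak U_{G_1}\cap K)\exp((\Lie\,\mathfrak U_{G_1})\cap\p)=\{1\}$, whereas you argue reductivity independently at the Lie algebra level before the decomposition; both routes use the same ingredients.
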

\begin{proof}
Let $g=k\exp (x)\in G_1$, where $k\in K$ and $x\in \p$. Then
\[
  \exp (2x)=(\exp (x))^2=(\exp (x) k^{-1}) (k \exp (x))=\theta(g^{-1}) g\in G_1.
\]
Then Lemma \ref{ph} and Proposition \ref{expp} imply that $\exp (\R x)\subset G_1$. Consequently,
\[
x\in \Lie\, G_1,\quad \exp (x)\in G_1,\quad \textrm{and}\quad k\in G_1.
\]
Therefore
\[
  G_1=K_1\exp (\p_1).
\]

Denote by $U_1$ the unipotent radical of $G_1$. Then it is also a
$\theta$-stable Nash subgroup of $G$. Therefore
\[
  U_1=K'_1\exp (\p'_1),\quad \textrm{where } \,K'_1:=U_1\cap K, \quad \p'_1:=(\Lie\, U_1) \cap \p.
\]
It is clear that $K'_1=\{1\}$ and $\p'_1=\{0\}$. Therefore $U_1$ is trivial and $G_1$ is reductive.
\end{proof}

The following result is an obvious consequence of the third assertion of Proposition \ref{cs}.

\begin{prp}\label{ccartan}
All maximal abelian subspaces of $\p$ are conjugate to each other under $K$.
\end{prp}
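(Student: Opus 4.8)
The plan is to peel off the central directions and then invoke the third assertion of Proposition \ref{cs} for the semisimple part. I would retain the notation $\g=\z\oplus\s$ (with $\z$ the center of $\g$ and $\s=[\g,\g]$), the analytic subgroups $Z$ and $S$, and $K_0:=K\cap S$, as in the discussion preceding Lemma \ref{maxcr}. Since the differential $\theta:\g\rightarrow\g$ is a Lie algebra automorphism, it preserves $\z$ and $\s$, so $\p=(\p\cap\z)\oplus(\p\cap\s)$. The first point to check is the elementary bookkeeping that the maximal abelian subspaces of $\p$ are exactly the subspaces of the form $(\p\cap\z)\oplus\a'$ with $\a'$ a maximal abelian subspace of $\p\cap\s$: because $\p\cap\z$ is central in $\g$ it commutes with all of $\p$, so any maximal abelian $\a\subset\p$ contains $\p\cap\z$, and then $\a\cap\s$ is forced to be maximal abelian in $\p\cap\s$ (an element of $\p\cap\s$ commuting with $\a\cap\s$ commutes with all of $\a$).

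Next I would recall, from the construction of $\theta$ in the proof of Theorem \ref{cartan}, that $\theta$ restricts on $S$ to the Cartan involution $\theta_S$ with $S^{\theta_S}=K_0$; hence $\p\cap\s$ is precisely the $(-1)$-eigenspace of $d\theta_S$ on $\s$. Applying the third assertion of Proposition \ref{cs} to the connected semisimple Nash group $S$, which has finite center, together with its maximal compact subgroup $K_0$, any two maximal abelian subspaces of $\p\cap\s$ are conjugate under $\Ad(K_0)$. Then, given maximal abelian subspaces $\a_1,\a_2\subset\p$, I would write $\a_i=(\p\cap\z)\oplus\a_i'$ and pick $k\in K_0$ with $\Ad_k(\a_1')=\a_2'$; since $\ad_x$ annihilates $\z$ for every $x\in\g$ and $K_0$ is connected, $\Ad_k$ fixes $\z$ pointwise, so $\Ad_k(\a_1)=(\p\cap\z)\oplus\a_2'=\a_2$, and $k\in K_0\subset K$ gives the claim.

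I do not expect a genuine obstacle: the substantive content is entirely in Proposition \ref{cs} and in the identification $\theta|_S=\theta_S$ already established for Theorem \ref{cartan}. The only point requiring care is the handling of the central piece $\p\cap\z$, namely that it lies inside every maximal abelian subspace of $\p$ and is fixed pointwise by $\Ad(K_0)$; this is exactly what allows the conjugation to be carried out inside $K_0$, hence inside $K$, while leaving the central directions untouched.
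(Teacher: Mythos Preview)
Your proposal is correct and is precisely the argument the paper has in mind: the paper's proof consists of the single line ``an obvious consequence of the third assertion of Proposition \ref{cs}'', and you have simply unpacked that obviousness by splitting off the central piece $\p\cap\z$ and applying Proposition \ref{cs} to the semisimple factor $S$ with its maximal compact $K_0$. There is no difference in approach, only in level of detail.
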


Let $\a$ be a maximal abelian subspaces of $\p$. Denote by $A$ the analytic subgroup of $G$ with Lie algebra $\a$.
\begin{prp}\label{ccartan2}
The analytic subgroup $A$ is a hyperbolic Nash subgroup of $G$.
\end{prp}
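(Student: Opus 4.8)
The goal is to show that the analytic subgroup $A$ of $G$ with Lie algebra $\a$ (where $\a\subset\p$ is maximal abelian) is a hyperbolic Nash subgroup. The strategy is first to identify $A$ as a Nash subgroup, and then to verify it is hyperbolic by checking that all its elements are hyperbolic (via Proposition \ref{criehu}, or directly via the structure of abelian almost linear Nash groups in Proposition \ref{da}).

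First I would observe that $\a$ is a commutative Lie subalgebra of $\g$ consisting of hyperbolic elements: indeed $\a\subset\p\subset\g_\mathrm h$ by Lemma \ref{ph}. So every element of $\a$ is hyperbolic, and since the elements of $\a$ pairwise commute, the replicas $\la x\ra$ for $x\in\a$ are hyperbolic Nash subgroups that pairwise commute (this uses that $\la x\ra$ is hyperbolic when $x$ is hyperbolic, together with the fact that commuting elements generate commuting replicas, as in the proof of Lemma \ref{expcom}). Let $A'$ denote the subgroup of $G$ generated by all $\la x\ra$, $x\in\a$. By Proposition \ref{subgi} there is a smallest Nash subgroup containing $A$; call it $\bar A$. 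Then $\bar A$ is abelian (its Lie algebra contains $\a$ and I claim equals $\a$). To pin down $\Lie\,\bar A$: since $\bar A$ is an abelian almost linear Nash group, Proposition \ref{da} gives $\bar A=\bar A_\mathrm e\times\bar A_\mathrm h\times\bar A_\mathrm u$, hence $\Lie\,\bar A=\Lie\,\bar A_\mathrm e\oplus\Lie\,\bar A_\mathrm h\oplus\Lie\,\bar A_\mathrm u$ with the three summands consisting of elliptic, hyperbolic, unipotent elements respectively. But $\Lie\,\bar A\supset\a$ and $\a$ consists of hyperbolic elements, while also $\bar A$ contains $A=\exp(\a)$; I would argue that the elliptic and unipotent parts of $\Lie\,\bar A$ must be trivial. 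The cleanest route: $\bar A$ is the smallest Nash subgroup containing $\exp(\a)$, so $\Lie\,\bar A$ is the smallest "Nash-closed" subalgebra containing $\a$; since each $x\in\a$ is hyperbolic, $\la x\ra$ is a hyperbolic Nash subgroup, so the Nash subgroup generated by all these is still built from hyperbolic pieces — more precisely, using Proposition \ref{da} applied to $\bar A$, the projection of $\a$ to $\Lie\,\bar A_\mathrm e$ and to $\Lie\,\bar A_\mathrm u$ is zero because $\a\subset\g_\mathrm h$ and the Jordan decomposition (Theorem \ref{jdi}, Proposition \ref{pr3}) is preserved; hence $\a\subset\Lie\,\bar A_\mathrm h$, and then $\exp(\a)\subset\bar A_\mathrm h$, forcing $\bar A_\mathrm h$ to be a Nash subgroup containing $\exp(\a)$, so $\bar A=\bar A_\mathrm h$ by minimality. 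Thus $\bar A$ is hyperbolic.

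It remains to show $A=\bar A$, i.e. that the analytic subgroup $A$ already coincides with this Nash subgroup. Since $\bar A$ is hyperbolic, it is connected and simply connected, and by Proposition \ref{autsu0}-style reasoning for hyperbolic groups (or directly from Theorem \ref{thmh}) the exponential map $\Lie\,\bar A\to\bar A$ is a diffeomorphism; in fact $\bar A\cong(\R^\times_+)^{\dim\a}$ via $\exp$. Therefore $\exp(\a)$ is already a subgroup of $\bar A$, namely the image of the linear subspace $\a=\Lie\,\bar A$ under $\exp$, which is all of $\bar A$. Hence $A=\exp(\a)=\bar A$ is a hyperbolic Nash subgroup of $G$, as desired.

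**Main obstacle.** The one step requiring care is showing $\Lie\,\bar A=\a$ (equivalently that passing to the smallest Nash subgroup does not enlarge the Lie algebra beyond $\a$), and that this enlargement, even if it occurred, would stay hyperbolic. The Jordan-decomposition machinery (Proposition \ref{pr3}, Lemma \ref{exphu}, Proposition \ref{expp}) together with the structure theorem for abelian almost linear Nash groups (Proposition \ref{da}) should handle it: the key point is that $\la x\ra$ for $x\in\a$ is hyperbolic, these replicas commute, and the Nash subgroup they generate embeds (by Proposition \ref{intehu2}, \ref{intehu3}) into a product respecting the elliptic/hyperbolic/unipotent trichotomy, so no elliptic or unipotent part can appear. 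Everything else is routine.
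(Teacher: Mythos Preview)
Your argument establishes correctly that the smallest Nash subgroup $\bar A$ containing $\exp(\a)$ is hyperbolic, but there is a genuine gap in the step $A=\bar A$ (equivalently $\Lie\,\bar A=\a$). You assert ``$\bar A\cong(\R^\times_+)^{\dim\a}$'' and ``$\a=\Lie\,\bar A$'' without justification, and the machinery you invoke in the final paragraph (replicas, Jordan decomposition, Proposition~\ref{da}) only shows $\bar A$ is hyperbolic, not that its dimension equals $\dim\a$. Crucially, you never use the hypothesis that $\a$ is \emph{maximal} abelian in $\p$. Without maximality the statement is false: for instance in $\SL_3(\R)$ take $\a_0=\R\cdot\diag(1,\sqrt 2,-1-\sqrt 2)\subset\p$; the one-parameter group $\exp(\a_0)$ is not semialgebraic, and its Nash closure is the full two-dimensional positive diagonal torus.

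The gap can be filled, but it requires the same ingredients as the paper's proof. One completion: observe that $\theta(\exp(\a))=\exp(\a)$, so $\bar A$ is $\theta$-stable; then Proposition~\ref{ccartan0} gives $\bar A=(\bar A\cap K)\exp(\Lie\,\bar A\cap\p)$; since $\bar A$ is hyperbolic, $\bar A\cap K$ is trivial, so $\Lie\,\bar A\subset\p$; now $\Lie\,\bar A$ is an abelian subspace of $\p$ containing $\a$, and maximality forces $\Lie\,\bar A=\a$. The paper's own proof is more direct: it considers the centralizer $G_1$ of $\a$ in $G$, applies Proposition~\ref{ccartan0} to get $G_1=K_1\times A$ (using that $(\Lie\,G_1)\cap\p=\a$ by maximality), and then identifies $A$ as the identity component of $\{x\in Z_1\mid\theta(x)=x^{-1}\}$ where $Z_1$ is the center of $G_1$---a visibly semialgebraic condition. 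Both routes hinge on the maximality of $\a$ and on $\theta$-stability via Proposition~\ref{ccartan0}; your replica argument alone cannot reach the conclusion.
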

\begin{proof}
Denote by $G_1$ the centralizer of $\a$ in $G$, which is a
$\theta$-stable Nash subgroup of $G$. Note that $(\Lie\, G_1)\cap \p=\a$
since $\a$ is maximal abelian in $\p$. Therefore by Proposition
\ref{ccartan0},
\[
  G_1=K_1\exp (\a)=K_1\times A,\quad \textrm{where}\quad K_1:=G_1\cap K.
\]
Write $Z_1$ for the center of $G_1$, then $A$ equals the identity
connected component of the Nash subgroup
\[
  \{x\in Z_1\mid \theta(x)=x^{-1}\}.
\]
Therefore $A$ is a Nash subgroup.

Note that $A$ is abelian and all elements of $A$ are hyperbolic. Therefore $A$ is
hyperbolic by Proposition \ref{da}.
\end{proof}

\begin{lemp}\label{ccartan2}
The set $\exp (\p)$ is a close Nash submanifold of $G$.
\end{lemp}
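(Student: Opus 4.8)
The plan is to exhibit $\exp(\p)$ simultaneously as the image of a Nash map, which will give semialgebraicity, and as the image of a closed submanifold under the Cartan diffeomorphism of Proposition \ref{kpn}, which will give that it is a closed submanifold.

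First I would consider the map
\[
  \mu\colon G\longrightarrow G,\qquad g\longmapsto g\,\theta(g)^{-1}.
\]
Since $\theta$ is a Nash involution and multiplication and inversion on $G$ are Nash, $\mu$ is a Nash map. I claim that $\mu(G)=\exp(\p)$. For the inclusion $\mu(G)\subseteq\exp(\p)$: given $g\in G$, write $g=k\exp(x)$ with $k\in K$ and $x\in\p$ by Proposition \ref{kpn}; since $K=G^{\theta}$ and $\theta(\exp x)=\exp(-x)$, a short computation gives $\mu(g)=k\exp(x)\exp(x)k^{-1}=k\exp(2x)k^{-1}=\exp\bigl(2\Ad_k(x)\bigr)$, and $\Ad_k(x)\in\p$ because $\p$ is $\Ad_K$-stable; as $\p$ is a linear subspace we have $2\p=\p$, so $\mu(g)\in\exp(\p)$. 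For the reverse inclusion $\exp(\p)\subseteq\mu(G)$: for any $x\in\p$ one computes $\mu(\exp(x/2))=\exp(x/2)\exp(x/2)=\exp(x)$. Hence $\exp(\p)=\mu(G)$, and by Lemma \ref{images} this is a semialgebraic subset of $G$.

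It then remains to see that $\exp(\p)$ is a closed submanifold of $G$. By Proposition \ref{kpn} the map $(k,x)\mapsto k\exp(x)$ is a diffeomorphism between $K\times\p$ and $G$; restricting it to the closed submanifold $\{1\}\times\p$ yields a closed embedding $\p\hookrightarrow G$ whose image is precisely $\exp(\p)$. Thus $\exp(\p)$ is a closed, hence locally closed, submanifold of $G$, and being diffeomorphic to the vector space $\p$ it is equidimensional. Combining this with the previous paragraph, $\exp(\p)$ is a semialgebraic, locally closed submanifold of $G$, that is, a closed Nash submanifold. The only delicate point is the semialgebraicity, since the exponential map itself is not Nash; the device that makes it work is to realize $\exp(\p)$ as the image of the Nash symmetrization map $g\mapsto g\,\theta(g)^{-1}$, and the identification of this image rests on Proposition \ref{kpn} together with the $\Ad_K$-invariance of $\p$.
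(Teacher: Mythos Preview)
Your proof is correct, and for the closed-submanifold part it coincides with the paper's argument (both invoke Proposition \ref{kpn}). The difference lies in how you obtain semialgebraicity.

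The paper realizes $\exp(\p)$ as the image of the Nash map
\[
K\times A\longrightarrow G,\qquad (k,a)\longmapsto kak^{-1},
\]
which requires the prior facts that $A=\exp(\a)$ is a Nash subgroup of $G$ and that every element of $\p$ is $\Ad_K$-conjugate into $\a$ (Proposition \ref{ccartan}). You instead use the symmetrization $\mu(g)=g\,\theta(g)^{-1}$, which is Nash simply because $\theta$ is a Nash involution by the very definition of Cartan involution, and you identify its image with $\exp(\p)$ directly from the Cartan decomposition. Your route is a bit more economical: it bypasses the conjugacy of maximal abelian subspaces of $\p$ and the construction of $A$ as a Nash subgroup, at the cost of a small computation showing $\mu(k\exp x)=\exp(2\Ad_k x)$. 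The paper's route, on the other hand, ties the lemma more visibly into the $KAK$ decomposition that follows it.
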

\begin{proof}
The set $\exp (\p)$ is a closed submanifold of $G$ by Proposition \ref{kpn}. It is semialgebraic since it is equal to the image of the Nash map
\[
  K\times A\rightarrow G, \quad (k,a)\mapsto kak^{-1}.
\]
\end{proof}

Combining Proposition \ref{kpn}, Proposition \ref{ccartan} and Lemma \ref{ccartan2}, we obtain the following
\begin{prp}\label{kak}
One has that $G=KAK$, and the multiplication map
\[
K\times \exp (\p)\rightarrow G
\]
is a Nash diffeomorphism.
\end{prp}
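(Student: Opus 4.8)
The plan is to assemble the statement from Proposition \ref{kpn}, Proposition \ref{ccartan} and Lemma \ref{ccartan2}; no further reduction is needed, since all three are already stated for an arbitrary reductive Nash group $G$.

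First I would extract the smooth picture from Proposition \ref{kpn}. That proposition says $K\times\p\rightarrow G$, $(k,x)\mapsto k\exp(x)$, is a diffeomorphism. Restricting it to the closed submanifold $\{1\}\times\p$ shows that $\p\rightarrow\exp(\p)$ is a diffeomorphism onto a closed submanifold of $G$, and composing $K\times\exp(\p)\xrightarrow{\sim}K\times\p\xrightarrow{\sim}G$ shows that the multiplication map $K\times\exp(\p)\rightarrow G$ is a diffeomorphism of smooth manifolds; in particular $G=K\exp(\p)$.

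Next I would prove $G=KAK$. Since $\a$ is abelian and $A$ is the corresponding connected analytic subgroup, $A=\exp(\a)$ as a subset of $G$, so $A\subset\exp(\p)$ and hence $KAK\subset K\exp(\p)=G$. For the reverse inclusion, take $g=k\exp(x)\in G$ with $k\in K$ and $x\in\p$; the line $\R x$ is an abelian subspace of $\p$, so $x$ lies in some maximal abelian subspace $\a'$ of $\p$, and by Proposition \ref{ccartan} there is $k'\in K$ with $\a'=\Ad_{k'}(\a)$. Writing $x=\Ad_{k'}(a)$ with $a\in\a$ gives $\exp(x)=k'\exp(a)(k')^{-1}$ with $\exp(a)\in A$, so $g=kk'\exp(a)(k')^{-1}\in KAK$. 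Thus $G=KAK$ (and, incidentally, $\exp(\p)=\{k a k^{-1}\mid k\in K,\ a\in A\}$, consistent with the proof of Lemma \ref{ccartan2}).

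Finally, for the Nash diffeomorphism assertion: $K$ is an elliptic Nash subgroup of $G$ and, by Lemma \ref{ccartan2}, $\exp(\p)$ is a closed Nash submanifold of $G$, so both carry canonical Nash structures and $K\times\exp(\p)$ is a Nash submanifold of $G\times G$. The multiplication map $K\times\exp(\p)\rightarrow G$ is then the restriction of the (Nash) multiplication map $G\times G\rightarrow G$ to this Nash submanifold, hence is a Nash map; being a diffeomorphism by the second paragraph, it is a Nash diffeomorphism. The only points needing care are the bookkeeping identification $A=\exp(\a)$ and the observation that restricting a Nash map to a Nash submanifold of its source keeps it Nash; both are routine, so there is no serious obstacle here — all the substantive work is already done in Propositions \ref{kpn} and \ref{ccartan} and Lemma \ref{ccartan2}.
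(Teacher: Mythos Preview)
Your proof is correct and follows exactly the approach the paper takes: the paper simply asserts that the result is obtained by ``combining Proposition \ref{kpn}, Proposition \ref{ccartan} and Lemma \ref{ccartan2}'' without supplying details, and you have filled those details in faithfully. The only minor remark is that your justification $A=\exp(\a)$ ``since $\a$ is abelian and $A$ is the corresponding connected analytic subgroup'' relies on surjectivity of $\exp$ for connected abelian Lie groups, which is fine, though one could also cite that $A$ is hyperbolic (Proposition \ref{ccartan2}) to get this immediately.
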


Write
\[
   \g=\bigoplus_{\alpha\in \a^*} \g_\alpha,
\]
where $\a^*$ denotes the space of all real valued linear functionals on $\a$, and
\[
\g_{\alpha}:=\{x\in \g\mid [a, x]=\alpha(a)x,\,\textrm{for all }a\in \a\}.
\]
Then the set
\[
  \Delta(\g, \a):=\left\{\alpha\in \a^*\mid \alpha\neq 0,\, \g_\alpha\neq \{0\}\right\}
\]
is a root system in $\a^*$. Fix a positive system $\Delta(\g, \a)^+\subset \Delta(\g, \a)$, and put
\[
  \n:=\bigoplus_{\alpha\in \Delta(\g, \a)^+} \g_\alpha.
\]
Then $\n$ is a Lie subalgebra of $\g$. Denote by $N$ the analytic subgroup of $G$ with Lie algebra $\n$.

\begin{prp}\label{uni}
The analytic subgroup $N$ is a unipotent Nash subgroup of $G$.
\end{prp}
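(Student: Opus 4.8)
The plan is to realize $N$ inside a unipotent Nash subgroup of a general linear group by means of a Nash representation with finite kernel, to deduce from this that $N$ is a Nash subgroup, and then to invoke Lemma~\ref{nilc} to conclude that $N$ is unipotent.

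First I would fix a Nash representation $\varphi\colon G\rightarrow\GL(V)$ with finite kernel (it exists because $G$ is reductive, hence almost linear), and denote by $\phi\colon\g\rightarrow\gl(V)$ its differential; $\phi$ is injective since $\ker\varphi$ is finite. Pick $H\in\a$ with $\alpha(H)>0$ for all $\alpha\in\Delta(\g,\a)^+$, i.e. $H$ in the open chamber attached to the chosen positive system. By Lemma~\ref{ph}, $H\in\g_\mathrm h$, so Proposition~\ref{pr2} gives that $\phi(H)$ is a hyperbolic element of $\gl(V)$, hence, by Lemma~\ref{jgl2}, semisimple with all eigenvalues real. Writing $V=\bigoplus_{\lambda\in\R}V_\lambda$ for the eigenspace decomposition of $\phi(H)$ and using $[\phi(H),\phi(x)]=\alpha(H)\phi(x)$ for $x\in\g_\alpha$, one gets $\phi(\g_\alpha)V_\lambda\subseteq V_{\lambda+\alpha(H)}$. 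Since $\Delta(\g,\a)^+$ is finite and each $\alpha(H)>0$, ordering the finitely many eigenvalues of $\phi(H)$ and refining to a full flag of $V$ exhibits $\phi(\n)$ as a Lie subalgebra of a conjugate of $\Lie\oU_m(\R)$, where $m=\dim V$; in particular every element of $\phi(\n)$ is nilpotent.

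Next I would let $N_V$ be the analytic subgroup of $\GL(V)$ with Lie algebra $\phi(\n)$. By the previous step it lies in a unipotent Nash subgroup of $\GL(V)$, so Proposition~\ref{nashucs000} shows that $N_V$ is itself a unipotent Nash subgroup of $\GL(V)$; in particular it is semialgebraic, and all its elements act as unipotent operators on $V$. Since the image of the analytic subgroup $N$ under $\varphi$ is the analytic subgroup with Lie algebra $\phi(\n)$, we have $\varphi(N)=N_V$, hence $N\subseteq\varphi^{-1}(N_V)$. By Lemma~\ref{images} the subgroup $\varphi^{-1}(N_V)$ is semialgebraic, and $\varphi$ maps it onto $N_V$ with finite kernel, so $\dim\varphi^{-1}(N_V)=\dim N_V=\dim\phi(\n)=\dim\n=\dim N$. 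As $N$ is a connected subgroup of full dimension inside $\varphi^{-1}(N_V)$, it coincides with $(\varphi^{-1}(N_V))^\circ$ and is therefore a Nash subgroup of $G$. Finally, $V$ restricted to $N$ is a Nash representation of the connected Nash group $N$ with finite kernel on which every group element acts unipotently, so Lemma~\ref{nilc} yields that $N$ is unipotent.

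The only delicate point is making $N$ semialgebraic; everything hinges on the compatibility of the restricted root space grading of $\g$ with the eigenspace grading of $\phi(H)$ for $H$ in the positive chamber, which is exactly what forces $\phi(\n)$ to be a simultaneously triangularizable, hence nilpotent, subalgebra of $\gl(V)$. Once that is in hand, both the Nash subgroup property (via pulling back $N_V$) and unipotence (via Lemma~\ref{nilc}) follow with no further work.
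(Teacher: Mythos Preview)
Your proof is correct and follows essentially the same strategy as the paper: grade the representation space by the eigenvalues of a regular element of $\a$, observe that $\n$ acts by strictly raising this grading, pull back the resulting unipotent Nash subgroup of the general linear group to see that $N$ is the identity component of a Nash subgroup of $G$, and finish with Lemma~\ref{nilc}. The only notable difference is that the paper first reduces to connected semisimple $G$ and then uses the adjoint representation (which has finite kernel precisely in that case), whereas you take an arbitrary Nash representation with finite kernel from the start and thereby avoid that reduction; this is a mild simplification but not a different idea.
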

\begin{proof}
Without loss of generality, assume that $G$ is semisimple and
connected. Denote by $a_0$ the element of $\a$ such that
$\alpha(a_0)=1$ for all simple roots $\alpha$ in $\Delta(\g, \a)^+$.
For every integer $i$, denote
\[
  \g_i:=\{x\in \g\mid [a_0, x]=i x\}.
\]
Put
\[
  \tilde \n:=\left\{g\in \gl(\g)\mid g\left(\bigoplus_{j\geq i} \g_j\right)\subset \bigoplus_{j\geq i+1} \g_j\,\textrm{ for all $i\in \Z$}\right\},
\]
and
\[
  \tilde N:=\left\{g\in \GL(\g)\mid (g-1)\left(\bigoplus_{j\geq i} \g_j\right)\subset \bigoplus_{j\geq i+1} \g_j\,\textrm{ for all $i\in \Z$}\right\}.
\]
Then $\tilde N$ is a unipotent Nash subgroup of $\GL(\g)$ with Lie algebra $\tilde \n$.

 Consider the adjoint representation
\[
  \Ad: G\rightarrow \GL(\g)
\]
and its differential
\[
  \ad: \g\rightarrow \GL(\g).
\]
Note that $\ad^{-1}(\tilde \n)=\n$. Therefore the Nash subgroup $\Ad^{-1}(\tilde N)$ of $G$ has Lie algebra $\n$.
Hence $N$ equals the identity connected component of $\Ad^{-1}(\tilde N)$, which is a Nash subgroup of $G$.
Since $G$ is assumed to be semisimple, the adjoint representation of $N$ on $\g$ has a finite kernel. Then Lemma \ref{nilc} implies that $N$ is unipotent.
\end{proof}

\begin{thmp}\label{injcartan2}
The multiplication map \be \label{kang} K\times A\times N\rightarrow
G \ee
 is a Nash diffeomorphism.
\end{thmp}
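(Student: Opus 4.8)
The plan is to reduce the statement to the classical Iwasawa decomposition for connected semisimple Lie groups with finite center. Since $K$ is a Nash subgroup of $G$ by Lemma \ref{auts}, $A$ is a hyperbolic Nash subgroup of $G$ by Proposition \ref{ccartan2}, $N$ is a unipotent Nash subgroup of $G$ by Proposition \ref{uni}, and the multiplication of $G$ is Nash, the map \eqref{kang}, which I call $\mu$, is automatically a Nash map; as a Nash map which is a diffeomorphism of the underlying smooth manifolds is a Nash diffeomorphism, it suffices to prove that $\mu$ is a diffeomorphism of smooth manifolds. First I would reduce to the case that $G$ is connected. By Proposition \ref{kpn}, $G$ is diffeomorphic to $K\times\exp(\p)$ with $\exp(\p)$ connected, so $K$ meets every connected component of $G$ and $K^\circ=K\cap G^\circ$. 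The data $(\theta|_{G^\circ},\a,\n)$ is an Iwasawa datum for $G^\circ$ with associated multiplication map $\mu^\circ\colon K^\circ\times A\times N\to G^\circ$, and $\mu(k\kappa,a,n)=k\,\mu^\circ(\kappa,a,n)$ for $k\in K$, $\kappa\in K^\circ$; decomposing $K$ into cosets of $K^\circ$, the map $\mu$ restricts on each $k_iK^\circ\times A\times N$ to a diffeomorphism onto the connected component $k_iG^\circ$, and these components are pairwise distinct and exhaust $G$. Hence $\mu$ is a diffeomorphism once $\mu^\circ$ is, so we may assume $G$ connected (whence $K$ is connected) and reductive.

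Next I would split off the central torus. Write $\g=\z\oplus\s$ with $\z$ the center and $\s=[\g,\g]$, and let $S,Z$ be the corresponding analytic subgroups; by Proposition \ref{autss} and Theorem \ref{thmcs}, $S$ is a connected semisimple Nash subgroup with finite center, and by Lemma \ref{znt}, $Z=Z_\mathrm e\times Z_\mathrm h$ is a Nash torus. The multiplication $p\colon S\times Z\to G$ is then a finite covering homomorphism with kernel $\Delta=\{(t,t^{-1})\mid t\in S\cap Z\}$, where $S\cap Z$ is finite and central in $G$. Since $\theta$ preserves $\z$ and $\s$ we get $\p=(\p\cap\s)\oplus(\p\cap\z)$, and by Lemma \ref{ph} together with the absence of nontrivial compact subgroups in $Z_\mathrm h$ one finds $\p\cap\z=\Lie(Z_\mathrm h)$, so that $\a=(\a\cap\s)\oplus\Lie(Z_\mathrm h)$ with $\a\cap\s$ maximal abelian in $\p\cap\s$. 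As $\ad_a$ annihilates $\z$ for $a\in\a$, every nonzero restricted root space lies in $\s$, hence $\n\subset\s$ and $N\subset S$; moreover, writing $A':=\exp(\a\cap\s)\subset S$, one has $A=A'\cdot Z_\mathrm h$ with $A'\cap Z_\mathrm h=\{1\}$ (since $Z_\mathrm h$ is torsion free and $A'\cap Z_\mathrm h\subset S\cap Z$ is finite). By Lemma \ref{maxcr}, $K=K_0Z_\mathrm e$ with $K_0:=K\cap S$ a maximal compact subgroup of $S$, and $K_0\cap Z_\mathrm e=S\cap Z$ (as $S\cap Z$, being finite and central in $G$, lies in $K$).

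Then I would invoke the classical Iwasawa decomposition for the connected semisimple Lie group $S$ with finite center (cf.\ \cite[Theorem 6.46]{Kn}): the multiplication map $K_0\times A'\times N\to S$ is a diffeomorphism. Combined with the obvious diffeomorphism $Z_\mathrm e\times Z_\mathrm h\to Z$, this gives a diffeomorphism $\nu\colon(K_0\times A'\times N)\times(Z_\mathrm e\times Z_\mathrm h)\to S\times Z$. Rewriting the source as $(K_0\times Z_\mathrm e)\times(A'\times Z_\mathrm h)\times N$ by permuting factors and using the centrality of $Z$ in $G$, one checks the identity $p\circ\nu=\mu\circ(q_1\times q_2\times\mathrm{id}_N)$, where $q_1\colon K_0\times Z_\mathrm e\to K$ is the finite covering with kernel $\{(t,t^{-1})\mid t\in S\cap Z\}$ and $q_2\colon A'\times Z_\mathrm h\to A$ is an isomorphism; one also checks that $\nu^{-1}(\Delta)$ is carried into $\ker q_1\times\{1\}\times\{1\}$. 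Consequently $p\circ\nu$ and $q_1\times q_2\times\mathrm{id}_N$ are finite covering maps of the same degree $\lvert S\cap Z\rvert$, so $\mu$ is a surjective local diffeomorphism which is moreover proper (preimages of compacta are compact), hence a finite covering; comparing degrees, $\mu$ has degree one and is a diffeomorphism, which finishes the proof.

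I expect the main obstacle to be the bookkeeping in the last two paragraphs: one must verify that the Iwasawa datum $(\theta,\a,\n)$ of $G$ restricts compatibly both to $G^\circ$ and to $S$, that the finite group $S\cap Z$ is absorbed into the maximal compact factors $K_0$ and $Z_\mathrm e$ (so that $\nu^{-1}(\Delta)$ sits entirely in the $K$-direction), and that the factorization $p\circ\nu=\mu\circ(q_1\times q_2\times\mathrm{id}_N)$ holds on the nose. Once these identifications are in place, the degree count forces $\mu$ to be a diffeomorphism of smooth manifolds, and Nash-ness of the inverse is then automatic.
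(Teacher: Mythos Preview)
Your approach is correct and is essentially the same as the paper's: both reduce to the connected case, invoke the classical Iwasawa decomposition for the semisimple part $S$ via \cite[Theorem 6.46]{Kn}, and then pass from $S\times Z$ to $G$ through the finite covering $p$ with kernel $\nabla=\{(t,t^{-1})\mid t\in S\cap Z\}$, exactly as in the proof of Proposition \ref{kpn}. The paper compresses all of your last two paragraphs into the single sentence ``the same argument as in Proposition \ref{kpn} reduces the general case to the case when $G$ is semisimple,'' whereas you spell out the bookkeeping (the splittings $\a=(\a\cap\s)\oplus\Lie(Z_\mathrm h)$, $\n\subset\s$, $K=K_0Z_\mathrm e$) and replace the direct descent-through-$\nabla$ identification by a degree count; your fiber-cardinality computation in fact gives bijectivity of $\mu$ directly, so the detour through properness is unnecessary.
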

\begin{proof}
Without loss of generality, assume that $G$ is connected. The map
\eqref{kang} is clearly a Nash map. We only need to show that it is
a diffeomorphism. This is known when $G$ is semisimple (cf. \cite[Theorem 6.46]{Kn}). The same argument as in Proposition
\ref{kpn} reduces the general case to the case when $G$ is
semisimple.

\end{proof}

As a corollary of Theorem \ref{injcartan2}, we have
\begin{prp}\label{ee22}
An almost linear Nash group is elliptic if it consists elliptic elements only.
\end{prp}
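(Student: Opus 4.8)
The plan is to first reduce to the case where $G$ is reductive, and then to extract the conclusion from the Iwasawa decomposition of Theorem \ref{injcartan2}.

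First I would show that the unipotent radical $\mathfrak U_G$ is trivial. By Proposition \ref{0unir}, $\mathfrak U_G$ is a unipotent Nash subgroup of $G$, so every non-trivial element of $\mathfrak U_G$ is a unipotent element of $G$. On the other hand, no non-trivial element of $G$ can be simultaneously elliptic and unipotent: the replica of such an element would be a Nash group that is at once elliptic and unipotent, and Proposition \ref{jt}, applied to the identity homomorphism, forces such a group to be trivial. Since by hypothesis every element of $G$ is elliptic, this gives $\mathfrak U_G=\{1\}$, and hence $G$ is reductive by Lemma \ref{reduni}.

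Next I would apply Theorem \ref{injcartan2} to the reductive Nash group $G$, obtaining a maximal compact subgroup $K$, a hyperbolic Nash subgroup $A$, and a unipotent Nash subgroup $N$ such that the multiplication map $K\times A\times N\rightarrow G$ is a Nash diffeomorphism. Every element of $A$ is hyperbolic, being contained in the hyperbolic Nash subgroup $A$, and is elliptic by the hypothesis on $G$; by the disjointness argument of the previous paragraph such an element must be trivial, so $A=\{1\}$. Likewise every element of $N$ is both unipotent and elliptic, whence $N=\{1\}$. Therefore $G=K$ is compact, i.e. $G$ is elliptic.

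I do not anticipate any genuine obstacle. The only points that require care are that $\mathfrak U_G$ really is a unipotent Nash subgroup (so that the reduction to the reductive case goes through) and that reductivity must be established before Theorem \ref{injcartan2} is applicable; once these are in place, the statement is an immediate consequence of the pairwise disjointness of elliptic, hyperbolic and unipotent Nash subgroups.
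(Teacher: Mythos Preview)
Your proof is correct and follows essentially the same route as the paper: first show that the unipotent radical is trivial (so $G$ is reductive), then invoke the Iwasawa decomposition of Theorem \ref{injcartan2} and use disjointness to conclude that $A$ and $N$ are trivial, whence $G=K$ is compact. The paper's proof is merely terser, leaving the vanishing of $A$ and $N$ implicit.
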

\begin{proof}
If an almost linear Nash group consists only elliptic elements, then
its unipotent radical is trivial, and is thus reductive. Then
Theorem \ref{injcartan2} implies that it is compact.

\end{proof}

The same proof as Proposition \ref{ee22} shows the following
\begin{prp}\label{ee222}
An almost linear Nash group is hyperbolic  if it consists hyperbolic elements only.
\end{prp}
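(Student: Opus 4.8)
*An almost linear Nash group is hyperbolic if it consists of hyperbolic elements only.*

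The plan is to imitate the proof of Proposition \ref{ee22}, replacing ``compact'' by ``hyperbolic'' at the appropriate point. Let $G$ be an almost linear Nash group all of whose elements are hyperbolic. First I would observe that $G$ has no non-trivial unipotent element, since every non-trivial unipotent element generates a unipotent $\la x\ra$, whose non-identity elements are unipotent (hence not hyperbolic by Proposition \ref{intehu}, since $\la x\ra$ would then be both hyperbolic and unipotent, forcing it to be trivial). Likewise $G$ has no non-trivial elliptic element: if $e\in G$ were elliptic and non-trivial, then $\la e\ra$ is a non-trivial elliptic Nash group, and the hyperbolicity of $e$ would make $\la e\ra$ both elliptic and hyperbolic, again impossible by Proposition \ref{jt}. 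In particular the unipotent radical $\mathfrak U_G$, being a normal unipotent Nash subgroup (Proposition \ref{0unir}), is trivial, so $G$ is reductive by Lemma \ref{reduni}.

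Next I would apply the Iwasawa decomposition (Theorem \ref{injcartan2}) to the reductive Nash group $G$: the multiplication map $K\times A\times N\rightarrow G$ is a Nash diffeomorphism, where $K$ is a maximal compact subgroup, $A$ a hyperbolic Nash subgroup, and $N$ a unipotent Nash subgroup of $G$. Since $G$ has no non-trivial elliptic element, $K$ is trivial; since $G$ has no non-trivial unipotent element, $N$ is trivial. Hence the multiplication map gives a Nash diffeomorphism $A\stackrel{\sim}{\rightarrow} G$, and $A$ is visibly a Nash subgroup, so this is in fact a Nash isomorphism of Nash groups. Therefore $G$ is Nash isomorphic to the hyperbolic Nash group $A$, i.e.\ $G$ is hyperbolic.

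The only delicate point is to make sure that the reductive structure theory is legitimately available, i.e.\ that we are allowed to invoke Theorem \ref{injcartan2} once reductivity is established; but since $\mathfrak U_G$ is trivial this is immediate. A second small point is the identification of $K$ and $N$ as trivial: $K$ consists of elliptic elements and $N$ of unipotent elements (the latter by Proposition \ref{j0unip} applied to a faithful Nash representation of $N$, or more simply because a unipotent Nash group has only unipotent elements), so both are trivial by our hypothesis combined with Proposition \ref{intehu}. With these observations the argument is essentially the same as for Proposition \ref{ee22}, so I would simply write ``The same proof as Proposition \ref{ee22} works, using that $G$ has no non-trivial elliptic or unipotent element, so that in the Iwasawa decomposition $G = KAN$ one has $K = N = \{1\}$ and hence $G = A$ is hyperbolic.''
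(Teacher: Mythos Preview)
Your proof is correct and follows exactly the approach the paper intends: the paper's entire proof is the single line ``The same proof as Proposition \ref{ee22} shows the following,'' and you have faithfully carried out that same argument (trivial unipotent radical $\Rightarrow$ reductive $\Rightarrow$ Iwasawa decomposition $G=KAN$ with $K=N=\{1\}$, hence $G=A$ is hyperbolic). Your more detailed justifications for why $K$ and $N$ are trivial are fine, though slightly more than the paper bothers to spell out.
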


\section{Exponential Nash groups}\label{secexp}

Recall from the Introduction that an almost linear Nash group $G$ is
said to be exponential if $G_\mathrm e=\{1\}$. The following lemma
is obvious.
\begin{lemd}\label{expcomp}
An almost linear Nash group is exponential if and only if all its elements are exponential.
\end{lemd}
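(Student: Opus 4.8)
The plan is simply to unwind the two definitions involved and reduce the equivalence to the uniqueness clause of the Jordan decomposition (Theorem \ref{jd}). Recall that ``$G$ is exponential'' means $G_\mathrm e=\{1\}$, while ``the element $x$ is exponential'' means $x_\mathrm e=1$; so the assertion to be proved is that $G_\mathrm e=\{1\}$ if and only if $x_\mathrm e=1$ for every $x\in G$.

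For the forward implication I would take an arbitrary $x\in G$ and recall that, by the very construction of the Jordan decomposition, the elliptic part $x_\mathrm e$ always lies in $G_\mathrm e$. If $G_\mathrm e=\{1\}$ this gives $x_\mathrm e=1$, i.e.\ $x$ is exponential, and since $x$ was arbitrary the forward direction is done.

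For the converse I would take any $x\in G_\mathrm e$, that is, an elliptic element of $G$. Then $x=x\cdot 1\cdot 1$ exhibits $x$ as a product of pairwise commuting elliptic, hyperbolic and unipotent elements, so the uniqueness part of Theorem \ref{jd} forces $x_\mathrm e=x$, $x_\mathrm h=1$, $x_\mathrm u=1$. By hypothesis every element of $G$ is exponential, so $x_\mathrm e=1$, whence $x=1$; thus $G_\mathrm e=\{1\}$ and $G$ is exponential.

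There is no genuine obstacle here: the only points to be careful about are to invoke the uniqueness assertion of the Jordan decomposition in the converse direction (so that an elliptic element indeed has elliptic part equal to itself) and to note that the elliptic part of any element of $G$ lies in $G_\mathrm e$ by definition. No structural results beyond Theorem \ref{jd} are required, which is why the lemma is stated as obvious.
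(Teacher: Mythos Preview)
Your proof is correct and is precisely the natural unfolding of the definitions that the paper has in mind; the paper itself gives no argument, stating only that the lemma ``is obvious.'' Your use of the uniqueness clause of Theorem \ref{jd} to conclude that an elliptic element equals its own elliptic part is exactly the right justification for the converse direction.
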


Proposition \ref{pr} implies the following
\begin{lemd}\label{expquo}
All Nash quotient groups of exponential Nash groups are exponential Nash
groups.
\end{lemd}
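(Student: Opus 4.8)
The plan is to deduce the lemma directly from Proposition \ref{pr}. Let $G$ be an exponential Nash group and let $G'$ be a Nash quotient group of it; by definition there is a normal Nash subgroup $H$ of $G$ with $G'=G/H$. First I would recall from Proposition \ref{quotient} that $G'$ is again an almost linear Nash group and that the quotient map $\varphi\colon G\rightarrow G'$ is a surjective Nash homomorphism of almost linear Nash groups. This is the only preliminary point that needs to be in place, and it is exactly the last assertion of Proposition \ref{quotient}.

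Next I would invoke the surjective case of Proposition \ref{pr}: since $\varphi$ is surjective, $\varphi(G_\mathrm e)=G'_\mathrm e$. Because $G$ is exponential we have $G_\mathrm e=\{1\}$, and therefore $G'_\mathrm e=\{\varphi(1)\}=\{1\}$. By the definition of exponential Nash group (equivalently, by Lemma \ref{expcomp}), this says precisely that $G'$ is exponential, which completes the proof.

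I do not expect any real obstacle here: the entire content of the lemma is packaged inside Proposition \ref{pr}, whose equality statement for surjective homomorphisms rests in turn on Proposition \ref{quotientel} (handling the elliptic part). If one wished, the same one-line argument, applied to the hyperbolic and unipotent parts, re-derives Propositions \ref{quosplit} and \ref{quotientunip}; but for the present statement only the elliptic part is used. The single thing to be careful about is to verify that Proposition \ref{pr} is applicable, i.e.\ that $G/H$ is almost linear, which as noted above is supplied by Proposition \ref{quotient}.
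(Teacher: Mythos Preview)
Your proof is correct and is exactly the approach the paper takes: the paper's entire proof is the single sentence ``Proposition \ref{pr} implies the following'', and you have simply unpacked that sentence, noting that the surjective case of Proposition \ref{pr} gives $G'_\mathrm{e}=\varphi(G_\mathrm{e})=\{1\}$. Your care in invoking Proposition \ref{quotient} to ensure $G/H$ is almost linear is appropriate but already implicit in the paper's standing conventions.
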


Let $G$ be an almost linear Nash group. Let $K$ be a maximal compact
subgroup of $G$.

\begin{lemd}\label{expcomp}
The almost linear Nash group $G$ is exponential if and only if $K$ is trivial.
\end{lemd}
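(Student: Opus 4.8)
The plan is to deduce the statement directly from the Cartan--Malcev--Iwasawa theorem (Theorem \ref{compact}) together with the fact that compact subgroups of almost linear Nash groups are Nash subgroups (Lemma \ref{auts}). The preliminary observation I would record is that $G$, being a Nash manifold, has only finitely many connected components (Lemma \ref{dimen20}, applied to $G$ as a semialgebraic subset of itself), so that Theorem \ref{compact} is applicable to the underlying Lie group of $G$: maximal compact subgroups exist, all of them are conjugate in $G$, and every compact subgroup of $G$ is contained in one of them.

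For the ``only if'' direction, I would argue as follows. Suppose $G$ is exponential, i.e. $G_{\mathrm e}=\{1\}$. Since $K$ is a compact subgroup of the almost linear Nash group $G$, Lemma \ref{auts} shows $K$ is a Nash subgroup, hence an elliptic Nash group by Definition \ref{ehu0} (it is almost linear, being a Nash subgroup of $G$, and compact). Consequently every element of $K$ lies in the elliptic Nash subgroup $K$, hence is an elliptic element of $G$ in the sense of Definition \ref{defehue}; therefore $K\subset G_{\mathrm e}=\{1\}$.

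For the ``if'' direction, suppose $K=\{1\}$, and let $x\in G_{\mathrm e}$. By definition $x$ is contained in some elliptic, hence compact, Nash subgroup $H$ of $G$. By Theorem \ref{compact}, $H$ is contained in a maximal compact subgroup of $G$, and by the conjugacy assertion of the same theorem that maximal compact subgroup is of the form $gKg^{-1}=\{1\}$ for some $g\in G$. Hence $H=\{1\}$, so $x=1$. Thus $G_{\mathrm e}=\{1\}$, i.e. $G$ is exponential.

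I do not expect a genuine obstacle here: the argument is a short application of structure theorems already established. The only point needing a moment's care is verifying that $G$ has finitely many connected components so that Theorem \ref{compact} applies, and this is immediate from the generalities on Nash manifolds recalled in Section \ref{secnash}.
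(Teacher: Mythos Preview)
Your proof is correct and takes essentially the same approach as the paper: the paper simply packages both directions into the single identity $G_{\mathrm e}=\bigcup_{g\in G} gKg^{-1}$, whose two inclusions are exactly what your two paragraphs establish. Your explicit remark that $G$ has finitely many connected components (so that Theorem~\ref{compact} applies) is a useful point the paper leaves implicit.
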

\begin{proof}
The lemma is clear since
\[
G_\mathrm e=\bigcup_{g\in G} gKg^{-1}.
\]
\end{proof}
\begin{lemd}\label{expred}
If $G$ is reductive and exponential, then $G$ is hyperbolic.
\end{lemd}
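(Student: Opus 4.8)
The plan is to combine the Iwasawa decomposition for reductive Nash groups with the triviality of the unipotent radical. Since $G$ is exponential, its maximal compact subgroup $K$ is trivial: indeed $G_{\mathrm e}=\bigcup_{g\in G}gKg^{-1}=\{1\}$, which forces $K=\{1\}$ (equivalently, this is the preceding characterization of exponential groups via triviality of the maximal compact subgroup). Because $G$ is reductive, Theorem \ref{injcartan2} applies and furnishes a hyperbolic Nash subgroup $A$ and a unipotent Nash subgroup $N$ of $G$ so that the multiplication map $K\times A\times N\to G$ is a Nash diffeomorphism; with $K=\{1\}$ this says that the map $A\times N\to G$, $(a,n)\mapsto an$, is a Nash diffeomorphism, and in particular $G=AN$.

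Next I would show that $N$ is normal in $G$. At the Lie algebra level $[\a,\g_\alpha]\subset\g_\alpha$ for every root $\alpha\in\Delta(\g,\a)$, so $[\a,\n]\subset\n$; since $A$ is connected this gives $aNa^{-1}=N$ for all $a\in A$, i.e. $A$ normalizes $N$. As $N$ also normalizes itself and $G=AN$, every element of $G$ normalizes $N$. Thus $N$ is a normal unipotent Nash subgroup of $G$, hence $N\subset\mathfrak U_G$ by Proposition \ref{0unir}. But $G$ is reductive, so $\mathfrak U_G=\{1\}$ by Lemma \ref{reduni}; therefore $N=\{1\}$ and $G=A$ is a hyperbolic Nash group by Proposition \ref{ccartan2}, as desired.

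No step here is a serious obstacle; the only point requiring a little care is that all the cited ingredients — the Iwasawa decomposition, the identification of $A$ as hyperbolic and of $N$ as unipotent, and the fact that $\mathfrak U_G$ is the largest normal unipotent Nash subgroup — are valid for an arbitrary, not necessarily connected, reductive Nash group. Alternatively one could avoid the Iwasawa decomposition altogether: by the structure theorem for connected reductive Nash groups, $G^\circ$ is a finite-kernel Nash-homomorphic image of $H\times T$ with $H$ connected semisimple and $T$ a Nash torus, and exponentiality — hence the absence of nontrivial compact subgroups — forces $H=\{1\}$ (a nontrivial connected semisimple group with finite center has positive-dimensional maximal compact subgroup) and forces the compact factor of $T$ to vanish, so that $G^\circ$ is hyperbolic; a short argument on connected components then completes the proof.
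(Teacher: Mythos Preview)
Your proof is correct but takes a longer route than the paper's. The paper invokes Proposition~\ref{kak}, the $KAK$ decomposition: since $G$ is exponential, Lemma~\ref{expcomp} gives $K=\{1\}$, and then $G=KAK=A$ immediately, so $G$ is hyperbolic. You instead use the Iwasawa decomposition $G=KAN$ (Theorem~\ref{injcartan2}), which leaves you with $G=AN$ and forces an extra step to kill $N$ --- your argument that $A$ normalizes $N$ (via $[\a,\g_\alpha]\subset\g_\alpha$), hence $N\trianglelefteq G=AN$, hence $N\subset\mathfrak U_G=\{1\}$, is sound, just unnecessary once you have $KAK$.

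Your alternative sketch via the structure theorem for $G^\circ$ is also viable, but the ``short argument on connected components'' should be made explicit: the cleanest way is to note that triviality of $K$ together with either the Cartan decomposition (Proposition~\ref{kpn}) or Theorem~\ref{compact} already forces $G$ to be connected, so $G=G^\circ$ and no separate component argument is needed.
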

\begin{proof}
The lemma follows by Lemma \ref{expcomp} and Proposition \ref{kak}.
\end{proof}

\begin{lemd}\label{exhyp}
The almost linear Nash group $G$ is exponential if and only if
$G/\mathfrak U_G$ is a hyperbolic Nash group.
\end{lemd}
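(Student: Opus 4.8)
The plan is to exploit the short exact sequence $1\to\mathfrak U_G\to G\to G/\mathfrak U_G\to 1$ together with two facts already in hand: $\mathfrak U_G$ is a unipotent Nash group (Proposition \ref{0unir}), and $G/\mathfrak U_G$ is a reductive Nash group (the remark following Lemma \ref{reduni}). The proof then splits cleanly into the two implications, neither of which is difficult.

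For the ``only if'' direction, suppose $G$ is exponential. Since $G/\mathfrak U_G$ is a Nash quotient group of $G$, it is exponential by Lemma \ref{expquo}. Being also reductive, it is hyperbolic by Lemma \ref{expred}. That settles this direction.

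For the ``if'' direction, suppose $G/\mathfrak U_G$ is hyperbolic; I want to deduce that $G$ is exponential, i.e. $G_\mathrm{e}=\{1\}$, which (since every elliptic element lies in an elliptic, hence compact, Nash subgroup, and conversely every compact subgroup of $G$ is Nash and elliptic by Lemma \ref{auts}) is equivalent to saying that every compact subgroup of $G$ is trivial. So let $C$ be a compact subgroup of $G$ and let $\pi\colon G\to G/\mathfrak U_G$ be the quotient homomorphism. Then $\pi(C)$ is a compact subgroup of the hyperbolic Nash group $G/\mathfrak U_G$; since a hyperbolic Nash group is Nash isomorphic to $(\R_+^\times)^n$ and hence has no non-trivial compact subgroup, $\pi(C)=\{1\}$, i.e. $C\subseteq\mathfrak U_G$. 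But $\mathfrak U_G$ is unipotent, so it too has no non-trivial compact subgroup (Proposition \ref{je}, exactly as in the proof of Lemma \ref{unic2}); therefore $C=\{1\}$. Hence $G_\mathrm{e}=\{1\}$ and $G$ is exponential.

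There is essentially no obstacle: the statement is a short formal consequence of the structure theory developed earlier (that $\mathfrak U_G$ is unipotent, that $G/\mathfrak U_G$ is reductive, that reductive exponential Nash groups are hyperbolic, and that neither hyperbolic nor unipotent Nash groups admit non-trivial compact subgroups). The only point needing a moment's care is the elementary identification of ``exponential'' with ``all compact subgroups trivial'', which one can either quote from Lemma \ref{expcomp} or reprove in one line using $G_\mathrm{e}=\bigcup_{g\in G}gKg^{-1}$ for a maximal compact subgroup $K$.
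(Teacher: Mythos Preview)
Your proof is correct and follows essentially the same approach as the paper. The paper's argument for the ``if'' direction is phrased in terms of elliptic elements rather than compact subgroups---it observes that the image of $G_\mathrm e$ in $G/\mathfrak U_G$ lies in $(G/\mathfrak U_G)_\mathrm e=\{1\}$, hence $G_\mathrm e\subset\mathfrak U_G$, and then $(\mathfrak U_G)_\mathrm e=\{1\}$ forces $G_\mathrm e=\{1\}$---but this is the same two-step descent (first into the quotient, then into the unipotent radical) that you carry out with compact subgroups, and the two formulations are interchangeable via Lemma~\ref{expcomp}.
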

\begin{proof}
In view of Lemma \ref{expquo}, the ``only if" part is implied by
Lemma \ref{expred}. To prove the ``if" part, assume that
$G/\mathfrak U_G$ is a hyperbolic Nash group. Then under the quotient map
\[
  G\rightarrow G/\mathfrak U_G,
\]
the image of $G_\mathrm e$ is contained in
\[
  (G/\mathfrak U_G)_\mathrm e=\{1\}.
  \]
Therefore $G_\mathrm e\subset
\mathfrak U_G$, which implies that $G_\mathrm e=\{1\}$ as
$(\mathfrak U_G)_\mathrm e=\{1\}$.
\end{proof}

Using Levi decompositions, Lemma \ref{exhyp} implies that every
exponential Nash group is connected, simply connected, and solvable.

\begin{lemd}\label{uhcoc}
If $G$ is unipotent or hyperbolic, then there is no proper co-compact Nash subgroup of $G$.
\end{lemd}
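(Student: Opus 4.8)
The plan is to treat the unipotent and hyperbolic cases separately. For the \emph{hyperbolic case}, let $H$ be a co-compact Nash subgroup of a hyperbolic Nash group $G$. Since $G$ is Nash isomorphic to $(\R_+^\times)^n$ for some $n$, it is abelian, so $H$ is a normal Nash subgroup and $G/H$ is a Nash quotient group of $G$; by Proposition~\ref{quosplit} it is again a hyperbolic Nash group, hence Nash isomorphic to $(\R_+^\times)^k$ for some $k\ge 0$. As $(\R_+^\times)^k$ is homeomorphic to $\R^k$, compactness of $G/H$ forces $k=0$, so $G/H$ is trivial and $H=G$. Thus $G$ has no proper co-compact Nash subgroup.

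For the \emph{unipotent case} I would argue by induction on $\dim G$, the case $\dim G=0$ being vacuous since then $G$ is trivial (Proposition~\ref{nashucs}). Assume $\dim G>0$ and let $H$ be a co-compact Nash subgroup. The Lie algebra $\g$ of $G$ is nilpotent (Proposition~\ref{nashucs}), so its centre is non-zero; choose a one-dimensional subspace $\z_0$ of it, which is an ideal of $\g$, and let $Z$ be the corresponding analytic subgroup of $G$. By Proposition~\ref{nashucs000}, $Z$ is a Nash subgroup of $G$; since $\z_0$ is central and $G$ is connected, $Z$ is central in $G$ — in particular normal — and, being a one-dimensional unipotent Nash group, it is Nash isomorphic to $\R$ (Theorem~\ref{thmu}, or Lemma~\ref{euni}). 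The intersection $Z\cap H$ is a Nash subgroup of $Z\cong\R$, hence unipotent and therefore connected (Lemma~\ref{unic2}), so $Z\cap H$ is either $\{1\}$ or $Z$.

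Suppose first that $Z\subseteq H$. Since $Z$ is normal in $G$ there is a canonical homeomorphism $G/H\cong(G/Z)/(H/Z)$, so $(G/Z)/(H/Z)$ is compact; here $G/Z$ is a unipotent Nash group (Proposition~\ref{quotientunip}) of dimension $\dim G-1$, and $H/Z$ is a Nash subgroup of $G/Z$, being the image of $H$ under the submersive Nash quotient map $G\to G/Z$ (Lemma~\ref{images}). The induction hypothesis applied to $G/Z$ yields $H/Z=G/Z$, hence $H=G$. Suppose instead that $Z\cap H=\{1\}$. As $Z$ is central, $HZ$ is a subgroup of $G$ with $H$ normal in it, and $HZ$ equals the image of the Nash map $H\times Z\to G$, $(h,z)\mapsto hz$; so $HZ$ is a closed Nash subgroup of $G$ (Lemma~\ref{images}, Proposition~\ref{subg}), and the second isomorphism theorem together with Lemma~\ref{warner} identifies $HZ/H$ with $Z/(Z\cap H)=Z\cong\R$. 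But $HZ$ is precisely the preimage of the subset $HZ/H$ under the quotient map $G\to G/H$, and $HZ$ is closed in $G$, so $HZ/H$ is closed in $G/H$; were $G/H$ compact this would make $\R$ compact, which is absurd. Hence the second case does not occur, and in every case $H=G$, so $G$ has no proper co-compact Nash subgroup.

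I do not anticipate a serious obstacle; the only points demanding any care are the routine compatibilities in the unipotent step — that $H/Z$ really is a Nash subgroup of $G/Z$, that $G/H\cong(G/Z)/(H/Z)$ when $Z\subseteq H$, and that $HZ/H$ is closed in $G/H$ — each of which follows directly from Lemma~\ref{images}, Proposition~\ref{subg}, and Proposition~\ref{quotient}.
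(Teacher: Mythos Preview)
Your proof is correct and follows essentially the same approach as the paper. The paper's version uses the full center $Z$ of $G$ and argues in one stroke that $ZH/H \cong Z/(Z\cap H)$ is closed in the compact $G/H$, hence compact, and then invokes the (obvious) abelian unipotent case to conclude $Z \subseteq H$ before inducting; you achieve the same conclusion by taking a one-dimensional central $Z$ and splitting on whether $Z\cap H$ is trivial, but the core idea and the induction are identical.
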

\begin{proof}
The hyperbolic case is obvious. Assume that $G$ is unipotent. We prove the lemma by induction on $\dim G$.
It is trivial when $\dim G=0$. Assume that $\dim G>0$ and the lemma holds for unipotent Nash groups of smaller dimension.

Let $H$ be a co-compact Nash subgroup of $G$. Denote by $Z$ the
center of $G$. It is a Nash subgroup of $G$ of positive dimension.
Note that $ZH$ is a Nash subgroup of $G$, and  $ZH/H$ is a closed
subset of $G/H$. Therefore
\[
 Z/(Z\cap H)=ZH/H
 \]
 is compact. Since the
lemma obviously holds for abelian unipotent Nash groups, we have
that $Z\cap H=Z$, or equivalently, $H\supset Z$. Then $H/Z$ is a
co-compact Nash subgroup of the unipotent Nash group $G/Z$. Since
$\dim G/Z<\dim G$, by the induction hypothesis, we have that $H/Z=G/Z$, in other words, $H=G$.

\end{proof}

\begin{lemd}\label{uhcoc2}
If $G$ is exponential, then there is no proper co-compact Nash subgroup of $G$.
\end{lemd}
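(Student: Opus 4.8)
The plan is to reduce the statement to the hyperbolic and unipotent cases of Lemma~\ref{uhcoc}, using the unipotent radical $\mathfrak U_G$ together with the description of exponential groups given in Lemma~\ref{exhyp}. Let $H$ be a co-compact Nash subgroup of $G$; the aim is to prove $H=G$.

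First I would push $H$ down to the reductive quotient. Since $\mathfrak U_G$ is a normal Nash subgroup, $G/\mathfrak U_G$ is an almost linear Nash group (Proposition~\ref{quotient}), and it is hyperbolic because $G$ is exponential (Lemma~\ref{exhyp}). Let $q\colon G\to G/\mathfrak U_G$ be the quotient homomorphism. Then $q(H)$ is a Nash subgroup of $G/\mathfrak U_G$, since images of Nash homomorphisms are Nash subgroups, and it is co-compact: the map $q$ induces a continuous surjection from the compact space $G/H$ onto $(G/\mathfrak U_G)/q(H)$, so the latter is compact. By the hyperbolic case of Lemma~\ref{uhcoc}, $q(H)=G/\mathfrak U_G$, that is, $H\mathfrak U_G=G$.

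Next I would descend to the unipotent radical itself. Put $H_0:=H\cap\mathfrak U_G$, which is a Nash subgroup of $\mathfrak U_G$ (Proposition~\ref{subgi}). Because $H\mathfrak U_G=G$, the left translation action of $\mathfrak U_G$ on $G/H$ is transitive, so Lemma~\ref{warner} identifies the manifold $\mathfrak U_G/H_0$ with $G/H$; in particular $\mathfrak U_G/H_0$ is compact, i.e.\ $H_0$ is a co-compact Nash subgroup of $\mathfrak U_G$. As $\mathfrak U_G$ is unipotent (Proposition~\ref{0unir}), the unipotent case of Lemma~\ref{uhcoc} forces $H_0=\mathfrak U_G$, so $\mathfrak U_G\subset H$. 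Combined with $H\mathfrak U_G=G$ this gives $H=G$.

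I expect the only slightly delicate point to be the verification that $H_0$ is co-compact in $\mathfrak U_G$, which requires recognizing $G/H$ as a homogeneous space for $\mathfrak U_G$ and appealing to Lemma~\ref{warner}; everything else is a routine use of the quotient-and-image formalism of Section~\ref{secng} together with the two cases already established in Lemma~\ref{uhcoc}.
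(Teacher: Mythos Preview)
Your argument is correct, and it uses the same two building blocks as the paper (the unipotent and hyperbolic cases of Lemma~\ref{uhcoc}), but in the opposite order and with a different mechanism. The paper first lets $\mathfrak U_G$ act on $G/H$, invokes Proposition~\ref{closed} to find a closed (hence compact) $\mathfrak U_G$-orbit, and applies the unipotent case of Lemma~\ref{uhcoc} to that orbit to conclude $\mathfrak U_G\subset H$; only then does it pass to the hyperbolic quotient $G/\mathfrak U_G$. You instead go to the hyperbolic quotient first, obtaining $H\mathfrak U_G=G$, which makes the $\mathfrak U_G$-action on $G/H$ \emph{transitive}; this lets you identify $G/H$ with $\mathfrak U_G/(H\cap\mathfrak U_G)$ via Lemma~\ref{warner} and then apply the unipotent case. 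Your route avoids the closed-orbit lemma entirely, trading it for the elementary observation that $H\mathfrak U_G=\mathfrak U_G H$ (normality of $\mathfrak U_G$) plus Lemma~\ref{warner}. The paper's route is slightly more economical in that it does not need to know in advance that the $\mathfrak U_G$-action is transitive, but yours is arguably more direct once one has Lemma~\ref{exhyp} in hand.
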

\begin{proof}
Let $H$ be a co-compact Nash subgroup of $G$. Using Proposition
\ref{closed}, we get a closed orbit $O\subset G/H$ under left
translations by $\mathfrak U_G$. Since $O$ is  compact, Lemma
\ref{uhcoc} implies that $O$ has only one point, say $g_0 H$. Then $\mathfrak U_G g_0 H\subset g_0 H$, which implies that
$\mathfrak U_G\subset H$ as $\mathfrak U_G$ is a normal subgroup
of $G$. Now $H/\mathfrak U_G$ is a co-compact Nash subgroup of the
hyperbolic Nash group $G/\mathfrak U_G$. Lemma \ref{uhcoc} implies
that $H/\mathfrak U_G=G/\mathfrak U_G$, in other words, $H=G$.
\end{proof}

The following is Borel fixed point theorem in the setting of Nash groups.
\begin{thmd}\label{bfix}
Let $G\times M\rightarrow M$ be a Nash action of $G$ on a non-empty
Nash manifold $M$. If $G$ is exponential and $M$ is compact, then
the action has a fixed point.
\end{thmd}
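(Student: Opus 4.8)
The plan is to reduce at once to the case of a \emph{transitive} action and then quote the non‑existence of proper co‑compact Nash subgroups of exponential Nash groups. Concretely, the theorem turns out to be little more than a repackaging of Proposition \ref{closed} together with Lemma \ref{uhcoc2}.

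First I would pick, among all $G$‑orbits in $M$, one of minimal dimension, call it $O$. By Proposition \ref{closed} it is closed in $M$, hence compact because $M$ is compact. It suffices to show that $O$ is a single point, for then that point is $G$‑fixed.

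Next, fix $x\in O$ and set $H:=G_x=\{g\in G\mid g.x=x\}$. Since $H$ is the inverse image of the point $\{x\}$ under the Nash map $g\mapsto g.x$, it is semialgebraic, hence a Nash subgroup of $G$. The orbit $O$ is a Nash submanifold of $M$ (every orbit of a Nash action is), and the induced action $G\times O\to O$ is a transitive smooth action of $G$ on the smooth manifold $O$; so Lemma \ref{warner} identifies $G/H$ with $O$ via $gH\mapsto g.x$. In particular $G/H$ is compact, that is, $H$ is a co‑compact Nash subgroup of $G$.

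Finally, since $G$ is exponential, Lemma \ref{uhcoc2} tells us that $G$ has no proper co‑compact Nash subgroup, whence $H=G$, i.e. $O=\{x\}$, which is the desired fixed point. I do not expect a genuine obstacle here: the substantive work is already carried out in Proposition \ref{closed} and Lemma \ref{uhcoc2}. The only points needing a moment's care are the verification that $H$ is a Nash subgroup and that $O$ carries the quotient structure making Lemma \ref{warner} applicable, both of which are routine. (Alternatively one could argue by induction on $\dim G$ imitating the classical proof of Borel's fixed point theorem, using the Levi decomposition $G=L\ltimes\mathfrak U_G$ with $L$ hyperbolic, but the argument above is shorter.)
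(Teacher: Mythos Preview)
Your proof is correct and follows exactly the paper's approach: pick a closed $G$-orbit via Proposition \ref{closed}, observe it is compact, and then invoke Lemma \ref{uhcoc2} to conclude the orbit is a single point. The paper's version is terser (it omits the explicit identification of the stabilizer as a co-compact Nash subgroup via Lemma \ref{warner}), but the argument is the same.
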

\begin{proof}
Using Proposition \ref{closed}, we get a closed $G$-orbit $O\subset
M$.  Then $O$ is compact and Lemma \ref{uhcoc2} implies that $O$ has
only one point.
\end{proof}

\begin{lemd}\label{b}
There exists an exponential Nash subgroup $B$ of $G$ such that the
multiplication map $K\times B\rightarrow G$ is a Nash
diffeomorphism.
\end{lemd}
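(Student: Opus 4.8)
The plan is to obtain $B$ from an Iwasawa decomposition of a Levi component of $G$. Since $K$ is compact it is reductive, so by Theorem \ref{uconj} it is contained in some Levi component $L$ of $G$; moreover $K$ is then a maximal compact subgroup of $L$, because any compact subgroup of $L$ containing $K$ would be a compact subgroup of $G$ containing $K$. The Levi decomposition $G=L\ltimes\mathfrak U_G$ gives that the multiplication map $L\times\mathfrak U_G\to G$ is a Nash diffeomorphism: the quotient map $q\colon G\to G/\mathfrak U_G$ is a submersive Nash map by Proposition \ref{quotient}, the restriction $L\to G/\mathfrak U_G$ is a bijective Nash homomorphism and hence a Nash isomorphism with Nash inverse $\sigma$, and then $(l,u)\mapsto lu$ has the explicit Nash inverse $g\mapsto\bigl(\sigma(q(g)),\,\sigma(q(g))^{-1}g\bigr)$.

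Next I would apply Theorem \ref{injcartan2} to the reductive Nash group $L$ with maximal compact subgroup $K$: fixing a Cartan involution of $L$ with fixed-point set $K$, a maximal abelian subspace $\mathfrak a$ of the corresponding $(-1)$-eigenspace, and a positive system of restricted roots, one obtains a hyperbolic Nash subgroup $A$ (Proposition \ref{ccartan2}) and a unipotent Nash subgroup $N$ (Proposition \ref{uni}) of $L$ such that $K\times A\times N\to L$ is a Nash diffeomorphism. Composing with the Levi decomposition, the map $\Phi\colon K\times A\times N\times\mathfrak U_G\to G$, $(k,a,n,u)\mapsto kanu$, is a Nash diffeomorphism.

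I would then set $B:=\Phi(\{1\}\times A\times N\times\mathfrak U_G)=AN\mathfrak U_G$. Because $\mathfrak U_G$ is normal in $G$, a short computation shows $B$ is a subgroup; as the image under the Nash diffeomorphism $\Phi$ of the closed Nash submanifold $\{1\}\times A\times N\times\mathfrak U_G$, it is a closed Nash submanifold of $G$, hence a Nash subgroup, and $(a,n,u)\mapsto anu$ is a Nash diffeomorphism $A\times N\times\mathfrak U_G\xrightarrow{\sim}B$. Writing each $b\in B$ uniquely as $b=anu$ accordingly, the multiplication map $K\times B\to G$ is $\Phi$ precomposed with $\mathrm{id}_K$ times this Nash diffeomorphism, and so is itself a Nash diffeomorphism. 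Finally, $N\mathfrak U_G$ is a normal unipotent Nash subgroup of $B$ (it is the preimage of the normal subgroup $N$ under $B\to B/\mathfrak U_G\cong AN$, and it is unipotent as an extension of unipotent Nash groups, by Proposition \ref{thmehu}), and one checks $A\cap N\mathfrak U_G=\{1\}$ and $A\cdot(N\mathfrak U_G)=B$, so $B=A\ltimes(N\mathfrak U_G)$; since $N\mathfrak U_G\subseteq\mathfrak U_B$ (Proposition \ref{0unir}), $B/\mathfrak U_B$ is a Nash quotient of $B/(N\mathfrak U_G)\cong A$ and hence hyperbolic by Proposition \ref{quosplit}, whence $B$ is exponential by Lemma \ref{exhyp}.

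The only inputs carrying real content are the Iwasawa decomposition of $L$ and the Levi decomposition; everything else is assembly, so I do not expect a genuine obstacle, the one point requiring care being the bookkeeping that identifies $K\times B\to G$ with $\Phi$ after transporting the coordinates on $B$.
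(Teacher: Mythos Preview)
Your proposal is correct and follows essentially the same approach as the paper. The paper's proof is extremely terse: it simply says to assume $G$ is reductive by passing to a Levi component containing $K$, and then take $B=AN$ from the Iwasawa decomposition (Theorem \ref{injcartan2}). You have spelled out exactly this reduction, including the verification that $B=AN\,\mathfrak U_G$ is a Nash subgroup, that $K\times B\to G$ is a Nash diffeomorphism, and that $B$ is exponential via Lemma \ref{exhyp}; these details are implicit in the paper's one-line reduction but not written out there.
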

\begin{proof}
Without loss of generality, assume that $G$ is reductive (otherwise,
take a Levi component of $G$ containing $K$). Then the group $AN$ of
Theorem \ref{injcartan2} fulfills the requirement of the lemma.
\end{proof}

\begin{lemd}\label{cocompacte}
If $G$ is not exponential, then $G$ has a proper co-compact Nash
subgroup.
\end{lemd}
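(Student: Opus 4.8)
The plan is to read off the desired subgroup directly from Lemma \ref{b}. That lemma furnishes an exponential Nash subgroup $B$ of $G$ for which the multiplication map $K\times B\rightarrow G$ is a Nash diffeomorphism, where $K$ is a maximal compact subgroup of $G$. I claim that this $B$ is a proper co-compact Nash subgroup of $G$ whenever $G$ fails to be exponential.

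First I would note that $G$ not exponential forces $K\neq\{1\}$, by Lemma \ref{expcomp}; in particular $B\neq G$, so $B$ is a proper Nash subgroup. Next I would check co-compactness. From the bijectivity of the multiplication map one gets $K\cap B=\{1\}$ (the elements $(k,1)$ and $(1,k)$ of $K\times B$ both map to $k$), and every element of $G$ is uniquely of the form $kb$ with $k\in K$, $b\in B$. Hence the assignment $k\mapsto kB$ is a well-defined bijection $K\rightarrow G/B$: it is surjective because $kbB=kB$, and injective because $k_1B=k_2B$ forces $k_2^{-1}k_1\in K\cap B=\{1\}$. This bijection is continuous, being the composite $K\hookrightarrow K\times B\rightarrow G\rightarrow G/B$, so it is a homeomorphism from the compact space $K$ onto the Hausdorff space $G/B$ (Hausdorffness of $G/B$ being automatic since the Nash subgroup $B$ is closed in $G$). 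Therefore $G/B$ is compact, i.e. $B$ is co-compact.

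There is no serious obstacle here: the entire content has been packaged into Lemma \ref{b}, whose proof in turn rests on the Iwasawa-type decomposition $G=KAN$ of Theorem \ref{injcartan2} in the reductive case together with a Levi decomposition, and what remains is only the elementary observation that a group admitting such a ``$KB$-decomposition'' has $G/B$ homeomorphic to $K$. I expect the one point to be careful about is to work throughout with left cosets $gB$ (rather than right cosets $Bg$), so that the decomposition $g=kb$ genuinely descends to the map $k\mapsto kB$; with that convention fixed, the argument above is complete.
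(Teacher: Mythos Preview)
Your proposal is correct and follows exactly the same approach as the paper: the paper's proof consists of the single sentence ``The group $B$ of Lemma \ref{b} is a proper co-compact Nash subgroup of $G$,'' and you have simply unpacked why $B$ is proper (via Lemma \ref{expcomp}) and why $G/B$ is compact (via the homeomorphism with $K$). No difference in method.
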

\begin{proof}
The group $B$ of lemma \ref{b} is a proper co-compact Nash subgroup
of $G$.
\end{proof}

Recall the following
\begin{lemd}\label{subbn} (\cite[Section I.1, Theorem
1]{LL}) Let $H$ be a  connected, simply connected, solvable Lie
group with Lie algebra $\h$. If the the exponential map $\exp:
\h\rightarrow H$ is either injective or surjective, then it is a
diffeomorphism.
\end{lemd}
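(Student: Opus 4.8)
The plan is to prove the stronger assertion that, for a connected, simply connected, solvable Lie group $H$ with Lie algebra $\h$, the four conditions --- (i) $\exp$ is a diffeomorphism, (ii) $\exp$ is injective, (iii) $\exp$ is surjective, (iv) no $\ad_x$ with $x\in\h$ has a nonzero purely imaginary eigenvalue --- are equivalent, by a single induction on $\dim H$. From the formula $d\exp_x=(dL_{\exp x})\circ\tfrac{1-e^{-\ad_x}}{\ad_x}$ one sees that $\exp$ is a local diffeomorphism at $x$ exactly when $\ad_x$ has no eigenvalue in $2\pi i\Z\setminus\{0\}$; rescaling $x$ turns a nonzero purely imaginary eigenvalue of $\ad_x$ into one in $2\pi i\Z$, so ``$\exp$ is everywhere a local diffeomorphism'' is equivalent to (iv). Since (i)$\Rightarrow$(iv) is then immediate and (iv) together with bijectivity gives (i), it suffices to prove (ii)$\Rightarrow$(iv), (iii)$\Rightarrow$(iv), and (iv)$\Rightarrow$(i).

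For the inductive step, assume $\h$ nonabelian (the abelian case is immediate, as $\exp$ then identifies $\h$ with $H\cong\R^{\dim H}$). Then $[\h,\h]$ is a nonzero nilpotent ideal, so its centre is a nonzero ideal of $\h$ on which $\h$ acts through the abelian quotient $\h/[\h,\h]$; choose a minimal nonzero $\h$-submodule $\a$ inside it, of real dimension $1$ or $2$. Because $\a$ is central in $[\h,\h]$ and $H$ is simply connected solvable, $A:=\exp_H(\a)$ is a closed, simply connected, normal subgroup, $\exp_A$ is a diffeomorphism, and $H/A$ is again connected, simply connected, solvable, of smaller dimension; writing $q$ for the quotient map, $q\circ\exp_H=\exp_{H/A}\circ dq$, so injectivity or surjectivity of $\exp_H$ passes to $\exp_{H/A}$, whence by induction $\exp_{H/A}$ is a diffeomorphism and $\h/\a$ satisfies (iv).

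Now deduce (iv) for $\h$: were it false, the offending purely imaginary eigenvalue of some $\ad_{x_0}$ could not be visible in $\h/\a$, so it must occur already in the action of $\ad_{x_0}$ on $\a$; hence $\dim\a=2$, and (after rescaling $x_0$) $\R x_0\oplus\a$ is a subalgebra isomorphic to the Lie algebra of the universal cover $\widetilde{E(2)}$ of the Euclidean motion group, with corresponding analytic subgroup $H_0\le H$ closed, simply connected, and isomorphic to $\widetilde{E(2)}$. A direct computation in $\widetilde{E(2)}$ shows its exponential collapses each affine plane $2\pi k\,x_0+\a$ $(k\in\Z\setminus\{0\})$ to a single point, hence is neither injective nor surjective. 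Since $\exp_{H_0}=\exp_H|_{\R x_0\oplus\a}$, injectivity of $\exp_H$ is contradicted directly; and since $\exp_{H/A}$ is a diffeomorphism, $\exp_H^{-1}(H_0)=\R x_0\oplus\a$ (as $q(\exp_H X)\in q(H_0)$, a one-parameter subgroup, and $\exp_{H/A}$ is injective), so surjectivity of $\exp_H$ would force $\exp_{H_0}$ surjective --- again impossible. This gives (ii)$\Rightarrow$(iv) and (iii)$\Rightarrow$(iv). For (iv)$\Rightarrow$(i): (iv) for $\h$ gives (iv) for $\h/\a$, so by induction $\exp_{H/A}$ is a diffeomorphism; then one assembles $\exp_H$ from $\exp_A$ and $\exp_{H/A}$ along the principal bundle $A\to H\to H/A$ --- a smooth global section identifies $H$ with $A\times(H/A)$, in which $\exp_H$ is, modulo the two diffeomorphisms in hand, a self-map of $\a\times(\h/\a)$ with base component $\exp_{H/A}$, affine along the $\a$-fibres, and with invertible fibrewise linear part by (iv) --- hence a diffeomorphism.

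The step I expect to be the main obstacle is the localisation argument: showing that when (iv) fails the offending eigenvalue is genuinely forced either into $\a$ (yielding the clean $\widetilde{E(2)}$-subgroup) or into $H/A$ (where induction eliminates it), and handling the bookkeeping --- in the surjectivity case one really must exploit that $\exp_{H/A}$ is already known to be a diffeomorphism in order to propagate the failure from $H_0$ back to $H$, since neither injectivity nor surjectivity of $\exp$ is naively hereditary. In the (iv)$\Rightarrow$(i) step the remaining work is to arrange the semidirect-product coordinates so that the fibrewise-affine structure of $\exp_H$ relative to the fibration is manifest. The differential formula, the model computation in $\widetilde{E(2)}$, and the structure facts about solvable Lie algebras used throughout (the derived algebra is nilpotent; analytic subgroups of simply connected solvable Lie groups are closed and simply connected) are routine.
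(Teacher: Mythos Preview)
The paper does not prove this lemma at all: it is stated with a bare citation to \cite[Section I.1, Theorem 1]{LL} and used as a black box in the proof of Proposition~\ref{expiso}. There is therefore no ``paper's own proof'' to compare against.

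That said, your sketch is essentially the classical Dixmier--Saito argument that appears in the cited reference, and the outline is sound. The key ingredients --- the differential formula for $\exp$, the reduction via a minimal abelian ideal $\a$ inside the centre of $[\h,\h]$, the identification of the obstruction with a copy of the Lie algebra of $\widetilde{E(2)}$, and the fibrewise analysis over $H/A$ --- are all correct. Your claim that $\exp_H$ is affine along the $\a$-fibres can be made precise: for $Y\in\a$ one has $\exp_H(X+Y)=\exp_H(X)\cdot\exp_A\!\bigl(\tfrac{1-e^{-\ad_X|_\a}}{\ad_X|_\a}Y\bigr)$, since $\ad_Y$ vanishes on $\a$ and hence $\ad_{X+tY}|_\a=\ad_X|_\a$; the invertibility of the fibrewise linear part then follows from (iv) exactly as you say. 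The localisation step you flag as delicate is fine once you note that the eigenvalues of $\ad_{x_0}$ on $\h$ are the union of those on $\a$ and on $\h/\a$, and the surjectivity case is handled correctly by pulling back through the already-established diffeomorphism $\exp_{H/A}$.
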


Denote by $\g$ the Lie algebra of $G$.

\begin{prpd}\label{expiso}
The almost linear Nash group $G$ is exponential if and only if the exponential map
\begin{equation}\label{exph2}
\exp: \g\rightarrow G
\end{equation}
is a diffeomorphism.
\end{prpd}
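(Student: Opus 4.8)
The plan is to prove the two implications separately, with Lemma~\ref{subbn} doing the structural work. For the ``only if'' direction I would start from the observation recorded after Lemma~\ref{exhyp}: via the Levi decomposition $G=L\ltimes\mathfrak U_G$, together with the fact that a reductive exponential Nash group is hyperbolic (hence connected, simply connected and abelian) and that $\mathfrak U_G$ is connected, simply connected and nilpotent, an exponential Nash group $G$ is connected, simply connected and solvable as a Lie group. By Lemma~\ref{subbn} it therefore suffices to show that the smooth map \eqref{exph2} is surjective. For this I would take an arbitrary $g\in G$, note that $G_\mathrm e=\{1\}$ forces the elliptic part $g_\mathrm e$ to be trivial, so that $g$ is an exponential element and $\log(g)=\log(g_\mathrm h)+\log(g_\mathrm u)$ is a well-defined element of $\g$ as in Section~\ref{secexp}; Proposition~\ref{expp0} then gives $\exp(\log g)=g$. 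Hence $\exp\colon\g\to G$ is onto, and Lemma~\ref{subbn} upgrades this to a diffeomorphism.

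For the ``if'' direction I would argue by contradiction: suppose \eqref{exph2} is a diffeomorphism — in particular $\exp$ is injective — but $G$ is not exponential, i.e. $G_\mathrm e\neq\{1\}$. Choosing a non-trivial elliptic element of $G$ and passing to its replica, which is a non-trivial compact abelian Nash group, I obtain an element $k\neq 1$ with $k^n=1$ for some $n\geq 2$. Writing $k=\exp(x)$ with $x\in\g$, the one-parameter subgroup relation $\exp(nx)=\exp(x)^n=k^n=1=\exp(0)$ together with injectivity of $\exp$ forces $nx=0$, hence $x=0$ and $k=1$, a contradiction. Therefore $G$ must be exponential.

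I do not expect a serious obstacle here: essentially all of the content is already in place, the surjectivity of $\exp$ on an exponential Nash group being just the Jordan-decomposition bookkeeping of Section~\ref{secexp} (Proposition~\ref{expp0}), and the promotion of a surjective exponential map to a diffeomorphism being the Lie-theoretic input of Lemma~\ref{subbn}. The one point that must not be overlooked is that Lemma~\ref{subbn} requires $G$ to be connected, simply connected and solvable — which is precisely why the structural fact established earlier from Levi decompositions is invoked at the start of the ``only if'' argument.
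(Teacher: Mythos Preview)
Your proposal is correct and matches the paper's proof closely. The ``only if'' direction is identical (Proposition~\ref{expp0} for surjectivity, the structural fact after Lemma~\ref{exhyp} for connected/simply connected/solvable, then Lemma~\ref{subbn}); for the ``if'' direction the paper routes through the maximal compact subgroup $K$ (connected since $G$ is connected, hence $\exp|_{\k}$ injective forces $K$ trivial) rather than through a torsion element in the replica of an elliptic element, but the underlying mechanism---a nontrivial compact subgroup contradicts injectivity of $\exp$---is the same.
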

\begin{proof}
The ``only if" part is implied by Proposition \ref{expp0} and Lemma
\ref{subbn}. To prove the ``if" part of the proposition, assume that
\eqref{exph2} is a diffeomorphism. Then $G$ is connected. Therefore
$K$ is connected and the exponential map
\begin{equation}\label{exph3}
\exp: \k\rightarrow K
\end{equation}
is injective, where $\k$ denotes the Lie algebra of $K$. This forces $K$ to be trivial. Therefore $G$ is exponential by Lemma \ref{expcomp}.
\end{proof}

\begin{lemd}\label{maxdh}
For each co-compact Nash subgroup $H$ of $G$, one has that $\dim
H\geq \dim G/K$.
\end{lemd}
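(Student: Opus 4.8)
The plan is to show that every co-compact Nash subgroup $H$ of $G$ contains a conjugate of a maximal exponential Nash subgroup of $G$, from which the dimension bound is immediate. First I would invoke Lemma \ref{b} to obtain an exponential Nash subgroup $B$ of $G$ for which the multiplication map $K\times B\rightarrow G$ is a Nash diffeomorphism; comparing dimensions gives $\dim B=\dim G-\dim K=\dim G/K$, the last equality because the quotient map $G\rightarrow G/K$ is submersive (Proposition \ref{quotient}). Thus it suffices to prove $\dim H\geq \dim B$, and for this it is enough to produce an element $g_0\in G$ with $g_0^{-1}Bg_0\subseteq H$.

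To find such a $g_0$, I would consider the left translation action of $B$ on the quotient space $G/H$. Since $H$ is co-compact, $G/H$ is a non-empty compact affine Nash manifold (Proposition \ref{quotient}), and the restriction of the Nash map $G\times G/H\rightarrow G/H$ to $B\times G/H$ is a Nash action of the exponential Nash group $B$ on it. By the Borel fixed point theorem for Nash groups (Theorem \ref{bfix}), this action has a fixed point, say $g_0H\in G/H$. Then $bg_0H=g_0H$ for every $b\in B$, that is, $g_0^{-1}bg_0\in H$ for all $b\in B$, so indeed $g_0^{-1}Bg_0\subseteq H$.

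Finally, $g_0^{-1}Bg_0$ is a Nash subgroup of $G$ of dimension $\dim B$, whence $\dim H\geq \dim B=\dim G/K$, which completes the argument. I do not expect any serious obstacle once Lemma \ref{b} and Theorem \ref{bfix} are in hand; the only points requiring care are verifying the hypotheses of Theorem \ref{bfix} — namely that $G/H$ really is a (non-empty) compact Nash manifold and that left translation by $B$ is a genuine Nash action — both of which are supplied directly by Proposition \ref{quotient}, together with the trivial dimension bookkeeping $\dim G/K=\dim G-\dim K$.
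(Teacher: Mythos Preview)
Your proposal is correct and follows essentially the same route as the paper: invoke Lemma \ref{b} to produce an exponential subgroup $B$ with $\dim B=\dim G/K$, apply the Borel fixed point theorem (Theorem \ref{bfix}) to the $B$-action on the compact Nash manifold $G/H$ to obtain a fixed point $g_0H$, and conclude that a conjugate of $B$ lies in $H$. One tiny remark: in your opening sentence you describe $B$ as a \emph{maximal} exponential Nash subgroup, but at this point in the text that has not yet been established (it comes only in Theorem \ref{conjexp}); fortunately your actual argument never uses maximality, only that $B$ is exponential with $\dim B=\dim G/K$, so this is merely a wording issue.
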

\begin{proof}
Let $B$ be as in Lemma \ref{b}. By Theorem \ref{bfix}, the left translation action of $B$ on $G/H$ has a fixed point, say $g_0 H$. Then $Bg_0H\subset g_0H$, which implies that
\[
  \dim H\geq \dim B =\dim G/K.
  \]
\end{proof}

Denote by $\oB_n(\R)$ the Nash subgroup of $\GL_n(\R)$ consisting
all upper-triangular matrices with positive diagonal entries ($n\geq
0$). Its Lie algebra $\b_n(\R)$ consisting all upper-triangular
matrices in $\gl_n(\R)$. It is obvious that $\oB_n(\R)$ and all its
Nash subgroups are exponential Nash groups. Conversely, we have

\begin{lemd}\label{bnr}
Every exponential Nash group $H$ is Nash isomorphic to a Nash
subgroup of $\oB_n(\R)$ for some $n\geq 0$.
\end{lemd}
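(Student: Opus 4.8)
The plan is to realize $H$ faithfully inside $\oB_n(\R)$ by triangularizing a faithful Nash representation over $\R$. First I would observe that $H$ is torsion-free: an element of finite order generates a finite Nash subgroup, which is elliptic (a finite group being compact and almost linear), hence lies in $H_{\mathrm e}=\{1\}$ since $H$ is exponential. Therefore every Nash representation of $H$ with finite kernel is faithful; as $H$ is almost linear, I fix such a representation $\varphi\colon H\hookrightarrow\GL(V)$ and set $n:=\dim_\R V$. Recall, from the remark following Lemma \ref{exhyp}, that $H$ is connected, simply connected and solvable, so $\varphi(H)$ is a connected solvable subgroup of $\GL(V)$.

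Next I would show that every $g\in\varphi(H)$ has all eigenvalues in $\R^\times_+$. Writing $g=\varphi(x)$, the Jordan decomposition of Theorem \ref{jd} gives $x=x_{\mathrm h}x_{\mathrm u}$ with $x_{\mathrm h}$ hyperbolic, $x_{\mathrm u}$ unipotent, commuting (the elliptic part is trivial because $H$ is exponential); by Proposition \ref{pjd} this is carried to the Jordan decomposition of $g$ in $\GL(V)$, so $\varphi(x_{\mathrm h})$ is a hyperbolic element of $\GL(V)$ and $\varphi(x_{\mathrm u})$ a unipotent one. By Lemma \ref{jgl} the former is semisimple with eigenvalues in $\R^\times_+$ and the latter has all eigenvalues equal to $1$; since they commute, $g$ has all eigenvalues in $\R^\times_+$.

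Then I would triangularize $\varphi(H)$ over $\R$ by induction on $\dim V$, producing a complete $\varphi(H)$-stable flag $0=V_0\subset V_1\subset\cdots\subset V_n=V$ of real subspaces. For the inductive step: by Lie's theorem the connected solvable group $\varphi(H)$ has a one-dimensional invariant subspace in $V_\C$, with eigen-character $\chi\colon\varphi(H)\to\C^\times$; since $\chi(g)$ is an eigenvalue of $g$, the previous step forces $\chi(\varphi(H))\subset\R^\times_+$, so $\chi$ is real-valued, and hence the weight space $\{w\in V_\C\mid gw=\chi(g)w\text{ for all }g\in\varphi(H)\}$ is stable under complex conjugation on $V_\C$, i.e.\ is the complexification of a nonzero real $\varphi(H)$-stable subspace on which $\varphi(H)$ acts by the scalar $\chi$. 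Picking a line $V_1$ inside it and passing to $V/V_1$ (again connected solvable, again with eigenvalues in $\R^\times_+$) completes the induction. In a basis of $V$ adapted to the flag, $\varphi(H)$ consists of upper-triangular matrices whose diagonal entries, being their eigenvalues, lie in $\R^\times_+$; after composing $\varphi$ with the corresponding inner automorphism of $\GL_n(\R)$, I obtain a faithful Nash homomorphism $\varphi\colon H\hookrightarrow\oB_n(\R)$. Finally $\varphi(H)$ is a Nash subgroup of $\GL_n(\R)$ (Proposition \ref{subgi}), hence a Nash subgroup of $\oB_n(\R)$ by Proposition \ref{subg}, and the corestriction $\varphi\colon H\to\varphi(H)$ is a bijective Nash homomorphism, hence a Nash isomorphism.

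The one genuinely non-formal step is the descent of the triangularization from $\C$ to $\R$; this is exactly where exponentiality enters, through the positivity of the eigenvalues, and everything else is assembly of facts already established (the Jordan decomposition, its naturality, Lemma \ref{jgl}, and the basic Nash-subgroup formalism).
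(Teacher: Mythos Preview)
Your proof is correct, but it takes a genuinely different route from the paper's. The paper's argument is very short: it invokes the Borel fixed point theorem (Theorem \ref{bfix}), which was just established, to conclude that the induced Nash action of $H$ on the compact Nash manifold of full flags in $V$ has a fixed point; this immediately yields an $H$-stable full flag, hence an injective Nash homomorphism into the upper-triangular group $\oB'_n(\R)$, and connectedness of $H$ then forces the image to lie in the identity component $\oB_n(\R)$.

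Your approach instead avoids Theorem \ref{bfix} entirely: you use the Jordan decomposition (Theorem \ref{jd}, Proposition \ref{pjd}, Lemma \ref{jgl}) to show directly that every eigenvalue of every $\varphi(h)$ lies in $\R^\times_+$, and then combine Lie's theorem over $\C$ with a conjugation-descent argument to build the flag over $\R$ one step at a time. This is more hands-on and more explanatory about \emph{why} the diagonal entries are positive, and it relies only on classical Lie theory plus the Jordan machinery already developed. The paper's proof, by contrast, is slicker and illustrates the utility of the Nash Borel fixed point theorem, but it hides the eigenvalue positivity inside the connectedness step. Both are valid; yours is the more elementary, the paper's the more structural.
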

\begin{proof}
Fix a Nash representation $V$ of $H$ with finite kernel. By Lemma
\ref{expcomp}, the representation is actually faithful. Consider the
induced action of $H$ on the compact Nash manifold of all full flags
in $V$. Then Theorem \ref{bfix} implies that the action has a fixed
point, that is, $H$ stabilizes a full flag in $V$. Therefore there
exists an injective Nash homomorphism $\varphi: H\rightarrow
\oB'_n(\R)$, where $n:=\dim V$, and $\oB'_n(\R)$ denotes the Nash
subgroup of $\GL_n(\R)$ of upper-triangular matrices.  Since $H$ is
connected, $\varphi(H)$ is contained in $\oB_n(\R)$ and the lemma
follows.
\end{proof}
\begin{thmd}\label{conjexp}
Every exponential Nash subgroup of $G$ is contained in a maximal one, and all maximal exponential Nash subgroups of $G$ are conjugate to each other in $G$.
\end{thmd}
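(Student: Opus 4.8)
The plan is to exploit Borel's fixed point theorem (Theorem \ref{bfix}) together with the compact--co-compact splitting from Lemma \ref{b}. Fix a maximal compact subgroup $K$ of $G$ and an exponential Nash subgroup $B$ of $G$ as in Lemma \ref{b}, so that the multiplication map $K\times B\rightarrow G$ is a Nash diffeomorphism. Then the composite $K\hookrightarrow G\rightarrow G/B$ is a continuous bijection (note $B\cap K$ is a compact subgroup of the exponential Nash group $B$, hence trivial), so $G/B$ is compact; by Proposition \ref{quotient} it is a Nash manifold on which $G$, and therefore every Nash subgroup of $G$, acts by left translations through a Nash action. Recall also that every exponential Nash group is connected (noted after Lemma \ref{exhyp}) and that every Nash subgroup is closed (Proposition \ref{subg}).

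The key step is the following: for every exponential Nash subgroup $E$ of $G$ there is $g\in G$ with $g^{-1}Eg\subseteq B$. Indeed, $E$ acts by left translations on the non-empty compact Nash manifold $G/B$, and this is a Nash action; since $E$ is exponential, Theorem \ref{bfix} provides a fixed point $gB\in G/B$. Then $egB=gB$ for all $e\in E$, that is, $g^{-1}Eg\subseteq B$.

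From this the theorem follows. First, $B$ is a maximal exponential Nash subgroup: if $B\subseteq E$ with $E$ exponential Nash, the key step gives $g\in G$ with $E\subseteq gBg^{-1}$, hence $\dim B\leq\dim E\leq\dim gBg^{-1}=\dim B$; as $B$ is then a closed submanifold of the connected manifold $E$ of the same dimension, $B$ is open in $E$, so $B=E$. Since conjugation by any element of $G$ is a Nash automorphism of $G$, every conjugate $gBg^{-1}$ is again a maximal exponential Nash subgroup. Next, an arbitrary exponential Nash subgroup $E$ is contained in the maximal exponential Nash subgroup $gBg^{-1}$, where $g$ is supplied by the key step; this gives the existence part. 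Finally, if $E$ is a maximal exponential Nash subgroup, choose $g$ with $g^{-1}Eg\subseteq B$; then $g^{-1}Eg$ is a maximal exponential Nash subgroup contained in the exponential Nash subgroup $B$, so $g^{-1}Eg=B$, i.e. $E=gBg^{-1}$. Thus all maximal exponential Nash subgroups are conjugate to $B$, hence to one another.

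The only points needing care are the routine verifications that $G/B$ is genuinely a compact Nash manifold and that the left-translation action of a Nash subgroup on $G/B$ is a Nash action; the substance of the argument is the single application of Theorem \ref{bfix}, so I do not expect a real obstacle beyond correctly assembling these ingredients.
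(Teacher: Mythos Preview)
Your proof is correct and follows essentially the same approach as the paper: both fix $B$ from Lemma \ref{b}, apply the Borel fixed point theorem (Theorem \ref{bfix}) to the action of an arbitrary exponential Nash subgroup on the compact Nash manifold $G/B$ to obtain containment in a conjugate of $B$, and then use connectedness of exponential Nash groups together with a dimension comparison to conclude that $B$ is maximal. Your write-up is simply more explicit about the routine verifications (compactness of $G/B$, that the action is Nash, and the open-closed argument for $B=E$) that the paper leaves implicit.
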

\begin{proof}
Let $B$ be as in Lemma \ref{b}. Let $H$ be an exponential Nash subgroup of $G$. By Theorem \ref{bfix}, the left translation action of $H$ on $G/B$ has a fixed point, say, $g_0 B$. Then $Hg_0 B\subset g_0 B$,  and consequently, $H$ is contained in a conjugation of $B$. Therefore, $B$ has the largest dimension among all exponential Nash subgroup of $G$.
In particular, $B$ is a maximal exponential Nash subgroup of $G$ (since all exponential Nash groups are connected). This proves the theorem.

\end{proof}

\begin{thmd}\label{gkb}
A Nash subgroup $B$ of $G$ is a   maximal exponential Nash subgroup if and only if the multiplication map $K\times B\rightarrow G$ is a Nash diffeomorphism.
\end{thmd}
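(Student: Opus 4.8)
The plan is to derive both implications from Lemma \ref{b}, Theorem \ref{conjexp} and Theorem \ref{compact}, the organizing idea being that $K\times B\to G$ is a diffeomorphism exactly when $K$ acts simply transitively on the coset space $G/B$. Throughout I will use that $G$, being a Nash manifold, has finitely many connected components (so Theorem \ref{compact} applies) and that $\mu\colon K\times B\to G$, being a restriction of the multiplication of $G$, is automatically a Nash map.

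For the ``only if'' direction I would argue as follows. Assume $B$ is a maximal exponential Nash subgroup. By Lemma \ref{b} pick an exponential Nash subgroup $B_0$ with $K\times B_0\to G$ a Nash diffeomorphism; this $B_0$ is a maximal exponential Nash subgroup (shown in the proof of Theorem \ref{conjexp}), so by Theorem \ref{conjexp} there is $g\in G$ with $B=gB_0g^{-1}$. From the diffeomorphism $K\times B_0\to G$ one reads off $G=KB_0$ and $K\cap B_0=\{1\}$; and since for $x=kb_0\in G$ ($k\in K$, $b_0\in B_0$) one has $xB_0x^{-1}=kB_0k^{-1}$, hence $K\cap xB_0x^{-1}=k(K\cap B_0)k^{-1}=\{1\}$, the group $K$ acts simply transitively on $G/B_0$. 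Right translation by $g$ descends to a $K$-equivariant diffeomorphism $G/B\to G/B_0$, $xB\mapsto xgB_0$, so $K$ acts simply transitively on $G/B$ as well; in particular $G=KB$ and $K\cap B=\{1\}$. Thus $\mu$ is a bijective Nash map between manifolds of equal dimension $\dim K+\dim B=\dim K+\dim B_0=\dim G$. Its differential at $(1,1)$ is $(X,Y)\mapsto X+Y$ on $\k\oplus\b$, with kernel $\k\cap\b$; any $Z\in\k\cap\b$ has $\exp(tZ)\in K\cap B=\{1\}$ for all $t$, so $\k\cap\b=\{0\}$ and $d\mu_{(1,1)}$ is an isomorphism. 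Translating by elements of $K$ on the left and of $B$ on the right then shows $\mu$ is everywhere a local diffeomorphism; a bijective local diffeomorphism is a diffeomorphism, and a Nash map that is a diffeomorphism has Nash inverse, so $\mu$ is a Nash diffeomorphism.

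For the converse I would start from a Nash diffeomorphism $\mu\colon K\times B\to G$, so $G=KB$ and $K\cap B=\{1\}$, and first establish that $B$ is exponential. The key observation is that $K$ meets every conjugate of $B$ trivially: for $x=kb\in G$ ($k\in K$, $b\in B$) one has $xBx^{-1}=kBk^{-1}$, so $K\cap xBx^{-1}=k(K\cap B)k^{-1}=\{1\}$. Given a compact subgroup $C\subset B$, Theorem \ref{compact} places $C$ inside a maximal compact subgroup of $G$, which is conjugate to $K$, so $h^{-1}Ch\subset K$ for some $h\in G$; since also $h^{-1}Ch\subset h^{-1}Bh$, the observation gives $h^{-1}Ch\subset K\cap h^{-1}Bh=\{1\}$, i.e. $C=\{1\}$. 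Thus a maximal compact subgroup of $B$ is trivial, so $B$ is exponential by Lemma \ref{expcomp}. To see that $B$ is maximal exponential, note that every exponential Nash subgroup of $G$ lies in a maximal one, which is conjugate to the $B_0$ of Lemma \ref{b} (Theorem \ref{conjexp}), hence has dimension $\dim B_0=\dim G-\dim K=\dim B$; since exponential Nash groups are connected, any exponential Nash subgroup containing $B$ then has the same dimension as $B$ and so equals $B$.

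I expect the homogeneous-space bookkeeping in the ``only if'' direction (that right translation descends to an equivariant diffeomorphism of coset spaces, and the differential computation) to present no real difficulty, as these are standard manipulations. The one step that deserves care is the converse direction's argument that $B$ has no nontrivial compact subgroup: everything there hinges on the fact that the decomposition $G=KB$ forces $K$ to intersect every conjugate of $B$ in the identity, which is precisely where the hypothesis gets used.
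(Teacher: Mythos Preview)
Your proof is correct, but both directions take a different route from the paper's.

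For the ``only if'' direction, you and the paper both start by invoking Lemma \ref{b} and Theorem \ref{conjexp} to get $B=g_0B_0g_0^{-1}$. The paper then simply writes $g_0=k_0b_0$ with $k_0\in K$, $b_0\in B_0$, notes that $B=k_0B_0k_0^{-1}$, and observes that conjugating the decomposition $K\times B_0\to G$ by $k_0\in K$ immediately yields the Nash diffeomorphism $K\times B\to G$. You effectively rediscover the identity $xB_0x^{-1}=kB_0k^{-1}$ but then go the long way around via simply transitive actions, a differential computation at the identity, and translation; this works but is more labor than necessary.

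For the ``if'' direction the approaches genuinely diverge. The paper applies Lemma \ref{b} to $B$ itself to get $B=K'B'$ with $K'$ maximal compact in $B$ and $B'$ exponential, observes $B'$ is co-compact in $G$, and invokes the dimension bound of Lemma \ref{maxdh} (which rests on the Borel fixed point theorem) to force $\dim B'\geq\dim B$, hence $B'=B$. Your argument instead exploits the observation that $K$ meets every conjugate of $B$ trivially, combined with Theorem \ref{compact} on conjugacy of maximal compacts, to show directly that $B$ has no nontrivial compact subgroup. Your route is arguably more elementary at this step, trading the Nash-specific Lemma \ref{maxdh} for the classical Cartan--Malcev--Iwasawa theorem; the paper's route is shorter once Lemma \ref{maxdh} is in hand. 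Both conclude maximality the same way, via the dimension count from Theorem \ref{conjexp}.
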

\begin{proof}
Let $B$ be a Nash subgroup of $G$. We first prove the ``if" part of the theorem. So assume that the multiplication map $K\times B\rightarrow G$ is a Nash diffeomorphism. Then $B$ is connected. Let $K'$ be a maximal compact subgroup of $B$. Applying Lemma
\ref{b} to $B$, we get an exponential Nash subgroup $B'$ of $B$ so
that the multiplication map $K'\times B'\rightarrow B$ is a Nash
diffeomorphism. Then $B'$ is co-compact in $G$. Hence by Lemma \ref{maxdh},
 \[
  \dim B'\geq \dim G/K=\dim B.
  \]
 Therefore  $B'=B$, and $B$ is an exponential Nash subgroup of $G$. Then the proof of Theorem \ref{conjexp} shows that $B$ is a maximal  exponential Nash subgroup of $G$.

To prove the ``only if" part of the theorem, assume that $B$ is a maximal exponential Nash subgroup of $G$. Using Lemma \ref{b}, take an exponential Nash subgroup $B_0$ of $G$
so that the multiplication map
\begin{equation}\label{kb0}
K\times B_0\rightarrow G\,\,\textrm{ is a Nash diffeomorphism.}
\end{equation}
The proof of Theorem \ref{conjexp} shows that $B_0$ is a maximal
exponential Nash subgroup of $G$. By Theorem \ref{conjexp},
\[
  B=g_0 B_0 g_0^{-1}\quad \textrm{for some $g_0\in G$}.
\]
Write $g_0=k_0 b_0$, where $k_0\in K$ and $b_0\in B_0$. Note that \eqref{kb0} implies that the multiplication map
\[
  K\times k_0 B_0 k_0^{-1}\rightarrow G
\]
is a Nash diffeomorphism. The ``only if" part of the theorem then follows as $k_0 B_0 k_0^{-1}=B$.
\end{proof}

\begin{thmd}\label{conhyp}
Every hyperbolic Nash subgroup of $G$ is contained in a maximal one,
and all maximal hyperbolic Nash subgroups of $G$ are conjugate to
each other in $G$.
\end{thmd}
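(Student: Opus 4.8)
The strategy is to reduce Theorem~\ref{conhyp} to the two conjugacy results already in hand: the conjugacy of maximal exponential Nash subgroups (Theorem~\ref{conjexp}) and the conjugacy of maximal reductive Nash subgroups (Theorem~\ref{uconj}). First I would note that every element of a hyperbolic Nash subgroup $H$ of $G$ is hyperbolic, hence has trivial elliptic part, hence is exponential; thus $H$ is an exponential Nash subgroup of $G$. Fixing once and for all a maximal exponential Nash subgroup $B$ of $G$, Theorem~\ref{conjexp} gives $g_{0}\in G$ with $H\subseteq g_{0}Bg_{0}^{-1}$, so after replacing $H$ by $g_{0}^{-1}Hg_{0}$ I may assume $H\subseteq B$.

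Next I would work inside $B$. Choose a Levi component $A$ of $B$, that is, a maximal reductive Nash subgroup of $B$. Since $A\subseteq B$ and $B$ has no non-trivial elliptic element, neither does $A$, so $A$ is exponential; being also reductive, $A$ is hyperbolic by Lemma~\ref{expred}, and Theorem~\ref{uconj} applied to $B$ gives $B=A\ltimes\mathfrak{U}_{B}$. On the other hand a hyperbolic Nash group is reductive, since its tautological faithful Nash representation is completely reducible by Proposition~\ref{jhyp2}. Hence $H$ is a reductive Nash subgroup of $B$, so by Theorem~\ref{uconj} (for $B$) it is contained in a maximal reductive Nash subgroup of $B$, and all of these are conjugate to $A$ under $\mathfrak{U}_{B}$. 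Undoing the conjugation of the first step, I obtain the key intermediate statement: \emph{every hyperbolic Nash subgroup of $G$ is contained in some conjugate $gAg^{-1}$ of $A$ in $G$}, and each such $gAg^{-1}$ is itself a hyperbolic Nash subgroup.

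Finally I would upgrade this to the full statement by a connectedness argument. Suppose $gAg^{-1}\subseteq H'$ with $H'$ hyperbolic; applying the key statement to $H'$ gives $H'\subseteq g'Ag'^{-1}$, so $\dim H'=\dim A$ and the chain $gAg^{-1}\subseteq H'\subseteq g'Ag'^{-1}$ of connected groups of equal dimension collapses to $gAg^{-1}=H'$ (an open subgroup of a connected group is everything). Thus every conjugate of $A$ is a \emph{maximal} hyperbolic Nash subgroup of $G$, which together with the key statement shows every hyperbolic Nash subgroup lies in a maximal one; and since any maximal hyperbolic Nash subgroup $M$ satisfies $M\subseteq gAg^{-1}$ for some $g$ and hence $M=gAg^{-1}$ by maximality, all maximal hyperbolic Nash subgroups are conjugate in $G$. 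I do not expect a real obstacle here; the only point needing care is keeping track of the two successive conjugations — first moving $H$ into $B$, then moving it within $B$ by an element of $\mathfrak{U}_{B}$ — and the argument is otherwise a direct assembly of Theorems~\ref{conjexp} and~\ref{uconj} with Lemma~\ref{expred}.
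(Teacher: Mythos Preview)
Your proposal is correct and follows essentially the same approach as the paper: reduce to a fixed maximal exponential Nash subgroup $B$ via Theorem~\ref{conjexp}, then inside $B$ use Theorem~\ref{uconj} to move any hyperbolic (hence reductive) subgroup into a fixed Levi component $A$, and conclude that $A$ is a maximal hyperbolic Nash subgroup by a dimension argument. You have simply spelled out in more detail the steps the paper leaves terse (why $A$ is hyperbolic, why $H$ is reductive, and the connectedness argument for maximality).
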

\bp Fix a maximal exponential Nash subgroup $B$ of $G$, and fix a
Levi component $A$ of $B$. Let $H$ be a hyperbolic Nash subgroup of
$G$. Then by Theorem \ref{conjexp}, a conjugation of $H$ is
contained in $B$. Theorem \ref{uconj} further implies that a
conjugation of $H$ is contained in $A$. As in the proof of Theorem \ref{conjexp}, we know that $A$ is a maximal hyperbolic Nash subgroup by dimension reason.
This proves the theorem.

\ep

\begin{lemd}\label{expuni}
If $G$ is exponential, then every unipotent Nash subgroup of $G$ is
contained in $\mathfrak U_G$.
\end{lemd}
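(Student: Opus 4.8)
The plan is to pass to the reductive quotient $G/\mathfrak U_G$ and exploit the disjointness between unipotent and hyperbolic Nash groups. First I would recall that, by Lemma~\ref{exhyp}, the hypothesis that $G$ is exponential is equivalent to $G/\mathfrak U_G$ being a hyperbolic Nash group. Let $U$ be a unipotent Nash subgroup of $G$, and consider the composite Nash homomorphism
\[
  \varphi\colon U\hookrightarrow G\longrightarrow G/\mathfrak U_G,
\]
whose kernel is $U\cap\mathfrak U_G$.

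The key step is then immediate: $\varphi$ is a Nash homomorphism from a unipotent Nash group $U$ to the hyperbolic Nash group $G/\mathfrak U_G$, so Proposition~\ref{jt} forces $\varphi$ to be trivial. Hence $U=\ker\varphi\subset\mathfrak U_G$, which is exactly the assertion. (Equivalently, one could observe that $\varphi(U)$ is a Nash quotient group of $U$, hence unipotent by Proposition~\ref{quotientunip}, while it is also a Nash subgroup of the hyperbolic group $G/\mathfrak U_G$, hence hyperbolic by Proposition~\ref{subsplit}; being both unipotent and hyperbolic it is trivial by Proposition~\ref{intehu}. Either route works.)

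There is essentially no obstacle here — the statement is a formal consequence of the structure of exponential Nash groups already established (Lemma~\ref{exhyp}) together with the disjointness results of Section~\ref{secjordan}. The only point meriting a word of care is the verification that $\varphi(U)$, i.e.\ the image of a unipotent Nash group under a Nash homomorphism, is again a Nash subgroup (Proposition~\ref{subgi}) and is unipotent; but this is handled by Proposition~\ref{quotientunip}, so the argument closes in two lines.
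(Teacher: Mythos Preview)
Your proof is correct and follows the same approach as the paper: both pass to the quotient $G/\mathfrak U_G$, use Lemma~\ref{exhyp} to see it is hyperbolic, and conclude that the image of $U$ there is trivial via the disjointness of unipotent and hyperbolic Nash groups (Proposition~\ref{jt}). The paper compresses this into a single line, but the logic is identical.
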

\bp Let $U$ be a unipotent Nash subgroup of $G$. Then the quotient
homomorphism
\[
  G\rightarrow G/\mathfrak U_G
\]
has trivial restriction to $U$. Therefore $U\subset \mathfrak U_G$.
 \ep

In view of Lemma \ref{expuni}, a similar argument as Theorem
\ref{conhyp} implies the following

\begin{thmd}\label{conuni}
Every unipotent Nash subgroup of $G$ is contained in a maximal one,
and all maximal unipotent Nash subgroups of $G$ are conjugate to
each other in $G$.
\end{thmd}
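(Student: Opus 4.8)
The plan is to run the argument of Theorem \ref{conhyp} verbatim, with the Levi component $A$ of a maximal exponential Nash subgroup replaced by its unipotent radical. First I would fix, using Theorem \ref{conjexp}, a maximal exponential Nash subgroup $B$ of $G$, and set $N:=\mathfrak U_B$, the unipotent radical of $B$. By Proposition \ref{0unir} this $N$ is a unipotent Nash subgroup of $G$; it will turn out to be a maximal one, and every maximal unipotent Nash subgroup will be a conjugate of it.

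Next, given an arbitrary unipotent Nash subgroup $U$ of $G$, I would first note that $U$ is exponential: every element of $U$ lies in the unipotent Nash subgroup $U$, hence is unipotent, hence is exponential (its elliptic part is trivial by uniqueness of Jordan decomposition, Theorem \ref{jd}), so $U$ is exponential by Lemma \ref{expcomp}. Therefore Theorem \ref{conjexp} applies and gives some $g\in G$ with $gUg^{-1}\subset B$. Applying Lemma \ref{expuni} to the exponential Nash group $B$ \emph{in place of} $G$ — every unipotent Nash subgroup of $B$ is contained in $\mathfrak U_B$ — we conclude $gUg^{-1}\subset N$, i.e. $U\subset g^{-1}Ng$.

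From this the theorem follows formally, exactly as Theorems \ref{conjexp} and \ref{conhyp} were deduced from their respective containment facts. The inequality $\dim U\le\dim N$ for every unipotent Nash subgroup $U$ shows $N$ has maximal dimension among unipotent Nash subgroups; since every unipotent Nash group is connected (Proposition \ref{nashucs}), a unipotent Nash subgroup of maximal dimension is a maximal one, so $N$ is a maximal unipotent Nash subgroup. Then $U\subset g^{-1}Ng$ exhibits every unipotent Nash subgroup inside a maximal one, and if $M$ is any maximal unipotent Nash subgroup, then $hMh^{-1}\subset N$ for some $h$, which by maximality of $M$ forces $hMh^{-1}=N$; hence all maximal unipotent Nash subgroups are conjugate to $N$, and so to each other. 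I do not anticipate a genuine obstacle here: the only two points needing care are verifying that a unipotent Nash subgroup is exponential (so that Theorem \ref{conjexp} is available) and invoking Lemma \ref{expuni} with the ambient exponential group taken to be $B$ rather than $G$, and both are immediate from the quoted results.
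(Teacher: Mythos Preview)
Your proposal is correct and follows essentially the same approach as the paper: the paper simply states that ``in view of Lemma \ref{expuni}, a similar argument as Theorem \ref{conhyp}'' gives the result, and your write-up is precisely that argument spelled out, replacing the Levi component $A$ of $B$ by $\mathfrak U_B$ and replacing the appeal to Theorem \ref{uconj} by Lemma \ref{expuni}.
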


By the preceding arguments, we know that for each maximal
exponential Nash subgroup $B$ of $G$, its unipotent radical
$\mathfrak U_B$ is a maximal unipotent Nash subgroup of $G$, and
each Levi component of $B$ is a maximal hyperbolic Nash subgroup of
$G$.

\section{About proofs of the results in the Introduction}

In this last section, we collect some results of previous sections
to explain the proofs of the propositions and theorems which occur
in the Introduction.

Proposition \ref{quotient0} is a restatement of Proposition
\ref{quotient}.

Recall that Proposition \ref{criehu} asserts the following: An
almost linear Nash group is elliptic, hyperbolic, or unipotent if
and only if all of its elements are elliptic, hyperbolic, or
unipotent, respectively. The ``only if" part of the proposition is
trivial. The elliptic case and hyperbolic case of the ``if" part is
proved in Propositions \ref{ee22} and \ref{ee222}, respectively. To
prove the ``if" part in the unipotent case, let $G$ be an almost
linear Nash group consisting unipotent elements only. Then $G$ is
exponential. Hence a Levi component of $G$ is a hyperbolic Nash group, which
has to be trivial. Therefore $G$ is unipotent. This finishes the
proof of Proposition \ref{criehu}.

Proposition \ref{thmehu} consists two assertions. The first one is
the following
\begin{prpd}
Let $G$ be an almost linear Nash group which is elliptic, hyperbolic or unipotent. Then all of its Nash subgroups and Nash quotient groups are elliptic,
hyperbolic or unipotent, respectively.
\end{prpd}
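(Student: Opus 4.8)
The plan is to dispose of the three cases (elliptic, hyperbolic, unipotent) separately, and within each of them to treat Nash subgroups and Nash quotient groups in turn; in every case the assertion has effectively been recorded already in an earlier section, so what remains is to assemble the relevant references together with the standing fact that a Nash subgroup, and a Nash quotient by a normal Nash subgroup, of an almost linear Nash group is again almost linear.

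For Nash subgroups, suppose $H$ is a Nash subgroup of $G$. If $G$ is elliptic, then $G$ is compact and, by Proposition \ref{subg}, $H$ is closed in $G$, hence compact; being also almost linear, $H$ is elliptic. If $G$ is hyperbolic, then $H$ is hyperbolic by Proposition \ref{subsplit}. If $G$ is unipotent, fix a faithful Nash representation $V$ of $G$ on which every element acts as a unipotent operator; its restriction to $H$ is a Nash representation of the Nash group $H$ which is again faithful and still has every element acting unipotently, so $H$ is unipotent directly from Definition \ref{ehu0}.

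For Nash quotient groups the three cases are, respectively, the content of Proposition \ref{quotientel} (a quotient of a compact group is compact, and it is almost linear), Proposition \ref{quosplit}, and Proposition \ref{quotientunip}; nothing new is needed.

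There is no real obstacle left at this stage: the substantive work sits inside the cited propositions, in particular inside Theorem \ref{equsplit} (which powers the hyperbolic quotient case) and Proposition \ref{j0unip} (which powers the unipotent quotient case). The only point deserving a word of care is to keep the three adjectives in the strict sense of Definition \ref{ehu0} --- so that, for instance, in the elliptic subgroup case one must record almost-linearity of $H$ before concluding that compactness suffices, and in the unipotent subgroup case one invokes the \emph{faithful} representation rather than merely one with finite kernel.
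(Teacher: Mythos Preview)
Your proof is correct and follows essentially the same approach as the paper: the paper's own argument simply declares the subgroup assertion ``obvious'' and cites Propositions \ref{quotientel}, \ref{quosplit}, and \ref{quotientunip} for the quotient cases, exactly as you do. Your version merely spells out the ``obvious'' part (compactness passes to closed subgroups, Proposition \ref{subsplit} for the hyperbolic case, restriction of a faithful unipotent representation for the unipotent case), which is a harmless elaboration.
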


\begin{proof}
The assertion for Nash subgroups is obvious. The assertion for Nash
quotient groups appears in Propositions \ref{quotientel},
\ref{quosplit} and \ref{quotientunip}.

\end{proof}

The second assertion of Proposition \ref{thmehu} is the following

\begin{prpd}
Let $G$ be an almost linear Nash group. If $G$ has a normal Nash subgroup $H$ so
that $H$ and $G/H$ are both elliptic, both hyperbolic or both unipotent, then
$G$ is  elliptic, hyperbolic or unipotent, respectively.
\end{prpd}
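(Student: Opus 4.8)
The plan is to deduce all three cases at once from Proposition~\ref{criehu}: it suffices to prove that, in each case, every element of $G$ is of the required type. First I would record an auxiliary observation, valid for each of the three properties $\mathcal P$ (elliptic, hyperbolic, unipotent): a Nash group of type $\mathcal P$ has no non-identity element belonging to either of the other two types. Indeed, such an element would lie in a Nash subgroup of one of the other two types, and that subgroup would simultaneously be of type $\mathcal P$ (Nash subgroups of elliptic, hyperbolic, or unipotent Nash groups are again elliptic, hyperbolic, or unipotent, by Proposition~\ref{quotientel}, Proposition~\ref{subsplit}, and the evident fact that a Nash subgroup of a unipotent Nash group is unipotent, respectively); but by Proposition~\ref{jt}, applied to the identity homomorphism, a Nash group that is simultaneously of two distinct types among these three must be trivial.

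Granting this, fix the case in which $H$ and $G/H$ are both of type $\mathcal P$, and let $x\in G$. Since $G$ is almost linear (part of the hypothesis), $x$ has a Jordan decomposition $x=x_\mathrm e x_\mathrm h x_\mathrm u$ by Theorem~\ref{jd}. Let $q\colon G\to G/H$ be the quotient homomorphism; by Proposition~\ref{pjd} it carries this to the Jordan decomposition of $q(x)$, so $q(x_\mathrm e)=q(x)_\mathrm e$, $q(x_\mathrm h)=q(x)_\mathrm h$, $q(x_\mathrm u)=q(x)_\mathrm u$. Applying the auxiliary observation to the type-$\mathcal P$ group $G/H$, the two ``wrong-type'' Jordan parts of $q(x)$ are trivial; hence the corresponding two Jordan parts of $x$ lie in $H=\ker q$. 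Applying the observation now to $H$, those two parts of $x$ are themselves trivial. Therefore $x$ coincides with its $\mathcal P$-part, and so is of type $\mathcal P$. As $x\in G$ was arbitrary, Proposition~\ref{criehu} yields that $G$ is of type $\mathcal P$, completing all three cases.

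I do not anticipate a genuine obstacle here; the argument is essentially bookkeeping, and the one point requiring care is citing the correct earlier result for ``Nash subgroups stay of the same type'' in each of the three cases. (In the elliptic case one could alternatively argue directly: an extension of a compact group by a compact group is compact, since the quotient map is proper, and $G$ is almost linear by hypothesis, so $G$ is elliptic without any appeal to Jordan decomposition.)
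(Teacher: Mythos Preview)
Your proof is correct and follows essentially the same approach as the paper's. The paper argues more tersely at the level of the sets $G_\mathrm h$, $G_\mathrm u$, etc., showing directly that (in the elliptic case, say) the image of $G_\mathrm h$ in $G/H$ lands in $(G/H)_\mathrm h=\{1\}$, hence $G_\mathrm h\subset H$, hence $G_\mathrm h=\{1\}$; but your elementwise version via Jordan decomposition and Proposition~\ref{pjd} is the same argument unpacked, and your auxiliary observation is exactly the implicit fact $(G/H)_\mathrm h=(G/H)_\mathrm u=\{1\}$ that the paper uses.
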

\bp

Assume that both $H$ and $G/H$ are elliptic. The the image of
$G_\mathrm h$ under the quotient map
\[
  G\rightarrow G/H
\]
is contained in $(G/H)_\mathrm h=\{1\}$. Therefore $G_\mathrm
h\subset H$, which implies that $G_{\mathrm h}=\{1\}$. Similarly
$G_{\mathrm u}=\{1\}$. Therefore $G=G_\mathrm e$, and Proposition
\ref{criehu} implies that $G$ is elliptic.

The same argument proves the proposition in the hyperbolic and
unipotent case.

 \ep

As already mentioned, Theorem \ref{thmc} is a combination of Lemmas
\ref{compactn}, \ref{auts2} and \ref{auts}, and Theorem \ref{thmu}
is a combination of Propositions \ref{nashucs}, \ref{autsu2}, \ref{unin} and \ref{nashucs000}. Theorem \ref{thmh} is a restatement of Theorem
\ref{equsplit}. Theorem  \ref{thm2} is the same as Theorem \ref{jd}.

Theorem \ref{thmcs} consists five assertions. The second one is
obvious. The others are respectively proved in Lemma \ref{ssl},
Proposition \ref{snash}, Proposition \ref{autss2} and Proposition
\ref{autss}.

For Theorem \ref{thmred}, it is obvious that (b) $\Rightarrow$ (a),
and Theorem \ref{thmredcr} asserts that (a) $\Rightarrow$ (b). Lemma
\ref{reduni} assert that (a) $\Leftrightarrow$ (c). Theorem \ref{rtr}
implies that
\[
   (d)\Leftrightarrow (a)\Leftrightarrow(e).
\]
The equivalence (a) $\Leftrightarrow$ (f) is proved in Lemma
\ref{redi}, and (a) $\Leftrightarrow$ (g) is proved in Theorem
\ref{strr}. Therefore Theorem \ref{thmred} holds.

For Theorem \ref{thmexp}, (a) $\Leftrightarrow$ (b) is implied by
Lemma \ref{expcomp}, and (a) $\Leftrightarrow$ (c) is implied by
Lemmas \ref{uhcoc2} and \ref{cocompacte}. The equivalence
(a) $\Leftrightarrow$ (d) is proved in Lemma \ref{exhyp},
(a) $\Leftrightarrow$ (e) is implied by Lemma \ref{bnr},  and
(a) $\Leftrightarrow$ (f) is proved in Proposition \ref{expiso}. By
Theorem \ref{bfix}, (a) $\Rightarrow$ (g), and by Lemma
\ref{cocompacte}, (g) $\Rightarrow$ (a). In conclusion,  Theorem
\ref{thmexp} holds.

Theorem \ref{conall} is contained in Theorems \ref{compact},
\ref{conhyp}, \ref{conuni}, \ref{uconj} and \ref{conjexp}.

Finally, Theorem \ref{levd0} is contained in Theorem \ref{uconj},
and Theorem \ref{iwasawa} is contained in Theorem \ref{gkb}.

\end{document}